\newcommand{\CC}{\mathbb{C}}
\newcommand{\HH}{\mathbb{H}}
\newcommand{\NN}{\mathbb{N}}
\newcommand{\RR}{\mathbb{R}}
\newcommand{\ZZ}{\mathbb{Z}}
\newcommand{\cC}{\mathcal{C}}
\newcommand{\cD}{\mathcal{D}}
\newcommand{\cF}{\mathcal{F}}
\newcommand{\cG}{\mathcal{G}}
\newcommand{\cI}{\mathcal{I}}
\newcommand{\cL}{\mathcal{L}}
\newcommand{\cM}{\mathcal{M}}
\newcommand{\cO}{\mathcal{O}}
\newcommand{\cS}{\mathcal{S}}
\newcommand{\cT}{\mathcal{T}}
\newcommand{\cV}{\mathcal{V}}
\newcommand{\sC}{\mathscr{C}}
\newcommand{\sF}{\mathscr{F}}
\newcommand{\sG}{\mathscr{G}}
\newcommand{\sH}{\mathscr{H}}
\newcommand{\sL}{\mathscr{L}}
\newcommand{\sM}{\mathscr{M}}
\newcommand{\sO}{\mathscr{O}}
\newcommand{\sQ}{\mathscr{Q}}
\newcommand{\sV}{\mathscr{V}}
\newcommand{\fC}{\mathsf{C}}
\newcommand{\fE}{\mathsf{E}}
\newcommand{\fF}{\mathsf{F}}
\newcommand{\fL}{\mathsf{L}}
\newcommand{\fM}{\mathsf{M}}
\newcommand{\fP}{\mathsf{P}}
\newcommand{\fQ}{\mathsf{Q}}
\newcommand{\fR}{\mathsf{R}}
\newcommand{\fS}{\mathsf{S}}
\newcommand{\fT}{\mathsf{T}}
\newcommand{\fV}{\mathsf{V}}
\newcommand{\fW}{\mathsf{W}}
\newcommand{\Homeop}{\operatorname{Homeo^+}}
\newcommand{\E}[1]{\operatorname{E}(#1)}
\newcommand{\stem}[2]{S_{#1}^{#2}}
\newcommand{\closure}[1]{\overline{#1}}
\newcommand{\interior}[1]{\operatorname{Int}\left( #1 \right)}
\newcommand{\Fix}[1]{\operatorname{Fix}(#1)}
\newcommand{\Per}[1]{\operatorname{Per}(#1)}
\newcommand{\Stab}[2]{\operatorname{Stab}_{#1}(#2)}
\newcommand{\tipp}[1]{\curlyvee(#1)}
\newcommand{\tipg}[1]{\diamondsuit(#1)}
\newcommand{\PSL}[1]{\operatorname{PSL}_2(#1)}
\newcommand{\COL}{\operatorname{COL}}
\newcommand{\Aut}{\operatorname{Aut}}
\newcommand{\cphi}{\varphi_{\operatorname{co}}}
\newcommand{\id}{\operatorname{Id}}
\newcommand{\opi}[3][S^1]{(#2, #3)_{#1}}
\newcommand{\ropi}[3][S^1]{[#2, #3)_{#1}}
\newcommand{\lopi}[3][S^1]{(#2, #3]_{#1}}
\newcommand{\cldi}[3][S^1]{[#2, #3]_{#1}}
\newcommand{\seq}[2][n]{\{ #2\}_{#1=1}^\infty}
\newcommand{\seqc}[3]{\{#1_{#2}\}_{#2\in \ZZ_{#3}}}
\newcommand{\im}{\operatorname{Im}}
\newcommand{\re}{\operatorname{Re}}
\newcommand{\emap}[2]{\varphi_{#1\to#2}}
\newcommand{\mrk}{\mathbf{m}}
\newcommand{\Mrk}{\mathbf{M}}
\newcommand{\ord}{\operatorname{ord}}
\newcommand{\cusp}{\operatorname{cusp}}
\let\c@equation\c@subsection
\numberwithin{equation}{section} 
\let\c@figure\c@equation
\numberwithin{figure}{section}
\theoremstyle{plain}
\newtheorem{thm}[equation]{Theorem}
\newtheorem{cor}[equation]{Corollary}
\newtheorem{lem}[equation]{Lemma}
\newtheorem{prop}[equation]{Proposition}
\theoremstyle{definition}
\newtheorem{defn/}[equation]{Definition}
\newtheorem{ex}[equation]{Example}
\newtheorem{ques}[equation]{Question}
\newenvironment{defn}
  { 
  \pushQED{\qed} \begin{defn/}}
  {\popQED \end{defn/}}
\theoremstyle{remark}
\newtheorem{rmk/}[equation]{Remark}
\newtheorem*{rmk*}{Remark}
\newtheorem*{case*}{Case}
\newtheorem{claim}[equation]{Claim}
\newtheorem*{claim*}{Claim}
\newenvironment{rmk}
  {
   \pushQED{\qed}\begin{rmk/}}
  {\popQED\end{rmk/}}
\newcommand{\refsec}[1]{Section~\ref{Sec:#1}}
\newcommand{\refthm}[1]{Theorem~\ref{Thm:#1}}
\newcommand{\refcor}[1]{Corollary~\ref{Cor:#1}}
\newcommand{\reflem}[1]{Lemma~\ref{Lem:#1}}
\newcommand{\refprop}[1]{Proposition~\ref{Prop:#1}}
\newcommand{\refclm}[1]{Claim~\ref{Clm:#1}}
\newcommand{\refrmk}[1]{Remark~\ref{Rmk:#1}}
\newcommand{\reffig}[1]{Figure~\ref{Fig:#1}}
\newcommand{\refitm}[1]{(\ref{Itm:#1})}
\newcommand{\fakeenv}{} 
\newenvironment{restate}[2]  
{ 
 \renewcommand{\fakeenv}{#2} 
 \theoremstyle{plain} 
 \newtheorem*{\fakeenv}{#1~\ref{#2}} 
 \begin{\fakeenv}
}
{
 \end{\fakeenv}
}
\title{Groups acting on veering pairs and Kleinian groups}
\author[Baik]{Hyungryul Baik}
\address{\hskip-\parindent
Department of Mathematical Sciences\\
Korea Advanced Institute of Science and Technology (KAIST)\\
291 Daehak-ro, Yuseong-gu,  Daejeon 34141, Republic of Korea}
\email{hrbaik@kaist.ac.kr }
\author[Jung]{Hongtaek Jung}
\address{\hskip-\parindent
Center for Geometry and Physics\\  Institute for Basic Science (IBS), Pohang 37673, Korea}
\email{htjung@ibs.re.kr }
\author[Kim]{KyeongRo Kim}
\address{\hskip-\parindent
Research institute of Mathematics\\
Seoul National University\\
GwanAkRo 1, Gwanak- Gu, Seoul 08826, Korea}
\email{cantor14@snu.ac.kr}
\date{\today}
\begin{document}

\begin{abstract}
We show that some subgroups of the orientation preserving circle homeomorphism group with invariant veering pairs of laminations are hyperbolic 3-orbifold groups. On the way, we show that from a veering pair of laminations, one can construct a loom space (in the sense of Schleimer-Segerman) as a quotient. Our approach does not assume the existence of any 3-manifold to begin with so this is a geometrization-type result, and supersedes some of the results regarding the relation among veering triangulations, pseudo-Anosov flows, taut foliations in the literature. 
\end{abstract}

\subjclass{Primary 37E10; Secondary 57R05, 57M07, 57M60}
\keywords{Circle homeomorphisms, Kleinian groups, Circle laminations, Laminar groups, Veering triangulations}

\maketitle

\section{Introduction}

\subsection{Background and Motivation}
The main theme of geometric group theory is to draw interesting algebraic properties of a group from the geometry of its actions on spaces. In that respect, two fundamental questions for a given group are: whether can this group act on some space nicely and what properties does this group have if it acts on some space nicely.

Let us focus on the fundamental groups of compact orientable 3-manifolds, and ask whether they can act nicely on some space. It turns out that these groups admit various interesting actions on the circle and the sphere. It is classic that the fundamental group of a closed hyperbolic surface always acts on the circle, and this action is known to be convergence, minimal and faithful. The fundamental group of a closed hyperbolic 3-manifold group also admits a convergence action on the sphere. Less obviously, the fundamental group of a closed 3-manifold with a pseudo-Anosov flow admits a convergence action on the sphere as well \cite{Fenley12}. On the other hand, the fundamental group of a closed orientable atoroidal 3-manifold with a co-orientable taut foliation, or more generally with a certain type of essential lamination, acts faithfully on the circle \cite{CalegariDunfield03, Thurston97}. An interesting feature of this action is that it leaves some structure, called a \emph{circle lamination}, invariant. In some circumstances, these two classes of actions are closely related by some maps, so-called Cannon-Thurston maps. See \cite{CannonThurston07} and \cite{Bowditch07}. 

Conversely, what can we say about group that admits a nice action on the circle or the sphere? For example, suppose that a group admits a convergence action on the circle and ask whether we can characterize this group. A surprisingly results proven by many authors, Tukia \cite{Tukia88},  Gabai \cite{Gabai92}, and Casson-Jungreis \cite{CassonJungreis94} (cf. Hinkkanen\cite{Hinkkanen90} for the indiscrete case), show that such a group must be a Fuchsian group (including a hyperbolic surface group). On the other hand, whether a group with a convergence action on the sphere is a (virtually) hyperbolic 3-manifold group, which is one part of Cannon's conjecture, is still open. And much less is known for groups acting on the circle with invariant circle laminations.

This is where the study of \emph{laminar groups}, subgroups of $\Homeop(S^1)$ with invariant circle laminations, is initiated \cite{CalegariDunfield03, cal06, Calegari07}. Perhaps, one of the earliest results on this object is \cite{Baik15}, showing that a laminar group preserving three special kind of circle laminations must be a surface group. After this work, many subsequent studies have been published by various authors including \cite{AlonsoBaikSamperton}, \cite{BaikKim20}, and \cite{BaikKim20}. One of the implicit goals of these works is to generalize \cite{Baik15} and find decent conditions for a laminar group to become a hyperbolic 3-manifold group.

Behavior of a laminar group is controlled by properties of circle laminations that the group preserves. Therefore, the study of laminar groups boils down to fine-tune the invariant circle laminations so that the laminar group preserving these circle laminations has the desired property. 

The concept of veering triangulations, namely, an ideal triangulation on a cusped 3-manifold with extra data that captures combinatorial features of pseudo-Anosov flow, was proposed by Agol \cite{Agol10}. Over the last decade, the theory of veering triagulations has been studied by many authors including \cite{FuterGueritaud13}, \cite{HodgsonRubinsteinSegerman},  \cite{HodgsonRubinsteinSegermanTillmann}, 
\cite{LandryMinskyTaylor20}, 
\cite{LandryMinskyTaylor}, \cite{SchleimerSegerman19}, 
\cite{SchleimerSegerman20},
\cite{SchleimerSegerman21}, \cite{Landry18}, \cite{Landry19}, \cite{Landry22}, and \cite{AgolTsang22}. Recently, the theory of veering triangulations is spotlighted to study the Thurston norm balls and pseudo-Anosov flow theory. Among them, Frankel-Schleimer-Segerman \cite{SchleimerSegerman19} (see also \cite{SchleimerSegerman20}, \cite{SchleimerSegerman21}) show that the fundamental group of a veering triangulated 3-manifold  can act faithfully on the circle (so-called the veering circle) and leave a pair of special circle laminations invariant. Moreover, Schleimer-Segerman \cite{SchleimerSegerman19, SchleimerSegerman21} also show that one can functorially build a veering triangulated 3-manifold  from these circle laminations data. Their results suggest promising candidates of circle laminations such that a laminar group preserving them becomes a hyperbolic 3-manifold group.

\subsection{Main Results}
In this paper, we will introduce \emph{veering pair} generalizing the pair of circle laminations constructed in \cite{SchleimerSegerman19} and study laminar groups that preserve a veering pair. Our final goal is to show that a veering pair preserving laminar group is the fundamental group of an irreducible 3-orbifold. If, in addition, the action of a laminar group is ``cocompact'' we can prove that this group is the fundamental group of a hyperbolic 3-orbifold. We also present how to construct a loom space from an abstract veering pair, generalizing the loom space construction of \cite{SchleimerSegerman19}.

\subsubsection{Circle Laminations and Loom Spaces}
In the first half of the paper, we show that one can construct a loom space out of a pair of circle laminations called \emph{veering pair}, generalizing the construction of \cite{SchleimerSegerman19}. This construction is also functorial in a sense that a laminar group that preserves the veering pair acts on the loom space so constructed as loom isomorphisms.

To present our result, we briefly recall and introduce some terminology. Regard the circle $S^1$ as the ideal boundary of the hyperbolic disk $\HH^2$. Given a set of unoriented geodesics $L$ in $\HH^2$, their endpoints $e(L)$ define a subset in $\cM:=(S^1\times S^1 \setminus \{(x,x)\,:\, x\in S^1\})/{\sim}$, where $(x,y)\sim (w,z)$ if and only if $x=z$ and $y=w$. A \emph{circle lamination} is a subset $\lambda$ of $\cM$ such that $\lambda=e(\overline{\lambda})$ for some (in fact, unique) geodesic lamination $\overline{\lambda}$ of $\HH^2$. Connected components of the complement $\HH^2\setminus \overline{\lambda}$  are called \emph{gaps}\footnote{For reader's convenience, we use the traditional definition for gaps at this moment. What we call gaps here will be referred to as \emph{non-leaf gaps} in the main text.} of $\lambda$. As mentioned above, a 3-manifold $M$ with a veering triangulation (e.g., pseudo-Anosov mapping torus with singular orbits removed) gives rise to a pair of circle laminations $(\lambda^+, \lambda^-)$ invariant under the $\pi_1(M)$-action on $S^1$. We recall key features of this pair of circle laminations $\lambda^\pm$. See \cite{SchleimerSegerman19} for details.
\begin{itemize}
    \item (Crown gaps) Each gap of $\lambda^\pm$ is a crown, namely, an infinite polygon with vertices accumulate to a unique point. See \reffig{crownexample}.
    \item (Loose) Each $\lambda^\pm$ is \emph{loose}, namely, two complementary crowns of $\lambda^\pm$ share no vertices.
    \item (Strongly transverse) $\lambda^+$ and $\lambda^-$ are \emph{strongly transverse}, i.e., end $\ell_1 \cap \ell_2=\emptyset$ as subsets in $S^1$ for every pair $(\ell_1,\ell_2)\in \overline{\lambda^+} \times \overline{\lambda^-}$.
    \item (Interleaving gaps) Given a crown gap $C^+$ of $\lambda^+$, there is a crown gap $C^-$ of $\lambda^-$ such that the set of vertices of $C^+$ and $C^-$ alternate in $S^1$.
\end{itemize}
Motivated from this, we define a \emph{veering pair} to be a pair of circle laminations $\sV=\{\sL_1, \sL_2\}$ such that
\begin{itemize}
    \item $\sL_1$ and $\sL_2$ are loose.
    \item $\sL_1$ and $\sL_2$ are \emph{quite full}, namely, each gap is either an ideal polygon or a crown.
    \item $\sL_1$ and $\sL_2$ are strongly transverse.
    \item For each gap $\sG_i$ of $\sL_i$, there is a gap $\sG_j$ of $\sL_j$, $j\ne i$, that interleaves with $\sG_i$. 
\end{itemize}
See \refsec{veeringsec} for the detailed definition. Note that in the main text, we use slightly different notion, called \emph{lamination system} instead of circle lamination which is more proper  to keep track of the circle actions. Although there are some differences in usage of terminology, the key ideas remain the same.

The major difference between veering pairs and pair of circle laminations in \cite{SchleimerSegerman19} is the potential existence of polygon gaps. Having polygon gaps is a more general and natural setting because it allows us to take torsion elements of laminar groups into account. 

Our construction of loom spaces is along the similar line with that of \cite{SchleimerSegerman19}. However, a veering pair is just an abstract object and is not necessarily induced from a veering triangulation. This lack of the background geometry of the veering pair makes extra difficulties. We have to exploit the abstract properties of veering pair. 

First, we define the \emph{stitch space}
$$\fS(\sV):=\{(\ell_1,\ell_2)\,:\,\ell_i\text{ is a leaf of } \overline{\sL_i} \text{ such that } \ell_1\cap \ell_2 \ne \emptyset\},$$ which corresponds to the link space in \cite{SchleimerSegerman19}. Each element of $\fS(\sV)$ is called a \emph{stitch}. We introduce the \emph{weaving relation} $\sim_\omega$ on $\fS(\sV)$ and consider the quotient space $\overline{\fW}(\sV):= \fS(\sV)/{\sim_\omega}$, called the \emph{cusped weaving}. Each equivalence class in $\overline{\fW}(\sV)$ consists of a single stitch, two stitches, four stitches,  or the set of stitches that forms interleaving polygons or crowns. The equivalence class of interleaving crowns is called a \emph{cusp class}, which corresponds to a boundary point of $\overline{\fW}(\sV)$. The \emph{weaving} $\fW(\sV):=\overline{\fW}(\sV)\setminus\{\text{cusp classes}\}$  is an intermediate space toward a loom space. 
\begin{restate}{Theorem}{Thm:weaving}
Let $\sV$ be a veering pair. The weaving $\fW(\sV)$ is homeomorphic to an open disk with transverse (singular) foliations induced from the circle laminations.
\end{restate}

Singular loci of the foliations, called \emph{singular  classes}, come from interleaving pairs of non-leaf ideal polygons. Later, these singular classes turn out to be potential fixed points of finite order elements of a veering pair preserving laminar group. On the other hand, if a laminar group contains 2-torsions, a pair of real leaves may happen to be a fixed point. To capture this information, we introduce a set $\Mrk$ of \emph{markings} on the stitches. The required properties of $\Mrk$ are
\begin{itemize}
\item $\Mrk$ is a closed and discrete subspace of $\fS(\sV)$.
\item Each element of $\Mrk$ consists of a pair of real leaves.
\item Each leaf of $\sV$ is a component of at most one element of $\Mrk$.
\end{itemize}
The corresponding equivalence classes in $\fW(\sV,\Mrk)$ are called \emph{marked classes}. The marked and singular classes are potential fixed points of finite order elements in a laminar group. 

Now we simply remove these singular and marked classes to obtain $\fW^\circ(\sV,\Mrk)$ called the \emph{regular weaving}. This space admits a pair of transverse foliations but not homeomorphic to $\RR^2$. Hence, we take the universal cover $\widetilde{\fW^\circ}(\sV,\Mrk)$ to obtain our potential candidates for a loom space. Then we have the following:
\begin{restate}{Theorem}{Thm:loomspaceconstruction}
Let $\sV$ be a veering pair and let $\Mrk$ be any marking on the stitch $\fS(\sV)$. Then $\widetilde{\fW^\circ}(\sV,\Mrk)$ is a loom space. 
\end{restate}

\subsubsection{Veering Pair Preserving Actions on the Circle}

The second half is devoted to studying groups preserving veering pairs. The main question is to understand the structure of these groups and, at the end, we will show that under a ``cocompact'' assumption they are the fundamental groups of hyperbolic 3-orbifolds. 

For a veering pair $\sV=\{\sL_1,\sL_2\}$, we denote by $\Aut(\sV)$ the group of elements in $\Homeop(S^1)$ that preserve $\sL_1$ and $\sL_2$ individually. 

We first need to understand dynamics of individual elements of $\Aut(\sV)$. Recall that a nontrivial element of $\operatorname{PSL}_2(\RR)$ is elliptic, parabolic or hyperbolic depending on its dynamics on $S^1$. Quite similar classification holds for elements of $\Aut(\sV)$. We rephrase our result as follow.
\begin{restate}{Theorem}{Thm:classOfLaminarAut}
Let $\sV=\{\sL_1, \sL_2\}$ be a veering pair. Let $g$ be a non-trivial element in $\Aut(\sV)$. Then, $g$ falls into one of the following cases:
\begin{enumerate}
    \item (Elliptic) $g$ has finite order and there is an interleaving pair of ideal polygons\footnote{We always think of  geodesics as ideal bi-gons in this paper.} of $\sV$ preserved by $g.$
    \item (Parabolic) $g$ has a unique fixed point and there is an interleaving pair of crowns of $\sV$ preserved by $g.$
    \item (Hyperbolic) $g$ has exactly two fixed points, one is  attracting and the other is repelling. 
    \item (Pseudo-Anosov like) $g$ preserves  an interleaving pair $(\sG_1, \sG_2)$ of ideal polygons.
    \item (Properly pseudo-Anosov) $g$ preserves an interleaving pair $(\sG_1, \sG_2)$ of gaps such that the vertices of one of $\sG_i$ are the attracting fixed points of $g$ and the vertices of the other polygon are the repelling fixed points of $g$.
\end{enumerate}
\end{restate}
In fact, we will prove a slightly general result. See \refthm{classification}. 

We then classify ``elementary''  veering pair preserving groups. It is known that elementary Kleinian groups are either a rank 2 abelian group or a virtually cyclic group. Similarly, we have the following classification: 

\begin{restate}{Theorem}{Thm:elementary}
Let $\sV=\{\sL_1, \sL_2\}$ be a veering pair and $G$ a subgroup of $\Aut(\sV)$. If $G$ has an infinite cyclic normal subgroup, $G$ is isomorphic to $\ZZ,$ the infinite dihedral group, $\ZZ\times \ZZ_n$ for some $n\in \NN$ with $n>1,$ or $\ZZ\times \ZZ.$
Furthermore, one of the following cases holds.
\begin{enumerate}
    \item When $G\cong \ZZ,$
    \begin{enumerate}
        \item $G$ is generated by a parabolic automorphism, 
        \item $G$ is generated by a hyperbolic automorphism, or
        \item $G$ is generated by a pA-like automorphism.
    \end{enumerate} 
    \item When $G$ is isomorphic to the infinite dihedral group, $G$ is generated by an hyperbolic automorphism $g$ and an elliptic automorphism $e$ of order two, and so $G=\langle  g, e \ | \ e^2=1,\ ege=g^{-1} \rangle.$  
    \item When $G\cong \ZZ \times \ZZ_n$ for some $n\in\NN$ with $n>1,$ $G$ preserves a unique interleaving pair of  ideal polygons of $\sC$ and  there is a pA-like automorphism $g$ and an elliptic automorphism $e$ of order $n$ such that $G=\langle g \rangle \times \langle e \rangle.$
    \item When $G\cong \ZZ \times \ZZ,$ $G$ preserves a unique asterisk of crowns of $\sC$ and there is a properly pseudo-Anosov $g$ and a parabolic automorphism $h$ such that $G=\langle g \rangle \times \langle h \rangle.$ 
\end{enumerate}
\end{restate}

These two results are primitive evidences that the group $\Aut(\sV)$ resembles a Kleinian group.

Recall that we defined a marking on the stitch space of a veering pair. A prior, this marking on the stitches is just a random subset of $\fS(\sV)$ satisfying the desired properties. However, there is a canonical choice of a marking that records the internal order two symmetry of the veering pair. 
\begin{restate}{Corollary}{Cor:canonicalmarking}
Given a veering pair $\sV=\{\sL_1,\sL_2\}$ and a subgroup $G\le \Aut(\sV)$,
\[
\Mrk(G):=\{s\in \fS(\sV)\,|\,g(s)=s\text{ for some order 2 elliptic element }g\in G\}
\]
is a marking. 
\end{restate}

Now we are ready to prove the main theorem. We first state the theorem and give a sketch of its proof. 
\begin{restate}{Theorem}{Thm:3orbifoldgroup}
Let $\sV$ be a veering pair. Let $G$ be a subgroup of $\Aut(\sV)$. Then $G$ is the fundamental group of an irreducible 3-orbifold. 
\end{restate}
\begin{proof}[Proof Sketch]
In \refthm{loomspaceconstruction}, we constructed a loom space $\widetilde{\fW^\circ}(\sV, \Mrk(G))$. By work of \cite{SchleimerSegerman21}, we have a veering triangulated $\RR^3$ where the deck group $D$ for the universal cover $\widetilde{\fW^\circ}(\sV, \Mrk(G)) \to \fW^\circ (\sV,\Mrk(G))$  acts as taut isomorphisms. Then we can show that $\Aut(\sV)$ acts properly discontinuously on $\RR^3/D$. Topologically,  $\RR^3/D$ is a 3-manifold with many cylindrical boundary components. We fill in these cylinder boundary components with solid cylinders to obtain a simply-connected 3-manifold $X$ and extend the $G$-action to $X$. Hence $X/G$ is the desired 3-orbifold with $G=\pi_1(X/G)$. To show that $X/G$ is irreducible, we compute the homology group of $X$ and prove that $H_2(X)=\pi_2(X)=0$. 
\end{proof}

This result is a variation of \cite{Baik15} answering to the question what happens if a laminar group preserves two circle laminations. 

Finally, suppose that $X/G$ is compact. This condition can be translated as follow
\begin{itemize}
    \item The stabilizer of each  interleaving non-leaf polygons and each marked class  is of the form $\ZZ\times \ZZ_n$ or $\ZZ$ (see \refthm{gapStab}).
    \item The stabilizer of each interleaving crowns is of the form $\ZZ\times \ZZ.$
    \item There are only finitely many orbit classes of gaps and marked classes.
\end{itemize}
See \reflem{cofinite} and \refprop{converseCofinite}. We call the $G$ action is \emph{cofinite} if the above three conditions are satisfied. We can also show that $X$ is homotopically atoroidal. If $X/G$ is a manifold, then Perelman-Thurston hyperbolization theorem applies and $X/G$ is hyperbolic. If $X/G$ has a non-empty singular locus, $X/G$ a geometric orbifold by the (orbifold) geometrization theorem. Since $X/G$ is homotopically atoroidal, $X/G$ supports a hyperbolic geometry. Eventually, we get the following result:
\begin{restate}{Corollary}{Cor:geometric}
Let $\sV$ be a veering pair. Let $G$ be a subgroup of $\Aut(\sV)$. Suppose that the $G$-action is cofinite. Then $G$ is the fundamental group of a hyperbolic 3-orbifold. 
\end{restate}

We believe that \refcor{geometric} holds without the cofinite assumption. Because the geometrization theorem does not apply to noncompact 3-orbifolds we need to find a more intrinsic approach.

We know the fundamental group of a closed hyperbolic 3-manifold with a taut foliation acts faithfully on $S^1$ and preserves a pair of circle laminations \cite{CalegariDunfield03,Thurston97,Calegari06}. We record a corollary which is a partial converse of this fact. 

\begin{restate}{Corollary}{Cor:tautFoli}
Let $\sV$ be a veering pair without polygonal gaps and $G$ be a torsion free subgroup of $\Aut(\sV)$. If the $G$-action is cofinite, then $G$ is the fundamental group of a tautly foliated hyperbolic $3$-manifold.  
\end{restate}

\subsection{Organization}
In Part~\ref{veeringpart}, we discuss basic definitions related to lamination systems. Most importantly, the main object of the paper, veering pair, is defined in \refsec{veeringsec}.

Part~\ref{loomspacepart} is about the construction of loom spaces. The first step, \refsec{weavingsec}, is to define stitch spaces, weavings and their relatives. To construct transverse foliations, the notion of threads is introduced in \refsec{threadsec}. In \refsec{framesec} we present how to obtain rectangles from a veering pair. Then, in \refsec{foliationsec}, we prove that the regular weaving is an open disk with singular transverse foliations (\refthm{weaving}). The notion of marking is defined in \refsec{markingsec}. Finally, in \refsec{loomsec} we show that the universal cover of the regular weaving is a loom space (\refthm{loomspaceconstruction}). 

Part~\ref{grouppart} deals with groups acting on the circle with an invariant veering pair. After a quick review (\refsec{laminarpropertysec}) on properties of these groups, we prove various classification results in \refsec{classificationsec}, including classification of single elements (\refthm{classOfLaminarAut}), elementary groups (\refthm{elementary}), and gap stabilizers (\refthm{gapStab}). In the same section, we also prove that the order two elliptic elements give rise to the canonical marking (\refcor{canonicalmarking}). In \refsec{frameactionsec}, we explain that an action on the circle naturally gives an action on frames and rectangles. Then we prove in \refsec{triangulationactionsec} that a veering pair preserving action on the circle induces a properly discontinuous action on an irreducible simply-connected 3-manifold proving \refthm{3orbifoldgroup}. Finally, in \refsec{kleiniansec}, we explain the cofinite property and show \refcor{geometric}.

\subsection{Acknowledgement} The first author was partially supported by Samsung Science and Technology Foundation under Project Number SSTF-BA1702-01. The second author was supported by the Institute for Basic Science (IBS-R003-D1). The third author was supported by the National Research  Foundation of Korea Grant funded by the Korean Government (NRF-2022R1C1C2009782). We would like to thank Juan Alonso, Wonyong Jang, Sang-hyun Kim, Saul Schleimer, Samuel Taylor and Maxime Wolff for helpful conversations.

\part{Lamination Systems}\label{veeringpart}

The first part is a preliminary in nature. We recall relevant notions including circular ordering, lamination systems, and gaps. We also define veering pair in \refsec{veeringsec}. 

We introduce lamination systems which replace the role of circle laminations. The fundamental objects of a lamination system are good intervals in $S^1$, rather than leaves. This change of a viewpoint makes it easy to study circle actions. 

As we promote circle laminations to lamination systems, some terms have changed in meaning. For instance, leaves are regarded as gaps in a lamination system, and what is traditionally known as a gap is called a non-leaf gap. 

\section{Circular Orders and Intervals in $S^1$}
Let $S^1$ be the unit circle in the complex plane $\CC.$ If not mentioned otherwise, $S^1$ is the unit circle. 

\subsection{Circular Orders}
Let $X$ be a set. For each $n\in \NN$ with $n\geq 2,$ we write  $\Delta_n(X)$ for the set 
$$\{(x_1, \cdots, x_n)\in X^n : x_i=x_j \ \text{for some } i\neq j \}.$$ Then, we say that a function $\cphi$ from $X^3$ to $\{-1,0,1\}$ is a \emph{circular order} if the function satisfies the following:
\begin{itemize}
\item For any element $x^\circ \in \Delta_3(X)$, $\cphi(x^\circ)=0.$
\item For any element $(x_1, x_2, x_3,x_4)$ in $X^4,$ 
$$\cphi(x_2, x_3, x_4)-\cphi(x_1, x_3, x_4)+\cphi(x_1, x_2, x_4)-\cphi(x_1, x_2, x_3)=0.$$
\end{itemize}

Now, we define a circular order $\cphi$ for $S^1.$ Note that  the Cayley transformation 
$$\psi(z)=i\frac{1+z}{1-z}$$ 
maps $S^1$ to $\hat{\RR}=\RR \cup \{\infty \}.$ Fix $n$ in $\NN$ with $n\geq 3.$  We say that a $n$-tuple $(x_1, \cdots, x_n)$ in $(S^1)^n$ is \emph{counter-clockwise} if $\psi(x_1^{-1}x_i)<\psi(x_1^{-1}x_{i+1})$ for all $i$ in $\{2, \cdots, n-1\}.$ Similarly, 
 $(x_1, \cdots, x_n)$ is said to be \emph{clockwise} if $\psi(x_1^{-1}x_i)<\psi(x_1^{-1}x_{i+1})$ for all $i$ in $\{2, \cdots, n-1\}.$ We define $\cphi$ by 
 
\begin{equation*} \cphi(x)=
\begin{cases}
-1 & \text{if } \  x \ \text{is clockwise,}\\
 0 & \text{if } \  x \in\Delta_3(S^1) ,\\ 
 1 & \text{if } \ x \ \text{is counter-clockwise.}
\end{cases} \end{equation*}
We can see easily that $\cphi$ is a circular order for $S^1.$ In this paper, the circular order of $S^1$ is $\cphi.$

\subsection{Good Open Intervals}
We refer to non-empty proper connected open subsets of $S^1$ as \emph{open intervals} in $S^1.$ Let $I$ be an open interval in $S^1.$ If $S^1-I=\{u\}$ for some $u$ in $S^1,$ then we denote $I$ by $\opi{u}{u}.$ In this case, $I$ is said to be \emph{bad.} If not, there are two distinct points $u$ and $v$ such that 
$$I=\{z\in S^1 : \cphi(u, z, v)=1\},$$ 
so we denote $I$ by $\opi{u}{v},$ and we say that $I$ is \emph{good.} 

Let $\opi{a}{b}$ be good. Similarly we define $\ropi{a}{b},$ $\lopi{a}{b},$ and $\cldi{a}{b}.$ For convenience, we write $\opi{a}{b}^*$ for $\opi{b}{a}.$

\section{Lamination Systems}

\subsection{Laminations on $\HH^2$}
Let $\fM$ be the set of all geodesics in the hyperbolic plane  $\HH^2.$ The set  $\fM$ has the topology induced by Hausdorff distance.  A \emph{geodesic lamination} is a closed subset of $\fM$ whose elements are pairwise unlinked. Each element of a geodesic lamination is called a \emph{leaf} of the geodesic lamination.

Now, we consider $\HH^2$ as the Poincar\'e disk and the boundary $\partial \HH^2$ of $\HH^2$ is the unit circle $S^1$. As every geodesic connects two boundary points and vice versa, $\fM$  is parameterized by the set $\cM$ of two points subset of $S^1,$ namely,
$$\cM:=(S^1\times S^1)-\Delta_2(S^1)/(x,y)\sim(y,x).$$ 
We denote the parametrization from $\cM$ to $\fM$ by $g$ so that $g(\{x,y\})$ is the geodesic having $x$ and $y$ as endpoints. Also, we can think of a geodesic lamination on $\HH^2$ as a subset of $\cM.$

For any distinct elements $\ell_1$ and $\ell_2$ in $\cM,$ we say that $\ell_1$ and $\ell_2$ are \emph{linked} if each component of $S^1-\ell_1$ contains a point of $\ell_2,$ and they are \emph{unlinked} otherwise. A \emph{circle lamination} is a closed subset of $\cM$ whose elements are pairwise unlinked. Like above, given a circle laminations, there is a corresponding geodesic lamination. Hence, each element of a circle lamination is called a \emph{leaf} 
of the circle lamination. 

Now, we consider the set
$$\sM:=\{ \ell(I): I \text{ is a good interval} \ \}$$ where $\ell(I)=\{I, I^*\}.$ We define the \emph{endpoint map} $\epsilon$ from $\sM$ to $\cM$ as 
\[
\epsilon(\ell(\opi{x}{y}))=\{x,y\}.
\]
Via the endpoint map, $\cM$ is parameterized by $\sM.$ Then, we give $\sM$  a topology so that the endpoint map is a  homeomorphism. 

We introduce the following definition to say the unlinkedness in $\sM$ in terms of good intervals. Given an element $\ell(I)$ in $\sM$, we say that $\ell(I)$ \emph{lies} on an open interval $J$ if $I\subseteq J$ or $I^*\subseteq J$.

\subsection{Lamination Systems}
In many circumstances, intervals in $S^1$ separated by a leaf of a circle lamination play key roles to understand the dynamics of laminar groups. This suggests that intervals are more fundamental objects than leaves in the study of circle laminations and motivates us to define lamination systems in terms of good open intervals. This lamination systems are equivalent to circle laminations in spirit, but allow us to prove many results more rigorously.

\begin{defn}
A \emph{lamination system} is a nonempty collection $\sL$ of good intervals that satisfies the following properties
\begin{enumerate}
    \item If $I\in \sL$, then $I^*\in \sL$.
    \item (unlinkedness) Given two elements $I,J \in \sL$,  $\ell(I)$ lies on $J$ or $J^*,$ that is, $I\subset J,$ $I^*\subset J,$ $I\subset J^*,$ or $I^*\subset J^*.$ 
    \item (closedness) Given an ascending family $\{I_k\}_{k\in \NN}$ of elements of $\sL$, we have $\bigcup_{k\in \NN} I_k\in \sL$ provided $\bigcup_{k\in \NN} I_k$ is a good interval.
\end{enumerate}
\end{defn}

Note that lamination systems are examples of pocsets. The unlinkedness condition in the definition of lamination systems is sometimes called the \emph{nestedness}. A discrete pocset of good intervals in $S^1$ with all pairs of distinct elements being nested is an example of lamination system.

\subsection{Leaf Spaces}
Let $\sL$ be a lamination system. The subspace $\ell(\sL)$ of $\sM$ is called the \emph{leaf space} of $\sL$ and each element of $\ell(\sL)$ is called a \emph{leaf} of $\sL.$ Then, $\epsilon(\ell(\sL))$ is a circle lamination and we denote this circle lamination by $\cC(\sL).$ 
The unlinkedness of $\cC(\sL)$ is obvious.  To see the closedness, we introduce the notion for the convergence of a sequence of leaves in terms of good intervals.

A sequence $\{\ell_k\}_{k\in \NN}$ of leaves of $\sL$ \emph{converges} to a good interval $J$ if there is a sequence $\{I_k\}_{k\in \NN}$ of elements of $\sL$ such that for each $k\in \NN,$ $\ell_k=\ell(I_k)$ and 
\[
J\subseteq \liminf I_k \subseteq \limsup I_k \subseteq \closure{J}.
\] 
We write $\ell_k\to J$ when $\ell_k$ converges to $J$. Then,  $J\in \sL$. Moreover, $\ell_k \to J^*$. Therefore, we can say that a sequence $\{\ell_k\}_{k\in \NN}$ of leaves of $\sL$ \emph{converges} to an element $\ell$ of $\sM$ if $\ell_k\to I$ for some $I\in \ell.$ This notion of convergence is equivalent to the convergence under the topology of the leaf space. This implies that the closedness of $\cC(\sL).$ For more details, we refer to \cite[Section~10.7]{BaikKim20}.

Given a good interval $I$, a sequence $\{\ell_k\}_{k\in \NN}$ of leaves in $\sL$ is said to be \emph{$I$-side} if it satisfies the following:
\begin{itemize}
    \item $\ell_k\to I$
    \item$\ell_k\neq \ell(I)$ and $\ell_k$ lies on $I$ for all $k\in \NN.$
\end{itemize}
When $I$ in $\sL$ has no $I$-side sequence, $I$ is said to be \emph{isolated.} A given leaf  is \emph{isolated} if every element of the leaf is isolated.

\subsection{Gaps}
Given a geodesic lamination on a closed hyperbolic surface, its complementary regions are subsurfaces with geodesic boundary. We can bring this notion into the context of lamination system, leading us to the following definition:
\begin{defn}
Let $\sL$ be a lamination system. A \emph{gap} $\sG$ of $\sL$ is a subset of $\sL$ such that
\begin{enumerate}
      \item any two distinct elements of $\sG$ are disjoint, and
    \item each leaf of $\sL$ lies on some element of $\sG$.
\end{enumerate}
\end{defn}
The closed set $S^1\setminus \bigcup \sG$ is called the \emph{vertex set} of $\sG$ and  is denoted by $v(\sG).$ Each element of $v(\sG)$ is called  a \emph{vertex} of $\sG.$ A vertex of $\sG$ is called a \emph{tip} of $\sG$ if it is isolated in $v(\sG).$ For each $I\in \sG,$ the leaf $\ell(I)$ of $\sL$ is called a \emph{boundary leaf} of $\sG.$ 

Note that a leaf itself is a gap in our definition. Hence, we say that a gap of $\sL$ is \emph{non-leaf} if the gap is not a leaf of $\sL.$  We denote the set $\bigcup_{I\in\sL} v(\ell(I))$ by $\E{\sL}$ and call $\E{\sL}$ the \emph{endpoint set} of $\sL.$ Also, each element of the endpoint set is called an \emph{endpoint} of $\sL.$

We say that a gap $\sG$ in a lamination system \emph{lies on} a good interval $I$ if $J^*\subset I$ for some $J\in \sG.$ Also, $\sG$ \emph{properly lies} on a good interval $I$ if $\closure{J^*}=J^c\subset I$ for some $J\in \sG.$ Note that the unlinkedness of leaves implies the following proposition. 
For a detailed proof, we refer to~\cite[Lemma~10.7.13]{BaikKim20}
\begin{prop}\label{Prop:twoGapUnlinked}
    Let $\sL$ be a lamination system. Suppose that $\sG$ and $\sH$ are gap of $\sL$ with $|\sG|$, $|\sH|\geq 2.$ Then $\sG=\sH$ or $\sG$ lies on some open interval in $\sH.$ 
\end{prop}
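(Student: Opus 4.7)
The plan is to argue by contradiction. Assume $\sG \ne \sH$ and that $\sG$ lies on no open interval of $\sH$, meaning $J^* \not\subset K$ for every pair $(J, K) \in \sG \times \sH$; I will derive $\sG = \sH$.

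The key preparatory observation is that, for any two good intervals $J, K \subset S^1$, the relation $J^* \subset K$ is equivalent to $K^* \subset J$, since both correspond to the same cyclic ordering of the four endpoints of $J$ and $K$ (the configuration where $J$ and $K$ together cover the whole circle up to boundary points). Consequently, the standing assumption is symmetric in $\sG$ and $\sH$: we also have $K^* \not\subset J$ for every such pair.

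I then apply the defining covering property of gaps twice. For each $J \in \sG$, the leaf $\ell(J)$ lies on some element of the gap $\sH$, so $J \subset K$ or $J^* \subset K$ for some $K \in \sH$; the second alternative is ruled out by the standing assumption, so every $J \in \sG$ is contained in some $K \in \sH$. Symmetrically, every $K \in \sH$ is contained in some $J \in \sG$. Now iterate: pick $J_0 \in \sG$, then $K_1 \in \sH$ with $J_0 \subset K_1$, and then $J_1 \in \sG$ with $K_1 \subset J_1$. The sandwich $J_0 \subset J_1$ with $J_0, J_1 \in \sG$ forces $J_0 = J_1$, since distinct elements of a gap are disjoint and a good interval is nonempty; hence $J_0 = K_1 \in \sH$. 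Thus $\sG \subset \sH$, and by symmetry $\sH \subset \sG$, giving $\sG = \sH$, the desired contradiction.

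The only subtle step I foresee is verifying $J^* \subset K \iff K^* \subset J$ for good intervals, which amounts to a short case analysis on the cyclic order of the four endpoints, including the boundary cases when some of these endpoints coincide. Once that equivalence is in hand, the rest of the proof is a short formal manipulation of the gap axioms.
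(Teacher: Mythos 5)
Your proof is correct. The paper does not give its own argument for this proposition---it refers the reader to \cite[Lemma~10.7.13]{BaikKim20}---so there is no in-paper proof to compare against, but your argument is a clean, self-contained derivation straight from the gap axioms. The one preparatory fact, $J^*\subseteq K \iff K^*\subseteq J$ for good intervals, is most efficiently checked by passing to complements: $J^*\subseteq K$ is equivalent to $K^c\subseteq (J^*)^c$, i.e.\ $\closure{K^*}\subseteq\closure{J}$, and since a good interval equals the interior of its closure this is in turn equivalent to $K^*\subseteq J$; this handles the boundary coincidences in one stroke rather than by case analysis. One small observation: your argument never invokes the hypothesis $|\sG|,|\sH|\geq 2$, so you have in fact proved the statement without it; that hypothesis is presumably there because the authors only ever apply the proposition to non-degenerate gaps. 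Finally, you occasionally write $\subset$ where you need $\subseteq$ (and the covering axiom for leaves is stated with $\subseteq$), which is worth tidying since the sandwich step $J_0\subseteq K_1\subseteq J_1$ with $J_0,J_1\in\sG$ and the disjointness axiom together force $J_0=K_1=J_1$, and that chain genuinely uses non-strict containment.
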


The following proposition gives a criterion for the existence of a non-leaf gap.
\begin{prop}[\cite{BaikKim20}]\label{Prop:gapExist}
Let $\sL$ be a lamination system and $I\in \sL.$ If $I$ is isolated, then there is a non-leaf gap $\sG$ containing $I^*.$
\end{prop}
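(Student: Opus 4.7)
The plan is to construct the desired gap as $\sG := \{I^*\} \cup \sG_0$, where
\[
\sG_0 := \{J \in \sL : J \subsetneq I \text{ and } J \text{ is maximal among elements of } \sL \text{ properly contained in } I\}.
\]
This candidate is visibly not a leaf: any leaf has the form $\{M, M^*\}$ for some good $M$, and since $I^* \in \sG$ this would force $\sG = \{I, I^*\}$; but the members of $\sG_0$ are strict subsets of $I$ and $I^* \ne I$, so $I \notin \sG$.

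The technical heart of the argument, and the step I expect to be the main obstacle, is to show that every $K \in \sL$ with $K \subsetneq I$ sits inside some element of $\sG_0$. I would apply Zorn's lemma to the poset $\cC_K := \{J \in \sL : K \subseteq J \subsetneq I\}$, which is nonempty as it contains $K$. For an ascending chain in $\cC_K$ without a maximum, second countability of $S^1$ lets me extract a countable strictly-increasing cofinal subsequence $\{J_k\}_{k \in \NN}$ and set $U := \bigcup_k J_k$. If $U \subsetneq I$ then $U$ is a good interval and the closedness axiom places it in $\sL$, giving an upper bound inside $\cC_K$. The delicate case $U = I$ is exactly where the hypothesis enters: here $\{\ell(J_k)\}_{k \in \NN}$ would be an $I$-side sequence, since each $\ell(J_k) \ne \ell(I)$ lies on $I$ and $\bigcup_k J_k = I$ forces $\ell(J_k) \to I$, contradicting the isolatedness of $I$. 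Zorn's lemma thus produces a maximal element of $\cC_K$, which by construction lies in $\sG_0$.

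It remains to verify that $\sG$ is a gap. For pairwise disjointness, $I^*$ is disjoint from every $J \in \sG_0$ since $J \subsetneq I$; for distinct $J_1, J_2 \in \sG_0$, maximality rules out either being contained in the other, and the remaining alternative $J_1^* \subseteq J_2$ offered by unlinkedness is impossible, since it would force $J_1 \cup J_1^* \subseteq I$ while $I$ is a proper subset of $S^1$. For the covering property, given a leaf $\ell(K)$, unlinkedness between $K$ and $I$ places $K$ or $K^*$ inside either $I$ or $I^*$. In the $I^*$ cases, $\ell(K)$ already lies on $I^* \in \sG$. Otherwise we may assume $K \subseteq I$: if $K = I$ then $\ell(K) = \ell(I)$ lies on $I^* \in \sG$, and if $K \subsetneq I$ then the Zorn step provides $J \in \sG_0$ with $K \subseteq J$, so $\ell(K)$ lies on $J \in \sG$.
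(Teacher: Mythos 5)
Your construction is correct, and since the paper only quotes this proposition from [BaikKim20] without reproducing a proof, there is nothing internal to compare it against; the route you take — letting $\sG$ consist of $I^*$ together with the maximal elements of $\sL$ properly contained in $I$, using Zorn's lemma with the closedness axiom for chains whose union stays inside $I$ and the isolatedness of $I$ to kill chains that exhaust $I$ — is the natural one and every step checks out. Two small points of hygiene. First, in the disjointness step the set $J_1\cup J_1^*$ is $S^1$ minus the two endpoints of $J_1$, so the contradiction with $J_1\cup J_1^*\subseteq I$ really uses that $S^1\setminus I=\closure{I^*}$ contains the nonempty open interval $I^*$ (i.e.\ that $I$ is a \emph{good} interval), not merely that $I$ is a proper subset of $S^1$ as you wrote; properness alone would not exclude the complement of $I$ being a single endpoint of $J_1$. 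Second, your argument tacitly allows $\sG_0=\emptyset$ (when no element of $\sL$ is properly contained in $I$); in that case $\sG=\{I^*\}$, which one should note is still a gap — the covering condition holds because unlinkedness with $I$ forces every leaf other than $\ell(I)$ to lie on $I^*$, and $\ell(I)$ itself lies on $I^*$ — and is certainly not a leaf. Neither point affects the validity of the proof.
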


Let $\sL$ be a lamination system. A gap $\sG$ is said to be \emph{real} if there is no isolated element in $\sG.$ Then, any real leaf is not a boundary leaf of a non-leaf gap.
We say that a pair $\{I,J\}$ of elements of $\sL$ is a \emph{distinct pair} in $\sL$ if  $I\subsetneq J^*$. Note that a distinct pair is not a leaf. A distinct pair $\{I, J\}$ is said to be \emph{separated} if there is a non-leaf gap $\sG$ of $\sL$ such that $I\subset M$ and $J\subset N$ for some $\{M,N\}\subset \sG.$ Here, $M$ and $N$ are not necessarily distinct. Nonetheless, whenever a distinct pair $\{I,J\}$ is separated, we can take a non-leaf gap $\sG$ so that $\sG$ contains a distinct pair $\{M,N\}$ with $I\subseteq M$ and $J\subseteq N$. Then, $\sL$ is said to be \emph{totally disconnected} if every distinct pair is separated. Equivalently, for any two distinct leaves $\ell_1$ and $\ell_2$ of $\sL,$ there is a non-leaf gap $\sG$ containing a distinct pair $\{J_1, J_2\}$ such that  $\ell_i$ lies on $J_i$ for all $i\in \{1,2\}$. Note that totally disconnectedness guarantees the existence of a non-leaf gap. 
 
There are two important types of gaps. An \emph{ideal polygon} (or simply a polygon) is 
a gap with finitely many vertices. When $\sG$ is a polygon, we can write $\sG=\{\opi{t_k}{t_{k+1}}\,:\, k=1,2,\cdots,n \}$ where $n=|v(\sG)|$ and $v(\sG)=\{t_k\,:\,k=1,2,\cdots,n\}$ (cyclically indexed). Note that vertices and tips of a polygon are identical.

The other type is a \emph{crown} which is defined as follows. A gap $\sG$ is called a \emph{crown} if there is a sequence $\{t_k\}_{k\in \ZZ}$ and a point $p$ in $S^1$ satisfying the following:
\begin{itemize}
    \item $p\neq t_k$ for all $k\in \ZZ,$
    \item $\sG=\{\opi{t_k}{t_{k+1}}:k\in \ZZ\},$
    \item $v(\sG)=\{t_k : k\in \ZZ\} \cup \{p\}.$
\end{itemize}
In this situation, the point $p$ is called the \emph{pivot} of $\sG$. See \reffig{crownexample}. 
\begin{rmk}\label{Rmk:aboutCrown}
Note that $t_k$ are tips of $\sG$ and that $p$ is the unique limit point of $v(\sG).$
\end{rmk}

\begin{figure}[htb]
    \centering
    \begin{tikzpicture}[scale=1.5]
    \draw[thick] (0,0) circle (2 cm);
\draw[thick, purple] (	1.732050808	,	1	) arc (	300	:	160	:	0.7279404685	);
\draw[thick, purple] (	0.6840402867	,	1.879385242	) arc (	340	:	200	:	0.7279404685	);
\draw[thick, purple] (	-0.6840402867	,	1.879385242	) arc (	20	:	-120	:	0.7279404685	);
\draw[thick, purple] (	-1.732050808	,	1	) arc (	60	:	-80	:	0.7279404685	);
\draw[thick, purple] (	-1.969615506	,	-0.3472963553	) arc (	100	:	-50	:	0.5358983849	);
\draw[thick, purple] (	-1.532088886	,	-1.285575219	) arc (	130	:	-25	:	0.4433893253	);
\draw[thick, purple] (	-0.8452365235	,	-1.812615574	) arc (	155	:	-10	:	0.2633049952	);
\draw[thick, purple] (	-0.3472963553	,	-1.969615506	) arc (	170	:	-5	:	0.08732188582	);
\draw[thick, purple] (	-0.1743114855	,	-1.992389396	) arc (	175	:	-2	:	0.05237184314	);
\draw[thick, purple] (	0.06979899341	,	-1.998781654	) arc (	182	:	5	:	0.05237184314	) ;
\draw[thick, purple] (	0.1743114855	,	-1.992389396	) arc (	185	:	10	:	0.08732188582	) ;
\draw[thick, purple] (	0.3472963553	,	-1.969615506	) arc (	190	:	25	:	0.2633049952	);
\draw[thick, purple] (	0.8452365235	,	-1.812615574	) arc (	205	:	50	:	0.4433893253	);
\draw[thick, purple] (	1.532088886	,	-1.285575219	) arc (	230	:	80	:	0.5358983849	);
\draw[thick, purple] (	1.732050808	,	1	) arc (	120	:	260	:	0.7279404685	);
\draw (0,-2) node{$\bullet$} node[below]{$p$};
\draw ({2*cos(30)},{ 2*sin(30)}) node[right]{$t_i$};
\draw ({2*cos(70)},{ 2*sin(70)}) node[right, above]{$t_{i+1}$};
\draw ({2*cos(350)},{ 2*sin(350)}) node[right]{$t_{i-1}$};
\end{tikzpicture}
    \caption{A crown. The pivot is marked with the dot.}
    \label{Fig:crownexample}
\end{figure}
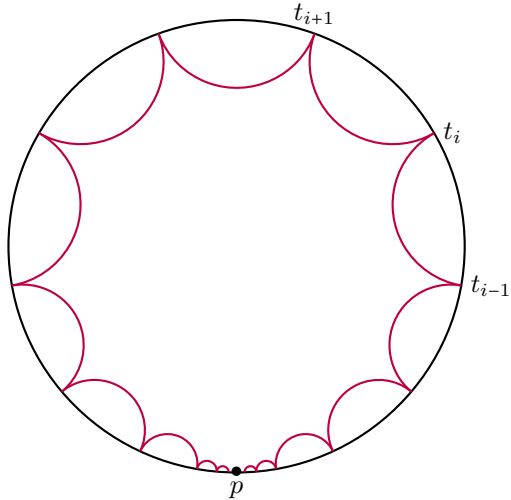

\subsection{Stems and Rainbows}\label{Sec:stem}

Let $\sL$ be a lamination system and $B$ a good interval. Also, let $E$ be a non-empty subset of $B.$ Then, the \emph{stem} from $B$ to $E$ in $\sL$ is a set 
\[
\stem{E}{B}:=\{J\in \sL: E \subseteq J \subseteq B \}.
\] which is totally ordered by the set inclusion $\subseteq.$

Suppose that a stem $\stem{E}{B}$ is not empty. Then, $\bigcup \stem{E}{B}$ is the maximal element of $\stem{E}{B}.$ We call the maximal element the \emph{base} of $\stem{E}{B}.$ We call $\interior{\bigcap \stem{E}{B}}$ the \emph{end} of the stem $\stem{E}{B}$ if $\interior{\bigcap \stem{E}{B}}\neq \emptyset.$ 

Observe that when $E$ has at least two points, $\widehat{\stem{E}{B}}:=\interior{\bigcap \stem{E}{B}}$ is never empty. To see this, we denote the smallest closed interval containing $E$ and contained in $B$ by $m.$ Then, $\widehat{\stem{E}{B}}$ is a good interval containing the interior $\interior{m}$ of $m$ and it is contained in $\sL.$ Hence, if $\widehat{\stem{E}{B}}= \emptyset$, then $E$ necessarily has only one point. 

A descending sequence $\{I_k\}_{k\in\NN}$ of elements of $\sL$ is called a \emph{rainbow} at $p$ if $\bigcap_{k\in \NN} I_k=\{p\}.$ We call $p$ a \emph{rainbow point} of $\sL$.  For convenience, when $E=\{p\}$ for some $p\in S^1,$ then we write $\stem{p}{B}$ for $\stem{E}{B}.$ A point $p \in S^1$ is a rainbow point of $\sL$ precisely when $\widehat{\stem{p}{B}}$ is empty. To see this, observe that $\widehat{\stem{p}{B}}$ is not empty, then  $\{p\} \subset v(\ell(\widehat{\stem{p}{B}} ))$ or $\widehat{\stem{p}{B}}$ is the minimal element of $\stem{p}{B}$. 
On the other hand, if  $\widehat{\stem{p}{B}} = \emptyset$, then one can take a rainbow $\{I_k\}_{k\in\NN}$ at $p$ in $\stem{p}{B}$, namely, $I_k\in \stem{p}{B}$ for all $k\in \NN.$ 

\section{Quite Full Loose Lamination Systems}

Not all lamination systems are interesting. Most lamination systems naturally occurring in the context of geometry and dynamics have a certain degree of complexity. In this, and the forthcoming sections, we introduce quite full, loose lamination systems and ultimately veering pairs by packaging some properties of the stable/unstable pair of laminations associated to a pseudo-Anosov mapping class. Along this, we also define pseudo-fibered pairs which is a weaker version of veering pairs. In fact, pseudo-fiberedness is enough to prove the classification theorems for laminar automorphisms, e.g.\refthm{classification}, \refthm{elementary}, and  \refthm{gapStab}. Also, a pseudo-fibered pair is a right  generalization of a pair of laminations contained the definition of  pseudo-fibered triples defined in \cite{AlonsoBaikSamperton}. In \refsec{promotiontocol2}, it turns out that every pseudo-fibered pair is a pants-like pair studied in \cite{Baik15} and \cite{BaikKim21}.

\subsection{Quite full}\label{Sec:quiteFull}
 A lamination system $\sL$ is said to be \emph{quite full} if every gap of $\sL$ is either an ideal polygon or a crown. In particular, we say that $\sL$ is \emph{very full} if  every gap is an ideal polygon. 

The following lemma shows that the local shape of a quite full lamination system is regulated. 

\begin{lem}[Rainbows are quite abundant]\label{Lem:trichotomy}
Let $\sL$ be a quite full lamination system. Each point in $S^1$ is either an endpoint of $\sL$, the pivot of a unique crown, or a rainbow point. These cases are mutually exclusive.  
\end{lem}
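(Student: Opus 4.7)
My plan is to prove mutual exclusivity using unlinkedness together with the gap condition, and then exhaustiveness via a case analysis on the minimal elements of $\sS_p := \{J \in \sL : p \in J\}$, invoking the polygon-or-crown dichotomy from quite fullness.

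For mutual exclusivity: if $p$ is an endpoint of a leaf $\ell(I)$ with $I=\opi{p}{q}$, then any candidate rainbow $\{I_k\}$ shrinking to $\{p\}$ must eventually have $I_k$ too small to be unlinked with $\ell(I)$; checking the four possible nestings $I\subset I_k$, $I^*\subset I_k$, $I\subset I_k^*$, $I^*\subset I_k^*$ rules each out once $q\notin I_k$. If $p$ is the pivot of a crown $C$, the requirement that each leaf of $\sL$ lies on some $\opi{t_k}{t_{k+1}}\in C$ forces every element of $\sL$ containing $p$ to be a superset of some $\opi{t_{k+1}}{t_k}$, and these supersets do not shrink to $\{p\}$; a leaf with endpoint $p$ likewise cannot lie on any tooth of $C$, ruling out endpoint-plus-pivot.

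For exhaustiveness, assume $p\notin \E{\sL}$ so that $\sS_p\ne\emptyset$. The key observation is that $\sS_p$ cannot have a unique minimum $J_0$ in a quite full system: unlinkedness would force every leaf's non-$p$-side into $J_0^*$, so the singleton $\{J_0^*\}$ would be a gap with vertex set $\closure{J_0}$, an uncountable arc that is neither polygon nor crown. So either $\sS_p$ has no minimal element --- in which case a cofinal descending sequence yields a rainbow --- or $\sS_p$ has multiple minimal elements $\{J_\alpha\}$. In the latter case, unlinkedness makes the $\{J_\alpha^*\}$ pairwise disjoint, and every leaf lies on some $J_\alpha^*$ since its $p$-side dominates some minimal $J_\alpha$; hence $\sG := \{J_\alpha^*\}$ is a gap with $p\in v(\sG)$. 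Quite fullness forces $\sG$ to be a polygon or crown; a polygon would make $p$ an endpoint of a boundary leaf and hence an element of $\E{\sL}$, so $\sG$ is a crown and $p$, being a non-tip vertex, is its pivot. Uniqueness of the crown follows from unlinkedness applied to any hypothetical second crown at $p$, whose tips would interleave incompatibly with those of $\sG$.

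The main obstacle is verifying the no-minimum subcase: that $\bigcap \sS_p=\{p\}$ so a cofinal descending sequence truly witnesses a rainbow at $p$ rather than converging to a larger limit set. I plan to handle this by noting that any $q\in \bigcap \sS_p\setminus\{p\}$ would be inseparable from $p$ by any leaf, forcing a gap of $\sL$ whose vertex set contains both $p$ and $q$; the polygon-or-crown classification would then force $p$ to be an endpoint or the pivot of that gap --- contradicting our standing assumptions and collapsing this subcase back into the other two. Executing this argument carefully, particularly the construction of the gap witnessing $\{p,q\}$ in its vertex set and the appropriate use of the closedness axiom, is the most technically demanding step.
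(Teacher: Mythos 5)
Your mutual-exclusivity arguments are essentially fine (and more detailed than what the paper records), but the exhaustiveness argument has a genuine gap coming from the fact that $\sS_p=\{J\in\sL : p\in J\}$ is \emph{not} totally ordered by inclusion: two elements $J_1,J_2\in\sS_p$ can satisfy $J_1^*\subseteq J_2$, hence $J_1\cup J_2=S^1$, without being nested (this already happens for the three complementary sides of an ideal triangle when $p$ lies behind one of them). Because of this, your trichotomy ``no minimal element / unique minimum / several minimal elements'' is not exhaustive ($\sS_p$ could a priori have a single minimal element that is not a least element, or minimal elements together with descending chains lying above none of them), and the key step in your third case --- ``every leaf lies on some $J_\alpha^*$ since its $p$-side dominates some minimal $J_\alpha$'' --- presupposes that every element of $\sS_p$ contains a minimal element, which is exactly what fails in a poset without a descending chain condition. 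Note also that in the ``unique minimum'' case the conclusion that $\{J_0^*\}$ is a gap genuinely requires $J_0$ to be a \emph{least} element, not merely the unique minimal one: if $K\subsetneq J_0$ with $p\in K^*$, the leaf $\ell(K)$ need not lie on $J_0^*$.

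The second gap is the one you flagged yourself: extracting an honest rainbow in the no-minimal-element case. Producing a descending sequence with $\bigcap_k I_k=\{p\}$, rather than a larger arc, is not a routine cofinality argument over all of $\sS_p$, and your plan to handle a stray $q\in\bigcap\sS_p$ by ``forcing a gap whose vertex set contains $p$ and $q$'' is essentially a re-derivation of \refprop{gapExist}, which you would still need to carry out. The paper avoids both problems at once by fixing a single $J\in\sS_p$ and working inside the stem $\stem{p}{J}$, which \emph{is} totally ordered; the end $\widehat{\stem{p}{J}}=\interior{\bigcap \stem{p}{J}}$ from \refsec{stem} then cleanly separates the rainbow case ($\widehat{\stem{p}{J}}=\emptyset$) from the case of a minimal isolated interval, and \refprop{gapExist} supplies the gap having $p$ as a vertex, to which quite fullness is applied exactly as in your final step. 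I recommend restructuring your exhaustiveness argument around the stem rather than around all of $\sS_p$.
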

\begin{proof}
    Let $p$ be a point in $S^1$. As $\sL\ne \emptyset$, we may choose an element $I.$ If $p\in v(\ell(I))$, then $p \in \E{\sL}$. If not, there is an element $J$ in $\ell(I)$ containing $p.$ 
    
    Now, we consider the stem $\stem{p}{J}$. As $J\in \stem{p}{J}$, the stem is not empty. 
    If  $\widehat{\stem{p}{J}}=\emptyset$, then $p$ is a rainbow point. See \refsec{stem}. Now suppose that the end $E$ of the stem exists. There are two cases. One is the case where $p$ is an endpoint of $\ell(E).$ This implies the statement. In the other case, since $E$ is the minimal element in $\stem{p}{J},$ there is no $E$-side sequence of leaves and so $E$ is isolated. Therefore, by \refprop{gapExist}, there is a non-leaf gap $\sG$ of $\sL$ containing $E^*$. 
    
    If there is $K$ in $\sG$ containing $p,$ then 
    $$p\in K \subsetneq E\subseteq J$$ 
    and $K\in \stem{p}{J}$. This contradicts that $E$ is the end of $\stem{p}{J}$. Therefore, $p\in v(\sG)$. Then, since $\sL$ is quite full, $\sG$ is either an ideal polygon and a crown. Hence, $p$ is either the pivot of $\sG$ or a tip of $\sG$. By \refrmk{aboutCrown}, these cases are mutually exclusive. Thus, $p$ is the pivot of $\sG$ or $p\in \E{\sL}$. 
    \end{proof}
\begin{cor}\label{Cor:denseEnd}
        The endpoint set of a quite full lamination system is dense in $S^1$.
\end{cor}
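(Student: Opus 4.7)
The plan is to take any nonempty open arc $U \subseteq S^1$ and an arbitrary point $p \in U$, then show that some endpoint of $\sL$ lies in $U$. The main tool is the trichotomy established in \reflem{trichotomy}, which splits into three exhaustive cases. The argument is elementary once we use this lemma, so the difficulty is mainly bookkeeping rather than a substantial obstacle.

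First, suppose $p$ is an endpoint of $\sL$; then $p \in U \cap \E{\sL}$ and we are immediately done. Second, suppose $p$ is the pivot of a crown $\sG$. By the definition of a crown and \refrmk{aboutCrown}, the vertex set $v(\sG) = \{t_k : k \in \ZZ\} \cup \{p\}$ has $p$ as its unique accumulation point, so $t_k \to p$ as $|k| \to \infty$. Each $t_k$ is a tip of $\sG$, hence lies in $\E{\sL}$. Since $U$ is open and contains $p$, eventually $t_k \in U$, producing the desired endpoint in $U$.

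Third, suppose $p$ is a rainbow point. Then by the discussion in \refsec{stem}, there exists a rainbow $\{I_k\}_{k\in\NN}$ at $p$, i.e., a descending sequence of elements of $\sL$ with $\bigcap_{k\in\NN} I_k = \{p\}$. Writing $I_k = \opi{x_k}{y_k}$, both $x_k$ and $y_k$ belong to $\E{\sL}$ since $\ell(I_k) \in \ell(\sL)$. Because the $I_k$ descend with total intersection $\{p\}$, we have $\closure{I_k} \to \{p\}$ in the Hausdorff sense, so for $k$ sufficiently large $\closure{I_k} \subseteq U$, and in particular $x_k, y_k \in U \cap \E{\sL}$.

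In all three cases we have produced a point of $\E{\sL}$ inside $U$, so $\E{\sL}$ meets every nonempty open interval and is therefore dense in $S^1$.
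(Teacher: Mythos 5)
Your proof is correct and follows exactly the route the paper intends: the corollary is stated immediately after \reflem{trichotomy} precisely because in each of the three cases (endpoint, pivot of a crown, rainbow point) one finds endpoints of $\sL$ accumulating at the given point. The case analysis and the convergence arguments (tips accumulating at the pivot, rainbow interval endpoints converging to the rainbow point) are all sound.
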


\subsection{Loose}
A quite full lamination system $\sL$ is \emph{loose} if for any distinct non-leaf gaps $\sG$ and $\sH$ of $\sL,$ $v(\sG)$ and $v(\sH)$ are disjoint. In this section, we investigate the topology of loose lamination systems  and renew \reflem{trichotomy} in loose lamination systems.
\begin{prop}\label{Prop:threeLeaves}
Let $\sL$ be a quite full lamination system. Suppose that $\sL$ is loose and totally disconnected. Then, for three distinct leaves $\ell_1$, $\ell_2$, and $\ell_3$, we have $v(\ell_1) \cap v(\ell_2) \cap v(\ell_3)=\emptyset$.       
\end{prop}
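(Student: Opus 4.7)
The plan is to argue by contradiction: assume a single point $p$ belongs to $v(\ell_1) \cap v(\ell_2) \cap v(\ell_3)$ for three distinct leaves, and produce two distinct non-leaf gaps of $\sL$ each having $p$ as a vertex, contradicting looseness. Since the three leaves are distinct and share the vertex $p$, their other vertices $q_1, q_2, q_3$ are pairwise distinct; after relabeling, assume the cyclic order $(p, q_1, q_2, q_3)$ is counter-clockwise. Setting $I_i := \opi{p}{q_i}$, we obtain $I_1 \subsetneq I_2 \subsetneq I_3$, so $\{I_1, I_2^*\}$ and $\{I_2, I_3^*\}$ are distinct pairs in $\sL$. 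Invoking total disconnectedness, fix non-leaf gaps $\sG_{12}$ and $\sG_{23}$ separating these pairs, with boundary intervals $M, N \in \sG_{12}$ satisfying $M \supseteq I_1$, $N \supseteq I_2^*$, and $M', N' \in \sG_{23}$ satisfying $M' \supseteq I_2$, $N' \supseteq I_3^*$.

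The first step is to verify $\sG_{12} \ne \sG_{23}$. If they agreed, then $M$ and $M'$ would both contain $I_1$ (using $I_1 \subsetneq I_2$), so pairwise disjointness within the common gap forces $M = M'$; likewise $N$ and $N'$ both contain $I_3^*$, forcing $N = N'$. Then $M \cup N \supseteq I_2 \cup I_2^* = S^1 \setminus \{p, q_2\}$, and since $M$ and $N$ are two disjoint open arcs whose complement has at most two points, one concludes $\{M, N\} = \{I_2, I_2^*\}$, so the gap is exactly the leaf $\ell_2$, contradicting non-leafness.

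The second step is to verify $p \in v(\sG_{12})$, and symmetrically $p \in v(\sG_{23})$. The essential geometric input is that $I_1$ approaches $p$ from the counter-clockwise side while $I_2^*$ approaches $p$ from the clockwise side. A short case analysis on the endpoints of $M$ and $N$ shows that if $M$ strictly contained $p$, then $M$ would also contain an arc just past $p$ on the clockwise side, forcing an overlap with $N \supseteq I_2^*$; symmetrically, if $N$ strictly contained $p$, it would overlap $M$ on the counter-clockwise side. Disjointness $M \cap N = \emptyset$ therefore forces $p$ to be a boundary point of both intervals, so $p \notin M \cup N$. No third element of $\sG_{12}$ can contain $p$ either, because any open arc containing $p$ meets both $M$ and $N$ near $p$; hence $p \in v(\sG_{12})$.

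Combining the two steps yields $p \in v(\sG_{12}) \cap v(\sG_{23})$, contradicting looseness and completing the proof. The main technical hurdle is the endpoint bookkeeping in the second step: one has to rule out every configuration in which $M$ or $N$ extends strictly past $p$ without bumping into the other. Once this geometric picture is pinned down, the rest of the argument is a routine unwinding of the definitions of distinct pair, gap, and looseness.
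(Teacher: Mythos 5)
Your proof is correct and follows essentially the same route as the paper's: order the shared-vertex leaves as $I_1\subsetneq I_2\subsetneq I_3$, use total disconnectedness to produce non-leaf gaps separating $\{I_1,I_2^*\}$ and $\{I_2,I_3^*\}$, observe that $p$ is a vertex of both and that the gaps are distinct, and contradict looseness. The only difference is organizational (you prove distinctness of the gaps directly, while the paper notes their vertex sets lie in $\closure{I_2}$ and $\closure{I_2^*}$ respectively), and you usefully spell out the endpoint argument for $p\in v(\sG_{12})\cap v(\sG_{23})$ that the paper merely asserts.
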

\begin{proof}
Assume that $v(\ell_1) \cap v(\ell_2) \cap v(\ell_3)=\{p\}$ for some $p\in S^1$. For each $i\in \{1,2,3\}$, we write $\ell_i=\ell(I_i)$. We may assume that $I_1\subsetneq I_2 \subsetneq I_3$. By totally disconnectedness, there is a non-leaf gap $\sG_1$ containing a distinct pair $\{J_1, K_1\}$ such that $I_1\subset J_1$ and $I_2^* \subset K_1$ and there is also a non-leaf gap $\sG_2$ containing a distinct pair $\{J_2, K_2\}$ such that $I_2\subset J_2$ and $I_3^* \subset K_2$. Then, $p\in v(\sG_1) \cap v(\sG_2)$ and by looseness, $\sG_1=\sG_2$. On the other hand,  $\sG_1\neq \sG_2$ since $v(\sG_1)\subset \closure{I_2}$ and $v(\sG_2)\subset \closure{I_2^*}$. This is a contradiction. Thus, $v(\ell_1) \cap v(\ell_2) \cap v(\ell_3)=\emptyset$.
\end{proof}

\begin{prop}\label{Prop:parallelLeaves}
Let $\sL$ be a quite full lamination system. Suppose that $\sL$ is loose and totally disconnected. Given distinct leaves $\ell_1$ and $\ell_2$ sharing a vertex $p,$ there is a non-leaf gap $\sG$ having $\ell_1$ and $\ell_2$ as boundary leaves, and $p$ as a tip. 
\end{prop}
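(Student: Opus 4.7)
The plan is to apply total disconnectedness to a carefully chosen distinct pair built from $\ell_1, \ell_2$ so as to extract a non-leaf gap $\sG$ with $p$ as a vertex, and then to invoke the trichotomy (\reflem{trichotomy}) together with the three-leaves bound (\refprop{threeLeaves}) to identify $\ell_1$ and $\ell_2$ as the two boundary leaves of $\sG$ at $p$.

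First I would pick representatives $I_i \in \ell_i$ so that $I_1 \subsetneq I_2$. This is possible because unlinkedness and the distinctness of two leaves sharing $p$ force exactly this type of strict containment after possibly replacing each $I_i$ by its complement; note that $p$ remains a common endpoint of $I_1$ and $I_2$ because swapping $I_i$ with $I_i^*$ does not change the endpoint set $\{p, q_i\}$ of $\ell_i$. The pair $\{I_1, I_2^*\}$ is then a distinct pair since $I_1 \subsetneq (I_2^*)^* = I_2$, and total disconnectedness yields a non-leaf gap $\sG$ and a distinct pair $\{M, N\} \subseteq \sG$ with $I_1 \subseteq M$ and $I_2^* \subseteq N$. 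The distinct-pair condition for $\{M, N\}$ forces $M \subseteq N^* \subseteq I_2$, hence $I_1 \subseteq M \subseteq I_2$. Since $p$ is an endpoint of both $I_1$ and $I_2$ but lies in neither open arc, this squeeze pins $p$ down as an endpoint of $M$; disjointness of the elements of $\sG$ then rules out $p$ lying in the interior of any other member, so $p \in v(\sG)$.

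To finish, quite-fullness makes $\sG$ either an ideal polygon or a crown, and \reflem{trichotomy} rules out the possibility that $p$ is the pivot since $p$ is an endpoint of $\sL$; hence $p$ is a tip of $\sG$. The two elements of $\sG$ adjacent to $p$ yield two distinct boundary leaves of $\sG$ through $p$, and \refprop{threeLeaves} (which bounds the number of leaves sharing any given vertex by two) forces this pair to coincide with $\{\ell_1, \ell_2\}$, completing the proof. The one real subtlety is the squeeze argument $I_1 \subseteq M \subseteq I_2$ pinning $M$ at $p$; everything else is a direct application of the preceding structural results, and the argument closely mirrors the reasoning already used inside the proof of \refprop{threeLeaves}.
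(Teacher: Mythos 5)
Your proof is correct and follows the same strategy as the paper's: total disconnectedness produces a non-leaf gap separating $\ell_1$ from $\ell_2$, and \refprop{threeLeaves} pins down the two boundary leaves through $p$. The paper cites the leaf-to-leaf reformulation of total disconnectedness and then applies \refprop{threeLeaves} in a single line; you instead explicitly assemble the distinct pair $\{I_1,I_2^*\}$, run the squeeze $I_1\subseteq M\subseteq I_2$ to show $p\in v(\sG)$, and spell out the tip-versus-pivot distinction via \reflem{trichotomy}, which the paper leaves implicit — more detail, but the same underlying argument.
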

\begin{proof}
By totally disconnectedness, there is a non-leaf gap $\sG$ having a distinct pair $\{J_1, J_2\}$ such that $\ell_i$ lies on $J_i$ for all $i\in \{1,2\}$. Then, by \refprop{threeLeaves}, $\ell_i=\ell(J_i)$ for all $i\in \{1,2\}$.
\end{proof}

\begin{prop}\label{Prop:realGap}
Let $\sL$ be a quite full lamination system. Suppose that $\sL$ is loose and totally disconnected. Then, every leaf is a real leaf or a boundary leaf of a non-leaf gap. In particular, there is no isolated leaf of $\sL$. Moreover, every non-leaf gap is real. 
\end{prop}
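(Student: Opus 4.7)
The plan is to reduce all three assertions to the interplay between two facts already in hand: Proposition $\refprop{gapExist}$, which produces a non-leaf gap containing $I^*$ whenever $I$ is isolated, and the looseness hypothesis, which forces any two distinct non-leaf gaps to have disjoint vertex sets. Total disconnectedness will not actually be used for this particular statement; it is inherited as part of the standing setup. The quite full hypothesis enters only insofar as it underwrites the definition of looseness.

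For the first claim, suppose $\ell = \ell(I)$ is not real. Then by definition at least one of $I, I^*$ is isolated; without loss of generality $I$ is. Invoke $\refprop{gapExist}$ to obtain a non-leaf gap $\sG$ with $I^* \in \sG$, and observe that $\ell$ is then a boundary leaf of $\sG$. To rule out isolated leaves, argue by contradiction: if $\ell = \ell(I)$ were isolated, both $I$ and $I^*$ would be isolated, so $\refprop{gapExist}$ would supply non-leaf gaps $\sG_1 \ni I^*$ and $\sG_2 \ni I$. The two endpoints of $I$ lie in $v(\sG_1) \cap v(\sG_2)$, and looseness then forces $\sG_1 = \sG_2 =: \sG$. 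But this single $\sG$ now contains both $I$ and $I^*$; any further element of $\sG$ would have to be an open interval disjoint from $I \cup I^*$, i.e.\ contained in the two-point set $S^1 \setminus (I \cup I^*)$, which is impossible. Hence $\sG = \{I, I^*\} = \ell$, contradicting the non-leaf hypothesis. The last claim follows by the identical scheme: if a non-leaf gap $\sG$ contained an isolated element $I$, then $\refprop{gapExist}$ would furnish a non-leaf gap $\sH \ni I^*$, the endpoints of $I$ would sit in $v(\sG) \cap v(\sH)$, looseness would give $\sG = \sH$, and the same ``both $I$ and $I^*$ in one gap forces $\sG$ to be a leaf'' argument closes the contradiction.

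I do not expect a serious obstacle here; the only subtlety is the small auxiliary observation, which I would state and verify once, that a gap containing both $I$ and $I^*$ must equal $\ell(I)$. This is immediate because the two complementary open intervals already cover all of $S^1$ except for the two endpoints of $I$, leaving no room for an additional good interval. With that observation isolated, the rest is a clean bookkeeping exercise reconciling the definitions of \emph{real}, \emph{isolated}, and \emph{boundary leaf} with the contents of $\refprop{gapExist}$ and looseness.
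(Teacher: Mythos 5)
Your proof is correct and follows essentially the same route as the paper: reduce everything to Proposition~\ref{Prop:gapExist} plus looseness, and derive the contradiction from two non-leaf gaps sharing the endpoints of an isolated interval. The paper's own proof is terser (it simply asserts the two gaps are distinct and that only the "no isolated leaf" claim needs an argument), and your version merely makes explicit the small observations it leaves implicit — that a gap containing both $I$ and $I^*$ must be the leaf $\ell(I)$, and that the same scheme disposes of non-leaf gaps with an isolated element; your remark that total disconnectedness is not actually used is also accurate.
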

\begin{proof}
By \refprop{gapExist}, we only need to show that there is no isolated leaf in $\sL.$ Assume that $\ell=\ell(I)$ is an isolated leaf. By \refprop{gapExist}, there are distinct non-leaf gaps $\sG_1$ and $\sG_2$ such that $I\in \sG_1$ and $I^* \in \sG_2$. Then, $v(\ell)= v(\sG_1)\cap v(\sG_2)$ and this contradicts the looseness of $\sL$. 
\end{proof}

Now, we can restate \reflem{trichotomy} with \refprop{realGap} as follows. 

\begin{lem}\label{Lem:quadrachotomy}
Let $\sL$ be a quite full lamination system. Suppose that $\sL$ is loose and totally disconnected. Then each point in $S^1$ is either a vertex of a unique real leaf, a tip of a unique non-leaf gap, the pivot of a unique crown, or a rainbow point. These cases are mutually exclusive.  
\end{lem}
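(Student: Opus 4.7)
The plan is to refine the trichotomy of \reflem{trichotomy} by splitting the endpoint case using the machinery developed in the preceding propositions. The starting observation is that \reflem{trichotomy} already guarantees that each point of $S^1$ falls into exactly one of three classes: endpoint of $\sL$, pivot of a unique crown, or rainbow point. So the main task is to analyze what it means to be an endpoint under the additional hypotheses of looseness and total disconnectedness, and to check that the resulting subcases are mutually exclusive with each other and with the two remaining classes.

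First I would take an endpoint $p$ and pick a leaf $\ell$ with $p \in v(\ell)$. By \refprop{realGap}, $\ell$ is either a real leaf or a boundary leaf of some non-leaf gap $\sG$. In the latter case, $p \in v(\sG)$, and since $\sL$ is quite full the gap $\sG$ is either an ideal polygon or a crown; since $p$ is already known (from the mutual exclusivity in \reflem{trichotomy}) not to be the pivot of any crown, $p$ must be a tip of $\sG$. This handles the existence of the two refined subcases.

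Next, for uniqueness. If $p$ is a vertex of a real leaf $\ell$ and also of another leaf $\ell' \ne \ell$, then \refprop{parallelLeaves} produces a non-leaf gap having both $\ell$ and $\ell'$ as boundary leaves, contradicting the remark (after the definition of real gap) that a real leaf is never a boundary leaf of a non-leaf gap. If $p$ is a tip of two distinct non-leaf gaps $\sG$ and $\sG'$, then $p \in v(\sG) \cap v(\sG')$, which directly violates the looseness hypothesis. Hence in each refined subcase the associated real leaf or non-leaf gap is unique.

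Finally, I would verify mutual exclusivity of the four classes. The pivot and rainbow cases are already separated from the endpoint case by \reflem{trichotomy}, so only the incompatibility of ``vertex of a real leaf'' and ``tip of a non-leaf gap'' needs to be checked: if $p$ were simultaneously a vertex of a real leaf $\ell$ and a tip of a non-leaf gap $\sG$, then one of the two boundary leaves of $\sG$ adjacent to $p$ is a leaf $\ell' \ne \ell$ sharing the vertex $p$ with $\ell$, and \refprop{parallelLeaves} again forces $\ell$ to be a boundary leaf of a non-leaf gap, contradicting the definition of real. No step looks genuinely hard: essentially every ingredient has been prepared in \refprop{threeLeaves}, \refprop{parallelLeaves}, and \refprop{realGap}. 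The only thing requiring a bit of attention is keeping the case analysis organized and remembering to invoke looseness (rather than \refprop{threeLeaves}) for the uniqueness of the non-leaf gap, since \refprop{threeLeaves} alone would only bound the number of leaves through $p$, not the number of gaps.
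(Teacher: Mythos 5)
Your proof is correct, and it matches the approach the paper intends but does not spell out: the paper presents \reflem{quadrachotomy} with the one-line justification ``restate \reflem{trichotomy} with \refprop{realGap},'' and you have simply supplied the details of that restatement. You correctly split the endpoint case of \reflem{trichotomy} into the real-leaf and non-leaf-gap subcases via \refprop{realGap}, rule out the pivot possibility by citing the mutual exclusivity already built into \reflem{trichotomy}, obtain uniqueness of the real leaf from \refprop{parallelLeaves} together with the observation that a real leaf cannot bound a non-leaf gap, obtain uniqueness of the non-leaf gap directly from looseness, and verify the one new exclusivity (vertex of a real leaf versus tip of a non-leaf gap) again via \refprop{parallelLeaves}. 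No gaps.
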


Let $\sL$ be a quite full lamination system. Assume that $\sL$ is loose and totally disconnected. 
Let $t$ be an endpoint of $\sL.$ Then, by \reflem{quadrachotomy}, there is a unique real gap $\sG$ having $t$ as a tip. We call $\sG$ the \emph{tip gap} of $t$ and $\sG$ is denoted by $\tipg{t}.$ If $\sG$ is a non-leaf gap, then there is a distinct pair $\{I,J\}$ in $\sG$ such that $\closure{I}\cap \closure{J}=\{t\}$. We call $\{I, J\}$ the \emph{tip pair} at $t$ and denote it by $\tipp{t}$.

\section{Pseudo-Fibered Pairs and Veering Pairs}
As we promised, we define pseudo-fibered pairs and veering pairs in this section. The motivating examples of our definition are the pair of laminations  constructed in \cite{SchleimerSegerman19} and a pair of geodesic laminations preserved by a pseudo-Anosov surface mapping class.

\subsection{Pseudo-fibered pairs and  veering pairs}\label{Sec:veeringsec}
Let $\sC=\{\sL_1, \sL_2\}$ be a pair of quite full lamination systems. We say that a collection $\sG$ of good intervals is a \emph{gap} of $\sC$ if $\sG$ is a gap of $\sL_1$ or $\sL_2$. 

Let $\sG_1$ and $\sG_2$ be gaps of $\sC.$ When $\sG_1$ and $\sG_2$ are leaves, we say that $\sG_1$ and $\sG_2$ are \emph{linked} if $v(\sG_1)$ and $v(\sG_2)$ are linked, and \emph{unlinked} otherwise. Suppose that $\sG_1$ and $\sG_2$ are unlinked. We say that $\sG_1$ and $\sG_2$ are \emph{parallel} if $v(\sG_1)\cap v(\sG_2)$ is a singleton, and \emph{ultraparallel} if $v(\sG_1)$ and $v(\sG_2)$ are disjoint.

In general, $\sG_1$ and $\sG_2$ are \emph{linked} if  $\ell(I_1)$ and $\ell(I_2)$ are linked for some $I_1\in \sG_1$ and $I_2 \in \sG_2.$ Otherwise, $\sG_1$ and $\sG_2$ are \emph{unlinked}.

We say that $\sG_1$ and $\sG_2$ \emph{interleave} if for each $i\in \{1,2\},$ $I\cap v(\sG_{i+1})$ is a singleton for all $I$ in $\sG_i$ (indexed cyclically). In this case, we call $\sG_i$  an \emph{interleaving gap} of $\sG_{i+1}$, for each $i\in\{1,2\}$ (cyclically indexed). Consult Figure~\ref{interleavingexample} for generic shape of interleaving gaps.  Note that if two gaps interleave and one is a leaf then the other is also a leaf. Likewise, the interleaving gap of a polygon or a crown is also a polygon or a crown, respectively. In particular, if two crowns interleave, then the pivots of these crowns coincide.  

An ordered pair of interleaving gaps is called an \emph{interleaving pair}, i.e., the ordered pair $(\sG_1, \sG_2)$ is called an interleaving pair of $\sC$ if $\sG_i$ is a gap of $\sL_i$ for each $i\in \{1,2\}$ and $\sG_1$ and $\sG_2$ interleave. A \emph{stitch} of $\sC$ is an interleaving pair $(\sG_1, \sG_2)$ such that each $\sG_i$ is a leaf.  An \emph{asterisk} of $\sC$ is an interleaving pair which is not a stitch. 

\begin{defn}
Let $\sC=\{\sL_1, \sL_2\}$ be a pair of lamination system. The pair $\sC$ is said to be \emph{pseudo-fibered} if 
\begin{itemize}
    \item each $\sL_i$ is quite full and loose, and 
    \item $\sL_1$ and $\sL_2$ are \emph{strongly transverse}, namely, $\E{\sL_1}\cap \E{\sL_2}=\emptyset.$
\end{itemize}
In particular, a pseudo-fibered pair $\sC$ is called a \emph{veering pair} if each non-leaf gap of $\sC$ has an interleaving gap.
\end{defn}

Note that the transversality of laminations implies the totally disconnectedness as follows.
\begin{prop}[\cite{AlonsoBaikSamperton}, \cite{BaikKim20}]\label{Prop:totDis}
If lamination systems $\sL_1$ and $\sL_2$ are strongly transverse and both $\E{\sL_1}$ and $\E{\sL_2}$ are dense in $S^1,$ then each $\sL_i$ are totally disconnected.   
\end{prop}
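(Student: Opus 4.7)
My plan is to take a distinct pair $\{I, J\}$ in $\sL_1$ (the argument for $\sL_2$ is symmetric) and build a non-leaf gap of $\sL_1$ separating them. The first move is to use the closedness axiom applied to the totally ordered chain $\{M \in \sL_1 : I \subseteq M,\, M \cap J = \emptyset\}$ to produce
\[
M_I := \bigcup\{M \in \sL_1 : I \subseteq M,\, M \cap J = \emptyset\} \in \sL_1,
\]
and symmetrically $M_J \in \sL_1$. These are the ``maximal containers'' of $I$ and $J$ under the disjointness constraints.

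The crux of the plan, and the main obstacle, is to verify that $\{M_I, M_J\}$ is a genuine distinct pair, i.e.\ $M_I \cap M_J = \emptyset$. Unlinkedness immediately rules out $M_I \subseteq M_J$ and $M_I \supseteq M_J$ (using the defining disjointness conditions), but the degenerate case $M_I \supseteq M_J^*$ --- corresponding to $\ell(M_I)$ and $\ell(M_J)$ sharing an asymptotic vertex $P \in S^1$ --- is not automatically excluded. The ``bouquet'' lamination of all geodesics through a fixed boundary point illustrates that without additional hypotheses this degeneracy genuinely occurs; in that example $\E{\sL_1} = S^1$, so no strongly transverse $\sL_2$ with dense endpoints can exist, consistent with the hypothesis being violated. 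Under our hypotheses, the maximality of $M_I$ and $M_J$ forces a sequence of $\sL_1$-endpoints to accumulate to any putative shared vertex $P$; combining density of $\E{\sL_2}$ near $P$, the disjointness $\E{\sL_1} \cap \E{\sL_2} = \emptyset$, and the closedness of the leaf space yields the required contradiction. This is the technical core of the proof, developed in \cite{AlonsoBaikSamperton, BaikKim20}.

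Granted the disjointness, I write $M_I = \opi{\alpha}{\beta}$ and $M_J = \opi{\gamma}{\delta}$ in cyclic order $\alpha\beta\gamma\delta$, and denote the two gap arcs $A_1 := \opi{\beta}{\gamma}$ and $A_2 := \opi{\delta}{\alpha}$. A direct unlinkedness-plus-maximality argument shows every leaf of $\sL_1$ not lying on $M_I$ or $M_J$ has both its endpoints in a single $\closure{A_k}$: a ``crossing'' leaf would contain $M_I$ or $M_J$ and contradict maximality of one of them. For each such leaf's short arc $L \subseteq A_k$, set $F_L := \bigcup\{N \in \sL_1 : L \subseteq N \subseteq A_k\} \in \sL_1$; the collection $\sF_k = \{F_L\}$ is pairwise disjoint by the same maximality pattern. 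Then
\[
\sG := \{M_I, M_J\} \cup \sF_1 \cup \sF_2
\]
is a pairwise-disjoint subfamily of $\sL_1$ on which every leaf of $\sL_1$ lies, hence a non-leaf gap separating $I$ from $J$ via its distinct elements $M_I, M_J$, as required.
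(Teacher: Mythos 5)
The paper itself does not prove this proposition (it is imported from \cite{AlonsoBaikSamperton} and \cite{BaikKim20}), so your argument has to stand on its own, and it has a genuine gap exactly at the step you yourself flag as ``the technical core'': the disjointness $M_I\cap M_J=\emptyset$ is asserted and deferred to the very references the proposition is quoted from, rather than proved. This step is where all the content of the proposition lives, since it is the only place the hypotheses (density of both endpoint sets and $\E{\sL_1}\cap\E{\sL_2}=\emptyset$) can enter. Moreover, the sketch you offer for it misidentifies the obstruction. You claim the only problematic configuration is $\ell(M_I)$ and $\ell(M_J)$ sharing a vertex $P$, and propose to rule it out by endpoints accumulating at $P$. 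But take $\sL_1$ to be the lamination system of the foliation of $\HH^2$ by the parallel geodesics with endpoints $\{e^{it},e^{-it}\}$, $t\in(0,\pi)$, and the distinct pair $I=\opi{e^{-is}}{e^{is}}$, $J=\opi{e^{it}}{e^{-it}}$ with $s<t$. Then $M_I=J^*$ and $M_J=I^*$, so $M_I^*\subsetneq M_J$ with four distinct endpoints and $M_I\cap M_J$ equal to two nondegenerate arcs --- no shared vertex at all. This ``foliated product'' configuration is the generic way total disconnectedness fails, and it is precisely what the hypotheses must exclude (in the example, $\E{\sL_1}=S^1\setminus\{\pm1\}$, so no strongly transverse $\sL_2$ with dense endpoints exists). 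Your one-sentence appeal to density near a shared vertex does not address it, so the crux of the proof is missing, not merely compressed.

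Two secondary points. First, even granting disjointness, you silently assume four distinct endpoints $\alpha,\beta,\gamma,\delta$; the case $M_I=M_J^*$ is consistent with $M_I\cap M_J=\emptyset$, and there your $\sG$ collapses to the leaf $\ell(M_I)$, which is not a non-leaf gap and hence does not witness separation --- this degenerate case also needs the hypotheses to be excluded. Second, in the assembly of $\sG$ your justification that a ``crossing'' leaf contradicts maximality is off: a leaf $K$ with $K\supseteq M_I\cup M_J$ contradicts nothing (maximality of $M_I$ only concerns elements disjoint from $J$); the correct observation is that such a $K$ still has $K^*$, hence both vertices, inside a single $\closure{A_k}$. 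That part of the construction is repairable; the disjointness step is the real gap.
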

\begin{prop}\label{Prop:totDis2}
    Let $\sC=\{\sL_1, \sL_2\}$ be a pseudo-fibered pair. Then, each $\sL_i$ is totally disconnected. 
\end{prop}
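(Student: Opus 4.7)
The plan is to simply combine the two results already in hand, \refcor{denseEnd} and \refprop{totDis}, since the definition of a pseudo-fibered pair has been engineered to supply all of their hypotheses. First I would unpack the definition: because $\sC=\{\sL_1, \sL_2\}$ is pseudo-fibered, each $\sL_i$ is quite full and loose, and $\sL_1$ and $\sL_2$ are strongly transverse. Thus the transversality hypothesis of \refprop{totDis} is automatic.

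Next I would supply the density of the endpoint sets. Since each $\sL_i$ is quite full, \refcor{denseEnd} directly yields that $\E{\sL_i}$ is dense in $S^1$ for $i=1,2$. Note that looseness plays no role here; only the quite full hypothesis is needed to invoke \refcor{denseEnd}.

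Finally, with strong transversality and the density of both endpoint sets in place, \refprop{totDis} applies verbatim and gives that each $\sL_i$ is totally disconnected. There is no real obstacle in this argument; the only thing to be careful about is that \refprop{totDis} is stated symmetrically in $\sL_1$ and $\sL_2$, so it must be applied once for $i=1$ and once for $i=2$, in each case using the density of the \emph{other} endpoint set which we have already established. The whole proof should fit in two or three sentences.
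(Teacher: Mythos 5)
Your proof is correct and is essentially identical to the paper's: both invoke \refcor{denseEnd} to get density of the endpoint sets from quite-fullness, and then apply \refprop{totDis} using strong transversality. (The only cosmetic difference is that \refprop{totDis} as stated already concludes total disconnectedness for both $\sL_i$ at once, so no separate application for each index is needed.)
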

\begin{proof}
    Since $\sL_i$ are quite full, by \refcor{denseEnd}, the endpoint sets $\E{\sL_i}$ are dense in $S^1.$ As $\sL_1$ and $\sL_2$ are strongly transverse, by \refprop{totDis}, $\sL_i$ are totally disconnected.
\end{proof}
    
\begin{rmk}
By \refprop{totDis2}, each lamination system $\sL_i$ of a pseudo-fibered pair is quite full, loose and totally disconnected. Therefore, each $\sL_i$ enjoys the properties shown in \refsec{quiteFull}.
\end{rmk} 

\subsection{Properties of Veering Pairs}
Let $\sV=\{\sL_1,\sL_2\}$ be a veering pair.
A \emph{singular stitch} of  an asterisk $(\sG_1,\sG_2)$ of $\sV$ is a stitch $(\ell_1,\ell_2)$ such that each $\ell_i$ is a boundary leaf of $\sG_i$ for all $i\in \{1,2\}$. A stitch of $\sV$ is \emph{singular} if it is a singular stitch of some asterisk. Otherwise, $(\ell_1, \ell_2)$ is \emph{regular}. We say that a stitch $(\ell_1,\ell_2)$ is \emph{genuine} if $\ell_1$ and $\ell_2$ are real.

A tuple $(I_1,I_2)$ in $\sL_1 \times \sL_2$ is called a \emph{sector} in $\sV$ if $(\ell(I_1),\ell(I_2))$ is a stitch. In particular, $(I_1,I_2)$ is a \emph{sector} of an interleaving pair $(\sG_1,\sG_2)$ if $I_i\in \sG_i$.  Also, $(I_1,I_2)$ is said to be \emph{counter-clockwise} if $\opi{u_1}{v_1}\cap \opi{u_2}{v_2}=\opi{u_2}{v_1}$ where $I_i=\opi{u_i}{v_i}$ for all $i\in\{1,2\}$. Otherwise, the sector is \emph{clockwise}. We say that a stitch $(\ell_1,\ell_2)$ \emph{lies} on a sector $(I_1,I_2)$ if for each $i\in\{1,2\}$, $\ell_i$ properly lies on $I_i$ and $\ell_i$ is linked with $\ell(I_j)$, $j\neq i$. 



\begin{lem}\label{Lem:trichotomy2}
Let $\sV=\{\sL_1, 
\sL_2\}$ be a veering pair. Then, each point $p$ in $S^1$ falls into one of the following cases:
\begin{enumerate}
    \item $p$ is a rainbow point of both $\sL_1$ and $\sL_2$.
    \item $p$ is a rainbow point of $\sL_i$ for some $i\in \{1,2\}$, and $p$ is a tip of a unique real gap of $\sL_{j}$, $j\ne i$.
    \item $p$ is the pivot of some asterisk of crowns.
\end{enumerate}
\end{lem}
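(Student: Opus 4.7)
The plan is to apply \reflem{quadrachotomy} separately to $\sL_1$ and to $\sL_2$ (its hypotheses hold because a veering pair is pseudo-fibered, hence quite full, loose, and totally disconnected by \refprop{totDis2}), and then glue the two classifications using strong transversality and the veering property. For each $i\in\{1,2\}$, \reflem{quadrachotomy} presents $p$ with four mutually exclusive options: a vertex of a real leaf, a tip of a non-leaf gap, the pivot of a crown, or a rainbow point. The first two options place $p$ in $\E{\sL_i}$; the last two do not.

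Strong transversality, $\E{\sL_1}\cap\E{\sL_2}=\emptyset$, immediately rules out $p$ being an endpoint of both lamination systems. I would then split into two main cases. If $p\notin\E{\sL_1}\cup\E{\sL_2}$, each $\sL_i$ contributes either a rainbow point or a pivot of a crown. The synchronizing observation is: if $p$ is the pivot of a crown $C_i$ in $\sL_i$, then by the veering property $C_i$ admits an interleaving gap in $\sL_j$, $j\neq i$, and by the remark in \refsec{veeringsec} (that the interleaving gap of a crown is a crown, and interleaving crowns share their pivot) this interleaving gap is a crown $C_j$ sharing the pivot $p$. Hence $p$ is simultaneously a pivot of a crown in both lamination systems; uniqueness of such crowns from \reflem{quadrachotomy} ensures $(C_1,C_2)$ is a well-defined asterisk of crowns with pivot $p$, yielding case (3). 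The remaining subcase, rainbow on both sides, is case (1).

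If instead $p\in\E{\sL_i}$ for exactly one $i$, then by \reflem{quadrachotomy} combined with \refprop{realGap} (which says every non-leaf gap is real) together with the fact that a real leaf is already a real gap, $p$ is a tip of a unique real gap of $\sL_i$. For $\sL_j$, $j\neq i$, we have $p\notin\E{\sL_j}$, and the pivot-of-crown option is excluded by the synchronization argument above: it would force $p$ to be a pivot in $\sL_i$ as well, contradicting the mutual exclusivity in \reflem{quadrachotomy} (since $p$ is already an endpoint in $\sL_i$). Therefore $p$ is a rainbow point of $\sL_j$, placing us in case (2). The argument is essentially a careful case chase; the only non-routine ingredient is the veering-pair synchronization between pivots of crowns in the two lamination systems, and I do not anticipate any serious obstacle beyond invoking the uniqueness assertions of \reflem{quadrachotomy} to justify the mutual exclusivity of the three alternatives listed in the statement.
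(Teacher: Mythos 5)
Your proof is correct and follows the same route as the paper, whose entire proof is the one-line citation ``It follows from Lemma~\ref{Lem:quadrachotomy}.'' You have simply supplied the case-gluing details the paper leaves implicit: strong transversality to exclude $p\in\E{\sL_1}\cap\E{\sL_2}$, and the interleaving-crowns-share-a-pivot observation to synchronize the pivot case across the two lamination systems.
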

\begin{proof}
It follows from \reflem{quadrachotomy}.
\end{proof}

Let $t$ be an endpoint of $\sL_1$ or $\sL_2$.
By \reflem{trichotomy2}, there is a unique real gap of $\sV$ having $t$ as a tip. Hence, the notions of tip pairs and tip gaps in $\sV$ are well-defined. 
Now, we abuse the notions of tip pairs and tip gaps for $\sV$. 

We say that  an element  $I$ in $\sL_1\cup \sL_2$ \emph{crosses over} $t$ if $I\cap v(\tipg{t})=\{t\}.$ Note that if $t\in \E{\sL_i}$ for some $i\in \{1,2\}$, then $I\in \sL_{i+1}$ (cyclically indexed). Also, a leaf $\ell$ of $\sV$ \emph{crosses over} $t$ if some element of $\ell$ crosses over $t$. Moreover, we say that a tip pair $\tipp{t'}$ \emph{crosses over} $t$ if $\ell(J)$ crosses over $t$ for all $J\in \tipp{t'}$. Note that if $\tipp{t'}=\{J_1, J_2\},$ then $J_i$ crosses over $t$ for some $i\in \{1,2\}$ and so $J_{i+1}^*$ crosses over $t$ (cyclically indexed). Therefore, $J_{i+1}\cap v(\tipg{t})= v(\tipg{t})-\{t\}$.

\subsection{Linkedness and Crossing}Let $\sV=\{\sL_1, \sL_2\}$ be a veering pair. We say that gaps $\sG_1$ and $\sG_2$ in $\sL_1$ and $\sL_2$, respectively, \emph{cross}  if there are distinct pairs $\{I_1, J_1\}$ and  $\{I_2, J_2\}$ in $\sG_1$ and $\sG_2,$ respectively, such that for each $i\in \{1,2\}$, $v(\sG_{i+1})\subset (I_i \cup J_i)$ and $|v(\sG_{i+1})\cap I_i|=1$ (cyclically indexed). One purpose of this section  is to show that, in veering pairs, the concept of crossing coincide with the concept of crossing over in some sense. See \refprop{crossingTwoGaps} and \refprop{tipPairCross}. Eventually, we show \refprop{twoGap} which describes the possible configurations of linked gaps of $\sV$. 

\begin{rmk}
If $\sG_1$ and $\sG_2$ are unlinked, we can take $K_1$ and $K_2$ in $\sG_1$ and $\sG_2,$ respectively, so that $K_1^c \subset K_2.$  Therefore, for each $i \in \{1,2\}$, $\sG_i$ lies on $K_{i+1}$ (cyclically indexed). Compare with \refprop{twoGapUnlinked}. 
\end{rmk}
\begin{prop}\label{Prop:leafGapCross}
    Let $\sV=\{\sL_1, \sL_2\}$ be a veering pair and $\sG$ be a gap in $\sL_1.$ Then for every leaf $\ell$ in $\sL_2,$ either $\sG$ and  $\ell$ are unlinked  or they cross.
    \end{prop}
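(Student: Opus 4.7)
The plan is to prove the contrapositive: if $\sG$ and $\ell$ are linked, then they cross. I split on whether $\sG$ is a leaf or a non-leaf gap of $\sL_1$. If $\sG$ is itself a leaf $\ell' = \{I, I^*\}$, then linkedness of $\ell'$ and $\ell$ is by definition the interleaving of the two-point vertex sets $v(\ell')$ and $v(\ell)$ on $S^1$; choosing $\{I_1, J_1\} = \{I, I^*\}$ and $\{I_2, J_2\} = \ell$ makes both cardinality conditions in the crossing definition reduce to this interleaving, and the statement follows.

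The substantive case is when $\sG$ is non-leaf, so by quite fullness it is a polygon of at least three vertices or a crown. Here I would invoke the defining property of a veering pair: $\sG$ admits an interleaving gap $\sG' \subset \sL_2$. Label indices so that the tips of $\sG$ and $\sG'$ alternate cyclically as $\ldots, t_k, s_k, t_{k+1}, s_{k+1}, \ldots$, with the $t_k$'s the tips of $\sG$ and the $s_k$'s the tips of $\sG'$. Since $\sG'$ is a gap of $\sL_2$ and $\ell$ is a leaf of $\sL_2$, either $\ell = \ell(M)$ for some edge $M \in \sG'$ or $\ell$ lies strictly on some $M \in \sG'$; in both cases, writing $M = \opi{s_k}{s_{k+1}}$, the interleaving property supplies a unique vertex $t_{k+1}$ of $\sG$ in $M$. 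In the boundary-leaf subcase, I would take $\{I_1, J_1\}$ to be the two edges $\opi{t_k}{t_{k+1}}$ and $\opi{t_{k+1}}{t_{k+2}}$ of $\sG$ adjacent to $t_{k+1}$, together with $\{I_2, J_2\} = \{M, M^*\}$; the interleaving order forces $s_k \in I_1$, $s_{k+1} \in J_1$, and $M \cap v(\sG) = \{t_{k+1}\}$, so the cross condition holds. In the interior subcase, strong transversality forces both endpoints $u, v$ of $\ell$ to lie strictly inside $M$, and $t_{k+1}$ divides $M$ into two subarcs: if $u, v$ lie in the same subarc then $v(\ell)$ is contained in a single edge of $\sG$, contradicting linkedness; otherwise $u, v$ lie in the two edges of $\sG$ adjacent to $t_{k+1}$, and the same choice $\{I_1, J_1\}$ together with $\{I_2, J_2\}$ the two complementary intervals of $\ell$ again verifies all four items of the cross condition.

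The main obstacle will be careful bookkeeping in the crown case: I must confirm that the shared pivot $p$ of the crowns $\sG$ and $\sG'$, being itself a vertex of $\sG'$, cannot lie inside the edge $M$ of $\sG'$, and therefore is contributed to $J_2 = M^*$ rather than $I_2 = M$, preserving the identity $|v(\sG) \cap I_2| = 1$. A secondary subtlety is that the two elements of a leaf form strict complements rather than a technical distinct pair (since $I = (I^*)^*$), so the notion of "distinct pair" in the cross definition must be read in the looser sense of a pair of distinct elements whenever applied to a leaf.
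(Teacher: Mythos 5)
Your proof is correct and follows essentially the same route as the paper's: both reduce the non-leaf case to locating $\ell$ on an element $M$ of the interleaving gap of $\sG$, using that $M$ meets $v(\sG)$ in a single tip, and then splitting on whether $\ell$ separates that tip from the rest of $v(\sG)$. The paper is terser (it does not treat the boundary-leaf subcase separately and leaves the witnessing pairs implicit), and your one imprecision --- that strong transversality forces both endpoints of $\ell$ strictly inside $M$, which can fail when $\ell$ is parallel to $\ell(M)$ --- is harmless, since the endpoints still avoid $v(\sG)$ and the dichotomy about $t_{k+1}$ goes through unchanged.
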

    \begin{proof}
    First, we consider the case where $\sG$ is a leaf. Then $\sG$ and $\ell$ are either unlinked or linked. If they are linked, then they cross. Therefore, this case is obvious. 
    
    Now, we assume that $\sG$ is a non-leaf gap. Since $\sV$ is a veering pair, there is the interleaving gap $\sH$ of $\sG$ and $\ell$ lies on a unique good interval $I$ in $\sH$. Then, we can take a good interval $J$ in $\ell$ contained in $I$. 
    
     Note that  $I\cap v(\sG)= \{v\}$ for some tip $v$ of $\sG$. If $J$ does not contain $v,$ then $v(\sG) \subset J^c= \closure{J^*}$ and so $\sG$ lies on $J^*$ since $\E{\sL_1} \cap \E{\sL_2}=\emptyset$. Therefore, $\sG$ and $\ell$ are unlinked. Assume that $J$ contains $v$. As $I^*\cap v(\sG)=v(\sG)-\{v\}$ and $J\subseteq I$,
    $$v(\sG)-\{v\}=I^*\cap v(\sG) \subseteq I^* \subseteq J^*.$$
    Moreover, endpoints of $\ell$ belong to different elements of $\sG$ since $\sG$ and $\ell$ is linked. Therefore, $\sG$ and $\ell$ cross. 
    \end{proof}
\begin{prop}\label{Prop:crossingTwoGaps}
        Let $\sV=\{\sL_1, \sL_2\}$ be a veering pair. Suppose that there are non-leaf gaps $\sG_1$ and $\sG_2$ in $\sL_1$ and $\sL_2$, respectively, such that $\sG_1$ and $\sG_2$ cross. Then there are tips $t_1$ and $t_2$ of $\sG_1$ and $\sG_2$, respectively, such that for each $i,j\in \{1,2\}$, $i\ne j$, the tip pair $\tipp{t_i}$ crosses over $t_j$. 
        \end{prop}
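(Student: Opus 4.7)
The plan is to take $t_i$ to be the unique vertex of $\sG_i$ lying in $I_{i+1}$ (with indices mod $2$), where $\{I_1,J_1\}\subset \sG_1$ and $\{I_2,J_2\}\subset \sG_2$ are the witnesses of the crossing. I will verify these are tips, then identify $\tipp{t_2}$ with $\{I_2,J_2\}$ (and symmetrically $\tipp{t_1}$ with $\{I_1,J_1\}$), and finally read off the crossing-over conditions almost for free. Strong transversality $\E{\sL_1}\cap \E{\sL_2}=\emptyset$ will be used throughout to keep the vertex sets of gaps in $\sL_1$ and $\sL_2$ separated.

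For the tip claim, since $\sV$ is a veering pair, each $\sG_i$ is either an ideal polygon (whose vertices are all tips) or a crown. In a crown the pivot is the unique accumulation point of $v(\sG_i)$, so if $t_i$ were the pivot then the open interval $I_{i+1}$, being an open neighborhood of $t_i$, would contain infinitely many vertices of $\sG_i$, violating $|v(\sG_i)\cap I_{i+1}|=1$. Hence $t_i$ is a tip, and by \refprop{realGap} the gap $\sG_i$ is real, so the tip pair $\tipp{t_i}$ is well defined.

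The key step — and the main obstacle — is the identification $\tipp{t_2}=\{I_2,J_2\}$. I claim $t_2$ must be an endpoint of $I_2$. If not, both endpoints of $I_2$ are vertices of $\sG_2$ other than $t_2$, which the crossing hypothesis places inside $J_1$. Then $I_2$ is one of the two open arcs joining these two endpoints, so either $I_2\subset J_1$ (impossible because $J_1\cap v(\sG_1)=\emptyset$ while $I_2\cap v(\sG_1)=\{t_1\}$) or $I_2\supset J_1^*\supset I_1 \ni t_2$ (impossible because $I_2\in \sG_2$ is disjoint from $v(\sG_2)$). The same dichotomy forces $t_2$ to be an endpoint of $J_2$ as well. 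Since $\tipp{t_2}$ has exactly two elements, it follows that $\tipp{t_2}=\{I_2,J_2\}$.

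Finally, $I_2\cap v(\sG_1)=\{t_1\}$ is the crossing hypothesis, so $I_2$ crosses over $t_1$. For $J_2$, the hypothesis gives $J_2\cap v(\sG_1)=v(\sG_1)\setminus \{t_1\}$, and strong transversality ensures that the two endpoints of $J_2$, which are vertices of $\sG_2$, avoid $v(\sG_1)$; hence $J_2^*\cap v(\sG_1)=\{t_1\}$, so $\ell(J_2)$ crosses over $t_1$. Combining, the tip pair $\tipp{t_2}$ crosses over $t_1$, and the symmetric argument with the roles of $\sG_1$ and $\sG_2$ exchanged shows $\tipp{t_1}$ crosses over $t_2$.
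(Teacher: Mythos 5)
Your proof is correct and follows essentially the same route as the paper's: define $t_i$ as the unique vertex of $\sG_i$ inside $I_{i+1}$, identify $\tipp{t_i}=\{I_i,J_i\}$ by contradiction (the paper phrases the dichotomy as ``$\ell(I_2)$ lies on $J_1$,'' which is your $I_2\subset J_1$ or $I_2\supset J_1^*$ split), and then read off the crossing-over conditions. Your explicit check that $t_i$ cannot be a crown pivot (so that $\tipg{t_i}$ and $\tipp{t_i}$ are well defined) is a step the paper's proof leaves implicit and is a worthwhile addition.
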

        \begin{proof}
        Since $\sG_1$ and $\sG_2$ cross, there are distinct pairs $\{I_1, J_1\}$ and $\{I_2, J_2\}$ in $\sG_1$ and $\sG_2$, respectively, such that for each $i,j\in \{1,2\}$, $i\ne j$,  $v(\sG_j)\subset I_i \cup J_i$ and  $|I_i \cap v(\sG_j)|=1$. Now we say that for each $i,j\in \{1,2\}$, $i\ne j$, the point in $I_i \cap v(\sG_j)$ is $t_j$. 
        
        For each $i\in \{1,2\}$, $t_j\in I_i \subset J_i^*$ and
        $$v(\sG_j)-\{t_j\} \subset J_i \subset I_i^*.$$ 
        Therefore, for each $i,j\in\{1,2\}$, $i\ne j$, $I_i$ and $J_i^*$ cross over the tip $t_j$. 
        
        If $t_2 \notin v(\ell(I_2)),$ then $v(\ell(I_2)) \subset J_1$. This implies that $\ell(I_2)$ lies on $J_1$. This is a contradiction, because $I_2$ crosses over $t_1.$ Hence, $t_2 \in v(\ell(I_2))$. Similarly, we can see that $t_2 \in v(\ell(J_2))$. Therefore, $\{I_2, J_2\}=\tipp{t_2}$ and we can conclude that $\tipp{t_2}$ crosses over $t_1$. In a similar way, we can see that $\tipp{t_1}$ crosses over $t_2$.
        \end{proof}

        \begin{prop}\label{Prop:tipPairCross}
        Let $\sV=\{\sL_1, \sL_2\}$ be a veering pair. Suppose that there are non-leaf gaps $\sG_1$ and $\sG_2$ in $\sL_1$ and $\sL_2,$ respectively. If there is a tip pair $\tipp{t}$ in $\sG_1$ crossing over a tip $t'$ of $\sG_2,$ Then $\sG_1$ and $\sG_2$ cross. 
        \end{prop}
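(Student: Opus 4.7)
The plan is to show that the given tip pair $\tipp{t}=\{I_1,J_1\}$ itself serves as the distinct pair on the $\sG_1$ side of the crossing condition, while the distinct pair on the $\sG_2$ side is extracted by applying \refprop{leafGapCross} to $\ell(I_1)$ and $\sG_2$.

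First, I would pin down the basic configuration. Strong transversality forces $t\ne t'$, so exactly one of $I_1,J_1$ contains $t'$; after relabelling, assume $t'\in I_1$. By \reflem{trichotomy2} we have $\tipg{t'}=\sG_2$, and the hypothesis that $\tipp{t}$ crosses over $t'$ then yields $I_1\cap v(\sG_2)=\{t'\}$ together with $J_1^*\cap v(\sG_2)=\{t'\}$, hence $v(\sG_2)-\{t'\}\subset J_1$. This already settles the $\sG_1$ side: $v(\sG_2)\subset I_1\cup J_1$ with $|I_1\cap v(\sG_2)|=1$.

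Next, I would produce a distinct pair in $\sG_2$. A boundary leaf of $\sG_2$ through $t'$ has its other endpoint in $v(\sG_2)-\{t'\}\subset J_1\subset I_1^*$, so it is linked with $\ell(I_1)$. Hence \refprop{leafGapCross} says $\ell(I_1)$ and $\sG_2$ cross, producing a distinct pair $\{A_2,B_2\}\subset\sG_2$ separating the endpoints $\{x_1,t\}$ of $\ell(I_1)$, where $x_1$ denotes the endpoint of $I_1$ other than $t$. Label so that $t\in A_2$ and $x_1\in B_2$.

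Finally, I would verify the $\sG_2$ side. Let $z$ be the endpoint of $J_1$ other than $t$. Every element of $\sG_1$ distinct from $I_1,J_1$ is disjoint from both, hence lies in $\opi{z}{x_1}$, so $v(\sG_1)-\{t\}\subset\cldi{z}{x_1}$. Since the endpoints of $A_2,B_2$ lie in $v(\sG_2)\subset\{t'\}\cup\opi{t}{z}$, a short case analysis using disjointness of $A_2,B_2$ together with the fact that no vertex of $\sG_2$ can sit in the interior of any element of $\sG_2$ forces $A_2=\opi{t'}{s}\subset\opi{x_1}{z}$ and $B_2=\opi{u}{t'}\supset\cldi{z}{x_1}$ for some $s,u\in v(\sG_2)\cap\opi{t}{z}$. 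This yields $A_2\cap v(\sG_1)=\{t\}$ and $v(\sG_1)-\{t\}\subset B_2$, whence $v(\sG_1)\subset A_2\cup B_2$ with $|A_2\cap v(\sG_1)|=1$. The main obstacle is this final case analysis, which must rule out the configurations where $A_2$ or $B_2$ wraps the long way around to contain both $t$ and $x_1$, or where neither has $t'$ as an endpoint.
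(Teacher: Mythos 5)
Your proof is correct and fills in what the paper's one-line argument merely asserts, namely that $\tipp{t}$ and $\tipp{t'}$ furnish the two distinct pairs witnessing the crossing; your detour through \refprop{leafGapCross} just recovers $\tipp{t'}$ explicitly, and your outlined case analysis does check out. One small nit: the claim that exactly one of $I_1,J_1$ contains $t'$ is really a consequence of the crossing-over hypothesis (via the observation recorded in the paper immediately after the definition of a tip pair crossing over a tip, which your next sentence effectively invokes) rather than of $t\ne t'$ alone, so the causal ``so'' in your opening sentence is misplaced, though harmless.
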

        \begin{proof}
        With $\tipp{t_1}$ and $\tipp{t_2},$ $\sG_1$ and $\sG_2$ cross.
        \end{proof}

        \begin{prop}\label{Prop:linkedGaps}
        Let $\sV=\{\sL_1, \sL_2\}$ be a veering pair. Suppose that for each $i\in \{1,2\}$, $\sG_i$ is a gap in $\sL_i.$ If $\sG_1$ and $\sG_2$ are linked, then $\sG_1$ and $\sG_2$ interleave or cross.
        \end{prop}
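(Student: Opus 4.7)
The plan is to split into cases depending on whether each $\sG_i$ is a leaf. When both are leaves, each gap has only two elements and two vertices, so the linkedness condition coincides verbatim with the interleaving condition and there is nothing to do. When exactly one of $\sG_1,\sG_2$ is a leaf, \refprop{leafGapCross} directly supplies the dichotomy unlinked-or-cross, and the hypothesis of linkedness forces the cross case.

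The main case is when both $\sG_1,\sG_2$ are non-leaf. I will fix interleaving gaps $\tilde{\sG_2}\in\sL_1$ and $\tilde{\sG_1}\in\sL_2$ furnished by the veering pair hypothesis. If $\sG_1=\tilde{\sG_2}$ then $(\sG_1,\sG_2)$ interleaves by definition, so I may assume $\sG_1\neq\tilde{\sG_2}$, and by the symmetry of interleaving also $\sG_2\neq\tilde{\sG_1}$. Applying \refprop{twoGapUnlinked} to the distinct non-leaf gaps $\sG_1,\tilde{\sG_2}$ of $\sL_1$ produces $J\in\sG_1$ and $I\in\tilde{\sG_2}$ with $J^*\subset I$. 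Looseness of $\sL_1$ yields $v(\sG_1)\cap v(\tilde{\sG_2})=\emptyset$, so the endpoints of $J$ lie strictly inside $I$, upgrading $J^*\subset I$ to the vertex-level statement $v(\sG_1)\subset I$. Because $\tilde{\sG_2}$ and $\sG_2$ interleave, $I$ meets $v(\sG_2)$ in a single vertex $v_I$, and strong transversality combined with \reflem{trichotomy2} (for pivots) pushes $v(\sG_2)\setminus\{v_I\}$ strictly inside $I^*\subset J$.

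The decisive step is to rule out $v_I\in J$: if it held, every boundary leaf of $\sG_2$ would have both endpoints in $J$, while every boundary leaf of $\sG_1$ other than $\ell(J)$ would have both endpoints in $\closure{J^*}$, so no pair of such leaves could link across $\ell(J)$, contradicting linkedness. Hence $v_I\in J^*$, and since $v_I\notin v(\sG_1)$ it sits inside some element $K_1\in\sG_1$ with $K_1\neq J$. The distinct pair $\{K_1,J\}\subset\sG_1$ then satisfies $v(\sG_2)\subset K_1\cup J$ with $|K_1\cap v(\sG_2)|=1$. Running the mirror argument with $\tilde{\sG_1}$ in place of $\tilde{\sG_2}$ will produce a distinct pair $\{K_2,L_2\}\subset\sG_2$ with the symmetric property, and together these witness that $\sG_1$ and $\sG_2$ cross.

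The hardest part will be the bookkeeping that upgrades the set-level inclusion delivered by \refprop{twoGapUnlinked} to the vertex-level statement $v(\sG_1)\subset I$, together with the repeated care that tips and pivots of the various gaps never accidentally land on $\partial I$ or $\partial J$. All of this reduces to a careful combination of looseness (separating vertices of distinct non-leaf gaps within one lamination system), strong transversality (separating tips across the two systems), and \reflem{trichotomy2} (handling pivots), with no new conceptual ingredient beyond this case analysis.
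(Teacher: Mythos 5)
Your proof is correct, and it follows the paper's strategy quite closely: in the main case (both gaps non-leaf), you and the paper both apply \refprop{twoGapUnlinked} to a gap together with an interleaving gap furnished by the veering hypothesis, upgrade to a vertex-level inclusion using looseness, strong transversality and \reflem{trichotomy2}, and rule out one configuration by showing it would force unlinkedness. The noticeable difference is in how you finish. The paper works only with the interleaving gap $\sH_1$ of $\sG_1$ (in $\sL_2$); after the unlinkedness contradiction it extracts \emph{both} tip pairs $\tipp{t_1}$ and $\tipp{t_2}$ from the one pass, using \refprop{leafGapCross} to pin down the second tip, and then invokes \refprop{tipPairCross}. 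You instead use both interleaving gaps $\tilde\sG_1, \tilde\sG_2$ and run the argument twice, once per lamination system, obtaining a distinct pair in $\sG_1$ and a distinct pair in $\sG_2$ directly and checking the definition of crossing by hand. Your pairs $\{K_1, J\}$ and $\{K_2, L_2\}$ need not be tip pairs, which is fine since the definition of crossing places no adjacency requirement on the witnessing pairs. The paper's single-pass finish is a bit more economical and reuses \refprop{leafGapCross} cleverly; your two-pass finish is more symmetric and avoids routing through \refprop{tipPairCross}, at the modest cost of invoking the interleaving-gap hypothesis twice. Both are sound. Your acknowledged ``bookkeeping'' — upgrading $J^*\subset I$ to $v(\sG_1)\subset I$, and showing pivots and tips of the various gaps stay off $\partial I$ and $\partial J$ — is indeed exactly what is needed and all of it follows from looseness, strong transversality and \reflem{trichotomy2} as you say; in particular, a possible pivot of a crown is handled because a pivot is a pivot in both lamination systems and two distinct crowns in one system cannot share a pivot.
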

        \begin{proof}
        If both $\sG_1$ and $\sG_2$ are leaves, then they obviously interleave and cross. Also, the case where one of $\sG_1$ and $\sG_2$ is a leaf and the other is a non-leaf gap follows from \refprop{leafGapCross}.
        
        Now, we assume that $\sG_1$ and $\sG_2$ are non-leaf gaps and that they do not interleave. Then, the interleaving gap $\sH_1$ of $\sG_1$ is not $\sG_2.$ Therefore, by \refprop{twoGapUnlinked}, $\sG_2$ lies on a good open interval $K$ in $\sH_1.$ Then, $J_2^* \subset K$ for some $J_2\in \sG_2.$ 
        
        Now, we say that the tip of $\sG_1$ over which $K$ crosses is $t_1.$ If $t_1\in J_2,$ then $v(\sG_1) \subset J_2$ since $v(\sG_1)-\{t_1\}\subset K.$ This implies that $\sG_1$ and $\sG_2$ are unlinked and so it is a contradiction. Hence, $t_1 \in J_2^*.$ 
        
        If $t_1$ is a pivot with respect to $\sL_2,$ then $t_1$ is also a pivot in $\sL_1$, but, by \reflem{trichotomy2}, it is a contradiction since $t_1$ is a tip of $\sG_1.$ Therefore, there is an element $I_2$ in $\sG_2$ containing $t_1$ by \reflem{trichotomy2}.
        
        Now, we consider distinct pairs $\tipp{t_1}$ and $\{I_2, J_2\}$ to show that $\sG_1$ and $\sG_2$ cross.
        Note that 
        $$ v(\sG_2) \subset \closure{J_2^*}-\{t_1\} \subset \closure{K}-\{t_1\} \subset \bigcup \tipp{t_1}.$$ We write $\{M,N\}$ for $\tipp{t_1}.$ Since $I_2$ is crossing over $t_1,$ $\ell(I_2)$ and $\ell(M)$ are linked and so $\ell(M)$ and $\sG_2$ are linked. Therefore, by \refprop{leafGapCross}, $\ell(M)$ and $\sG_2$ cross. Therefore, one of $M$ and $M^*$ contains only one tip $t_2$ of $v(\sG_2).$ This implies that one of $M$ and $N$ contains only one tip $t_2$ of $v(\sG_2)$ since $$v(\sG_2)\subset M \cup N \subset M\cup M^*.$$ Therefore, $\{I_2, J_2\}=\tipp{t_2}$ and so $\tipp{t_2}$ crosses over $t_1.$ Thus, by \refprop{tipPairCross}, $\sG_1$ and $\sG_2$ cross. 
        \end{proof}

Now, we may summarize as follows.
\begin{prop}\label{Prop:twoGap}
    Let $\sV=\{\sL_1, \sL_2\}$ be a veering pair. Suppose that a non-leaf gap $\sG_1$ and a gap $\sG_2$ of $\sV$ are linked. Then, one of the following cases holds. 
    \begin{itemize}
        \item $\sG_1$ and $\sG_2$ interleave.
        \item $\sG_2$ is a leaf and $\sG_2$ crosses over a tip of $\sG_1.$
        \item $\sG_2$ is also a non-leaf gap and there are tips $t_1$ and $t_2$ of $\sG_1$ and $\sG_2,$ respectively, such that $\tipp{t_i}$ crosses over $t_{j}$ for all $i\ne j$.
    \end{itemize}
    \end{prop}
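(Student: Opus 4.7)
The plan is to organize this as a short case analysis that collates the earlier propositions on linkedness and crossing. First I would invoke Proposition~\ref{Prop:linkedGaps}: since $\sG_1$ is a non-leaf gap linked with $\sG_2$, either $\sG_1$ and $\sG_2$ interleave (first bullet, done) or they cross. So we may assume henceforth that $\sG_1$ and $\sG_2$ cross. From here, the dichotomy between bullets two and three is precisely whether $\sG_2$ is a leaf or a non-leaf gap.

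If $\sG_2$ is a leaf, I would translate ``crossing'' into the ``crossing over'' language as follows. Crossing provides a distinct pair $\{I_1, J_1\}$ in $\sG_1$ and a (single) element $J_2$ of $\sG_2$ with $v(\sG_2)\subseteq I_1\cup J_1$ and $|I_1 \cap v(\sG_2)|=1$, while the two vertices of the leaf $\sG_2$ lie in $I_1$ and $J_1$ respectively. Hence there is a unique tip $t$ of $\sG_1$ with $v(\sG_1)\cap J_2^*=\{t\}$ (equivalently $J_2 \cap v(\sG_1)=\{t\}$; strong transversality rules out the tip $t$ being an endpoint of the leaf $\sG_2$ because $t\in\E{\sL_1}$ and the leaf belongs to $\sL_2$). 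Since $\tipg{t}=\sG_1$ by looseness and \reflem{trichotomy2}, this says exactly $J_2\cap v(\tipg{t})=\{t\}$, so the element $J_2$ of $\sG_2$ crosses over $t$, and therefore the leaf $\sG_2$ crosses over $t$. This yields the second bullet.

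If instead $\sG_2$ is a non-leaf gap, then Proposition~\ref{Prop:crossingTwoGaps} applies verbatim: crossing yields tips $t_1$ of $\sG_1$ and $t_2$ of $\sG_2$ such that $\tipp{t_1}$ crosses over $t_2$ and $\tipp{t_2}$ crosses over $t_1$, which is the third bullet.

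I do not expect a serious obstacle; the proposition is essentially a packaging result whose content is already distributed across Propositions~\ref{Prop:leafGapCross},~\ref{Prop:crossingTwoGaps}, and~\ref{Prop:linkedGaps}. The only subtlety is to verify in the leaf case that the ``crossing'' configuration literally produces the defining condition $J\cap v(\tipg{t})=\{t\}$ for ``crossing over'', where one must identify $\tipg{t}$ with $\sG_1$ using that $\sG_1$ is the unique real gap of $\sV$ having $t$ as a tip (by looseness and \reflem{trichotomy2}) and invoke strong transversality to exclude $t$ from the vertex set of the leaf $\sG_2$.
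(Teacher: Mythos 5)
Your plan is correct and matches what the paper intends: the paper offers no separate proof for Proposition~\ref{Prop:twoGap}, introducing it with ``Now, we may summarize as follows,'' so the intended argument is exactly the packaging you describe---reduce via Proposition~\ref{Prop:linkedGaps} to ``interleave or cross,'' then split on whether $\sG_2$ is a leaf and cite Proposition~\ref{Prop:crossingTwoGaps} for the non-leaf case. The one place your write-up stumbles is in the leaf case. You assert that $v(\sG_1)\cap J_2^*=\{t\}$ is ``equivalently'' $J_2\cap v(\sG_1)=\{t\}$; this is false unless $|v(\sG_1)|=2$. By strong transversality $v(\sG_1)$ is partitioned between the two complementary halves $J_2$ and $J_2^*$ of the leaf, so if one half meets $v(\sG_1)$ in exactly one point $t$, the other meets it in $|v(\sG_1)|-1$ points. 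Your conclusion survives because ``the leaf crosses over $t$'' only requires \emph{some} element of the leaf to cross over $t$, and the half that contains exactly one vertex of $\sG_1=\tipg{t}$ does. Also, the ``Hence'' that pulls out $t$ is misleading: the $\sG_2$-side clause $|I_2\cap v(\sG_1)|=1$ is not derived from the $\sG_1$-side clause you state---it is the other half of the crossing definition, which you should invoke directly. You should also make explicit why $t$ must be a tip and not the pivot when $\sG_1$ is a crown (an open interval containing the pivot would contain infinitely many tips, contradicting $|I_2\cap v(\sG_1)|=1$).

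A cleaner route for the leaf case is to observe that the proof of Proposition~\ref{Prop:leafGapCross} already produces the stronger conclusion you want: in the case where $\sG$ is non-leaf, that proof selects $J\in\ell$ with $J\subset I\in\sH$ (the interleaving gap) and $v\in J$, where $I\cap v(\sG)=\{v\}$, hence $J\cap v(\sG)=\{v\}$; that is precisely ``$\ell$ crosses over the tip $v$ of $\sG$.'' So the second bullet can be read off from the proof of Proposition~\ref{Prop:leafGapCross} without re-deriving it from the crossing definition (which, strictly speaking, never quite applies to a leaf, since a leaf $\{K,K^*\}$ is not a ``distinct pair'' in the paper's technical sense). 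Either way the conclusion is correct; I would tighten the wording as above.
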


\part{From Veering Pairs to Loom Spaces}\label{loomspacepart}

The major theme of Part 2 is to construct a loom space from a given veering pair. This construction is functorial.

Let $\sV=\{\sL_1,\sL_2\}$ be a veering pair. The main idea is to consider the stitch space $\fS(\sV)$  analogous to the link space in \cite{SchleimerSegerman19}. Then, define the \emph{weaving relation} $\sim_\omega$ on $\fS(\sV).$ Let $\lambda_1$ and $\lambda_2$ be  geodesic laminations on $\HH^2$ associated with $\sL_1$ and $\sL_2,$ respectively. r
We do doubling $\overline{\HH^2}$ to obtain the Riemann sphere $\hat{\CC}$ where  each hemisphere has a copy of a pair of laminations $\lambda_1$ and $\lambda_2$. Then, we collapse each component of the complement of the four geodesic laminations in $\hat{\CC}$ to a point and use Moore's theorem (see \refthm{Moore}) to show that the resulting space is still the sphere. Next, by showing that $\partial \HH$ is still alive after collapsing, we cut the sphere along $\partial \HH$ to get a transversely (singular) foliated disk. Furthermore, we show that the foliated disk is exactly the manifold part of the quotient space  $\fS(\sV)/\sim_\omega.$ After removing all singular points from   $\fS(\sV)/\sim_\omega,$ we show that the universal cover of the resulting space is a loom space in the sense of \cite{SchleimerSegerman21}.

\section{The Compressing Relation on $\closure{\HH^2}$}

\subsection{Moore's Theorem}
Let $X$ be a topological space. Suppose that $P$ is a partition of $X.$ Then we call $P$ a \emph{decomposition} of $X$ if every element of $P$ is  closed in $X.$ Let $\sim_P$ be the equivalence relation on $X$ induced by $P.$ Then there is a quotient map 
$$\pi_P : X \to X/\sim_P$$
defined by $x \mapsto [x]$
where $[x]$ is the equivalence class of $x.$ Then, $X/{\sim_P}$ is a topological space with the quotient topology induced by $\pi_P.$ We call the topological space $X/{\sim_P}$ the \emph{decomposition space} of $P$ and denote it by $\cD(P).$

A decomposition $P=\{C_\alpha\}_{\alpha \in \Gamma}$ of $X$ is said to be \emph{upper semicontinuous} if  for each $\alpha$ in $\Gamma,$ $C_\alpha$ is compact in $X$ and for any open set $U$ of $X$ containing $C_\alpha,$ we can take an open set $V$ so that it satisfies the following:
\begin{itemize}
    \item $C_\alpha \subset V \subset U$ and 
    \item if $C_\beta \cap V\neq \emptyset$ for some $\beta\in \Gamma,$ then $C_\beta \subset U.$
\end{itemize}
See \cite[Section 3-6, Chapter 3 ]{HockingYoung} for more detailed introduction to upper semicontinuous decompositions. See also \cite{Calegari07}.

Now, we assume that $X$ is a $2$-manifold without boundary and $P$ is an upper semicontinuous  decomposition of $X.$ Then we say that $P$ is \emph{cellular} if for each $p\in P,$ there is an  embedding $i$ in $\RR^2$ such that $\RR^2 - i(p)$ is connected.

\begin{thm}[Moore \cite{Moore29}]\label{Thm:Moore}
Let $S$ be either $S^2$ or $\RR^2$. Let $P$ be a cellular decomposition of $S$. Then, the quotient map $\pi_P : S \to \cD(P)$ can be approximated by homeomorphisms. In particular, $S$ and $\cD(P)$ are homeomorphic.  
\end{thm}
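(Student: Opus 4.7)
The plan is to apply the \emph{Bing shrinking criterion}: a continuous surjection $f \colon X \to Y$ between compact metric spaces is uniformly approximable by homeomorphisms iff for every $\epsilon > 0$ there is a self-homeomorphism $h \colon X \to X$ with $d(f \circ h, f) < \epsilon$ and $\operatorname{diam}(h(f^{-1}(y))) < \epsilon$ for every $y \in Y$. First I would reduce to the compact case. If $S = \RR^2$, compactify to $S^2$ by a point $\infty$ and extend $P$ by the singleton $\{\infty\}$; the enlarged decomposition is still cellular and upper semicontinuous, and a homeomorphism of the two decomposition spaces restricts to the desired one on $\RR^2$. Endow $S^2$ with a fixed metric and give $\cD(P)$ the quotient pseudometric; upper semicontinuity together with compactness of $S^2$ makes this an honest metric, and $\pi_P$ becomes a continuous surjection between compact metric spaces.

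Next I would verify the shrinking criterion. Fix $\epsilon > 0$. By compactness together with upper semicontinuity, only finitely many elements $C_1, \dots, C_n \in P$ have diameter $\geq \epsilon/2$. Using cellularity, each $C_i$ admits a neighborhood basis of open disks in $S^2$. Shrinking if necessary, I can select pairwise disjoint open disks $D_i \supset C_i$ so small that (i) $\operatorname{diam}(D_i) < \epsilon$, and (ii) every element of $P$ that meets $D_i$ is contained in a slightly fattened disk $V_i$ with $\operatorname{diam}(V_i) < \epsilon$ and $\overline{D_i} \subset V_i$. Cellularity of $C_i$ inside $D_i$ then lets me construct an ambient isotopy of $S^2$ supported in $D_i$ that compresses $C_i$ into a subdisk of diameter less than $\epsilon$; this is a standard planar disk move. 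Composing the $n$ isotopies, whose supports are disjoint, produces a single homeomorphism $h \colon S^2 \to S^2$.

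The main step is verifying the two uniform bounds simultaneously. Any element $C \in P$ disjoint from all $D_i$ already has diameter less than $\epsilon/2$ and is fixed by $h$. Any $C$ meeting some $D_i$ lies in $V_i$ by construction and is pushed inside $V_i$ by $h$; hence $\operatorname{diam}(h(C)) < \epsilon$. For the approximation bound, $h$ moves points only within the $V_i$, and for $x \in V_i$ both $\pi_P(x)$ and $\pi_P(h(x))$ are images of points in the single set $V_i$, which has small diameter in the quotient metric. The hard part is the choice of the $V_i$: upper semicontinuity is the only tool that allows me to select a thickening large enough to swallow every decomposition element that could intrude while still keeping $\operatorname{diam}(V_i) < \epsilon$. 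Once the shrinking criterion is verified, Bing's theorem furnishes homeomorphisms $g_k \colon S^2 \to \cD(P)$ converging uniformly to $\pi_P$, which in particular yields the asserted homeomorphism $S \cong \cD(P)$.
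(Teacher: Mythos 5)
The paper does not prove this statement---it is imported verbatim from the literature with a citation to Moore---so there is no internal proof to compare against; your Bing-shrinking strategy is indeed the standard modern route to Moore's theorem. However, as written your argument has a fatal gap at the very first step of the shrinking verification: the claim that \emph{only finitely many elements of $P$ have diameter $\geq \epsilon/2$} is false for a general cellular upper semicontinuous decomposition. For example, let $K\subset[0,1]$ be a Cantor set and let $P$ consist of the arcs $\{c\}\times[0,1]$ for $c\in K$ together with the remaining singletons; this is a cellular usc decomposition of $S^2$ with uncountably many elements of diameter $1$. (Upper semicontinuity controls how elements \emph{vary}, not how many large ones there are.) Consequently the entire strategy of enclosing the large elements in finitely many pairwise disjoint disks and composing finitely many disjointly supported isotopies collapses. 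Note also that such decompositions actually occur in this paper: a single non-leaf gap of $\sL_1$ can be crossed by an uncountable family of leaves of $\sL_2$, producing uncountably many nondegenerate elements of the compressing decomposition.

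Two further points would need repair even if the finiteness claim held. First, you require $D_i\supset C_i$ with $\operatorname{diam}(D_i)<\epsilon$, which is impossible whenever $\operatorname{diam}(C_i)\geq\epsilon$; the correct requirement is that $D_i$ lie in a saturated neighborhood whose $\pi_P$-image is small, not that $D_i$ itself be small. Second, and more seriously, a homeomorphism that compresses $C_i$ inside $D_i$ will in general stretch \emph{other} decomposition elements passing through $D_i$, possibly creating new elements of image-diameter $\geq\epsilon$; the Bing criterion demands that \emph{all} fibers of $\pi_P\circ h$ be small simultaneously, and controlling this feedback is precisely where the two-dimensional hypotheses (cellularity in a surface, separation properties of the plane) must be used in an essential way. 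The classical proofs handle this either via Moore's continuum-theoretic characterization of $S^2$ or via a genuinely more delicate shrinking argument; your outline does not yet contain the idea needed to close this loop.
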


\subsection{Cellular Decompositions on $\overline{\HH^2}$}\label{Sec:decompositions}
In this section, we think of $\HH^2$ as the upper half plane of $\CC$ and so our circle $S^1$ is the extended real number $\hat{\RR}=\RR\cup \{\infty\}.$ Let $\sV=\{\sL_1, \sL_2 \}$ be a veering pair. Fix $i\in\{1,2\}$. We define an equivalence relation $\sim_i$ on $S^1$ as follows: For any $u$ and $v$ in $S^1$, $u \sim_i v$ if and only if either $u=v$ or there is a gap $\sG$ in $\sL_i$ such that $\{u,v\}\subset v(\sG)$. We denote the partition induced by $\sim_i$ by $P_i$.

\begin{lem}
$P_i$ is upper semicontinuous.
\end{lem}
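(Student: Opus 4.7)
The plan is to verify the two defining properties of an upper semicontinuous decomposition: each class is compact in $S^1$, and each class admits arbitrarily tight saturated open neighborhoods. Compactness is immediate once one classifies the equivalence classes under $\sim_i$. Using \reflem{quadrachotomy} together with looseness and \refprop{realGap}, one sees that every class is either a singleton $\{p\}$ for a rainbow point $p$ of $\sL_i$, or the vertex set $v(\sG)=S^1\setminus\bigcup\sG$ of a unique real gap $\sG$; both are closed subsets of the compact circle.

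For the separation condition I would split into two cases. If $C=\{p\}$ with $p$ a rainbow, then \reflem{trichotomy} provides a descending family $\{I_n\}\subset\sL_i$ with $\bigcap I_n=\{p\}$; for $n$ large enough that $\overline{I_n}\subset U$, set $V:=I_n$. If a different gap $\sG''$ meets $V$, then the leaf $\ell(I_n)$ lies on some $J\in\sG''$, i.e.\ $I_n\subseteq J$ or $I_n^*\subseteq J$. The case $I_n\subseteq J$ would force $v(\sG'')\cap I_n\subseteq I_n\setminus J=\emptyset$, contradicting the assumption; hence $I_n^*\subseteq J$, which gives $v(\sG'')\subseteq S^1\setminus J\subseteq\overline{I_n}\subset U$. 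If instead $C=v(\sG)$ for a real gap, I would take $V=\bigcup_{t\in v(\sG)}V_t$ with each $V_t$ a small open arc around $t$ inside $U$ (plus an extra arc $V_p$ around the pivot when $\sG$ is a crown). By \refprop{twoGapUnlinked}, any other gap $\sG''$ meeting $V$ must be nested on some interval $I\in\sG$ with $v(\sG'')\subseteq\overline{I}$, because the opposite nesting would place $\{t_I,t_I'\}\subset v(\sG'')\cap v(\sG)$ and contradict looseness together with the disjointness of vertex sets of distinct real gaps.

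The main technical step is then to choose each $V_t$ small enough that no such nested $\sG''$ has a vertex outside $U$. I would argue by contradiction: if $\sG_n''$ are nested on $I\in\sG$ with vertices $q_n\to t=t_I$ and $r_n\to r\in I\setminus U$, take $L_n\in\sG_n''$ adjacent to $q_n$ on the $r_n$ side with far endpoint $s_n\to s$. The closedness axiom of $\sL_i$ produces a limit leaf $\ell_\infty\in\sL_i$ with $v(\ell_\infty)=\{t,s\}$, and \refprop{parallelLeaves} combined with the uniqueness in \reflem{quadrachotomy} forces $\ell_\infty=\ell(I)$, whence $s=t_I'$. Since $r_n\in\overline{I}\setminus(q_n,s_n)=[t_I,q_n]\cup[s_n,t_I']$ and $q_n\to t_I$, $s_n\to t_I'$, the pinching forces $r\in\{t_I,t_I'\}\subset U$, contradicting $r\notin U$. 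The main obstacle is the degenerate subcase where $s_n\to t$ instead, corresponding to intervals of $\sG_n''$ accumulating at $t$ from inside $I$; I would handle this by iterating the convergence argument along the chain of intervals of $\sG_n''$ running from $q_n$ toward $r_n$, exploiting that $t$ is a tip and therefore not a rainbow point of $\sL_i$ to rule out such an accumulation.
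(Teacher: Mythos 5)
Your treatment of the rainbow case is correct and essentially matches the paper's argument. For the real-gap case, however, you take a genuinely different route (sequential compactness / contradiction) from the paper's direct construction, and there is a real gap in the degenerate subcase that you flag at the end. The mechanism you propose to close it --- ``$t$ is a tip and therefore not a rainbow point of $\sL_i$'' --- is not the right one. Not being a rainbow point is a statement about nested intervals of $\sL_i$ \emph{containing} $t$; it says nothing about leaves of $\sL_i$ sitting \emph{inside} $I$ and accumulating on $\ell(I)$. In particular, if the arcs $L_n=(q_n,s_n)$ both of whose endpoints tend to $t$ are pairwise disjoint (which the unlinkedness you'd want to exploit allows, since nested chains of such shrinking arcs are already ruled out for trivial reasons), they produce no rainbow at $t$ at all, and the chain argument you gesture at does not terminate: after iterating past $s_n^{(1)}, s_n^{(2)},\dots$ you never obtain a nondegenerate limit leaf with $t$ as a vertex, and you also never obtain the contradiction you want.

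The fact that actually does the work is \refprop{realGap}: since $\sG$ is a real gap, each $I\in\sG$ is not isolated, so there is $J\in\sL_i$ with $\overline{J}\subset I$ and $\ell(J)$ as close to $\ell(I)$ as you wish. Once such a $J$ is fixed with $r\in J$ and the near endpoint of $J$ between $t$ and your $V_t$, unlinkedness of $\sG_n''$ with $\ell(J)$ alone forces $v(\sG_n'')\subset\overline{J}$ or $v(\sG_n'')\subset\overline{J^*}$; with $q_n\in J^*$ and $r_n\in J$ this is an immediate contradiction, with no limiting argument and no case split on $s_n$. This is exactly how the paper builds $V$: it chooses, for each $I_j\in\sG$, an interval $J_j\in\sL_i$ with $\overline{I_j\setminus U'}\subset J_j\subset\overline{J_j}\subset I_j$ and sets $V=U'\setminus\bigcup_j\overline{J_j}$, after which saturation of $V$ inside $U$ is immediate from unlinkedness. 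You should replace the compactness argument (and in particular the appeal to ``$t$ not a rainbow'') with this use of \refprop{realGap}; once you do, the sequential machinery and the closedness-of-$\cC(\sL_i)$ step (which, incidentally, is not literally the ``closedness axiom'' of lamination systems but a derived property of the leaf space) become unnecessary for this case. The crown case then needs the same cut-off near the pivot, as you indicate, plus the choice of a terminal interval $\cldi{t_{N+1}}{t_{-N}}$ that isolates the pivot inside $U$, which again uses non-isolatedness of the remaining finitely many $\opi{t_k}{t_{k+1}}$.
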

\begin{proof}
We first show that each equivalence class of $\sim_i$ is compact subset of $S^1$. Let $v$ be a point in $S^1$. If $v$ is a rainbow point in $\sL_i$, the equivalence class $[v]$ is the one point set $\{v\}$. If $v\in \E{\sL_i}$, then there is a real gap $\sG$ such that $[v]=v(\sG)$ by \reflem{trichotomy2}. By \reflem{trichotomy}, the remaining case is that $v$ is the pivot of some crown $\sG$. In this case, $[v]=v(\sG)$. Therefore, for each $v$ in $S^1,$ $[v]$ is either $\{v\}$ or $v(\sG)$ for some real gap $\sG$ by \refprop{realGap} and so $[v]$ is compact. 

Let $U$ be an open set in $S^1$ containing an equivalence class $[v]$. We split the cases according to \reflem{trichotomy2}.

Suppose that $v$ is a rainbow point in $\sL_i$. In this case, we know that $[v]=\{v\}$. Let $\seq{I_n}$ be a rainbow at $v$ in $\sL_i$. Choose a large enough $n_0\in \NN$ such that $\overline{I_{n_0}}\subset U$. Let $V=I_{n_0}\in \sL_i$. Then, we have $[v] \subset V \subset U$.  Assume that $[w]\cap V \neq \emptyset$ for some $w\in S^1$. We need to show that $[w]\subset U$.  If $[w]=\{w\},$ then $[w]\subset V$ and so $[w] \subset U$. If $[w]=v(\sG)$ for some real gap $\sG,$ then  $v(\sG)\subset \overline{V}\subset U$, so that $[w]\subset U$ as we wanted.

Suppose that $v$ is a vertex of a real gap $\sG$. Then we have $[v]=v(\sG)$. First, consider the case where $\sG=\{I_1,\cdots, I_n\}$ is an ideal polygon. Let $U$ be an open set of $S^1$ containing $[v]$. For each $x \in v(\sG)$, choose an open interval $U_x$ containing $x$ in such a way that $\overline{U_x} \cap \overline{U_y} = \emptyset$ for all $x\ne y$ and that $\overline{U_x} \subset U$ for all $x\in v(\sG)$. Let $U' = \bigcup _{x\in v(\sG)} U_x$. Then, $U'$ is again a neighborhood of $[v]$ and $\overline{U'}\subset U$. Since $\sG$ is a real gap (\refprop{realGap}), we know that there is $J_j \in \sL_i$ such that  $\overline{I_j \setminus  U'} \subset J_j\subset \overline{J_j} \subset I_j$ for each $j=1,2,\cdots,n$. We define $V = U' \setminus \bigcup_{j=1} ^n \overline{J_j}$. From the construction, we know that $\overline{V} \subset \overline{U'} \subset U$. Assume that $w$ is a point in $S^1$ such that $[w]\cap V \neq \emptyset$. If $[w]=\{w\}$, then $[w] \subset V \subset U$. Otherwise, there is a real gap $\sG'$ such that $[w]=v(\sG')$. Again we know that $v(\sG') \subset \overline {V}\subset U$. Hence, $[w]\subset U$ as desired.

Finally, suppose that $v$ is a vertex of a crown $\sG$. Let $U$ be an open set of $S^1$ such that $[v]=v(\sG)\subset U$. We write $\sG=\{\opi{t_n}{t_{n+1}} : n\in \ZZ\}$ as in the definition of crowns and $p$ for the pivot. Since $p$ is the accumulation point of $\{t_n\}_{n\in \ZZ}$ and $\bigcap_{n\in \NN}\opi{t_{n+1}}{t_{-n}}=\{p\},$ there is a number $N \in \NN$ such that $p\in U_{\infty}=\cldi{t_{N+1}}{t_{-N}}\subset U$. We choose disjoint open intervals $U_n$ at each $t_k$, $|k|\le N$, such that $\closure{U_k}\subset U$. Let $U'=\bigcup_{|k|\le N} U_k\cup U_{\infty}$. Then, $[v]\subset U' \subset \closure{U'}\subset U$. Since each $\opi{t_k}{t_{k+1}}\in \sL_i$ is not isolated, we can find $J_k\in \sL_i$ for $|k|\le N$ such that $\closure{\opi{t_k}{t_{k+1}}\setminus U'}\subset J_k \subset \closure{J_k}\subset \opi{t_k}{t_{k+1}}$. Define $V=U'\setminus \bigcup_{|k|\le N} \overline{J_k}$ so that $[v]\subset V\subset \closure{V}\subset \closure{U'} \subset U$. By the same argument as before, we show that any equivalence class $[w]$ with $[w]\cap V \ne \emptyset$ is contained in $\closure{V}$ and so is in $U$.

This shows that $P_i$ is upper semicontinuous. 
\end{proof}

Now, we extend the equivalence relation $\sim_i$ to $\closure{\HH^2}$ as follows. For $x$ and $y\in \closure{\HH^2},$  $x \approx_i y$ if and only if $\{x, y\} \subset H(v)$ for some $v\in P_i$ where $H(K)$ is the Euclidean convex hull in the Klein model of $\closure{\HH^2}$.   Let $Q_i$ be the partition on $\closure{\HH^2}$ induced by $\approx_i$. Observe that $Q_i=\{H(v):v\in P_i\}$. As $P_i$ is an upper semicontinuous decomposition on $S^1,$ $Q_i$ is an upper semicontinuous decomposition on $\closure{\HH^2}$ by elementary hyperbolic geometry.

Finally, we define a partition $P$ on $\overline{\HH^2}$ to be the set 
$$\{c_1\cap c_2 : c_i\in Q_i \text { and } c_1\cap c_2\neq \emptyset\}.$$ Also, we denote the equivalence relation induced from $P$ by ${\approx}$. Then for $x$ and $y$ in $\overline{\HH^2}$, $x\approx y$ if and only if $x \approx_i y$ for all $i\in \{1,2\}$. Since $Q_1$ and $Q_2$ are upper semicontinuous decompositions, $P$ is also an upper semicontinuous decompositions.

Let $v$ be a point in $S^1.$ We claim that the intersection of $S^1$ and the equivalence class $\langle v \rangle$ under $\approx$ is $\{v\}$.  By \reflem{trichotomy2}, if $v$ is not the pivot of some crown, then $v$ is a rainbow point of $\sL_i$ for some $i \in \{1,2\}$. Then the equivalence class $\langle v \rangle_i$ under $\approx_i$ is $\{v\}$. Therefore, $\langle v \rangle=\{v\}$. Otherwise, there is an asterisk $(\sG_1,\sG_2)$ such that $\sG_i$ is a crown having $v$ as the pivot for all $i\in \{1,2\}$. Then, 
$$\langle v \rangle=H(v(\sG_1))\cap H(v(\sG_2)).$$ Hence, $\langle v \rangle \cap S^1=\{v\}$.

Let $P^*$ be the upper semi-continuous decomposition
$$\{J(c):c\in P\}$$ on $\closure{\HH^*}$ where $J$ is the complex conjugation given by $z\mapsto \bar{z},$ and ${\approx^*}$ denotes the equivalence relation defined by $P^*$. Now, we construct an upper semi-continuous decomposition on the Riemann sphere $\hat \CC$. 

First, we define a relation 
${\approxeq}$ on $\hat \CC$ as follows. For any $x$ and $y$ in $\hat \CC,$ $x\approxeq y$ if and only if either $x\approx y$ or $x\approx^* y$. Then, we denote the equivalence relation on $\hat \CC$ generated by ${\approxeq}$ by ${\triplesim}$. Also, we denote the partition defined by ${\triplesim}$ by $Q$.

Let $v$ be a point in $\hat \CC$. 
If the equivalence class $\lsem v \rsem$ of $v$ under ${\triplesim}$ does not intersect $\hat \RR$, then $\lsem v \rsem$ is in $P$ or $P^*$. Otherwise, there is a unique point $w$ in $\lsem v \rsem \cap S^1$ and so 
$$\lsem v \rsem = \langle w \rangle \cup  J(\langle w \rangle).$$ More precisely, if $w$ is not the pivot of some gap, then $\lsem v \rsem=\{w\}$. If $w$ is the pivot point of an asterisk $(\sG_1, \sG_2)$, then  
$$\lsem v \rsem =\left( H(v(\sG_1))\cap H(v(\sG_2)) \right) \cup J(H(v(\sG_1))\cap H(v(\sG_2))).$$ 
Then, we can see that $Q$ is also upper semi-continuous as $P$ and $P^*$ are upper semi-continuous. Moreover, $Q$ is cellular since every element of $Q$ intersects $\hat \RR$ in at most one point. Thus, by \refthm{Moore}, the quotient map 
$$\pi_Q: \hat \CC \to \cD(Q)$$
can be approximated by homeomorphisms and $\cD(Q)$ is homeomorphic to the Riemann sphere $\hat \CC$.

Observe that 
$$\pi_Q|\closure{\HH^2}=\pi_P$$
and $\pi_Q|\hat \RR =\pi_P|\hat \RR$ is an injective continuous map from $\hat \RR$ to $\cD(Q)$. As $\cD(Q)$ is  is homeomorphic to $S^2$, by applying  the Jordan-Schoenflies theorem to the Jordan  curve $\pi_Q(\hat \RR)$ on $\cD(Q)$, we can conclude that $\pi_Q(\closure{\HH^2})$ is homeomorphic to the closed disk whose boundary is the Jordan curve $\pi_Q(\hat \RR)$. Therefore, since $$\pi_Q(\closure{\HH^2})=\pi_P(\closure{\HH^2})=\cD(P),$$ 
the decomposition space $\cD(P)$ of $P$ is homeomorphic to the closed disk. Thus, $\cD(P)\setminus \pi_P(\hat \RR)$ is homeomorphic to $\RR^2$. 
 
From now on, we denote the upper semi-continuous decomposition $P$ on $\closure{\HH^2}$ by $C(\sV)$ where $\sV=\{\sL_1, \sL_2\}$. We refer to the decomposition $C(\sV)$ as the \emph{compressing decomposition}. Also, we call the equivalence relation $\sim_{C(\sV)}$ the \emph{compressing relation} of $\sV.$ Then, we can summarize the result of this section as follows.

\begin{lem}\label{Lem:compressH}
Let $\sV$ be a veering pair. Then, 
the quotient space $\cD(C(\sV))$ is the closed disk. Moreover, the restriction map $\pi_{C(\sV)}| S^1 $ is a Jordan curve which is the boundary of $\cD(C(\sV))$. 
\end{lem}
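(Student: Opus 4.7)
The plan is to avoid working directly with the bordered disk $\overline{\HH^2}$, since Moore's theorem as stated in \refthm{Moore} is for $S^2$ or $\RR^2$. Instead I would double $\overline{\HH^2}$ across $S^1$ to form the Riemann sphere $\hat{\CC}$, transport $C(\sV)$ to a decomposition $Q$ on $\hat{\CC}$ using the complex conjugation symmetry, apply Moore's theorem to conclude that $\cD(Q)\cong S^2$, and finally cut along the image of the equator $\hat{\RR}$ to recover the disk statement for $\cD(C(\sV))$.

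The crucial preparatory step, which must be carried out in $\overline{\HH^2}$ before doubling, is to verify that $C(\sV)$ is an upper semicontinuous decomposition whose classes are the Euclidean convex hulls (in the Klein model) of the $\sim$-classes on $S^1$ determined by the real gaps of $\sL_1$ and $\sL_2$. Concretely, I would first establish upper semicontinuity of each $P_i$ on $S^1$ by splitting into cases according to \reflem{trichotomy2}: rainbow points have singleton classes and are handled by a rainbow of good intervals; for tips of ideal polygons, realness of the gap (\refprop{realGap}) lets me shrink each complementary good interval $I_j$ just enough to produce a saturated neighborhood inside any given $U$; for pivots of crowns, I must simultaneously handle the accumulation at the pivot and the infinitely many tips, using that the tips accumulate only at the pivot to reduce to finitely many tips outside a small neighborhood of the pivot. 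Extending $\sim_i$ to $\approx_i$ by taking convex hulls in $\overline{\HH^2}$ preserves upper semicontinuity by elementary hyperbolic geometry, and intersecting the two decompositions $Q_1$ and $Q_2$ gives $C(\sV)=P$, still upper semicontinuous. Using \reflem{trichotomy2} once more, every non-trivial class of $P$ meets $S^1$ in at most one point.

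Next I double: let $P^*$ be the reflected decomposition on the lower hemisphere and let $Q$ be generated on $\hat{\CC}$ by identifying classes of $P$ and $P^*$ that share their (at most one) point on $S^1$. Then a class of $Q$ is either entirely contained in one closed hemisphere, or is the symmetric union $\langle v\rangle\cup J(\langle v\rangle)$ at the pivot $v$ of an asterisk of crowns. In either case the class meets $\hat{\CC}\setminus(\text{class})$ in a connected set because each class intersects $\hat{\RR}$ in at most one point and is, in each hemisphere, the convex hull of a closed subset of $S^1$; hence $Q$ is cellular, and upper semicontinuity of $Q$ follows from that of $P$ and $P^*$. By \refthm{Moore}, $\cD(Q)\cong S^2$, and $\pi_Q\vert_{\hat{\RR}}$ is a continuous injection of a compact space, hence a Jordan curve in $\cD(Q)$.

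To finish, I apply the Jordan--Schoenflies theorem: $\pi_Q(\hat{\RR})$ separates $\cD(Q)$ into two open disks whose closures are closed disks, and the image of $\overline{\HH^2}$ is exactly one of them. Because $\pi_Q\vert_{\overline{\HH^2}}=\pi_P$, this image coincides with $\cD(C(\sV))$, and its boundary Jordan curve is precisely $\pi_{C(\sV)}(S^1)$, giving both assertions of the lemma. The principal obstacle in this plan is the upper semicontinuity verification at pivots of crowns, where one must control a class with infinitely many boundary tips accumulating at a single point; the key is to exploit \refprop{realGap} so that each boundary good interval admits slightly shrunken elements of $\sL_i$ which can be used to build a saturated neighborhood.
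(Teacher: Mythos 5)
Your proposal is correct and follows essentially the same route as the paper: the paper likewise verifies upper semicontinuity of each $P_i$ by the same case analysis from \reflem{trichotomy2} (using \refprop{realGap} at tips and pivots), passes to convex hulls in the Klein model, doubles across $\hat{\RR}$ via complex conjugation to get a cellular decomposition $Q$ of $\hat{\CC}$, applies Moore's theorem to get $\cD(Q)\cong S^2$, and then cuts along the Jordan curve $\pi_Q(\hat{\RR})$ with the Jordan--Schoenflies theorem. No substantive differences.
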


\section{Stitch Spaces and Weavings}\label{Sec:weavingsec}
As the first step toward loom spaces, we consider stitch spaces and weavings. The stitch space plays the role of the link space in \cite{SchleimerSegerman19}.

\subsection{Stitch Spaces}
Let $\sV=\{\sL_1, \sL_2\}$ be a veering pair. We define the \emph{stitch spaces} $\fS(\sV)$  of $\sV$ be the set of all stitches of $\sV$. Then, $\fS(\sV)$ is a subspace of $\ell(\sL_1)\times \ell(\sL_2)$. For each $i\in \{1,2\}$, we define a projection map $\eta_i$ from $\fS(\sV)$ to $\ell(\sL_i)$ by $\eta_i(\ell_1, \ell_2)=\ell_i.$

We define the \emph{endpoint map} $\epsilon_2$ from $\fS(\sV)$ to $\cC(\sL_1)\times \cC(\sL_2)$ by 
$$\epsilon_2(s):=(\epsilon(\eta_1(s)),\epsilon(\eta_2(s))).$$ We call the image of $\epsilon_2$ the \emph{pair spaces} of $\sV$ and denote it by $\fP(\sV)$. Note that $\fP(\sV)$ is a subspace of $\cM \times \cM$. We define the \emph{link map} $g_2$ from the pair space $\fP(\sV)$ to $\closure{\HH^2}$ by sending $(\mu, \nu)$ to the intersection point of geodesics $g(\mu)$ and $g(\nu)$.  We can see that $g_2$ is an injective continuous map and the image is contained in $\HH^2$. Hence, we can get an injective continuous map 
$$g_2\circ \epsilon_2:\fS(\sV)\to \closure{\HH^2}.$$

\subsection{Warp and Weft threads}
In preparation for the construction of transverse foliations, we introduce warps and wefts, each will play the role of leaves of the forthcoming foliations.

Let $\sV=\{\sL_1, \sL_2\}$ be a veering pair. For each $i\in \{1,2\}$, let $\ell_i$ be a leaf of $\sL_i$. We call the set $\eta_1^{-1}(\ell_1)$ the \emph{warp thread} on $\ell_1$ and denote it by $\overt(\ell_1)$. Likewise, $\eta_2^{-1}(\ell_2)$ is called the \emph{weft thread} on $\ell_2$ and is denoted by $\ominus(\ell_2)$. We say that a subset $\cT$ of $\fS(\sV)$ is called a \emph{thread} on a leaf $\ell$ of $\sV$ if $\cT=\eta_i^{-1}(\ell)$ for some $i\in \{1,2\}$. Also, we denote $\cT$ by $\oslash(\ell)$. 

\begin{rmk}\label{Rmk:twoSingStitch}
    If the thread $\oslash(\ell)$ on a leaf $\ell$ has a singular stitch, then $\ell$ is a boundary leaf of a non-leaf  gap. Moreover, in this case, since every leaf in $\sL_1$ or $\sL_2$ is not isolated, there are exactly two singular stitches in $\oslash(\ell)$ which is of the same asterisk. 
\end{rmk}

Given a vertex $e_1$ of $\ell_1$ and a stitch $s_1$ in $\overt(\ell_1)$, the \emph{half warp thread} $\overt(\ell_1,s_1,e_1)$  on $\ell$ emanating from $s_1$ to $e_1$ is the set of all stitches $s$ in $\overt(\ell_1)$ such that $\eta_2(s)$ lie on the element of $\eta_2(s_1)$ containing $e_1$. Likewise, given a vertex $e_2$ of $\ell_2$ and a stitch $s_2$ in $\ominus(\ell_2)$, we define the \emph{half weft thread} on $\ell_2$ 
emanating from $s_2$ to $e_2$. For a leaf  $\ell$  of $\sV$, a vertex $e$ of $\ell$, and a stitch $s$ in $\oslash(\ell)$, we define the \emph{half thread }$\oslash(\ell,s,e)$ on $\ell$ 
emanating from $s$ to $e$ in the same fashion.

Let $\sL$ be a lamination system. A leaf $\ell$ of $\sL$ 
 \emph{lies between} distinct leaves $m_1$ and $m_2$ of $\sL$ if $m_i\neq \ell$ and  $m_i$ lies on $I_i$ for all $i\in\{1,2\}$ where $\ell=\{I_1, I_2\}$. We say that $\ell$ \emph{properly lies between} $m_1$ and $m_2$ if $m_i$ properly lies on $I_i$ for all $i\in\{1,2\}$ where $\ell=\{I_1, I_2\}$.

Let $s_1$ and $s_2$ be distinct stitches of $\sV.$ We say that a stitch $s$    is \emph{between} $s_1$ and $s_2$ if $\eta_i(s_1)=\eta_i(s_2)=\eta_i(s)$ and $\eta_{i+1}(s)$ lies between $\eta_{i+1}(s_1)$ and $\eta_{i+1}(s_2)$ for some $i\in \{1,2\}$. Also, we denote the set of all stitches lying between $s_1$ and $s_2$ by $\cI(s_1, s_2)$ and call $\cI(s_1, s_2)$ the \emph{interval} between $s_1$ and $s_2$. The \emph{closure} $\closure{\cI}(s_1, s_2)$ of $\cI(s_1, s_2)$ is $\cI(s_1, s_2)\cup\{s_1, s_2\}$ and it is called the \emph{closed interval} between $s_1$ and $s_2.$

\begin{prop}\label{Prop:gapBtwStitches}
    Let $\sV=\{\sL_1, \sL_2\}$ be a veering pair. We take a leaf $\ell$ of $\sL_1$. Suppose that there are distinct stitches $s_1$ and $s_2$ in the thread $\oslash(\ell)$ on $\ell$. If the interval $\cI(s_1,s_2)$ on $\oslash(\ell)$ is empty, then there is a tip $v$ such that $\{s_j\}= \oslash(\ell)\cap \oslash(\ell(I_j))$ for all $j\in \{1,2\}$ where $\tipp{v}=\{I_1, I_2\}$. Hence, $\ell$ crosses over the tip $v$. The same is true when $\ell$ is a leaf of $\sL_2$. 
    \end{prop}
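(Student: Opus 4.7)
The plan is to show that the two leaves $m_j := \eta_2(s_j) \in \sL_2$ must share a vertex; once this is established, \refprop{parallelLeaves} produces a non-leaf gap $\sG$ of $\sL_2$ having $m_1$ and $m_2$ as boundary leaves meeting at a common tip $v$. The tip pair $\tipp{v}=\{I_1,I_2\}$ at $v$ with $\ell(I_j)=m_j$ immediately yields $s_j=(\ell,\ell(I_j))$ as the unique element of $\oslash(\ell)\cap\oslash(\ell(I_j))$. That $\ell$ crosses over $v$ follows by inspecting the cyclic order: the good interval of $\ell$ on the $v$-side contains $v$ but none of the other vertices of $\tipg{v}=\sG$, because all other vertices of $\sG$ are forced to lie in the closed arc on the opposite side of $\ell$.

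To prove the shared-vertex claim, I fix notation. Let $a,b$ be the endpoints of $\ell$, and label the endpoints of $m_j$ by $c_j$ on the $\opi{b}{a}$ side of $\ell$ and $d_j$ on the $\opi{a}{b}$ side. After possibly swapping $s_1$ with $s_2$, the unlinkedness of $m_1$ and $m_2$ forces the cyclic order $b,c_1,c_2,a,d_2,d_1$. Emptiness of $\cI(s_1,s_2)$ then translates into the condition that no leaf of $\sL_2$ distinct from $m_1,m_2$ has endpoints one in $\cldi{c_1}{c_2}$ and one in $\cldi{d_2}{d_1}$, because any such leaf both separates $m_1$ from $m_2$ and is automatically linked with $\ell$. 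I argue by contradiction, assuming $c_1\neq c_2$ and $d_1\neq d_2$. By total disconnectedness of $\sL_2$ (\refprop{totDis2}), some non-leaf gap $\sG$ of $\sL_2$ contains a distinct pair $\{K_1,K_2\}$ with $m_j$ lying on $K_j$. A case analysis of which inclusions for $K_j$ are compatible with $K_1\cap K_2=\emptyset$ forces $K_1\supseteq\opi{d_1}{c_1}$ and $K_2\supseteq\opi{c_2}{d_2}$; if either containment were strict, the leaf $\ell(K_j)$ would itself provide a separating leaf, contradicting emptiness. Hence $K_1=\opi{d_1}{c_1}$, $K_2=\opi{c_2}{d_2}$, and $m_1, m_2$ are boundary leaves of $\sG$.

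The main obstacle is then to derive a contradiction from $m_1, m_2$ being non-adjacent boundary leaves of $\sG$. Here I invoke the veering-pair axiom: $\sG$ admits an interleaving gap $\sH$ in $\sL_1$. Since $v(\sG)$ is countable---it is a discrete set of tips together with at most one pivot---it cannot cover the open arc $\opi{d_2}{d_1}$, so $\sG$ has at least one element contained in $\opi{d_2}{d_1}$, and by the interleaving property $\sH$ has a corresponding vertex $u\in\opi{d_2}{d_1}$. But $u$ then lies strictly ccw between $a$ and $b$, so $a$ and $b$ cannot lie in the closure of a common element of $\sH$---contradicting the fact that $\ell\in\sL_1$ must lie on some element of the gap $\sH$. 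Therefore $c_1=c_2$ or $d_1=d_2$, and the proof is complete. The case $\ell\in\sL_2$ is symmetric under interchanging $\sL_1$ and $\sL_2$.
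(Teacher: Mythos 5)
Your proof takes a genuinely different route from the paper's. The paper argues directly: taking $I_1\subsetneq I_2$ in $\sL_2$ with $\ell(I_j)=\eta_2(s_j)$, it observes that $\cI(s_1,s_2)=\emptyset$ forces the stem $\stem{I_1}{I_2}$ to equal $\{I_1,I_2\}$, hence $I_2$ is isolated; \refprop{gapExist} then supplies a non-leaf gap $\sG$ of $\sL_2$ containing $I_2^*$, the same stem computation forces $I_1\in\sG$, and \refprop{leafGapCross} identifies $\{I_1,I_2^*\}$ as the tip pair crossed by $\ell$. You instead argue by contradiction that $m_1$ and $m_2$ must share a vertex and then invoke \refprop{parallelLeaves}. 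Both arguments ultimately use the interleaving axiom of a veering pair---yours directly, the paper's through \refprop{leafGapCross}---so neither is strictly more elementary, but your reduction to parallelism of $m_1,m_2$ is a sensible alternate decomposition, and the bookkeeping in your case analysis for $K_1,K_2$ checks out.

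However, there is a genuine gap in your final contradiction. After producing $u\in v(\sH)\cap\opi{d_2}{d_1}\subset\opi{a}{b}$, you assert that therefore ``$a$ and $b$ cannot lie in the closure of a common element of $\sH$.'' This does not follow from the existence of $u$ alone. If $\ell$ lies on $H\in\sH$, then either $\opi{a}{b}\subset H$ or $\opi{b}{a}\subset H$. Your observation rules out only the first possibility (it would force $u\in H$, contradicting $u\in v(\sH)$). The second, $\opi{b}{a}\subset H$, is entirely consistent with $u\notin\closure{H}$, so it needs its own argument. It can indeed be excluded: $\opi{b}{a}$ contains both $c_1$ and $c_2$, which are vertices of $\sG$, so $\opi{b}{a}\subset H$ would give $|H\cap v(\sG)|\geq 2$, violating the interleaving condition between $\sG$ and $\sH$. (Equivalently, run your countability argument symmetrically on $\opi{c_1}{c_2}$ to produce a second vertex $u'\in v(\sH)\cap\opi{b}{a}$.) Without one of these additional steps the claimed contradiction is not established, so the proof as written is incomplete, though readily repairable.
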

    \begin{proof}
    When $\ell$ is a leaf of $\sL_1$, we have $\oslash(\ell)=\overt(\ell)$. We write $s_1=(\ell, m_1)$ and $s_2=(\ell, m_2)$. We can take $I_1$ and $I_2$ in $\sL_2$ so that $\ell(I_j)=m_j$ for all $j\in \{1,2\}$ and $I_1 \subset I_2$. By assumption, the stem $\stem{I_1}{I_2}$ in $\sL_2$ is exactly $\{I_1, I_2\}.$ This implies that $I_2$ is isolated. 
    
    Since $I_2$ is isolated, there is a non-leaf gap $\sG$ in $\sL_2$ containing $I_2^*.$ since $I_1 \subset I_2,$ there is an element $I_3$ in $\sG$ such that $I_2\subset I_3$ If $I_2\neq I_3,$ then $I_2 \subsetneq I_3 \subsetneq I_1$ and so $I_3\in \stem{I_1}{I_2}.$ This is a contradiction. Therefore, $I_2=I_3\in \sG.$ Hence $\sG$ and $\ell$ cross. Then, by \refprop{leafGapCross}, $\{I_1, I_2^*\}$ is the tip pair at an end point $v$ of $\sG,$ and so  $\ell$ crosses over the tip $v.$
    \end{proof}
    
    \begin{rmk}\label{Rmk:tipBtwStitches}
    Let $\sV=\{\sL_1, \sL_2\}$ be a veering pair and $\ell$ a leaf of $\sV.$ Suppose that there is a singular stitch $s_1$ in $\oslash(\ell).$ By \refrmk{twoSingStitch}, there is another singular stitch $s_2$ in $\oslash(\ell)$ and the interval $\cI(s_1,s_2)$ on $\oslash(\ell)$ is empty. Therefore, by \refprop{gapBtwStitches}, there is a tip $v$ such that $\{s_i\}=\oslash(\ell)\cap \oslash(\ell(I_j))$ for all $j\in \{1,2\}$ where $\tipp{v}=\{I_1, I_2\}.$
    \end{rmk}

\subsection{Weaving Stitches}
Let $\sV=\{\sL_1, \sL_2\}$ be a veering pair. Now, we define an relation $\frown$ on the stitch space $\fS(\sV)$ as follows.  For any distinct stitches $s_1$ and $s_2$ in $\fS(\sV)$, $s_1 \frown s_2$ if and only if  $\{s_1,s_2\}\subset \oslash(\ell)$ for some leaf $\ell$ of $\sV$ and $\cI(s_1,s_2)=\emptyset$. Then, we denote the equivalence relation generated by the relation $\frown$ by $\sim_\omega$. We refer to the equivalence relation $\sim_\omega$ as the \emph{weaving relation} of $\sV$. The map $\#$ denotes the quotient map from $\fS(\sV)$ to $\fS(\sV)/\sim_\omega$.   

\begin{rmk}\label{Rmk:eqDef}
Let  $v$ be a tip of $\sL_1$. Assume that $\ell$ is a leaf crossing over $v$. Then for each $i\in \{1,2\}$, $s_i=(\ell(I_i), \ell)$ is a stitch where $\tipp{v}=\{I_1, I_2\}$. Note that since $I_1^*$ and $I_2^*$ are isolated, there is no $I$ in $\sL_1$ such that $I_1 \subsetneq I \subsetneq I_2^*$. Therefore, $\cI(s_1,s_2)$ is empty. Thus, $s_1 \frown s_2$. The converse follows from \refprop{gapBtwStitches}. Compare with the equivalence relation defined on the pair space in \cite{SchleimerSegerman19}.
\end{rmk}

Now, we define $\omega$ from the stitch space $\fS(\sV)$ to the decomposition space  $\cD(C(\sV))$ of the compressing decomposition by 
$$\omega=\pi_{C(\sV)}\circ g_2 \circ \epsilon_2$$
where  $\epsilon_2$ is the end point map and $g_2$ is the link map. We call $\omega$ the \emph{weaving map} of $\sV$.

\begin{prop}\label{Prop:compressingAndWeaving}
Let $\sV=\{\sL_1, \sL_2\}$ be a veering pair. Then for stitches $s_1, s_2\in \fS(\sV)$,  $s_1\sim_\omega s_2$ if and only if $\omega(s_1)=\omega(s_2)$.  
\end{prop}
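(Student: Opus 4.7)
The plan is to prove both implications separately, reducing the forward direction to the generating relation $\frown$ and handling the reverse by a case analysis on the shape of a compressing class.

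For $\sim_\omega \Rightarrow \omega$-equality, it suffices to show that $s_1 \frown s_2$ implies $\omega(s_1) = \omega(s_2)$. Assume $s_1, s_2$ share a leaf $\ell$, say $\ell \in \sL_1$, with $\cI(s_1, s_2) = \emptyset$ on $\oslash(\ell)$. \refprop{gapBtwStitches} then produces a non-leaf gap $\sG$ of $\sL_2$ and a tip $v$ with tip pair $\tipp{v} = \{I_1, I_2\}$ such that $\eta_2(s_i) = \ell(I_i)$. Writing $x_i := g_2(\epsilon_2(s_i))$, both points lie on $g(\ell) = H(v(\ell))$, giving $x_1 \approx_1 x_2$ under the convention that $\ell$ itself is an $\sL_1$-gap; and both lie in $H(v(\sG))$, giving $x_1 \approx_2 x_2$. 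Hence $x_1 \approx x_2$ in the compressing relation, so $\omega(s_1) = \omega(s_2)$.

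For the reverse direction, assume $\omega(s_1) = \omega(s_2)$ and set $x_i = g_2(\epsilon_2(s_i))$. The map $g_2 \circ \epsilon_2$ is injective since unlinkedness forces distinct leaves of each $\sL_i$ to be disjoint in $\HH^2$, so if $x_1 = x_2$ then $s_1 = s_2$ immediately. Otherwise $\{x_1, x_2\}$ lies in a non-singleton compressing class, producing gaps $\sG_i$ of $\sL_i$ with $\{x_1, x_2\} \subset H(v(\sG_1)) \cap H(v(\sG_2))$. Because any $\sL_i$-leaf meeting $H(v(\sG_i))$ must be a boundary leaf of $\sG_i$ by unlinkedness, each $\eta_i(s_k)$ is a boundary leaf of $\sG_i$ (or equals $\sG_i$ itself when $\sG_i$ is a leaf). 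Two transverse geodesic-hulls meet in at most one point, so $\sG_1$ and $\sG_2$ cannot both be leaves.

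If exactly one of $\sG_1, \sG_2$ is non-leaf, say $\sG_1 = \ell_1$ is a leaf, then $s_1, s_2 \in \oslash(\ell_1)$ and $\ell_1$ enters and exits $H(v(\sG_2))$ across precisely a tip pair $\tipp{v} = \{I_1, I_2\}$ of $\sG_2$, so $\eta_2(s_i) = \ell(I_i)$ after indexing. Looseness together with \refprop{parallelLeaves} rules out any other $\sL_2$-leaf between $\ell(I_1)$ and $\ell(I_2)$, so $\cI(s_1, s_2) = \emptyset$ and $s_1 \frown s_2$. If both $\sG_i$ are non-leaf, then $H(v(\sG_1)) \cap H(v(\sG_2))$ has nonempty interior, so $(\sG_1, \sG_2)$ is an asterisk and $s_1, s_2$ are singular stitches of it. Using that the tips of $\sG_1$ and $\sG_2$ alternate around $S^1$, one builds a finite chain of singular stitches from $s_1$ to $s_2$, each consecutive pair obtained by swapping one coordinate to an adjacent boundary leaf at a shared tip; each such single-step move is a $\frown$-relation by the one-non-leaf case applied within the asterisk.

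The main obstacle is the asterisk case: one must verify that every single-step move along the combinatorial walk is genuinely $\frown$-related, i.e., no extraneous $\sL_j$-leaf slips between adjacent boundary leaves at a common tip. This is controlled by looseness, which forbids any other leaf from sharing that tip, combined with \refprop{parallelLeaves}.
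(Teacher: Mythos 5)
Your forward direction is essentially the paper's argument: reduce to a single $\frown$-move, invoke \refprop{gapBtwStitches} to locate the tip pair, and observe that both image points lie in $H(v(\ell))\cap H(v(\tipg{v}))$, which sits inside a single compressing class. That part is fine, as are your treatment of the singleton case, the leaf--versus--non-leaf case, and the asterisk case (your finite chain of singular stitches is exactly how the paper shows $\#(s_1)$ exhausts all singular stitches of the asterisk).

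There is, however, a genuine gap in the reverse direction. When both $\sG_1$ and $\sG_2$ are non-leaf gaps you assert that ``$H(v(\sG_1))\cap H(v(\sG_2))$ has nonempty interior, so $(\sG_1,\sG_2)$ is an asterisk.'' That implication is false. By \refprop{linkedGaps}, two linked non-leaf gaps either interleave or \emph{cross}, and in the crossing case (\refprop{crossingTwoGaps}) there are tips $t_1$, $t_2$ whose tip pairs cross over each other; the intersection $H(v(\sG_1))\cap H(v(\sG_2))$ is then a quadrilateral with nonempty interior, yet the pair is not an asterisk and the stitches in $D$ are not singular stitches. In this case $(g_2\circ\epsilon_2)^{-1}(D)$ consists of exactly the four stitches $\{(\ell(I),\ell(J)): I\in\tipp{t_1},\ J\in\tipp{t_2}\}$, and one must check separately (via \refrmk{eqDef}, since each $\ell(I)$ with $I\in\tipp{t_1}$ crosses over $t_2$ and vice versa) that these four stitches form a single $\sim_\omega$-class. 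Your alternation-of-tips chain argument does not apply here because the tips of crossing gaps do not alternate. You need to add this case, which is where the paper spends a substantial part of its proof; without it the reverse implication is unproved for regular classes of the third type in \refprop{eqClass2}.
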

\begin{proof}
Let $s_1$ and $s_2$ be stitches such that $s_1 \frown s_2$. Then, there is a thread $\oslash(\ell)$ containing $s_1$ and $s_2$. Since $\cI(s_1,s_2)$ is empty, by \refprop{gapBtwStitches}, there is a tip $t$ such that $\{s_i\}=\oslash(\ell)\cap \oslash(\ell(I_i))$ for all $i\in \{1,2\}$ where $\tipp{t}=\{I_1, I_2\}$.  Then  $g_2 \circ \epsilon_2(s_i)\in H(v(\tipg{t}))$ for all $i\in\{1,2\}$ as $g(v(\ell(I_i))) \subset H(v(\tipg{t}))$ for all $i\in\{1,2\}$. Meanwhile, $g_2 \circ \epsilon_2(s_i)\in H(v(\ell))$ for all $i\in\{1,2\}$. Since there is an element $C$ in $C(\sV)$ containing  $H(v(\tipg{t})) \cap H(v(\ell))$, $g_2 \circ \epsilon_2(s_1) \sim_{C(\sV)} g_2 \circ \epsilon_2(s_2)$. See \refsec{decompositions}. Therefore, $\omega(s_1)=\omega(s_2)$.  

Conversely, suppose that $\omega(s_1)=\omega(s_2)$ for stitches $s_1$ and $s_2$. Then there is an element  $D\in C(\sV)$ such that  $g_2 \circ \epsilon_2(s_i)\in D$, $i\in \{1,2\}$. There are real gaps $\sG_1$ and $\sG_2$ such that each gap $\sG_i$ is in $\sL_i$ and $D=H(v(\sG_1)) \cap H(v(\sG_2))$.

\begin{claim}\label{Clm:stitchInCompression}  For any stitch $s'$ with $g_2 \circ \epsilon_2(s')\in D$, there are boundary leaves $\eta_1(s')$ and  $\eta_2(s')$ of $\sG_1$ and $\sG_2,$ respectively such that  $s'=(\eta_1(s'), \eta_2(s'))$.
\end{claim}
\begin{proof}
See \refsec{decompositions} and observe the following. Let $\sL$ be a quite full lamination system and $\sG$ be a gap in $\sL.$ Then the boundary of $H(v(\sG))$ consists of convex hulls of boundaries of elements of $\sG.$ Namely, 
    $$\partial H(v(\sG))=\bigcup_{I\in \sG} H(v(\ell(I))).$$
\end{proof}

Note that $\sG_1$ and  $\sG_2$ are linked. If $(\sG_1,\sG_2)$ is a stitch, then $s_1=s_2=(\sG_1,\sG_2)$. Therefore, $s_1 \sim_\omega s_2$.

Now we consider the case where $(\sG_1,\sG_2)$ is an asterisk of $\sV$. By \refclm{stitchInCompression}, each element of $\#(s_1)$ is a singular stitches of $(\sG_1,\sG_2)$.  For each boundary leaf $\ell$ of $\sG_1$ or $\sG_2,$ the thread $\oslash(\ell)$ has exactly two singular stitches and the interval between these stitches is empty. See \refrmk{tipBtwStitches} and \refrmk{eqDef}. This implies that $\#(s_1)$ contains the set of all singular stitches of $(\sG_1, \sG_2)$. Therefore, $(g_2 \circ \epsilon_2)^{-1}(D)=\#(s_1)$. The same argument shows that $(g_2 \circ \epsilon_2)^{-1}(D)=\#(s_2)$. Therefore, $\#(s_1)=\#(s_2)$ and $s_1\sim_\omega s_2$.

Now we assume that $\sG_1$ is a non-leaf gap and $\sG_1$ and $\sG_2$ are not interleaving. If $\sG_2$ is a leaf, then by \refprop{leafGapCross}, there is a tip $v$ of $\sG_1$ over which $\sG_2$ crosses. Then we can see that $(g_2 \circ \epsilon_2)^{-1}(D)$ has exactly two points, namely, $(g_2 \circ \epsilon_2)^{-1}(D)=\{(\ell(I_1), \sG_2), (\ell(I_2), \sG_2)\}$ where $\tipp{v}=\{I_1,I_2\}$. By \refrmk{eqDef},  $(\ell(I_1), \sG_2)\frown (\ell(I_2), \sG_2)$.  Thus, $(g_2 \circ \epsilon_2)^{-1}(D)= \#(s_1)=\#(s_2)$. 

Then we consider the case where $\sG_2$ is also a non-leaf gap. Since $\sG_1$  and $\sG_2$ are not interleaving, by \refprop{linkedGaps}, $\sG_1$ and $\sG_2$ cross. Then by \refprop{crossingTwoGaps}, there are tips $t_1$ and $t_2$ of $\sG_1$ and $\sG_2$, respectively, such that  for each $i\in \{1,2\}$, the tip pair $\tipp{t_i}$ crosses over $t_{i+1}$. Then 
$(g_2 \circ \epsilon_2)^{-1}(D)$ has exactly four elements, namely, 
$$(g_2 \circ \epsilon_2)^{-1}(D)=\{(\ell(I_1), \ell(I_2)):I_i\in \tipp{t_i} \text{ for all } i\in \{1,2\}\}.$$
Note that for each $i\in \{1,2\}$, and for any $I\in \tipp{t_i}$, $\ell(I)$ crosses over $t_{i+1}$. By \refrmk{eqDef}, $(g_2 \circ \epsilon_2)^{-1}(D)$ is an equivalence class under the compression relation. Therefore, $(g_2 \circ \epsilon_2)^{-1}(D)=\#(s_1)=\#(s_2)$. 

The case where $\sG_2$ is a non-leaf gap can be proven similarly. Thus, $$\omega^{-1}(\omega(s_1))=(g_2\circ \epsilon_2)^{-1}(D)=\#(s_2).$$ 
\end{proof}

\begin{rmk}\label{Rmk:subrelation}Let $\sV$ be a veering pair. For stitches $s_1$ and $s_2$ in $\fS(\sV)$, by \refprop{compressingAndWeaving},   $s_1\sim_\omega s_2$ if and only if $g_2 \circ \epsilon_2 (s_1) \sim_{C(\sV)} g_2 \circ \epsilon_2(s_2)$. Moreover, from the proof of \refprop{compressingAndWeaving}, we can see that if $s_1$ is a singular stitch of some asterisk $(\sG_1, \sG_2)$ and $s_1\sim_\omega s_2$, then $s_2$ is also a singular stitch of $(\sG_1,\sG_2)$. Also, $\#(s_1)=\#(s_2)$ is the set of all singular stitches of $(\sG_1,\sG_2)$.    
\end{rmk}

\subsection{Weavings}
Let $\sV=\{\sL_1, \sL_2\}$ be a veering pair.
we denote the quotient space of $\fS(\sV)$ under $\sim_\omega$ by $\closure{\fW}(\sV)$ and call it the \emph{cusped weaving} of $\sV.$  Now, we may summarize the results in the proof of \refprop{compressingAndWeaving} as the following propositions. Compare with \cite[Lemma~10.5]{SchleimerSegerman19}. 

\begin{prop}\label{Prop:eqClass} 
Let $\sV=\{\sL_1, \sL_2\}$ be a veering pair. For each $w$ in $\closure{\fW}(\sV)$, there are real gaps $\sG_1$ and $\sG_2$ of $\sL_1$ and $\sL_2$, respectively, such that $$\#^{-1}(w)=\{s\in \fS(\sV):  \text{ $\eta_i(s)$ is a boundary leaf of $\sG_i$ for all $i\in \{1,2\}$}\}.$$
\end{prop}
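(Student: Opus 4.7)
This proposition essentially repackages the case analysis carried out in the proof of \refprop{compressingAndWeaving}, so my plan is to leverage that result directly rather than rerun the work. Fix any representative $s_0 \in \#^{-1}(w)$ and set $p_0 := g_2 \circ \epsilon_2(s_0) \in \HH^2$. Let $D$ denote the unique element of the compressing decomposition $C(\sV)$ containing $p_0$. By \refprop{compressingAndWeaving}, for any stitch $s \in \fS(\sV)$ we have $s \sim_\omega s_0$ if and only if $g_2 \circ \epsilon_2(s) \in D$, so the proposition will follow once we describe $D$ and the stitches mapping into it in terms of real gaps.

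To identify $D$, I would invoke the construction of $C(\sV)$ in \refsec{decompositions} to write $D = H(v_1) \cap H(v_2)$, where $v_i \in P_i$ is the unique $\sim_i$-equivalence class on $S^1$ whose hyperbolic convex hull contains $p_0$. Since $p_0$ lies on the geodesic $g(\epsilon(\eta_i(s_0)))$, the class $v_i$ contains both vertices of $\eta_i(s_0)$. Then \reflem{quadrachotomy} together with \refprop{realGap} show that each $v_i$ is exactly the vertex set $v(\sG_i)$ of a unique real gap $\sG_i$ of $\sL_i$: namely, $\eta_i(s_0)$ itself when $\eta_i(s_0)$ is a real leaf not appearing as a boundary leaf of any non-leaf gap, and otherwise the unique non-leaf real gap (polygon or crown) having $\eta_i(s_0)$ as a boundary leaf. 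Hence $D = H(v(\sG_1)) \cap H(v(\sG_2))$ with $\sG_1, \sG_2$ real, as required.

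It remains to verify the two set inclusions. For $\supseteq$, if $\eta_i(s) = \ell(I_i)$ with $I_i \in \sG_i$ for each $i$, then $v(\ell(I_i)) \subset v(\sG_i)$ puts the geodesic $g(\epsilon(\eta_i(s)))$ inside $H(v(\sG_i))$, so $g_2 \circ \epsilon_2(s) \in D$ and $s \sim_\omega s_0$. For $\subseteq$, suppose $s \in \#^{-1}(w)$, so that $p := g_2 \circ \epsilon_2(s) \in H(v(\sG_i))$ for each $i$. The key observation, recorded inside \refclm{stitchInCompression}, is the boundary identity $\partial H(v(\sG)) = \bigcup_{I \in \sG} H(v(\ell(I)))$: since the leaves of $\sL_i$ are pairwise disjoint as geodesics in $\HH^2$, a leaf through a point of $H(v(\sG_i))$ cannot cross any boundary chord of $H(v(\sG_i))$, so both of its endpoints lie in $v(\sG_i)$, and because $\sG_i$ is a gap no leaf enters its interior, forcing the leaf to be a boundary leaf of $\sG_i$. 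Applied to $\eta_i(s)$ passing through $p$, this yields the desired characterization. The only nontrivial point throughout is the uniform selection of the real gaps $\sG_i$ across the various sub-configurations (single regular stitch, asterisk, non-leaf gap meeting a leaf, two crossing non-leaf gaps) already catalogued in the proof of \refprop{compressingAndWeaving}, and it is precisely \reflem{quadrachotomy} together with \refprop{realGap} that makes this selection uniform.
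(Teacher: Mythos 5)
Your proposal is correct and takes essentially the same route as the paper, which obtains this proposition simply by summarizing the converse direction of the proof of \refprop{compressingAndWeaving}: your identification of $\#^{-1}(w)$ with $(g_2\circ\epsilon_2)^{-1}(D)$ for $D=H(v(\sG_1))\cap H(v(\sG_2))$, together with the boundary identity $\partial H(v(\sG))=\bigcup_{I\in\sG}H(v(\ell(I)))$ and unlinkedness, is precisely the content of \refclm{stitchInCompression} and \refrmk{subrelation}. The only difference is cosmetic, in that you quote \refprop{compressingAndWeaving} to bypass re-running its four-case analysis.
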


More precisely, we can restate \refprop{eqClass} with \refprop{twoGap} as follows.
\begin{prop}\label{Prop:eqClass2}
Let $\sV=\{\sL_1,\sL_2\}$ be a veering pair and $w$ an element in the cusped weaving. 
Then, one of the following cases holds. 
\begin{itemize}
    \item $\#^{-1}(w)$ is a singleton whose element is a genuine stitch.
    \item $\#^{-1}(w)$ consists of two regular stitches $s_1$ and $s_2$ such that 
    \begin{itemize}
        \item $\ell_1=\eta_i(s_1)=\eta_i(s_2)$ for some $i \in\{1,2\}$, 
        \item $\ell_1$ is real, and
        \item $\eta_{i+1}(s_1)$ and $\eta_{i+1}(s_2)$ are parallel. 
    \end{itemize}  
    \item There are points $t_1$ and $t_2$ in $\E{\sL_1}$ and $\E{\sL_2},$ respectively, such that 
    \begin{itemize}
        \item  $t_i$ is a tip of a non-leaf gap $\sG_i$ of $\sL_i$ for all $i\in {1,2},$
        \item $\tipp{t_i}$ crosses over $t_{j}$ for all $i,j\in \{1,2\}$, $i\ne j$, and 
        \item $\#^{-1}(w)=\{(\ell(I), \ell(J)): I\in \tipp{t_1} \text{ and  } J\in \tipp{t_2}\}.$
    \end{itemize}
    \item $\#^{-1}(w)$ is the set of all singular stitches of an asterisk.
\end{itemize}
\end{prop}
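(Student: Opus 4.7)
The plan is to read off Proposition~\ref{Prop:eqClass2} directly from Proposition~\ref{Prop:eqClass} by enumerating how the pair of real gaps $\sG_1 \in \sL_1$ and $\sG_2 \in \sL_2$ produced by Proposition~\ref{Prop:eqClass} can be linked. Linkedness is automatic: any stitch $(\eta_1, \eta_2) \in \#^{-1}(w)$ is, by definition, an interleaving pair of leaves with $\eta_i$ a boundary leaf of $\sG_i$, so this interleaving already witnesses linkedness of $\sG_1$ and $\sG_2$. Then Proposition~\ref{Prop:linkedGaps} (and, for the mixed case, Proposition~\ref{Prop:leafGapCross}) forces $\sG_1, \sG_2$ either to interleave or to cross, and Proposition~\ref{Prop:crossingTwoGaps} pins down the crossing configuration in terms of tip pairs.

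The case split is by the number of leaves among $\sG_1, \sG_2$. If both are leaves, then $\sG_i = \{\eta_i\}$ for some real leaf $\eta_i$ (realness comes from Proposition~\ref{Prop:realGap}), and the only stitch available is the single genuine stitch $(\eta_1, \eta_2)$; this is the first bullet. If exactly one, say $\sG_1$, is a leaf $\ell_1$, then interleaving is impossible because the interleaving partner of a leaf is again a leaf, so Proposition~\ref{Prop:leafGapCross} forces $\ell_1$ to cross over some tip $t$ of $\sG_2$. Writing $\tipp{t} = \{I_1, I_2\}$, we get $\#^{-1}(w) = \{(\ell_1, \ell(I_1)), (\ell_1, \ell(I_2))\}$; the two leaves $\ell(I_1), \ell(I_2)$ are distinct and share the vertex $t$ (and nothing else, since distinct leaves have distinct vertex sets), hence are parallel, while $\ell_1$ is real since $\sG_1$ is a real gap. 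This is the second bullet.

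If both $\sG_1, \sG_2$ are non-leaf gaps, Proposition~\ref{Prop:linkedGaps} distinguishes two sub-cases. Interleaving gives an asterisk $(\sG_1, \sG_2)$, and by the definition of singular stitch combined with Proposition~\ref{Prop:eqClass} the set $\#^{-1}(w)$ is exactly the collection of all singular stitches of this asterisk, which is the fourth bullet. Crossing, via Proposition~\ref{Prop:crossingTwoGaps}, produces tips $t_i$ of $\sG_i$ with $\tipp{t_i}$ crossing over $t_j$ for $i \ne j$, and the four pairs $(\ell(I), \ell(J))$ with $I \in \tipp{t_1}$, $J \in \tipp{t_2}$ should exhaust $\#^{-1}(w)$, yielding the third bullet.

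The one step that needs genuine care is confirming, in this final sub-case, that no other pair of boundary leaves of $\sG_1$ and $\sG_2$ is a stitch. I expect this to be the main obstacle: one must show that every boundary leaf of $\sG_1$ outside $\{\ell(I) : I \in \tipp{t_1}\}$ is contained in a complementary region of $\sG_2$, hence is unlinked with all boundary leaves of $\sG_2$. The natural input is Proposition~\ref{Prop:twoGap} applied to such a boundary leaf against $\sG_2$, and the four-point count can be cross-checked against the computation of $(g_2 \circ \epsilon_2)^{-1}(D)$ inside the proof of Proposition~\ref{Prop:compressingAndWeaving}, which is essentially the same calculation and so reduces the remaining bookkeeping to a short unwinding of that argument.
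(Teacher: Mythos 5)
Your proposal is correct and follows essentially the same route the paper takes: the paper treats Prop.~\ref{Prop:eqClass2} as a direct restatement of Prop.~\ref{Prop:eqClass} filtered through Prop.~\ref{Prop:twoGap}, and your case split by the number of leaves among $\sG_1, \sG_2$ (using \ref{Prop:leafGapCross}, \ref{Prop:linkedGaps}, \ref{Prop:crossingTwoGaps}) is precisely the content that Prop.~\ref{Prop:twoGap} packages. The one bookkeeping step you flag — that the crossing case yields exactly the four stitches built from $\tipp{t_1}$ and $\tipp{t_2}$ — is indeed asserted rather than proved in detail inside the proof of Prop.~\ref{Prop:compressingAndWeaving}, so your plan to unwind that argument is the right one and closes no real gap relative to the paper.
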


By \refprop{eqClass2}, each element of the cusped weaving has only regular stitches or singular stitches. We say that $w\in \closure{\fW}(\sV)$ is a \emph{singular class (cusp class, respectively)} associated with an asterisk $(\sG_1,\sG_2)$ if $\sG_1$ and $\sG_2$ are ideal polygons (crowns, respectively) and $\#^{-1}(w)$ is the set of singular stitches of $(\sG_1,\sG_2)$. We say $w$ is a \emph{regular class} if $w$ is neither a singular class nor a cusp class. 

For convenience, we denote the class $w$ in $\closure{\fW}(\sV)$ \emph{associated with an interleaving pair} $(\sG_1,\sG_2)$ of real gaps by $w(\sG_1,\sG_2),$ that is, 
$$w=w(\sG_1,\sG_2)=\{s\in \fS(\sV): \text{$\eta_i(s)$ is a boundary leaf of $\sG_i$ for all $i\in \{1,2\}$}\}.$$ 

We define the \emph{order} $\ord(w)$ of $w=w(\sG_1, \sG_2)$  by
\[
\ord(w):= \begin{cases}
2 & \text{if }(\sG_1,\sG_2) \text{ is a genuine stitch}\\
|v(\sG_1)| & \text{if $w$ is a singular class} \\
\infty & \text{if  $w$ is a cusp class} 
\end{cases}.
\]

Now we define the \emph{weaving} $\fW(\sV)$ of $\sV$ to be 
$$\fW(\sV):=\closure{\fW}(\sV)\setminus \{ \text{cusp classes} \}$$
and the \emph{regular weaving} $\fW^\circ(\sV)$ of $\sV$ to be 
$$\fW^\circ(\sV):=\closure{\fW}(\sV)\setminus \{ \text{cusp and singular classes} \}.$$

\begin{lem}\label{Lem:weavingIsDisk}
    Let $\sV=\{\sL_1, \sL_2\}$ be a veering pair. Then the weaving $\fW(\sV)$ of $\sV$ is homeomorphic to the Euclidean plane $\RR^2.$ More precisely, there is a unique homeomorphism $\hat \omega$ such that the following diagram commutes:
    \begin{center}
    \begin{tikzcd}
    \fS(\sV)\setminus \{ \text{stitches associated with cusp classes}\}  \arrow[r, "\omega"] \arrow[ddr, "\#"]& \interior{\cD(C(\sV))}\\
    &\\
    & \fW(\sV) \arrow[uu,dashed,"\hat{\omega}"]
    \end{tikzcd}
    \end{center}
    where  $\interior{\cD(C(\sV))}=\cD(C(\sV))\setminus \partial \cD(C(\sV))$ and  $\#$ is the quotient map defined by the weaving relation $\sim_\omega$. We call the map $\hat \omega $ the \emph{trivialization} of the weaving. 
    \end{lem}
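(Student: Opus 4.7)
The plan is to define $\hat\omega$ by factoring $\omega$ through the weaving relation, verify it is a bijection onto $\interior{\cD(C(\sV))}$, and upgrade to a homeomorphism using invariance of domain.

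First, by \refprop{compressingAndWeaving} the fibers of $\omega$ coincide with the equivalence classes of $\sim_\omega$, so $\omega$ descends to a well-defined injection $\hat\omega\colon\overline{\fW}(\sV)\to \cD(C(\sV))$ with $\hat\omega\circ \# = \omega$, continuous by the universal property of the quotient topology. To see that $\hat\omega$ sends exactly the cusp classes to the boundary, I would observe: for a stitch $s$ associated to an interleaving pair $(\sG_1,\sG_2)$, the $\sim_{C(\sV)}$-class of $g_2(\epsilon_2(s))$ equals $H(v(\sG_1))\cap H(v(\sG_2))$, which meets $S^1$ precisely at $v(\sG_1)\cap v(\sG_2)$. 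By strong transversality $\E{\sL_1}\cap \E{\sL_2}=\emptyset$, so any shared vertex must be the pivot of a crown in each of $\sL_1,\sL_2$; that is, $(\sG_1,\sG_2)$ is an asterisk of crowns. Hence $\hat\omega$ restricts to an injection $\fW(\sV)\hookrightarrow \interior{\cD(C(\sV))}$.

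For surjectivity, take any $x\in \interior{\cD(C(\sV))}$ and choose $p\in \HH^2$ with $\pi_{C(\sV)}(p)=x$. Let $\sG_i$ be the (essentially unique) gap of $\sL_i$ whose closed hull contains $p$: a leaf if $p$ lies on a leaf of $\sL_i$, otherwise the non-leaf gap whose region contains $p$. Then the $\sim_{C(\sV)}$-class of $p$ equals $H(v(\sG_1))\cap H(v(\sG_2))$. Because this set contains $p\in\HH^2$, the gaps $\sG_1$ and $\sG_2$ are linked, so by \refprop{leafGapCross} and \refprop{linkedGaps} they either interleave or cross, and in every configuration I can exhibit boundary leaves $\ell_i$ of $\sG_i$ that intersect at a point $q\in H(v(\sG_1))\cap H(v(\sG_2))$. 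The pair $s=(\ell_1,\ell_2)$ is a stitch, not associated to a cusp class since $x$ is interior, and $\omega(s)=\pi_{C(\sV)}(q)=x$.

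Finally, to promote the continuous bijection $\hat\omega\colon\fW(\sV)\to \interior{\cD(C(\sV))}$ to a homeomorphism, I would appeal to invariance of domain. The target is homeomorphic to $\RR^2$ by \reflem{compressH}. A local analysis using that $g_2\circ\epsilon_2$ embeds $\fS(\sV)$ into $\HH^2$ (injectivity via unlinkedness in each lamination), together with the fact that the weaving identifications correspond to the Moore-cellular identifications of the compressing decomposition, shows $\fW(\sV)$ is itself a topological $2$-manifold. Invariance of domain then forces $\hat\omega$ to be open, hence a homeomorphism; uniqueness is immediate from the surjectivity of $\#$. The main technical obstacle is the surjectivity step, which demands a careful case analysis (leaf/leaf, leaf/non-leaf, non-leaf/non-leaf) to guarantee the existence of a boundary-leaf intersection inside each compressing class; establishing that $\fW(\sV)$ is a manifold is the second delicate point and likely requires different local models near regular classes and near singular classes.
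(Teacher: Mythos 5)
Your proposal tracks the paper closely through the first three steps: descending $\omega$ through the weaving relation to obtain a continuous injection $\hat\omega$, using \refprop{compressingAndWeaving} to identify fibers, and arguing that exactly the cusp classes land on $\partial \cD(C(\sV))$. Your surjectivity sketch, which the paper leaves implicit, is a reasonable case analysis, though a small correction is needed: if $p$ lies on a leaf that is a boundary leaf of a non-leaf gap, that leaf is not real and its vertex pair is not a $\sim_i$-class, so the gap $\sG_i$ must be taken to be the ambient non-leaf gap; in all cases the $\sim_{C(\sV)}$-class of $p$ is $H(v(\sG_1))\cap H(v(\sG_2))$ with $\sG_i$ \emph{real}.

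The genuine gap is in the final step. Invariance of domain converts a continuous injection into an open map only between spaces already known to be topological $2$-manifolds, so your plan presupposes that $\fW(\sV)$ is a $2$-manifold. At this point in the paper that fact is not available: the transversely foliated local charts you allude to are constructed only later, in \refsec{framesec} and \refsec{foliationsec}, and \refthm{weaving} uses this lemma as an input when asserting that $\fW(\sV)$ is an open disk. A continuous bijection from $\fW(\sV)$ onto $\RR^2$ does \emph{not} make $\fW(\sV)$ a manifold, so the sentence ``a local analysis\dots shows $\fW(\sV)$ is itself a topological $2$-manifold'' is doing all the work of the lemma and cannot be deferred. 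The paper avoids this circularity by getting the homeomorphism out of the quotient construction itself. Concretely, for the universal property to yield a homeomorphism rather than a mere continuous bijection, one needs the restriction of $\omega$ to non-cusp stitches to be a quotient (equivalently, closed or proper) map onto $\interior{\cD(C(\sV))}$. This can be checked directly: $g_2\circ\epsilon_2$ is a proper embedding of $\fS(\sV)$ onto a closed subset of the open half-plane $\HH^2$, $\pi_{C(\sV)}$ is a closed quotient of the compact space $\closure{\HH^2}$, and $\pi_{C(\sV)}^{-1}\bigl(\interior{\cD(C(\sV))}\bigr)$ lies inside $\HH^2$; hence the restricted $\omega$ is proper, so $\hat\omega$ is a closed continuous bijection and therefore a homeomorphism. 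This route uses no knowledge of the local topology of $\fW(\sV)$ and is the argument your proposal should be making in place of invariance of domain.
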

    \begin{proof}
    By \reflem{compressH}, the decomposition space $\cD(C(\sV))$ of the compressing decomposition is homeomorphic to the closed disk. We know that $\pi_{C(\sV)}| \partial \HH^2$ is an injective continuous map and $\partial \cD(C(\sV))=\pi_{C(\sV)}(\partial \HH^2)$.
    
    Now, for each point $p$ in $\closure{\HH^2}$, we denote the equivalence class of $p$ under the compressing relation $\sim_{C(\sV)}$ by $\lsem p \rsem$ as in \refsec{decompositions}. Then we can see that $$\pi_{C(\sV)}^{-1}(\interior{\cD(C(\sV))})=\closure{\HH^2}- \bigcup_{p\in \partial \HH^2} \lsem p \rsem
    .$$ Recall that if $p$ in $\partial \HH^2$ is not a pivot, then $\lsem p \rsem=\{p\}$ and  if $p$ is the pivot of an asterisk $(\sG_1,\sG_2),$ then $\lsem p \rsem = H(v(\sG_1))\cap H(v(\sG_2)).$ See \refsec{decompositions}. Therefore, if $s$ is  a stitch in $\fS(\sV)$ such that  $g_2\circ \epsilon_2(s)\in \bigcup_{p\in \partial \HH} \lsem p \rsem,$ then $s$ is a singular stitch of an asterisk of crowns and so $\#(s)$ is a cusp class by \refclm{stitchInCompression} and \refrmk{subrelation}. This implies that $$\omega^{-1}(\interior{\cD(C(\sV))})=\fS(\sV)\setminus \{ \text{stitches associated with cusp classes}\} .$$ Then, by \refprop{compressingAndWeaving} and the universal property of quotient maps, there is a unique homeomorphism $\hat{\omega}$ such that the following diagram commutes:
    \begin{center}
    \begin{tikzcd}
    \fS(\sV)\setminus \{ \text{stitches associated with cusp classes}\}  \arrow[r, "\omega"] \arrow[ddr, "\#"]& \interior{\cD(C(\sV))}\\
    &\\
    & \fW(\sV) \arrow[uu,dashed,"\exists! \hat{\omega}"]
    \end{tikzcd}
    \end{center}
    Thus, $\fW(\sV)$ is homeomorphic to $\RR^2$ as $\interior{\cD(C(\sV))}$ is homeomorphic to $\RR^2.$
    \end{proof}
Similarly, we can get the following lemma for the cusped weavings.

\begin{lem}\label{Lem:trivialization}
    Let $\sV$ be a veering pair. Then, there is a unique map $\hat \omega$ from $\closure{\fW}(\sV)$ to $\cD(C(\sV))$ such that it  makes the following diagram  commute.
    \begin{center}
    \begin{tikzcd}
    \fS(\sV)\arrow[r,"\#"] \arrow[d,swap , "g_2 \circ \epsilon_2"] \arrow[rd, "\omega"]& \closure{\fW}(\sV)\arrow[d, dashed,"\exists ! \hat{\omega}"]\\
    \closure{\HH^2} \arrow[r,swap,"\pi_{C(\sV)}"]& \cD(C(\sV)) 
    \end{tikzcd}
    \end{center}
\end{lem}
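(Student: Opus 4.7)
The plan is to invoke the universal property of the quotient map $\#: \fS(\sV)\to \closure{\fW}(\sV)$. The diagram to be completed is
\[
\hat{\omega}\circ \# \;=\; \omega \;=\; \pi_{C(\sV)}\circ g_2\circ \epsilon_2,
\]
so I need a well-defined map $\hat\omega$ on equivalence classes, together with uniqueness. The construction itself is essentially forced, and the content of the lemma is entirely carried by \refprop{compressingAndWeaving}.

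First, I would note that $\omega$ is continuous, being a composition of three continuous maps: the endpoint map $\epsilon_2$, the link map $g_2$, and the quotient $\pi_{C(\sV)}$ onto the decomposition space. Next, by \refprop{compressingAndWeaving}, for any $s_1,s_2\in \fS(\sV)$,
\[
s_1\sim_\omega s_2 \;\Longleftrightarrow\; \omega(s_1)=\omega(s_2).
\]
In particular, $\omega$ is constant on each equivalence class under $\sim_\omega$, so it descends set-theoretically through the quotient $\#$. The universal property of the quotient topology then yields a unique continuous map $\hat\omega:\closure{\fW}(\sV)\to \cD(C(\sV))$ with $\hat\omega\circ \#=\omega$, which is exactly the required commutativity.

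For uniqueness, suppose $\hat\omega'$ is another map with $\hat\omega'\circ\#=\omega$. Since $\#$ is surjective, for each $w\in \closure{\fW}(\sV)$ we may pick $s\in \#^{-1}(w)$ and compute
\[
\hat\omega'(w)=\hat\omega'(\#(s))=\omega(s)=\hat\omega(\#(s))=\hat\omega(w),
\]
so $\hat\omega'=\hat\omega$. There is no real obstacle in this argument; once \refprop{compressingAndWeaving} is in hand, the lemma is a direct factorization through the quotient, with the only minor subtlety being that one should observe $\omega$ takes values in all of $\cD(C(\sV))$ (not merely its interior as in \reflem{weavingIsDisk}), so that stitches associated with cusp classes are also handled and $\hat\omega$ is defined on the full cusped weaving.
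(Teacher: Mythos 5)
Your proof is correct and matches the paper's intended argument: the paper derives \reflem{trivialization} by the same route it spells out in \reflem{weavingIsDisk}, namely factoring $\omega$ through the quotient $\#$ using \refprop{compressingAndWeaving} and the universal property. Your additional observation about $\omega$ taking values in all of $\cD(C(\sV))$ rather than its interior is exactly the adjustment needed to pass from \reflem{weavingIsDisk} to the cusped version.
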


For clarity, we make the following remarks.

\begin{rmk}\label{Rmk:compressingClass}
Let $\sV=\{\sL_1,\sL_2\}$ be a veering pair. Assume that $\sG_i$ is a real gap of $\sL_i$ for all $i\in \{1,2\}$ and that  $\sG_1$ and $\sG_2$ are linked. Now, we set $$w=\{s\in \fS(\sV): \text{$\eta_i(s)$ is a boundary leaf of $\sG_i$ for all $i\in \{1,2\}$} \}$$ and
$$c=H(v(\sG_1))\cap H(v(\sG_2)).$$
By \refprop{eqClass2}, $w \in \closure{\fW}(\sV)$ and so $c=\hat{\omega}(w)\in \cD(C(\sV)).$ Furthermore, $c$ is the convex hull $H(g_2\circ \epsilon_2(w))$ except the case where $\sG_i$ are crowns. When $(\sG_1,\sG_2)$ is an asterisk of crowns, $c$ is the union of  $H(g_2\circ \epsilon_2(w))$  and the pivot point of $\sG_i$.
\end{rmk}

\begin{rmk}\label{Rmk:boundary}
Let $\sV=\{ \sL_1, \sL_2\}$ be a veering pair. By \reflem{trivialization}, we can identify $\partial \HH^2$ with $\partial \cD(C(\sV))$ via $\pi_{C(\sV)}|\partial \HH^2.$ Moreover, by \refrmk{compressingClass}, each point in $\im(\hat{\omega})\cap S^1$ is the image of a cusp class $w$ under $\hat \omega$ and so it is the pivot point of the asterisk associated with $w.$ See \reflem{trichotomy2}. 
\end{rmk}

\section{Threads on Weavings}\label{Sec:threadsec}
Even though we have shown that a weaving is a disk, this does  not tell us  that each leaf of a veering pair is realized as a foliation line in the weaving.

In this section, we show that each thread gives a foliation line in the weaving.

\subsection{Leaves on Weavings}
Let $S$ be a topological space. Then $S$ is an \emph{arc} if there is a homeomorphism $\phi_S$  from the closed interval $[0,1]$ on the real line $\RR$ to $S$. We say that $S$ is an \emph{arc from $a$ to $b$}  if $\phi_S(0)=a$ and $\phi_S(1)=b$. In this case, $a$ and $b$ are the \emph{end points} of $S$.

Let $x$ be a point in $X$. Assume that $X$ is connected. Then we call $x$ a \emph{cut point} of $X$ if $X\setminus \{x\}$ is disconnected. Otherwise, $x$ is called a \emph{non-cut point} of $S$. 

The following theorem provides a topological characterization of arcs. 

\begin{thm}[See \cite{Wilder49}] \label{Thm:characterizationOfArc}
Let $S$ be a compact, connected, separable and metrizable space. If there is a two elements subset $\{a,b\}$ of $S$ such that every element in $S$ is a cut point of $S$, then $S$ is an arc from $a$ to $b$. \end{thm}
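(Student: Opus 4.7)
The plan is to equip $S$ with a natural linear order induced by the cut-point structure and then show that $(S,\le)$ is order-homeomorphic to $[0,1]$. First, I would verify that for every cut point $x\in S\setminus\{a,b\}$ the complement $S\setminus\{x\}$ has exactly two connected components, one containing $a$ and the other containing $b$. The key inputs are that $S$ is compact, connected, and Hausdorff (so components of $S\setminus\{x\}$ are open and every component has $x$ in its closure), together with the hypothesis that $a$ and $b$ are the only non-cut points: if some component $C$ of $S\setminus\{x\}$ avoided both $a$ and $b$, then removing an interior point of $C$ would contradict the fact that such a point must be a cut point of $S$.

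With this in hand I would denote, for each cut point $x$, the two components by $U_x\ni a$ and $V_x\ni b$, and define
\[
y\le z \quad\Longleftrightarrow\quad y=z,\ \text{or}\ z=b,\ \text{or}\ y=a,\ \text{or}\ y\in U_z.
\]
I would then check that $\le$ is a total order with minimum $a$ and maximum $b$. The only nontrivial property is transitivity, which reduces to showing that the family $\{U_x\}$ is nested: for two distinct cut points $x\ne y$, either $x\in U_y$ (in which case $U_x\cup\{x\}\subseteq U_y$) or $y\in U_x$ (in which case $U_y\cup\{y\}\subseteq U_x$). This nesting follows from the observation that if $x\in U_y$, then $V_y\cup\{y\}$ is a connected subset of $S\setminus\{x\}$ containing $b$, so it must lie in $V_x$.

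Next I would prove that the order topology coincides with the topology of $S$. Open order-intervals $(y,z)$ are unions of components of complements of cut points and are therefore open in $S$; conversely, connectedness forces any open set in $S$ to be a union of order-intervals once one shows that order-intervals form a subbasis, which in turn uses that $S$ is compact and connected so the order is dense (between any two points there is another) and Dedekind-complete (suprema exist by compactness). Since $S$ is separable as a topological space, the order has a countable order-dense subset. Finally, a compact, connected, separable, Dedekind-complete, densely linearly ordered space with minimum and maximum is order-isomorphic to $[0,1]$ by the standard back-and-forth construction extended by continuity, giving the required homeomorphism $\phi_S\colon [0,1]\to S$ with $\phi_S(0)=a$ and $\phi_S(1)=b$.

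The main obstacle, and the step where care is required, is reconciling the order topology with the original topology of $S$. The nesting argument for transitivity and the two-component analysis of $S\setminus\{x\}$ are relatively mechanical once one is comfortable with compact connected spaces, but verifying that every open neighborhood in $S$ contains an order-interval neighborhood—equivalently, that the cut-point order is ``fine enough''—requires invoking the Hausdorff separation and density of cut points between any two distinct points of $S$, and this is where the hypotheses that $S$ is metrizable and that all but two points are cut points are fully used.
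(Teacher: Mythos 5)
The paper does not prove this theorem; the bracketed ``See \cite{Wilder49}'' signals that it is quoted from the classical literature, so I am comparing your argument against the standard proof. Your overall strategy is the standard one: build the cut-point linear order, show it is dense (from connectedness), complete (from compactness), and separable, and invoke the order-theoretic characterization of $[0,1]$. The order-theoretic part of your write-up (nesting, transitivity, coincidence of the order topology with the given topology, the back-and-forth step) is essentially sound, and the cleanest way to finish the topology comparison is simply to note that the identity map from $(S,\tau)$ to $(S,\tau_<)$ is a continuous bijection from a compact space to a Hausdorff one and hence a homeomorphism.

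There is, however, a genuine gap in your first step. You assert that because $S$ is compact, connected, and Hausdorff, ``components of $S\setminus\{x\}$ are open.'' This is false without further hypotheses: in the cone over a Cantor set (a compact connected metric space), removing the apex leaves uncountably many components, which cannot all be open in a separable space. The boundary-bumping theorem does give you that $x$ lies in the closure of each component, but openness is exactly the nontrivial content of your first claim, and establishing it is where the proof does its real work. The standard argument goes through separations rather than components: a separation $S\setminus\{x\}=A\cup B$ has both pieces automatically clopen; $A\cup\{x\}$ and $B\cup\{x\}$ are continua; the non-cut-point existence theorem (every nondegenerate continuum has at least two non-cut points) then yields a non-cut point of $S$ in each of $A$ and $B$, forcing $a\in A$ and $b\in B$ up to relabeling; and if $A$ were further separable as $A=A_1\cup A_2$ with $a\in A_1$, applying the same reasoning to the separation $A_2$, $(A_1\cup B)$ would manufacture a third non-cut point. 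Only after this does one conclude that $S\setminus\{x\}$ has exactly two components, both open. Your phrase ``removing an interior point of $C$ would contradict\ldots'' gestures at this but neither identifies which point to remove nor explains why it is a non-cut point of $S$; both require the non-cut-point existence theorem applied to $C\cup\{x\}$, and that in turn requires knowing $C\cup\{x\}$ is a continuum, which is not free either. You should make the appeal to the non-cut-point existence theorem explicit and replace the unjustified openness claim with the separation argument.
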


\begin{prop}\label{Prop:arcImage}
Let $\sV=\{\sL_1, \sL_2\}$ be a veering pair and $\ell$ be a leaf of $\sL_1$ or $\sL_2$. Then $\pi_{C(\sV)}(H(v(\ell)))$ is an arc in $\cD(C(\sV))$ whose end point set is $v(\ell).$
\end{prop}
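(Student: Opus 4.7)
The plan is to apply Theorem~\ref{Thm:characterizationOfArc} to $A := \pi_{C(\sV)}(H(v(\ell)))$. Compactness, connectedness, separability, and metrizability of $A$ are immediate: the chord $H(v(\ell))$ is a compact connected subset of $\closure{\HH^2}$, $\pi_{C(\sV)}$ is continuous, and $\cD(C(\sV))$ is homeomorphic to a closed disk by Lemma~\ref{Lem:compressH}. The substance of the argument lies in identifying the non-cut points.

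My approach is to parametrize $H(v(\ell))$ linearly by $[0,1]$ so that $0 \mapsto a$ and $1 \mapsto b$, where $v(\ell)=\{a,b\}$, and to analyze the fibers of $\pi_{C(\sV)}$ restricted to the chord. The central claim to establish is that each such fiber is a closed sub-interval of $[0,1]$. This reduces to showing that every equivalence class of $\sim_{C(\sV)}$ is Euclidean-convex in the Klein model: by construction each class has the form $c_1 \cap c_2$ with $c_i \in Q_i$, and each $c_i$ is either a singleton or the Euclidean convex hull $H(v(\sG))$ of the vertex set of a real gap. Intersecting any such convex set with the chord yields a convex sub-arc, which together with closedness gives the claim.

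Next I need to separate the two endpoints in $\cD(C(\sV))$. The vertices $a,b$ lie in $\E{\sL_1}$, so by strong transversality they are outside $\E{\sL_2}$; by Lemma~\ref{Lem:trichotomy2} they must be rainbow points of $\sL_2$, forcing their $\approx_2$-classes to be the singletons $\{a\}$ and $\{b\}$. Hence $a \not\approx_2 b$, so $\pi_{C(\sV)}(a) \ne \pi_{C(\sV)}(b)$, and the fibers over these two points are disjoint sub-intervals of the form $[0,s_a]$ and $[s_b,1]$ with $s_a < s_b$.

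With these two claims in hand, the cut-point analysis is direct. For any $y \in A$ distinct from $\pi_{C(\sV)}(a)$ and $\pi_{C(\sV)}(b)$, its fiber is a sub-interval $[s,t] \subset (s_a,s_b)$, and $A \setminus \{y\}$ decomposes as the union of the disjoint non-empty open sets $\pi_{C(\sV)}([0,s))$ and $\pi_{C(\sV)}((t,1])$, so $y$ is a cut point. Removing $\pi_{C(\sV)}(a)$ leaves $\pi_{C(\sV)}((s_a,1])$, which is connected as the continuous image of a connected set; similarly for $b$. Theorem~\ref{Thm:characterizationOfArc} then yields that $A$ is an arc with endpoints $\pi_{C(\sV)}(a)$ and $\pi_{C(\sV)}(b)$; since $\pi_{C(\sV)}|S^1$ is a Jordan curve by Lemma~\ref{Lem:compressH}, these endpoints coincide with $a$ and $b$, giving $v(\ell)$ as the endpoint set of $A$. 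The main obstacle I anticipate is the convexity/monotonicity claim: one must carefully trace through the two-step construction of $C(\sV)$ from $Q_1$ and $Q_2$ to confirm that each compressing class really is Euclidean-convex in the Klein model, as this is what ultimately forces the decomposition of the chord to be monotone.
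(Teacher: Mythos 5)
Your proposal is correct, and it follows the same skeleton as the paper (verify the hypotheses of Theorem~\ref{Thm:characterizationOfArc}, identify the two non-cut points, show every other point is a cut point), but the mechanism you use for the cut-point step is genuinely different. The paper fixes an interior point $x$, identifies the gap $\sG$ of $\sL_2$ whose class realizes $x$, splits into the cases where $\sG$ is a leaf or a non-leaf gap, and in each case exhibits an explicit three-piece partition of $\closure{\HH^2}$ into the class and two saturated sets whose images are open in $\cD(C(\sV))$; disconnectedness of $\pi_{C(\sV)}(h)\setminus\{x\}$ is then read off directly. You instead prove the single uniform statement that $\pi_{C(\sV)}$ restricted to the chord is monotone --- each fiber is a closed subinterval --- by observing that every compressing class is an intersection of Euclidean-convex sets in the Klein model, and then deduce the cut-point property from the resulting monotone decomposition of $[0,1]$. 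Your route avoids the paper's case analysis and is arguably cleaner; what it costs you is that the openness of $\pi_{C(\sV)}([0,s))$ and $\pi_{C(\sV)}((t,1])$ in the subspace topology of $A$ is not automatic from continuity alone and should be justified: it follows because $\pi_{C(\sV)}|_{H(v(\ell))}$ is a continuous surjection from a compact space onto a Hausdorff one, hence a closed map, so the images of the saturated sets $[0,s)$ and $(t,1]$ are the open complements of the closed images of $[s,1]$ and $[0,t]$. (This is the role played in the paper's proof by the explicit open sets $\pi_{C(\sV)}(H(I^c)^c)$, etc.) Your verification that the endpoint classes are singletons is essentially the paper's, phrased via rainbow points of $\sL_2$ rather than via ``not a pivot''; the two are equivalent by Lemma~\ref{Lem:trichotomy2}. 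With the closed-map remark added, the argument is complete.
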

\begin{proof}
We assume that $\ell$ is a leaf of $\sL_1.$ The convex hull $h=H(v(\ell))$ is an arc which is properly embedded in $\closure{\HH^2}$. Since $h$ is compact, connected, and  separable, the continuous image $\pi_{C(\sV)}(h)$ is also compact, connected, and separable. Moreover, $\cD(C(\sV))$ is the closed disk and so $\pi_{C(\sV)}(h)$ is metrizable. 

Now, we write $\{a,b\}$ for $v(\ell)$. By \reflem{trichotomy2}, $a$ and $b$ are not pivots.  Therefore, the equivalence classes $\lsem a\rsem$ and $\lsem b \rsem$ of $a$ and $b$, respectively, under the compressing relation, are $\{a\}$ and $\{b\}$. See \refsec{decompositions}. 

Choose a point $x$ in $\pi_{C(\sV)}(h)\setminus \{\lsem a \rsem,\lsem b \rsem \}$.  Then there is a gap $\sG$ in $\sL_2$ such that 
$$ H(v(\ell))\cap x = H(v(\ell))\cap H(v(\sG)).$$
If $\sG$ is a leaf of $\sL_2$, then $H(v(\ell))\cap x=\{s\}$ where $s=g_2\circ \epsilon_2((\ell, \sG))$.
Then $h\setminus \{s\}$ has two connected components. We denote the component containing $a$ by $h_a$ and the component containing $b$ by $h_b$. 

Now, we write $\sG=\ell(I)$ for some $I\in \sL_2$ with $a\in I$.  Then observe that $$\{H(I^c)^c,H(v(\ell)), H((I^*)^c)^c \}$$ is a partition
on $\closure{\HH^2}$ and that $\pi_{C(\sV)}(H(I^c)^c)$ and $\pi_{C(\sV)}(H((I^*)^c)^c)$ are open in $\cD(C(\sV))$ since $\sG$ is a real leaf by \refrmk{compressingClass}. See \refsec{decompositions}. 

Note that $h_a\subset H(I^c)^c$ and $h_b\subset H((I^*)^c)^c$. Since 
$$\pi_{C(\sV)}(h\setminus \{s\})=\pi_{C(\sV)}(h)\setminus \{x\}\subset 
\pi_{C(\sV)}(H(I^c)^c)\cup \pi_{C(\sV)}(H((I^*)^c)^c),$$
$\pi_{C(\sV)}(h)\setminus \{x\}$ is disconnected. Hence,  $x$ is a cut point of $\pi_{C(\sV)}(h)$.

If $\sG$ is a non-leaf gap, then by \refprop{leafGapCross}, there is a tip $t$ of $\sG$ over  which $\ell$ crosses. Now we write $\{I_1, I_2\}$ for $\tipp{t}$ and assume that $a\in I_1$ and $b\in I_2$. Then $H(v(\ell))\cap x$ is the geodesic arc $g$ from $g_2\circ \epsilon_2((\ell, \ell(I_1)))$ to $g_2\circ \epsilon_2((\ell, \ell(I_2)))$. See \refrmk{compressingClass}. Then $h\setminus g$ has two connected components. Again, we denote the  component containing $a$ by $h_a$ and the component containing $b$ by $h_b$.  

We consider the partition
$$\{H(I_1^c)^c, H((I_1\cup I_2)^c) ,H(I_2^c)^c\}$$
on $\closure{\HH^2}$. Then, $\pi_{C(\sV)}(H(I_1^c)^c)$ and $\pi_{C(\sV)}(H(I_2^c)^c)$ are open in $\cD(C(\sV))$. See \refsec{decompositions}.

Note that $h_a\subset H(I_1^c)^c$ and $h_b\subset H(I_2^c)^c$. Since 
$$\pi_{C(\sV)}(h\setminus g)=\pi_{C(\sV)}(h)\setminus \{x\}\subset \pi_{C(\sV)}(H(I_1^c))\cup \pi_{C(\sV)}(H(I_2^c)),$$
$x$ is a cut point of $\pi_{C(\sV)}(h)$. By \refthm{characterizationOfArc}, $\pi_{C(\sV)}(h)$ is an arc from $\lsem a\rsem$ to $\lsem b \rsem$. Similarly, we can show the case where $\ell$ is a leaf of $\sL_2$.
\end{proof}

Now we show that the image of a thread in cusped weaving is homeomorphic to a real line. 

\begin{lem}\label{Lem:leafToLeaf}
Let $\sV=\{\sL_1, \sL_2\}$ be a veering pair and $\ell$ be a leaf of $\sV.$ Then $\#(\oslash(\ell))$ is homeomorphic to the real line $\RR$ in the cusped weaving $\closure{\fW}(\sV)$. Moreover, $\closure{ \omega(\oslash(\ell))}$ is an arc whose end point set is $v(\ell)$.
\end{lem}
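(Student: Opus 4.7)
The plan is to use \refprop{arcImage} as the backbone. Set $A := \pi_{C(\sV)}(H(v(\ell)))$; by that proposition, $A$ is an arc in $\cD(C(\sV))$ with endpoint set $v(\ell)$. I will first establish $\omega(\oslash(\ell)) = A \setminus v(\ell)$; since $A$ is closed in $\cD(C(\sV))$, taking closures then gives at once that $\closure{\omega(\oslash(\ell))}$ is the arc $A$ with endpoints $v(\ell)$, which is the ``moreover'' clause of the lemma.

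Without loss of generality assume $\ell$ is a leaf of $\sL_1$. For each stitch $s = (\ell, m) \in \oslash(\ell)$, the link point $g_2 \circ \epsilon_2(s)$ lies in the interior of the geodesic $g(v(\ell))$, so $\omega(s) \in A$; moreover, by strong transversality and \reflem{trichotomy2}, each endpoint $v_i \in v(\ell)$ belongs to $\E{\sL_1}$ but to neither $\E{\sL_2}$ nor the set of pivots of crown asterisks, and tracing the construction of $C(\sV)$ in \refsec{decompositions} shows the $C(\sV)$-class of $v_i$ contains no interior point of $g(v(\ell))$, forcing $\omega(s) \ne \pi_{C(\sV)}(v_i)$. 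Conversely, given $q = \pi_{C(\sV)}(p) \in A \setminus v(\ell)$ with $p$ in the interior of $g(v(\ell))$, either $p$ lies on a leaf $m \in \sL_2$, in which case $(\ell, m) \in \oslash(\ell)$ and $\omega((\ell, m)) = q$, or $p$ lies in the interior of $H(v(\sG))$ for some non-leaf gap $\sG$ of $\sL_2$; in the second case, since a leaf cannot interleave a non-leaf gap, \refprop{leafGapCross} provides a tip $v$ of $\sG$ over which $\ell$ crosses, and for $\tipp{v} = \{I_1, I_2\}$ both stitches $(\ell, \ell(I_j))$ map to $q$ by \refrmk{eqDef} and \refprop{compressingAndWeaving}. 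This yields $\omega(\oslash(\ell)) = A \setminus v(\ell)$.

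For the homeomorphism $\#(\oslash(\ell)) \cong \RR$, combining \refprop{compressingAndWeaving} with \reflem{trivialization} produces a continuous bijection $\psi := \hat{\omega}|_{\#(\oslash(\ell))} \colon \#(\oslash(\ell)) \to A \setminus v(\ell)$, and since $A$ is an arc, $A \setminus v(\ell)$ is homeomorphic to $\RR$. The remaining task is to upgrade $\psi$ to a homeomorphism. My plan exploits the order structure on $A$: the arc carries a natural linear order from $v_1$ to $v_2$ whose order topology coincides with the subspace topology from $\cD(C(\sV))$; pulling this order back via $\psi$ gives a linear order on $\#(\oslash(\ell))$ that I claim matches both the subspace topology from $\closure{\fW}(\sV)$ and the natural order on leaves of $\sL_2$ linked with $\ell$ read off from the half-thread decomposition. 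Once this is verified, $\psi$ becomes an order-isomorphism between two order topologies each homeomorphic to $\RR$, and hence a homeomorphism.

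The main obstacle is this topology comparison. The delicate case is a sequence $(\ell, m_n) \in \oslash(\ell)$ whose second coordinates accumulate at an endpoint $v_i$ of $\ell$: one must check that its $\#$-classes escape every compact subset of $\#(\oslash(\ell))$, mirroring the escape of their $\omega$-images from $A \setminus v(\ell)$. Control is provided by the loose and totally disconnected structure of $\sL_2$ near $v_i$ (via \reflem{trichotomy2} and \refprop{eqClass2}), which tightly regulates the possible $\sim_\omega$-classes; however, making this explicit requires carefully tracking how convergence of leaves in $\sL_2$ translates through the quotient $\sim_\omega$.
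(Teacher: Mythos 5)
The first half of your proposal is sound and matches the paper's approach: you use \refprop{arcImage} as the backbone and identify $\omega(\oslash(\ell))$ with the interior $A \setminus v(\ell)$ of the arc. Your verification of this set-theoretic equality, via the compression classes of the rainbow endpoints and via \refprop{leafGapCross}, is if anything more explicit than the paper's, which compresses it into a short chain of inclusions.

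The gap is in the last step, and you flag it yourself. \reflem{trivialization} alone only gives that $\hat\omega$ is a well-defined continuous map, so from it you can only extract a continuous bijection $\psi$. You then propose to build the continuity of $\psi^{-1}$ from scratch by an order-topology comparison, and you concede you cannot close the ``delicate case'' of sequences accumulating at an endpoint of $\ell$. The tool you are missing is \reflem{weavingIsDisk}: it says that $\hat\omega$ restricted to the weaving $\fW(\sV)$ is already a \emph{homeomorphism} onto $\interior{\cD(C(\sV))}$, not merely a continuous bijection. By \refrmk{twoSingStitch}, the thread $\oslash(\ell)$ can carry a singular stitch of a crown asterisk only when $\ell$ itself is a boundary leaf of a crown; so when it is not, $\#(\oslash(\ell))$ lies entirely inside $\fW(\sV)$, and your $\psi$ is the restriction of the homeomorphism $\hat\omega|_{\fW(\sV)}$ to a subspace, hence automatically a homeomorphism onto its image. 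The order-topology analysis is then unnecessary. (The residual case where $\ell$ bounds a crown does require a small extra step to pass through the single cusp class on the thread, but that step too goes through \reflem{weavingIsDisk} applied to the two half-threads, not through the argument you sketch.)
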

\begin{proof}
We assume that $\ell$ is a leaf of $\sL_1$. Choose a point $x$ in $\omega(\overt(\ell))$. Then there are real gaps $\sG_1$ and $\sG_2$ in $\sL_1$ and $\sL_2$ such that 
$$\pi_{C(\sV)}^{-1}(x)=H(v(\sG_1))\cap H(v(\sG_2)).$$ See \refrmk{compressingClass}. Note that $g_2\circ \epsilon_2(\overt(\ell))\subset H(v(\ell))$ and $\ell$ is a boundary leaf of $\sG_1$.
Then 
$$g_2\circ \epsilon_2(\overt(\ell)) \cap \pi_{C(\sV)}^{-1}(x) \subset H(v(\ell))\cap H(v(\sG_2))$$ by \refrmk{compressingClass}. Therefore, $$g_2\circ \epsilon_2(\overt(\ell))\subset H(v(\ell))\setminus v(\ell)\subset \pi_{C(\sV)}^{-1}(\omega(\overt(\ell))),$$ and 
we also have that 
$$\hat{\omega}(\#(\overt(\ell)))=\omega(\overt(\ell))=\pi_{C(\sV)}(H(v(\ell))\setminus(\ell)).$$

Meanwhile, by \refprop{arcImage}, $\pi_{C(\sV)}(H(v(\ell)))$ is an arc whose end point set is $v(\ell)$ and so $\pi_{C(\sV)}(H(v(\ell))\setminus v(\ell))$ is the interior of the arc. Therefore, $\hat{\omega}(\#(\oslash(\ell)))$ is the interior of the arc. Thus, $\#(\oslash(\ell))$ is homeomorphic to the real line $\RR.$ 
\end{proof}

By the similar argument of \reflem{leafToLeaf} and by \refprop{arcImage}, we can also get the following lemma.

\begin{lem}\label{Lem:ray}
Let $\sV=\{\sL_1, \sL_2\}$ be a veering pair. Choose $i\in \{1,2\}$. Suppose that $\ell$ is a leaf of $\sL_i$ and $s$ is a stitch in $\oslash(\ell)$. For any $e$ in $v(\ell)$, $\closure{\omega(\oslash(\ell, s, e))}$ is the subarc of $\closure{\omega(\oslash(\ell))}$ whose end points are $\omega(s)$ and $e$. 
\end{lem}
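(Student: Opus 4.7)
The plan is to mimic the proof of \reflem{leafToLeaf} one side at a time, transferring everything to the geodesic picture in $\closure{\HH^2}$. First, set $v(\ell) = \{e, e'\}$, $p := g_2\circ\epsilon_2(s) \in H(v(\ell))$, and $m := \eta_j(s)$ with $j \ne i$. The point $p$ separates the closed geodesic segment $H(v(\ell))$ into two subarcs $h_e$ (from $p$ to $e$) and $h_{e'}$ (from $p$ to $e'$). By the cut-point argument of \refprop{arcImage} applied to $\pi_{C(\sV)}(h_e)$, this image is an arc in $\cD(C(\sV))$ with endpoints $\omega(s)$ and $e$. So it suffices to establish $\omega(\oslash(\ell, s, e)) = \pi_{C(\sV)}(h_e) \setminus \{e\}$; the conclusion then follows upon taking closures, since $e \notin \E{\sL_j}$ by strong transversality, which forces $[e]_{\sim_{C(\sV)}} = \{e\}$.

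For the forward inclusion, I rely on a short hyperbolic-geometry observation: if $s' \in \oslash(\ell, s, e)$, then the leaf $m' := \eta_j(s')$ lies on the element $J$ of $m$ containing $e$, so both endpoints of $m'$ are in $\closure{J}$. The geodesic $g(m')$ therefore sits in the closed half-plane of $\HH^2$ bounded by $g(m)$ whose boundary arc contains $e$, and its unique intersection $g_2\circ\epsilon_2(s')$ with $g(\ell)$ thus lies in $h_e$.

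For the reverse inclusion, the plan is to use the unlinkedness axiom of $\sL_j$: every leaf $m'' \ne m$ of $\sL_j$ has exactly one good interval contained in exactly one element of $m$, placing each stitch $s'' \in \oslash(\ell) \setminus \{s\}$ in precisely one of $\oslash(\ell, s, e) \setminus \{s\}$ and $\oslash(\ell, s, e') \setminus \{s\}$. Combining the forward inclusion applied to both $e$ and $e'$ with the surjectivity $\omega(\oslash(\ell)) = \pi_{C(\sV)}(H(v(\ell)) \setminus v(\ell))$ established in \reflem{leafToLeaf} then forces each inclusion to be an equality.

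The step I expect to be the main obstacle is confirming that the subarcs $\pi_{C(\sV)}(h_e) \setminus \{e\}$ and $\pi_{C(\sV)}(h_{e'}) \setminus \{e'\}$ meet exactly at the single point $\omega(s)$, so that the counting argument above is unambiguous. This reduces to the following geometric fact: for any real gap $\sG$ of $\sL_j$ having $m$ as a boundary leaf, the chord $g(\ell) \cap H(v(\sG))$ lies on only one side of $p$. This in turn follows because $g(m)$ is a boundary geodesic of $H(v(\sG))$, so $H(v(\sG))$ is contained in one closed half-plane determined by $g(m)$, and $p$ therefore appears as an endpoint, not an interior point, of that chord.
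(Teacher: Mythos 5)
Your argument is correct and is exactly the route the paper intends: the paper proves this lemma by the one-line remark that it follows ``by the similar argument of \reflem{leafToLeaf} and by \refprop{arcImage},'' i.e., by identifying $\omega(\oslash(\ell,s,e))$ with $\pi_{C(\sV)}(h_e)\setminus\{e\}$ in the geodesic picture and invoking the arc characterization, which is precisely what you carry out in detail. Your final ``obstacle'' is also handled correctly (and can be seen even more directly: any compressing class is convex, so a class meeting both $h_e\setminus\{p\}$ and $h_{e'}\setminus\{p\}$ must contain $p$, forcing $\pi(h_e)\cap\pi(h_{e'})=\{\omega(s)\}$).
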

Then, we can show the following lemma.
\begin{lem}\label{Lem:subarc}
    Let $\ell$ be a leaf of $\sL_1$ or $\sL_2$. Assume that $s_1$ and $s_2$ are stitches in $\oslash(\ell)$ and that $\cI(s_1,s_2)\neq \emptyset$. Then $\#(\closure{\cI}(s_1,s_2))$ is the subarc of $\#(\oslash(\ell))$ whose end point set is $\{\#(s_1),\#(s_2)\}$.
\end{lem}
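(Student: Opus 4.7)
The plan is to reduce the lemma to a statement about subarcs of the arc $\alpha := \closure{\omega(\oslash(\ell))}$ in $\cD(C(\sV))$ using \reflem{leafToLeaf} and \reflem{ray}, and then to pin down $\omega(\closure{\cI}(s_1, s_2))$ inside $\alpha$ by a set-theoretic computation with half-threads. I will assume without loss of generality that $\ell$ is a leaf of $\sL_1$, so that $\oslash(\ell) = \overt(\ell)$, and write $v(\ell) = \{e_1, e_2\}$. By \reflem{leafToLeaf}, the trivialization $\hat\omega$ identifies $\#(\oslash(\ell))$ with $\omega(\oslash(\ell))$, which embeds in $\alpha$ as the open arc with endpoints $e_1, e_2$; subarcs of $\#(\oslash(\ell))$ then correspond to subarcs of $\alpha$ disjoint from $\{e_1, e_2\}$. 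By \reflem{ray}, $\closure{\omega(\overt(\ell, s_i, e_j))}$ is the subarc of $\alpha$ joining $\omega(s_i)$ to $e_j$ for each $i, j \in \{1, 2\}$.

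Next, I observe that by unlinkedness of $\sL_2$, every stitch $s \in \overt(\ell) \setminus \{s_1\}$ lies in exactly one of $\overt(\ell, s_1, e_1)$ and $\overt(\ell, s_1, e_2)$. After relabeling $e_1, e_2$ if necessary, we may assume $s_2 \in \overt(\ell, s_1, e_2)$. Writing $m_k = \eta_2(s_k) = \ell(J_k)$ with $J_k$ the element containing $e_1$, this assumption forces $J_1 \subset J_2$, from which it follows immediately that $s_1 \in \overt(\ell, s_2, e_1)$. Consequently $\omega(s_1), \omega(s_2)$ appear in the order $e_1, \omega(s_1), \omega(s_2), e_2$ along $\alpha$, and the closed subarc joining $\omega(s_1)$ to $\omega(s_2)$ is exactly
\[
\closure{\omega(\overt(\ell, s_1, e_2))} \cap \closure{\omega(\overt(\ell, s_2, e_1))}.
\]

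The main combinatorial step, which I expect to be the most delicate part of the argument, is the set identity
\[
\closure{\cI}(s_1, s_2) = \overt(\ell, s_1, e_2) \cap \overt(\ell, s_2, e_1).
\]
The right-hand side says that a leaf $m = \{A, B\}$ of $\sL_2$ linked with $\ell$ has one of its complementary intervals nested as $J_1 \subset B \subset J_2$; this is precisely the condition that $m_1$ and $m_2$ lie on opposite sides of $m$, which is the ``between'' condition in the definition of $\cI(s_1,s_2)$. The boundary cases $m = m_1, m_2$ account for the two stitches $s_1, s_2$. I foresee that juggling the four intervals and ruling out the two degenerate ``same-side'' nestings using unlinkedness of $\sL_2$ will require some care.

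Finally, I transfer this identity to $\alpha$. Since $\omega|_{\oslash(\ell)}$ surjects onto the open arc $\alpha \setminus \{e_1, e_2\}$, any point $x$ in the closed middle subarc other than $\omega(s_1), \omega(s_2)$ equals $\omega(s)$ for some $s \in \oslash(\ell)$; the half-thread dichotomy at $s_1$ (and then at $s_2$) forces $s \in \overt(\ell, s_1, e_2) \cap \overt(\ell, s_2, e_1)$, which by the key identity lies in $\closure{\cI}(s_1, s_2)$. Combined with the trivial forward containment $\omega(\closure{\cI}(s_1, s_2)) \subset \omega(\overt(\ell, s_1, e_2)) \cap \omega(\overt(\ell, s_2, e_1))$, this yields equality of $\omega(\closure{\cI}(s_1, s_2))$ with the closed middle subarc. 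Pulling back through the homeomorphism $\hat\omega$ then gives the desired subarc of $\#(\oslash(\ell))$ with endpoints $\#(s_1), \#(s_2)$.
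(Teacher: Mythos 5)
Your proposal is correct and follows essentially the same route as the paper: the paper's proof also rests on the identity $\closure{\cI}(s_1,s_2)=\oslash(\ell,s_1,e_1)\cap \oslash(\ell,s_2,e_2)$ (your version up to relabeling of $e_1,e_2$) combined with \reflem{leafToLeaf} and \reflem{ray} to identify the two half-thread images as subarcs meeting in the desired middle subarc. Your extra care in checking that the image of the intersection equals the intersection of the images is a point the paper passes over silently, but it is not a different method.
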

\begin{proof}
Observe that we can write $v(\ell)=\{e_1,e_2\}$ so that $$\closure{\cI}(s_1,s_2)=\oslash(\ell,s_1,e_1)\cap \oslash(\ell,s_2,e_2).$$
Then, by \reflem{leafToLeaf}, $\closure{\omega(\oslash(\ell))}$ is an arc whose end point set is $v(\ell),$ and, 
for each $i\in \{1,2\}$, by \reflem{ray},  $\closure{\omega(\oslash(\ell,s_i,e_i))}$ is the subarc of $\closure{\omega(\oslash(\ell))}$ whose end point set is $\{e_i,\omega(s_i)\}$. Therefore, $\omega(\closure{\cI}(s_1,s_2))$ is the subarc of $\closure{\omega(\oslash(\ell))}$ whose end point set is $\{\omega(s_1), \omega(s_2)\}$. Thus, 
$\#(\closure{\cI}(s_1,s_2))$ is the subarc of $\#(\oslash(\ell))$ whose end point set is $\{\#(s_1),\#(s_2)\}.$
\end{proof}

From now on, we discuss the possible configurations of the images of threads.

\begin{lem}\label{Lem:ultraparallelThread}
    Let $\sV=\{\sL_1, \sL_2\}$ be a veering pair. Choose $i\in \{1,2\}$. Suppose that leaves $\mu$ and $\nu$ of $\sL_i$ are ultraparallel. Then the intersection $\#(\oslash(\mu))\cap \#(\oslash(\nu))$ in the cusped weaving $\closure{\fW}(\sV)$ is empty or a singleton whose element is a cusp class or a singular class $w(\sG_1, \sG_2)$ associated with the asterisk $(\sG_1,\sG_2) $ such that $\sG_i$ has $\mu$ and $\nu$ as boundary leaves. 
    \end{lem}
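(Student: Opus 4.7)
The plan is to show that any equivalence class appearing in the intersection must be of the fourth (``asterisk'') type in the classification given by Proposition~\ref{Prop:eqClass2}, and then to use looseness to show that at most one such class can appear. Without loss of generality assume $i=1$, so that $\oslash(\mu)=\overt(\mu)$ and $\oslash(\nu)=\overt(\nu)$, and the first coordinate of every stitch in $\oslash(\mu)$ (resp.\ $\oslash(\nu)$) is $\mu$ (resp.\ $\nu$).

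Suppose the intersection is non-empty and pick $w$ in it. Choose $s_\mu\in\oslash(\mu)$ and $s_\nu\in\oslash(\nu)$ with $\#(s_\mu)=\#(s_\nu)=w$. Since $\mu\ne\nu$ we have $s_\mu\ne s_\nu$, so $|\#^{-1}(w)|\ge 2$, which rules out the first (singleton, genuine stitch) case of \refprop{eqClass2}. In the second case (two regular stitches agreeing in one coordinate, with the other coordinates parallel), the only way to accommodate both $s_\mu$ and $s_\nu$ is to have $\eta_2(s_\mu)=\eta_2(s_\nu)$ with $\eta_1(s_\mu)=\mu$ and $\eta_1(s_\nu)=\nu$ parallel, contradicting the ultraparallel hypothesis $v(\mu)\cap v(\nu)=\emptyset$. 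In the third case, $\mu$ and $\nu$ would both arise as $\ell(I)$ for $I$ ranging over the two-element tip pair $\tipp{t_1}$, forcing them to share the tip $t_1$ as a common vertex, again contradicting ultraparallelness.

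Therefore only the fourth case survives: $\#^{-1}(w)$ is the set of singular stitches of some asterisk $(\sG_1,\sG_2)$ of $\sV$, where $\mu$ and $\nu$ are both boundary leaves of $\sG_1$ (because they are the first coordinates of $s_\mu$ and $s_\nu$). This gap $(\sG_1,\sG_2)$ is an asterisk of ideal polygons or of crowns, so $w=w(\sG_1,\sG_2)$ is a singular class or a cusp class, as desired.

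It remains to show uniqueness. Suppose $w'=w(\sG_1',\sG_2')$ is another element of the intersection, so that $\mu$ and $\nu$ are boundary leaves of $\sG_1'$ as well. Then $v(\mu)\subseteq v(\sG_1)\cap v(\sG_1')$, and since $\sG_1$ and $\sG_1'$ are non-leaf gaps of the loose lamination system $\sL_1$, looseness forces $\sG_1=\sG_1'$. Being a non-leaf gap, $\sG_1$ is either a polygon with at least three vertices or a crown; hence its interleaving gap in $\sL_2$ has at least three (in fact infinitely many, in the crown case) vertices determined by the elements of $\sG_1$, and these vertices are common to $\sG_2$ and $\sG_2'$. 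Looseness of $\sL_2$ then gives $\sG_2=\sG_2'$, so $w=w'$ and the intersection is indeed a singleton. The main bookkeeping burden is simply the four-way case split of \refprop{eqClass2}; there is no substantive obstacle beyond keeping the coordinates of the stitches straight.
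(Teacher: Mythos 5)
Your proof is correct and takes essentially the same route as the paper: the paper's argument is exactly to observe that ultraparallelness rules out all but the last case of \refprop{eqClass2}, and you have simply written out the case analysis in full. One small caveat on your uniqueness step: after looseness gives $\sG_1=\sG_1'$, your claim that the vertices of $\sG_2$ and $\sG_2'$ singled out by the elements of $\sG_1$ ``are common'' to both is not justified as stated (interleaving only says each $I\in\sG_1$ meets each of $v(\sG_2)$ and $v(\sG_2')$ in one point, not the same point); the cleanest repair is to invoke \refrmk{twoSingStitch}, by which $\oslash(\mu)$ contains exactly two singular stitches and these belong to a single asterisk, so $\mu$ determines the asterisk and hence $w=w'$.
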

    \begin{proof}
        We assume that $\mu$ and $\nu$ are leaves of $\sL_1$ and that $\#(\overt(\mu))\cap \#(\overt(\nu))$ is not empty. Then, $(\mu,l)\sim_\omega (\nu,m)$ for some leaves $l$ and $m$ of $\sL_2$. Because $\mu$ and $\nu$ are ultraparallel, the last case of \refprop{eqClass2} is the only possible case. Now, the result follows from \refprop{eqClass2} and \refrmk{compressingClass}. 
    \end{proof}

\begin{lem}\label{Lem:intersectLeaves}
Let $\sV=\{\sL_1, \sL_2\}$ be a veering pair. Suppose that $\ell_1$ is a leaf of $\sL_1$ and $\ell_2$ is a leaf of $\sL_2$. Then, the intersection of $\#(\overt(\ell_1))$ and $\#(\ominus(\ell_2))$ has at most one point. 
\end{lem}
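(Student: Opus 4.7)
The plan is to show that any $w\in\#(\overt(\ell_1))\cap\#(\ominus(\ell_2))$ is canonically associated, via Proposition~\refprop{eqClass}, to a pair of real gaps $(\sG_1,\sG_2)$ with $\sG_i\in\sL_i$ and $\ell_i$ a boundary leaf of $\sG_i$, and then to argue that this pair is uniquely determined by $(\ell_1,\ell_2)$. Once this is established, the intersection contains at most one class, namely the one associated to this canonical pair (if it is actually realized).

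For the uniqueness of $\sG_i$ given $\ell_i$, the plan is to use the looseness of $\sL_i$. The real gaps of $\sL_i$ having $\ell_i$ as a boundary leaf are of two kinds: the leaf gap $\ell_i$ itself (only when $\ell_i$ is a real leaf), and non-leaf gaps containing $\ell_i$ as a boundary leaf (automatically real by Proposition~\refprop{realGap}). Any non-leaf gap having $\ell_i$ as a boundary contains both endpoints of $\ell_i$ in its vertex set; since distinct non-leaf gaps of $\sL_i$ have disjoint vertex sets by looseness, at most one non-leaf gap $\sH_i$ has $\ell_i$ as a boundary leaf. When $\sH_i$ exists, the transitive equivalence $\sim_i$ defining the $P_i$-classes in the compressing decomposition places the endpoints of $\ell_i$ in the same class as every vertex of $\sH_i$ (via the common gap $\sH_i$), so the $\sim_{C(\sV)}$-class of any point on $g(\ell_i)$ is governed by $H(v(\sH_i))$; this forces $\sG_i=\sH_i$ in the formula of Proposition~\refprop{eqClass}, since the alternative $\sG_i=\ell_i$ would miss equivalent stitches whose $\eta_i$-coordinate is another boundary leaf of $\sH_i$. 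When $\sH_i$ does not exist, only $\sG_i=\ell_i$ remains as a real gap with $\ell_i$ as boundary.

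In either case $\sG_i$ is uniquely determined by $\ell_i$; consequently $(\sG_1,\sG_2)$ and hence the class $w=w(\sG_1,\sG_2)$ are uniquely determined by $(\ell_1,\ell_2)$, proving the intersection has at most one element. The most delicate point will be the subcase where $\ell_i$ is simultaneously a real leaf and a boundary of the non-leaf gap $\sH_i$: both provide candidate real gaps for Proposition~\refprop{eqClass}, and one must use the transitive closure in the compressing decomposition to recognize $\sH_i$ as the canonical choice.
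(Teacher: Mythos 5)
Your proposal is correct and follows essentially the same route as the paper's proof: each class in the intersection determines, via \refprop{eqClass}, a pair of real gaps having $\ell_1$ and $\ell_2$ as boundary leaves, and looseness (which the paper invokes in the packaged form of \reflem{quadrachotomy}) makes that pair unique, so the class is unique. The only remark worth adding is that your ``delicate subcase'' is vacuous: by \refprop{realGap} a real leaf is never a boundary leaf of a non-leaf gap (the element opposite a boundary interval of a non-leaf gap is isolated), so the two kinds of candidate real gaps never compete and the appeal to the transitive closure of the compressing relation is unnecessary.
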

\begin{proof}
Assume that there are distinct elements $c^1$ and $c^2$ in $\omega(\overt(\ell_1))\cap \omega(\ominus(\ell_2)).$ For each $i\in\{1,2\}$,  $$\pi_{C(\sV)}^{-1}(c^i)=H(v(\sG_1^i))\cap H(v(\sG_2^i))$$
for some real gaps $\sG_1^i$ and $\sG_2^i$ of $\sL_1$ and $\sL_2$, respectively, by \refrmk{compressingClass}. Since $\sG_i^1$ and $\sG_i^2$ have  $\ell_i$ as a boundary leaf for all $i\in \{1,2\}$, by \reflem{quadrachotomy}, $\sG_i^1=\sG_i^2$ for all $i\in\{1,2\}$. This implies that $c^1= c^2$ and it is a contradiction. Thus, $\omega(\overt(\ell_1))\cap \omega(\ominus(\ell_2))$ has at most one element. 
\end{proof}

\begin{lem}\label{Lem:crossingThread}
    Let $\sV=\{\sL_1, \sL_2\}$ be a veering pair. Choose $i\in \{1,2\}$. Suppose that $\mu_1$ and $\mu_2$ are leaves of $\sL_i$ that are parallel and $\nu$ is a leaf of $\sL_{i+1}$ (cyclically indexed). If $\#(\oslash(\nu))\cap \#(\oslash(\mu_1))=\{c_1\}$ and $\#(\oslash(\nu))\cap \#(\oslash(\mu_2))=\{c_2\},$ then $c_1=c_2.$
    \end{lem}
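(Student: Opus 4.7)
By symmetry I will work in the case $i=1$, so $\mu_1,\mu_2\in\sL_1$ are two parallel leaves sharing a unique vertex $p$ and $\nu\in\sL_2$. The goal is to exhibit a natural representative of each $c_j$ which visibly satisfies the elementary weaving relation $\frown$. I will set $s_j:=(\mu_j,\nu)$ and aim to show $s_1\frown s_2$, which gives $c_1=\#(s_1)=\#(s_2)=c_2$.

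The first task is to confirm that $s_j=(\mu_j,\nu)$ is really a stitch representing $c_j$. A priori, $c_j\in\#(\overt(\mu_j))\cap\#(\ominus(\nu))$ only guarantees two possibly distinct representatives, one of the form $(\mu_j,m)$ and one of the form $(\ell,\nu)$. Here I would invoke \refprop{eqClass}: it produces real gaps $\sG_1^j\in\sL_1$ and $\sG_2^j\in\sL_2$ with $\#^{-1}(c_j)$ equal to the set of stitches $(\ell_1,\ell_2)$ such that $\ell_k$ is a boundary leaf of $\sG_k^j$. The representatives of the two forms above then force $\mu_j$ to be a boundary leaf of $\sG_1^j$ and $\nu$ to be a boundary leaf of $\sG_2^j$, so $(\mu_j,\nu)$ lies in $\#^{-1}(c_j)$ and is therefore a stitch with $\#(s_j)=c_j$. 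Since $s_1,s_2$ share the second coordinate $\nu$, both lie in $\ominus(\nu)$, so checking $s_1\frown s_2$ reduces to the emptiness of the interval $\cI(s_1,s_2)$.

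For emptiness of $\cI(s_1,s_2)$, I would use \refprop{parallelLeaves} --- which applies since $\sL_1$ is quite full, loose, and (by \refprop{totDis2}) totally disconnected --- to produce a non-leaf gap $\sG$ of $\sL_1$ having $\mu_1,\mu_2$ as boundary leaves and $p$ as a tip. Then I would argue by contradiction: suppose some stitch $(\mu',\nu)$ with $\mu'\in\sL_1$ lies between $s_1$ and $s_2$. Writing $\mu'=\{K_1,K_2\}$ and $\mu_j=\{I_j,I_j^*\}$, the definition of ``lies between'' gives (after possibly swapping $I_j\leftrightarrow I_j^*$) $\bar{I_j}\subset\bar{K_j}$, hence $v(\mu_j)\subset\bar{K_j}$ for $j=1,2$. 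The shared vertex $p\in v(\mu_1)\cap v(\mu_2)$ therefore lies in $\bar{K_1}\cap\bar{K_2}=v(\mu')$, so $\mu_1,\mu_2,\mu'$ are three distinct leaves of $\sL_1$ sharing the vertex $p$, directly contradicting \refprop{threeLeaves}. Thus $\cI(s_1,s_2)=\emptyset$, giving $s_1\frown s_2$ and $c_1=c_2$.

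The main obstacle is not geometric but bookkeeping: the step that could trip up a careless reader is Step one, since the singleton intersection $\{c_j\}$ does not immediately name $(\mu_j,\nu)$ as a stitch --- it has to be extracted from the structural description of equivalence classes in \refprop{eqClass}. Once that is done, the \refprop{threeLeaves} obstruction is the cleanest possible finish and requires no case analysis on how $\nu$ interacts with the gap $\sG$.
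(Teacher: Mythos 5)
There is a genuine gap in your Step one, and it is exactly at the point you flagged as delicate. From \refprop{eqClass} you correctly extract that $\mu_j$ is a boundary leaf of $\sG_1^j$ and $\nu$ is a boundary leaf of $\sG_2^j$, but the inference ``so $(\mu_j,\nu)$ lies in $\#^{-1}(c_j)$ and is therefore a stitch'' is backwards: $\#^{-1}(c_j)$ is by definition the set of \emph{stitches} whose components are boundary leaves of $\sG_1^j$ and $\sG_2^j$, so membership presupposes that $\mu_j$ and $\nu$ are linked, which is not automatic. Concretely, when $c_j$ is a singular (or cusp) class of an asterisk $(\sG_1^j,\sG_2^j)$ of interleaving $n$-gons or crowns, each boundary leaf of $\sG_1^j$ is linked with exactly two boundary leaves of $\sG_2^j$, not with all of them. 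Take $\tipg{p}=\sG_1^1=\sG_1^2$ a triangle with vertices $t_0=p,t_1,t_2$, set $\mu_1=\ell(\opi{t_2}{t_0})$, $\mu_2=\ell(\opi{t_0}{t_1})$, and let $\nu$ be the boundary leaf of the interleaving gap crossing over $t_1$. Then the hypotheses of the lemma hold (both intersections are the singleton consisting of the singular class, by \reflem{intersectLeaves}), yet $\nu$ is unlinked with $\mu_1$, so $s_1=(\mu_1,\nu)$ is not a stitch and your argument cannot start. Your Step two (emptiness of $\cI(s_1,s_2)$ via \refprop{threeLeaves}) is correct, but it only covers the case where $\nu$ crosses over $p$ itself, where the conclusion already follows from \refrmk{eqDef}.

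The repair is to stop looking for a common representative stitch and instead finish from the data you already extracted, which is what the paper does. You have $c_j=w(\sG_1^j,\sG_2^j)$; since $\mu_1$ and $\mu_2$ are parallel they are boundary leaves of the single non-leaf gap $\tipg{p}$ (\refprop{parallelLeaves}), hence not real leaves, and by looseness (\reflem{quadrachotomy}) $\tipg{p}$ is the \emph{unique} real gap of $\sL_1$ having either of them as a boundary leaf, so $\sG_1^1=\sG_1^2=\tipg{p}$. Likewise the real gap of $\sL_2$ having $\nu$ as a boundary leaf is unique, so $\sG_2^1=\sG_2^2$. Therefore $c_1$ and $c_2$ are the class associated with the same pair of real gaps, and $c_1=c_2$ with no case analysis and no need for $(\mu_j,\nu)$ to be a stitch.
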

    \begin{proof}
    Without loss of generality, we may assume that $i=1$. Since $\mu_1$ and $\mu_2$ are parallel, there is a point $t$ in $S^1$ such that $\{t\}=v(\mu_1)\cap v(\mu_2).$ Note that $t$ is a tip and $\mu_j$ is a boundary leaf of $\tipg{t}$ for all $j\in\{1,2\}$. By \refrmk{compressingClass}, for each $j\in \{1,2\}$,
    $$\hat{\omega}(c_j)=H(v(\tipg{t}))\cap H(v(\sG_j))$$
    for some real gap $\sG_j$ of $\sL_j$. Then, $\mu$ is a boundary leaf of $\sG_j$ for all $j\in\{1,2\}$ and so $\sG_1=\sG_2.$  Thus, $\hat{\omega}(c_1)=\hat{\omega}(c_2)$ and $c_1=c_2.$ 
    \end{proof}

\begin{lem}\label{Lem:parallelThread}
Let $\sV=\{\sL_1, \sL_2\}$ be a veering pair. Choose $i\in \{1,2\}$. Suppose that leaves $\mu$ and $\nu$ of $\sL_i$ are parallel, namely, $v(\mu)\cap v(\nu)=\{p\}$ for some $p$ in $S^1.$ Then, there are singular stitches $s_\mu$ and $s_\nu$ in $\oslash(\mu)$ and $\oslash(\nu)$, respectively, such that 
$$\#(\oslash(\mu))\cap \#(\oslash(\nu))=\#(\oslash(\mu, s_\mu, p))=\#(\oslash(\nu,s_\nu,p))$$ and $\eta_{i+1}(s_\mu)=\eta_{i+1}(s_\nu)$ (cyclically indexed). 
\end{lem}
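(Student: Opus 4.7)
The plan is to locate a natural asterisk whose singular stitches realise the meeting of $\#(\oslash(\mu))$ and $\#(\oslash(\nu))$ in the cusped weaving, and then to use the unlinkedness of $\sL_{i+1}$ to force every class in the intersection to live on the $p$-side of these singular stitches.

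First I would apply \refprop{parallelLeaves} to produce the unique non-leaf real gap $\sG$ of $\sL_i$ having $\mu,\nu$ as boundary leaves and $p$ as a tip; write $\tipp{p}=\{I_\mu,I_\nu\}$ with $\mu=\ell(I_\mu)$ and $\nu=\ell(I_\nu)$. The veering hypothesis yields an interleaving gap $\sG'$ in $\sL_{i+1}$, and the interleaving property singles out a unique $J_0\in\sG'$ with $p\in J_0$, whose two endpoints lie one in $I_\mu$ and the other in $I_\nu$. Hence $\ell(J_0)$ is linked simultaneously with $\mu$ and with $\nu$; take $s_\mu$ and $s_\nu$ to be the resulting singular stitches of the asterisk $(\sG,\sG')$, so that $\eta_{i+1}(s_\mu)=\eta_{i+1}(s_\nu)=\ell(J_0)$ as required.

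Next I would verify the forward inclusion together with the identity $\#(\oslash(\mu,s_\mu,p))=\#(\oslash(\nu,s_\nu,p))$. For any $s=(\mu,\ell')\in\oslash(\mu,s_\mu,p)$, the leaf $\ell'$ lies on $J_0$ so its two endpoints sit in $\overline{J_0}$; since $s$ is a stitch, these endpoints must be separated by $\mu$, forcing one into the $I_\mu$-half of $\overline{J_0}$ and the other into the $I_\nu$-half. The same configuration makes $\ell'$ linked with $\nu$, so $s'=(\nu,\ell')$ is a stitch in $\oslash(\nu,s_\nu,p)$. Both stitches lie on the common thread $\oslash(\ell')$, and \refprop{threeLeaves} prohibits any third leaf of $\sL_i$ from sharing $p$ with $\mu$ and $\nu$; hence no leaf of $\sL_i$ lies strictly between $\mu$ and $\nu$, so $\cI(s,s')=\emptyset$ and $s\frown s'$. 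This gives $\#(s)=\#(s')$, showing the two half-thread images coincide and sit inside the intersection.

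For the reverse inclusion, let $c\in\#(\oslash(\mu))\cap\#(\oslash(\nu))$. The classification \refprop{eqClass2} rules out the singleton case, because $\mu\neq\nu$ both appear as $\sL_i$-components of representatives of $c$, and in each of the remaining cases one exhibits a representative $(\mu,\ell_0)\in\#^{-1}(c)$ whose second component $\ell_0$ is linked both with $\mu$ and with $\nu$. The main obstacle — and the crux of the argument — is then to show that such an $\ell_0$ necessarily lies on $J_0$. Let $I_0$ be the element of $\ell_0$ containing $p$; since $I_0$ and $J_0$ both lie in leaves of $\sL_{i+1}$ and both contain $p$, the unlinkedness axiom for $\sL_{i+1}$ forces them to be nested, and the possibility $J_0\subsetneq I_0$ must be excluded. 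If it held, the endpoint of $I_0$ in the $I_\mu$-direction would fall strictly inside the element of $\sG'$ immediately adjacent to $J_0$, causing $\ell_0$ to be linked with that boundary leaf of $\sG'$ — a contradiction with unlinkedness of $\sL_{i+1}$. Hence $I_0\subseteq J_0$, so $\ell_0$ lies on $J_0$ and $(\mu,\ell_0)\in\oslash(\mu,s_\mu,p)$, completing the reverse inclusion.
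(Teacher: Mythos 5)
Your proof is correct and follows essentially the same route as the paper's: identify the tip gap $\tipg{p}$ and its interleaving gap, take the singular stitches on the unique boundary leaf crossing over $p$, and use \refprop{eqClass2} together with unlinkedness in $\sL_{i+1}$ to confine every class of the intersection to the $p$-side half-threads. One small repair: your appeal to \refprop{threeLeaves} to conclude $\cI(s,s')=\emptyset$ is misplaced, since a leaf of $\sL_i$ lying between $\mu$ and $\nu$ need not have $p$ as a vertex; the correct justification is \refrmk{eqDef} (the leaf $\ell'$ crosses over the tip $p$ and $\tipp{p}=\{I_\mu,I_\nu\}$, whose complements are isolated), or equivalently the observation that a leaf strictly between $\mu$ and $\nu$ could not lie on any element of $\tipg{p}$. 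Similarly, in the nesting argument at the end you should also treat the degenerate possibility that an endpoint of $I_0$ coincides with an endpoint of $J_0$, which is handled by running the same linking argument on the other side.
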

\begin{proof}
Without loss of generality, we assume that $\mu$ and $\nu$ are leaves of $\sL_1$. Then, $\tipg{p}$ has $\mu$ and $\nu$ as boundary leaves. Now, consider the interleaving gap $\sG$ of $\tipg{p}$. There is a unique boundary leaf $\ell$ of $\sG$ crossing over $p$. Let $s_\mu=(\mu,\ell)$ and $s_\nu=(\nu,\ell)$. Since $\mu$ and $\nu$  are parallel,  each class $c$ in $\#(\overt(\mu))\cap \#(\overt(\nu))$, except the class $w(\tipg{p},\sG)$, falls into either the second or third case of \refprop{eqClass2}. Thus, $c$ is given by $(\tipg{p},\sG')$ for some real gap $\sG'$ crossing over $p$. Moreover, $\sG'$ lies on the element of $\ell$ containing $p$. Thus, $\#(\oslash(\mu))\cap \#(\oslash(\nu))$ is a subset of both $\#(\oslash(\mu, s_\mu, p))$ and $\#(\oslash(\nu,s_\nu,p))$. Because $\#(\oslash(\mu, s_\mu, p))$ and $\#(\oslash(\nu,s_\nu,p))$ are clearly contained in $\#(\oslash(\mu))\cap \#(\oslash(\nu))$, the result follows.
\end{proof}
From the proofs of the above, we can see the following fact.
\begin{rmk}
Let $\sV$ be a veering pair and $\ell$ is a leaf of $\sV$. Then, $\alpha=\pi_{C(\sV)}(H(v(\ell)))$ is equal to $\omega(\oslash(\ell))\sqcup v(\ell)$ and one of the following cases holds.
\begin{itemize}
    \item If $\ell$ is a boundary leaf of a crown, then $\alpha \cap S^1=v(\ell)\cup \{p\}$ where $p$ is the pivot of the crown.
    \item Otherwise, $\alpha\cap S^1=v(\ell)$.
\end{itemize}
\end{rmk}

\section{Frames and Scraps}\label{Sec:framesec}

In this section, we introduce and study frames and scraps in veering pairs. Eventually, the scraps give rise to foliated charts for the weavings and rectangles on weavings. See \refsec{rectangle} for the definition of rectangles.

\subsection{Frames and Scraps} 
Let $\sV=\{\sL_1, \sL_2\}$ be a veering pair. A \emph{frame} in $\sV$ is a quadruple $\fF=(I_1, J_1, I_2, J_2)$  in $\sL_1\times \sL_2 \times \sL_1 \times \sL_2$ that satisfies the following:
\begin{itemize}
    \item $s_i=(\ell(I_i), \ell(J_i))$ is a stitch for all $i\in\{1,2\}$.
    \item The sector $(I_i,J_i)$ is counter-clockwise for all $i\in \{1,2\}$.
    \item $s_i$ lies on the sector $(I_{i+1}^*,J_{i+1}^*)$ for all $i\in \{1,2\}$.
\end{itemize}
A stitch $s$ of $\sV$ \emph{lies} on the frame $\fF$ if $\eta_1(s)$ properly lies between $\ell(I_1)$ and $\ell(I_2)$ and $\eta_2(s)$ properly lies between $\ell(J_1)$ and $\ell(J_2)$. We define the \emph{scrap} $\cS(\fF)$ framed by $\fF$ to be the set of all stitches lying on $\fF$. The \emph{closure} $\closure{\cS}(\fF)$ of $\cS(\fF)$ is defined as 
$$\closure{\cS}(\fF):=\{(\ell(N),\ell(M))\in \fS(\sV): N\in \stem{I_1}{I_2^*} \text{ and } M\in \stem{J_1}{J_2^*}\}.$$  
For convenience, we write  
\begin{align*}
  c_1(\fF)&=(\ell(I_1),\ell(J_1)),\\
c_2(\fF)&=(\ell(I_2),\ell(J_1)),\\
c_3(\fF)&=(\ell(I_2),\ell(J_2)), \text{ and }\\
c_4(\fF)&=(\ell(I_1),\ell(J_2)).  
\end{align*}
Also, for each $i\in \{1,2,3,4\}$, we call the stitch  $c_i(\fF)$ the \emph{i-th corner} of the frame $\fF.$ 

\subsection{Maximal Extension}
Let $\fF_I=(I_1, I_2, I_3, I_4)$ and $\fF_J=(J_1, J_2, J_3, J_4)$ be frames in $\sV$. Each $I_i$ is called the \emph{$i^{th}$-side} of $\fF_I$.
For each $i\in \{1,2,3,4\}$, the frame $\fF_J$ is an \emph{$I_i$-side extension} of $\fF_I$ if $I_j=J_j$ for all $j\neq i,$ and $J_i\subsetneq I_i.$ For each $i\in \{1,2,3,4\}$, the frame $\fF_I$ is \emph{$I_i$-side maximal} if there is no $I_i$-side extension. 
A frame $\fF_I$ is a \emph{tetrahedron frame} if $\fF_I$ is $I_i$-side. We say that  $\fF_J$ \emph{covers} $\fF_I$ if  $J_i\subseteq I_i$ for all $i\in \{1,2,3,4\}$.

Now, for each $i \in \{1,2,3,4\}$, we write $\opi{u_i}{v_i}$ for $I_i$. Then $I_i$ contains $\{v_{i-1}, u_{i+1}\}$ for all $i\in \{1,2,3,4\}$ (cyclically indexed). We define the \emph{$I_i$-side rail} of $\fF_I$ to be the stem from  $I_i$ to  $\{v_{i-1}, u_{i+1}\}$ in $\sL$ where $\sL$ is the lamination system in the veering pair containing $I_i$. 


\begin{lem}\label{Lem:maximalSide}
    Let $\sV=\{\sL_1, \sL_2\}$ be a veering pair. Suppose that there is a frame $\fF=(I_1, I_2, I_3, I_4)$ and that $\fF$ is $I_j$-side maximal for some $j \in\{1,2,3,4\}$. Then there is a non-leaf  gap $\sG$ containing $I_j^*,$ and  $I_{j-1}$ and  $I_{j+1}$  cross over $u_j$ and $v_j$ respectively, where  $I_i=\opi{u_i}{v_i}$ (cyclically indexed).
    \end{lem}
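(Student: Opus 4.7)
By the cyclic symmetry of the frame axioms it suffices to treat $j=1$. Writing $I_i=\opi{u_i}{v_i}$ for each $i$, the frame conditions force the ccw order $u_1,v_4,u_2,v_1,u_3,v_2,u_4,v_3$ on $S^1$, and the $I_1$-side rail becomes $\stem{\{v_4,u_2\}}{I_1}\subset \sL_1$.

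My first move will be to show that $\fF$ is $I_1$-side maximal precisely when $I_1$ is the minimum of this rail. For the non-trivial direction, I will check that for any $L\in\sL_1$ with $\{v_4,u_2\}\subset L\subsetneq I_1$, the replacement $(L,I_2,I_3,I_4)$ satisfies every frame axiom: linkedness of $\ell(L)$ with $\ell(I_2)$ and $\ell(I_4)$ follows from $u_2,v_4\in L\subset I_1$ together with $v_2,u_4\in I_1^*\subset L^*$, while the ``properly lies on'' conditions persist since $\closure{L}\subset\closure{I_1}\subset I_3^*$ and $L^*\supset I_1^*\supset\closure{I_3}$.

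Next, assuming $I_1$ is minimal in the rail, I will argue that $I_1$ is isolated. Otherwise, an $I_1$-side sequence $\ell(K_k)$ with $K_k\subsetneq I_1$ and $K_k\to I_1$ would satisfy $\{v_4,u_2\}\subset K_k$ for all sufficiently large $k$ (both points lie in the open set $I_1=\liminf K_k$), contradicting minimality. Then \refprop{gapExist} produces a non-leaf gap $\sG\in\sL_1$ with $I_1^*\in\sG$. The endpoints $u_1,v_1$ are tips of $\sG$, and I name the tip pairs
\[
\tipp{u_1}=\{I_1^*,K_a\}, \qquad \tipp{v_1}=\{I_1^*,K_b\},
\]
with $K_a=\opi{u_1}{w_0}$, $K_b=\opi{w_r}{v_1}$ and $w_0\le w_r$ inside $I_1$.

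Finally, since $\ell(I_4)$ is linked with the boundary leaf $\ell(I_1)$ of $\sG$, \refprop{leafGapCross} tells me the leaf $\ell(I_4)$ and the gap $\sG$ cross. Unpacking the definition, this forces $|I_4\cap v(\sG)|=1$ or $|I_4^*\cap v(\sG)|=1$. Because $u_1\in I_4\cap v(\sG)$ and $v_1\in I_4^*\cap v(\sG)$ always hold, the first option says exactly that $I_4$ crosses over $u_1$; the second option would force $w_r<v_4$ (the sole vertex of $\sG$ in $I_4^*$ being $v_1$), which places $\{v_4,u_2\}$ in $K_b\subsetneq I_1$ and contradicts minimality. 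The symmetric argument with $\ell(I_2)$ excludes the unwanted orientation via $w_0>u_2$ and forces $I_2$ to cross over $v_1$. The principal difficulty is this last dichotomy: minimality of the rail is exactly what breaks the a priori symmetry in \refprop{leafGapCross} and pins down the correct tip.
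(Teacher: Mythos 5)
Your proof is correct and follows essentially the same route as the paper's: reduce to $j=1$, observe that $I_1$-side maximality forces the rail $\stem{\{v_4,u_2\}}{I_1}$ to be the singleton $\{I_1\}$, deduce that $I_1$ is isolated and invoke \refprop{gapExist}, then use \refprop{leafGapCross} together with the triviality of the rail to pin down which tip each of $\ell(I_2),\ell(I_4)$ crosses over. The only (cosmetic) difference is in the last step: the paper assumes the wrong crossing for $\ell(I_2)$, propagates it to $\ell(I_4)$, and finds the forbidden rail element in $\tipp{u_1}$, whereas you rule out the wrong option for each of $\ell(I_4)$ and $\ell(I_2)$ independently via the tip pairs at $v_1$ and $u_1$ respectively — both hinge on the same contradiction with rail minimality.
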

    \begin{proof}
    First, we consider the case where $j=1.$ Let $\fR_1$ be the $I_1$-side rail of $\fF.$ If there is an element $J$ in $\fR_1-\{I_1\},$ then $(J, I_2, I_3, I_4)$ is a $I_1$-side extension of $\fF.$ This is a contradiction by assumption. Therefore, $\fR_1=\{I_1\}.$ This implies that $I_1$ is isolated. Hence, there is a non-leaf gap  $\sG$ in $\sL_1$ containing $I_1^*.$
    
    Now we want to show that $I_2$ crosses over $v_1$. Since $\sG$ and $\ell(I_2)$ are linked, by \refprop{leafGapCross},$\ell(I_2)$ cross over $u_1$ or $v_1.$ Assume that $\ell(I_2)$ crosses over $u_1.$ Then $I_2^*$ crosses over $u_1.$ Since $\ell(I_4)$ lies properly on $I_2^*$ and $\sG$ and $\ell(I_4)$ are linked, $\ell(I_4)$ also crosses over $u_1.$ Now, we write $\{I_1^*, K\}$ for the tip pair $\tipp{u_1}.$ Then, $\{v_4, u_1\}\subset K \subset I_1$ and so $K\in \fR_1.$ This is in contradiction with that $\fR_1=\{I_1\}.$ Therefore, $\ell(I_2)$ crosses over $v_1$ and so $I_2$ crosses over $v_1.$ Likewise, we can show that $I_4$ crosses over $u_1.$  Similarly, we can show the cases where $j\in\{2,3,4\}$. 
    \end{proof} 

\begin{rmk}\label{Rmk:maximality}
If the $I$-side rail of a frame $F$ has at least two elements, then there is an $I$-side extension of $F$ that is $I$-side maximal. Otherwise, $F$ is $I$-side maximal.  
\end{rmk}

The following lemma follows from \reflem{maximalSide}.

\begin{lem}\label{Lem:maxStitch}
Let $\sV=\{\sL_1, \sL_2\}$ be a veering pair and $\fF=(I_1, I_2, I_3, I_4)$ be  a frame. Assume that $\fF$ is $I_i$-maximal for some $i\in\{1,2,3,4\}$. Then there are exactly two singular stitches in $\closure{\cI}(c_{i-1}(\fF),c_i(\fF))$ (cyclically indexed). In particular, $\cI(c_{i-1}(\fF),c_i(\fF))$ contains at least one singular stitch.
\end{lem}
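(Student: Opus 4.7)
My strategy is to use $I_i$-maximality of $\fF$ to identify the two singular stitches on the thread $\oslash(\ell(I_i))$ via the veering structure, and then to use the gap property of the interleaving gap to show they lie in $\closure{\cI}(c_{i-1},c_i)$. By \reflem{maximalSide}, there is a non-leaf gap $\sG$ in the lamination containing $I_i$ with $I_i^*\in\sG$, and the adjacent sides $I_{i-1}$ and $I_{i+1}$ cross over the two tips $u_i$ and $v_i$ of $I_i=\opi{u_i}{v_i}$, respectively. Since $\sV$ is veering, $\sG$ has an interleaving gap $\sH$ in the other lamination; by interleaving, $I_i^*$ contains a unique vertex $p_0\in v(\sH)$, and the two sides $M_1,M_2\in\sH$ adjacent to $p_0$ satisfy $u_i\in M_1$ and $v_i\in M_2$. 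The stitches $\sigma_1=(\ell(I_i),\ell(M_1))$ and $\sigma_2=(\ell(I_i),\ell(M_2))$ are singular stitches of the asterisk $(\sG,\sH)$ lying on $\oslash(\ell(I_i))$, so by \refrmk{twoSingStitch} they are the only two singular stitches on this thread; the task reduces to showing $\{\sigma_1,\sigma_2\}\subset\closure{\cI}(c_{i-1},c_i)$.

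The crux is the nested containment $I_{i-1}\subseteq M_1$ as good intervals in the lamination opposite to that of $I_i$; the analogous $I_{i+1}\subseteq M_2$ follows by a symmetric argument. Both $I_{i-1}$ and $M_1$ belong to this opposite lamination, contain $u_i$ in their interiors, and intersect $v(\sG)$ only in $\{u_i\}$ --- the former by the crossing-over property from \reflem{maximalSide} and the latter by the interleaving of $\sG$ and $\sH$. In particular $I_{i-1}^*\supset v(\sG)\setminus\{u_i\}$, a set with at least two points. Since $\sH$ is a gap of this opposite lamination, the leaf $\ell(I_{i-1})$ must lie on some side $M\in\sH$, meaning $I_{i-1}\subset M$ or $I_{i-1}^*\subset M$; the second case is impossible because each side of $\sH$ meets $v(\sG)$ in exactly one point. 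So $I_{i-1}\subset M$, and since $u_i\in I_{i-1}\subset M$, we must have $M=M_1$, giving $I_{i-1}\subseteq M_1$.

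Finally I translate this into the bracketing statement. By \reflem{leafToLeaf} and \reflem{ray}, the image $\#(\oslash(\ell(I_i)))$ is an open arc in the cusped weaving whose closure is bounded by $u_i,v_i$, and $c_{i-1}$ cuts this closed arc into two half-arcs emanating toward $u_i$ and $v_i$. The containment $I_{i-1}\subseteq M_1$ of two good intervals straddling $u_i$ translates, in $\HH^2$, into the geodesic $g(\ell(I_{i-1}))$ sitting no further from $u_i$ than $g(\ell(M_1))$; under the trivialization $\hat\omega$ this places $\#(c_{i-1})$ between $u_i$ and $\#(\sigma_1)$ along the arc, whence $\sigma_1\in\closure{\cI}(c_{i-1},c_i)$. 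The symmetric argument using $I_{i+1}\subseteq M_2$ places $\sigma_2\in\closure{\cI}(c_{i-1},c_i)$, so $\closure{\cI}(c_{i-1},c_i)$ contains exactly the two singular stitches $\sigma_1$ and $\sigma_2$. The main technical obstacle I anticipate is making this last translation fully rigorous --- identifying the natural order on the arc with the nested-containment order of the corresponding leaves straddling the common endpoint $u_i$ --- which I would carry out using the trivialization established in \reflem{weavingIsDisk} and the explicit description of compressing classes from \refrmk{compressingClass}.
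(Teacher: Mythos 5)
Your identification of the two singular stitches $\sigma_1,\sigma_2$ as the only singular stitches of $\oslash(\ell(I_i))$, coming from the asterisk $(\sG,\sH)$ produced by \reflem{maximalSide}, is correct, and your nesting argument $I_{i-1}\subseteq M_1$, $I_{i+1}\subseteq M_2$ is sound. Since the paper's proof is a one-line citation of \reflem{maximalSide}, you are filling in exactly the intended details, and your route is essentially the paper's.

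Two remarks, however. First, the final translation through $\hat\omega$ and the arc picture of \reflem{leafToLeaf} is an unnecessary detour: the bracketing you want follows directly from the combinatorial definition of $\closure{\cI}(c_{i-1},c_i)$ and ``lies between.'' From $I_{i-1}\subseteq M_1$ and $I_{i+1}\subseteq M_2\subset M_1^*$ (the last containment holding because $M_1,M_2$ are distinct elements of the gap $\sH$), $\ell(I_{i-1})$ lies on $M_1$ and $\ell(I_{i+1})$ lies on $M_1^*$, so $\ell(M_1)$ lies between $\ell(I_{i-1})$ and $\ell(I_{i+1})$ unless it coincides with $\ell(I_{i-1})$, and in either case $\sigma_1\in\closure{\cI}(c_{i-1},c_i)$; similarly for $\sigma_2$. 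This bypasses the technical obstacle you flagged.

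Second, and more importantly, you do not address the ``In particular'' clause, which is a genuine gap: showing ``exactly two singular stitches in $\closure{\cI}(c_{i-1},c_i)$'' does not by itself rule out the scenario where both are the endpoints $c_{i-1}$ and $c_i$, leaving the open interval $\cI(c_{i-1},c_i)$ with none. This clause is used contrapositively in \reflem{extension}, so it cannot be dropped. To close it, observe that $\ell(M_1)$ and $\ell(M_2)$ are parallel, sharing the vertex $p_0\in v(\sH)$, whereas $\ell(I_{i-1})$ and $\ell(I_{i+1})$ are ultraparallel by the ``properly lies on the sector'' condition in the definition of a frame. Hence $\ell(I_{i-1})=\ell(M_1)$ and $\ell(I_{i+1})=\ell(M_2)$ cannot both hold, so at most one of $\sigma_1,\sigma_2$ is a corner, and the other lies in $\cI(c_{i-1},c_i)$.
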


\reflem{maxStitch} says that if a frame $\fF$ is $i^{th}$-side maximal, then the arc $\#(\closure{\cI}(c_{i-1}(\fF),c_i(\fF)))$ contains a singular or cusp class.  

\begin{lem}\label{Lem:tetrahedron}
    Let $\sV=\{\sL_1, \sL_2\}$ be a veering pair. Then every frame is covered by  a tetrahedron frame. 
    \end{lem}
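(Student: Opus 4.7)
The plan is to construct the desired tetrahedron frame by extending one side of $\fF$ at a time, each time passing to a single-side maximal extension. Specifically, I would set $\fF^{(0)}=\fF$ and, for each $j\in\{1,2,3,4\}$ successively, apply \refrmk{maximality} to obtain $\fF^{(j)}$ as an $I_j$-side maximal extension of $\fF^{(j-1)}$ (or simply set $\fF^{(j)}=\fF^{(j-1)}$ if the $I_j$-side rail of $\fF^{(j-1)}$ is already trivial). Each $\fF^{(j)}$ covers $\fF^{(j-1)}$ by definition of extension, and covering is transitive, so $\fF^{(4)}$ covers $\fF$. It therefore suffices to show that the $I_k$-side maximality achieved at step $k$ survives all subsequent extensions on sides $j>k$, so that $\fF^{(4)}$ ends up maximal on every side, i.e., a tetrahedron frame.

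The key point is a monotonicity statement: for $j\neq k$, an $I_j$-side extension of a frame cannot enlarge the $I_k$-side rail. The reason is that this rail $\stem{\{v_{k-1},u_{k+1}\}}{I_k}$ depends only on $I_k$ itself and on the two endpoints $v_{k-1},u_{k+1}$ of the neighboring sides that lie inside $I_k$. If $j=k\pm 2$ is the opposite side, then $I_j$ shares no endpoint with the neighbors of $I_k$, so $v_{k-1}$ and $u_{k+1}$ are untouched and the rail is literally unchanged. If $j=k+1$, then the extension shrinks $I_{k+1}$, replacing $u_{k+1}$ by a new point $u_{k+1}'$; from the frame conditions (the sector conditions together with $\{v_k,u_{k+2}\}\subseteq I_{k+1}'$), the counter-clockwise order of the eight endpoints $u_1,v_4,u_2,v_1,u_3,v_2,u_4,v_3$ on $S^1$ forces $u_{k+1}'$ to lie in the sub-arc $[u_{k+1},v_k)\subset I_k$. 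Hence $u_{k+1}$ lies between $v_{k-1}$ and $u_{k+1}'$ inside $I_k$, so any good interval $K\subseteq I_k$ containing both $v_{k-1}$ and $u_{k+1}'$ must contain the arc between them, which includes $u_{k+1}$; that is, the new rail is a subset of the old rail. The case $j=k-1$ is completely symmetric.

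Combining these, if $\fF^{(j-1)}$ is $I_k$-side maximal, then its $I_k$-side rail is $\{I_k\}$; by the monotonicity, the $I_k$-side rail of $\fF^{(j)}$ is contained in $\{I_k\}$, and since $I_k$ is always contained in its own rail, the new rail is still $\{I_k\}$, so $\fF^{(j)}$ remains $I_k$-side maximal. Iterating for $j=1,2,3,4$ produces the desired tetrahedron frame covering $\fF$. The main obstacle is the monotonicity claim, which ultimately reduces to carefully tracking how an endpoint of a shrunken side moves within its neighbor; once the cyclic order of the eight endpoints has been read off from the frame axioms, the nesting argument is short, but getting the picture right is the only subtle point.
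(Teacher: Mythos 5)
Your proof is correct and takes essentially the same route as the paper's, namely successive one-sided maximal extensions in cyclic order. What you add is the explicit rail-monotonicity argument (via the cyclic order of the eight endpoints) showing that the $I_k$-side maximality obtained at step $k$ persists under the later extensions; the paper's proof relies on this but does not spell it out, so your write-up is in fact a bit more complete than the original.
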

    \begin{proof}
    Assume that $\fF=(I_1, I_2,I_3, I_4)$ is a frame in $\{\sL_1,\sL_2\}.$ First, if $\fF$ is $I_1$-side maximal, then we set $\fF_1=\fF.$ Otherwise, by \refrmk{maximality}, there is a $I_1$-side extension $\fF'$ of $\fF$ that is $I_1$-side maximal. Then, we set $\fF_1=\fF'.$ Now, We write $\fF_1=(J_1,I_2, I_3, I_4).$ Then, we apply the same argument to $\fF_1$ to take the second side maximal frame $\fF_2=(J_1,J_2,I_3,I_4)$ that is the second side extension of $\fF_1$. By repeating the same argument in consecutive order, we can get a tetrahedron frame $\fF_4=(J_1,J_2,J_3,J_4)$ covering $\fF$.
    \end{proof}

Now, given a frame $\fF$, we show the natural one-to-one correspondence between opposite sides $\closure{\cI}(c_i(\fF),c_{i+1}(\fF))$ and $\closure{\cI}(c_{i+3}(\fF),c_{i+2}(\fF))$. 

Let $\fF=(I_1,I_2,I_3, I_4)$. For $i=1$, we define a map $\varphi_{2\to 4}$ from $\closure{\cI}(c_1(\fF),c_2(\fF))$ to $\closure{\cI}(c_4(\fF),c_3(\fF))$ by
$$\varphi_{2\to 4}(s)=(\eta_1(s),\ell(I_4)).$$ 

Then, we can observe the following lemma. 
       
\begin{lem}\label{Lem:edgeMap}
Let $\sV=\{\sL_1, \sL_2\}$ be a veering pair. Suppose that there is a frame $\fF=(I_1, I_2, I_3, I_4).$ Then  $\emap{2}{4}$ is a bijection from $\closure{\cI}(c_1(\fF),c_2(\fF))$ to $\closure{\cI}(c_4(\fF),c_3(\fF))$ with the following properties:
\begin{itemize}
\item $\varphi_{2 \to 4}(c_1(\fF))=c_4(\fF)$ and $\varphi_{2 \to 4}(c_2(\fF))=c_3(\fF).$
\item For any $s\in \cI(c_1(\fF), c_2(\fF)),$ $s$ and $\emap{2}{4}(s)$ are in a same thread.
\item For any distinct stitches $r_1$ and $r_2$ in $\closure{\cI}(c_1(\fF),c_2(\fF)),$ if $s\in \cI(r_1, r_2),$ then $\varphi_{2\to4}(s) \in \cI(\emap{2}{4}(r_1),\emap{2}{4}(r_2)).$ 
\end{itemize}
\end{lem}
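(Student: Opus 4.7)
The plan is to parameterize both intervals $\closure{\cI}(c_1(\fF), c_2(\fF))$ and $\closure{\cI}(c_4(\fF), c_3(\fF))$ by a common stem in $\sL_1$, at which point $\emap{2}{4}$ becomes the identity on the parameter and the bijection is manifest. In the notation $\fF=(I_1,I_2,I_3,I_4)$ with $I_1,I_3\in\sL_1$ and $I_2,I_4\in\sL_2$, the corners are $c_1(\fF)=(\ell(I_1),\ell(I_2))$, $c_2(\fF)=(\ell(I_3),\ell(I_2))$, $c_3(\fF)=(\ell(I_3),\ell(I_4))$, $c_4(\fF)=(\ell(I_1),\ell(I_4))$. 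I would show
\[
\closure{\cI}(c_1(\fF), c_2(\fF)) = \{(\ell(N), \ell(I_2)) : N \in \stem{I_1}{I_3^*}\}
\]
and analogously $\closure{\cI}(c_4(\fF), c_3(\fF)) = \{(\ell(N), \ell(I_4)) : N \in \stem{I_1}{I_3^*}\}$. The map $\emap{2}{4}$ then sends $(\ell(N), \ell(I_2))\mapsto(\ell(N), \ell(I_4))$, preserving the parameter $N$, so it is a bijection with the obvious inverse $(\ell(N),\ell(I_4))\mapsto(\ell(N),\ell(I_2))$.

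For the key identification, any stitch $s$ in $\closure{\cI}(c_1(\fF), c_2(\fF))$ must satisfy $\eta_2(s)=\ell(I_2)$ (the second coordinate agrees on both endpoints, and ``between'' fixes the common coordinate) while $\eta_1(s)$ lies between $\ell(I_1)$ and $\ell(I_3)$; the latter is equivalent to $\eta_1(s)=\ell(N)$ for some $N\in\stem{I_1}{I_3^*}$. Conversely, for any such $N$, one must verify that $(\ell(N),\ell(I_2))$ is actually a stitch, i.e., that $\ell(N)$ and $\ell(I_2)$ are linked. Here the frame axioms intervene: the counter-clockwise sector condition on $(I_1,I_2)$ places the endpoint $u_2$ of $\ell(I_2)$ inside $I_1\subseteq N$, while the third clause of the frame definition together with the counter-clockwise sector $(I_3,I_4)$ forces the other endpoint $v_2$ of $\ell(I_2)$ to lie inside $I_3\subseteq N^*$, giving linkedness. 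The same argument with $I_4$ in place of $I_2$ gives the analogous identification for $\closure{\cI}(c_4(\fF),c_3(\fF))$; here one uses that $c_4(\fF)$ and $c_3(\fF)$ are already stitches to extract the analogous endpoint information for $\ell(I_4)$.

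The three listed properties then follow immediately. The corner mapping corresponds to the extreme values $N=I_1$ and $N=I_3^*$ (so that $\ell(N)=\ell(I_3)$) in the stem. For the thread property, $s=(\ell(N),\ell(I_2))$ and $\emap{2}{4}(s)=(\ell(N),\ell(I_4))$ share their first coordinate $\ell(N)$, so both lie in the warp thread $\eta_1^{-1}(\ell(N))$. For preservation of the ``between'' relation, the ordering on each interval is exactly the inclusion ordering on $\stem{I_1}{I_3^*}$ read through the $N$-parameter, and $\emap{2}{4}$ preserves $N$. The only substantive step is the linkedness check in the reverse inclusion of the parameterization; once the geometric picture imposed by the counter-clockwise sector and ``lies on'' clauses of the frame definition is unpacked, the remainder of the argument is bookkeeping.
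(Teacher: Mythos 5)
Your proof is correct. Note that the paper states this lemma without proof (it is introduced with ``we can observe the following lemma''), so there is no written argument to compare against; but the route you take --- identifying $\closure{\cI}(c_1(\fF),c_2(\fF))$ with $\{(\ell(N),\ell(I_2)) : N\in \stem{I_1}{I_3^*}\}$ and likewise for the opposite side, so that $\emap{2}{4}$ is the identity on the stem parameter --- is precisely the parameterization the paper itself relies on: it is built into the definition of $\closure{\cS}(\fF)$ and is used again when $\sigma_\fF$ is shown to be a homeomorphism in the proof of Lemma 7.7. All three bulleted properties do reduce to bookkeeping once the parameterization is in place, exactly as you say, since ``between'' for stitches on a common weft thread is governed entirely by the first coordinate, which $\emap{2}{4}$ preserves. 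One small imprecision: in the linkedness check for $(\ell(N),\ell(I_2))$, what places the second endpoint $v_2$ of $\ell(I_2)$ inside $I_3$ is not the counter-clockwise orientation of the sector $(I_3,I_4)$ but simply the linkedness of $\ell(I_2)$ with $\ell(I_3)$ coming from the clause that $c_1(\fF)$ lies on the sector $(I_3^*,I_4^*)$: since $u_2\in I_1\subseteq I_3^*$, linkedness forces $v_2\in I_3\subseteq N^*$. With that reading the argument is complete.
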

We define $\emap{4}{2}$ by $\emap{4}{2}=\emap{2}{4}^{-1}$. Similarly, we can find $\emap{1}{3},$ and $\emap{3}{1}$ between $\closure{\cI}(c_4(\fF),c_1(\fF))$ and $\closure{\cI}(c_2(\fF),c_3(\fF))$ and they enjoy the same properties in \reflem{edgeMap} with an  appropriate re-indexing.

\begin{rmk}\label{Rmk:distinctWithCorner}
Assume that a stitch $(\ell_1,\ell_2)$ lies on a frame $\fF=(I_1,I_2,I_3,I_4)$. Then, the stitch $(\ell_1,\ell(I_2))$ is in $\cI(c_1(\fF),c_2(\fF))$.  Now, observe from  \refprop{eqClass2} that no three distinct stitches in a thread are equivalent under the weaving relation. Hence, $c_1(\fF)$ and $c_2(\fF)$ are not equivalent as $\ell_1$ properly lies between $\ell(I_1)$ and $\ell(I_3)$. Likewise, we can see that $c_i(\fF)\nsim_\omega c_{i+1}(\fF)$ for all $i\in\{1,2,3,4\}$ (cyclically indexed).    
\end{rmk}
    
    

\subsection{Stitches Lying on Frames}

In this section, we study the topology of scraps. First of all, we show that every scarp is not empty.
    \begin{prop}\label{Prop:scrapIsNonempty}
    Let $\sV=\{\sL_1, \sL_2\}$ be a veering pair. Let $\fF=(I_1, I_2, I_3, I_4)$ be a frame in $\sV$. Then, the scarp $\cS(\fF)$ framed by $\fF$ is not empty.
    \end{prop}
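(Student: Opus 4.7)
The strategy is to construct a stitch $(m_1,m_2)\in\cS(\fF)$ by finding $m_1\in\sL_1$ properly between $\ell(I_1)$ and $\ell(I_2)$ and $m_2\in\sL_2$ properly between $\ell(J_1)$ and $\ell(J_2)$, then verifying that any such pair is automatically linked.

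Writing $I_i=\opi{a_i}{b_i}$ and $J_i=\opi{c_i}{d_i}$, I would first use the frame axioms to pin down the cyclic counter-clockwise order of the eight endpoints on $S^1$. The counter-clockwise sector conditions on $(I_i,J_i)$, the ``lies on sector'' clauses (proper inclusions $\closure{I_1}\subset I_2^*$ and $\closure{J_1}\subset J_2^*$, together with the linkings of $\ell(I_1)$ with $\ell(J_2)$ and of $\ell(J_1)$ with $\ell(I_2)$), and strong transversality, together force this cyclic order to be, up to relabeling, $a_1,d_2,c_1,b_1,a_2,d_1,c_2,b_2$. In particular the arcs $\opi{b_2}{a_1}$ and $\opi{b_1}{a_2}$ between $\ell(I_1)$ and $\ell(I_2)$ contain no other named endpoints, and similarly for $\opi{d_1}{c_2}$ and $\opi{d_2}{c_1}$.

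Next I would construct $m_1\in\sL_1$ with endpoints in $\opi{b_2}{a_1}$ and $\opi{b_1}{a_2}$; equivalently, a $K\in\sL_1$ with $\closure{I_1}\subset K$ and $\closure{K}\subset I_2^*$. The stem $\stem{\closure{I_1}}{I_2^*}$ in $\sL_1$ already contains $I_2^*$. If $I_2^*$ is not isolated, a sequence approximating it from inside provides such a $K$, the strict closure condition being enforced by looseness (a shared endpoint with $\ell(I_2)$ would force two distinct non-leaf gaps to share vertices). If $I_2^*$ is isolated, \refprop{gapExist} yields a non-leaf gap $\sH\ni I_2$ of $\sL_1$, and I would claim that $I_1$ lies inside a single side $K_j\in\sH$. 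The opposing scenario, that $I_1$ spans several sides of $\sH$, is ruled out as follows: by \refprop{leafGapCross}, $\ell(J_1)$ crosses $\sH$ at a unique tip $v$, and since the endpoint $d_1\in I_2$ the $\sH$-side containing $d_1$ is $I_2$ itself, forcing $v\in\{a_2,b_2\}$; but then the other $\sH$-side $K$ (containing $c_1$) must extend from $v$ out past $a_1$ or $b_1$, contradicting the inclusion $K\subset I_1$ that unlinkedness would demand in the spanning scenario. Given $I_1\subset K_j$, I set $m_1=\ell(K_j)$ if $\closure{K_j}\cap\closure{I_2}=\emptyset$; otherwise $K_j$ is adjacent to $I_2$ in $\sH$, but $K_j$ cannot itself be isolated in $\sL_1$ (else \refprop{gapExist} plus looseness would collapse $\sH$ into the leaf $\{K_j,K_j^*\}$), so a suitable approximating $K'\subsetneq K_j$ with $\closure{I_1}\subset K'$ and $\closure{K'}\subset K_j\subset I_2^*$ exists.

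Running the symmetric argument on the stem $\stem{\closure{J_1}}{J_2^*}$ in $\sL_2$ produces $m_2\in\sL_2$ with endpoints in $\opi{d_1}{c_2}$ and $\opi{d_2}{c_1}$. Reading off the pinned cyclic order $a_1,d_2,c_1,b_1,a_2,d_1,c_2,b_2$, the four endpoints of $m_1$ and $m_2$ interleave on $S^1$, so $(m_1,m_2)$ is linked and therefore a stitch that by construction lies on $\fF$. The main difficulty I anticipate is the isolated case above, specifically ruling out $I_1$ spanning multiple sides of $\sH$; this is where the frame's linking data has to be pushed carefully through \refprop{leafGapCross} together with unlinkedness and strong transversality.
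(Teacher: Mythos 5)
Your overall strategy is the same as the paper's: produce a leaf of $\sL_1$ strictly (ultraparallel\nobreakdash-)between the two $\sL_1$-sides of the frame and a leaf of $\sL_2$ strictly between the two $\sL_2$-sides, and then observe that any such pair is automatically linked by the cyclic order of the eight endpoints, hence is a stitch lying on $\fF$. The implementations of the key step differ: the paper first shows each edge interval $\cI(c_i(\fF),c_{i+1}(\fF))$ is nonempty (by \refprop{gapBtwStitches}, emptiness would force $\ell(I_1)$ and $\ell(I_3)$ to be parallel, contradicting the frame condition), extracts a candidate from \reflem{edgeMap}, and then repairs possible parallelism with the sides using \refprop{realGap} and \refprop{parallelLeaves}; you work directly with the stem $\stem{\closure{I_1}}{I_2^*}$ in $\sL_1$ alone (your indexing) and split on whether $I_2^*$ is isolated. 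Both routes are viable, and your ``automatic linking'' endgame is exactly the paper's unstated final step.

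There is, however, a genuine gap in your isolated case. When $I_2^*$ is isolated and $\sH\ni I_2$ is the non-leaf gap from \refprop{gapExist}, the claim that $I_1$ lies inside a single side $K_j\in\sH$ only gives $I_1\subseteq K_j$ --- note the ``spanning'' scenario you work to exclude is already vacuous, since condition (2) in the definition of a gap says every leaf of $\sL_1$ lies on some element of $\sH$ --- whereas both of your branches silently require the strict containment $\closure{I_1}\subset K_j$, and in particular $I_1\neq K_j$. The case $I_1=K_j$, i.e.\ $\ell(I_1)$ is itself a boundary leaf of $\sH$, is not addressed and cannot be patched inside your construction: if $I_1$ and $I_2$ both belong to $\sH$, then $\stem{\closure{I_1}}{I_2^*}=\{I_2^*\}$ and no leaf of $\sL_1$ properly lies between $\ell(I_1)$ and $\ell(I_2)$ at all, so this case must be shown to be impossible. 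The tool is precisely the one you deploy for the vacuous scenario: $\ell(J_1)$ crosses $\sH$ over a unique tip $v$ by \refprop{leafGapCross}, and since $\ell(J_1)$ is linked with both $\ell(I_2)$ and $\ell(I_1)$, if $\ell(I_1)$ were a boundary leaf of $\sH$ then $v$ would have to be a common endpoint of $I_1$ and $I_2$, contradicting $\closure{I_1}\subset I_2^*$. (The remaining possibility that $I_1\subsetneq K_j$ shares an endpoint with $K_j$ is excluded by \refprop{parallelLeaves}, \reflem{quadrachotomy} and looseness, which would again force $\ell(I_1)$ to be a boundary leaf of $\sH$.) With this case closed, the rest of your argument goes through.
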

    \begin{proof}
    Assume that $\cS(\fF)$ is empty. If $\cI(c_1(\fF),c_2(\fF))$ is empty, then, by \refprop{gapBtwStitches}, $\ell(I_1)$ and $\ell(I_3)$ are parallel. This is a contradiction. Therefore, $\cI(c_1(\fF),c_2(\fF))$ is not empty. Similarly, we can show that $\cI(c_i(\fF),c_{i+1}(\fF))$ is not empty for all $i \in \{1,2,3,4\}$ (cyclically indexed). Then  by \reflem{edgeMap}, there are $J_1$ and $J_2$ in $\sL_1$ and $\sL_2,$ respectively, such that $I_1 \subsetneq J_1 \subsetneq I_3^*$ and $I_2 \subsetneq J_2 \subsetneq I_4^*.$
    Note that $(\ell(J_1), \ell(J_2))$ is a stitch.

If $\ell(J_1)$ is parallel with both $\ell(I_1)$ and $\ell(I_3),$ then $\ell(J_1)$ is isolated as $I_1\subsetneq J_1\subsetneq I_3^*.$ This is a contradiction by \refprop{realGap}. Hence, $\ell(J_1)$ is ultraparallel with $\ell(I_1)$ or $\ell(I_3).$

First, if $\ell(J_1)$ is ultraparallel with both $\ell(I_1)$ and $\ell(I_3),$ then we set $L_1$ to be $J_1.$ Then, $\closure{I_1}\subset L_1 \subset \closure{L_1}\subset I_3^*.$ If not, $\ell(J_1)$ is parallel with one of $\ell(I_1)$ and $\ell(I_3)$ and is ultraparallel with the other. 

If $\ell(J_1)$ and  $\ell(I_1)$ are ultraparallel and so $\ell(J_1)$ and $\ell(I_3)$ are parallel, then we set $K_1=J_1.$ If $\ell(J_1)$ and  $\ell(I_3)$ are ultraparallel and so $\ell(J_1)$ and $\ell(I_1)$ are parallel, then we set $K_1=J_1^*.$  By the choice of $K_1,$ \refprop{parallelLeaves}, and \refprop{realGap}, $K_1$ is not isolated. Then there is a $K_1$-side sequence in $\sL_1$ and we can take an element $L_1$ in $\sL_1$ so that $\closure{I_1}\subset L_1\subset \closure{L_1}\subset I_3^*.$ Similarly, we can take an element $L_2$ in $\sL_2$ so that $\closure{I_2}\subset L_2\subset \closure{L_2}\subset I_4^*.$ Then, the stitch $(\ell(L_1), \ell(L_2))$ lies on the scrap $\cS(\fF).$ This is a contradiction. Thus,  $\cS(\fF)$ is not empty.  
\end{proof}

Then, in the following proposition, we can observe that the image of each scrap in the weaving consists of regular classes. 
    
    \begin{prop}\label{Prop:scrapIsRegular}
    Let $\sV=\{\sL_1, \sL_2\}$ be a veering pair. Let $\fF=(I_1, I_2, I_3,I_4)$ be a frame. If a stitch $s$ lies on $\fF$, then $s$ is a regular stitch. Thus, the scrap $\cS(\fF)$ framed by $\fF$ consists of  regular stitches   
    \end{prop}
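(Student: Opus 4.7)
The plan is to argue by contradiction. Suppose $s = (\ell_1, \ell_2) \in \cS(\fF)$ is singular. Then $s$ is a singular stitch of an asterisk $(\sG_1, \sG_2)$ of interleaving non-leaf gaps, with each $\ell_i \in \sG_i$ a boundary leaf. Write $\ell_1 = \{I_1', I_2'\}$ and $\ell_2 = \{K, K^*\}$ so that the ``properly lies between'' conditions read $\closure{I_1} \subset I_1'$, $\closure{I_2} \subset I_2'$, $\closure{J_1} \subset K$, $\closure{J_2} \subset K^*$; in particular $I_1 \cap I_2 = \emptyset$ and $J_1 \cap J_2 = \emptyset$. After possibly relabeling, assume the body of $\sG_1$ lies on the $I_1'$-side of $\ell_1$, equivalently $I_2' \in \sG_1$; by the interleaving of $\sG_1$ and $\sG_2$, the body of $\sG_2$ then lies on the $K^*$-side of $\ell_2$.

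The next step is to show that each frame side crosses over a specific endpoint of $\ell_1$ or of $\ell_2$. Since $\sG_1$ is a non-leaf gap linked with each $\ell(J_j)$ (through $\ell_1 \in \sG_1$), \refprop{leafGapCross} produces a tip $t^{(j)}$ of $\sG_1$ over which $\ell(J_j)$ crosses. Because $\ell(J_j)$ has one endpoint in $I_1'$ and one in $I_2'$ while $v(\sG_1) \setminus v(\ell_1) \subset I_1'$, the singleton crossing-over condition forces $t^{(j)} \in v(\ell_1)$. Combined with $J_1 \cap J_2 = \emptyset$, this gives $\{t^{(1)}, t^{(2)}\} = v(\ell_1)$. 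The symmetric argument applied to $\sG_2$ and the leaves $\ell(I_1), \ell(I_2) \in \sL_1$ shows that $\ell(I_1)$ and $\ell(I_2)$ cross over the two distinct endpoints of $\ell_2$.

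Finally, the crossing-over data for $\ell(I_1)$, together with the gap axiom that every leaf of $\sL_1$ must lie on some interval of $\sG_1$, force the short side $I_1$ of $\ell(I_1)$ to be contained in a single gap-interval of $\sG_1$ directly adjacent to $\ell_1$. But the frame definition requires $\ell(I_1)$ to be linked with $\ell(J_2)$ (this is part of the condition that $s_1 = (\ell(I_1), \ell(J_1))$ lies on the sector $(I_2^*, J_2^*)$). Pinning down $\ell(J_2)$ from its crossing over an endpoint of $\ell_1$ and from $\closure{J_2} \subset K^*$ places both endpoints of $\ell(J_2)$ in arcs disjoint from the small gap-interval containing $I_1$; hence $\ell(I_1)$ and $\ell(J_2)$ are in fact unlinked, contradicting the frame condition. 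The main obstacle is the combinatorial bookkeeping in the crossing-over step, where one must carefully track how $v(\sG_1)$ and $v(\sG_2)$ alternate around $S^1$ and handle both ideal polygon and crown asterisks uniformly.
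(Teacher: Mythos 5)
There is a genuine error at the pivotal step of your argument, namely the claim that $\ell(J_1)$ and $\ell(J_2)$ cross over the \emph{two distinct} vertices of $\ell_1$, i.e.\ that $\{t^{(1)},t^{(2)}\}=v(\ell_1)$. The inference ``combined with $J_1\cap J_2=\emptyset$'' is a non-sequitur: two disjoint leaves of $\sL_2$ can perfectly well cross over the \emph{same} tip (their crossing components are then nested arcs around that tip), and in the situation at hand this is exactly what happens. Indeed, both $\ell(J_1)$ and $\ell(J_2)$ have one endpoint in $I_1$ and one in $I_2$; since $\closure{I_1}\subset I_1'$ and $\closure{I_2}\subset I_2'$ and neither interval meets $v(\sG_1)$, the crossing component of $\ell(J_j)$ must contain one of the two arcs of $S^1\setminus(I_1\cup I_2)$, one of which contains one vertex of $\ell_1$ and the other the second vertex. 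If $\ell(J_1)$ and $\ell(J_2)$ used different arcs, the singleton crossing-over condition would force $v(\sG_1)$ to equal $v(\ell_1)$, i.e.\ $\sG_1$ would be a leaf --- impossible for an asterisk. (Equivalently: $I_1$ is contained in a single element of the gap $\sG_1$, which pins down a single tip for both.) So necessarily $t^{(1)}=t^{(2)}$, the opposite of what you assert, and the symmetric claim for $\ell(I_1),\ell(I_2)$ fails for the same reason. Since your final step derives its contradiction from this false configuration (and from the equally unjustified placement of the ``body'' of $\sG_2$), the proof does not go through: a contradiction obtained downstream of an invalid inference does not refute the hypothesis that $s$ is singular.

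Ironically, the correct statement $t^{(1)}=t^{(2)}=:t$ is precisely what the paper's proof exploits, and it gives a much shorter contradiction: $\ell_2$ crosses over $t$, so by interleaving the unique element $K$ of $\sG_2$ containing $t$ is a component of $\ell_2$; since each $\ell(J_j)$ crosses over $t$ and must lie on some element of $\sG_2$, both $\ell(J_1)$ and $\ell(J_2)$ lie on that same component $K$, contradicting the requirement that $\ell_2$ properly lies between them. If you replace your step two with the equality of the tips (justified as above) and then run this last observation, you recover a complete proof; as written, the argument is broken.
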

    
    \begin{proof}
    We write $s=(\ell_1, \ell_2).$ Assume that $s$ is a singular stitch of an asterisk $(\sG_1, \sG_2).$ Since $\ell_1$ is linked with $\ell(I_2)$ and $\ell(I_4),$ there is a unique  tip $t$ of $\sG_1$ over which $\ell(I_2)$ and $\ell(I_4)$ cross. Then, $\ell_2$ crosses over $t$. Since an element $I$ of $\sG_2$ crossing over $t$ is unique, $\ell(I)=\ell_2$. Also, as $\ell(I_2)$ and $\ell(I_4)$ cross over $t$, they lie on $I$. It contradicts to the fact that $\ell_2$ properly lies between $\ell(I_2)$ and $\ell(I_4)$.     \end{proof}

    


Conversely, we can also get the following proposition.
    
    \begin{prop}\label{Prop:frameCovering}
    Let $\sV=\{\sL_1, \sL_2\}$ be a veering pair. Every regular stitch lies on a frame. 
    \end{prop}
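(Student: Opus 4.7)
Let $s = (\ell_1, \ell_2)$ be a regular stitch, and write $v(\ell_1) = \{a, b\}$, $v(\ell_2) = \{c, d\}$ listed counter-clockwise as $a, c, b, d$. The plan is to construct the four sides of a frame $\fF = (I_1, J_1, I_2, J_2)$ enclosing the crossing of $\ell_1$ and $\ell_2$ directly, guided by the picture of a hyperbolic quadrilateral in $\HH^2$.

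First, by \reflem{trichotomy2} together with strong transversality $\E{\sL_1} \cap \E{\sL_2} = \emptyset$ (and the fact that crown pivots of either lamination lie outside both endpoint sets, by looseness and \reflem{quadrachotomy}), the points $c, d$ are rainbow points of $\sL_1$ and $a, b$ are rainbow points of $\sL_2$. I then construct $I_1 \in \sL_1$ with $c \in I_1 \subsetneq A := (a,b)$ and $v(\ell(I_1)) \subset A$ strictly, so that $\ell(I_1)$ is ultraparallel to $\ell_1$ and crosses $\ell_2$. If $\ell_1$ is a real leaf of $\sL_1$, I take approximating leaves from the $A$-side. If $\ell_1$ bounds a non-leaf gap $\sG$ of $\sL_1$ on the $A$-side, then $c$ lies strictly inside a unique complementary interval of $\sG$ contained in $A$ (since $c \notin v(\sG) \subset \E{\sL_1}$); by \refprop{realGap} that interval is not isolated, and approximating it from inside yields $I_1$. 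The leaf $I_2 \in \sL_1$ with $d \in I_2 \subsetneq A^*$ is built symmetrically.

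Write the endpoints of $I_1$ as $u_1^- \in (a,c)$, $u_1^+ \in (c,b)$ and of $I_2$ as $u_2^- \in (b,d)$, $u_2^+ \in (d,a)$. A direct cyclic-order check shows that the four pairs $\ell(I_i)$--$\ell(J_j)$ are mutually linked precisely when
\[
\{u_1^+, b, u_2^-\} \subset J_1 \subsetneq B \quad\text{and}\quad \{u_2^+, a, u_1^-\} \subset J_2 \subsetneq B^*,
\]
where $B := (c,d)$. I then find such $J_1$ in $\sL_2$ by the same case analysis applied to $\ell_2$ on the $B$-side: since $b \in \E{\sL_1}$ is not a vertex of any gap of $\sL_2$, the point $b$ lies strictly inside a unique complementary interval of the $\sL_2$-gap bounded by $\ell_2$ (or inside $B$ itself if $\ell_2$ is real), and enlarging that interval within $\sL_2$ to also contain $u_1^+$ and $u_2^-$ produces the required $J_1$. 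The leaf $J_2$ is built symmetrically.

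It remains to verify directly that $\fF = (I_1, J_1, I_2, J_2)$ satisfies the frame axioms---ultraparallelism of the $\sL_1$-pair and $\sL_2$-pair, counter-clockwise orientation of the sectors $(I_i, J_i)$, and the condition that $s_i$ lies on $(I_{i+1}^*, J_{i+1}^*)$---and that $s$ lies on $\fF$ by strict separation of vertex sets; both follow from the construction. The main subtlety is the simultaneous enlargement of $J_1$ and $J_2$ forced by the containments displayed above: regularity of $s$ is crucial at this step, as it rules out the singular configuration of \refprop{eqClass2} in which $\ell_1, \ell_2$ are simultaneously boundary leaves of an interleaving pair of non-leaf gaps---precisely the obstruction to such an enlargement, and, in view of \refprop{scrapIsRegular}, the unique one.
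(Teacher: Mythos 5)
Your overall strategy---approximating $\ell_1$ and $\ell_2$ from the appropriate sides to build the four sides of the frame---is sound, but there is a genuine gap at the step producing $J_1$. You fix $I_1$ and $I_2$ first, requiring only $c\in I_1\subsetneq A$, $d\in I_2\subsetneq A^*$ with $v(\ell(I_1))\subset A$, $v(\ell(I_2))\subset A^*$, and only afterwards seek $J_1\in\sL_2$ with $\{u_1^+,b,u_2^-\}\subset J_1\subsetneq B$. Suppose $B$ is isolated in $\sL_2$, so $\ell_2$ is a boundary leaf of a non-leaf gap $\sH$ of $\sL_2$ with $B^*\in\sH$. By unlinkedness, any $J\in\sL_2$ with $J\subsetneq B$ lies inside a single element of $\sH$, so your $J_1$ exists only if $u_1^+$, $b$ and $u_2^-$ all lie in one complementary interval of $\sH$. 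Nothing in your construction of $I_1$ guarantees this: if $\sH$ is an ideal triangle with vertices $c$, $t$, $d$ with $t\in\opi{c}{b}$, and you happen to choose $I_1$ with $u_1^+\in\opi{c}{t}$ (a choice your stated conditions on $I_1$ permit, since they make no reference to $\sL_2$), then $u_1^+$ and $b$ sit in different elements of $\sH$ and no admissible $J_1$ exists. This configuration is fully compatible with $s$ being regular---take $\ell_1$ real, so $s$ is even genuine---so your closing claim that regularity rules out ``precisely the obstruction'' is incorrect: regularity excludes the interleaving-asterisk configuration of \refprop{eqClass2} and \refprop{scrapIsRegular}, but the failure here is of a different kind, created by the order in which you make your choices. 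Note also that ``enlarging'' an element of $\sH$ within $\sL_2$ while staying strictly inside $B$ is impossible: the elements of $\sH$ are maximal among such intervals.

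The paper sidesteps this by first computing the entire weaving class $\#(s)$ via \refprop{eqClass2}. This pins down extremal intervals $I_1\subset I\subset I_2^*$ in $\sL_1$ and $J_1\subset J\subset J_2^*$ in $\sL_2$ with $\#(s)=\{(\ell(I_i),\ell(J_j))\}$; for a genuine stitch these are just $I,I^*,J,J^*$, and in the degenerate cases they are elements of tip pairs of the adjacent real gaps, hence non-isolated by \refprop{realGap}. One then chooses $M_i$ with $\closure{M_i}\subset I_i$ and---crucially---with $\ell(M_i)$ linked with \emph{both} $\ell(J_1)$ and $\ell(J_2)$; this is exactly the constraint your $u_1^+$ is missing, as it forces the endpoint of $M_i$ to land beyond the relevant tip of the $\sL_2$-gap adjacent to $\ell_2$. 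Only then does one choose $N_i\subset J_i$ linked with both $\ell(M_1)$ and $\ell(M_2)$. Your argument can be repaired along the same lines (identify the tip pairs over which $\ell_1$ and $\ell_2$ cross before fixing $I_1,I_2$), but as written the existence of $J_1$ is unjustified and the justification you offer is wrong.
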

    \begin{proof}
    Let $s=(\ell(I), \ell(J))$ be a regular stitch of $\sV$. Without loss of generality, we may assume that the sector $(I,J)$ is counter-clockwise. As $s$ is regular, $\#(s)$ is a regular class by \refprop{eqClass2}. Also, there are two pairs $\{I_1,I_2\}$ and $\{J_1,J_2\}$ in $\sL_1$ and $\sL_2$, respectively, so that  $I_1 \subset I \subset I_2^*$, $J_1 \subset J \subset J_2^*$, and $\#(s)=\{(\ell(I_i),\ell(J_j))\,:\,i,j\in\{1,2\}\}$.
   Then, $I_i$ and $J_j$ are not isolated. Therefore, for each $i\in \{1,2\}$, we can take $M_i$ in $\sL_1$ so that $\ell(M_i)$ are linked with both $\ell(J_1)$ and $\ell(J_2)$, and $\closure{M_i}\subset I_i$. Again, for each $i\in \{1,2\}$,  we can take $N_i$ in $\sL_2$ so that $\ell(N_i)$ are linked with both $\ell(M_1)$ and $\ell(M_2)$, and $\closure{N_i} \subset J_i$. As the sector $(I,J)$ is counter-clockwise, so are  the sectors $(M_i,N_i)$. Therefore, $(M_1,N_1,M_2,N_2)$ is a frame. Thus, by the choices of $M_i$ and $N_i$,  the stitch $s$ lies on the frame.     \end{proof}

So far, we have shown the following.
\begin{lem}\label{Lem:unmarkedFrame}
Let $\sV=\{\sL_1, \sL_2\}$ be a veering pair.
\begin{enumerate}
\item Every regular stitch lies on a frame. 
\item Any stitch lying in a frame is regular.
\item The scrap framed by a frame is not empty.
\item Every frame is covered by a tetrahedron frame.
\end{enumerate}
\end{lem}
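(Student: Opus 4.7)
The plan is straightforward: \reflem{unmarkedFrame} is a bookkeeping lemma that collects the four statements already proved as \refprop{frameCovering}, \refprop{scrapIsRegular}, \refprop{scrapIsNonempty}, and \reflem{tetrahedron}. So my proof proposal is essentially to cite each of these in turn, and, if desired, give a one-sentence reminder of the idea behind each item for the reader's convenience.

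For item (1), I would invoke \refprop{frameCovering}. The idea there is that a regular stitch $s=(\ell(I),\ell(J))$ has a regular weaving class, so \refprop{eqClass2} supplies pairs $\{I_1,I_2\}\subset \sL_1$ and $\{J_1,J_2\}\subset \sL_2$ containing $I$, $J$ in the appropriate way, and none of these are isolated, so one can shrink to strict containments $\closure{M_i}\subset I_i$, $\closure{N_i}\subset J_i$ that still yield a frame $(M_1,N_1,M_2,N_2)$ on which $s$ lies. For item (2), I cite \refprop{scrapIsRegular}: if a stitch on a frame were singular, its unique crossing tip $t$ would be straddled by both opposite leaves of the frame, contradicting the "properly between" requirement. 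For item (3), I cite \refprop{scrapIsNonempty}: non-emptiness of each side interval $\cI(c_i(\fF),c_{i+1}(\fF))$ combined with realness of leaves (via \refprop{realGap}) produces the needed interior approximants $L_1\in\sL_1$, $L_2\in\sL_2$. For item (4), I cite \reflem{tetrahedron}, whose proof consists of side-maximizing one coordinate at a time using \refrmk{maximality}.

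Since there is nothing new to prove, no obstacle is expected; the only minor subtlety to watch for in the writeup is to emphasize that the four items are logically independent (one is not derived from another) so that the reader understands the lemma simply packages the preceding four results into a single reference point that will be used downstream. Accordingly, the full proof in the paper should read essentially: \emph{(1) is \refprop{frameCovering}, (2) is \refprop{scrapIsRegular}, (3) is \refprop{scrapIsNonempty}, and (4) is \reflem{tetrahedron}.}
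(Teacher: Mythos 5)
Your proposal is correct and matches the paper exactly: the lemma is introduced with ``So far, we have shown the following'' and is simply a summary of \refprop{frameCovering}, \refprop{scrapIsRegular}, \refprop{scrapIsNonempty}, and \reflem{tetrahedron}, with no further argument given.
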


\section{Markings}\label{Sec:markingsec}

In \reflem{elliptic}, we will show that if an orientation preserving homeomorphism of the weaving is of finite order, i.e. elliptic, then it fixes a point in the weaving.  These fixed points of elliptic elements of order $\ge 3$ are represented by singular stitches. On the other hand, an elliptic element of order two may fixes a point in the weaving represented by a regular stitch. In this section, we introduce the markings to deal with such fixed points in the weaving of order two elliptic elements. We also discuss the extensions of frames with marking.

\subsection{Markings on Stitch Spaces}

\begin{defn}\label{Defn:marking}
Let $\sV=\{\sL_1,\sL_2\}$ be a veering pair. A \emph{marking} on $\sV$ is a subset $\Mrk$ of stitches subject to the following properties:
\begin{enumerate}
    \item Each stitch in $\Mrk$ is genuine;
    \item $\Mrk$ is discrete and closed in $\fS(\sV)$;
    \item For each $i=1,2$, $\eta_i|\Mrk$ is injective. 
\end{enumerate}
A stitch $s$ in $\fS(\sV)$ is said to be \emph{marked} if $s\in \Mrk$ and \emph{unmarked} otherwise. Also, the pair $(\sV,\Mrk)$ is called a \emph{marked veering pair}. 
\end{defn}

Let $\sV=\{\sL_1, \sL_2\}$ be a veering pair and $\Mrk$ a marking on $\sV$. A leaf $\ell_i$ of $\sL_i$ is said to be \emph{marked} if $\ell_i\in \eta_i(\Mrk)$ and \emph{unmarked} otherwise. Note that by the definition, every marked leaf of $\sV$ is real. A frame $\fF$ in $\sV$ is said to be \emph{marked} if there is a marked stitch lying $\fF$ and \emph{unmarked} otherwise. 

Let $\fF_I=(I_1, I_2, I_3, I_4)$ be frames in $\sV$. Now, we assume that $\fF_I$ is unmarked.  We say that $\fF_I$ is \emph{$I_i$-side full} if there is no unmarked $I_i$-side extension. The frame $\fF_I$ is called a \emph{tetrahedron frame} under $\Mrk$ if $\fF_I$ is $I_i$-side full for all $i \in \{1,2,3,4\}$.

\begin{lem}\label{Lem:finiteness}
Let $\sV=\{\sL_1, \sL_2\}$ be a veering pair with marking $\Mrk$.  Then, for each frame $\fF$ in $\sV,$ there are only finitely many marked stitches lying on $\fF$. 
\end{lem}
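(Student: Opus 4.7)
The strategy is to prove that the closed scrap $\closure{\cS}(\fF)$ is a compact subspace of $\fS(\sV)$, and then use the hypothesis that $\Mrk$ is closed and discrete in $\fS(\sV)$: a closed discrete subset of a compact space is finite. Since every marked stitch lying on $\fF$ belongs to $\cS(\fF)\subset\closure{\cS}(\fF)$, this yields the lemma.

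Write $\fF=(I_1,J_1,I_2,J_2)$. For compactness of $\closure{\cS}(\fF)$, the first step is to show that the leaf families
\[
\ell\bigl(\stem{I_1}{I_2^*}\bigr)\subset\ell(\sL_1)\quad\text{and}\quad\ell\bigl(\stem{J_1}{J_2^*}\bigr)\subset\ell(\sL_2)
\]
are compact. A leaf in $\ell(\stem{I_1}{I_2^*})$ has the form $\ell(N)$ with $I_1\subset N\subset I_2^*$, so the two endpoints of $N$ are confined to a pair of disjoint closed sub-arcs of $S^1$ determined by the endpoints of $I_1$ and $I_2$. Given a sequence $\ell(N_k)$ in this family, I would extract a subsequence along which these endpoints converge to a pair of distinct points of $S^1$, defining a good interval $N_\infty$ with $I_1\subset N_\infty\subset I_2^*$. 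The paper's definition of convergence of leaves in $\ell(\sL_1)$, together with the closedness axiom for lamination systems, then ensures $N_\infty\in\sL_1$ and $\ell(N_k)\to\ell(N_\infty)$, proving compactness.

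With each factor compact, the next step is to note that $\closure{\cS}(\fF)$ sits inside the compact product $\ell(\stem{I_1}{I_2^*})\times \ell(\stem{J_1}{J_2^*})$ as a closed subset: it is precisely the set of pairs that form a stitch, and this condition is preserved under limits because the stem containments $I_1\subset N_\infty\subset I_2^*$ and $J_1\subset M_\infty\subset J_2^*$ keep the limit leaves distinct from the corresponding bounding leaves in a way that forces linkedness. Hence $\closure{\cS}(\fF)$ is compact, and $\Mrk\cap\closure{\cS}(\fF)$ is a closed discrete subset of a compact space, hence finite.

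The main obstacle is the compactness of each leaf family $\ell(\stem{I_1}{I_2^*})$: the closedness axiom of $\sL_1$ only directly handles ascending unions, so one must invoke the paper's formal definition of convergence in the leaf space (with its $\liminf$ and $\limsup$ conditions) to extract a limit leaf from an arbitrary sequence in the family, and verify that the containments $I_1\subset N_\infty\subset I_2^*$ persist under passage to the limit. Once this is settled, the rest is a routine application of compactness-plus-discreteness.
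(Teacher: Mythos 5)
Your proof is correct and matches the idea behind the paper's one-line proof: the paper simply cites that $\Mrk$ is closed and discrete in $\fS(\sV)$, leaving implicit the compactness of $\closure{\cS}(\fF)$ that you spell out. Your account of why each stem's leaf family is compact (endpoints confined to disjoint closed arcs, plus closedness of $\ell(\sL_i)$) and why linkedness persists in the limit (strong transversality forbids endpoint coincidences, so the cyclic order of four distinct endpoints is stable) supplies exactly the details the paper elides.
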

\begin{proof}
It follows from the fact that a marking is discrete and closed in the stitch space.\end{proof}

\subsection{Full Extensions}

Now, we study the full extensions of unmarked frame.

\begin{lem}\label{Lem:extension}
Let $\sV=\{\sL_1, \sL_2\}$ be a veering pair with marking $\Mrk$. Assume that $\fF=(I_1, I_2, I_3, I_4)$ is an unmarked frame. If $\cI(c_{i-1}(\fF),c_i(\fF))$ has no marked stitch and no singular stitch, then there is an unmarked $I_i$-side extension of $\fF.$  Thus, if $\fF$ is $I_i$-side full for some $i \in \{1,2,3,4\}$, then $\cI(c_{i-1}(\fF),c_i(\fF))$ contains a unique marked stitch or at least one singular stitch (cyclically indexed).
\end{lem}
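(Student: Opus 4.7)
The plan is to first establish the existence of some $I_i$-side extension of $\fF$ under the given hypothesis, then use the discreteness and closedness of the marking $\Mrk$ to locate one that is unmarked. Assume without loss of generality that $i=1$.

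For the first step, the hypothesis that $\cI(c_4(\fF), c_1(\fF))$ contains no singular stitch rules out $\fF$ being $I_1$-side maximal, by the contrapositive of \reflem{maxStitch}. Then \refrmk{maximality} gives that the $I_1$-side rail of $\fF$ has at least two elements, so proper $I_1$-side extensions of $\fF$ exist.

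For the second step I would split into two cases based on the order structure of the rail. If the rail admits a largest element $K$ strictly contained in $I_1$, the extension $\fF' = (K, I_2, I_3, I_4)$ already works: any stitch in $\cS(\fF')\setminus \cS(\fF)$ has $\eta_1$ equal either to $\ell(I_1)$ (hence lies in the unmarked interval $\cI(c_4(\fF), c_1(\fF))$) or to $\ell(N)$ for some $N \in \sL_1$ with $\closure{K}\subset N \subsetneq I_1$; the latter would force $N$ to belong to the rail strictly above $K$, contradicting its maximality. If instead the rail has no largest proper element below $I_1$, I would pick a sequence $\{J_n\}$ in the rail with $J_n \subsetneq I_1$ increasing up to $I_1$, and argue by contradiction: supposing each extension $\fF_n = (J_n, I_2, I_3, I_4)$ is marked, each contains a marked stitch $s_n$; since $\cS(\fF)$ and $\cI(c_4(\fF), c_1(\fF))$ are unmarked, $\eta_1(s_n) = \ell(N_n)$ with $N_n$ strictly between $J_n$ and $I_1$, and the $s_n$ are distinct by injectivity of $\eta_1|\Mrk$. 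After passing to a subsequence, $\eta_2(s_n)$ converges among leaves properly lying between $\ell(I_2)$ and $\ell(I_4)$, and combined with $\eta_1(s_n) \to \ell(I_1)$, the sequence $s_n$ converges in $\fS(\sV)$ to a stitch $s_\infty \in \closure{\cI}(c_4(\fF), c_1(\fF))$. Closedness of $\Mrk$ would then force $s_\infty \in \Mrk$, but $\cI(c_4(\fF), c_1(\fF))$ is unmarked by hypothesis, and $s_\infty$ coinciding with a marked corner $c_4(\fF)$ or $c_1(\fF)$ would contradict the discreteness of $\Mrk$, since the $s_n$ would then be distinct markings accumulating at a marking. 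This contradiction supplies the desired unmarked extension, and the ``Thus'' clause of the lemma is the direct contrapositive, together with the observation that the injectivity of $\eta_i|\Mrk$ forces any marked stitch in $\cI(c_{i-1}(\fF), c_i(\fF))$ to be unique, as all members of that edge share the leaf $\ell(I_i)$.

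The main obstacle I expect is the convergence step in the second case: verifying that the limit $s_\infty$ is a bona fide stitch of $\sV$ rather than a degenerate pair of leaves, and handling the possibility that $s_\infty$ coincides with one of the corners $c_4(\fF)$ or $c_1(\fF)$. Both are ultimately resolved by using the topology of $\fS(\sV)$ together with the defining properties of a marking (discreteness, closedness, and injectivity of $\eta_i|\Mrk$), but the bookkeeping is delicate and constitutes the most technical portion of the argument.
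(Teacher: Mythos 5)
Your overall strategy is essentially the paper's: both arguments start from the contrapositive of \reflem{maxStitch} to rule out $I_1$-side maximality, and both then push the first side toward $I_1$ to locate an unmarked extension. The paper splits on whether $I_1$ is isolated (handling the isolated case with the non-leaf gap from \refprop{gapExist}, and the non-isolated case with the end of the stem plus \reflem{finiteness}); your split on whether the rail has a largest proper element is essentially the same dichotomy, and your accumulation argument in the second case amounts to re-deriving the relevant consequence of \reflem{finiteness} directly from discreteness and closedness of $\Mrk$. That is a legitimate, if less economical, route.

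Two steps need repair. First, in your case 1 the asserted dichotomy for $\eta_1(s)$ with $s\in\cS(\fF')\setminus\cS(\fF)$ is incomplete: unlinkedness also permits $\eta_1(s)=\ell(N)$ with $I_1\subsetneq N$ and $N$, $I_1$ sharing an endpoint, so that $\ell(I_1)$ fails to \emph{properly} lie on an element of $\ell(N)$ and $s$ genuinely escapes $\cS(\fF)$ without $N$ being in the rail. Such an $\ell(N)$ is parallel to $\ell(I_1)$, hence by \refprop{parallelLeaves} and \refprop{realGap} a boundary leaf of a non-leaf gap, hence not real, hence not marked --- this is exactly the ``marked leaves are real'' observation the paper's proof uses, and your argument silently needs it (also in case 2, where you assert $N_n$ lies strictly between $J_n$ and $I_1$). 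Second, in case 2 the stitches $s_n$ need not be distinct, and injectivity of $\eta_1|\Mrk$ does not give distinctness; what is actually needed is that no single marked stitch can lie on every $\fF_n$, which again follows because a marked (hence real) leaf whose interval contains every $\closure{J_n}$ must contain $\closure{I_1}$ and so the stitch would lie on $\fF$ or in $\cI(c_4(\fF),c_1(\fF))$, both excluded. After that, one passes to a subsequence of distinct $s_n$ and your convergence/closedness contradiction goes through (the limit is a bona fide stitch because the endpoints of $\eta_2(s_n)$ are trapped in two disjoint closed arcs determined by the frame). With these two patches the proof is correct.
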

\begin{proof}
Assume that $i=1$. We write $I_i=\opi{u_i}{v_i}$ for all $i\in \{1,2,3,4\}$ By \reflem{maxStitch}, $\fF$ is not a $I_1$-side maximal. Therefore, there is a $I_1$-side extension of $\fF$. This implies that the stem $S=\stem{\opi{v_4}{u_2}}{I_1}$ in $\sL_1$ has at least one element that is not $I_1$. Let $E$ be the end of the stem $S$. Since $\sL_1$ and $\sL_2$ are strongly transverse, $E \in S$ and $\cldi{v_4}{u_2} \subset E \subsetneq I_1.$ 

If $I_1$ is isolated, then there is a non-leaf gap $\sG$ containing $I_1^*.$ Then, since $E\subsetneq I_1$ there is an element $I_1'$ in $\sG$ containing $E.$ Then, $I_1'\in S$ and so $\fF'=(I_1', I_2, I_3, I_4)$ is an $I_1$-side extension of $\fF.$ Since $\ell(I_1)$ and $\ell(I_1')$  are linked with $\ell(I_2)$ and $\ell(I_4),$ by \refprop{twoGap}, $\{I_1', I_1^*\}$ is a tip pair $\tipp{t}$ for some $t \in v(\ell(I_1))$ and so both $\ell(I_2)$ and $\ell(I_4)$ cross over $t.$ For simplicity, we may assume that $t=v_1.$ Then, $I_4^*$ and $I_2$ cross over $v_1.$ 

Suppose that there is a marked stitch $s=(\ell_1, \ell_2)$ lying on $\fF'$. Then, $\ell_1$ properly lies between $\ell(I_1')$ and $\ell(I_3).$ This implies that  $\ell_1$ properly lies between $\ell(I_1)$ and $\ell(I_3)$ since $\ell_1$ is real. Hence, $s$ lies on $\fF$. It is a contradiction. Therefore, there is no marked stitch in $\fF'.$ Thus, $\fF'$ is a unmarked $I_1$-side extension of $\fF.$

 Now, we consider the case where $I_1$ is not isolated. The quadruple $\fF_E=(E,I_2, I_3,I_4)$ is an $I_1$-side extension of $\fF$ which is $E$-side maximal. If $\fF_E$ is unmarked, then we are done. Assume that $\fF_E$ is marked. By \reflem{finiteness}, $\cS(\fF_E)$ has an only finite number of marked stitches. Therefore, since $I_1$ is not isolated, we can take $I_1'$ in $S$ so that $I_1'\subsetneq I_1$ and the frame $\fF'=(I_1', I_2, I_3, I_4)$ is unmarked. Then, $\fF'$ is an unmarked $I_1$-side extension of $\fF.$ 
\end{proof}

The combination of the two following lemmas is the converse of \reflem{extension} in nature.

\begin{lem}\label{Lem:markedSide}
Let $\sV=\{\sL_1, \sL_2\}$ be a veering pair and $\Mrk$ a marking on $\sV.$ Let $\fF=(I_1, I_2, I_3, I_4)$ be an unmarked frame. If $\cI(c_{i-1}(\fF),c_i(\fF))$ has a marked stitch $s=(\ell_1, \ell_2),$ then $\fF$ is not $I_i$-side maximal but it is $I_i$-side full.
\end{lem}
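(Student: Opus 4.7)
Without loss of generality, I would focus on the case $i=1$; the remaining cases follow by cyclic relabeling. A stitch $s=(\ell_1,\ell_2)$ in $\cI(c_4(\fF),c_1(\fF))$ has $\ell_1=\ell(I_1)$ and $\ell_2$ lying between $\ell(I_2)$ and $\ell(I_4)$ on the weft thread through $\ell(I_1)$. Since $s$ is marked, the marking axioms force $s$ to be genuine, so both $\ell(I_1)$ and $\ell_2$ are real leaves of $\sV$. This realness is the engine driving both conclusions.

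For ``$\fF$ is not $I_1$-side maximal,'' I would exhibit an explicit $I_1$-side extension. Realness of $\ell(I_1)$ implies $I_1$ is not isolated (by \refprop{realGap}), so there is a sequence $\{K_n\}\subset\sL_1$ with $K_n\subsetneq I_1$ and $\ell(K_n)\to\ell(I_1)$ taking the small-arc representatives. For $n$ large the endpoints of $K_n$ lie close enough to those of $I_1$ that $\{v_4,u_2\}\subset K_n$ (because $v_4,u_2$ sit strictly inside $I_1$), so $K_n$ belongs to the $I_1$-side rail. The same endpoint proximity preserves the linking of $\ell(K_n)$ with $\ell(I_2),\ell(I_4)$, keeps the sector $(K_n,I_2)$ counterclockwise, and maintains the ``lies on $(I_3^*,I_4^*)$'' condition. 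Hence $(K_n,I_2,I_3,I_4)$ is the desired $I_1$-side extension of $\fF$.

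For ``$\fF$ is $I_1$-side full,'' I would show that any $I_1$-side extension $\fF''=(J,I_2,I_3,I_4)$ with $J\subsetneq I_1$ automatically contains $s$ in its scrap, so $\fF''$ is marked. Since $J\subsetneq I_1$, both endpoints of $J$ sit inside $I_1$, so $\ell(J)$ lies on the $I_1$-side of $\ell(I_1)$; meanwhile $\ell(I_3)$ lies on the $I_1^*$-side by the frame structure of $\fF$. Therefore $\ell(J)$ and $\ell(I_3)$ flank $\ell(I_1)$ from opposite sides, giving ``$\ell(I_1)$ lies between $\ell(J)$ and $\ell(I_3)$''. The upgrade to \emph{properly} between is forced by realness: by \refprop{parallelLeaves} any two distinct leaves sharing a vertex must both be boundary leaves of a non-leaf gap, which is impossible for the real leaf $\ell(I_1)$ (\refprop{realGap}); hence $\overline{\ell(I_1)}$ is disjoint from $\overline{\ell(J)}$ and $\overline{\ell(I_3)}$. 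The same argument applied to $\ell_2$ upgrades the weft-direction ``between'' (which comes for free from $s\in\cI(c_4(\fF),c_1(\fF))$) to ``properly between'' with respect to $\ell(I_2),\ell(I_4)$. Therefore $s\in\cS(\fF'')$, so $\fF''$ is marked.

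The main obstacle is the first part: confirming that the shrunk arc $K_n$ indeed gives a frame, i.e.\ that all four structural conditions (linking, counterclockwise sector, and the two ``properly lies on'' clauses) survive small perturbations of $I_1$ inward. This is a continuity argument whose key input is that $v_4$ and $u_2$ sit strictly interior to $I_1$, so each condition is open in the endpoint data. The second half is essentially free once one notices that the genuineness axiom built into the definition of marking turns ``between'' into ``properly between'' for both coordinates of $s$.
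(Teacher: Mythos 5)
Your proof is correct and takes essentially the same approach as the paper's: both parts hinge on the marked stitch being genuine, so that $\ell_1=\ell(I_1)$ and $\ell_2$ are real leaves, which rules out $I_1$ being isolated (giving non-maximality) and rules out either leaf sharing a vertex with another leaf (upgrading ``between'' to ``properly between'' for any putative extension). The only cosmetic differences are that you establish ``not $I_i$-side maximal'' constructively, extracting an explicit extension from an $I_1$-side sequence, where the paper argues by contradiction via \reflem{maximalSide}, and you route the ultraparallelism needed for ``$I_i$-side full'' through \refprop{parallelLeaves} together with realness where the paper cites \reflem{quadrachotomy}; these substitutions are interchangeable since \reflem{quadrachotomy} is built from exactly that machinery.
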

\begin{proof}
Without loss, we may assume that $\cI(c_4(\fF), c_1(\fF))$ has a marked stitch $s=(\ell_1, \ell_2).$ Then, $\ell(I_1)=\ell_1.$ If $\fF$ is $I_1$-side maximal, then $I_1$ is isolated and so $\ell(I_1)$ is a boundary leaf of a non-leaf gap.  However, it is a contradiction as $\ell_1$ is real. See \refprop{realGap}. Therefore, $\fF$ is not $I_1$-side maximal.

Suppose that there is an $I_1$-side extension $\fF'=(I_1', I_2, I_3, I_4)$ of $\fF.$ Then, $I_1'\subsetneq I_1$ and by \reflem{quadrachotomy}, $\ell(I_1')$ and $\ell(I_1)$ are ultraparallel. Therefore, $\ell(I_1)$ properly lies between $\ell(I_1')$ and $\ell(I_3).$ As $s\in \cI(c_4(\fF), c_1(\fF)),$ by \reflem{quadrachotomy}, $\ell_2$ properly lies between $\ell(I_2)$ and $\ell(I_4).$ Hence, $s$ lies on $\fF'$ and so $\fF'$ is marked. Thus, there is no unmarked $I_1$-side extension of $\fF$ and $\fF$ is $I_1$-side full.   
\end{proof}

\begin{lem}\label{Lem:singularSide}
Let $\sV=\{\sL_1,\sL_2\}$ be a veering pair. Let $\fF=(I_1, I_2, I_3, I_4)$ be a frame. If $\cI(c_{i-1}(\fF),c_i(\fF))$ has a singular stitch $s=(\ell_1, \ell_2)$, then $\fF$ is $I_i$-side maximal. 
\end{lem}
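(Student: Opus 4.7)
\emph{Plan.} The argument proceeds by contradiction: suppose $\fF$ were not $I_i$-side maximal. Then there is an $I_i$-side extension $\fF' = (I_1, \dots, I_i', \dots, I_4)$ with $I_i' \subsetneq I_i$ and $\{v_{i-1}, u_{i+1}\} \subseteq I_i'$. I would show that the singular stitch $s$ lies on $\fF'$, i.e.\ $s \in \cS(\fF')$, which contradicts \refprop{scrapIsRegular} since the scrap of a frame contains only regular stitches.

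Take $i = 1$ without loss of generality and write $s = (\ell(I_1), \ell_2)$ as a singular stitch of an asterisk $(\sG_1,\sG_2)$ of non-leaf polygons, so that $\ell(I_1)$ is a boundary leaf of $\sG_1$. The two conditions needed for $s$ to lie on $\fF'$ are that $\ell(I_1)$ properly lies between $\ell(I_1')$ and $\ell(I_3)$ and that $\ell_2$ properly lies between $\ell(I_2)$ and $\ell(I_4)$. The key step is the first; it suffices to show that $\ell(I_1)$ and $\ell(I_1')$ are ultraparallel. Suppose they were merely parallel, sharing an endpoint. By \refprop{parallelLeaves} together with the uniqueness of non-leaf gaps at endpoints (\reflem{quadrachotomy}), $\ell(I_1')$ would be another boundary leaf of $\sG_1$. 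I then split cases according as $I_1 \in \sG_1$ or $I_1^* \in \sG_1$: in the first case the polygon lies in $I_1^*$, so the adjacent side of $\sG_1$ at the shared endpoint has its other vertex in $I_1^*$, contradicting $I_1' \subsetneq I_1$ (which forces both endpoints of $\ell(I_1')$ into $\overline{I_1}$); in the second case the polygon lies in $I_1$, and one exploits the interleaving of $\sG_1$ with $\sG_2$, strong transversality, and the positions forced on $\ell_2$'s endpoint in $I_1$ and on $\ell(I_2), \ell(I_4)$ (which must themselves lie on sides of $\sG_2$) to deduce that $v_4$ and $u_2$ sit in different $\sG_1$-sides of $I_1$, so no single $\sL_1$-leaf strictly inside $I_1$, which by the gap property must lie on a single $\sG_1$-side of $I_1$, can contain both.

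The second ``properly between'' condition is handled by a symmetric argument with $\sG_2$ in place of $\sG_1$, appealing to \refrmk{twoSingStitch} and \refrmk{tipBtwStitches} to dispose of the delicate subcase in which one of $\ell(I_2), \ell(I_4)$ is itself a boundary leaf of $\sG_2$ (so that $c_4(\fF)$ or $c_1(\fF)$ is singular): the fact that there are only two singular stitches in the thread on $\ell(I_1)$, and that the interval between them is empty, restricts the configuration enough to force ultraparallelism. The main obstacle will be the structural bookkeeping in the case $I_1^* \in \sG_1$, where the parallel possibility cannot be excluded by geometric position alone and must be ruled out by chaining the interleaving condition, strong transversality, and the lamination gap property in tandem.
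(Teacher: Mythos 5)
Your overall architecture (contradiction via an $I_1$-side extension $\fF'$, then a dichotomy on whether $\ell(I_1)$ and $\ell(I_1')$ are ultraparallel or parallel) matches the paper's, but your parallel case is handled by a genuinely different argument. The paper does not try to show $s$ lies on $\fF'$ there; it observes that $\ell_2$, $\ell(I_2)$ and $\ell(I_4)$ all cross over the shared tip $t$, hence all must sit at the unique element $K$ of the interleaving gap $\sG_2$ crossing over $t$, forcing $\ell(I_2)$ and $\ell(I_4)$ onto the same side of $\ell_2=\ell(K)$ and contradicting $s\in\cI(c_4(\fF),c_1(\fF))$. Your version instead reads off where $u_2$ and $v_4$ land relative to $\sG_1$: writing $P_{u_1},P_{v_1}$ for the two (distinct) elements of $\sG_1$ inside $\closure{I_1}$ adjacent to $u_1,v_1$, the betweenness of $\ell_2$ together with the fact that $\ell(I_2),\ell(I_4)$ each lie on one of the two elements of $\sG_2$ adjacent to the unique vertex of $\sG_2$ in $I_1^*$ forces $u_2$ and $v_4$ into $P_{u_1}$ and $P_{v_1}$ in some order. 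I checked this and it is correct; note that it proves more than you use it for, since any $J\in\sL_1$ with $\{v_4,u_2\}\subseteq J\subsetneq I_1$ must lie on a single element of $\sG_1$, so this argument empties the $I_1$-side rail outright whenever $I_1^*\in\sG_1$, with no ultraparallel/parallel dichotomy needed.

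The genuine gap is in your handling of the second condition, that $\ell_2$ properly lies between $\ell(I_2)$ and $\ell(I_4)$. The delicate subcase you flag — $\ell(I_2)$ itself a boundary leaf of $\sG_2$ sharing a tip with $\ell_2$, so that $c_1(\fF)$ is the second singular stitch of the asterisk and $\cI(s,c_1(\fF))=\emptyset$ — is not excluded by \refrmk{twoSingStitch} and \refrmk{tipBtwStitches}; those remarks describe exactly this configuration rather than forbidding it, and since $\cI$ is defined with ``lies between'' rather than ``properly lies between,'' the hypothesis $s\in\cI(c_4(\fF),c_1(\fF))$ tolerates it. In that configuration $s$ lies on no frame whose second and fourth sides are $I_2,I_4$, and the contradiction via \refprop{scrapIsRegular} evaporates, so ``forcing ultraparallelism'' is not available. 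The repair is already in your hands: when $I_1^*\in\sG_1$ the positional argument above settles the lemma unconditionally, delicate subcase or not; and the remaining case $I_1\in\sG_1$ is vacuous, not only because $I_1'$ cannot then be a boundary interval of $\sG_1$, but because the opposite side $\ell(I_3)$ cannot exist — the same interleaving analysis places $v_2$ and $u_4$ in two distinct elements of $\sG_1$ on the $I_1^*$ side, while $\ell(I_3)$, which must contain both and satisfy $\closure{I_3}\subseteq I_1^*$, has to lie on a single element of $\sG_1$. With the appeal to those remarks replaced by this, your proof goes through.
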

\begin{proof}
Without loss of generality, we may assume that $i=1$. Assume that $\fF$ is not $I_1$-side maximal. Then, there is an $I_1$-side extension $\fF'=(J_1, I_2, I_3, I_4)$ of $\fF.$ Note that $J_1\subsetneq I_1$. Then, if $\ell(J_1)$ and $\ell(I_1)$ are ultraparallel, then $s$ properly lies between $\ell(J_1)$ and $\ell(I_3)$. Therefore, $s$ lies on $\fF'$. However, by \reflem{unmarkedFrame}, there is no singular stitch lying on $\fF'$. Therefore, $\ell(J_1)$ and $\ell(I_1)$ are parallel, namely, $v(\ell(J_1))\cap v(\ell(I_1))=\{t\}$ for some $t\in S^1$. Then, by \reflem{quadrachotomy}, $t$ is a tip of a non-leaf gap $\sG_1$ of $\sL_1$. Hence, $\tipp{t}=\{J_1, I_1^*\}$ and $I_1^*\in \sG_1$. 

Since $\ell_2$ properly lies between $\ell(I_2)$ and $\ell(I_4)$, $\ell_2$ is linked with $\ell(J_1)$. Therefore, by \refprop{twoGap}, $\ell_2$ crosses over $t$. Also note that by \refprop{twoGap}, $\ell(I_2)$ and $\ell(I_4)$ cross over $t.$ 

Let $\sG_2$ be the non-leaf gap of $\sL_2$ interleaving with $\sG_1$. There is a unique element $K$ in $\sG_2$ crossing over $t$. Since $s$  is a singular stitch of the asterisk $(\sG_1, \sG_2)$ and $\ell_2$ crosses over $t$, $\ell(K)=\ell_2$. On the other hands, $\ell(I_2)$ and $\ell(I_4)$ properly lie on $K$ since $\ell(I_2)$ and $\ell(I_4)$ cross over $t$ and are ultraparallel with $\ell_2$. This implies that $\ell_2$ does not lie between $\ell(I_2)$ and $\ell(I_4)$,  a contradiction. Thus, $\fF$ is $I_1$-side maximal. 
\end{proof}

Now, we show the existence of the full extensions.

\begin{lem}\label{Lem:fullExtension}
Let $\sV=\{\sL_1, \sL_2\}$ be a veering pair and $\Mrk$ a marking on $\sV.$ Assume that $\fF=(I_1, I_2, I_3, I_4)$ is an unmarked frame. If $\fF$ is not $I_i$-side full for some $i\in \{1,2,3,4\}$, then there is an unmarked $I_i$-side extension of $\fF$ that is $i^{th}$-side full. 
\end{lem}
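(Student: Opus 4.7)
The plan is to pinpoint the smallest rail element $I_1'$ for which $(I_1', I_2, I_3, I_4)$ remains unmarked, and then verify that this frame is $I_1$-side full either by maximality or by the marked-side criterion of \reflem{markedSide}. Assume $i = 1$ without loss of generality, and let $\cR$ be the $I_1$-side rail of $\fF$, i.e., the stem $\stem{\{v_4, u_2\}}{I_1}$ in $\sL_1$. Since $\{v_4, u_2\}$ has two points, $\cR$ admits an end $K_0 := \widehat{\cR} \in \sL_1$ which is its unique minimum. Since $\fF$ is not $I_1$-side full, $\cR$ contains an element strictly smaller than $I_1$, and hence $K_0 \subsetneq I_1$. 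For each $K \in \cR$, I write $\fF_K := (K, I_2, I_3, I_4)$; these are all frames.

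The key structural observation is that shrinking $K$ along $\cR$ enlarges the scrap in a controlled manner: every stitch in $\cS(\fF_K) \setminus \cS(\fF)$ has the form $s$ with $\eta_1(s) = \ell(N)$ for some rail element $N$ with $K \subsetneq N \subseteq I_1$, and with $\eta_2(s)$ properly between $\ell(I_2)$ and $\ell(I_4)$. This comes from the unlinkedness of $\sL_1$ together with the fact that an $\sL_1$-leaf lying strictly between $\ell(K)$ and $\ell(I_1)$ must have its two endpoints in the two arcs of $\bar{I_1} \setminus K$, which forces the associated good interval to belong to the rail. With this in hand, introduce
\[
M := \{\,s \in \Mrk : \eta_1(s) = \ell(N_s) \text{ for some } N_s \in \cR,\ N_s \subseteq I_1,\ \eta_2(s) \text{ properly between } \ell(I_2) \text{ and } \ell(I_4)\,\}.
\]
From the characterization above, $\fF_K$ is unmarked exactly when $K \supseteq N_s$ for every $s \in M$. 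The contrapositive of \reflem{markedSide} applied to $\fF$ rules out $N_s = I_1$ for $s \in M$, so $N_s \subsetneq I_1$ for each $s \in M$. Moreover \reflem{finiteness} applied to $\fF_{K_0}$, whose scrap captures every $s \in M$, shows $M$ is finite.

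With $M$ finite, I set $I_1' := K_0$ when $M = \emptyset$, and $I_1' := \max\{N_s : s \in M\}$ in the inclusion order on $\cR$ otherwise. In either case $I_1' \in \cR$, $I_1' \subsetneq I_1$, and $\fF' := (I_1', I_2, I_3, I_4)$ is unmarked by the equivalence above. For $I_1$-side fullness I split cases: when $M = \emptyset$, $I_1' = K_0$ is the rail minimum so $\fF'$ admits no $I_1$-side extension at all, and is therefore $I_1$-side maximal and trivially $I_1$-side full; when $M \neq \emptyset$, the distinguished marked stitch $s^* \in M$ with $N_{s^*} = I_1'$ satisfies $\eta_1(s^*) = \ell(I_1')$ and $\eta_2(s^*)$ properly between $\ell(I_2)$ and $\ell(I_4)$, so $s^* \in \cI(c_4(\fF'), c_1(\fF'))$ and \reflem{markedSide} immediately yields $I_1$-side fullness. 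The main obstacle will be rigorously establishing the controlled-growth picture—namely, that the new stitches entering the scrap when $K$ shrinks can only have first coordinate coming from a rail element rather than from some exotic $\sL_1$-leaf in the relevant region; this is where the unlinkedness of $\sL_1$ and the precise definition of ``properly lies between'' must be handled with care.
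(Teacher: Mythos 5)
Your approach mirrors the paper's: take the rail end $K_0$ (the paper's $E$) as the first candidate, which is $I_1$-side maximal; when it is marked, pull back to the largest rail element carrying a marked stitch and invoke \reflem{markedSide}. The point you rightly flag as delicate --- the ``controlled growth'' claim that \emph{every} stitch in $\cS(\fF_K)\setminus\cS(\fF)$ has rail first coordinate --- is in fact false as stated. If $I_1$ is isolated (so that $\ell(I_1)$ bounds a non-leaf gap), there is a leaf $\ell_1=\ell(N)$ of $\sL_1$ parallel to $\ell(I_1)$, sharing a vertex of $I_1$ and with $N\subset I_1^*$; when the other endpoint of $N$ lies beyond $v_3$, one checks that $\ell_1$ properly lies between $\ell(K)$ and $\ell(I_3)$ but not between $\ell(I_1)$ and $\ell(I_3)$, and neither $N$ nor $N^*$ contains $\opi{v_4}{u_2}$, so $\ell_1$ is not a rail leaf.

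The repair is already latent in your proof: you only apply the observation to \emph{marked} stitches, and marked stitches are genuine, so their first coordinates are real leaves. By the remark following \refprop{gapExist} together with \refprop{parallelLeaves}, a real leaf is not a boundary leaf of a non-leaf gap and hence shares no vertex with any other leaf; in particular it is ultraparallel to both $\ell(I_1)$ and $\ell(K)$, which forces the parallel case above out and places the good interval in the rail. With that qualification your equivalence holds in the direction you actually use, the contrapositive of \reflem{markedSide} correctly excludes $N_s=I_1$, and the two terminal cases ($M=\emptyset$ giving $I_1$-side maximality, $M\ne\emptyset$ feeding into \reflem{markedSide}) go through --- matching the paper's argument. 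One small bookkeeping point: \reflem{finiteness} applied to $\fF_{K_0}$ captures all $s\in M$ with $N_s\supsetneq K_0$ but not a possible $s$ with $N_s=K_0$, since that stitch lies on the boundary $\overline{\cS}(\fF_{K_0})\setminus\cS(\fF_{K_0})$; by injectivity of $\eta_1|\Mrk$ there is at most one such, so finiteness of $M$ still holds.
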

\begin{proof}
Without loss of generality, we may assume that $i=1.$ We write $I_i=\opi{u_i}{v_i}$ for all $i\in\{1,2,3,4\}$. Now, we consider the stem $S=\stem{\opi{v_4}{u_2}}{I_1}$ in $\sL_1$. The stem $S$ has the end $E$ which contains $\opi{v_4}{u_2}$ and so $E\in S$. Note that $E$ is the minimal element of $S$.

Let $\fF'$ be the quadruple $(E, I_2, I_3, I_4)$. As $\sL_1$ and $\sL_2$ are strongly transverse, $\cldi{v_4}{u_2}\subset E$ and so $\fF'$ is a frame in $\sV$. By construction, $\fF'$ covers $\fF$ and since $\fF$ is not first-side full, $\fF'$ is a first-side extension of $\fF.$ By the minimality of $E,$ $\fF'$ is first-side maximal. Hence,  if $\fF'$ is unmarked, we are done. 

Assume that $\fF'$ is marked. By \reflem{finiteness}, there are only finitely many marked stitches in $\cS(\fF')$, that is, $\mrk:=\Mrk \cap \cS(\fF')$ is finite. We define the set $$R:=\{I\in S\,:\, \ell(I)\in \eta_1(\mrk)\}.$$
Then, $R$ is a finite subset of $S$ and there is a maximal element $M$ in $R$. 

Let $\fF''$ be the frame $(M, I_2, I_3, I_4)$ which is an $I_1$-side extension of $\fF$. By the construction, $\fF''$ is unmarked and $\cI(c_4(\fF''),c_1(\fF''))$ has a marked stitch. By \reflem{markedSide}, $\fF''$ is first-side full. Thus, $\fF''$ is a frame that we want. 
\end{proof}

Finally, we extend \reflem{unmarkedFrame} to the marked case.

\begin{lem}\label{Lem:rectangleisintetra}
Let $\sV=\{\sL_1, \sL_2\}$ be a veering pair and $\Mrk$ a marking on $\sV.$ Assume that $\fF=(I_1, I_2, I_3, I_4)$ is an unmarked frame. Then there is an tetrahedron frame under $\Mrk$ covering $\fF.$
\end{lem}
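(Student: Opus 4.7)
The plan is to apply Lemma~\ref{Lem:fullExtension} sequentially on the four sides of $\fF$. Set $\fF_0 := \fF$; for each $i \in \{1,2,3,4\}$ in turn, let $\fF_i := \fF_{i-1}$ if $\fF_{i-1}$ is already $I_i$-side full, and otherwise invoke Lemma~\ref{Lem:fullExtension} to produce an unmarked $I_i$-side extension $\fF_i$ of $\fF_{i-1}$ that is $I_i$-side full. Every $\fF_i$ will be unmarked by construction and will cover $\fF$, so the proof will reduce to showing that the final frame $\fF_4$ is $I_j$-side full for every $j\in\{1,2,3,4\}$, not merely for $j=4$.

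The main task will be to verify that extending on one side preserves fullness on each of the other sides. For this I will use the criterion, obtained by assembling Lemmas~\ref{Lem:extension}, \ref{Lem:markedSide}, and \ref{Lem:singularSide}, that $I_j$-side fullness of an unmarked frame is equivalent to the opposite-corner interval $\cI(c_{j-1}, c_j)$ containing a marked or singular stitch. I claim that shrinking $I_i$ to $J_i \subsetneq I_i$ for some $i \neq j$ either leaves $\cI(c_{j-1}, c_j)$ unchanged (when $i$ and $j$ are opposite sides, so neither $c_{j-1}$ nor $c_j$ involves $I_i$) or strictly enlarges it (when sides $i$ and $j$ share a common corner). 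Granting this claim, any marked or singular stitch witnessing $I_j$-side fullness of $\fF_{i-1}$ still lies in the corresponding interval of $\fF_i$, so $I_j$-side fullness is inherited across each extension step. Iterating then yields that $\fF_4$ is $I_j$-side full for every $j$, making it a tetrahedron frame under $\Mrk$ covering $\fF$.

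The hard part will be the enlargement claim for adjacent sides. For instance, in the case $i=2$, $j=1$, I need to verify that any leaf $\ell_2\in \sL_2$ lying properly between $\ell(I_4)$ and $\ell(I_2)$ also lies properly between $\ell(I_4)$ and $\ell(J_2)$. This in turn reduces to showing that $\ell(I_4)$ and $\ell(J_2)$ lie on opposite sides of $\ell(I_2)$: but $J_2 \subsetneq I_2$ forces $\ell(J_2)$ into the component of $S^1\setminus v(\ell(I_2))$ determined by $I_2$, while the frame condition that $\ell(I_2)$ properly lies on $I_4^*$ places $\ell(I_4)$ in the opposite component. Symmetric arguments handle the remaining adjacent pairs $(i,j)$, and the opposite-side cases are immediate from the corner formulas. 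This completes the enlargement claim, and hence the sequential construction produces the desired tetrahedron frame under $\Mrk$ covering $\fF$.
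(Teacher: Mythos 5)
Your proposal matches the paper's argument: sequentially apply Lemma~\ref{Lem:fullExtension} to each side, and verify that fullness of earlier sides is preserved because the relevant corner intervals only grow under the extension. The paper asserts the containment $\cI(c_4(\fF_1),c_1(\fF_1))\subset\cI(c_4(\fF_2),c_1(\fF_2))$ without comment, so your explanation of \emph{why} it holds (via the observation that $\ell(I_4)$ and $\ell(J_2)$ lie on opposite sides of $\ell(I_2)$, which forces anything between $\ell(I_4)$ and $\ell(I_2)$ to remain between $\ell(I_4)$ and $\ell(J_2)$) is a useful supplement rather than a deviation. Two small points: the interval $\cI(c_{j-1},c_j)$ is defined via \emph{lies between}, not \emph{properly lies between}, so your phrasing should be weakened, and ``strictly enlarges'' should just be ``enlarges'' (equality occurs if $\fF_{i-1}$ was already $I_i$-side full). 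Neither affects the correctness of the argument.
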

\begin{proof}
If $\fF$ is first-side full, then we set $\fF_1=\fF$. If not, then by \reflem{fullExtension}, there is an unmarked first-side extension $\fF'$ of $\fF$ that is first-side full. Then, we set $\fF_1=\fF'$. By \reflem{extension}, $\cI(c_4(\fF_1), c_1(\fF_1))$ has a marked or singular stitch.

Then, if $\fF_1$ is second-side full, then we set $\fF_2=\fF_1$. Otherwise, by \reflem{fullExtension}, there is an unmarked second-side extension $\fF_1'$ of $\fF_1$ that is second-side full. Then, we set $\fF_2=\fF_1'$. By \reflem{extension}, $\cI(c_1(\fF_2), c_2(\fF_2))$ has a marked or singular stitch. Also, as  $$\cI(c_4(\fF_1), c_1(\fF_1))\subset \cI(c_4(\fF_2), c_1(\fF_2)),$$ $\cI(c_4(\fF_2), c_1(\fF_2))$ has a marked or singular stitch.

Next, if $\fF_2$ is third-side full, then we set $\fF_3=\fF_2$. Otherwise, by \reflem{fullExtension},  there is an unmarked third-side extension $\fF_2'$ of $\fF_2$ that is third-side full. Then, we set $\fF_3=\fF_2'$. By \reflem{extension}, $\cI(c_2(\fF_3), c_3(\fF_3))$ has a marked or singular stitch. Also, both $\cI(c_4(\fF_3), c_1(\fF_3))$ and  $\cI(c_1(\fF_3), c_2(\fF_3))$ have marked or singular stitches. Therefore, by \reflem{markedSide} and \reflem{singularSide}, $\fF_4$ is $i^{th}$-side full for all $i\in\{1,2,3,4\}$. Thus, $\fF_4$ is an tetrahedron frame under $\Mrk$ covering $\fF.$

Finally, if $\fF_3$ is fourth-side full, then we set $\fF_4=\fF_3$. Otherwise, by \reflem{fullExtension},  there is an unmarked fourth-side extension $\fF_3'$ of $\fF_3$ that is fourth-side full. Then, we set $\fF_4=\fF_3'$. By \reflem{extension}, $\cI(c_3(\fF_4), c_4(\fF_4))$ has a marked or singular stitch. Moreover, $\cI(c_{i-1}(\fF_4), c_i(\fF_4))$ has a marked or singular stitch for all $i\in \{1,2,3,4\}$ (cyclically indexed).
\end{proof}

\begin{lem}\label{Lem:unmakredFrameCover}
Let $\sV=\{\sL_1, \sL_2\}$ be a veering pair and $\Mrk$ a marking on $\sV.$
Every unmarked regular stitch lies on an unmarked frame. 
\end{lem}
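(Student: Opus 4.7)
The plan is to first cover $s$ by some frame using \refprop{frameCovering}, and then shrink that frame finitely many times to discard any marked stitches it may carry. Writing $s=(\ell(N),\ell(M))$, \refprop{frameCovering} provides a frame $\fF_0=(I_1,J_1,I_2,J_2)$ on which $s$ lies, and \reflem{finiteness} tells us that only finitely many marked stitches $m_1,\dots,m_n$ lie on $\fF_0$. If $n=0$ we are done; otherwise the goal is to construct a sequence of frames $\fF_0,\fF_1,\dots,\fF_n$, each covering $s$, with $\fF_k$ carrying strictly fewer marked stitches than $\fF_{k-1}$, so that $\fF_n$ is the required unmarked frame.

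The core of the argument is a single shrink step: given a frame $\fF$ covering $s$ and carrying a marked stitch $m=(\ell(N'),\ell(M'))$, I want to produce a sub-frame still covering $s$ but no longer carrying $m$. Since $s$ is unmarked and $m\in \Mrk$, we have $m\neq s$, so $\ell(N')\neq \ell(N)$ or $\ell(M')\neq \ell(M)$, and up to the symmetry between $\sL_1$ and $\sL_2$ I may assume the former. The marking definition forces $m$ to be genuine, so $\ell(N')$ is a real leaf of $\sL_1$; by \refprop{realGap} it is not a boundary leaf of any non-leaf gap, hence is accumulated from each side by other leaves of $\sL_1$. Taking a leaf of $\sL_1$ approaching $\ell(N')$ from the $\ell(N)$-side and close enough to $\ell(N')$ yields an element $B\in\sL_1$ such that $\ell(B)$ lies strictly between $\ell(N')$ and $\ell(N)$. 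I then replace the $\sL_1$-side of $\fF$ that lies on the $\ell(N')$-side of $\ell(N)$ (namely $I_1$ or $I_2$) by $B$ or $B^*$, with the choice determined by orientation, to obtain $\fF'$: the leaf $\ell(N)$ is still properly between the two new $\sL_1$-sides so that $s$ still lies on $\fF'$, while $\ell(N')$ has been pushed outside the new strip so that $m$ no longer lies on $\fF'$.

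Iterating this shrink step at most $n$ times produces an unmarked frame covering $s$: since scraps are nested, no marked stitch can be reintroduced along the way. The main technical point to verify is that the quadruple $\fF'$ is indeed a frame; concretely, that $\ell(B)$ is linked with both $\ell(J_1)$ and $\ell(J_2)$ and that the sector-orientation conditions survive the replacement. The linking is automatic from an order-check on $S^1$: the four endpoints of $\ell(I_1)\cup \ell(I_2)$ cut $S^1$ into four arcs, and a leaf of $\sL_1$ lying between $\ell(I_1)$ and $\ell(I_2)$ must have its two endpoints in the two ``middle'' arcs, which necessarily alternate on $S^1$ with the endpoints of $\ell(J_j)$ since the latter already alternate with both $v(\ell(I_1))$ and $v(\ell(I_2))$. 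Orientation of the sector is handled by choosing $B$ versus $B^*$, and the ``properly lies on'' adverbs follow from the strict separation between $\ell(N)$, $\ell(B)$ and $\ell(N')$.
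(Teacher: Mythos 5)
Your approach is correct and achieves the goal, but it takes a genuinely different route from the paper's. You shrink the frame one marked stitch at a time, each time routing through an auxiliary leaf $\ell(B)$ drawn from a side-sequence converging to the real marked leaf $\ell(N')$, and you must re-verify at each stage that the replacement is again a frame (you handle this somewhat informally via the order-check on $S^1$). The paper instead collapses the whole process into a single step: writing $\eta_1(s) = \{J_1,J_3\}$ and $\eta_2(s)=\{J_2,J_4\}$ with $I_i\subset J_i$, it forms the finite sets $R_i$ of marked leaves strictly between $I_i$ and $J_i$, and replaces $I_i$ by the \emph{maximal} element $M_i$ of $R_i$ (keeping $I_i$ if $R_i=\emptyset$); the chain $I_i\subset M_i\subset \closure{M_i}\subset J_i$, the last inclusion coming from \reflem{quadrachotomy} and the realness of marked leaves, makes $(M_1,M_2,M_3,M_4)$ a frame on which $s$ lies and which is unmarked by maximality. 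The payoff of the paper's variant is that the new sides are the marked leaves themselves, so no auxiliary $B$ is needed and the ``properly lies'' conditions are immediate from the nesting; the payoff of your variant is that the single shrink step is conceptually simple, at the cost of iterating and re-verifying frame conditions each time. Two small points in your write-up deserve tightening: (i) you should state explicitly that the auxiliary $B$ is chosen close enough to $\ell(N')$ that $\ell(B)$ is ultraparallel with $\ell(N)$ and with both $\sL_2$-sides of the current frame, so the ``properly lies'' clauses survive (this is available because $\ell(N')$ is real and hence ultraparallel with $\ell(N)$); and (ii) the nestedness of the successive scraps, which you invoke to rule out reintroducing marked stitches, should be checked directly from the fact that $\ell(B)$ sits between $\ell(I_1)$ (the old side) and $\ell(N)$.
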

\begin{proof}
Let $s=(\ell_1, \ell_2)$ be an unmarked regular stitch. By \reflem{unmarkedFrame}, there is a frame $\fF=(I_1, I_2, I_3, I_4)$ on which $s$ lies. If $\fF$ is unmarked, then we are done. Assume that $\fF$ is marked. By \reflem{finiteness}, the scrap $\cS(\fF)$ has only finitely many marked stitches, that is, $\mrk:=\cS(\fF)\cap \Mrk$ is a finite subset of $\cS(\fF).$

Now, we write $I_i=\opi{u_i}{v_i}$ for all $i\in \{1,2,3,4\}$, and $\ell_1=\{J_1, J_3\}$ and $\ell_2=\{J_2, J_4\}$ so that $I_i\subset J_i$ for all $i\in \{1,2,3,4\}$. Then, we define
$$R_1:=\{K\in \sL_1: K \subsetneq J_1 \text{ and } \ell(K)\in \eta_1(\mrk) \}$$
$$R_2:=\{K\in \sL_2: K \subsetneq J_2 \text{ and } \ell(K)\in \eta_2(\mrk) \}$$
$$R_3:=\{K\in \sL_1: K \subsetneq J_3 \text{ and } \ell(K)\in \eta_1(\mrk) \},$$ and 
$$R_4:=\{K\in \sL_2: K \subsetneq J_4 \text{ and } \ell(K)\in \eta_2(\mrk) \}.$$
Then, $R_i$ are finite as $\mrk$ is finite.
Note that $R_1$ is totally ordered as $R_1$ is a finite subset of the stem $\stem{I_1}{J_1}$ in $\sL_1.$ Likewise, $R_i$ are finite and totally ordered. Now, for each $i\in \{1,2,3,4\}$, we define $M_i$ to be  the maximal element of $R_i$ if $R_i$ is not empty and to be $I_i$ otherwise. For each $i\in\{1,2,3,4\}$, by \reflem{quadrachotomy}, $$I_i\subset M_i\subset \closure{M_i}\subset J_i.$$ This implies that the quadruple $\fF'=(M_1, M_2, M_3, M_4)$ is a frame on which $s$ lies. Moreover, by the choice of $M_i,$ $\fF'$ is unmarked. 
\end{proof}

\subsection{Scraps on Weavings}

The following lemma is the key lemma  in the next section where we prove that weavings are transversely foliated. The following says that the scarps give rise to the transversely foliated charts for weavings. 

\begin{lem}\label{Lem:weavingRect}
    Let $\sV=\{\sL_1, \sL_2\}$ be a veering pair and $\fF$ a frame in $\sV$. Then, we can take a homeomorphism $\rho_\fF$ from $[0,1]^2$ to $\#(\closure{\cS}(\fF))$ so that the following hold. 
\begin{itemize}
    \item For each $s\in [0,1],$ $\rho_\fF(\{s\}\times [0,1])\subset \#(\overt(\ell))$ for some leaf $\ell$ of $\sL_1.$
    \item For each $t\in [0,1],$ $\rho_\fF( [0,1]\times \{t\})\subset \#(\ominus(\ell))$ for some leaf $\ell$ of $\sL_2.$
    \item $$\rho_\fF(\{0\}\times [0,1])=\#(\closure{\cI}(c_1(\fF),c_4(\fF))),$$
    $$\rho_\fF(\{1\}\times [0,1])=\#(\closure{\cI}(c_2(\fF),c_3(\fF))),$$ 
    $$\rho_\fF([0,1]\times\{0\})=\#(\closure{\cI}(c_1(\fF),c_2(\fF))),$$
    and 
    $$\rho_\fF ([0,1]\times \{1\})=\#(\closure{\cI}(c_4(\fF),c_3(\fF))).$$
\end{itemize}
Therefore, $\#^{-1}(\rho_\fF((0,1)^2))=\cS(\fF).$  
\end{lem}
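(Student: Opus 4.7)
My strategy is to transport the problem, via the trivialization $\hat{\omega}$ of \reflem{trivialization}, into the decomposition space $\cD(C(\sV))$, where the underlying hyperbolic geometry of $\closure{\HH^2}$ can be exploited directly. I will first identify $\hat{\omega}(\#(\closure{\cS}(\fF)))$ with a compact region in $\cD(C(\sV))$, then recognize this region as a topological closed disk with the required boundary, and finally transfer the hyperbolic ``product'' structure of the region back to $[0,1]^2$.

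First I would form the closed geodesic quadrilateral $Q \subset \closure{\HH^2}$ whose four vertices are the link points $p_i = g_2\circ\epsilon_2(c_i(\fF))$ and whose sides lie on the geodesics $g(\epsilon(\ell(I_i)))$. The counter-clockwise sector conditions in the definition of a frame force these four geodesics to bound such a quadrilateral. I then verify $\hat{\omega}(\#(\closure{\cS}(\fF))) = \pi_{C(\sV)}(Q)$. The inclusion $\subseteq$ is immediate from the definitions. For $\supseteq$, given $y\in Q$, its compressing class $\lsem y\rsem$ is either a single point---in which case density of endpoints (\refcor{denseEnd}) and the closedness axiom of lamination systems let one realize $y$ as a limit of links of stitches in $\closure{\cS}(\fF)$---or the convex hull of a non-leaf real gap or a real leaf, in which case \refprop{twoGap} together with \refprop{realGap} forces the gap's boundary leaves to lie between the frame sides, producing a stitch in $\closure{\cS}(\fF)$ whose image is $\sim_{C(\sV)}$-equivalent to $y$.

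Next I would show that $\pi_{C(\sV)}(Q)$ is a closed topological disk whose boundary Jordan curve is the concatenation of the arcs $\hat{\omega}(\#(\closure{\cI}(c_i(\fF),c_{i+1}(\fF))))$. By \reflem{subarc} each such side is an arc, by \refrmk{distinctWithCorner} they meet only at the corners $\#(c_i(\fF))$, and Jordan--Schoenflies applied inside $\interior{\cD(C(\sV))}$ then produces a topological closed disk bounded by this curve; by the identification above this disk must equal $\pi_{C(\sV)}(Q) = \hat{\omega}(\#(\closure{\cS}(\fF)))$.

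Finally I will build the parameterization. On the hyperbolic side I define a homeomorphism $\tau:[0,1]^2\to Q$ whose vertical fibers are chords of $Q$, each lying on $g(\epsilon(\ell_1))$ whenever $\ell_1$ is a leaf of $\sL_1$ between $\ell(I_1)$ and $\ell(I_3)$ and interpolating across the parameter intervals coming from non-leaf gaps, and whose horizontal fibers are the analogous chords for $\sL_2$; that two such chords from opposite families always cross in a single point follows from \reflem{intersectLeaves} together with strong transversality. Setting $\rho_\fF := \hat{\omega}^{-1}\circ \pi_{C(\sV)}\circ \tau$ yields the claimed map, and the three enumerated properties together with $\#^{-1}(\rho_\fF((0,1)^2))=\cS(\fF)$ translate back from the corresponding properties of $\tau$, the latter because open fibers correspond exactly to leaves properly lying between the frame sides. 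The main obstacle will be the interpolation step: the set of $\sL_1$-leaves between $\ell(I_1)$ and $\ell(I_3)$ indexes only a dense subset of $[0,1]$, and the intervals corresponding to non-leaf gaps must be mapped to chords that collapse under $\pi_{C(\sV)}$ in exactly the right way so that the assembly of thread classes $\#(\overt(\ell_1)\cap\closure{\cS}(\fF))$ gives a genuine transverse foliation of the closed disk $\#(\closure{\cS}(\fF))$; carrying this out rigorously, rather than handwaving the rectangle structure, will be the technical heart of the argument.
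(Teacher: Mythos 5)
Your plan diverges genuinely from the paper's. The paper never passes through $\cD(C(\sV))$ at all: it forms the combinatorial product $\fC = \closure{\cI}(c_1(\fF),c_2(\fF)) \times \closure{\cI}(c_1(\fF),c_4(\fF))$, identifies it with $\closure{\cS}(\fF)$ via $\sigma_\fF(s_1,s_2)=(\eta_1(s_1),\eta_2(s_2))$, and then proves the single crucial claim that the weaving relation restricted to $\closure{\cS}(\fF)$ is a \emph{product} relation: $s_1\sim_\omega s_2$ and $t_1\sim_\omega t_2$ iff $\sigma_\fF(s_1,t_1)\sim_\omega\sigma_\fF(s_2,t_2)$. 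That one claim, proved by a case analysis on parallel/ultraparallel leaves using \refprop{gapBtwStitches} and \refrmk{eqDef}, is what makes $\#(\closure{\cS}(\fF))$ a genuine product of the two boundary arcs; the map $\rho_\fF$ then drops out of the universal property of quotients together with \reflem{subarc}. Your route, by contrast, wants to see the product structure geometrically as a grid of chords in the geodesic quadrilateral $Q$ and push it through $\pi_{C(\sV)}$.

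The gap is exactly where you flag it, and it is not optional. Your ``interpolation step'' must show that the chord families, after the collapse of $\pi_{C(\sV)}$, still assemble into a bona fide product homeomorphism $[0,1]^2 \to \#(\closure{\cS}(\fF))$. That is not a matter of careful bookkeeping with dense parameter sets; it is precisely the statement that $\sim_\omega$ is a product relation on $\closure{\cS}(\fF)$, rephrased in $\cD(C(\sV))$. Concretely: two parallel $\sL_1$-leaves give chords that $\pi_{C(\sV)}$ glues along a sub-arc, and you must check that the corresponding horizontal arcs are glued across the \emph{same} collection of vertical parameters (and symmetrically), or else the collapsed grid is not a rectangle. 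Nothing in your sketch addresses this — you would need \refprop{gapBtwStitches} and \refrmk{eqDef} in essentially the same way the paper uses them, so the hyperbolic picture does not save you any of the combinatorial work.

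Your $\supseteq$ inclusion for $\pi_{C(\sV)}(Q)=\hat\omega(\#(\closure{\cS}(\fF)))$ also needs repair. For an interior point $y\in Q$ with $\lsem y\rsem$ a singleton, you do not need a limiting argument at all: $\lsem y\rsem$ being a singleton in the interior already forces $y$ to be the intersection of two \emph{real leaves}, i.e.\ $y=g_2\circ\epsilon_2(s)$ for a genuine stitch $s$; the limit-of-links phrasing conflates closure of the image with saturation of the class. For $\lsem y\rsem$ of positive dimension you must actually produce the stitch in $\closure{\cS}(\fF)$, and showing that the gap's boundary leaves sit between the frame sides is not automatic — the gap may protrude outside $Q$, and one must argue that \emph{some} boundary leaf lies in the required stem. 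Both points are fixable, but as written they are gaps rather than routine observations.

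In short: the geometric route is plausible but its technical heart — which you correctly identify — is exactly the content of the paper's product-relation claim, and you have deferred rather than proved it. If you want to pursue this route, the cleanest fix is to import that claim verbatim and use it to verify that your chord grid survives $\pi_{C(\sV)}$ as a product; at that point you have essentially reconstructed the paper's proof with extra hyperbolic scaffolding.
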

\begin{proof}
We define a map $\sigma_\fF$ from $\fC:=\closure{\cI}(c_1(\fF), c_2(\fF))\times \closure{\cI}(c_1(\fF),c_4(\fF)))$ to $\closure{\cS}(\fF)$ by $\sigma_\fF(s_1,s_2)=(\eta_1(s_1),\eta_2(s_2)).$ By \reflem{edgeMap} and the definition of the closure of the scrap, $\sigma_\fF$ is a homeomorphism.

Let $\fF=(I_1,I_2,I_3,I_4)$. We claim that for any elements $(s_1,t_1)$ and $(s_2,t_2)$ in $\fC$,
$s_1 \sim_\omega s_2$ and $t_1 \sim_\omega t_2$ if and only if $\sigma_\fF(s_1,t_1) \sim_\omega \sigma_\fF(s_2,t_2)$. The case where $(s_1,t_1)=(s_2,t_2)$ is obvious. 

First, we consider the case where $s_1\neq s_2$ and  $t_1=t_2$.
Assume that $s_1 \sim_\omega s_2$ and $t_1 \sim_\omega t_2$. As $s_1$ and $s_2$ are in the same thread  and $s_1 \sim_\omega s_2$, by \refprop{eqClass2}, $\cI(s_1,s_2)=\emptyset$. By \refprop{gapBtwStitches}, $\ell(I_2)$ crosses over a tip $t$ in $\E{\sL_1}$ such that  $\{s_1,s_2\}=\{(\ell(J),\ell(I_2))\,:\,J\in \tipp{t}\}$. By \reflem{edgeMap}, $\ell(I_4)$ also crosses over $t$. Hence, $\eta_2(t_1)$ crosses over $t$ as $\eta_2(t_1)$ properly lies between $\ell(I_2)$ and $\ell(I_4)$. Therefore, by \refrmk{eqDef}, $\sigma_\fF(s_1,t_1)=\sigma_\fF(s_2,t_2)$.

Conversely, if $\sigma_\fF(s_1,t_1)\sim_\omega \sigma_\fF(s_2,t_2)$, then  
$\mu_1=\sigma_\fF(s_1,t_1)$ and $ \mu_2=\sigma_\fF(s_2,t_2)$ are stitches in the same thread $\ominus(\eta_2(t_1))$ with $\cI(\mu_1,\mu_2)=\emptyset$. By \refprop{gapBtwStitches}, $\eta_2(t_1)$ crosses over a tip $t$ in $\E{\sL_1}$ such that $\{\mu_1,\mu_2\}=\{(\ell(J),\eta_2(t_1)): J\in\tipp{t}\}$. Then, $\ell(I_2)$ also crosses over $t$. Hence, as $\mu_i=\eta_1(s_i)$,  by \refrmk{eqDef}, $s_1\sim_\omega s_2$.

Likewise, we can show the claim in the cases where $s_1=s_2$ and $t_1\neq t_2$, or $s_1\neq s_2$ and $t_1\neq t_2$. Thus, the claim follows.

Now, we consider the continuous map $\# \circ \sigma_\fF$ from $\fC$ to $\#(\closure{\cS}(\fF)).$ We define a continuous map $\#^2$ from $\closure{\cI}(c_1(\fF),c_2(\fF))\times \closure{\cI}(c_1(\fF),c_4(\fF))$ to $\#(\closure{\cI}(c_1(\fF),c_2(\fF)))\times \#(\closure{\cI}(c_1(\fF),c_4(\fF)))$ by $$\#^2(s_1,s_2)=(\#(s_1),\#(s_2)).$$ Then, by the previous claim, there is a unique homeomorphism $$\delta_\fF: \#(\closure{\cI}(c_1(\fF),c_2(\fF)))\times \#(\closure{\cI}(c_1(\fF),c_4(\fF)))\to \#(\closure{\cS}(\fF))$$ that makes the following diagram commute:
\begin{center}
\begin{tikzcd}
\closure{\cI}(c_1(\fF),c_2(\fF))\times \closure{\cI}(c_1(\fF),c_4(\fF)) \arrow[dd,"\#^2"] \arrow[rr,"\sigma_\fF"]
\arrow[rrdd,"\#\circ \sigma_\fF"]&&\closure{\cS}(\fF) \arrow[dd,"\#"]\\
&&\\
\#(\closure{\cI}(c_1(\fF),c_2(\fF)))\times \#(\closure{\cI}(c_1(\fF),c_4(\fF)))\arrow[rr,dashed,"\exists !\delta_\fF"]&&\#(\closure{\cS}(\fF)) 
\end{tikzcd}
\end{center}
Note that by \refprop{scrapIsNonempty} and \refrmk{distinctWithCorner}, $\cI(c_i(\fF),c_{i+1}(\fF))\neq \emptyset$ for all $i\in \{1,2,3,4\}$ (cyclically indexed). By \reflem{subarc}, we can take a homeomorphism $\alpha$ from $[0,1]^2$ to $\#(\closure{\cI}(c_1(\fF),c_2(\fF)))\times \#(\closure{\cI}(c_1(\fF),c_4(\fF)))$ so that $\rho_\fF=\delta_\fF \circ \alpha$ satisfies the properties that we wanted. Then, it follows from construction that  $\#^{-1}(\rho_\fF((0,1)^2))=\cS(\fF)$. \end{proof}

\begin{rmk}\label{Rmk:regularNbhd}
Let $\fF$ be a frame. Let $s$ be a stitch lying on $\fF$. Then, by \refrmk{distinctWithCorner}, we can see that $\#(s)$ is contained in $\rho_\fF((0,1)^2)$ where $\rho_\fF$ is from \reflem{weavingRect}. Thus, $\cS(\fF)=\#^{-1}(\rho_\fF((0,1)^2))$ and so $\#(\cS(\fF))$ is homeomorphic to $(0,1)^2.$ 
\end{rmk}

\section{Transverse Foliations on  Weavings}\label{Sec:foliationsec}

In this section, we show that weaving are transversely foliated. First of all, we review the basic terminology for the singular foliations.

\subsection{Singular Foliations}\label{Sec:defnOfFoliation} 

Let $P_2$ be the open set $$\{z\in \CC\,:\, \max\{|\re z|,|\im z |\}<1 \}$$  of the complex plane $\CC$. We define transverse foliations $\sF^2$ and $\sF_2$ on $P_2$ as the sets of vertical lines and horizontal lines, respectively. 

For each integer $k$ with $k>1$, the map on $\CC$ defined by $z\mapsto z^k$ is denoted by $\varphi_k$. Then, we set $P_1=\varphi_2(P_2)$. Observe that $P_1$ is also an open neighborhood of the origin. Now, we define
decompositions $\sF^1$ and $\sF_1$ for $P_1$ as $\sF^1=\{\varphi_2(l):l\in \sF^2\}$ and
$\sF_1=\{\varphi_2(l):l\in \sF_2\}$. 

For each element $p$ of  $\sF^1,$ we say that $p$ is a \emph{plaque}. In particular, we say $p$ to be \emph{regular} if $p$ is homeomorphic to  $\RR.$ Otherwise, $p$ is \emph{singular}. Similarly, for each element $t$ of  $\sF_1,$ we say that $t$ is a \emph{transversal}. In particular, $t$ is \emph{regular} if $t$ is homeomorphic to  $\RR.$ Otherwise, $t$ is \emph{singular}.

Fix an integer $k$ with $k>1.$ We define $P_k$ as $P_k=\varphi_k^{-1}(P_1).$
For each $l$ in $\sF^1,$ each connected component $c$ of $\varphi_k^{-1}(l)$ is called a \emph{plaque} in $P_k.$ In particular, $c$ is said to be \emph{regular} if $k=2$ or $l$ is regular. Otherwise, $c$ is singular. Then, we define a foliation $\sF^k$ on $P_k$ to be the set of all plaques in $P_k$. Likewise, we can define a singular/regular transversal in $P_k$ and $\cF_k$.


Let $S$ be a surface. We define a \emph{singular foliation} $\sF$ on the surface $S$ to be a partition of $S,$ whose elements are called the \emph{leaves} of $\sF,$ satisfying the following:
\begin{itemize}
    \item For each point $x$ in $S,$ there is an open neighborhood $U$ of $x$ and an embedding $\phi_U$ from $U$ to $\CC$ such that $\phi_U(U)=P_k$ for some integer $k$ with $k>1,$ $\phi_U(x)=0$ and for any plaque $p$ in $\sF^k,$ $\phi_U^{-1}(p)$ is a connected component of  $U\cap l$ for some leaf  $l$ in $\sF.$ 
\end{itemize}
We call $(U,\phi_U)$ a \emph{foliated chart} at $x$. A foliated chart $(U,\phi_U)$ is  \emph{regular} if $k=2$. Otherwise, $(U,\phi_U)$ is  \emph{singular}.  A point $x$ is a \emph{regular point} of $\sF$ if there is a regular foliated chart at $x$. Non regular points are called \emph{singular}. Note that if $\sF$ has no singular point, then $\sF$ is just a foliation. 

Assume  $\sF_1$ and $\sF_2$ are singular foliations on $S.$ Then, we say that $\sF_1$ and $\sF_2$ are \emph{transverse} if for each $x$ in $S,$ there is an open neighborhood $U$ of $x$ and an embedding $\phi_U$ from $U$ to $\CC$ satisfying the following:
\begin{itemize}
    \item $\phi_U(U)=P_k$ and $\phi_U(x)=0$ for some integer $k$ with $k>1.$
    \item For any plaque $p$ in $\sF^k,$ $\phi_U^{-1}(p)$ is a connected component of $U\cap l$ for some leaf $l$ in $\sF_1.$
    \item For any transversal $t$ in $\sF_k,$ $\phi_U^{-1}(t)$ is a connected component of $U\cap l$ for some leaf $l$ in $\sF_2.$
\end{itemize}
We call $(U,\phi_U)$ a \emph{transversely foliated chart} at $x.$ Moreover, $(U,\phi_U)$ is \emph{regular} if $k=2.$ Otherwise, $(U,\phi_U)$ is  \emph{singular.} 

\subsection{Polygonal Frames}
In this section, we generalize the notion of frames to deal with the singular points in the weavings which are exactly singular classes. 

From now on, we sometimes index items over $\ZZ_n$ to emphasize that they are cyclically indexed. 

Let $\sV=\{\sL_1, \sL_2\}$ be a veering pair.  Let $(\sG_1,\sG_2)$ be an asterisk of ideal polygons in $\sV$. We write $\sG_1=\{\opi{s_k}{s_{k+1}}:k\in\ZZ_n\}$
and $\sG_2=\{\opi{t_k}{t_{k+1}}:k\in\ZZ_n\}$
so that $t_k\in \opi{s_k}{s_{k+1}}$ for all $k\in\ZZ_n$.
A \emph{polygonal frame} $\fP$ at the asterisk $(\sG_1,\sG_2)$ is a pair $(\seqc{I}{k}{n},\seqc{J}{k}{n})$ of finite sequences of good intervals satisfying the following:
\begin{itemize}
    \item $I_k\in \sL_1$ and $J_k\in \sL_2$ for all $k\in \ZZ_n.$
    \item For each $k\in \ZZ_n,$ $t_k\in I_k\subsetneq \opi{s_k}{s_{k+1}}$ and $s_{k+1}\in J_k\subsetneq \opi{t_k}{t_{k+1}}$.
    \item For each $k\in \ZZ_n,$ $\ell(I_k)$ is linked with both $\ell(J_{k-1})$ and $\ell(J_k).$
\end{itemize}


For each $\opi{s_k}{s_{k+1}}\in \sG_1,$ observe that  $(\opi{s_{k+1}}{s_k},J_{k-1},I_k,J_k)$ is a frame. Hence, it is called the \emph{$\opi{s_k}{s_{k+1}}$-side frame} of $\fP$ and we denote it by $\fF(\fP, \opi{s_k}{s_{k+1}}).$ Similarly, for each $\opi{t_k}{t_{k+1}}\in \sG_2,$  $(I_k,J_k,I_{k+1},\opi{t_{k+1}}{t_k})$ is the \emph{$\opi{t_k}{t_{k+1}}$-side frame} of $\fP$ and we denote it by $\fF(\fP, \opi{t_k}{t_{k+1}}).$

We define the \emph{polygonal scrap} $\cS(\fP)$ framed by $\fP$ to be
$$\cS(\fP)=\bigcup_{I\in \sG_1}\cS(\fF(\fP, I)) \cup \bigcup_{J\in \sG_2}\cS(\fF(\fP, J))\cup w(\sG_1,\sG_2).$$ Also, the \emph{closure} $\closure{\cS}(\fP)$ of $\cS(\fP)$ is defined as the set $\bigcup_{I\in \sG_1}\closure{\cS}(\fF(\fP, I))$ which is equal to $\bigcup_{J\in \sG_2}\closure{\cS}(\fF(\fP, J))$.

The following lemma is a preliminary result to construct singular foliated charts. 

\begin{lem}\label{Lem:polyFrameCovering}
Let $\sV=\{\sL_1, \sL_2\}$ be a veering pair and let $(\sG_1,\sG_2)$ be an asterisk of ideal polygons of $\sV$. Then, there is a polygonal frame $\fP$ at the asterisk $(\sG_1,\sG_2).$
\end{lem}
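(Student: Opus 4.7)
The plan is to construct each $I_k \in \sL_1$ and $J_k \in \sL_2$ as a ``large'' sub-interval of its containing edge, obtained by approximating the boundary leaves of the asterisk from inside. Since $(\sG_1, \sG_2)$ is an asterisk of non-leaf gaps, \refprop{realGap} ensures that each $\sG_i$ is a real gap, so every element $\opi{s_k}{s_{k+1}} \in \sG_1$ and $\opi{t_k}{t_{k+1}} \in \sG_2$ is non-isolated. Consequently, $\opi{s_k}{s_{k+1}}$-side sequences in $\sL_1$ and $\opi{t_k}{t_{k+1}}$-side sequences in $\sL_2$ exist, producing good sub-intervals whose endpoints can be pushed arbitrarily close to $\{s_k, s_{k+1}\}$ and $\{t_k, t_{k+1}\}$ respectively.

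For each $k \in \ZZ_n$, I would first select an element $I_k = \opi{a_k}{b_k} \in \sL_1$ with $I_k \subsetneq \opi{s_k}{s_{k+1}}$, $a_k \in \opi{s_k}{t_k}$, and $b_k \in \opi{t_k}{s_{k+1}}$, taking $a_k$ as close to $s_k$ and $b_k$ as close to $s_{k+1}$ as needed; the side-sequence property guarantees that such an $I_k$ exists, and for endpoints close enough it automatically forces $t_k \in I_k$. Analogously, for each $k$, I would choose $J_k = \opi{c_k}{d_k} \in \sL_2$ with $J_k \subsetneq \opi{t_k}{t_{k+1}}$, $c_k \in \opi{t_k}{s_{k+1}}$ close to $t_k$, and $d_k \in \opi{s_{k+1}}{t_{k+1}}$ close to $t_{k+1}$, so that $s_{k+1} \in J_k$.

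The last step is to verify that $\ell(I_k)$ is linked with both $\ell(J_{k-1})$ and $\ell(J_k)$. The four endpoints $a_k, b_k, c_k, d_k$ of $\ell(I_k)$ and $\ell(J_k)$ lie in the arcs $\opi{s_k}{t_k}, \opi{t_k}{s_{k+1}}, \opi{t_k}{s_{k+1}}, \opi{s_{k+1}}{t_{k+1}}$ respectively, and they alternate cyclically on $S^1$ precisely when $c_k$ lies between $t_k$ and $b_k$; this holds whenever $c_k$ is sufficiently close to $t_k$ while $b_k$ is sufficiently close to $s_{k+1}$. The linking of $\ell(I_k)$ with $\ell(J_{k-1})$ is symmetric, reducing to $d_{k-1}$ lying between $a_k$ and $t_k$. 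Since there are only finitely many such local conditions (indexed by $k \in \ZZ_n$), a compatible simultaneous choice of all the $I_k$'s and $J_k$'s is easily arranged by making each side-sequence approximation tight enough.

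The main obstacle is essentially the bookkeeping of the cyclic ordering of the $4n$ endpoints and the verification that the linking alternations occur. Once the inside-approximations are taken tight enough relative to the gaps between the points $\{s_k, t_k\}$, all the alternations and hence the linking conditions follow automatically, so no serious analytic difficulty arises beyond the use of \refprop{realGap}.
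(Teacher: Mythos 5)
Your proof is correct and follows essentially the same strategy as the paper: use \refprop{realGap} to know the boundary elements of $\sG_1$, $\sG_2$ are non-isolated, approximate each from inside by a smaller element of $\sL_1$ or $\sL_2$, and observe that linking reduces to a cyclic-ordering constraint that is met once the inner intervals are chosen large enough. One remark that would tighten your exposition: there is no genuine simultaneity issue to negotiate. The paper first fixes each $I_i=\opi{u_i}{v_i}$ with $\closure{I_i}\subset\opi{s_i}{s_{i+1}}$, and only afterwards chooses $J_i$ so that $\cldi{v_i}{u_{i+1}}\subset J_i\subset\closure{J_i}\subset\opi{t_i}{t_{i+1}}$; this single explicit containment makes both required linkings with $\ell(J_{i-1})$ and $\ell(J_i)$ immediate, since one endpoint of $I_i$ then lies in $J_{i-1}$ and the other in $J_i$. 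Phrasing your ``tight enough'' condition as this concrete inclusion removes the hand-waving about compatible simultaneous choices.
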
 
\begin{proof}
We write $\sG_1=\{\opi{s_i}{s_{i+1}}: i\in \ZZ_n\}$ and $\sG_2=\{\opi{t_i}{t_{i+1}}:i\in \ZZ_n\}$ so that $t_i\in \opi{s_i}{s_{i+1}}$ for all $i\in \ZZ_n.$ By \refprop{realGap}, every element of $\sG_1$ or $\sG_2$ is not isolated in $\sL_1$ or $\sL_2,$ respectively. Again, for each $i\in \ZZ_n,$ we take an element $I_i=\opi{u_i}{v_i}$ in $\sL_1$ so that $$t_i\in I_i \subset \closure{I_i}\subset \opi{s_i}{s_{i+1}}.$$ Likewise, for each $i\in \ZZ_n,$ we can take an element $J_i=\opi{x_i}{y_i}$ in $\sL_2$ so that $$\cldi{v_i}{u_{i+1}}\subset J_i\subset \closure{J_i}\subset \opi{t_i}{t_{i+1}}.$$
Observe that $\ell(I_i)$ is linked with $\ell(J_{i-1})$ and $\ell(J_i)$ for all $i\in \ZZ_n.$ Thus, $(\seqc{I}{i}{n},\seqc{J}{i}{n})$ is a polygonal frame at $(\sG_1, \sG_2).$
\end{proof}

\subsection{Weavings with Marking}
Let $\Mrk$ be a marking on a veering pair $\sV$. By \refprop{eqClass2}, we can think of each marked stitch as an element in $\closure{\fW}(\sV)$. From now on, we also consider $\Mrk$ as a subset of $\closure{\fW}(\sV)$. We refer to each element of $\Mrk$ in $\closure{\fW}(\sV)$ as a \emph{marked class} and we say that an element $w$ of $\closure{\fW}(\sV)$ is a \emph{cone class} if $w$ is a marked class or singular class. 

The \emph{regular weaving} of a marked veering pair $(\sV,\Mrk)$ is defined as 
\[
\fW^\circ(\sV,\Mrk):=\closure{\fW}(\sV) \setminus \{\text{cusp and cone classes}\}.
\]
Note that  $\Mrk \subset \fW^\circ(\sV)$ and $$\fW^\circ(\sV, \Mrk)=\fW^\circ(\sV)\setminus \Mrk.$$

\subsection{Foliations on Weavings}
Let $\sV=\{\sL_1, \sL_2\}$ be a veering pair. We define partitions $\closure{\cF}_1(\sV)$ and $\closure{\cF}_2(\sV)$ on $\closure{
\fW}(\sV)$ induced from $\sL_1$ and $\sL_2$, respectively as follows. Choose $i\in \{1,2\}$. 
The partition $\closure{\cF}_i(\sV)$ is the collection of subsets of $\closure{\fW}(\sV)$ that are of the form
\[
\#\{s\in \fS(\sV)\,:\, \text{$\eta_i(s)$ is a boundary leaf of some real gap $\sG$ of $\sL_i$}\}=\#\bigcup_{I\in \sG} \oslash(\ell(I)).    
\] Note that $\closure{\cF}
_i(\sV)$ is a well defined partition by  \refprop{eqClass2}.

Then, we define partitions $\cF_1(\sV)$ and $\cF_2(\sV)$ on $\fW(\sV)$ as follows. Choose $i\in \{1,2\}$. 
The partition $\cF_i(\sV)$ is the collection of the subsets
of $\fW(\sV)$ each of which is  a connected component of  $l\cap \fW(\sV)$ for some $l\in \closure{\cF}_i(\sV)$. Note that if $l$ has a cusp class $w$, then each associated element in $\cF_i(\sV)$ is a connected component of $l\setminus\{w\}$.

Now, we consider a marked veering pair $(\sV,\Mrk)$. Similarly, we define partitions $\cF_1^\circ(\sV, \Mrk)$ and $\cF_2^\circ(\sV,\Mrk)$ on $\fW^\circ(\sV, \Mrk)$ induced by $\sL_1$ and $\sL_2$, respectively, as follows. For each $i\in \{1,2\}$, $\cF_i^\circ(\sV,\Mrk)$ is the collection of connected components of subsets $l\cap \fW^\circ(\sV,\Mrk)$, $l\in \closure{\cF_i}(\sV)$. Note that if $l$ has a cusp or cone class $w,$ then each associated element in $\cF_i^\circ(\sV,\Mrk)$ is a connected component of $l\setminus\{w\}$. When $\Mrk$ is empty, we  briefly denote $\cF_i^\circ(\sV)$ by $\cF_i^\circ(\sV,\Mrk)$.

\subsection{$\fW(\sV)$ Is Singularly Foliated by $\cF_1(\sV)$ and $\cF_2(\sV)$}

Now, we show that every weaving $\fW(\sV)$ is transversely foliated by $\cF_1(\sV)$ and $\cF_2(\sV)$.

    \begin{lem}\label{Lem:openness}
    Let $\sV=\{\sL_1,\sL_2\}$ be a veering pair. Then, the regular weaving $\fW^\circ(\sV)$ is open in the weaving $\fW(\sV).$
    \end{lem}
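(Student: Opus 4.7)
The plan is to argue that every regular class has an open neighborhood in $\fW(\sV)$ consisting entirely of regular classes. Equivalently, I will exhibit, around an arbitrary $w\in \fW^\circ(\sV)$, a set of the form $\#(\cS(\fF))$ for some frame $\fF$, and verify (i) that it is a neighborhood of $w$, (ii) that it contains only regular classes, and (iii) that it is open in the quotient topology of $\fW(\sV)$.

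First I would use \refprop{eqClass2} to pick a regular stitch $s$ with $\#(s)=w$ (such an $s$ exists precisely because $w$ is neither a cusp class nor a singular class). Then \refprop{frameCovering} produces a frame $\fF$ in $\sV$ on which $s$ lies, so $s\in \cS(\fF)$ and $w\in \#(\cS(\fF))$. By \refprop{scrapIsRegular}, every stitch lying on $\fF$ is regular, so every element of $\#(\cS(\fF))$ is a regular class by \refprop{eqClass2}, giving $\#(\cS(\fF))\subset \fW^\circ(\sV)$.

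The main remaining step is to check that $\#(\cS(\fF))$ is open in $\fW(\sV)$. Since $\fW(\sV)\subset \closure{\fW}(\sV)$ carries the subspace topology coming from the quotient $\#:\fS(\sV)\to \closure{\fW}(\sV)$, it suffices to show $\#^{-1}(\#(\cS(\fF)))$ is open in $\fS(\sV)$. By \refrmk{regularNbhd} we have $\#^{-1}(\#(\cS(\fF)))=\cS(\fF)$, so everything boils down to verifying that $\cS(\fF)$ itself is open in the stitch space. Writing $\fF=(I_1,J_1,I_2,J_2)$, a stitch $(\ell_1,\ell_2)$ lies on $\fF$ iff $\ell_1$ properly lies between $\ell(I_1)$ and $\ell(I_2)$ and $\ell_2$ properly lies between $\ell(J_1)$ and $\ell(J_2)$; since $\fS(\sV)$ inherits its topology from $\ell(\sL_1)\times \ell(\sL_2)$ via the endpoint map, and "properly lies between" is a strict (hence open) condition on the endpoints in $S^1$, this description exhibits $\cS(\fF)$ as an intersection of two open subsets of $\fS(\sV)$.

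The only subtle point is ensuring that the "properly lies" condition is genuinely open in the leaf-space topology (rather than just the closed condition "lies on"), but this is immediate from the definition of lamination systems together with the identification of $\sM$ with $\cM$ through $\epsilon$: a small enough perturbation of a leaf sitting strictly between two other leaves remains strictly between them. Combining the three observations gives an open neighborhood $\#(\cS(\fF))$ of $w$ inside $\fW^\circ(\sV)$, completing the proof.
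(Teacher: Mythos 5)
Your proposal is correct and follows the paper's proof almost verbatim in outline: pick a regular stitch $s$ over $w$ (via \refprop{eqClass2}), cover it by a frame $\fF$ via \refprop{frameCovering}, observe $\#(\cS(\fF))\subset\fW^\circ(\sV)$ via \refprop{scrapIsRegular}, and take $\#(\cS(\fF))$ as the open neighborhood. The one place you diverge is the justification for openness of $\#(\cS(\fF))$. The paper cites \refrmk{regularNbhd}, which produces the homeomorphism $\rho_\fF$ from $[0,1]^2$ onto $\#(\closure{\cS}(\fF))$ and the saturation identity $\cS(\fF)=\#^{-1}(\rho_\fF((0,1)^2))$; openness of $\rho_\fF((0,1)^2)$ inside $\fW(\sV)\cong\RR^2$ (\reflem{weavingIsDisk}) is then implicit, essentially by invariance of domain. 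You instead retain the saturation identity from \refrmk{regularNbhd} but argue openness of $\cS(\fF)$ directly in the stitch space, noting that the ``properly lies between'' conditions defining membership in $\cS(\fF)$ are strict inclusions of closed arcs into open intervals, hence open on the level of endpoints; openness of $\#(\cS(\fF))$ then falls out of the quotient topology. This is sound (one should be a touch careful that ``properly lies'' is a finite disjunction of such strict inclusions, but each disjunct is open so the union is), and it has the minor virtue of sidestepping invariance of domain entirely. Either route is acceptable and they reach the same conclusion by the same ingredients.
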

    \begin{proof}
    Let $c$ be an element of $\fW^\circ(\sV).$ Note that $c$ is a regular class. Choose $s$ in $c$ which is a regular stitch. By \refprop{frameCovering}, there is a frame $\fF$ on which $s$ lies. Then, by \refrmk{regularNbhd}, $\#(\cS(\fF))$ is an open neighborhood of $\#(s)=c$ in $\closure{\fW}(\sV).$ Moreover, by \refprop{scrapIsRegular}, $\#(\cS(\fF))\subset \fW^\circ(\sV).$ Thus, $\fW^\circ(\sV)$ is open in $\fW(\sV).$ 
    \end{proof}

    \begin{lem}\label{Lem:coveringPolyScrap}
    Let $\sV=\{\sL_1, \sL_2\}$ be a veering pair. Let $\fP=(\seqc{I}{i}{n}, \seqc{J}{i}{n})$ be a polygonal frame at some asterisk $(\sG_1,\sG_2)$. Then, there is a homeomorphism $\rho_\fP$ from  $P_n$ to $\#(\cS(\fP))$ satisfying the following: 
    \begin{itemize}
        \item For any plaque $p$ in $\sF^n,$ $\rho_\fP(p)= l\cap \#(\cS(\fP)) $ for some $l\in \cF_1(\sV).$
        \item For any transversal $t$ in $\sF_n,$ $\rho_\fP(t) 
        =l\cap \#(\cS(\fP)) $ for some $l\in \cF_2(\sV).$
        \item $\rho_\fP(0)$ is the singular class  $w(\sG_1, \sG_2)$.
    \end{itemize}
    \end{lem}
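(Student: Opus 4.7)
The strategy is to assemble $\rho_\fP$ from the $2n$ homeomorphisms provided by \reflem{weavingRect} applied to each of the $2n$ side frames of $\fP$. Each side frame $\fF_i$ yields $\rho_{\fF_i}: [0,1]^2 \to \#(\closure{\cS}(\fF_i))$, whose image contains the singular class $w = w(\sG_1, \sG_2)$ on exactly one of its edges, namely the edge corresponding to the boundary leaf of $\sG_1$ or $\sG_2$ that the side frame is attached to. This can be verified by examining the stem structures defining $\closure{\cS}(\fF_i)$ and observing that the only singular stitches appearing there have their fixed coordinate equal to that boundary leaf.

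Next I would decompose $P_n$ combinatorially. As an $n$-fold branched cover of $P_1$ branched at $0$, the space $P_n$ carries two $n$-pronged singularities at $0$ (one for $\sF^n$ and one for $\sF_n$), whose leaves through $0$ together form $2n$ rays emanating from $0$. These rays cut $P_n$ into $2n$ closed sectors arranged cyclically around $0$, each homeomorphic to a closed topological square with $0$ as a distinguished boundary point. Their cyclic arrangement matches the alternating cyclic order $\opi{s_1}{s_2}, \opi{t_1}{t_2}, \opi{s_2}{s_3}, \ldots$ of the sides of $\sG_1, \sG_2$ in the asterisk, giving a natural bijection between the $2n$ sectors of $P_n$ and the $2n$ side frames of $\fP$.

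The map $\rho_\fP$ is then defined sector-by-sector by composing the $\rho_{\fF_i}$'s with these identifications, sending the distinguished boundary point $0$ of each sector to the interior edge-point where $\rho_{\fF_i}$ realizes $w$. The crucial compatibility check is that the gluings of adjacent sectors along their shared ray match the gluings of adjacent side scraps in the weaving: two adjacent side frames share an outer corner at $\#((\ell(I_k), \ell(J_k)))$ for the appropriate $k$, and the appropriate sub-arcs of threads on $\ell(I_k)$ or $\ell(J_k)$ fit together by the definition of polygonal frame and \reflem{edgeMap}. The resulting $\rho_\fP$ is a homeomorphism by construction, and the foliation properties are inherited from \reflem{weavingRect}: plaques of $\sF^n$ in each sector go to sub-arcs of $\cF_1(\sV)$-leaves, transversals to $\cF_2(\sV)$-leaves, and the singular plaque (resp. transversal) through $0$ maps to the union of $n$ thread sub-arcs through $w$ on the boundary leaves of $\sG_1$ (resp. $\sG_2$), which is exactly the leaf of $\cF_1$ (resp. $\cF_2$) through $w$ intersected with $\#(\cS(\fP))$.

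The main obstacle is the combinatorial bookkeeping of the cyclic gluings: one must carefully match the $2n$ rays from $0$ in $P_n$ with the corresponding thread sub-arcs between adjacent side scraps, verify that all $2n$ edge-point realizations of $w$ in the individual $[0,1]^2$ charts glue to a single point in the quotient, and confirm that the orientations of the sector identifications are compatible with the cyclic orientation of $(\sG_1,\sG_2)$. Once this compatibility is established, $\rho_\fP$ is a continuous bijection assembled from homeomorphisms on closed subsets meeting along common boundaries, hence itself a homeomorphism, and the property $\rho_\fP(0)=w$ holds by construction.
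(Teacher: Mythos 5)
Your overall strategy --- assembling $\rho_\fP$ from the charts of \reflem{weavingRect} on the side frames and gluing them around the singular class --- is the same as the paper's, but the combinatorial model you use for the gluing is wrong, and the error is fatal rather than cosmetic. The $2n$ side-frame scraps do \emph{not} tile $\closure{\cS}(\fP)$ meeting only along rays and shared corners. By definition, $\closure{\cS}(\fP)$ already equals the union of the closed scraps of the $n$ side frames attached to the sides of $\sG_1$ alone, and also equals the union over the sides of $\sG_2$ alone; two coverings of the same two-dimensional set cannot consist of pieces with pairwise disjoint interiors. Concretely, writing $\sG_1=\{M_i\}$ and $\sG_2=\{N_i\}$ with $I_i\subset M_i$ and $J_i\subset N_i$, the stem description of closed scraps shows that $\closure{\cS}(\fF(\fP,M_i))\cap\closure{\cS}(\fF(\fP,N_i))$ consists of all stitches whose first coordinate lies between $\ell(I_i)$ and $\ell(M_i)$ and whose second coordinate lies between $\ell(J_i)$ and $\ell(N_i)$ --- a full two-dimensional quadrant, in fact half of each of the two rectangles. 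So a map sending each of the $2n$ sectors of $P_n$ onto a whole side-frame scrap double-covers each such quadrant and cannot be injective. A second, independent problem: as you yourself observe, $w(\sG_1,\sG_2)$ lies in the \emph{interior of an edge} of each side-frame rectangle (it is the common image of the two singular stitches on the relevant thread), so locally it is a half-disk point there; a sector of $P_n$ has $0$ as a corner where both foliations have a single prong, and no foliation-preserving homeomorphism carries a corner point to an edge-interior point.

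The repair is to use only $n$ rectangles, namely the side frames attached to the sides of one of the two gaps, say the $M_i$-side frames. This is what the paper does: the open scraps of consecutive $M_i$- and $M_{i+1}$-side frames are disjoint, and their closures meet exactly in the common half-thread emanating from $w$ toward the shared tip $s_{i+1}$ (this is where \reflem{parallelThread} and \reflem{crossingThread} enter). After normalizing each chart so that $w$ sits at $(0,\tfrac{1}{2})$ and reparametrizing the transition on $\{0\}\times[\tfrac{1}{2},1]$, gluing $n$ half-disks cyclically around $w$ produces a closed disk whose interior carries exactly the $n$-prong model $P_n$. (One could salvage a $2n$-piece decomposition by instead using the $2n$ quadrants $\closure{\cS}(\fF(\fP,M_i))\cap\closure{\cS}(\fF(\fP,N_{i-1}))$ and $\closure{\cS}(\fF(\fP,M_i))\cap\closure{\cS}(\fF(\fP,N_i))$, each of which does have the singular class at a corner --- but that is not the bijection you set up.)
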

    \begin{proof}
    Let $c$ be the singular class in $\fW(\sV)$ corresponding the asterisk $(\sG_1, \sG_2).$
    We write 
    $\sG_1=\{M_1,\cdots, M_n\}$ and $\sG_2=\{N_1,\cdots,N_n\}$ so  that $I_i\subset M_i\in \sL_1$ and $J_i \subset N_i \in \sL_2$ for all $i\in \{1,2,\cdots,n\}$. 
    Also, we write $M_i=\opi{s_i}{s_{i+1}}$ and $N_i=\opi{t_i}{t_{i+1}}.$ For each $i\in \ZZ_n$, we denote the $M_i$-side frame of $\fP$ by $\fF_i,$ namely, $\fF_i=(M_i^*, J_{i-1}, I_i, J_i)$. 
    
    For each $i\in \{1,2,\cdots,n\}$, by \reflem{weavingRect}, there is a homeomorphism $\rho_{\fF_i}$ from $[0,1]^2$ to $\#(\closure{\cS}(\fF_i))$  satisfying the conditions in the lemma. We  also may assume that $\rho_{\fF_i}(0,\frac{1}{2})=w(\sG_1,\sG_2)$ for all $i\in \{1,2,\cdots,n\}$ since $w(\sG_1,\sG_2)\in \#(\cI(c_1(\fF_i),c_4(\fF_i))))$.
    
    Observe that  $\#(\cS(\fF_i))\cap\#(\cS(\fF_{i+1}))=\emptyset$ and  
    $$\#(\closure{\cS}(\fF_i))\cap\#(\closure{\cS}(\fF_{i+1}))=\#(\closure{\cI}(c, c_4(\fF_1)))=\#(\closure{\cI}(c_1(\fF_{i+1}), c))$$
    by \reflem{parallelThread} and \reflem{crossingThread} as $J_i$ and $N_i$ cross over $s_{i+1}.$
    Thus, $$\rho_{\fF_i}([0,1]^2)\cap\rho_{\fF_{i+1}}([0,1]^2)=\rho_{\fF_i}(\{0\}\times [\frac{1}{2},1])=\rho_{\fF_{i+1}}(\{0\}\times [0,\frac{1}{2}])$$ and the restriction of  $\rho_{\fF_{i+1}}^{-1}\circ \rho_{\fF_i}$ to $\{0\}\times [
    \frac{1}{2},1]$ is strictly decreasing with respect to the second variable.

    Now, after reparameterization, we may assume that for any $i\in \ZZ_n$ $$\rho_{\fF_{i+1}}^{-1}\circ \rho_{\fF_i}(0,t)=(0,1-t)$$ for all $t\in [\frac{1}{2},1]$. Then, we consider $n$-copies of $[0,1]^2,$ namely, $[0,1]^2\times \ZZ_n.$ We identify a point   $(x,y,i)$ in $[0,1]^2\times \ZZ_n$ with $(0,1-y,i+1)$ and denote the resulting space by $Q_n.$ Now, we define a map $\delta_{\fP}$ from $Q_n$ to $\closure{\cS}(\fP)$ as 
    $\delta_{\fP}(x,y,i)=\rho_{\fF_i}(x,y)$. By the gluing lemma, $\delta_\fP$ is well defined and is a homeomorphism. Observe that $Q_n$ is homeomorphic to the closed disk and the boundary $\partial Q_n$, which is homeomorphic to $S^1$,  is $$\bigcup_{i\in \ZZ_n} \left( \partial [0,1]^2-\{0\}\times (0,1) \right)\times \{i\}.$$ Hence,  by \refrmk{regularNbhd},
    the restriction $\delta_\fP|\interior{Q_n}$ of $\delta_\fP$ to $\interior{Q_n}$ is a homeomorphism from $\interior{Q_n}$ to $\cS(\fP)$, where $\interior{Q_n}=Q_n-\partial Q_n.$
    
    For each $i\in \ZZ_n,$ $[0,1]^2\times \{i\}$ is foliated by transverse foliations $\sQ^i$ and $\sQ_i$ whose leaves are vertical and horizontal lines, respectively. Hence, there are transverse foliations $\sQ^+$ and $\sQ_-$ on $\interior{Q_n}$ induced from $\{\sQ^i\}_{i\in \ZZ_n}$ and $\{\sQ_i\}_{i\in \ZZ_n},$ respectively. Obviously, there is a foliated chart $\varphi_{\interior{Q_n}}$ from $\interior{Q_n}$ to $P_n$ mapping  each leaf of $\sQ_+$ to a plaque in $\sF^n$ and each leaf of $\sQ_-$ to a transversal in $\sF_n.$ Then, $$\varphi_{\interior{Q_n}}(0,\frac{1}{2},i)=0$$ for all $i\in \ZZ_n.$ Thus, we define $\rho_\fP$ to be 
     $\rho_\fP=\delta_\fP\circ \varphi_{\interior{Q_n}}^{-1}$ and  the property of $\rho_\fP$ follows from \reflem{weavingRect}. \end{proof}

\begin{thm}\label{Thm:weaving} 
Let $\sV=\{\sL_1,\sL_2\}$ be a veering pair. The weaving $\fW(\sV)$ is an open disk foliated by transverse $1$-dimensional singular foliations $\cF_1(\sV)$ and $\cF_2(\sV).$ Moreover, the singularities of $\cF_1(\sV)$ and $\cF_2(\sV)$ are precisely the singular classes. Finally,  each $\cF_i(\sV)$ has exactly $\ord(s)$ prongs at each singular class $s$.
\end{thm}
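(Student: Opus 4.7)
The plan is to combine \reflem{weavingIsDisk}, which already establishes that $\fW(\sV)$ is homeomorphic to $\RR^2$, with the local chart constructions of \reflem{weavingRect} and \reflem{coveringPolyScrap}, and to verify that the resulting atlas realizes $\cF_1(\sV)$ and $\cF_2(\sV)$ as transverse singular foliations in the sense of \refsec{defnOfFoliation}. By \refprop{eqClass2}, every class of $\closure{\fW}(\sV)$ is regular, singular, or a cusp class; since cusp classes are removed in $\fW(\sV)$, it suffices to produce a transversely foliated chart at each regular class and each singular class.

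At a regular class $c$, I would pick any $s\in \#^{-1}(c)$ (necessarily a regular stitch) and use \refprop{frameCovering} to obtain a frame $\fF$ on which $s$ lies. Then \reflem{weavingRect} and \refrmk{regularNbhd} supply a homeomorphism $\rho_\fF:[0,1]^2\to \#(\closure{\cS}(\fF))$ whose restriction to $(0,1)^2 = P_2$ lands on $\#(\cS(\fF))\ni c$, sending vertical lines to pieces of leaves of $\cF_1(\sV)$ and horizontal lines to pieces of leaves of $\cF_2(\sV)$. Since $\#(\cS(\fF))$ is open in $\fW(\sV)$ (this is exactly what is established inside the proof of \reflem{openness}), the inverse $\rho_\fF|_{(0,1)^2}^{-1}$ is a regular transversely foliated chart at $c$.

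At a singular class $c = w(\sG_1,\sG_2)$ of order $n := |v(\sG_1)| = \ord(c) \geq 3$, I would invoke \reflem{polyFrameCovering} to produce a polygonal frame $\fP$ at $(\sG_1,\sG_2)$ and then apply \reflem{coveringPolyScrap} to obtain a homeomorphism $\rho_\fP : P_n \to \#(\cS(\fP))$ with $\rho_\fP(0) = c$ that matches plaques and transversals of $\sF^n,\sF_n$ with pieces of leaves of $\cF_1(\sV),\cF_2(\sV)$. To see $\#(\cS(\fP))$ is open in $\fW(\sV)$, I would observe that it is the union of $\{c\}$ with the sets $\#(\cS(\fF(\fP,M)))$ ranging over side frames of $\fP$ (each open by the regular case); equivalently, it is the image of the open set $P_n\subset\CC$ under a continuous injection into the $2$-manifold $\fW(\sV)$, so openness follows from invariance of domain. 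Hence $\rho_\fP^{-1}$ is a singular transversely foliated chart at $c$ of order $n$, contributing exactly $n = \ord(c)$ prongs to each of $\cF_1(\sV)$ and $\cF_2(\sV)$. This verifies simultaneously that the singular set of $\cF_i(\sV)$ consists exactly of the singular classes (regular classes admit regular charts, so are not singular) and that the prong count at each singular class is $\ord(c)$.

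The main obstacle I anticipate is making sure the cyclic model $P_n$ with $z\mapsto z^n$ is genuinely the right local picture, i.e.\ that the gluing of the $n$ side-frame squares used in the construction of $\rho_\fP$—which relies on the parallel and crossing thread identifications of \reflem{parallelThread} and \reflem{crossingThread}—faithfully reproduces the branched covering structure near $0\in P_n$. This is essentially settled inside \reflem{coveringPolyScrap}, but it is the step where the order-$n$ prong count is ultimately pinned down, so I would take care to explicitly invoke that lemma for both the chart and the transversality properties, rather than reproving the gluing.
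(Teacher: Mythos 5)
Your proposal is correct and follows essentially the same route as the paper's proof: regular classes are handled via \refprop{frameCovering}, \reflem{weavingRect} and \refrmk{regularNbhd}, and singular classes via \reflem{polyFrameCovering} and \reflem{coveringPolyScrap}, with the prong count read off from the $P_n$-chart. The only cosmetic differences are that the paper rescales $[0,1]^2$ to $P_2$ explicitly and cites \reflem{parallelThread}, \reflem{ultraparallelThread} and \reflem{ray} for the prong count, while your invariance-of-domain argument for openness of $\#(\cS(\fP))$ is a perfectly acceptable substitute for the paper's appeal to \refprop{scrapIsRegular}.
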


\begin{proof}
Let $c$ be a point in $\fW(\sV).$ First, we consider the case where $c$ is a regular class. Choose a stitch $s$ in $c$. Note that $s$ is a regular stitch. Then, by \refprop{frameCovering},there is a frame $\fF$ on which $s$ lies. By \refrmk{regularNbhd} and \refprop{scrapIsRegular}, $\#(\cS(\fF))$ is an open neighborhood of $c$ in $\fW(\sV).$ Now, we consider the homeomorphism $\rho_\fF$
    given by \reflem{weavingRect}. Then, by \refrmk{regularNbhd}, 
    $$\psi \circ \left( \rho_\fF^{-1}|\#(\cS(\fF))\right)$$ is a transversely foliated foliated chart at $c$ with respect to $\cF_1(\sV)$ and $\cF_2(\sV),$ where  $\rho_\fF^{-1}|\#(\cS(\fF))$ is the restriction  of $\rho_\fF^{-1}$ to $\#(\cS(\fF))$ and $\psi$ is the map from $[0,1]^2$ to $P_2$ defined by $$\psi(x,y)=(2x-1, 2y-1).$$
    
    Now, we assume that $c$ is a singular class. Then, by \reflem{polyFrameCovering}, there is a polygonal frame $\fP$ at the asterisk $(\sG_1, \sG_2)$. We consider the homeomorphism $\rho_\fP$ given by \reflem{coveringPolyScrap}. Then, as $c\in \#(\cS(\fP))$ and \refprop{scrapIsRegular}, $\#(\cS(\fP))$ is an open neighborhood of $c$ in $\fW(\sV)$. Hence,  $\rho_\fP^{-1}$ is a transversely foliated chart at $c$ with respect to $\cF_1(\sV)$ and $\cF_2(\sV)$. Thus,  $\cF_1(\sV)$ and $\cF_2(\sV)$ are transverse singular foliations on  $\fW(\sV)$.

    The last assertion follows from the construction of $\cF_i(\sV)$ together with \reflem{parallelThread}, \reflem{ultraparallelThread}, and  \reflem{ray}.
    \end{proof}

\subsection{$\fW^\circ(\sV, \Mrk)$ Is Foliated by $\cF_1^\circ (\sV,\Mrk)$ and $\cF_2^\circ (\sV,\Mrk)$.}

In this section, we show that given a marked veering pair, the regular weaving associated with the marking is open in the weaving. Therefore, we may conclude that the regular weaving is foliated without singularity. 

\begin{lem}\label{Lem:planar}
Let $(\sV,\Mrk)$ be a marked veering pair. Then, $\fW^\circ(\sV,\Mrk)$ is open in $\closure{\fW}(\sV).$ Therefore, $\fW^\circ(\sV,\Mrk)$ is a planar surface as $\fW^\circ(\sV)$ is an open disk.  
\end{lem}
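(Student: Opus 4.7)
The plan is to reduce the statement to showing that the set of marked classes $\#(\Mrk)$ is closed and discrete inside $\fW^\circ(\sV)$. Combined with \reflem{openness} (which says $\fW^\circ(\sV)$ is open in $\fW(\sV)$) and \refthm{weaving} (which says $\fW(\sV)$ is an open disk), this will force $\fW^\circ(\sV,\Mrk)=\fW^\circ(\sV)\setminus \#(\Mrk)$ to be an open subset of $\fW(\sV)$, hence of $\closure{\fW}(\sV)$, and at the same time an open subsurface of an open disk, which is planar.

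First, I would fix a marked class $w\in\#(\Mrk)$ and pick a marked stitch $s=(\ell_1,\ell_2)\in \Mrk$ with $\#(s)=w$. By definition $s$ is genuine, so \refprop{eqClass2} tells us that $\#^{-1}(w)$ consists of at most two stitches, each sharing at least one leaf with $s$. In particular, $\#^{-1}(w)\cap \fS(\sV)$ is contained in $\oslash(\ell_1)\cup \oslash(\ell_2)$. By \reflem{unmarkedFrame} (or \refprop{frameCovering}) I can pick a frame $\fF_0$ on which $s$ lies. By \reflem{finiteness}, only finitely many marked stitches lie on $\fF_0$, so I can shrink the two sides of $\fF_0$ adjacent to $\ell_1$ and $\ell_2$ (using that $\ell_1,\ell_2$ are real, hence non-isolated, so plenty of room is available) to obtain a sub-frame $\fF$ containing $s$ such that $s$ is the unique marked stitch in $\cS(\fF)$.

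Next, I need to promote this to a neighborhood that avoids \emph{equivalents} of other marked stitches as well. Suppose $t=(m_1,m_2)\in\cS(\fF)$ with $t\sim_\omega s'$ for some $s'\in\Mrk$, $s'\ne s$. Since $t$ is regular (by \refprop{scrapIsRegular}) and $s'$ is genuine, \refprop{eqClass2} forces $\#^{-1}(\#(t))$ to consist of exactly two regular stitches in the same thread sharing a real leaf; so $t$ and $s'$ share one of $m_1,m_2$, and the other leaves are parallel. Using injectivity of $\eta_i|\Mrk$ in the definition of a marking (\refdefn{marking}), the shared leaf of $t$ and $s'$ is then forced to lie in the finite set $\eta_1(\Mrk\cap \cS(\fF_0))\cup \eta_2(\Mrk\cap \cS(\fF_0))$ after shrinking once more; iterating a shrinking argument identical to Step 1, but now requiring $\cS(\fF)$ to miss the finitely many threads through marked stitches $s'\neq s$ of $\cS(\fF_0)$, I produce a frame $\fF$ on which $s$ lies, with $\#(\cS(\fF))\cap \#(\Mrk)=\{w\}$. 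By \refrmk{regularNbhd} the set $\#(\cS(\fF))$ is an open neighborhood of $w$ in $\fW(\sV)$, so $w$ is isolated in $\#(\Mrk)$ and $\#(\Mrk)$ is discrete.

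It remains to check that $\#(\Mrk)$ is closed in $\fW(\sV)$. Suppose $w_n=\#(s_n)\in \#(\Mrk)$ converges to $w\in\fW(\sV)$. If $w$ is regular, pick a frame $\fF$ containing a representative of $w$; by the open-neighborhood property of $\#(\cS(\fF))$, eventually $s_n$ (or the partner of $s_n$, which lies in a thread through $s_n$) lies in $\cS(\fF)$, and the closedness of $\Mrk$ in $\fS(\sV)$ forces the limiting stitch to lie in $\Mrk$, so $w\in\#(\Mrk)$. If $w$ is a singular class, use the polygonal frame of \reflem{coveringPolyScrap} instead of an ordinary frame, together with \reflem{finiteness} applied to each side-frame, to reach the same conclusion. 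Since $w$ is not a cusp class (we are in $\fW(\sV)$), these cases are exhaustive, and $\#(\Mrk)$ is closed in $\fW(\sV)$.

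Combining: $\#(\Mrk)$ is closed in $\fW(\sV)$, hence closed in the open subset $\fW^\circ(\sV)\subset \fW(\sV)$, so $\fW^\circ(\sV,\Mrk)=\fW^\circ(\sV)\setminus \#(\Mrk)$ is open in $\fW^\circ(\sV)$, and therefore open in $\fW(\sV)$ and in $\closure{\fW}(\sV)$. Being an open subset of the open disk $\fW(\sV)$, it is a planar surface. The main obstacle is the second shrinking step (removing equivalents of other marked stitches): the quotient $\#$ can identify a stitch with a partner lying in a parallel thread, so one has to exploit the genuineness of marked stitches and the injectivity condition in \refdefn{marking} in a fairly delicate way to make the shrinking argument converge.
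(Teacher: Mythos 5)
Your strategy (show $\#(\Mrk)$ is closed and discrete in $\fW(\sV)$, then remove it from the open set $\fW^\circ(\sV)$ of \reflem{openness}) is a genuinely different route from the paper's. The paper's proof is a one-step argument: a point of $\fW^\circ(\sV,\Mrk)$ is $\#(s)$ for an \emph{unmarked regular} stitch $s$, \reflem{unmakredFrameCover} puts $s$ on an \emph{unmarked} frame $\fF$, and $\#(\cS(\fF))$ is then an open neighborhood avoiding cusp, singular and marked classes all at once, since $\#^{-1}(\#(\cS(\fF)))=\cS(\fF)$ consists of unmarked regular stitches. You cite \reflem{unmakredFrameCover}-adjacent machinery but do not exploit it this way; your route works but is longer and pushes the content into properties of $\#(\Mrk)$ that the lemma does not actually need.

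Two points of substance. First, your ``second shrinking step,'' which you flag as the main obstacle, rests on a misreading of \refprop{eqClass2} and is in fact vacuous. A marked stitch $s'$ is genuine, i.e.\ both its leaves are real; in case (2) of \refprop{eqClass2} the two non-shared leaves are parallel, hence by \refprop{parallelLeaves} they are boundary leaves of a non-leaf gap and therefore not real, and cases (3)--(4) likewise involve non-real leaves. So a genuine stitch can only fall into case (1): $\#^{-1}(\#(s'))=\{s'\}$ for every $s'\in\Mrk$, i.e.\ $\#^{-1}(\#(\Mrk))=\Mrk$ and no stitch can be a ``partner'' of a marked stitch without being that marked stitch. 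There is no equivalence-class leakage to control. Second, your closedness argument as written infers a limiting stitch in $\fS(\sV)$ from convergence of classes in the quotient, which needs justification (fibers of $\#$ over a scrap need not be singletons). The clean fix, again using $\#^{-1}(\#(\Mrk))=\Mrk$, is that $\#(\Mrk)\cap\#(\cS(\fF))=\#(\Mrk\cap\cS(\fF))$ is finite by \reflem{finiteness} for any frame $\fF$, so every point of $\fW(\sV)$ has a neighborhood meeting $\#(\Mrk)$ in a finite set; closedness and discreteness follow without any limit-of-stitches argument. With these repairs your proof is correct, but the paper's direct use of unmarked frames is both shorter and is the pattern reused immediately afterwards in \refthm{foliatedRegWeaving}.
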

\begin{proof}
Let $w$ be an element in $\closure{\fW}(\sV).$ Suppose that $w$ is not a cone nor cusp class. Then, $w=\#(s)$ for some unmarked and regular stitch $s$. By \reflem{unmakredFrameCover}, $s$ lies on an unmarked frame $\fF$. By applying \reflem{weavingRect}, $U=\rho_\fF((0,1)^2)$ is an open neighborhood of $w$ in $\closure{\fW}(\sV)$ and since $\#^{-1}(U) =\cS(\fF),$ each class in $U$ is not a cone and cusp class and so $U\subset \fW^\circ(\sV,\Mrk).$ Thus, $\fW^\circ(\sV,\Mrk)$ is open in $\closure{\fW}(\sV).$
\end{proof}

\begin{thm}\label{Thm:foliatedRegWeaving}
Let $(\sV,\Mrk)$ be a marked veering pair. Then, $\cF_1^{\circ}(\sV, \Mrk)$ and $\cF_2^{\circ}(\sV, \Mrk)$ are transverse $1$-dimensional foliations on $\fW^\circ(\sV, \Mrk)$.
\end{thm}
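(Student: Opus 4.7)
The plan is to leverage Theorem~\ref{Thm:weaving}, which already equips $\fW(\sV)$ with transverse singular foliations $\cF_1(\sV), \cF_2(\sV)$ whose singular locus is precisely the set of singular classes. Since $\fW^\circ(\sV,\Mrk) = \fW(\sV) \setminus (\{\text{singular classes}\} \cup \Mrk)$ and every marked class sits inside the regular (non-singular) locus of the ambient foliations, the restriction should yield honest $1$-dimensional transverse foliations once leaves are cut at the removed marked classes. By Lemma~\ref{Lem:planar}, $\fW^\circ(\sV,\Mrk)$ is open in $\closure{\fW}(\sV)$ and hence in $\fW(\sV)$, so it inherits a surface structure.

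Next I would construct a regular transversely foliated chart at an arbitrary point $w \in \fW^\circ(\sV,\Mrk)$. Such $w$ is a regular, unmarked class, so $w = \#(s)$ for some unmarked regular stitch $s$. By Lemma~\ref{Lem:unmakredFrameCover}, $s$ lies on an unmarked frame $\fF$. Apply Lemma~\ref{Lem:weavingRect} to obtain a homeomorphism $\rho_\fF : [0,1]^2 \to \#(\closure{\cS}(\fF))$ whose vertical, resp.\ horizontal, fibers map into leaves of $\cF_1(\sV)$, resp.\ $\cF_2(\sV)$. By Remark~\ref{Rmk:regularNbhd}, the restriction $\rho_\fF|_{(0,1)^2}$ is a homeomorphism onto an open neighborhood $\#(\cS(\fF))$ of $w$ in $\fW(\sV)$. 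Because $\fF$ is unmarked, no marked class lies in $\#(\cS(\fF))$; combined with Proposition~\ref{Prop:scrapIsRegular} (scraps contain only regular stitches), this gives $\#(\cS(\fF)) \subset \fW^\circ(\sV,\Mrk)$. Composing $\rho_\fF^{-1}$ with the affine homeomorphism $(0,1)^2 \to P_2$, $(x,y) \mapsto (2x-1,\, 2y-1)$, furnishes a regular foliated chart in the sense of Section~\ref{Sec:defnOfFoliation}.

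Finally I would identify the plaques and transversals of these charts with elements of $\cF_1^\circ(\sV,\Mrk)$ and $\cF_2^\circ(\sV,\Mrk)$. By Lemma~\ref{Lem:weavingRect}, each vertical fiber $\rho_\fF(\{t\}\times(0,1))$ is a connected subset of $\#(\overt(\ell))$ for some leaf $\ell$ of $\sL_1$, hence contained in a unique $l \in \closure{\cF_1}(\sV)$. Since the whole chart image $\rho_\fF((0,1)^2)$ sits inside $\fW^\circ(\sV,\Mrk)$, this fiber is a connected subset of $l \cap \fW^\circ(\sV,\Mrk)$ and therefore lies within a single connected component, i.e., a single element of $\cF_1^\circ(\sV,\Mrk)$. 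The same argument applied to horizontal fibers gives the corresponding statement for $\cF_2^\circ(\sV,\Mrk)$, and transversality follows from transversality of the model foliations in $P_2$. This holds uniformly at every $w$, completing the verification.

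The only delicate point is the connected-component step in the definition of $\cF_i^\circ(\sV,\Mrk)$: one must rule out the possibility that our local chart simultaneously sees two distinct components of some $l \cap \fW^\circ(\sV,\Mrk)$. This is automatic because $\rho_\fF((0,1)^2)$ is connected and contains no cone or cusp class, so no leaf of $\closure{\cF_i}(\sV)$ can be disconnected inside it. Thus no extra combinatorial work beyond the unmarked-frame-covering lemma is required.
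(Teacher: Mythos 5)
Your proof is correct and follows the same route as the paper: invoke Lemma~\ref{Lem:planar} for the surface structure, write $w=\#(s)$ for an unmarked regular stitch, cover $s$ by an unmarked frame via Lemma~\ref{Lem:unmakredFrameCover}, and use Lemma~\ref{Lem:weavingRect} together with Remark~\ref{Rmk:regularNbhd} and Proposition~\ref{Prop:scrapIsRegular} to see that the resulting chart lands inside $\fW^\circ(\sV,\Mrk)$ and is transversely foliated. Your extra remarks on matching fibers with connected components of $l\cap\fW^\circ(\sV,\Mrk)$ only make explicit what the paper leaves implicit.
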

\begin{proof}
By \reflem{planar}, $\fW^\circ(\sV, \Mrk)$ is a surface. Let $w$ be an element in $\fW^\circ(\sV, \Mrk).$ Then, $w=\#(s)$ for some stitch $s$ which is unmarked and regular. By \reflem{unmakredFrameCover}, $s$ lies on an unmarked frame $\fF$. By applying \reflem{weavingRect}, $U=\rho_\fF((0,1)^2)$ is an open neighborhood of $w$ in $\closure{\fW}(\sV)$ and since $\#^{-1}(U) =\cS(\fF),$ each class in $U$ is not a cone and cusp class and so $U\subset \fW^\circ(\sV,\Mrk).$ Therefore, $(U,(\rho_\fF|(0,1)^2)^{-1})$ is a transversely foliated chart at $w$. This implies that $\cF_1^{\circ}(\sV, \Mrk)$ and $\cF_2^{\circ}(\sV, \Mrk)$ are transverse $1$-dimensional foliations on $\fW^\circ(\sV, \Mrk)$.
\end{proof}

\begin{lem}\label{Lem:keyProp}
Let $\sV=\{\sL_1,\sL_2\}$ be a veering pair and $\Mrk$ a marking on $\sV$. Then, every leaf of $\cF_i^\circ(\sV,\Mrk)$ is homeomorphic to $\RR$. Moreover, Any leaves $l_1$ and $l_2$ of $\cF_1^\circ(\sV,\Mrk)$  and $\cF_2^\circ(\sV,\Mrk),$ respectively, intersect at most one point.   
\end{lem}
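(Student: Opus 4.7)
The plan is to reduce both assertions to a single containment claim: every leaf $l$ of $\cF_i^\circ(\sV,\Mrk)$ is contained in $\#(\oslash(\ell))$ for some leaf $\ell$ of $\sL_i$. Once this containment is shown, the first statement follows from \reflem{leafToLeaf}: since $\#(\oslash(\ell))$ is homeomorphic to $\RR$ and $l$, being a leaf of the regular foliation $\cF_i^\circ(\sV,\Mrk)$ from \refthm{foliatedRegWeaving}, is a connected $1$-manifold without boundary, the inclusion into a copy of $\RR$ forces $l$ to be an open interval, hence homeomorphic to $\RR$. For the second statement, if $l_j \subset \#(\oslash(\ell_j))$ for $\ell_j$ a leaf of $\sL_j$ ($j=1,2$), then
\[
l_1 \cap l_2 \subset \#(\overt(\ell_1)) \cap \#(\ominus(\ell_2)),
\]
which has at most one point by \reflem{intersectLeaves}.

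To establish the containment, note that $l$ is a connected component of $L \cap \fW^\circ(\sV,\Mrk)$ for some $L \in \closure{\cF_i}(\sV)$ of the form $L = \#\bigcup_{I \in \sG} \oslash(\ell(I))$, where $\sG$ is a real gap of $\sL_i$. If $\sG$ is itself a real leaf $\ell$, then $L = \#(\oslash(\ell))$ and the containment is immediate. Otherwise $\sG$ is a non-leaf real gap, i.e.\ an ideal polygon or a crown. By the veering pair condition, $\sG$ has an interleaving gap $\sH$, and the class $w = w(\sG,\sH)$ is a singular class in the polygon case and a cusp class in the crown case; in either case, $w \notin \fW^\circ(\sV,\Mrk)$.

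The key structural observation is that $L \setminus \{w\}$ decomposes as a disjoint union of open half-threads, one per tip of $\sG$. Indeed, \reflem{parallelThread} shows that two adjacent boundary leaves of $\sG$ (sharing a tip $t$) yield $\#$-images that coincide on a half-thread running from $w$ to $t$, while \reflem{ultraparallelThread} shows that any two non-adjacent boundary leaves of $\sG$ meet in $\closure{\fW}(\sV)$ only at $w$. Each of these half-threads is contained in $\#(\oslash(\ell(I)))$ for some $I \in \sG$. Since $l$ is connected and lies in $L \setminus \{w\}$, it must be contained in a single half-thread, hence in $\#(\oslash(\ell))$ for a single boundary leaf $\ell$ of $\sG$. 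Further removal of the marked classes from this half-thread — which form a discrete set by \reflem{finiteness} — preserves the containment in a single $\#(\oslash(\ell))$.

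The main obstacle is the bookkeeping in the non-leaf case, especially the crown case where countably many half-threads accumulate at a cusp class; however, the parallel/ultraparallel dichotomy for threads from \refsec{threadsec} captures this uniformly. With the containment established, the two conclusions are essentially formal.
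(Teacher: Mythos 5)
Your proof is correct and closely parallels the paper's: both work from the prong decomposition of the elements of $\closure{\cF_i}(\sV)$ furnished by \reflem{parallelThread}, \reflem{ultraparallelThread}, and \reflem{ray}, and the containment $l\subset\#(\oslash(\ell))$ that you isolate as the key step is exactly what the paper records afterwards in \refrmk{prong}. The one place where the routes diverge is the second assertion: the paper classifies how the real gaps associated with $l_1$ and $l_2$ can be linked via \refprop{twoGap} and then identifies the intersection of the ambient leaves of $\closure{\cF_i}(\sV)$ with a single class, whereas you bypass that analysis by pushing both regular leaves into single threads and quoting \reflem{intersectLeaves} directly; once the containment is in hand this is a mild streamlining. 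Two minor remarks. First, in the non-leaf case your prongs in fact cannot meet $\Mrk$ at all: every stitch in such a prong has first (or second) component a boundary leaf of the non-leaf gap, which is not a real leaf, so the stitch is not genuine and hence not marked; thus the extra bookkeeping for marked classes there is unnecessary (the paper points this out). Second, the discreteness of $\Mrk$ in $\fS(\sV)$ is part of \refdefn{marking} itself; \reflem{finiteness} is about marked stitches lying on a fixed frame, so it is not quite the right citation, though the fact you need is of course true.
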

\begin{proof}
Let $l$ be an element of $\closure{\cF_i}(\sV)$. Then, then there is an unique real gap $\sG$ in $\sL_i$ associated with  $l$. If $\sG$ is a leaf, then by \reflem{parallelThread}, \reflem{ultraparallelThread}, and \reflem{ray}, $l$ is a line contained in $\fW^\circ(\sV)$. Then, as $l$ and $\Mrk$ intersect in at most one point, the leaves in $\cF_i^\circ (\sV,\Mrk)$ associated with $l$ is homeomorphic to $\RR$. 

Now, we consider the case where $\sG$ is a non-leaf gap. There is a unique cusp or singular class $c$ in $l$ and, by \reflem{parallelThread}, \reflem{ultraparallelThread}, and \reflem{ray}, there are $\ord(c)$-many connected components of $l\setminus c$ which is homeomorphic to $\RR$. Note that these components do not intersect with $\Mrk$ and they are contained in $\fW^\circ(\sV,\Mrk)$. Therefore, the leaves in $\cF_i^\circ (\sV,\Mrk)$ associated with $l$ is homeomorphic to $\RR.$ 

Let $l_1$ and $l_2$ be leaves in $\cF_1^\circ (\sV,\Mrk)$ and $\cF_2^\circ (\sV,\Mrk)$, respectively. Assume that $l_1$ and $l_2$ intersect in $w.$ Let $\sG_1$ and $\sG_2$ be the real gaps associated with $l_1$ and $l_2,$ respectively. Since $w$ is a unmarked regular class, $\sG_1$ and $\sG_2$ are linked but they do not interleave. By \refprop{twoGap}, \reflem{parallelThread}, and \reflem{intersectLeaves},  $w= l_1'\cap l_2'$ where $l_i'$ are elements of $\closure{\cF}_i(\sV)$ containing $l_i$. This implies the second statement. 
\end{proof}

\begin{rmk}\label{Rmk:prong}
A leaf $l$ of $\cF_i^\circ(\sV, \Mrk)$ is called a \emph{prong} at a cusp or cone class $c$ if the element of $\closure{\cF}_i(\sV)$ containing $l$ contains  $c.$ From the proof of \reflem{keyProp}, if $l$ is a prong at $c,$ then there is a unique tip $t$ of the real gap $\sG$ associated with $l$ such that  $\closure{\hat{\omega}(l)}$ is an arc whose end point set is $\{\hat{\omega}(c),t\}.$ More precisely, when $\sG$ is a non-leaf gap, if 
$l\subset \#(\oslash(\ell))$ for some leaf $\ell$ of $\sL_i,$ then $\ell=\ell(I)$ for some $I\in \tipp{t}.$ 
In the case where $\sG$ is a leaf, if 
$l\subset \#(\oslash(\ell))$ for some leaf $\ell$ of $\sL_i,$ then $\ell=\sG.$ 
If $l$ is not a prong, then there is a unique unmarked leaf $\ell$ of $\sV$ such that $l=\#(\oslash(\ell)).$
\end{rmk}

\section{Construction of Loom Spaces}\label{Sec:loomsec}

In this section, we recall loom spaces in the sense of \cite{SchleimerSegerman19,SchleimerSegerman21} and construct a loom space out of data given by a veering pair. 

Our construction is not very different from that of \cite{SchleimerSegerman19}. We have built a regular weaving $\fW^\circ$, which is analogous to the link space $\fL(\cV)$ in \cite{SchleimerSegerman19}. Unlike to the link space, the regular weaving itself is not a loom space -- it is not even homeomorphic to $\RR^2$. However, we will show that its universal cover $\widetilde{\fW^\circ}$ is indeed a loom space. 

One element that distinguishes our theorem from previous work is that veering pairs in this paper may have polygonal gaps. Recall that veering laminations in \cite{SchleimerSegerman19} only have crown gaps, mainly because they are induced from cusped 3-manifolds. In fact, the existence of polygonal gaps leads us to a cusped 3-orbifold rather than 3-manifold. 

We begin with a bit more general situation; a laminar group with an invariant veering pair. Because veering pairs are just abstract circle laminations, we cannot use arguments that rely on the veering triangulation. Instead, we carefully utilize the abstract properties of a veering pair to build a loom space. 

\subsection{Orientations on Foliated Planar Surfaces} \label{Sec:orientation}

Let $X$ be a planar surface 
with $C^0$ transverse foliations $\cF_1$ and $\cF_2.$ We say that two transversely foliated charts $(U, \phi_U)$ and $(V,\phi_V)$ of $X$ are \emph{positively compatible}  if  either $U\cap V=\emptyset$ or the \emph{transition map} $$\phi_V\circ \phi_U^{-1}:\phi_U(U\cap V)\to \phi_V(U\cap V)$$
is of the form $(\phi,\psi)$ such that 
$$\phi_V\circ \phi_U^{-1}(x,y)=(\phi(x),\psi(y)),$$ and
either the interval maps $\phi$ and $\psi$ are increasing or $\phi$ and $\psi$ are decreasing.  

An atlas $\cO$ of $X$ is called an \emph{orientation} of $X$ with respect to $\cF_1$ and $\cF_2$ if $\cO$ consists of transversely foliated charts of $S$ and any two elements of $\cO$ are positively compatible. We say that an transversely foliated chart $(W,\phi_W)$ of $S$ is \emph{positively compatible} with the orientation $\cO$ if $(W,\phi_W)$ is positively compatible with any element of $\cO.$

For each $i\in \{1,2\}$, let $X^i$ be  a planar surface with transverse foliations $\cF_1^i$ and $\cF_2^i.$ Suppose that each $X^i$ has an orientation $\cO_i.$ We say that a continuous map $f$ from $X^1$ to $X^2$ is \emph{orientation preserving} if for every $p\in S^1,$ there exist  transversely foliated charts $(U_1,\phi_1)$ and $(U_2,\phi_2)$ of $X^1$ and $X^2,$ respectively, satisfying the following.
\begin{itemize}
    \item $p\in U_1$ and $f(U_1)\subset U_2,$
    \item each $(U_i,\phi_i)$ is positively compatible with $\cO_i,$ and 
    \item $(U_1,\phi_2\circ f)$ is positively compatible with $(U_1, \phi_1).$
  
\end{itemize}
Note that, by definition, for each $i \in \{1,2\}$, an orientation preserving map $f$ maps a leaf $l$ of $\cF_i^1$ into a leaf $l'$ of $\cF_i^2,$ namely, $f(l)\subset l'$.

From now on, the foliations $F_1$ and $F_2$ of  $(0,1)^2$  are the sets of vertical lines and of horizontal lines, respectively. Also, we fix the orientation of $(0,1)^2$ with respect to $F_1$ and $F_2$ as the atlas $\{((0,1)^2,\id_{(0,1)^2})\}.$

Let $\sV=\{\sL_1, \sL_2\}$ be a veering pair with marking $\Mrk$. We can see that the transversely foliated charts given by \reflem{weavingRect} compose an atlas $\cO$ for the planar surface $\fW^\circ(\sV,\Mrk)$ from the proof of \refthm{foliatedRegWeaving}. Moreover, any two elements of $\cO$ are positively compatible since every frame is oriented in the counter-clockwise direction.
Hence, $\cO$ is an orientation of $\fW^\circ(\sV,\Mrk)$. From now on, we fix the orientation of $\fW^\circ(\sV,\Mrk)$ as $\cO$.  Under these orientations, the transversely foliated charts given by \reflem{weavingRect} are orientation preserving. For convenience, we say that a homeomorphism $f$ on a weaving is \emph{orientation preserving} if $f$ is orientation preserving on the regular weaving.

\subsection{Loom Spaces}\label{Sec:loom}
We recall the definition of loom spaces. A \emph{loom space} $\cL$ is a copy of $\RR^2$ together with $C^0$ transverse foliations $\cF_1$ and $\cF_2$. We also assume that $\cL$ has an orientation $\cO$. 

Since $\cL$ is simply-connected, we may take a subatlas $\cO_+$ of $\cO$ so that each transition map of $\cO_+$ is of the form $(\phi, \psi)$ such that both $\phi$ and $\psi$ are increasing.  Hence, we can say that the positive direction of $\cF_1$ is \emph{north} and the negative direction of $\cF_1$ is \emph{south}. Similarly,  we refer to  the positive and negative direction of $\cF_2$ as \emph{east} and \emph{west}, respectively. 

A \emph{rectangle} $R$ is an open subset of $\RR^2$ such that there is a homeomorphism $f_R:(0,1)\times (0,1) \to R$ that maps vertical leaves $x\times (0,1)$ to a leaf of $\cF_1$ and horizontal leaves $ (0,1)\times y$ to $\cF_2$. We request that $f_R$ preserves the orientations of the foliations. Here, the orientation of $(0,1)^2$ is $\{((0,1)^2,\id _{(0,1)^2})\}$ and the orientation of $\cL$ is $\cO_+$. 

If $f_R$ admits a homeomorphic extension $\overline{f_R}:[0,1]\times [0,1] \setminus \{a\times b\}\to \overline{R}$ for some $a,b\in \{0,1\}$, we say that $R$ is a \emph{cusp rectangle}. In this case,  $\overline{f_R}(a\times (0,1))$ and $\overline{f_R}((0,1)\times b)$ are called \emph{cusp sides}.

If $f_R$ has a homeomorphic extension 
\[
\overline{f_R}: [0,1]\times[0,1] \setminus \{a\times 0, 1\times b, c\times 1, 0\times d\}\to \overline{R}
\]
for some $0<a,b,c,d<1$, we call $R$ a \emph{tetrahedron rectangle} with parameters $a,b,c$ and $d$. 

\begin{defn}\label{loomdef}
A \emph{loom space} $\cL$ is a topological space homeomorphic to $\RR^2$ with two oriented transverse foliations subject to the following properties:
\begin{enumerate}
    \item Every cusp side of a cusp rectangle is contained in a rectangle.
    \item Every rectangle is contained in a tetrahedron rectangle.
    \item If a tetrahedron rectangle has parameters $a,b,c$ and $d$, then we have $a\ne c$ and $b\ne d$.
\end{enumerate}
\end{defn}

Let $\cL$ be a loom space with oriented transverse foliations $\cF_1$ and $\cF_2.$ A \emph{edge rectangle} $R$ in $\cL$ is one such that an associated homeomorphism $f_R$ can be extended to the homeomorphism either of the forms
\begin{align*}
\closure{f_R}&:[0,1]^2-\{0\times 0, 1\times 1\} \to \closure{R} \text{ or }\\
\closure{f_R}&:[0,1]^2-\{0\times 1, 1\times 0\} \to \closure{R}. 
\end{align*}
 In particular, $R$ is called a \emph{red edge rectangle} if the first extension happens. $R$ is called a \emph{blue edge rectangle} otherwise. 

A \emph{south-west face rectangle} in $\cL$ is one whose associated homeomorphism $f_R$ has a continuous extension 
$$\closure{f_R}:[0,1]\times [0,1]\setminus \{0\times 0, 1\times a, b\times 1\}$$
for some $a$ and $b$ in $(0,1).$ Similarly, we define \emph{south-east}, \emph{north-east}, and \emph{north-west} \emph{face rectangles}.

We need a careful definition for cusps in $\cL$. Let us say that two cusp rectangles $P$ and $Q$ in $\cL$ are \emph{equivalent} if there is a finite sequence of cusp rectangles $P=R_1, R_2, \cdots, R_n=Q$ such that
for each pair $(R_i,R_{i+1})$, some cusp side of one is contained in a cusp side of the other.  A \emph{cusp} of the loom space is an equivalence class of this equivalent relation. A tetrahedron rectangle  \emph{contains} a cusp $c$ if it contains some cusp rectangle in $c$.

\subsection{Rectangles in $\fW^{\circ}(\sV, \Mrk)$}\label{Sec:rectangle}
Let $P$ be a planar surface with transverse $1$-dimensional foliations $\cF_1$ and $\cF_2.$ We fix an orientation of $P$. By abusing language, we call an open subset $R\subset P$ a \emph{rectangle} if there is an orientation preserving homeomorphism $f_R:(0,1)\times (0,1)\to R$ such that $f_R$ maps each vertical leaf $x\times (0,1)$, $x\in (0,1)$ to a leaf of $\cF_1$ and each horizontal leaf $(0,1)\times y$, $y\in (0,1)$ to a leaf of $\cF_2$.

We can also define skeletal rectangles similarly. A \emph{cusp rectangle} $R$ is a rectangle such that an associated homeomorphism $f_R$ can be extended to the homeomorphism $\overline{f_R}:[0,1]\times [0,1] \setminus \{a\times b\}\to \overline{R}$ for some $a,b\in \{0,1\}.$ In this case,  $\overline{f_R}(a\times (0,1))$ and $\overline{f_R}((0,1)\times b)$ are again called \emph{cusp sides}.

An \emph{edge rectangle} $R$ is one that $f_R$ admits a homeomorphic extension either of the form
\begin{align*}
\overline{f_R}& : [0,1]\times [0,1] \setminus \{0\times 0, 1\times 1\} \to \overline{R} \text{, or}\\
\overline{f_R}& : [0,1]\times [0,1] \setminus \{0\times 1, 1\times 0\} \to \overline{R}.
\end{align*}
If the first extension happens, we call $R$ a \emph{red edge rectangle}; otherwise $R$ is called a \emph{blue edge rectangle}. 

Similarly, a \emph{face rectangle} $R$  is one whose associated homeomorphism $f_R$ admits a homeomorphic extension of the form
\begin{align*}
    \closure{f_R}&:[0,1]\times [0,1]\setminus \{a\times a, (1-a)\times b, c\times (1-a)\}\text{ or }\\
    \closure{f_R}&:[0,1]\times [0,1]\setminus \{a\times (1-a),  b\times a, (1-a)\times c \}
\end{align*}
for some $a\in\{0,1\}$ and for some $b$ and $c$ in $(0,1).$

A \emph{tetrahedron rectangle} $R$ with parameters $a,b,c$ and $d$  is one that an associated homeomorphism $f_R$ can be extended to the homeomorphism 
\[
\overline{f_R}: [0,1]\times[0,1] \setminus \{a\times 0, 1\times b, c\times 1, 0\times d\}\to \overline{R}
\]
for some $0<a,b,c,d<1$.

\begin{rmk}\label{Rmk:rectWellDefine}
Suppose that a planar surface $P$ with transverse foliations $\cF_1$ and $\cF_2$ has an orientation $\cO$. When $P$ is simply-connected, we need to check the compatibility of the definition of rectangles in $P$ defined in this section and the definition of rectangles in loom spaces. In fact, the definition of rectangles in $P$ is weaker than the definition of rectangles in loom spaces since in loom spaces, we refined the orientation for loom spaces as $\cO_+$ to say north, east, west, and south. See \refsec{loom}.

Nonetheless, we can abuse these definitions because the only problem is defining the red and blue rectangles. To see this, we take a subatlas $\cO_+$ of $\cO$ so that each coordinate function of  transition maps of $\cO_+$ is increasing as in \refsec{loom}.

Then, choose a rectangle $R$ in $P$ and let $f_R:(0,1)^2 \to R$ be the associated map that is the orientation preserving map respecting $\cO.$ Then, we can take another orientation preserving map $g_R:(0,1)^2 \to R$  so that $g_R$ respects $\cO_+$, if necessary, by precomposing  the orientation preserving homeomorphism $\pi: (x,y)\mapsto (1-x,1-y)$ on $(0,1)^2$ which flips the direction of leaves of $F_1$ and $F_2$. Therefore, the definition of rectangles in transversely foliated planar surfaces is well-defined. 

Moreover, $\pi$ is essentially the only homeomorphism on $(0,1)^2$ (other than the trivial map) up to isotopy passing along the homeomorphisms that preserve the orientation and the leaves of foliations.  This means that an edge rectangle being blue or red is well-defined. 
\end{rmk}

\begin{lem}\label{Lem:maximalRep}
Let $\sV=\{\sL_1, \sL_2\}$ be a veering pair and $\Mrk$ a marking on $\sV$. Let $R$ be a rectangle on $\fW^\circ(\sV,\Mrk)$ such that an associate homeomorphism $f_R$ can be extended to the homeomorphism $\closure{f_R}$ from $[0,1]^2$ to the closure $\closure{R}$
in $\fW^\circ(\sV,\Mrk).$ Then, there is an unmarked frame $\fF=(I_1, I_2, I_3, I_4)$ of $\sV$ such that $\#^{-1}(\closure{R})=\closure{\cS}(\fF).$ Moreover, if there is a frame $\fF'$ in $\sV$ such that $\#^{-1}(\closure{R})=\closure{\cS}(\fF'),$ then $\fF'$ is either $(I_1, I_2, I_3, I_4)$ or $(I_3, I_4, I_1, I_2).$
\end{lem}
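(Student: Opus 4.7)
The plan is to read off four leaves and four good intervals from the rectangle $\closure{R}$, package them into an unmarked frame $\fF$, and then compare the resulting scrap closure with $\closure{R}$ via the Jordan--Schoenflies theorem. First, I label the corners
\[
w_1=\closure{f_R}(0,0),\ w_2=\closure{f_R}(1,0),\ w_3=\closure{f_R}(1,1),\ w_4=\closure{f_R}(0,1),
\]
which are unmarked regular classes because $\closure{R}\subseteq \fW^\circ(\sV,\Mrk)$. By the local structure of $\cF_i^\circ(\sV,\Mrk)$ (\refthm{foliatedRegWeaving}, \reflem{keyProp}, and \refrmk{prong}), each side of $\closure{R}$ is a compact arc contained in an image $\#(\oslash(\ell))$ of a thread on a unique leaf $\ell$ of $\sV$. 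This reads off leaves $\ell^W,\ell^E\in\sL_1$ from the two vertical sides and $\ell^S,\ell^N\in\sL_2$ from the two horizontal sides.

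Next, the orientation carried by $f_R$ picks out the ``outside'' good interval of each of $\ell^W,\ell^E,\ell^S,\ell^N$: I would let $I_1$ be the good interval of $\ell^W$ whose closure is disjoint from the interior of $\closure{R}$, and define $I_2,I_3,I_4$ analogously, so that each corner sector faces the interior of $R$ counter-clockwise. Verifying that $\fF:=(I_1,I_2,I_3,I_4)$ is a frame reduces to checking the three conditions in the definition: the corner pairs are stitches, immediate since the corresponding leaves meet at $w_i$; each corner sector is counter-clockwise, from $f_R$ being orientation preserving; and each corner stitch lies on the opposite sector, from the east/west and north/south walls being pairwise ultraparallel, which holds because $\closure{R}$ is a genuine closed rectangle with four distinct corners. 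Since $R$ contains no marked class, $\fF$ must be unmarked, for any marked stitch lying on $\fF$ would descend to a marked class in $R$.

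To identify the preimages, I would invoke \reflem{weavingRect} to produce a homeomorphism $\rho_\fF:[0,1]^2\to\#(\closure{\cS}(\fF))$ whose corners and sides agree with those of $\closure{f_R}$ by the very construction of $\fF$. Then $\closure{R}$ and $\#(\closure{\cS}(\fF))$ are both closed topological disks in $\fW(\sV)\cong \RR^2$ (\refthm{weaving}) with the same Jordan boundary, so the Jordan--Schoenflies theorem forces $\closure{R}=\#(\closure{\cS}(\fF))$. The inclusion $\closure{\cS}(\fF)\subseteq \#^{-1}(\closure{R})$ is then immediate, and the reverse inclusion is the main obstacle. I must rule out that a stitch $s\notin \closure{\cS}(\fF)$ is $\sim_\omega$-equivalent to some $s'\in\closure{\cS}(\fF)$, which by \refprop{eqClass2} can occur only in the two- or four-stitch cases. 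The key observation is that such a sibling stitch arises from a tip pair of a non-leaf real gap $\sG$ of $\sL_1$ (or $\sL_2$); since $\sG$ is unlinked with the walls $\ell(I_1),\ell(I_3)\in \sL_1$, all its tips lie in $\closure{I_1^*}\cap \closure{I_3^*}$, and this forces the alternative good intervals to lie in $\stem{I_1}{I_3^*}$ (and analogously for the $\sL_2$ walls), placing the sibling stitch in $\closure{\cS}(\fF)$ after all.

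For uniqueness, if $\fF'=(I_1',I_2',I_3',I_4')$ also satisfies $\#^{-1}(\closure{R})=\closure{\cS}(\fF')$, then the same boundary analysis recovers the unordered pairs $\{\ell(I_1'),\ell(I_3')\}=\{\ell^W,\ell^E\}$ and $\{\ell(I_2'),\ell(I_4')\}=\{\ell^S,\ell^N\}$, and hence the same underlying unordered pairs of good intervals. The frame axioms then force a counter-clockwise cyclic labeling, leaving only the two options $(I_1,I_2,I_3,I_4)$ and its $180$-degree rotation $(I_3,I_4,I_1,I_2)$.
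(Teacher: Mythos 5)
Your overall strategy is the same as the paper's (read off the four wall leaves, build a counter-clockwise frame, compare disks via Jordan--Schoenflies, then fight for the saturation $\#^{-1}(\closure{R})=\closure{\cS}(\fF)$), but there is a genuine gap at the very first step: the claim that each side of $\closure{R}$ is contained in $\#(\oslash(\ell))$ for a \emph{unique} leaf $\ell$ of $\sV$ is false exactly in the case that matters. By \refrmk{prong} (which you cite), when a side $l_1$ is a prong at a cusp or cone class with associated tip $t$, there are \emph{two} leaves carrying it, namely $\ell(A)$ and $\ell(B)$ for $\tipp{t}=\{A,B\}$; by \reflem{parallelThread} the prong is precisely the common half-thread of these two parallel leaves. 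Your selection rule (``the good interval of $\ell^W$ whose closure is disjoint from the interior of $\closure{R}$'') chooses between $I$ and $I^*$ for a \emph{fixed} leaf, so it does not disambiguate between $\ell(A)$ and $\ell(B)$: each candidate has an ``outward'' interval, and the two resulting frames are genuinely different. Both choices give $\#(\closure{\cS}(\fF))=\closure{R}$ (Jordan--Schoenflies only sees the image), but only one gives $\#^{-1}(\closure{R})=\closure{\cS}(\fF)$. Concretely, if you take $I_1=A$ when the correct side is $B$, the boundary stitches $(\ell(B),m)$ over $l_1$ lie in $\#^{-1}(\closure{R})$ but not in $\closure{\cS}(\fF)$, since $B\in\stem{I_1}{I_3^*}$ would force $A\subseteq B$ (impossible, as $A\cap B=\emptyset$) and $B^*\in\stem{I_1}{I_3^*}$ would force $I_3\subseteq B$, which fails since $I_3$ is the opposite wall. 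This also undercuts your reverse-inclusion paragraph: the assertion that unlinkedness of $\sG$ with the walls ``forces the alternative good intervals to lie in $\stem{I_1}{I_3^*}$'' is exactly the statement that is true only for the correct member of the tip pair.

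The paper resolves this by a two-step construction you would need to add: first form a provisional frame $\fQ=(J_1,J_2,J_3,J_4)$ from \emph{any} choice of carrying leaves and the sectors determined by the opposite walls, and then, whenever $J_i$ is isolated, replace it by the unique $I_i\subsetneq J_i$ with $l_i\subset\#(\oslash(\ell(I_i)))$ supplied by \refrmk{prong}. With that correction (and an argument in the spirit of the paper's Claim about maximality of the boundary intervals, which your sibling analysis gestures at), the rest of your proof --- the frame axioms from regularity of the corners and ultraparallelism of opposite walls, unmarkedness, the Schoenflies comparison, and the uniqueness up to the rotation $(x,y)\mapsto(1-x,1-y)$ --- goes through as in the paper.
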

\begin{proof}
Let $f_R$ be an homeomorphism associated to $R$. By the assumption, $f_R$ is extended to a homeomorphism $\closure{f_R}:[0,1]\times [0,1]\to \closure{R}$. We write $l_1$ for the leaf of $\cF_1^\circ(\sV,\Mrk)$ containing $\closure{f_R}(0\times [0,1])$. Similarly, we also write $l_2$, $l_3$, and $l_4$ for the leaves containing $\closure{f_R}([0,1]\times 0),$ $\closure{f_R}(1\times [0,1]),$ and $\closure{f_R}([0,1]\times 1),$ respectively.
Also, for each $i\in \{1,2,3,4\}$ we denote the intersection point of $l_i$ and $l_{i+1}$ by $d_i$ (throughout the proof, assume everything is cyclically indexed). By \reflem{keyProp}, $d_i$ are uniquely determined and so 
$$d_1=\closure{f_R}(0,0), \ d_2=\closure{f_R}(1,0), \ d_3=\closure{f_R}(1,1), \text{ and } d_4=\closure{f_R}(0,1).$$

Observe that, for each $i\in \{1,2,3,4\}$, if $\ell_i$ and $\ell_{i+1}$  are leaves of $\sV$ such that $l_i \subset \#(\oslash(\ell_i))$ and $l_{i+1} \subset \#(\oslash(\ell_{i+1}))$, then $\ell_i$ and $\ell_{i+1}$ are linked since $d_i$ is an unmarked regular class. See \refprop{eqClass2}.

Now, for each $i\in \{1,2,3,4\}$, we choose a leaf $\ell_i$ such that $l_i \subset \#(\oslash(\ell_i))$. Note that by the definition of the weaving relation, $\ell_1$ and $\ell_3$ are ultraparallel and $\ell_2$ and $\ell_4$ are ultraparallel. Then, there is the unique sector $(J_1,J_2)$ of the stitch $(\ell_1, \ell_2)$ such that the stitch $(\ell_3, \ell_4)$ lies on $(J_1^*, J_2^*).$ Likewise, there is the unique sector $(J_3,J_4)$ of the stitch $(\ell_3, \ell_4)$ such that the stitch $(\ell_1, \ell_2)$ lies on $(J_3^*, J_4^*).$ Note that the sectors $(J_1, J_2)$ and $(J_3, J_4)$ are counter-clockwise as $f_R$ preserves the orientation. Therefore, the quadruple $\fQ=(J_1, J_2, J_3, J_4)$ is a frame of $\sV$. Then, we can take an orientation preserving homeomorphism $\rho_\fQ$ from $[0,1]\times [0,1]$ to $\#(\closure{\cS}(\fQ))$ as in \reflem{weavingRect}.
Since each $l_i$ is homeomorphic to $\RR$ by \reflem{keyProp}, $\rho_\fQ(\partial [0,1]^2)=\partial R$ and as $\fW(\sV,\Mrk)$ is the open disk, by the Jordan–Schoenflies theorem,  $\#(\closure{\cS}(\fQ))= \closure{R}.$ However, in general, $\closure{\cS}(\fQ)\neq \#^{-1}(\closure{R}).$ Hence, we need to modify the frame $\fQ.$ 

If $J_1$ is not isolated, then there is no leaf $\ell$ of $\sL_1$ lying on $J_1$ such that $l_1 \subset \#(\overt(\ell)).$ See \refrmk{prong}. In this case, we set $I_1=J_1.$ Otherwise,  by \refrmk{prong}, there is a unique element $I_1$ in $\sL_1$ such that $I_1\subsetneq J_1$ and $l_1 \subset \#(\overt(\ell(I_1))).$ Likewise, for each $i\in \{1,2,3,4\}$, we can take $I_i$. Then, by the previous observation, $\ell(I_i)$ and $\ell(I_{i+1})$ are linked for all $i\in \{1,2,3,4\}$. Therefore, the quadruple $\fF=(I_1, I_2, I_3, I_4)$ is  a frame covering $\fQ$. By the construction, $l_i\subset \#(\oslash(\ell(I_i)))$ for all $i\in \{1,2,3,4\}$. Then, by \reflem{keyProp}, $\#(\closure{\cS}(\fF))=\closure{R}$

Now, we want to show that $\#^{-1}(\closure{R})=\closure{\cS}(\fF).$ Note that $\#^{-1}(R)=\cS(\fF)$ by \reflem{weavingRect}. 
Let $s=(m_1,m_2)$ be a stitch with $\#(s)\in \closure{R}.$ Then, there is a unique leaf $n_1$ of $\cF_1^\circ(\sV,\Mrk)$ such that  $\#(s)\in n_1 \subset \#(\overt(m_1)).$ See \refrmk{prong}. Then, there is a unique class $w_1$ in the intersection of $n_1$ and $l_2.$ Then, $w_1=\#(s_1)$ where $s_1$ is the stitch $(m_1, \ell(I_2)).$

\begin{claim}\label{Clm:sameInterval}
Let $\ell$ be a leaf of $\sL_i.$ Assume that $\cI(t_1,t)$ and $\cI(t_2,t)$ are non-empty intervals in $\oslash(\ell).$ If $\closure{\cI}(t_1, t)\subsetneq \closure{\cI}(t_2,t)$ and $\#(\closure{\cI}(t_1, t))=\#(\closure{\cI}(t_2,t)),$ then $t_1\sim_\omega t_2$ and so $\eta_{i+1}(t_1)$ and $\eta_{i+1}(t_2)$ are parallel. 
\end{claim}
\begin{proof}
It follows from \reflem{subarc} and \refrmk{eqDef}.
\end{proof}

The interval $\cI(c_1(\fF),c_2(\fF))$ is the maximal interval on $\ominus(\ell( J_2))$ in the following sense. If there is an interval $\cI(t_1,t_2)$ in $\ominus(\ell( J_2))$ such that $\#(\closure{\cI}(t_1,t_2))=\closure{f_R}([0,1]\times 0),$ then $\closure{\cI}(t_1,t_2)\subset \closure{\cI}(c_1(\fF),c_2(\fF))$ by the choice of $I_1$ and $I_2$  and by \refclm{sameInterval}. Therefore, $s_1\in \closure{\cI}(c_1(\fF),c_2(\fF)).$ Likewise, we can show that the stitch $(\ell(I_1),m_2)$ belongs to $\closure{\cI}(c_1(\fF),c_4(\fF))$. Therefore, $s\in \closure{\cS}(\fF)$ as $\sigma_\fF(s_1,s_2)=s.$ Thus, $\#^{-1}(\closure{R})=\closure{\cS}(\fF)$. Also, since $R\subset \fW^\circ(\sV,\Mrk),$ $\fF$ is unmarked. 

Note that if a map $f_R'$ from $(0,1)^2$ to $R$ is defined by $f_R'(x,y)=f_R(1-x,1-y)$, then $f_R'$ is also an orientation preserving homeomorphism associated with $R$. This possibility of the choice of an associated homeomorphism implies the second statement. 
\end{proof}

For a rectangle $R$, we call frames $(I_1, I_2, I_3, I_4)$ and $(I_3, I_4, I_1, I_2)$ as in \reflem{maximalRep} the \emph{maximal frame representatives} of $R$.

\begin{prop}\label{Prop:coverRep}
Let $\sV=\{\sL_1, \sL_2\}$ be a veering pair and $\Mrk$ a marking on $\sV.$ Let $R_1$ and $R_2$ be rectangles in $\fW^\circ(\sV,\Mrk)$  such that  $R_1 \subset R_2$ and 
a homeomorphism $f_{R_2}$ associated with $R_2$ can be extended to the homeomorphism $\closure{f_{R_2}}$ from $[0,1]^2$ to
the closure $\closure{R_2}$ in $\fW^\circ(\sV,\Mrk).$ Then, there are maximal frame representatives $\fF_1$ and $\fF_2$ of $R_1$ and $R_2,$ respectively, such that $\fF_2$ covers $\fF_1.$ 
\end{prop}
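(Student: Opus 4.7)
The plan is to use the extension $\closure{f_{R_2}} \colon [0,1]^2 \to \closure{R_2}$ to coordinatize $R_1$ inside $R_2$, deduce that $R_1$ satisfies the hypothesis of \reflem{maximalRep}, and compare the two maximal frame representatives side by side.

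First, since $R_1 \subseteq R_2$ is a sub-rectangle with sides lying in leaves of $\cF_1^\circ(\sV,\Mrk)$ and $\cF_2^\circ(\sV,\Mrk)$, the composition $\closure{f_{R_2}}^{-1}\circ f_{R_1}$ is an orientation-preserving homeomorphism from $(0,1)^2$ into $(0,1)^2$ sending vertical (resp.\ horizontal) lines to vertical (resp.\ horizontal) lines, so it must have the form $(x,y)\mapsto(\phi(x),\psi(y))$ for increasing homeomorphisms $\phi,\psi$. Consequently $\closure{f_{R_2}}^{-1}(R_1)=(a,b)\times(c,d)$ for some $0\le a<b\le 1$ and $0\le c<d\le 1$, and post-composing $\closure{f_{R_2}}|_{[a,b]\times[c,d]}$ with an orientation-preserving affine rescaling gives an extension $\closure{f_{R_1}} \colon [0,1]^2 \to \closure{R_1}$ of $f_{R_1}$. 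Hence \reflem{maximalRep} applies to $R_1$ and produces a maximal frame representative $\fF_1 = (J_1,J_2,J_3,J_4)$. Let $\fF_2=(I_1,I_2,I_3,I_4)$ denote the given maximal representative of $R_2$.

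Next, I would track the four boundary leaves $l_1,l_2,l_3,l_4$ of $R_1$ through $\closure{f_{R_2}}$. Each $l_k$ lies in the thread $\#(\oslash(\ell(K_k)))$ of a unique leaf $\ell(K_k)$ of $\sV$, appearing as an interior or boundary plaque of $\#(\closure{\cS}(\fF_2))$. The parametrization $\rho_{\fF_2}$ from \reflem{weavingRect}, built from the map $\sigma_{\fF_2}\colon (s_1,s_2)\mapsto(\eta_1(s_1),\eta_2(s_2))$, identifies the vertical foliation lines of $\closure{R_2}$ with stitches in $\closure{\cI}(c_1(\fF_2),c_2(\fF_2))$, whose first coordinates are leaves $\ell(K)$ with $K$ in the $I_1$-side rail $\stem{\{v_4,u_2\}}{I_1}$ (using the notation $I_i=\opi{u_i}{v_i}$). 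Analogous statements hold for the other three sides, so $K_k\subseteq I_k$ for $k=1,2,3,4$. The refinement in the proof of \reflem{maximalRep} (passing from the intermediate $K_k$ to the final $J_k$, with equality or strict containment when $K_k$ is isolated) preserves this nesting, so $J_k\subseteq K_k\subseteq I_k$. After matching the cyclic orientations of $\fF_1$ and $\fF_2$ (possible via the uniqueness up to the $180^\circ$ swap from \reflem{maximalRep}, pinned down by the shared orientation inherited from $\cO$), we obtain $J_k\subseteq I_k$ for each $k\in\{1,2,3,4\}$, i.e., $\fF_2$ covers $\fF_1$.

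The main obstacle is the precise identification of each $K_k$ with an element of the $I_k$-side rail of $\fF_2$. This requires a careful three-way translation between the combinatorial nesting of good intervals (via sectors and stems), the topological picture inside $\#(\closure{\cS}(\fF_2))$ (via $\rho_{\fF_2}$), and the foliated picture inside $\closure{R_2}$. In particular, one must verify that the interior vertical plaques of $\cF_1^\circ(\sV,\Mrk)\cap \closure{R_2}$ are exactly the threads of leaves $\ell(K)$ with $K\in\stem{\{v_4,u_2\}}{I_1}$, a fact that follows from unwinding the construction of $\sigma_{\fF_2}$ but is easy to misstate; the analogous checks for the other three sides are symmetric and follow from the counter-clockwise condition built into the frame.
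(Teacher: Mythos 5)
Your overall strategy is the right one and is what the paper intends (its proof is literally ``It follows from \reflem{maximalRep}''). In particular the first half of your argument is a point that genuinely needs to be supplied: the hypothesis only gives an extension for $f_{R_2}$, so one must check that $\closure{f_{R_2}}^{-1}\circ f_{R_1}$ has product form (this uses \reflem{keyProp} to see that a leaf meets $R_2$ in at most one plaque), hence $\closure{f_{R_2}}^{-1}(R_1)=(a,b)\times(c,d)$ and a rescaled restriction of $\closure{f_{R_2}}$ provides the extension needed to apply \reflem{maximalRep} to $R_1$. That part is correct.

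The comparison step, however, contains two concrete errors that happen to cancel. First, the vertical plaques of $\closure{R_2}$ are \emph{not} indexed by the $I_1$-side rail $\stem{\{v_4,u_2\}}{I_1}$: the rail consists of intervals \emph{contained in} $I_1$ and indexes candidate first-side extensions of the frame, not plaques of its scrap. By the definition of $\closure{\cS}(\fF_2)$, the vertical plaques correspond to stitches $(\ell(N),\ell(I_2))$ with $N\in \stem{I_1}{I_3^*}$, i.e.\ $I_1\subseteq N\subseteq I_3^*$; for an interior plaque neither element of the leaf $\ell(K_k)$ is contained in $I_k$, so your claimed inclusion $K_k\subseteq I_k$ is false --- the correct one is $I_k\subseteq K_k$ (for the element chosen as in the proof of \reflem{maximalRep}). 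Second, you read the definition of ``covers'' backwards: $\fF_2$ covers $\fF_1$ means $I_k\subseteq J_k$ --- the covering frame has the \emph{smaller} components and the larger scrap --- not $J_k\subseteq I_k$. The clean way to finish, avoiding the plaque-by-plaque bookkeeping, is to use the equalities $\#^{-1}(\closure{R_i})=\closure{\cS}(\fF_i)$ furnished by \reflem{maximalRep}: then $\closure{\cS}(\fF_1)\subseteq\closure{\cS}(\fF_2)$, and after matching the two admissible orderings via the ``moreover'' clause of \reflem{maximalRep}, comparing stems gives $\stem{J_1}{J_3^*}\subseteq\stem{I_1}{I_3^*}$ and $\stem{J_2}{J_4^*}\subseteq\stem{I_2}{I_4^*}$, whence $I_k\subseteq J_k$ for all $k$, i.e.\ $\fF_2$ covers $\fF_1$.
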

\begin{proof}
It follows from \reflem{maximalRep}.
\end{proof}

\begin{lem}\label{Lem:minimalRep}
Let $\sV=\{\sL_1, \sL_2\}$ be a veering pair and $\Mrk$ a marking on $\sV.$ Let $R$ be a rectangle on $\fW^\circ(\sV,\Mrk).$ Then, there is an unmarked frame $\fF=(I_1, I_2, I_3, I_4)$ such that $\cS(\fF)=\#^{-1}(R)$ and $I_i^*$ is not isolated for all $i \in \{1,2,3,4\}$. Moreover, if $\fF'=(J_1,J_2, J_3, J_4)$ is a frame such that $\cS(\fF')=\#^{-1}(R)$ and $J_i^*$ is not isolated for all $i \in \{1,2,3,4\}$, then $\fF'$ is either $(I_1, I_2, I_3, I_4)$ or $(I_3, I_4, I_1, I_2)$.
\end{lem}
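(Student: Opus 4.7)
The plan is to parallel Lemma~\ref{Lem:maximalRep} while enforcing the complementary non-isolation condition on each side, and to deduce uniqueness from the rigidity of the boundary leaves of $R$.

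First, I will obtain some frame $\fF^{0}=(I_{1}^{0},I_{2}^{0},I_{3}^{0},I_{4}^{0})$ with $\cS(\fF^{0})=\#^{-1}(R)$. To do this, I exhaust $R$ by an increasing sequence of sub-rectangles $R_{n}\subset R_{n+1}\subset R$ with each $\closure{R_{n}}\subset R$ and each $R_{n}$ admitting a continuous extension $\closure{f_{R_{n}}}:[0,1]^{2}\to\closure{R_{n}}$ in $\fW^{\circ}(\sV,\Mrk)$. Lemma~\ref{Lem:maximalRep} then yields a maximal frame $\fF_{n}^{\max}$ for each $R_{n}$, and Proposition~\ref{Prop:coverRep} allows the $\fF_{n}^{\max}$ to be taken nested. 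Taking the union of the increasing intervals $I_{i}^{0}:=\bigcup_{n}I_{i}^{(n)}$ in each coordinate and applying the closedness axiom of lamination systems produces $I_{i}^{0}\in\sL_{i}$, and a direct verification from $\bigcup_{n}\cS(\fF_{n}^{\max})=\#^{-1}(R)$ shows that $\fF^{0}$ is a frame representative of $R$.

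Next, I refine each side. If $(I_{i}^{0})^{*}$ is already non-isolated, set $I_{i}:=I_{i}^{0}$. If $(I_{i}^{0})^{*}$ is isolated, Proposition~\ref{Prop:gapExist} yields a non-leaf gap $\sG_{i}$ with $I_{i}^{0}\in\sG_{i}$; by looseness, $\ell(I_{i}^{0})$ is the boundary leaf of $\sG_{i}$ on the scrap side of the frame, and Proposition~\ref{Prop:linkedGaps} together with the veering property implies that the interleaving partner $\sG_{i}'$ exists. Here I will replace $I_{i}^{0}$ by the tip-pair element identified via Remark~\ref{Rmk:prong}, taking care that $\ell(I_{i})$'s thread still contains the $i$-th side of $R$. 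The scrap-preservation verification, which is the crux of the existence argument, uses the classification in Proposition~\ref{Prop:eqClass2}: any stitch gained or lost in this replacement must correspond under $\#$ to either a singular stitch of the asterisk $(\sG_{i},\sG_{i}')$ mapping to the cone/cusp class $w(\sG_{i},\sG_{i}')\in\partial R\setminus\fW^{\circ}(\sV,\Mrk)$, or a parallel-leaf stitch pair identified via Lemma~\ref{Lem:parallelThread} whose image likewise lies outside $\#^{-1}(R)$.

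For uniqueness, suppose $\fF'=(J_{1},J_{2},J_{3},J_{4})$ is another unmarked frame with $\cS(\fF')=\#^{-1}(R)$ and $J_{i}^{*}$ non-isolated. Then for each $i$, $\ell(J_{i})$ is a boundary leaf of $R$ on side $i$ whose thread contains the $i$-th side of $R$ inside $\fW^{\circ}(\sV,\Mrk)$ and which is not the boundary of a non-leaf gap on the scrap side. By Remark~\ref{Rmk:prong} there are at most two leaves of $\sV$ whose thread contains a given side, and when the side is a prong both candidates lie in a non-leaf gap; the non-isolation condition then pins down the boundary leaf uniquely. Hence $\ell(J_{i})=\ell(I_{i})$ for each $i$, so $J_{i}\in\{I_{i},I_{i}^{*}\}$. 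The counter-clockwise orientation of frames together with Remark~\ref{Rmk:rectWellDefine} then forces $\fF'=(I_{1},I_{2},I_{3},I_{4})$ or $\fF'=(I_{3},I_{4},I_{1},I_{2})$.

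The hard part will be the side-refinement in the existence argument: when $(I_{i}^{0})^{*}$ is isolated the adjacent tip-pair element also lies in $\sG_{i}$, so I need to argue carefully (using the full veering pair structure and Proposition~\ref{Prop:eqClass2}) that the transition does not add or remove any regular class in $\#^{-1}(R)$, and that a valid $I_{i}$ satisfying both the scrap-equality and non-isolation conditions can always be chosen.
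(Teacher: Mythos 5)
Your first step is essentially the paper's argument: exhaust $R$ by sub-rectangles $R_n$ with $\closure{R_n}\subset R$, take maximal frame representatives via \reflem{maximalRep}, nest them via \refprop{coverRep}, and pass to a limit. But note a directional slip: as $R_n$ grows, the sides $I_i^{(n)}$ of the maximal frames \emph{shrink} (covering means containment of sides), so $\bigcup_n I_i^{(n)}$ of ``increasing intervals'' is not the right limit object. You must instead set $I_i^*:=\bigcup_n \bigl(I_i^{(n)}\bigr)^*$ and apply closedness to that ascending family, so that $\ell(I_i^{(n)})\to\ell(I_i)$ from the rectangle side. Once this is done, the non-isolation you are worried about comes for free: the vertical (resp.\ horizontal) sides of the $R_n$ lie on pairwise distinct leaves of the foliation restricted to $R$, so each sequence $I_i^{(n)}$ strictly decreases infinitely often, and the leaves $\ell(I_i^{(n)})$ form an $I_i^*$-side sequence witnessing that $I_i^*$ is not isolated. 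Your entire refinement step --- which you yourself flag as the unresolved hard part --- is therefore unnecessary.

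More seriously, the refinement as described cannot work, so it is fortunate that it is not needed. If $(I_i^0)^*$ is isolated, \refprop{gapExist} gives a non-leaf gap $\sG_i$ with $I_i^0\in\sG_i$; but then the tip pair $\tipp{t}$ at a vertex $t$ of $\ell(I_i^0)$ consists of \emph{two elements of the same gap} $\tipg{t}=\sG_i$, so the substitute interval you extract from \refrmk{prong} also lies in $\sG_i$ and its complement is just as isolated --- no leaf accumulates onto a boundary leaf of a non-leaf gap from the gap side. The substitution can never produce a side with non-isolated complement, and since this is the acknowledged crux of your existence argument, it constitutes a genuine gap. The same confusion touches your uniqueness paragraph: when a side of $R$ lies in a prong whose gap sits on the rectangle side, the non-isolation hypothesis does not ``pin down'' one of the two tip-pair candidates, it excludes both; the resolution is that this configuration simply does not occur for a frame satisfying the conclusion of the lemma. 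The uniqueness claim itself is then comparatively soft: with all $(J_i)^*$ non-isolated the boundary leaves are determined leaf-by-leaf, and the only residual ambiguity is the relabelling $(I_1,I_2,I_3,I_4)\mapsto(I_3,I_4,I_1,I_2)$ coming from precomposing $f_R$ with the half-turn of $(0,1)^2$, which is how the paper disposes of it.
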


\begin{proof} Let $f_R$ be a homeomorphism associated with $R$.
Given a real numbers $\alpha$ with $0\leq \alpha<1/2,$ we define  a map $p^\alpha$ from $[0,1]^2$ to $[\alpha, 1-\alpha]^2$ by 
$p^\alpha(x,y)=((1-x)\alpha+x(1-\alpha),(1-y)\alpha+y(1-\alpha))$. Then, we write $f_R^\alpha:=f_R\circ p^\alpha$ and the rectangles $f_R^\alpha((0,1)^2)$ are denoted by $R^\alpha.$ Note that $f_R=f_R^0$ and that if $0\leq \alpha_1 < \alpha_2<1/2$, then $ \closure{R^{\alpha_2}}\subset R^{\alpha_1}.$ Since $\closure{R^\alpha} \subset R^0$ for all $\alpha$ with $0<\alpha<1/2,$ by \reflem{maximalRep}, there is a maximal frame representative $\fF^\alpha$ of $R^\alpha$ for all real number $\alpha$ with $0<\alpha<1/2$. Moreover, by \refprop{coverRep}, we may assume that if $0<\alpha_1 < \alpha_2<1/2$, then $\fF^{\alpha_1}$ covers  $\fF^{\alpha_2}$.

For each $\alpha$ with $0<\alpha<1/2$, we write $\fF^{\alpha}=(I_1^\alpha,I_2^\alpha,I_3^\alpha,I_4^\alpha)$ and $I_i^\alpha=\opi{u_i^\alpha}{v_i^\alpha}$ for all $i\in \{1,2,3,4\}$. Now, we consider the stem $S_1$ from ${I_1^{1/3}}$ to $\opi{v_4^{1/3}}{u_2^{1/3}}$ in $\sL_1$.
Observe that $I_1^\alpha\in S_1$ for all $\alpha$ with $0<\alpha \leq 1/3$. As $\{I_1^{1/(n+2)}\}_{n\in\NN}$ is a descending sequence in $S_1$, then there is an element $I_1$ in $S_1$ such that $I_1^*=\bigcup_{n\in \NN}(I_1^{1/(n+2)})^*,$ that is, $\ell(I_1^{1/(n+2)})\to \ell(I_1)$ as $n\to \infty$. Similarly, we can find $I_2$, $I_3,$ and $I_4$ such that $I_i^*=\bigcup_{n\in \NN}(I_i^{1/(n+2)})^*$ for all $i\in \{2,3,4\}$.

Note that as $\sL_1$ and $\sL_2$ are strongly transverse, $\ell(I_i)$ and $\ell(I_{i+1})$ are either linked or ultraparallel for all $i\in \{1,2,3,4\}$ (everything in the proof is cyclically indexed).  If $\ell(I_i)$ and $\ell(I_{i+1})$ are ultraparallel for some $i\in \{1,2,3,4\}$, then there is a number $N$ in  $\NN$ such that $\ell(I_i^{1/(n+2)})$ and $\ell(I_{i+1}^{1/(n+2)})$ are unlinked for all $n\in \NN$ with $N<n$ since $\ell(I_j^{1/(n+2)})\to \ell(I_j)$ for all $j\in \{1,2,3,4\}$. This is a contradiction. Therefore, $\ell(I_i)$ and $\ell(I_{i+1})$ are  linked for all $i\in \{1,2,3,4\}$. This implies that the quadruple $\fF=(I_1, I_2, I_3, I_4)$ is a frame. Note that  for each $n\in \NN,$ $\closure{\cS}(\fF^{1/(n+2)})\subset \cS(\fF)$ since $\ell(I_i^{1/(n+2)})$ and $\ell(I_i)$ are ultraparallel for all $i\in \{1,2,3,4\}$.  

Given a stitch $s$ lying on $\fF,$ there is a frame $\fF^{1/(m+2)}$ for some $m\in \NN$ such that $s\in \closure{\cS}(\fF^{1/(m+2)})$ since $\ell(I_j^{1/(n+2)})\to \ell(I_j)$ for all $j\in \{1,2,3,4\}$. Thus, $$\cS(\fF)=\bigcup_{n\in \NN}\closure{\cS}(\fF^{1/(n+2)})$$
Observe that $\#^{-1}(R)=\bigcup_{n\in \NN}\closure{\cS}(\fF^{1/(n+2)})$ since $\fF^{1/(n+2)}$ are maximal frame representatives. Thus, $\cS(\fF)=\#^{-1}(R)$.
The second statement comes from the possibility of the choice of $f_R$.
\end{proof}

Given a rectangle $R$, we call any frame that satisfies the conclusion of \reflem{minimalRep} is called a \emph{minimal frame representatives} of $R.$ \reflem{minimalRep} shows that a minimal frame is unique up to reordering of components.

\begin{rmk}
Let $(\sV,\Mrk)$ be a marked veering pair. Assume that $R$ is a rectangle in $\fW^\circ(\sV,\Mrk).$ If $\fF$ is a frame with $\#(\cS(\fF))=R,$ then $\fF$ covers a minimal frame representative of $R.$
\end{rmk}

\begin{rmk}\label{Rmk:cuspCorner}
Let $\sV=\{\sL_1, \sL_2\}$ be a veering pair and $\Mrk$ a marking on $\sV$. Assume that $R$ is a cusp rectangle in $\fW^\circ(\sV,\Mrk)$. By \reflem{minimalRep}, there is a minimal frame representative $\fF$ of $R$. Then, we can take a homeomorphism $\rho_\fF$ as in  \reflem{weavingRect} of which the restriction $\rho_\fF^\circ:=\rho_\fF|(0,1)^2$ is an associated homeomorphism of $R.$ 
Since $R$ is a cusp rectangle, we may assume that $\rho_\fF^\circ$ is extended to $\closure{\rho_\fF^\circ}$ from $[0,1]^2\setminus 0\times 0$ to the closure $\closure{R}$ in $\fW^\circ(\sV,\Mrk)$. Observe that $\closure{\rho_\fF^\circ}=\rho_\fF$ on $[0,1]^2\setminus 0\times 0.$ Therefore, $\rho_\fF(0,0)$ is a cusp or cone class in $\closure{\fW}(\sV)$ since $\rho_\fF([0,1]^2)\subset \closure{\fW}(\sV)$. Hence, we denote $\rho_\fF(0,0)$ by $\cusp(R)$.
\end{rmk}

\begin{rmk}\label{Rmk:tetFrameToRect}
Let $\sV=\{\sL_1, \sL_2\}$ be a veering pair with a marking $\Mrk$. Assume that $R$ is a rectangle in $\fW^\circ(\sV,\Mrk)$ and that there is a tetrahedron frame $\fF$ under $\Mrk$ such that $\#(\cS(\fF))=R$. Then, we can take a homeomorphism  $\rho_{\fF}$ as in \reflem{weavingRect}. Then, since $\fF$ is $i^{th}$-side full for all $i\in \{1,2,3,4\}$, by \reflem{extension} and \refrmk{cuspCorner},  $\rho_{\fF}(0\times (0,1))$, $\rho_{\fF}( (0,1)\times 0)$, $\rho_{\fF}(1\times (0,1))$, and $\rho_{\fF}((0,1)\times 1)$ contain  cusp or cone classes $w_1$, $w_2$, $w_3$,  and $w_4$, respectively. Moreover, by \refrmk{prong}, connected components of $\rho_{\fF}(0\times [0,1])-\{w_1\}$, are contained in different prongs at $w_1$. This implies that $w_1$ is a unique cusp or cone class in $\rho_{\fF}(0\times [0,1])$. Similar statement also holds for $w_2$, $w_3$, and $w_4$. This implies that $R$ is a tetrahedron rectangle in $\fW^\circ(\sV,\Mrk)$. 

Conversely, given a  tetrahedron rectangle $R$ in $\fW^{\circ}(\sV,\Mrk)$, we can take a minimal frame representative $\fF$ for $R$. Then, by \reflem{fullExtension} and \reflem{weavingRect}, it follows that $\fF$ is a tetrahedron frame under $\Mrk.$ Therefore, by \reflem{minimalRep}, there is a one-to-one correspondence between the set of all tetrahedron rectangles in $\fW^{\circ}(\sV, \Mrk)$ and the set of all tetrahedron frames under $\Mrk$ up to reordering of components.
\end{rmk}

\subsection{Construction of a Loom Space}

We now present our main construction of a loom space. 
Then our main theorem of this section is the following:
\begin{thm}\label{Thm:loomspaceconstruction}
Let $\sV=\{\sL_1, \sL_2\}$ be a veering pair with a marking $\Mrk$. Then the universal cover $\widetilde{\fW^\circ}(\sV,\Mrk)$ of the regular weaving $\fW^\circ (\sV,\Mrk)$ is a loom space with the foliations $\widetilde{\cF_1^\circ}(\sV,\Mrk)$ and $\widetilde{\cF_2^\circ}(\sV,\Mrk)$ induced  by lifting $\cF_1^\circ(\sV,\Mrk)$ and $\cF_2^\circ(\sV,\Mrk),$ respectively.
\end{thm}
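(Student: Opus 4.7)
The plan is to identify $\widetilde{\fW^\circ}(\sV,\Mrk)$ with $\RR^2$ equipped with lifted transverse oriented foliations, and then to verify the three loom space axioms by translating them into statements about frames in $\sV$ via the dictionary developed in \refsec{rectangle}.

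By \reflem{weavingIsDisk} the weaving $\fW(\sV)$ is homeomorphic to $\RR^2$, and $\fW^\circ(\sV,\Mrk)$ is obtained from it by removing the closed discrete set of cone classes. Its universal cover is therefore homeomorphic to $\RR^2$, and the transverse foliations of \refthm{foliatedRegWeaving} together with the orientation from \refsec{orientation} lift to transverse oriented foliations $\widetilde{\cF_i^\circ}(\sV,\Mrk)$ on $\widetilde{\fW^\circ}(\sV,\Mrk)$.

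The key translation step is to show that every rectangle $\widetilde R$ in $\widetilde{\fW^\circ}$ projects homeomorphically onto a rectangle in $\fW^\circ(\sV,\Mrk)$. Indeed, the projection $p|\widetilde R$ is a covering onto its image; if the image had nontrivial fundamental group, pulling back a generating loop through the $(0,1)^2$-parameterization of $\widetilde R$ would produce a closed leaf in the rectangle, contradicting \reflem{keyProp}. By \reflem{maximalRep}, \reflem{minimalRep}, and \refrmk{tetFrameToRect}, rectangles in $\widetilde{\fW^\circ}$ correspond (up to reordering of components) to unmarked frames in $\sV$, and tetrahedron rectangles correspond to tetrahedron frames under $\Mrk$. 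Axiom~(ii) follows immediately from \reflem{rectangleisintetra}; axiom~(i) reduces to extending a frame $\fF=(I_1,I_2,I_3,I_4)$ whose corner is the cusp $\cusp(R)$ (\refrmk{cuspCorner}) across the two cusp sides, which is accomplished by choosing unmarked frames whose scraps cover the relevant prongs at $\cusp(R)$ described in \refrmk{prong}, using \reflem{trichotomy2}.

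The main obstacle is axiom~(iii). A tetrahedron rectangle writes as $\rho_\fF([0,1]^2)$ for a tetrahedron frame $\fF=(I_1,I_2,I_3,I_4)$ under $\Mrk$, and the parameters $a$ and $c$ locate the cone/cusp classes guaranteed by \reflem{extension} on the opposite sides $\cI(c_1(\fF),c_2(\fF))$ and $\cI(c_4(\fF),c_3(\fF))$. The alignment $a=c$ would force these two classes to share a common $\sL_1$-leaf $\ell$, making $\ell$ the $\sL_1$-coordinate of two cone classes on opposite sides of the rectangle. The injectivity of $\eta_1|\Mrk$ prevents both from being marked; by \refrmk{twoSingStitch}, if both are singular they come from the same asterisk and so are the \emph{same} singular class in $\closure{\fW}(\sV)$, whereas the two cusps correspond to distinct prongs of this class (\refrmk{prong}), giving a contradiction; the mixed case is incompatible with \reflem{markedSide} and \reflem{singularSide}. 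The condition $b\ne d$ follows symmetrically after exchanging the roles of $\sL_1$ and $\sL_2$.
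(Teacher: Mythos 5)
Your proposal follows essentially the same route as the paper: lift the transverse oriented foliations to the universal cover, establish the correspondence between rectangles upstairs and rectangles (hence frames) downstairs via injectivity of the covering on rectangles (using \reflem{keyProp}), and then verify the three loom axioms through the frame machinery (\reflem{fullExtension} and \reflem{extension} for cusp sides, \reflem{rectangleisintetra} for tetrahedron rectangles, and the prong structure for the parameter condition). One small correction: in your mixed marked/singular case for axiom (iii), \reflem{markedSide} and \reflem{singularSide} concern the same side of a single frame and do not directly apply to cone classes on opposite sides; the contradiction there comes instead from the fact that a marked leaf is real while a leaf carrying a singular stitch is a boundary leaf of a non-leaf gap, which is excluded by \refprop{realGap} --- though the cleaner argument, and the paper's, is simply that $a=c$ would force a single leaf of $\cF_1^\circ(\sV,\Mrk)$ to have both ends of its closure at cone or cusp classes, contradicting \refrmk{prong}.
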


In light of \refthm{weaving} and \refthm{foliatedRegWeaving},  $\fW^\circ(\sV,\Mrk)$ is an open disk with a discrete closed subset of countably many points removed. We then know that that the universal cover $\widetilde{\fW^\circ}(\sV,\Mrk)$ is homeomorphic to the plane $\RR^2$ equipped with (non-singular) transverse foliations $\widetilde{\cF_1^\circ}(\sV,\Mrk)$ and $\widetilde{\cF_2^\circ}(\sV,\Mrk)$ induced by lifting $\cF_1^\circ(\sV, \Mrk)$ and $\cF_2^\circ(\sV, \Mrk),$ respectively.  Therefore, it remains to show that  $\widetilde{\fW^\circ}(\sV,\Mrk)$ fulfils the properties listed in Definition \ref{loomdef}. 

\begin{rmk}\label{Rmk:liftingLeaf}
By \reflem{keyProp}, every leaf of $\widetilde{\cF_1^\circ}(\sV,\Mrk)$ and $\widetilde{\cF_2^\circ}(\sV,\Mrk)$ is homeomorphic to $\RR.$ 
\end{rmk}

Although $\fW^\circ(\sV,\Mrk)$ itself is not a loom space, it shares the same properties that rectangles in a loom space have. 

\begin{lem}\label{Lem:propertiesofweaving}
Let $\sV=\{\sL_1,\sL_2\}$ be a veering pair with a marking $\Mrk$. 
\begin{enumerate}
    \item Any cusp side of a cusp rectangle in $\fW^\circ(\sV,\Mrk)$ is contained in a rectangle in $\fW^\circ(\sV,\Mrk)$.
    \item Any rectangle in $\fW^\circ(\sV,\Mrk)$ is contained in a tetrahedron rectangle. 
    \item For parameters $a,b,c$ and $d$ of a tetrahedron rectangle in $\fW^\circ(\sV,\Mrk)$, we have $a\ne c$ and $b\ne d$. 
\end{enumerate}
\end{lem}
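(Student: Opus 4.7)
For (2), the argument is immediate: take a minimal frame representative $\fF_0$ of $R$ via \reflem{minimalRep} (it is unmarked since $R\subset\fW^\circ(\sV,\Mrk)$), apply \reflem{rectangleisintetra} to obtain a tetrahedron frame $\fF$ under $\Mrk$ covering $\fF_0$, and invoke \refrmk{tetFrameToRect} to conclude that $\#(\cS(\fF))$ is a tetrahedron rectangle containing $R=\#(\cS(\fF_0))$.

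For (3), the key observation is that each element of $\closure{\cF_1}(\sV)$ contains at most one cusp or cone class. Indeed, the element associated to a real leaf $\ell$ of $\sL_1$ is the line $\#(\overt(\ell))$; it carries no cusp or singular class (which arise only from non-leaf gaps), and by the injectivity of $\eta_1|_\Mrk$ in \refdefn{marking} it carries at most one marked class. The element associated to a non-leaf gap $\sG\subset\sL_1$ is a star whose unique cusp or singular class sits at its center, and by \refprop{realGap} boundary leaves of $\sG$ are isolated and hence not real, so they cannot be first coordinates of genuine (marked) stitches; the analogous statement holds for $\closure{\cF_2}(\sV)$. Now let $R$ be a tetrahedron rectangle with parameters $a,b,c,d$. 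By \refrmk{tetFrameToRect}, $R=\#(\cS(\fF))$ for some tetrahedron frame $\fF$ under $\Mrk$, and by \reflem{weavingRect} the map $f_R$ extends uniquely to an injection $[0,1]^2\to \closure{\fW}(\sV)$ sending the removed points $a\times 0,\,1\times b,\,c\times 1,\,0\times d$ to four pairwise distinct cusp or cone classes. If $a=c$, then the vertical leaf $l=f_R(\{a\}\times(0,1))$, a leaf of $\cF_1^\circ(\sV,\Mrk)$ contained in a unique element $l'\in\closure{\cF_1}(\sV)$, has a closure in $\closure{\fW}(\sV)$ meeting both the south corner class at $a\times 0$ and the north corner class at $c\times 1$; hence $l'$ contains two distinct cusp or cone classes, a contradiction. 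The case $b=d$ is entirely analogous using $\closure{\cF_2}(\sV)$.

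For (1), take a minimal frame representative $\fF_0=(I_1,I_2,I_3,I_4)$ of $R$ from \reflem{minimalRep}, and assume (after relabeling) that the cusp is the class $\#(c_1(\fF_0))$, so the west cusp side equals $\#(\cI(c_1(\fF_0),c_4(\fF_0)))$, consisting of the stitches $(\ell(I_1),m)$ with $m$ properly between $\ell(I_2)$ and $\ell(I_4)$. The plan is to construct a frame $\fG=(G_1,I_2,G_3,I_4)$ whose scrap captures exactly these stitches, so that $\#(\cS(\fG))$ is a rectangle containing the west cusp side. Keeping the $\sL_2$ components of $\fG$ equal to those of $\fF_0$ handles the $m$-range automatically, reducing the task to producing $G_1,G_3\in\sL_1$ lying on opposite arc-sides of $\ell(I_1)$ and each linked with $\ell(I_2)$ and $\ell(I_4)$. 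The frame-interior side is easy: the non-isolation of $I_1^*$ (built into the minimality of $\fF_0$) supplies $\sL_1$-leaves accumulating on $\ell(I_1)$ from that side, and any sufficiently close member is still linked with $\ell(I_2)$ and $\ell(I_4)$, so serves as $G_3$. Producing $G_1$ on the opposite ($I_1$) arc-side is more delicate because $I_1$ may be isolated (in the cusp-class or singular-cone cases). Here I would combine the density of $\E{\sL_1}$ (\refcor{denseEnd}) with the unlinkedness of $\sL_1$ (which forces any $\sL_1$-leaf with an endpoint in the arc $I_1$ to have both endpoints in $I_1$), and apply a stem argument to the pair $\{u,v\}\subset I_1$ consisting of the endpoints of $\ell(I_2)$ and $\ell(I_4)$ lying in $I_1$, to exhibit $\ell(G_1)\in\sL_1$ with $G_1\subsetneq I_1$ and $\{u,v\}\subset G_1$. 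The south cusp side is handled symmetrically. The main obstacle is verifying that the stem from $\{u,v\}$ to $I_1$ in $\sL_1$ genuinely descends below $I_1$ in the isolated cases, which requires a careful exploitation of the veering pair structure and the quite-fullness of $\sL_1$.
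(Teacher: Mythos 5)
Your part (2) is exactly the paper's argument. Your part (3) is a correct elaboration of the paper's one-line citation of \refrmk{cuspCorner} and \refrmk{prong}: the observation that each element of $\closure{\cF_i}(\sV)$ carries at most one cusp or cone class is the right mechanism. One small point: you assert without proof that the four corner classes are pairwise distinct, and your contradiction needs the two classes met by the closure of the vertical leaf through $a=c$ to be distinct. If they coincided you would instead need to argue that the leaf would then be contained in a single prong, whose closure is an arc ending on $S^1$ by \refrmk{prong}, so it cannot limit onto the same class from both ends; this is easily patched but should be said.

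Part (1) has a genuine gap, in two places. First, you yourself leave the hardest case open: when the cusp corner $c_1(\fF_0)$ is a singular or cusp class, $I_1$ is isolated, and producing $G_1$ with $\ell(G_1)$ \emph{ultraparallel} to $\ell(I_1)$ (parallel is not enough, since ``lies on a frame'' requires \emph{properly} lying between the sides) is precisely the content you defer to ``a careful exploitation of the veering pair structure.'' The resolution goes through the tip pair $\tipp{t}=\{J_1,I_1^*\}$ at the shared tip and the fact that $J_1$ is not isolated because $\tipg{t}$ is a real gap (\refprop{realGap}), so one can descend past $\ell(J_1)$; but as written this step is missing, and it is not a formality. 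Second, and unaddressed entirely: even once $\fG=(G_1,I_2,G_3,I_4)$ is built, the stitches lying strictly between $\ell(G_1)$ and $\ell(I_1)$ sit \emph{outside} the original rectangle $R$ and may well be marked or singular, in which case $\#(\cS(\fG))$ meets cone classes and is therefore a rectangle in $\fW(\sV)$ but not in $\fW^\circ(\sV,\Mrk)$, which is what the lemma demands. You must choose $G_1$ so that $\fG$ is unmarked and its scrap contains no singular stitch. The paper's proof avoids both problems at once by a different route: it applies \reflem{extension} and \reflem{fullExtension} to produce an \emph{unmarked} $I_1$-side extension $\fF^1$ of the minimal representative that is $I_1$-side \emph{full}; fullness stops the extension exactly at the first marked or singular stitch encountered, so $\#(\cS(\fF^1))$ automatically lies in $\fW^\circ(\sV,\Mrk)$ and contains the west cusp side, with \refprop{scrapIsRegular} guaranteeing regularity. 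I recommend you replace your ad hoc construction of $\fG$ by an appeal to that machinery, or else supply the two missing verifications explicitly.
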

\begin{proof} Due to \reflem{minimalRep}, given a rectangle $R$ in $\fW^\circ(\sV,\Mrk),$ we can take a minimal frame represenative $\fF$ of $R.$

(1) Let $R$ be a cusp rectangle in $\fW^\circ(\sV,\Mrk)$ and $\fF$ a minimal frame representative of $R.$ Then,  we can take a homeomorphism $\rho_\fF$ as in \reflem{weavingRect}. For simplicity, we say that $\rho_\fF(0,0)$ is $\cusp(R).$ See \refrmk{cuspCorner}. Then, $c_1(\fF)$ is a marked or singular stitch and as $\fF$ is a minimal representative of $R,$ $\cI(c_1(\fF), c_4(\fF))$ and $\cI(c_1(\fF),c_2(\fF))$ have no marked or singular stitch. By \reflem{extension}, $\fF$ is not $1^{st}$-side and $2^{nd}$-side full. By \reflem{fullExtension}, there is unmarked $1^{st}$-side extension $\fF^1$ of $\fF$ that is $1^{st}$-side full. Also, there is an unmarked $2^{nd}$-side extension $\fF^2$ of $\fF$ that is $2^{nd}$-side full. Then, by \reflem{weavingRect}, $\#(\cS(\fF^1))$ and $\#(\cS(\fF^2))$ are rectangles containing 
the cusp sides $\rho_\fF(0\times (0,1))$ and $\rho_\fF([0,1]\times 0)$ of $R,$ respectively.

(2) Let $R$ a rectangle in $\fW^\circ(\sV,\Mrk)$ and $\fF$ a minimal frame representative of $R.$ By \reflem{rectangleisintetra}, $\fF$ is covered by a tetrahedron frame $\fF'$ under $\Mrk.$ By \refrmk{tetFrameToRect}, $\cS(\fF')$ is a tetrahedron rectangle in $\fW^\circ(\sV,\Mrk)$ containing $R.$

(3) It follows from \refrmk{cuspCorner} and \refrmk{prong}.
\end{proof}

\begin{lem}\label{Lem:correspondence}
Let $D$ be the group of deck transformations of the covering $p:\widetilde{\fW^\circ} (\sV,\Mrk)\to \fW^\circ (\sV,\Mrk)$. 
\begin{enumerate}
    \item The covering map $p$ sends a rectangle to a rectangle.
    \item For any $g\in D\setminus\{1\}$ and any rectangle $R$ in $\widetilde{\fW^\circ} (\sV,\Mrk)$, $g(R)$ is also a rectangle of $\widetilde{\fW^\circ} (\sV,\Mrk)$ and $g(R)\cap R=\emptyset$.
    \item For a rectangle $R$ in $\fW^\circ(\sV,\Mrk)$, each component of $p^{-1}(R)$ is a rectangle. 
\end{enumerate}
\end{lem}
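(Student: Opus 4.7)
My plan is to prove (3) first, then (2), and finally deduce (1) from (2). For (3), the key observation is that a rectangle $R$ in $\fW^\circ(\sV,\Mrk)$ is homeomorphic to $(0,1)^2$ via its associated homeomorphism $f_R$, hence is simply-connected and locally path-connected. Standard covering-space theory therefore implies that $R$ is evenly covered: each connected component $\widetilde{R}_\alpha$ of $p^{-1}(R)$ is mapped homeomorphically onto $R$ by $p$, with local inverse $s_\alpha \colon R \to \widetilde{R}_\alpha$. Since $p$ carries each leaf of $\widetilde{\cF_i^\circ}(\sV,\Mrk)$ into a leaf of $\cF_i^\circ(\sV,\Mrk)$, the inverse $s_\alpha$ sends leaves to leaves, and the composite $s_\alpha \circ f_R$ is a foliation-preserving homeomorphism $(0,1)^2 \to \widetilde{R}_\alpha$. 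Thus each component is a rectangle.

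For (2), the crucial intermediate fact is that the restriction of $p$ to any leaf of $\widetilde{\cF_i^\circ}(\sV,\Mrk)$ is a \emph{homeomorphism} onto a leaf of $\cF_i^\circ(\sV,\Mrk)$. Indeed, each leaf $\widetilde{L}$ of $\widetilde{\cF_i^\circ}(\sV,\Mrk)$ is a connected component of the preimage of a leaf of $\cF_i^\circ(\sV,\Mrk)$, hence $p\lvert\widetilde{L}$ is a covering map, and since both the source and target are copies of $\RR$ by \reflem{keyProp} and \refrmk{liftingLeaf}, this covering must be trivial. Also, a deck transformation $g\in D$ commutes with $p$, hence preserves the lifted foliations, so $g(\widetilde{R})$ is automatically a rectangle whenever $\widetilde{R}$ is.

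Suppose toward contradiction that $g(\widetilde{R})\cap \widetilde{R}\ne\emptyset$ for some nontrivial $g\in D$. Pick $\widetilde{x}\in \widetilde{R}$ with $\widetilde{y}:=g(\widetilde{x})\in \widetilde{R}$; freeness of the $D$-action forces $\widetilde{x}\ne\widetilde{y}$, while $p(\widetilde{x})=p(\widetilde{y})$. Fix an associated homeomorphism $\widetilde{f}\colon (0,1)^2\to \widetilde{R}$ and write $\widetilde{f}^{-1}(\widetilde{x})=(s_x,t_x)$, $\widetilde{f}^{-1}(\widetilde{y})=(s_y,t_y)$. By symmetry we may assume $s_x\ne s_y$. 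Set $\widetilde{z}:=\widetilde{f}(s_x,t_y)$, which lies on the same vertical leaf $\widetilde{L_x}$ as $\widetilde{x}$ and on the same horizontal leaf $\widetilde{M_y}$ as $\widetilde{y}$. Since a unique leaf of $\cF_1^\circ(\sV,\Mrk)$ passes through each point, $p(\widetilde{L_x})=p(\widetilde{L_y})=:L$, and similarly $p(\widetilde{M_x})=p(\widetilde{M_y})=:M$. By \reflem{keyProp}, $L\cap M=\{p(\widetilde{x})\}$, so $p(\widetilde{z})=p(\widetilde{x})$. Because $p\lvert\widetilde{L_x}$ is a homeomorphism, $\widetilde{z}=\widetilde{x}$, forcing $t_x=t_y$. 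Now $\widetilde{x}$ and $\widetilde{y}$ lie on the common horizontal leaf $\widetilde{M_x}$, project to the same point, and $p\lvert\widetilde{M_x}$ is a homeomorphism, so $\widetilde{x}=\widetilde{y}$, contradiction.

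For (1), note that (2) forces $p\lvert\widetilde{R}$ to be injective: a coincidence $p(\widetilde{x})=p(\widetilde{y})$ with $\widetilde{x}\ne\widetilde{y}$ in $\widetilde{R}$ would produce a nontrivial $g$ with $g(\widetilde{x})=\widetilde{y}\in g(\widetilde{R})\cap \widetilde{R}$. Since $p$ is a local homeomorphism, $p\lvert\widetilde{R}$ is open and injective, hence a homeomorphism onto the open set $p(\widetilde{R})$; composing with $\widetilde{f}$ gives a foliation-preserving homeomorphism $(0,1)^2\to p(\widetilde{R})$, so $p(\widetilde{R})$ is a rectangle. The only subtle step I anticipate is cleanly justifying the leaf-restriction fact from \refrmk{liftingLeaf}; everything else is a direct combinatorial dance inside the rectangle chart, powered by the one-point-intersection property of \reflem{keyProp}.
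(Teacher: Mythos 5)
Your proof is correct and rests on the same ingredients as the paper's: \reflem{keyProp} (leaves intersect in at most one point, and leaves are copies of $\RR$) and \refrmk{liftingLeaf}. The only structural difference is the ordering --- the paper proves (1) first (directly showing $p\circ f_R$ is injective by deriving a contradiction with a leaf becoming a circle) and then deduces (2) from (1) and (3) from simple-connectedness, whereas you prove (3), then (2), then derive (1) from (2). These are logically equivalent rearrangements of the same argument. Your explicit intermediate claim, that $p$ restricted to a leaf of the lifted foliation is a homeomorphism onto a leaf downstairs, is exactly what the paper uses implicitly when it concludes that a non-injective projection would force the downstairs leaf to be a circle (a covering $\RR \to L$ that is non-injective has $L \cong S^1$). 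You are right to flag the leaf-restriction fact as the one step needing care; it follows from choosing evenly-covered neighborhoods inside foliated charts so that they meet the leaf in a single plaque, making $p$ restricted to a component of $p^{-1}(L)$ a covering of $L \cong \RR$, hence trivial. With that in hand, the rest of the ``rectangle chart dance'' is sound.
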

\begin{proof}
(1) Let $R$ be a rectangle and let $f_R:(0,1)\times(0,1) \to R$ be the associated homeomorphism.   We claim that $p\circ f_R$ is a homeomorphism onto its image. We first show that $p\circ f_R$ is injective. 

On the contrary, suppose that there are two distinct points $a\times b$ and $c\times d$ in $(0,1)\times(0,1)$ that are mapped by $p\circ f_R$ to a same point in $\fW^\circ (\sV,\Mrk)$. Then, there are leaves $l_1$ and $l_2$ of $\widetilde{\cF_1^\circ} (\sV,\Mrk)$ containing $f_R(a\times [0,1])$ and $f_R(c\times [0,1]),$ respectively. By definition, $p(l_1)$ and $p(l_2)$ are also leaves of $\cF_1^\circ (\sV,\Mrk)$ and by the assumption that $p\circ f_R(a,b)=p\circ f_R(c,d),$ $p(l_1)$ and $p(l_2)$ are equal. We say that $l=p(l_1)=p(l_2).$ 

Observe that if  $a=c,$ then $b\neq d$ and $l_1=l_2.$ This implies that $l$ is homeomorphic to the circle. See \refrmk{liftingLeaf}. This is a contradiction by \reflem{keyProp}. Therefore, $a\neq c.$

On the other hand, there is a leaf $m$ of $\widetilde{\cF_2^\circ} (\sV,\Mrk)$ containing $f_R([0,1]\times 1/2).$ Also, by definition, $p(m)$ is  a leaf of $\cF_2^\circ(\sV, \Mrk).$ Then, if $p\circ f_R(a,1/2)\neq p\circ f_R(c,1/2),$ then $l$ and $p(m)$ intersect in two distinct points and it is a contradiction by \reflem{keyProp}. Hence, $p\circ f_R(a,1/2)= p\circ f_R(c,1/2).$ This implies that    $p(m)$ is homeomorphic to the circle as $a\neq c$. See \refrmk{liftingLeaf}. This is also a contradiction by \reflem{keyProp}. Therefore, $p \circ f_R$ is injective. 

Moreover, $p\circ f_R$ is an open map being the composition of a  homeomorphism and a covering map. Therefore, $p\circ f_R((0,1)\times (0,1))$ is a rectangle in $\fW^\circ(\sV,\Mrk)$.

(2) Let $R$ be a rectangle in $\widetilde{\fW^\circ} (\sV,\Mrk)$ with the associated homeomorphism $f_R:(0,1)\times (0,1)\to R$. Since the induced transverse foliations in $\widetilde{\fW^\circ} (\sV, \Mrk)$ are $D$-invariant, $g\circ f_R$ is a homeomorphism onto $g(R)$ that preserves the transverse foliations. This shows that $g(R)$ is a rectangle. The assertion that $g(R)\cap R = \emptyset$ follows from (1). 

(3) Clear from the lifting property together with the fact that $R$ is simply-connected.
\end{proof}

\begin{lem}\label{Lem:containedintetra}
Any rectangle in $\widetilde{\fW^\circ} (\sV,\Mrk)$ is contained in a tetrahedron rectangle in $\widetilde{\fW^\circ} (\sV,\Mrk)$. 
\end{lem}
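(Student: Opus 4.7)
The approach is to push $\widetilde R$ down to $\fW^\circ(\sV,\Mrk)$, enclose its image in a tetrahedron rectangle there, and then lift that tetrahedron rectangle back to $\widetilde{\fW^\circ}(\sV,\Mrk)$ using the standard covering space lifting criterion.

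Concretely, set $R := p(\widetilde R)$, which is a rectangle in $\fW^\circ(\sV,\Mrk)$ by \reflem{correspondence}(1). Apply \reflem{propertiesofweaving}(2) to obtain a tetrahedron rectangle $T \supset R$ together with an associated homeomorphism $f_T\colon (0,1)^2 \to T$ that extends to $\overline{f_T}\colon [0,1]^2 \setminus X \to \overline{T}$, where $X = \{a \times 0,\, 1 \times b,\, c \times 1,\, 0 \times d\}$. The space $[0,1]^2 \setminus X$ is a closed square with four boundary points removed, hence is path connected, locally path connected, and simply connected.

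Choose any $\widetilde r \in \widetilde R$ and $q \in (0,1)^2$ with $f_T(q) = p(\widetilde r)$. The covering space lifting criterion then produces a unique continuous lift $\widetilde{\overline{f_T}}\colon [0,1]^2 \setminus X \to \widetilde{\fW^\circ}(\sV,\Mrk)$ of $\overline{f_T}$ satisfying $\widetilde{\overline{f_T}}(q) = \widetilde r$. This lift is automatically injective since $p \circ \widetilde{\overline{f_T}} = \overline{f_T}$ is injective. By uniqueness of lifts on the simply connected subset $(0,1)^2$, the restriction $\widetilde{\overline{f_T}}|_{(0,1)^2}$ coincides with $(p|_{\widetilde T})^{-1}\circ f_T$, where $\widetilde T$ denotes the component of $p^{-1}(T)$ containing $\widetilde R$; in particular $\widetilde R \subset \widetilde T = \widetilde{\overline{f_T}}((0,1)^2)$.

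Since $p$ is a local homeomorphism preserving the induced transverse foliations and the chosen orientation, $\widetilde{\overline{f_T}}|_{(0,1)^2}$ is an orientation preserving homeomorphism onto $\widetilde T$ carrying vertical and horizontal lines of $(0,1)^2$ to leaves of $\widetilde{\cF_1^\circ}(\sV,\Mrk)$ and $\widetilde{\cF_2^\circ}(\sV,\Mrk)$, so $\widetilde T$ is a rectangle, and the continuous injective extension $\widetilde{\overline{f_T}}$ then certifies that $\widetilde T$ is a tetrahedron rectangle containing $\widetilde R$ with the same parameters $a,b,c,d$ as $T$. The step that demands the most care is verifying that the image of $\widetilde{\overline{f_T}}$ coincides with the closure $\overline{\widetilde T}$ in $\widetilde{\fW^\circ}(\sV,\Mrk)$; this is where we use that $\overline T$ is simply connected inside $\fW^\circ(\sV,\Mrk)$ (being homeomorphic to a closed disk minus four boundary points), so that $p^{-1}(\overline T)$ decomposes into disjoint homeomorphic copies of $\overline T$ and the copy containing $\widetilde T$ is precisely $\overline{\widetilde T}$.
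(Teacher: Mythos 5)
Your proposal is correct and follows essentially the same route as the paper: project down via $p$, enclose $p(\widetilde R)$ in a tetrahedron rectangle using \reflem{propertiesofweaving}, and lift the extended homeomorphism back using simple connectivity of $[0,1]^2$ minus the four boundary points. The only cosmetic difference is that you pin down the lift by a base-point condition so that it contains $\widetilde R$ directly, whereas the paper lifts arbitrarily and then applies a deck transformation at the end.
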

\begin{proof}
Let $R$ be a rectangle in $\widetilde{\fW^\circ} (\sV,\Mrk)$.
In light of \reflem{correspondence}, we know that $p(R)$ is a rectangle in $\fW^\circ(\sV,\Mrk)$ and by \reflem{propertiesofweaving}, $p(R)$ is contained in a tetrahedron rectangle $R'$.

Given a tetrahedron rectangle $R'$ in $\fW^\circ (\sV,\Mrk)$ with a homeomorphism $f_{R'}:(0,1)\times (0,1) \to \fW^\circ (\sV,\Mrk),$ we have the extended homeomorphism $\overline{f_{R'}}:[0,1]\times [0,1]\setminus \{a \times 0,1\times b, c\times 1, 0\times d\}\to \overline{R}$ for some $0<a,b,c,d<1$. Because the domain of $\overline{f_{R'}}$ is simply-connected and $\overline{f_{R'}}$ is a homeomorphism onto its image, we know that there is a lift $\widetilde{\overline{f_{R'}}}:[0,1]\times [0,1]\setminus \{a \times 0,1\times b, c\times 1, 0\times d\} \to \widetilde{\fW^\circ}(\sV,\Mrk)$. 

Let $Z$ be the image of $\widetilde{\overline{f_{R'}}}$. Since the image $\overline{R'}$ of $\overline{f_{R'}}$ is simply-connected, 
we know that the connected component of $p^{-1}(\overline{R'})$ that contains $Z$ is $Z$ itself. Since $\overline{R'}$ is closed, one of its lift $Z$ is also closed in $\widetilde{\fW^\circ}(\sV,\Mrk)$. Hence, the interior $\interior{Z}$ of $Z$ is, by definition, a tetrahedron rectangle. Finally, by \reflem{correspondence}, $g(\interior{Z})$ is a tetrahedron rectangle containing $R $ for some $g\in D.$ 
\end{proof}

We summarize the above results in the following form. 

\begin{prop}\label{Prop:fullcorrespondence}
Let $\sV=\{\sL_1,\sL_2\}$ be a veering pair with a marking $\Mrk$. Let $p:\widetilde{\fW^\circ}(\sV,\Mrk)\to \fW^\circ(\sV,\Mrk)$ be the universal covering map. Then, $p$ induces a one-to-one correspondence:
\[
\{\text{Rectangles in }\widetilde{\fW^\circ} (\sV,\Mrk)\}/{\sim} \overset{\mathfrak{p}}{\longrightarrow} \{\text{Rectangles in }\fW^\circ (\sV,\Mrk)\}
\]
where two rectangles in $\widetilde{\fW^\circ} (\sV,\Mrk)$ are equivalent if there is a deck transformation that maps one to the other. Moreover, this correspondence respects being cusp, red/blue edge, face or tetrahedron.
\end{prop}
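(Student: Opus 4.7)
The plan is to check, in order, (a) that $\mathfrak{p}$ is well-defined, (b) that $\mathfrak{p}$ is surjective, (c) that $\mathfrak{p}$ is injective on equivalence classes, and finally (d) that the assorted skeletal types are preserved. Step (a) is immediate: Lemma~\ref{Lem:correspondence}(1) shows that $p$ takes a rectangle upstairs to a rectangle downstairs, and Lemma~\ref{Lem:correspondence}(2) shows that any deck translate of a rectangle is again a rectangle that projects to the same downstairs rectangle, so the induced map on equivalence classes is well-defined.

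For surjectivity, I would take a rectangle $R\subset \fW^\circ(\sV,\Mrk)$ with its associated homeomorphism $f_R\colon(0,1)^2\to R$. Because $(0,1)^2$ is simply-connected, the lifting criterion produces a continuous lift $\widetilde{f_R}\colon (0,1)^2 \to \widetilde{\fW^\circ}(\sV,\Mrk)$ of $f_R$. Since $p$ is a covering and $f_R$ is an embedding, $\widetilde{f_R}$ is an embedding onto its image $\widetilde{R}$. The leaves of $\widetilde{\cF_i^\circ}(\sV,\Mrk)$ are, by definition, the connected components of the $p$-preimages of leaves of $\cF_i^\circ(\sV,\Mrk)$, and the orientation on the universal cover is pulled back from the one fixed on $\fW^\circ(\sV,\Mrk)$, so $\widetilde{f_R}$ sends vertical/horizontal lines to leaves of $\widetilde{\cF_1^\circ}(\sV,\Mrk)$/$\widetilde{\cF_2^\circ}(\sV,\Mrk)$ respectively and is orientation-preserving. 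Hence $\widetilde{R}$ is a rectangle, and $\mathfrak{p}([\widetilde{R}])=R$.

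For injectivity, suppose $\widetilde{R}_1$ and $\widetilde{R}_2$ are rectangles upstairs with $p(\widetilde{R}_1)=p(\widetilde{R}_2)=R$. Let $\widetilde{f}_1,\widetilde{f}_2\colon (0,1)^2 \to \widetilde{\fW^\circ}(\sV,\Mrk)$ be their associated homeomorphisms. Then both $p\circ \widetilde{f}_1$ and $p\circ \widetilde{f}_2$ are associated homeomorphisms of $R$, so by Remark~\ref{Rmk:rectWellDefine} they differ at most by precomposition with the $180^\circ$ rotation $\pi(x,y)=(1-x,1-y)$ of $(0,1)^2$. After replacing $\widetilde{f}_2$ by $\widetilde{f}_2\circ\pi$ if necessary, we may assume $p\circ\widetilde{f}_1=p\circ\widetilde{f}_2$. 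Pick any $q\in(0,1)^2$; then $\widetilde{f}_1(q)$ and $\widetilde{f}_2(q)$ lie in the same $p$-fiber, so there is a unique deck transformation $g$ with $g(\widetilde{f}_1(q))=\widetilde{f}_2(q)$. Uniqueness of lifts then forces $g\circ \widetilde{f}_1=\widetilde{f}_2$, whence $g(\widetilde{R}_1)=\widetilde{R}_2$ and $[\widetilde{R}_1]=[\widetilde{R}_2]$.

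Finally, the skeletal types (cusp, red/blue edge, face, tetrahedron) are all defined by the requirement that the associated homeomorphism extends continuously over $[0,1]^2$ with a prescribed finite subset $F\subset \partial([0,1]^2)$ removed. To pass from upstairs to downstairs, I post-compose such an extension with $p$; it remains continuous and injective on its image, because two points of $[0,1]^2\setminus F$ cannot map to the same point of $\overline{R}$ (any collision would force a self-intersection of a leaf, contradicting Lemma~\ref{Lem:keyProp} applied downstairs, or give rise to a nontrivial loop through $R$). To pass from downstairs to upstairs, I observe that $[0,1]^2\setminus F$ is in each of these cases simply-connected, so the extension lifts along $p$ and then glues to the lift $\widetilde{f_R}$ by unique continuation. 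The main obstacle I anticipate is this structure-preservation step, specifically verifying that the projected extension remains injective and that the lifts downstairs give the correct number of cusp/edge/face/tetrahedron features; this is handled using Lemma~\ref{Lem:containedintetra}, Remark~\ref{Rmk:tetFrameToRect}, and the minimal/maximal frame representatives from Lemmas~\ref{Lem:maximalRep} and \ref{Lem:minimalRep}, which give a concrete combinatorial model for each rectangle type that is stable under both projection and lifting.
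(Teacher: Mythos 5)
Your proposal is correct and follows essentially the same route as the paper: well-definedness via Lemma~\ref{Lem:correspondence}, bijectivity via lifting over the simply-connected domain $(0,1)^2$ and transitivity of the deck group on components of $p^{-1}(R)$, and type preservation by lifting/projecting the boundary extensions as in Lemma~\ref{Lem:containedintetra} (the paper in fact leaves the surjectivity/injectivity step largely implicit, which you spell out). One small imprecision: Remark~\ref{Rmk:rectWellDefine} only gives that two associated homeomorphisms differ by a foliation-preserving map isotopic to the identity or to $\pi$, not by exactly $\id$ or $\pi$; this does not affect your injectivity argument, since it suffices that $\widetilde{R}_1$ and $\widetilde{R}_2$ are both components of $p^{-1}(R)$ and the deck group acts transitively on these.
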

\begin{proof}
The correspondence $\mathfrak{p}$ is simply to map an equivalence class of rectangles $R$ in $\widetilde{\fW^\circ}(\fV,\Mrk)$ to $p(R)$. $\mathfrak{p}$ is well-defined by \reflem{correspondence}. It suffices to show that this correspondence respects the types of rectangles. 

We first show that $\mathfrak{p}$ sends tetrahedron rectangles to tetrahedron rectangles. According to \reflem{containedintetra} and its proof, a tetrahedron rectangle in $\fW^\circ(\sV,\Mrk)$ can be lifted to a tetrahedron rectangle in $\widetilde{\fW^\circ}(\sV,\Mrk)$. Conversely, a tetrahedron rectangle in $\widetilde{\fW^\circ}(\sV,\Mrk)$ is mapped to a tetrahedron rectangle by the same argument of the proof of \reflem{correspondence}. 

For the remaining cases, we use \reflem{containedintetra} and the one-to-one correspondence between tetrahedron rectangles. 
\end{proof}

Now we can prove the main theorem of this section.
\begin{proof}[Proof of \refthm{loomspaceconstruction}]
As mentioned in the beginning of this subsection, it is enough to show that rectangles in $\widetilde{\fW^\circ}(\sV,\Mrk)$ satisfy the properties in Definition \ref{loomdef}.

Let $R$ be a cusp rectangle in $\widetilde{\fW^\circ}(\sV,\Mrk)$ with a cusp side $e$. According to \refprop{fullcorrespondence}, $p(R)$ is the corresponding cusp rectangle in $\fW^\circ(\sV,\Mrk)$ with the cusp edge $p(e)$. By \reflem{propertiesofweaving}, we know that there is a rectangle $R'$ in $\fW^\circ(\sV,\Mrk)$ that contains $p(e)$. Then the corresponding equivalence class of rectangles in $\widetilde{\fW^\circ}(\sV,\Mrk)$ has a representative $\widetilde{R'}$ that contains $e$. Hence we have the first property. 

The second property is already proven in \reflem{containedintetra}.

For the last property, let $a,b,c$ and $d$ be parameters for a tetrahedron rectangle $R$ in $\widetilde{\fW^\circ}(\sV,\Mrk)$. Then $p(R)$ is a tetrahedron rectangle in $\fW^\circ(\sV,\Mrk)$ with the same parameters $a,b,c$ and $d$. By \reflem{propertiesofweaving}, we know that $a\ne c$ and $b\ne d$. This completes the proof of the theorem.
\end{proof}

\part{Veering Pair Preserving Laminar Groups}\label{grouppart}

In this part, we introduce laminar automorphisms and show that any groups of laminar automorphisms of a given veering pair is an irreducible 3-orbifold group. On the way of the proof, we prove the classification theorems for the actions of laminar automorphisms of pseudo-fibered pairs, e.g. \refthm{classification}, \refthm{elementary}, and \refthm{gapStab}.

\section{Laminar Automorphisms}
We begin with defining terms related to laminar groups.

Let $g$ be a homeomorphism on $S^1.$ Then we say that $g$ is \emph{orientation preserving} if for any clockwise triple $(x_1,x_2,x_3)$ in $S^1,$ $(g(x_1),g(x_2),g(x_3))$ is clockwise. We denote the group of orientation preserving homeomorphisms on $S^1$ by $\Homeop(S^1).$

Let $g$ be an element in $\Homeop(S^1)$. We denote the \emph{fixed point set} of $g$ on $S^1$ by $\Fix{g}$ and the \emph{periodic point set} of $g$ by $\Per{g}$. When $p$ is a fixed point of $g$, $p$ is an \emph{attracting fixed point} of $g$ if there is a good interval $I$ such that $I\cap \Fix{g}=\{p\}$ and $g(\closure{I}) \subset I$. Likewise, $p$ is a \emph{repelling fixed point} of $g$ if there is a good interval $I$ such that $I\cap \Fix{g}=\{p\}$ and $ I\subset g(\closure{I})$.

Assume that $g$ is a non-trivial element in $\Homeop(S^1).$ Then, we say that
\begin{itemize}
    \item $g$ is \emph{elliptic} if $g$ is of finite order.
    \item $g$ is \emph{parabolic} if $g$ has a unique fixed point.
    \item $g$ is \emph{hyperbolic} if $g$ has exactly two fixed points such that one is attracting and the other is repelling. 
\end{itemize}
We also say that $g$ is \emph{M\"obius-like} if $g$ is either elliptic, parabolic, or hyperbolic.

Let $\sL$ be a lamination system. We call an element $g$ in $\Homeop(S^1)$ a \emph{laminar automorphism} of $\sL$ if $g$ \emph{preserves}  $\sL,$ namely, $g(\sL)=\sL.$ Also, we define the \emph{laminar automorphism group} of $\sL$ to be the set of all laminar automorphisms of $\sL$ and denote it by $\Aut(\sL).$ 

A subgroup $G$ of $\Homeop(S^1)$ is said to be \emph{laminar} if there is a lamination system $\sL$ such that $G\leq \Aut(\sL).$ 

Let $\sC=\{\sL_\alpha\}_{\alpha\in \Gamma}$ be a collection of lamination systems. Then the \emph{laminar automorphism group} $\Aut(\sC)$ of $\sC$ is
$$\bigcap_{\alpha \in \Gamma} \Aut(\sL_\alpha),$$
and each element of $\Aut(\sC)$ is called an \emph{laminar automorphism} of $\sC.$ 

When $g$ is a non-trivial laminar automorphism of a pseudo-fibered pair $\sC,$ $g$ is said to be \emph{pseudo-Anosov-like} or \emph{pA-like} if $g$ is neither elliptic nor parabolic and $g$ preserves an interleaving pair of $\sC.$  In particular, a pA-like homeomorphism with non-empty fixed point set is called a \emph{properly pseudo-Anosov}.

\section{Laminar Automorphisms of Quite Full Lamination Systems}\label{Sec:laminarpropertysec}

In the subsequent  sections, we analyze the circle actions of laminar automorphisms of pseudo-fibered pairs. Before that, we study dynamics of laminar automorphisms preserving a single quite full lamination system. 

\subsection{Obstruction from the Rotation number}

In this subsection, we show that any element $g$ in $\Homeop(S^1)$ preserving a quite full lamination system has a rational rotation number. In other words, there is a periodic point. See~\cite[Chapter~11]{Katok95} and~\cite[Section~5]{Ghys01} for the rotation number theory.

First, we observe that any irrational rotation can not preserve a lamination system.

\begin{prop}\label{Prop:irrationalRot}
Let $g\in \Homeop(S^1).$ Suppose that $g$ is topologically transitive. Then, there is no lamination system $\sL$ such that $g\in \Aut(\sL)$.  
\end{prop}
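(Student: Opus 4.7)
My plan is to argue by contradiction: assume $g\in \Homeop(S^1)$ is topologically transitive and preserves some lamination system $\sL$, and then produce a pair of linked elements of $\sL$, contradicting the unlinkedness axiom.

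The first and most delicate step is to reduce to the case where $g$ is an irrational rotation. Topological transitivity forbids periodic points, since if $g$ had a periodic point of period $q$ then every orbit of $g^q$ would either be periodic or accumulate on the periodic set, preventing the existence of a dense $g$-orbit. Hence $g$ has irrational rotation number. Poincar\'e's classification then offers only two scenarios: either $g$ is topologically conjugate to an irrational rotation $R_\alpha$, or $g$ is semi-conjugate to $R_\alpha$ by collapsing complementary intervals of a Cantor minimal set (the Denjoy situation). In the latter case, every orbit is dense in the Cantor minimal set but nowhere dense in $S^1$, which contradicts transitivity. So the Denjoy case is excluded, and $g$ is conjugate to some irrational rotation $R_\alpha$. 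Since ``being a lamination system'' and ``being linked'' are both topologically invariant notions, conjugating by the Poincar\'e homeomorphism lets me assume $g = R_\alpha$ on $S^1 = \RR/\ZZ$ outright.

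Next I pick any $I \in \sL$. Since $I^*$ also lies in $\sL$, I may replace $I$ by whichever of $I, I^*$ has arc-length at most $1/2$, so without loss of generality $|I| \leq 1/2$; write $I = \opi{u}{v}$. Density of the $R_\alpha$-orbit of $u$ produces a nonzero integer $n$ with $g^n(u) \in \opi{u}{v}$. Because $g^n$ is itself a rigid rotation, $g^n(I)$ is an arc of the same length $|I|\leq 1/2$, one of whose endpoints, $g^n(u)$, already sits inside $I$. A one-line length computation then forces the other endpoint $g^n(v)=g^n(u)+|I|\pmod 1$ to lie in $I^* = \opi{v}{u}$, because otherwise the two arcs $I$ and $g^n(I)$ (both of length at most $1/2$) would have to coincide, an impossibility since $n\alpha \notin \ZZ$. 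Thus exactly one endpoint of $g^n(I)$ lies in $I$, so $I$ and $g^n(I)$ are linked.

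This is the desired contradiction: $g^n(I) \in \sL$ by $g$-invariance, yet $I$ and $g^n(I)$ are linked, violating the unlinkedness axiom. The step I expect to be the main obstacle is the first one, because it requires more than just rotation-number information: one has to genuinely rule out the Denjoy-type semi-conjugacy, for which transitivity (as opposed to mere irrational rotation number) is essential. Once this reduction is secured, the rest of the proof is an elementary arc-length argument on the circle.
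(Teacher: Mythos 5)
Your proof is correct and follows essentially the same route as the paper: conjugate $g$ to an irrational rotation (the paper likewise invokes transitivity to get an actual conjugacy, which is where the Denjoy case you discuss is implicitly excluded), transport the lamination system, and then produce a linked pair $\{I, g^n(I)\}$ from the orbit of a single leaf. The only difference is cosmetic — the paper finds the linked pair via a return time with $R_\tau^{r_k}(a)$ close to $a$, while you use density plus the normalization $|I|\le 1/2$ — so this is the same argument in substance.
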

\begin{proof}
Let $\tau(g)$ be the rotation number of $g.$ Since $g$ is topologically transitive, $\tau(g)$ is irrational and there is a $\phi$ in $\Homeop(S^1)$ such that $\phi^{-1} g \phi= R_{\tau(g)}$ where for each $\alpha\in \RR,$ $R_\alpha$ is the map defined as 
$$R_\alpha(z)=e^{2\pi i\alpha}z.$$

Now assume that there is a lamination system $\sL$ such that $g\in \Aut(\sL)$. Then $\phi^{-1}(\sL)$ is also a lamination system and  $R_{\tau(g)}\in \Aut(\phi^{-1}(\sL))$. Let $\opi{a}{b}$ be a good open interval in $\phi^{-1}(\sL)$. Since $R_{\tau(g)}$ is minimal, there is a sequence $\seq{r_n}$ in $\ZZ\setminus 0$ such that $\seq{(R_{\tau(g)})^{r_n}(a)}$ converges to $a.$ Then $\{a,b\}$ and $\{(R_{\tau(g)})^{r_k}(a), (R_{\tau(g)})^{r_{k}}(b)\}$ are linked for some $k\in \NN.$ It is a contradiction by the unlinkedness. Thus, there is no lamination system $\sL$ such that $g\in \Aut(\sL).$ 
\end{proof}

Then, we show the main lemma in this section.

\begin{lem}\label{Lem:rationalRot}
Let $g\in \Homeop(S^1)$ and $\sL$ a quite full lamination system. Suppose that $g\in \Aut(\sL)$. Then there is a periodic point, that is, $\Per{g}\neq \emptyset.$
\end{lem}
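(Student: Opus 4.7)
The strategy is to argue by contradiction and show that the rotation number $\tau(g)$ cannot be irrational; once it is rational, a periodic point exists by the classical Poincar\'e classification. So suppose $\tau(g)=\alpha\in\RR\setminus\QQ$. Standard rotation number theory furnishes a monotone continuous degree-one surjection $h\colon S^1\to S^1$ that semiconjugates $g$ to the rigid rotation $R_\alpha$, that is, $h\circ g=R_\alpha\circ h$.

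First I would dispose of the transitive case. If $h$ is a homeomorphism then $g$ is topologically conjugate to $R_\alpha$, hence topologically transitive, and \refprop{irrationalRot} gives an immediate contradiction. So I may assume the Denjoy case: the non-trivial fibres of $h$ form a countable collection of pairwise disjoint closed wandering intervals $\{\closure{W_i}\}_{i\in\NN}$, and the complement $K:=S^1\setminus\bigsqcup_i W_i$ is a $g$-invariant minimal Cantor set.

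With the semiconjugacy in hand, my goal is to locate at least one leaf $\ell(I)\in\ell(\sL)$, with $I=\opi{u}{v}$, such that $h(u)\neq h(v)$. Granting this, the orbit $\{g^n(\ell(I))\}_{n\in\ZZ}$ consists of pairwise unlinked good intervals since $g\in\Aut(\sL)$, and their endpoints project under $h$ to the $R_\alpha$-orbit of the pair $\{h(u),h(v)\}$. A routine equidistribution argument for the irrational rotation on $S^1$ produces some $n\neq 0$ for which the four points $h(u),h(v),R_\alpha^n(h(u)),R_\alpha^n(h(v))$ are distinct and the pairs are linked on $S^1$; because $h$ is monotone of degree one, this lifts to a linking between $\ell(I)$ and $g^n(\ell(I))$, contradicting the unlinkedness of $\sL$.

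The remaining and main obstacle is producing such an uncollapsed leaf, and this is precisely where the quite full hypothesis enters. I would argue by contradiction: assume that every leaf $\ell(I)$ has $h$-collapsed endpoints, so that both endpoints of every leaf lie in a common wandering interval closure $\closure{W_i}$. Since the wandering intervals are countable, so is their endpoint set; as $K$ is uncountable I can pick $p\in K\setminus\bigcup_i\closure{W_i}$. By \reflem{trichotomy}, $p$ is either an endpoint of $\sL$, the pivot of a unique crown, or a rainbow point, and I will rule out all three. If $p\in\E{\sL}$, then $p$ is an endpoint of a leaf and hence $p\in\closure{W_i}$, contradicting the choice of $p$. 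If $p$ admits a rainbow $\{I_k\}$ in $\sL$, then the diameter of $I_k$ shrinks to zero, so eventually $I_k$ must be the short arc bounded by its endpoints and hence $I_k\subseteq\closure{W_{j_k}}$; since $p\in I_k$, this again places $p$ in some $\closure{W_i}$. If $p$ is the pivot of a crown $\sG$, propagating the relation $h(t_k)=h(t_{k+1})$ along the consecutive tips $\{t_k\}_{k\in\ZZ}$ of $\sG$ forces all tips, and by continuity of $h$ at the pivot $p$ also $p$ itself, to lie in a single fibre $\closure{W_i}$; once more contradicting the choice of $p$. Eliminating the trichotomy delivers the leaf needed in the previous paragraph and completes the contradiction.
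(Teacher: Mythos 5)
Your proposal is correct and follows essentially the same route as the paper: rule out an irrational rotation number by semiconjugating to the rigid rotation, observe that unlinkedness forces every leaf to collapse under the semiconjugacy (equivalently, an uncollapsed leaf would become linked with one of its iterates), and then contradict quite fullness. The only difference is one of detail: where the paper simply asserts that ``every leaf lies on a component of $S^1\setminus K$'' contradicts quite fullness, you justify this by running the trichotomy of \reflem{trichotomy} at a point of $K$ outside all wandering-interval closures, which is a worthwhile elaboration rather than a new idea.
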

\begin{proof}
Suppose that $\Per{g} = \emptyset$. Then the rotation number $\tau$ of $g$ is irrational. Due to \refprop{irrationalRot}, $g$ is not minimal. Hence, there is the $g$-invariant exceptional minimal set $K$ which is a cantor set. Thus, by collapsing the components of $S^1\setminus K$,  we obtain a semi-conjugacy $\phi:S^1\to S^1$ such that $\phi g = R_\tau \phi $, where $R_\tau$ is the rotation by $\tau$.

We claim that for any leaf $\ell$ of $\cC(\sL)$, $\phi(\ell)$ is a singleton. For, if $\phi(\ell)$ is not a singleton, we can find $n \in \ZZ$ such that $R_{\tau}^n\phi(\ell)$ and $\phi(\ell)$ are linked. This implies that $\ell$ and $g^n(\ell)$ are linked. It is a contradiction.

From the above claim, every leaf of $\sL$ lies on a component of $S^1\setminus K$.  This contradicts the fact that $\sL$ is quite full. Thus, $\Per{g}\neq \emptyset$.

\end{proof}

\subsection{Laminar Automorphisms That Preserve Gaps}

If an element of a laminar group preserves a gap, its dynamics on $S^1$ becomes very restrictive.

\begin{prop}\label{Prop:fixingCrown}
Let $\sL$ be a quite full lamination system and let $\sG$ be a crown. Let $g\in \Aut(\sL)$. If $g$ preserves $\sG$, then the pivot of $\sG$ is a fixed point of $g$. Conversely, if $g$ fixes the pivot of $\sG$, then $g$ preserves $\sG$.
\end{prop}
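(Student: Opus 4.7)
The plan is to handle the two implications separately and exploit the characterization of the pivot as the unique accumulation point of the vertex set.

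For the forward direction, I would first observe that since $g\in\Aut(\sL)$, it induces a bijection on $\sL$ which in turn sends gaps of $\sL$ to gaps of $\sL$ (the defining conditions of a gap — pairwise disjointness of intervals and every leaf lying on some element — are preserved by any element of $\Aut(\sL)$). Hence $g(\sG)=\sG$ as a set of good intervals, which implies $g(v(\sG))=v(\sG)$. Writing $v(\sG)=\{t_k\}_{k\in\ZZ}\cup\{p\}$ with $p$ the pivot, the key point is that $p$ is intrinsically characterized as the unique accumulation point of $v(\sG)$ in $S^1$ (see \refrmk{aboutCrown}), whereas the $t_k$ are isolated in $v(\sG)$. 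Since $g$ is a self-homeomorphism of $S^1$, it preserves both isolation and accumulation of points in any closed subset. Hence $g$ must send the unique accumulation point of $v(\sG)$ to the unique accumulation point of $g(v(\sG))=v(\sG)$, giving $g(p)=p$.

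For the converse, suppose $g(p)=p$. Choose any crown gap $\sG$ with pivot $p$ and consider $g(\sG)$. As noted above, $g(\sG)$ is again a gap of $\sL$. Since $\sL$ is quite full, $g(\sG)$ is either an ideal polygon or a crown. Because $|v(\sG)|=\infty$ and $g$ is a bijection on $S^1$, we have $|v(g(\sG))|=\infty$, so $g(\sG)$ is a crown. The pivot of $g(\sG)$ is the unique accumulation point of $g(v(\sG))$; since accumulation points are preserved by the homeomorphism $g$, this accumulation point is $g(p)=p$. So $g(\sG)$ is a crown with pivot $p$. Now invoke \reflem{trichotomy}: in a quite full lamination system, each point of $S^1$ is either an endpoint, a rainbow point, or the pivot of a \emph{unique} crown, and these cases are mutually exclusive. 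Since $p$ is the pivot of $\sG$, this uniqueness forces $g(\sG)=\sG$.

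The argument is essentially bookkeeping once one notices the intrinsic, topology-only characterization of the pivot; I expect the only genuine subtlety to be confirming that a laminar automorphism does indeed carry gaps to gaps and crowns to crowns (rather than to polygons), which follows immediately from cardinality of vertex sets together with the quite-fullness hypothesis. No delicate estimates or case analyses on the $t_k$ are needed.
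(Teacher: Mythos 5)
Your proof is correct and follows essentially the same route as the paper's: the forward direction uses that the pivot is the unique accumulation point of $v(\sG)$, and the converse uses that no two distinct crowns share a pivot (via \reflem{trichotomy}). Your version simply fills in the routine details (gaps map to gaps, crowns to crowns) that the paper leaves implicit.
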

\begin{proof}
$g$ must fix the pivot of $\sG$ as the pivot is the only accumulation point of $v(\sG)$. The converse statement follows from the fact that there are no distinct crowns sharing the pivot point.
\end{proof}

\begin{lem}\label{Lem:elliptic}
Let $\sL$ be a quite full lamination system and $g$ a laminar automorphism of $\sL.$ Suppose that there is a rainbow point $p.$ If $g$ is elliptic, then there is an ideal polygon $\sG$ preserved by $g.$ In particular, the order of $g$ divides the number of elements of $\sG.$
\end{lem}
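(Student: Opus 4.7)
The strategy is to extend $g$ to a finite-order homeomorphism of the closed hyperbolic disk preserving the geodesic lamination $\bar\sL$ associated with $\sL$, and then apply Brouwer's fixed point theorem to locate a fixed point in $\HH^2$.

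First, since $g$ is an orientation-preserving homeomorphism of $S^1$ of finite order $n>1$, the classical fact that every finite subgroup of $\Homeop(S^1)$ is topologically conjugate to a group of rotations shows that $g$ is conjugate to $R_{k/n}$ for some $k$ coprime to $n$; in particular, $g$ acts freely on $S^1$. Under the rainbow hypothesis, the orbit $\{p,g(p),\dots,g^{n-1}(p)\}$ consists of $n$ distinct rainbow points, consistently with the free action.

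Next, I extend $g$ to a finite-order orientation-preserving homeomorphism $\tilde g\colon \overline{\HH^2}\to \overline{\HH^2}$ preserving $\bar\sL$. The map $g$ permutes the leaves and the complementary regions of $\bar\sL$; the latter are, by the quite-full hypothesis, ideal polygons or crowns. Choosing, for each $g$-orbit of leaves, a coherent family of parametrizations, and for each orbit of regions a coherent family of fill-in homeomorphisms, I assemble $\tilde g$ with $\tilde g^n = \id$ and $\tilde g(\bar\sL) = \bar\sL$. Brouwer's fixed point theorem applied to $\tilde g$ on the closed disk, together with the fact that $g$ has no fixed points on $S^1 = \partial \HH^2$, produces a fixed point $c\in\HH^2$.

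Then I do a case analysis. If $c$ lies on a leaf $\ell$ of $\bar\sL$, then $\tilde g(\ell) = \ell$, so $g$ permutes the two endpoints of $\ell$; since $g$ acts freely on $S^1$, it must swap them, forcing $n = 2$. Thus $\sG = \ell$ is a $g$-invariant bigon with $|v(\sG)| = 2 = n$. If instead $c$ lies in the interior of a complementary region $R$, then $\tilde g(R) = R$. Since $\sL$ is quite full, $R$ is an ideal polygon or a crown. A $g$-invariant crown would, by \refprop{fixingCrown}, have its pivot fixed by $g$, contradicting freeness. Hence $R$ is an ideal polygon, and $\sG = R$ (as a gap of $\sL$) is the desired $g$-invariant polygon. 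Since $g$ acts freely on $v(\sG)$, the orbit-stabilizer theorem yields that $n$ divides $|v(\sG)|$.

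The main obstacle is the construction of a finite-order extension $\tilde g$ that preserves $\bar\sL$: making coherent choices along each $g$-orbit of leaves and each $g$-orbit of regions is routine but tedious. Alternatively, one can bypass the explicit extension by letting $\langle g\rangle$ act on the dual $\RR$-tree of $\bar\sL$ and invoking a Bruhat-Tits-style fixed-point theorem for finite group actions on real trees.
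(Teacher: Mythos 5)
Your argument is correct in outline, but it takes a genuinely different route from the paper. The paper's proof is entirely combinatorial inside the lamination system: it takes the orbit $\sO$ of the rainbow point $p$, forms the stem from $\opi{u}{v}$ (with $u,v$ the neighbours of $p$ in $\sO$) down to $p$, shows the base $B$ has $B^*$ isolated, invokes \refprop{gapExist} to produce a non-leaf gap containing $B$, and then shows that gap is $\langle g\rangle$-invariant, treating the order-two case by a separate argument with the family $S_p$. Your route---extend $g$ over the closed disk preserving the geodesic lamination and apply Brouwer---trades that combinatorics for a topological extension lemma, and the subsequent case analysis (a fixed point on a leaf forces order $2$ and yields a bigon, which the paper's conventions count as an ideal polygon; a fixed point in a complementary region yields a polygon or a crown, with the crown excluded by \refprop{fixingCrown}; freeness of the $\langle g\rangle$-action on $S^1$ then gives the divisibility) is sound. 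Three remarks. First, your proof never uses the rainbow hypothesis, so you are proving a strictly stronger statement; I believe it is true and consistent with \refthm{classOfLaminarAut}, but you should say explicitly that you are dropping a hypothesis rather than silently ignoring it. Second, the extension of $g$ is the only real content of your proof and you leave it as a sketch; note that you do not need $\tilde{g}$ to have finite order---Brouwer and the case analysis only require a continuous extension carrying each leaf to the leaf with image endpoints (hence permuting complementary regions compatibly with $g$), and such an extension can be written down leaf-by-leaf in the Klein model via the linear parametrization of chords, filled in on complementary regions by Schoenflies, with no equivariance required. Insisting on $\tilde{g}^n=\id$ is exactly what makes your construction ``tedious,'' and it buys you nothing here. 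Third, the $\RR$-tree alternative is shakier than you suggest: without a transverse measure there is no canonical dual $\RR$-tree of $\overline{\sL}$, only an order-tree-like object, and the fixed-point theorem you want is not off-the-shelf in that setting, so I would not lean on it.
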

\begin{proof}
Let $\sigma$ be the order of $g.$ Since $g$ is non-trivial, $\sigma >1.$ Let $\sO$ be the orbit of $p$ under the iteration of $g.$ Then $|\sO|=\sigma. $

First we consider the case where $\sigma>2.$ Then $S^1\setminus \sO$ is a disjoint union of good intervals. Then there are two good intervals $\opi{u}{p}$ and $\opi{p}{v}$ which are components of $S^1\setminus \sO.$ Since $u$ and $v$ are in $\sO$ and $\sigma >2,$ $u$ and $v$ are distinct. Hence, $\opi{u}{v}$ is good. 

Now, we consider the stem $\stem{p}{\opi{u}{v}}.$ As $p$ is a rainbow point, $\stem{p}{\opi{u}{v}}$ is not empty. Therefore, there is the base $B$ of $\stem{p}{\opi{u}{v}}.$ Moreover, since $u$ and $v$ are also rainbow points, $\closure{B} \subset \opi{u}{v},$ so   $B^*$ is isolated. Therefore, by \refprop{gapExist}, there is a non-leaf gap $\sG$ containing $B.$ 

\begin{claim}\label{Clm:noJump}
Let $\ell$ be a leaf of $\sL.$ Then there is a $I$ in $\ell$ such that $|I\cap \sO|\leq 1.$ 
\end{claim}
\begin{proof}
Note that  $v(\ell) \cap \sO =\emptyset$ since every point in $\sO$ is a rainbow point. The case where $\sigma=3$ is obvious so we assume $\sigma>3.$ 
Suppose that $1<|I\cap \sO|$ for all $I\in \ell.$ Now we write $v(\ell)=\{e,f\}.$ Then for each $w\in v(\ell),$ there is a good interval $\opi{x_w}{y_w}$ containing $w$ which is a component of $S^1\setminus \sO.$ Observe that $\cldi{x_e}{y_e}\cap \cldi{x_f}{y_f}=\emptyset,$  $\{x_e, y_f\} \subset \opi{f}{e},$ and $\{y_e, x_f\}\subset \opi{e}{f}.$
There is a $k$ in $\ZZ$ such that $y_e=g^{k}(x_e).$ Obviously, $y_f=g^{k}(x_f).$ Then $\{e,f\}$ and $\{g^k(e),g^k(f)\}$ are linked. This is a contradiction. Thus, there is a $I$ in $\ell$ such that  $|I\cap \sO|\leq 1.$ 
\end{proof}

Since $u$ is also a rainbow point, by \reflem{trichotomy}, $u\nin v(\sG)$ and there is an element $I_u$ in $\sG$ containing $u.$ Observe that $\{p, v\}\in I_u^*$ since $B$ is the base of $\stem{p}{\opi{u}{v}}.$ By \refclm{noJump}, $\closure{I_u} \cap \sO=\{u\}.$ Note that $u=g^k(p)$ for some $k\in \ZZ.$ Therefore, we have that $I_u\in \stem{u}{g^k(\opi{u}{v})}.$ Since $g^k(B)$ is  the base of $\stem{u}{g^k(\opi{u}{v})},$  $I_u \subset g^k(B).$ If $I_u \neq g^k(B),$ then $\sG$ lies on $g^k(B)$ by \refprop{twoGap}. This implies that the boundary leaf $\ell(B)$ lies on $g^k(\opi{u}{v}).$ However, it is a contradiction by \refclm{noJump} and the fact that $p\notin g^k(\opi{u}{v})$. Therefore, $I_u=g^k(B).$ By \refprop{twoGap}, this implies that $\sG=g^k(\sG).$ Note that by the choice of $k,$ $g^k$ is a generator of $\langle g \rangle.$ Thus, $\sG$ is preserved by $g.$ By \refprop{fixingCrown}, if $\sG$ is a crown, $g$ has a fixed point, but this contradict the assumption that $g$ is elliptic. Hence, $\sG$ is an ideal polygon. 

Now, we consider the case where $\sigma=2.$ Since $p$ is a rainbow point, there is a $I$ in $\sL$ containing $p$ such that $I\cap g(I)=\emptyset.$ We define $S_p$ to be the set 
$$\{J \in \sL : p\in J \text{ and }J \cap g(J)=\emptyset\}.$$
As $I\in S_p,$ $S_p$ is not empty. Observe that for each $J\in S_p,$ $p\in J$ and $g(p) \in J^*,$ and so $S_p$ is totally ordered by inclusion $\subseteq.$ Then $B=\bigcup S_p \in \sL$ and $B\in S_p.$ If $B^*=g(B),$ the leaf $\ell(B)$ is the gap preserved by $g.$ 

Assume that $B^* \neq g(B).$ Now we consider the stem $\stem{B}{g(B)^*}.$ If there is a $K$ in $\stem{B}{g(B)^*}\setminus \{B, g(B)^*\},$ then $K \cap g(K)\neq \emptyset$  as $K\notin S_p.$ Hence,  $g(K)^* \subset K$ and so $g(K)^*\cap K^*=\emptyset.$ Therefore, $g(K)^* \in S_p$ and $g(K)^* \subseteq B.$ This implies that $B^*\subseteq g(K)$ and $g(B)^* \subseteq K$ since $\sigma=2.$ It is a contradiction by the choice of $K.$ Hence, $\stem{B}{g(B)^*}=\{B, g(B)^*\}.$ Thus, $B^*$ is isolated. 

By \refprop{gapExist}, there is a non-leaf gap $\sG$ containing $B.$ Then, there is a $J$ in $\sG$ containing $g(p).$ Observe that $J=g(B).$ Thus $\sG$ is the gap preserved by $g$ and $\sG$ is also an ideal polygon. See \refprop{fixingCrown}.
\end{proof}

\section{Laminar Automorphisms of Pseudo-Fibered Pairs}\label{Sec:classificationsec}
In this section, we begin to study groups acting on the circle preserving pseudo-fibered or veering pairs. 

\subsection{Classification of Automorphisms for a Pseudo-fibered Pair}
In the first subsection, we present various classification results. We first begin with the study of dynamical properties of each individual element. 

\begin{lem}\label{Lem:preserveCrown}
Let $\sC=\{\sL_1, \sL_2\}$ be a pseudo-fibered pair and $g$ a laminar automorphism in $\Aut(\sC)$. If $g$ is parabolic, then for each $i\in\{1,2\}$, there is a crown in $\sL_i$ preserved by $g$ whose pivot is the fixed point of $g$. 
\end{lem}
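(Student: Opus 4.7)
My plan is to apply the trichotomy of \reflem{trichotomy2} to the unique fixed point $p$ of the parabolic element $g$. For any fixed $i\in\{1,2\}$, the point $p$ falls into exactly one of three categories with respect to $\sL_i$: a rainbow point of $\sL_i$, a tip of the unique real gap of $\sL_i$ having $p$ as a tip, or the pivot of a (necessarily unique) crown of $\sL_i$. If I can rule out the first two possibilities, then the third case supplies a crown of $\sL_i$ with pivot $p$, and \refprop{fixingCrown} immediately yields that this crown is preserved by $g$.

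Ruling out the tip case is a finite-permutation argument. Suppose $p$ is a tip of the unique real gap $\sG$ of $\sL_i$ furnished by \reflem{quadrachotomy}. Then $g(\sG)$ is again a real gap of $\sL_i$ with $p$ as a tip, hence $g(\sG)=\sG$. If $\sG$ is an ideal polygon (including the case of a leaf), then $g$ permutes the finite vertex set of $\sG$, so some nontrivial power $g^k$ fixes every vertex; but $g^k$ is again a parabolic element with the single fixed point $p$, contradicting that $\sG$ has at least two vertices. If $\sG$ is a crown, the pivot $p'$ of $\sG$ is distinct from every tip (\reflem{quadrachotomy}), so $p'\ne p$, yet $g$ must fix $p'$ as it is the unique accumulation point of $v(\sG)$---again contradicting uniqueness of $p$ as a fixed point of $g$.

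The rainbow case is the main obstacle and needs genuine dynamical input. Assume for contradiction there is $I\in\sL_i$ with $p\in I$. First, $g(I)\ne I$: otherwise $g$ would permute the two endpoints of $I$ (neither of which is $p$), either fixing both (forbidden, since $p$ is the only fixed point of $g$) or swapping them (forbidden, since then $g^2$, still a nontrivial parabolic with fixed point $p$, would fix both endpoints). Since $I$ and $g(I)$ both contain $p$ and are unlinked, they are nested; replacing $g$ by $g^{-1}$ if necessary, assume $g(I)\subsetneq I$. Form the increasing union $U:=\bigcup_{n\ge 0} g^{-n}(I)$; by compactness of $S^1$, an increasing union of proper open arcs cannot cover $S^1$, so $F:=S^1\setminus U$ is nonempty and closed. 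A direct computation gives $g(U)=U$ (since $g(I)\subsetneq I\subset U$), so $F$ is $g$-invariant. Choosing any $q\in F$, the parabolic dynamics on $S^1\setminus\{p\}$ force $g^n(q)\to p$ as $n\to\infty$, hence $p\in\closure{F}=F$, contradicting $p\in I\subset U$.

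With both the tip and rainbow possibilities eliminated, \reflem{trichotomy2} forces $p$ to be the pivot of an asterisk of crowns of $\sC$; in particular, a crown of $\sL_i$ has $p$ as its pivot, and \refprop{fixingCrown} guarantees $g$ preserves it.
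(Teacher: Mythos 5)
The argument in the rainbow case has a genuine gap. You claim that since $I$ and $g(I)$ both contain $p$ and are unlinked, they are nested; this is false, and in fact nesting is impossible here. Parametrize $S^1\setminus\{p\}$ by $\RR$ so that the parabolic $g$ acts as a fixed-point-free increasing map, and write $I^*=(\beta,\alpha)\subset\RR$. Then $g(I)\subsetneq I$ would require $g(\beta)\le\beta$, and $I\subsetneq g(I)$ would require $g(\alpha)\le\alpha$, both absurd. The only unlinked configuration is $g(\beta)\ge\alpha$, which gives $I^*\subseteq g(I)$; in every other case $\beta<g(\beta)<\alpha<g(\alpha)$, so $\ell(I)$ and $\ell(g(I))$ are actually linked. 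In the surviving unlinked case your set $U=\bigcup_{n\ge 0}g^{-n}(I)$ is not an increasing union of arcs and in fact equals all of $S^1$ (the translated complementary arcs $[g^{-n}(\beta),g^{-n}(\alpha)]$ drift off and have empty intersection), so $F=\emptyset$ and the compactness step collapses. What rescues the argument is the rainbow itself: as $I_k$ shrinks to $\{p\}$ one has $\beta_k\to-\infty$ and $\alpha_k\to+\infty$, so eventually $g(\beta_k)<\alpha_k$, forcing $\ell(I_k)$ and $\ell(g(I_k))$ to link, the desired contradiction. This is exactly what the paper's proof does by choosing the rainbow member small enough that its complementary arc contains both some $q$ and its image $g(q)$, which directly forces the linking.

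A secondary point: you cite \reflem{trichotomy2}, which is stated for veering pairs, while the present lemma concerns pseudo-fibered pairs; the correct references are \reflem{trichotomy} or \reflem{quadrachotomy} applied to each $\sL_i$ separately, and the final conclusion should just be that $p$ is the pivot of a crown of $\sL_i$ (an asterisk of crowns is a notion introduced only for veering pairs). Your treatment of the endpoint/tip case is fine and slightly different from the paper's, which simply invokes \refprop{threeLeaves} on the $g$-orbit of the leaf through $p$; both routes work.
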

\begin{proof}
Let $p$ be the fixed point of $g$. Without loss of generality, we assume that $i=1$. First, we consider the case where $p$ is a rainbow point in $\sL_1$. Choose a point $q$ in $S^1\setminus \{p\}$. Since $p$ is a rainbow point, there is a $I$ in $\sL_1$ containing $\{q, g(q)\}$ with $p\in I^*$. Then $v(\ell(I))$ and $v(\ell(g(I)))$ are linked. This is a contradiction. Therefore, $p$ is an endpoint of $\sL$ or the pivot of a crown by \reflem{trichotomy}. 

Assume that $p$ is a vertex of a leaf $\ell$ of $\sL_1$. Then, the distinct leaves $\{g^n(\ell)\}_{n\in \NN}$ share $p$ as a common vertex. This is a contradiction by \refprop{threeLeaves}. Therefore, $p$ is the pivot of a crown $\sG$. By \refprop{fixingCrown}, $g$ preserves $\sG$.
\end{proof}

Now we would like to further analyze the case when we have a pair of lamination systems invariant under a homeomorphism with two or more fixed point. First note that if $g \in \Homeop(S^1)$ has more than two fixed points and $I$ is a connected component of the complement, then $I$ is a good open interval.

\begin{lem}\label{Lem:hyperbolic}
Let $\sC=\{\sL_1, \sL_2\}$ be a pseudo-fibered pair and let $g\in\Aut(\sC)$. If $|\Fix{g}|=2$, then $g$ is hyperbolic and each fixed point of $g$ is a rainbow point in both $\sL_1$ and $\sL_2$. 
\end{lem}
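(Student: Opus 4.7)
The plan is to use the quadrachotomy (\reflem{quadrachotomy}) to classify each fixed point in each lamination of $\sC$, ruling out every alternative to ``rainbow point'' and extracting hyperbolicity as a by-product. Write $p,q$ for the two fixed points of $g$; by \reflem{quadrachotomy}, in each $\sL_i$, each of $p,q$ is a rainbow point, the pivot of a crown, a tip of a non-leaf gap, or a vertex of a real leaf.

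The pivot and tip cases fall quickly to orientation-preserving combinatorics. If $p$ were the pivot of a crown $\sG$ in some $\sL_i$, the uniqueness clause of \reflem{quadrachotomy} would give $g(\sG)=\sG$; then $g$ acts on the tip set $\{t_k\}_{k\in\ZZ}$ as an orientation-preserving bijection fixing the accumulation point $p$, hence as a shift $t_k\mapsto t_{k+n}$. The case $n=0$ produces infinitely many fixed points, contradicting $|\Fix{g}|=2$, while $n\neq 0$ permutes the complementary arcs of $\sG$ cyclically, leaving nowhere for $q$ to sit. Analogously, if $p$ were a tip of a non-leaf gap $\sH$ in some $\sL_i$, uniqueness forces $g(\sH)=\sH$, and orientation-preservation forces $g$ to fix both members of the tip pair $\tipp{p}=\{J_1,J_2\}$ individually (since they sit on opposite sides of $p$); then both $\ell(J_1)$ and $\ell(J_2)$ are fixed, their remaining vertices both equal $q$, and $\ell(J_1)=\ell(J_2)$, absurd.

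With the pivot and tip options excluded, strong transversality $\E{\sL_1}\cap\E{\sL_2}=\emptyset$ forces $p$ to be a rainbow point of at least one lamination, say $\sL_j$, so one has a rainbow $\{I_k\}_{k\in\NN}$ at $p$ in $\sL_j$. I would then show $g$ cannot have ``translation'' behaviour near $p$ (attracting from one side, repelling from the other): under such behaviour, for large $k$ the interval $g(I_k)$ has one endpoint slightly inside $I_k$ and the other slightly outside, so $\ell(I_k)$ and $g(\ell(I_k))$ are linked in $\sL_j$, contradicting unlinkedness. Hence $p$ (and, by the same argument, $q$) is attracting or repelling. Since $g$ preserves the orientation of $S^1$ and has exactly two fixed points, one of $p,q$ must be attracting and the other repelling, so $g$ is hyperbolic.

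To conclude, I would show $p$ is a rainbow point of the other lamination $\sL_{j'}$ as well (the argument for $q$ is symmetric). Suppose for contradiction that $p$ is a vertex of a real leaf $\ell$ of $\sL_{j'}$. Since $g(\ell)$ and $\ell$ share the vertex $p$, \refprop{parallelLeaves} combined with \refprop{realGap} forces $g(\ell)=\ell$; hence the other vertex of $\ell$ is $q$. Now pick an $\opi{p}{q}$-side sequence $\ell_n=\ell(\opi{p_n}{q_n})$ with $p_n\to p$ and $q_n\to q$, available because $\ell$ is real. Assuming $p$ is attracting, continuity of $g$ at $q$ gives $g(q_n)\to q$ while $g(p_n)\to p$, so for $n$ large the arc-order on $\opi{p}{q}$ reads $p<g(p_n)<p_n<g(q_n)<q_n<q$, making $\opi{p_n}{q_n}$ and $g(\opi{p_n}{q_n})$ linked --- the required contradiction. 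The main obstacle is this final linking argument: one must use continuity of $g$ at the repelling point $q$ to keep $g(q_n)$ near $q$ (hence above $p_n$) rather than dragged toward $p$, which is precisely what forces the link.
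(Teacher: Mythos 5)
Your proposal is correct. The paper's own proof of this lemma is a one-sentence assertion --- ``otherwise $\Fix{g}$ is in both $\cC(\sL_1)$ and $\cC(\sL_2)$, contradicting strong transversality'' --- so what you have written is essentially the fully fleshed-out version of that claim, built from the same ingredients (the quadrachotomy, unlinkedness of $g$-orbits of leaves, and $\E{\sL_1}\cap\E{\sL_2}=\emptyset$). The one genuine difference is in the endgame: the paper's sketch suggests showing that a non-rainbow fixed point forces $\{p,q\}$ to be a leaf of \emph{both} laminations, whereas you derive a contradiction already from $\{p,q\}$ being a real leaf of a \emph{single} lamination, by combining the hyperbolicity established in your third paragraph with an $\opi{p}{q}$-side sequence and a linking argument. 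This is a sound and arguably more robust route, since it never needs to pass through the second lamination at that stage; the trade-off is that you must first secure hyperbolicity (via the rainbow/semi-stability argument), which your write-up does carefully. Two minor polish points: in the tip-pair case, the cleanest way to see $g$ fixes $J_1$ and $J_2$ individually is that an orientation-preserving map fixing $p$ sends $\opi{a}{p}$ to $\opi{g(a)}{p}$, which can never equal an interval of the form $\opi{p}{b}$; and in the final step, the identity $g(\ell)=\ell$ follows most directly from the uniqueness clause of \reflem{quadrachotomy} ($p$ is a vertex of a \emph{unique} real leaf, and $g(\ell)$ is again a real leaf through $p$), which avoids invoking \refprop{parallelLeaves} at all.
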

\begin{proof}
Otherwise, it is easy to see that $\Fix{g}$ is in both $\cC(\sL_1)$ and $\cC(\sL_2).$ This contradicts the strongly transversality.
\end{proof}

\begin{lem}\label{Lem:noStickyLeaf}
Let $\sC=\{\sL_1, \sL_2\}$ be a pseudo-fibered pair and let $g\in \Aut(\sC)$. Then, for each leaf $\ell$ of $\sC$, either $v(\ell)\subset \Fix{g}$ or $v(\ell)\cap \Fix{g}=\emptyset.$ 
\end{lem}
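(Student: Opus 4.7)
The plan is to argue by contradiction. Suppose $\ell$ is a leaf of $\sL_i$ with $v(\ell)=\{u,v\}$ satisfying $g(u)=u$ but $g(v)\neq v$; I would derive a contradiction in three short steps.

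First I would examine the orbit $\{g^n(\ell)\}_{n\in\ZZ}$. Since $g\in\Aut(\sC)$ preserves $\sL_i$ and fixes $u$, every member of this orbit is a leaf of $\sL_i$ having $u$ as a vertex. Because $\sC$ is pseudo-fibered, \refprop{totDis2} tells us that $\sL_i$ is quite full, loose, and totally disconnected, so \refprop{threeLeaves} applies and forbids three distinct leaves of $\sL_i$ from sharing the common vertex $u$. Consequently $|\{g^n(\ell)\}_{n\in\ZZ}|\le 2$, and in either subcase we obtain $g^2(\ell)=\ell$. Since $g^2$ preserves the two-point set $v(\ell)=\{u,v\}$ and fixes $u$, it must fix $v$ as well. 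Hence $v$ is a periodic point of $g$ of period dividing $2$.

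The main step is then to invoke the elementary fact that every orientation preserving homeomorphism of $S^1$ possessing a fixed point has rotation number zero, and in particular all of its periodic points are fixed points. (Concretely, identify $S^1\setminus\{u\}$ with $\RR$ via stereographic projection from $u$; the restriction of $g$ becomes an increasing self-homeomorphism of $\RR$, and such maps admit no periodic orbits other than fixed points.) Applying this to the fixed point $u$, the relation $g^2(v)=v$ forces $g(v)=v$, contradicting our standing assumption.

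I do not anticipate any serious technical obstacle here: the argument relies only on \refprop{threeLeaves} (already established in the paper) and on the classical rotation-number fact about circle homeomorphisms. Notably the proof does not make use of the other lamination $\sL_j$ or of strong transversality beyond what is already absorbed into \refprop{totDis2}, which is consistent with the fact that the assertion is really a statement about each individual lamination in a pseudo-fibered pair.
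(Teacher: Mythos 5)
Your proof is correct, and it uses the same two ingredients as the paper's argument — iterating $g$ on the leaf $\ell$ and invoking \refprop{threeLeaves} — but arranges them in the opposite order, and in doing so is actually slightly more complete. The paper's proof simply asserts that $\{g^n(\ell)\}_{n\in\ZZ}$ is an \emph{infinite} collection of leaves sharing the vertex $u$ and then contradicts \refprop{threeLeaves}; but infiniteness of that orbit is exactly the claim that $v$ is not periodic, which requires the monotonicity observation you make explicit (an orientation-preserving homeomorphism fixing $u$ restricts to an increasing homeomorphism of $S^1\setminus\{u\}\cong\RR$, which has no non-fixed periodic points). You instead use \refprop{threeLeaves} to bound the orbit by two leaves, deduce $g^2(v)=v$, and then apply the same monotonicity fact to conclude $g(v)=v$. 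Both routes are valid; yours has the minor virtue of not leaving the "infinite collection" step implicit.
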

\begin{proof}
Without loss of generality, we may assume that $\ell$ is a leaf of $\sL_1.$ Assume that $g$ fixes only one vertex $p$ of $\ell$. Then, the images of $\ell$ under iterations of $g$ forms an infinite collection of leaves which all have $p$ as a common vertex. This is a contradiction by \refprop{threeLeaves}. 
\end{proof}

\begin{lem} \label{Lem:noPivotEnd}
Let $\sL$ be a quite full lamination system and let $g\in \Aut(\sL)$. Assume that $g$ is a non-trivial automorphism with $|\Fix{g}|\geq2$. Let $I=\opi{u}{v}$ be a connected component of the complement of $\Fix{g}$. Then neither $u$ nor $v$ is the pivot of a crown of $\sL$. 
\end{lem}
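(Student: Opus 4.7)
The plan is to derive a contradiction from the assumption that $u$ is the pivot of a crown $\sG$ of $\sL$; the argument for $v$ will be symmetric. First, by \refprop{fixingCrown}, since $g$ fixes the pivot $u$ of $\sG$, the automorphism $g$ preserves $\sG$, and hence permutes its tip set $\{t_k\}_{k\in\ZZ}$.

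Next, I would analyze the dynamics of $g$ on the interval $I$. Because both endpoints of $I$ lie in $\Fix{g}$ and $g$ is orientation-preserving, $g$ preserves $I$; moreover, since $I$ is a connected component of the complement of $\Fix{g}$, the restriction $g|_I$ has no fixed points. Hence $g$ acts on the linearly ordered interval $I$ as a strict translation in one of two directions: every point of $I$ is pushed strictly toward $u$, or every point is pushed strictly toward $v$. I would then observe that the $\ZZ$-indexing of the tips is compatible with their linear order in $S^1\setminus\{u\}$, since otherwise the open arcs $\opi{t_k}{t_{k+1}}\in\sG$ would fail to be pairwise disjoint; as a consequence, the tips of $\sG$ approach the pivot $u$ from both sides of $u$ in $S^1$. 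In particular, the set $T:=\{t_k\}_{k\in\ZZ}\cap I$ is infinite and accumulates at $u$ from inside $I$; and since $v\neq u$ is not an accumulation point of $v(\sG)$, only finitely many tips lie near $v$. Thus $T$ may be enumerated in the linear order of $I$ as $\{s_1,s_2,s_3,\dots\}$, with $s_1$ the unique tip of $T$ closest to $v$ and $s_n\to u$.

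The final step is a short pigeonhole argument on the order-preserving bijection $g|_T$ of the set $T$, which is order-isomorphic to $\NN$. If $g$ pushes $I$ toward $u$, then $g^{-1}(s_1)\in T$ would have to be a tip strictly closer to $v$ than $s_1$, contradicting the minimality of $s_1$; symmetrically, if $g$ pushes $I$ toward $v$, then $g(s_1)\in T$ would have to be strictly closer to $v$ than $s_1$, again contradicting minimality. Either way we reach a contradiction, so $u$ is not the pivot of a crown, and swapping the roles of $u$ and $v$ handles the other endpoint.

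The main technical point will be justifying that the tips of the crown accumulate at the pivot from both sides of $S^1$, which is what guarantees that $T$ is infinite and possesses a well-defined minimum element $s_1$. Although this is intuitively clear from prototypical examples of crowns, I expect it to take some care to extract cleanly from the definition, via the observation that the order of $\{t_k\}_{k\in\ZZ}$ as a subset of the linearly ordered set $S^1\setminus\{u\}$ is forced to match the integer order in order for the arcs $\opi{t_k}{t_{k+1}}\in\sG$ to be pairwise disjoint.
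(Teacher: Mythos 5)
Your proof is correct and follows essentially the same route as the paper's: invoke \refprop{fixingCrown} to see that $g$ preserves the crown, observe that its tips accumulate at the pivot $u$ from inside the fixed-point-free interval $I$, and derive a contradiction from the dynamics of $g$ on $I$. The paper finishes with the dichotomy that $g$ either translates the tips (hence is parabolic, impossible since $|\Fix{g}|\geq 2$) or fixes every tip (impossible since some tips lie in $I$), whereas you use the minimum element of $v(\sG)\cap I$; this is only a cosmetic difference, and your handling of the accumulation of tips at the pivot from both sides is adequate.
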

\begin{proof}
Suppose that $\sG$ is a crown in $\sL$ and $u$ is the pivot of $\sG.$ By \refprop{fixingCrown}, $g$ preserves $\sG.$ Then either $g$ is parabolic or $v(\sG)\subset \Fix{g}$. Since $g$ has at least two fixed points, $g$ can not be parabolic. Hence, $v(\sG)\subset \Fix{g}.$ However, as $v(\sG)\cap I\neq \emptyset$, $I$ must contain a fixed point of $g.$ This is a contradiction since $I\subset S^1 \setminus \Fix{g}$. Therefore, $u$ is not a pivot. Similarly, $v$ cannot be a pivot either. 
\end{proof}

\begin{lem}\label{Lem:twoColored}
Let $\sC=\{\sL_1, \sL_2\}$ be a pseudo-fibered pair and let $g\in \Aut(\sC)$. Suppose that $g$ is a non-trivial automorphism with $|\Fix{g}|>2$. Then, for each component $I$ of $S^1\setminus \Fix{g}$, $|\partial I\cap \E{\sL_i}|=1$ for all $i\in \{1,2\}$.
\end{lem}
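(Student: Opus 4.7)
My plan is to classify each endpoint of $I$ with respect to each $\sL_i$ using \reflem{quadrachotomy}, leverage \reflem{noStickyLeaf} to constrain the local structure, and use $|\Fix{g}|>2$ to eliminate degenerate cases.

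I would begin by observing that $u,v\in \Fix{g}$ and applying \reflem{noPivotEnd} to exclude them from being pivots of crowns in either $\sL_i$. Since by \refprop{totDis2} each $\sL_i$ is quite full, loose and totally disconnected, \reflem{quadrachotomy} applies: for each $i\in\{1,2\}$ and each $w\in\{u,v\}$, exactly one of the following holds -- $w$ is a rainbow point of $\sL_i$, a vertex of a unique real leaf of $\sL_i$, or a tip of a unique non-leaf gap of $\sL_i$. Strong transversality further gives that no $w$ can simultaneously belong to $\E{\sL_1}$ and $\E{\sL_2}$.

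The central structural claim is: if both endpoints $u,v$ of $I$ lie in $\E{\sL_i}$, then $\ell(I)\in\sL_i$. I would prove this in two cases. If $w\in\{u,v\}$ is a tip of a non-leaf gap $\sG_w^i$, then the tip pair $\tipp{w}$ contains an interval $J$ with $w$ as endpoint and interior lying on the $I$-side of $w$. By \reflem{noStickyLeaf} applied to the boundary leaf $\ell(J)$, the opposite endpoint of $J$ lies in $\Fix{g}$; since the interior of $J$ initially lies in $\closure{I}$ and $I$ is free of fixed points, this opposite endpoint must coincide with the other endpoint of $I$, so $J=I$ and $\ell(I)\in\sL_i$. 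The alternative Type-$\rho$ case, where $w$ is the vertex of a unique real leaf $\ell_w$ whose $I$-side partner overshoots past the other endpoint of $I$ to a farther fixed point, is ruled out by an unlinkedness argument: if both $u$ and $v$ are of this overshooting type, then the two real leaves $\ell_u$ and $\ell_v$ become linked (their vertex pairs interlace on $S^1$), contradicting the unlinkedness axiom of $\sL_i$; if only one of the endpoints is overshooting, then combining \refprop{parallelLeaves} at the shared partner vertex with the tip-case argument above forces $\ell(I)$ to sit in $\sL_i$ as the missing boundary edge of a triangular polygon gap, contradicting the uniqueness of the real leaf at the non-overshooting endpoint.

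Consequently, both endpoints lying in $\E{\sL_i}$ forces $\ell(I)\in\sL_i$, which by strong transversality forces $u$ and $v$ to be rainbow for $\sL_j$ with $j\neq i$. The remaining task -- and the main obstacle -- is to exclude this configuration (and the symmetric possibility that some endpoint is rainbow for both $\sL_1$ and $\sL_2$) using $|\Fix{g}|>2$. I would pick a third fixed point $p\in \Fix{g}\setminus\{u,v\}$; such $p$ lies in the closure of a component on the $I^*$-side of $u$ or $v$. Using density of $\E{\sL_j}$ from \refcor{denseEnd} in that component together with the $g$-orbit of a nearby $\sL_j$-endpoint, I would produce a sequence of $\sL_j$-leaves whose vertices approach $u$ (or $v$) from one side while the second vertices accumulate near $p$-adjacent fixed points, not at $u$. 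By the closedness axiom of $\sL_j$, the limit is then a proper leaf of $\sL_j$ with $u$ (or $v$) as a vertex, contradicting the rainbow hypothesis for $\sL_j$ at that endpoint. The essential role of $p$ is precisely to anchor the second vertices away from $u$ so that the limit does not collapse. Once both bad configurations are ruled out, $\{u,v\}$ must split one-to-one between $\E{\sL_1}$ and $\E{\sL_2}$, yielding $|\partial I\cap \E{\sL_i}|=1$ for each $i\in\{1,2\}$.
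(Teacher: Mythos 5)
Your reduction to the case where both endpoints of $I$ are rainbow points of the same $\sL_j$ is sound in outline, but two of your intermediate steps do not hold up. First, the structural claim that $u,v\in\E{\sL_i}$ forces $\ell(I)\in\sL_i$ is argued via the assertion that if both real leaves $\ell_u,\ell_v$ overshoot then they are linked; this is false. Writing $\ell_u=\{u,z_u\}$ and $\ell_v=\{v,z_v\}$ with $z_u,z_v\in I^*$, the cyclic order $u,v,z_v,z_u$ gives a nested, hence unlinked, configuration, so unlinkedness alone yields no contradiction there; the tip-pair branch likewise assumes without justification that the relevant interval of $\tipp{w}$ stays inside $\closure{I}$, and the ``triangular polygon gap'' deduction in the mixed case is not a valid step. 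Fortunately this entire claim is an unnecessary detour: if both $u,v\in\E{\sL_i}$, then strong transversality, \reflem{noPivotEnd} and \reflem{trichotomy} already force both to be rainbow points of $\sL_j$, $j\neq i$, with no mention of $\ell(I)$. So the whole lemma reduces to showing that, for each $j$, not both endpoints of $I$ are rainbow points of $\sL_j$.

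Second, your argument for that remaining step is where the real content lies, and it is under-specified at exactly the delicate point. Taking ``a nearby $\sL_j$-endpoint'' via \refcor{denseEnd} and pushing it toward $u$ along the $g$-orbit does not work as stated: if the leaf you start with has both vertices inside $I$, then so do all its iterates (since $g$ preserves $I$), both vertices converge to the same endpoint of $I$, and no limit leaf is produced. The correct choice, which is what the paper's proof does, is to use the rainbow at $u$ itself to select $J\in\sL_j$ with $u\in J$, $\partial J=\{x,y\}$, $x\in I$, $y\in I^*$, and with $y$ close enough to $u$ that a third fixed point lies in $\opi{v}{y}$ — this is the only place $|\Fix{g}|>2$ enters. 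Iterating in the direction that enlarges $J$, one has $g^n(x)\to v$ while $g^n(y)$ is trapped behind that fixed point, so $\ell(g^n(J))$ converges to a genuine leaf of $\sL_j$ having $v$ as a vertex; this shows $v\in\E{\sL_j}$ directly, and applying it once for each lamination system finishes the proof. Your third fixed point $p$ plays the right anchoring role, but you must specify that the initial leaf is chosen to cross over the rainbow endpoint and that its far vertex is shielded by a fixed point, or the limit degenerates.
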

\begin{proof}
Let $I = (u, v)_{S^1}$ be a connected component of the complement of $\Fix{g}$.
Our claim is that $|\partial I \cap \E{\sL_i}|= 1$ for each $i = 1, 2$. 

By the fact that $\E{\sL_1} \cap \E{\sL_2} = \emptyset$ and \reflem{noPivotEnd}, $u$ is a rainbow point of one of $\sL_1$ and $\sL_2$. Without loss of generality, let us assume that $u$ is a rainbow point of $\sL_1$. Then there exists $J \in \sL_1$ such that $u \in J$, and one boundary point of $J$, say $x$, is in $I$ and the other boundary point of $J$, say $y$, is in $I^\ast$. We may take such $J$ so that $(v, y)_{S^1} \cap \Fix{g} \neq \emptyset$ since $|\Fix{g}|\geq 3$.    

Replacing $g$ by $g^{-1}$ if necessary, we may assume that $\{g^n(J)\}_{n\in \NN}$ is an increasing sequence of intervals. Since $J^*$ contains at least two fixed point of $g$ including $v,$  $\{g^n(\ell(J))\}_{n\in \NN}$ converges to a leaf $\ell'$ of $\sL_1.$ Since $g$ acts on $I$ as a translation without fixed points and $\{g^n(x)\}_{n\in \NN}$ converges to $v$, $\ell'$ has $v$ as a vertex. Therefore, $v\in \E{\sL_1}.$ 

Now in $\sL_2$, $v$ is not an endpoint and not a pivot, hence by \reflem{trichotomy} it is a rainbow point. By the symmetric argument, then $u$ must be an endpoint of $\sL_2$, which proves our claim. 
\end{proof}

\begin{lem} \label{Lem:ginvariantpairofgaps}
Let $\sC=\{\sL_1, \sL_2\}$ be a pseudo-fibered pair and let $g\in \Aut(\sC)$. Suppose that $g$ is a non-trivial automorphism with $|\Fix{g}|>2.$ Then, there is a unique interleaving pair $(\sG_1,\sG_2)$ of real gaps preserved by $g.$ Furthermore, $v(\sG_1)\cup v(\sG_2)=\Fix{g}.$   
\end{lem}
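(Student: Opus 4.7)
\emph{Proof Plan.} The natural candidates for the desired gaps are tip gaps associated to specific fixed points, so I would begin by choosing $p_i \in \Fix g \cap \E{\sL_i}$ for each $i\in\{1,2\}$ (which exists by \reflem{twoColored}) and setting $\sG_i := \tipg{p_i}$, the unique real gap of $\sL_i$ having $p_i$ as a tip (\reflem{quadrachotomy}, which applies since $\sC$ is pseudo-fibered and hence totally disconnected by \refprop{totDis2}). Because $g$ preserves $\sL_i$ and fixes $p_i$, the uniqueness of the tip gap immediately forces $g(\sG_i)=\sG_i$.

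The next move is to show $v(\sG_i)\subseteq \Fix g$. The automorphism $g$ induces an orientation-preserving bijection of $v(\sG_i)$, which is a cyclically ordered set (finite if $\sG_i$ is an ideal polygon, $\ZZ$-indexed tips with an accumulating pivot if $\sG_i$ is a crown). An orientation-preserving permutation of such a set with any fixed point must be the identity, so every tip of $\sG_i$ is fixed; if $\sG_i$ is a crown, the pivot is fixed by \refprop{fixingCrown}. Consequently every element $I=(a,b)_{S^1}\in\sG_1$ has fixed endpoints and is itself $g$-invariant, and likewise for $\sG_2$.

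The heart of the proof is the reverse inclusion together with interleaving. For a fixed $I = (a,b)_{S^1}\in\sG_1$ I would analyse the fixed points of $g$ in the interior of $I$. Since $a,b\in\E{\sL_1}$, the complementary arcs of $\Fix g$ adjacent to $a$ and $b$ from inside $I$ must, by \reflem{twoColored}, have their other endpoints in $\E{\sL_2}$, so $I$ contains at least one fixed point of $\E{\sL_2}$-type. To show that it contains \emph{exactly} one, I would argue by contradiction using the trichotomy of \reflem{trichotomy2}: an interior fixed point in $\E{\sL_1}$ would generate, via \refprop{twoGapUnlinked}, a $g$-invariant tip gap $\tipg{r}$ lying on $I$ whose fixed tips (by the argument of the previous paragraph) and endpoints of $\sG_2$-elements produce a combinatorial/linking incompatibility with the alternation of colors from \reflem{twoColored}; a rainbow-in-both fixed point would violate \reflem{twoColored} once one observes it would have to be isolated in $\Fix g$; a pivot fixed point would force $\sG_1$ to already be the crown with that pivot by uniqueness. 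Applying the symmetric analysis to $\sG_2$ gives the interleaving and the identity $\Fix g = v(\sG_1)\cup v(\sG_2)$.

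Finally, for uniqueness, suppose $(\sG_1',\sG_2')$ is another interleaving pair of real gaps preserved by $g$. The same cyclic-action argument gives $v(\sG_i')\subseteq\Fix g$, and strong transversality forces $v(\sG_i')\subseteq \E{\sL_i}\cup\{\text{pivots}\}$. Combined with the coverage $\Fix g=v(\sG_1)\cup v(\sG_2)$ and the uniqueness of tip gaps from \reflem{quadrachotomy}, one obtains $v(\sG_i')=v(\sG_i)$ and hence $\sG_i'=\sG_i$. The step I expect to be the main obstacle is ruling out nested tip gaps inside an element $I\in\sG_1$; controlling this requires a careful interplay between \refprop{twoGapUnlinked}, the alternation given by \reflem{twoColored}, and the strong transversality between $\sL_1$ and $\sL_2$.
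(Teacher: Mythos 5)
Your setup is fine: choosing $p_i\in \Fix{g}\cap\E{\sL_i}$ via \reflem{twoColored}, taking $\sG_i=\tipg{p_i}$, and deducing $g(\sG_i)=\sG_i$ with $v(\sG_i)\subseteq\Fix{g}$ from the fact that a cyclic-order-preserving permutation of the tips with a fixed tip is the identity --- all of that is sound. But the proof has a genuine gap exactly where you flag ``the main obstacle'': the claim that each $I\in\sG_1$ contains exactly one fixed point, and that this fixed point is a tip of a \emph{single} gap of $\sL_2$ interleaving with $\sG_1$. You only gesture at a ``combinatorial/linking incompatibility'' to rule out an interior fixed point $r\in\E{\sL_1}$, but no contradiction is actually derived: $\tipg{r}$ would be a $g$-invariant real gap of $\sL_1$ lying on $I$ with all vertices fixed, and nothing in \reflem{twoColored} or \refprop{twoGapUnlinked} immediately forbids this configuration --- the alternation of colours on components of $S^1\setminus\Fix{g}$ is perfectly compatible with two nested invariant gaps of $\sL_1$ unless one tracks, as the paper does, the precise order structure of $\Fix{g}$. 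Similarly, your dismissal of a fixed point that is a rainbow point of both laminations presupposes that such a point is isolated in $\Fix{g}$, which is not established. Finally, even granting ``exactly one,'' you never show that the fixed points $t_I$ obtained from the various $I\in\sG_1$ all have the \emph{same} tip gap in $\sL_2$, nor that this common gap is your chosen $\sG_2=\tipg{p_2}$; without that, interleaving and the identity $\Fix{g}=v(\sG_1)\cup v(\sG_2)$ do not follow.

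For comparison, the paper's argument is bottom-up rather than top-down: starting from one component $\opi{u}{v}$ of $S^1\setminus\Fix{g}$, it first proves that $u$ and $v$ are \emph{isolated} in $\Fix{g}$ (using a rainbow $J$ at $u$ and the translation dynamics of $g$ on $J$ to show $\bigcap_n g^n(J)$ would otherwise be a leaf with $u$ as a vertex, contradicting that $u$ is a rainbow point), then inductively generates the whole bi-infinite (or cyclic) chain of isolated fixed points $\ldots,u_2,u_1,u,v,v_1,v_2,\ldots$ together with the alternating leaves joining them, and finally invokes \refprop{gapExist}, \refprop{threeLeaves} and \reflem{quadrachotomy} to assemble these leaves into the two interleaving gaps. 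That isolation-plus-induction step is precisely the machinery your proposal is missing; I would either import it wholesale or find a genuinely new argument for the ``exactly one fixed point per side'' claim before the rest of your outline can be completed.
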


\begin{proof}
By definition, we need to find a real gap $\sG_1$ in $\sL_1$ and a real gap $\sG_2$ in $\sL_2$ such that $\sG_1$ and $\sG_2$ are preserved by $g$ and interleave. 

Let $I=\opi{u}{v}$ be a connected component of $S^1\setminus \Fix{g}.$ Note that $I$ is a good open interval as $|\Fix{g}|>2.$ Our first claim is that each boundary point of $I$ is isolated in $\Fix{g}.$

By \reflem{twoColored}, we may assume that $u$ is a rainbow point of $\sL_1$ and $v$ is an end point of $\sL_1$. Then there exists $J \in \sL_1$ such that $u \in J$, and one boundary point of $J$, say $x$, is in $I$ and the other boundary point of $J$, say $y$, is in $I^\ast$. We may take such $J$ so that $(v, y)_{S^1} \cap \Fix{g} \neq \emptyset$ since $|\Fix{g}|\geq 3$. By \reflem{noStickyLeaf}, there is another connected component $I_y=\opi{s}{t}$ of $S^1\setminus \Fix{g}$ containing $y.$

Replacing $g$ by $g^{-1}$ if necessary, we may assume that $\{g^n(J)\}_{n\in \NN}$ is a decreasing sequence of intervals. Note that $\closure{g^{n+1}(J)}\subset g^n(J)$ for all $n\in\NN.$ Suppose there exists a fixed point of $g$, say $p$, in the open interval $(y, u)_{S^1}$. Then $\bigcap_{n \in \NN} g^n(J) \in \sL_1$ and $u$ is a boundary point of it, which is impossible since $u$ is a rainbow point in $\sL_1$. This proves that $\Fix{g} \cap (y, u)_{S^1} = \emptyset$, hence $t=u$ and  $u$ is an isolated fixed point. By a symmetrical argument using $\sL_2$, we know that $v$ is an isolated fixed point too. 

Now let $I' = \bigcup_{i \in \NN} g^{-i}(J)$. From the above, we know that $I'=\opi{s}{v}$ and $u$ is the only fixed point of $g$ in $I'$. Furthermore, since $v \in \Fix{g}$, $s$ is also fixed by $g$ by \reflem{noStickyLeaf}. Let us denote $s$ by $u_1$. By applying the above argument to $(u_1, u)_{S^1}$ instead of $I$, we show that $u_1$ is isolated, and there exists $u_2 \in \Fix{g}$, and so on by induction. Similarly, we have fixed points $v_1, v_2, \ldots$ on the other side of $I.$ 

In $\sL_1$, $u$ is a rainbow point and since $v(\ell(I'))=\{u_1,v\},$  there is a leaf $\sL_1$ connecting $v$ to $u_1$. Similarly in $\sL_2$, $v$ is a rainbow point and there is a leaf in $\sL_2$ connecting $u$ to $v_1$. It is possible that $u_2 = v_1$ and $u_3 = v$. In that case, these two leaves form a genuine stitch preserved by $g.$ 

Suppose not. In any case, $u_1$ is a rainbow point of $\sL_2$. Hence there is a leaf in $\sL_2$ connecting $u$ to $u_2$. Similarly, there is also a leaf in $\sL_2$ connecting $v_1$ to $v_3$ (we are not excluding the possibility of $v_3 = u_2$ here). Observe that $\opi{u}{v_1}^*$ is isolated in $\sL_2$ as $\opi{u_2}{u},$ $\opi{u}{v_1},$ and $\opi{v_1}{v_3}$ are in $\sL_2.$ Then, by \refprop{gapExist}, there is a non-leaf gap $\sG_2$ of $\sL_2$ containing $\opi{u}{v_1}.$ Inductively, by \refprop{threeLeaves} and \reflem{quadrachotomy}, $\opi{u_{2n}}{u_{2(n-1)}}$ and $\opi{v_{2n-1}}{v_{2(n+1)-1}}$ are in $\sG_2$  for all $n\in \NN$(here one uses the convention that $u = u_0$ and $v= v_0$). Similarly, there exists a gap $\sG_1$ in $\sL_1$ containing $\opi{u_1}{v}$ and $\sG_1$ contains $\opi{u_{2(n+1)-1}}{u_{2n-1}}$ and $\opi{v_{2(n-1)}}{v_{2n}}$ for all $n\in \NN.$ 

Now there are two cases. First, it is possible that $u_n = v_m$ for some $n, m$. In that case, both $\sG_1$ and  $\sG_2$ are ideal polygons. Otherwise, both $\sG_1$ and $\sG_2$ have infinitely many tips, therefore they are crowns. Since each crown has only one pivot, the only possibility is that both sequences $\{u_i\}$ and $\{v_i\}$ converge to the same point, say $p$, and $p$ is the pivot of $\sG_1$ and $\sG_2$. In any case, by construction, it is obvious that $\sG_1$ and $\sG_2$ are real gaps all of whose tips are fixed by $g$, and $\sG_1$ and $\sG_2$ interleave. 

The claim that the set of vertices of $\sG_1$ and  $\sG_2$ is precisely the set $\Fix{g}$ is obviously by the construction. This ends the proof. 
\end{proof}

To classify a pseudo-fibered pair preserving homeomorphism on the circle, we need a slightly weaker notion of interleaving gaps. Let $\sC=\{\sL_1,\sL_2\}$ be a pair of quite full lamination systems. For convenience, we say that gaps $\sG_1$ and $\sG_2$ of $\sL_1$ and $\sL_2,$ respectively, \emph{weakly interleave} if it is one of the following cases:
\begin{enumerate}
    \item For each $i\in \{1,2\}$ $\sG_i=\{\opi{u_n^i}{u_{n+1}^i}:n\in \ZZ_{n_i}\}$ for some integer $n_i\geq2$ and there is a natural number $d$ dividing $\gcd(n_1,n_2)$ such that for each $i\in \{1,2\}$, whenever $u_n^i\in \opi{u_m^{i+1}}{u_{m+1}^{i+1}}$, $u_{n+(n_i/d)}^i\in \opi{u_{m+(n_{i+1}/d)}^{i+1}}{u_{m+(n_{i+1}/d)+1}^{i+1}}$ (cyclically indexed). 
    \item For each $i\in \{1,2\}$, $\sG_i=\{\opi{u_n^i}{u_{n+1}^i}:n\in \ZZ\}$ and there are natural numbers $d_1$ and $d_2$ such that for each $i\in \{1,2\}$, whenever $u_n^i\in \opi{u_m^{i+1}}{u_{m+1}^{i+1}}$, $u_{n+d_i}^i\in \opi{u_{m+d_{i+1}}^{i+1}}{u_{m+d_{i+1}+1}^{i+1}}$ (cyclically indexed). 
\end{enumerate}
Also, we call the pair $(\sG_1,\sG_2)$ a \emph{weakly interleaving pair} of $\sC.$

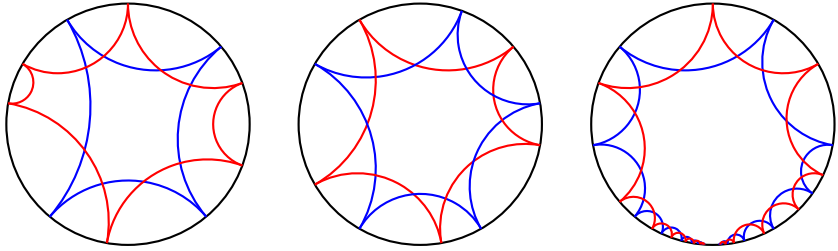
\begin{figure}[htb]
    \centering
    \begin{tikzpicture}[scale=0.8]
    \draw[thick] (0,0) circle (2 cm);
\draw[thick, blue] (	1.532088886	,	1.285575219	) arc (	310	:	210	:	1.678199262	);
\draw[thick, blue] (	-1	,	1.732050808	) arc (	30	:	-40	:	2.856296013	);
\draw[thick, blue] (	-1.285575219	,	-1.532088886	) arc (	140	:	40	:	1.678199262	);
\draw[thick, blue] (	1.532088886	,	1.285575219	) arc (	130	:	220	:	2	);
\draw[thick, red] (	0	,	2	) arc (	0	:	-120	:	1.154700538	);
\draw[thick, red] (	-1.732050808	,	1	) arc (	60	:	-100	:	0.3526539614	);
\draw[thick, red] (	-1.969615506	,	0.3472963553	) arc (	80	:	-10	:	2	);
\draw[thick, red] (	-0.3472963553	,	-1.969615506	) arc (	170	:	70	:	1.678199262	);
\draw[thick, red] (	1.879385242	,	0.6840402867	) arc (	110	:	250	:	0.7279404685	);\draw[thick, red] (	1.879385242	,	0.6840402867	) arc (	290	:	180	:	1.400415076	);
    \end{tikzpicture}\hspace{0.5cm}
    \begin{tikzpicture}[scale=0.8]
    \draw[thick] (0,0) circle (2 cm);
    \draw[thick, red] (	1.532088886	,	1.285575219	) arc (	310	:	210	:	1.678199262	);
\draw[thick, blue] (	0.6840402867	,	1.879385242	) arc (	340	:	240	:	1.678199262	);
\draw[thick,red] (	-1	,	1.732050808	) arc (	30	:	-60	:	2	);
\draw[thick,blue] (	-1.732050808	,	1	) arc (	60	:	-30	:	2	);
\draw[thick,red] (	-1.732050808	,	-1	) arc (	120	:	10	:	1.400415076	);
\draw[thick,blue] (	-1	,	-1.732050808	) arc (	150	:	30	:	1.154700538	);
\draw[thick,red] (	0.3472963553	,	-1.969615506	) arc (	190	:	80	:	1.400415076	);
\draw[thick,blue] (	1.969615506	,	0.3472963553	) arc (	100	:	210	:	1.400415076	);
\draw[thick,red] (	1.532088886	,	1.285575219	) arc (	130	:	260	:	0.9326153163	);
\draw[thick,blue] (	1.969615506	,	0.3472963553	) arc (	280	:	160	:	1.154700538	);
    \end{tikzpicture}\hspace{0.5cm}
    \begin{tikzpicture}[scale=0.8]
    \draw[thick] (0,0) circle (2 cm);
\draw[thick, blue] (	-1.969615506	,	-0.3472963553	) arc (	100	:	-40	:	0.7279404685	);
\draw[thick, blue] (	-1.285575219	,	-1.532088886	) arc (	140	:	-25	:	0.2633049952	);
\draw[thick, blue] (	-0.8452365235	,	-1.812615574	) arc (	155	:	-15	:	0.1749773271	);
\draw[thick, blue] (	-0.5176380902	,	-1.931851653	) arc (	165	:	-10	:	0.08732188582	);
\draw[thick, blue] (	-0.3472963553	,	-1.969615506	) arc (	170	:	-7	:	0.05237184314	);
\draw[thick, blue] (	-0.2437386868	,	-1.985092303	) arc (	173	:	-5	:	0.03491012986	);
\draw[thick, blue] (	-0.1743114855	,	-1.992389396	) arc (	175	:	-4	:	0.01745373558	);
\draw[thick, blue] (	0.1046719125	,	-1.99725907	    ) arc (	183	:	4	:	0.01745373558	);
\draw[thick, blue] (	0.1395129475	,	-1.995128101	) arc (	184	:	7	:	0.05237184314	);
\draw[thick, blue] (	0.2437386868	,	-1.985092303	) arc (	187	:	15	:	0.1398536239	);
\draw[thick, blue] (	0.5176380902	,	-1.931851653	) arc (	195	:	30	:	0.2633049952	);
\draw[thick, blue] (	1	,	-1.732050808	) arc (	210	:	55	:	0.4433893253	);
\draw[thick, blue] (	1.638304089	,	-1.147152873	) arc (	235	:	80	:	0.4433893253	);
\draw[thick, blue] (	-1.532088886	,	1.285575219	) arc (	50	:	-80	:	0.9326153163	);
\draw[thick, blue] (	1	,	1.732050808	) arc (	330	:	230	:	1.678199262	);
\draw[thick, blue] (	1	,	1.732050808	) arc (	150	:	260	:	1.400415076	);
\draw[thick, red] (	-1.532088886	,	-1.285575219	) arc (	130	:	-30	:	0.3526539614	);
\draw[thick, red] (	-1	,	-1.732050808	) arc (	150	:	-20	:	0.1749773271	);
\draw[thick, red] (	-0.6840402867	,	-1.879385242	) arc (	160	:	-13	:	0.1223252403	);
\draw[thick, red] (	-0.4499021087	,	-1.94874013	) arc (	167	:	-9	:	0.06984153898	);
\draw[thick, red] (	-0.3128689301	,	-1.975376681	) arc (	171	:	-6	:	0.05237184314	);
\draw[thick, red] (	-0.2090569265	,	-1.989043791	) arc (	174	:	-4.5	:	0.02618143417	);
\draw[thick, red] (	-0.1743114855	,	-1.992389396	) arc (	175	:	-4	:	0.01745373558	);
\draw[thick, red] (	0.1220970791	,	-1.996269597	) arc (	183.5	:	6	:	0.04364015524	);
\draw[thick, red] (	0.2090569265	,	-1.989043791	) arc (	186	:	10	:	0.06984153898	);
\draw[thick, red] (	0.3472963553	,	-1.969615506	) arc (	190	:	25	:	0.2633049952	);
\draw[thick, red] (	0.8452365235	,	-1.812615574	) arc (	205	:	45	:	0.3526539614	);
\draw[thick, red] (	1.414213562	,	-1.414213562	) arc (	225	:	65	:	0.3526539614	);
\draw[thick, red] (	-1.879385242	,	0.6840402867	) arc (	70	:	-50	:	1.154700538	);
\draw[thick, red] (	0	,	2	) arc (	0	:	-110	:	1.400415076	);
\draw[thick, red] (	1.732050808	,	1	) arc (	300	:	180	:	1.154700538	);
\draw[thick, red] (	1.732050808	,	1	) arc (	120	:	245	:	1.041134101	);
\end{tikzpicture}
    \caption{A weakly interleaving pair (left) and interleaving pairs (middle and right)}
    \label{interleavingexample}
\end{figure}

Now, summarizing what we have shown so far, we state the first main result of Part~\ref{grouppart} as follows:
\begin{thm} \label{Thm:classification}
Let $\sC=\{\sL_1, \sL_2\}$ be a pseudo-fibered pair. If $g$ is a non-trivial laminar automorphism in $\Aut(\sC)$, then $g$ falls into one of the following cases:

\begin{enumerate}
    \item \label{Itm:elliptic}$g$ is elliptic and there is a weakly interleaving pair of ideal polygons of $\sC$ preserved by $g.$
    \item \label{Itm:parabolic} $g$ is parabolic and there is a weakly interleaving pair of crowns of $\sC$ preserved by $g.$ Furthermore, the pivot of the crowns is the fixed point of $g.$
    \item \label{Itm:hyperbolic} $g$ is hyperbolic and each fixed point of $g$ is a rainbow point in both $\sL_1$ and $\sL_2.$ 
    \item \label{Itm:pAlike} $g$ is a pA-like automorphism without fixed point and there is an interleaving pair $(\sG_1, \sG_2)$ of real ideal polygons preserved by $g$ such that $v(\sG_1)\cup v(\sG_2)=\Per{g}.$
    \item \label{Itm:pA} $g$ is a properly pseudo-Anosov and there is an interleaving pair $(\sG_1, \sG_2)$ of real gaps preserved by $g$ such that $v(\sG_1)\cup v(\sG_2)=\Fix{g}.$ Moreover, the tips of $\sG_i$ are attracting fixed points of $g$ and the tips of $\sG_{i+1}$ are repelling fixed points of $g$ for some $i\in \{1,2\}$ (cyclically indexed). Therefore, we say that $\sG_i$ is the \emph{attracting gap} of $g$ and $\sG_{i+1}$ is the \emph{repelling gap} of $g.$ 
\end{enumerate}
\end{thm}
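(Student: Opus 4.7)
The strategy is a case analysis organized by the fixed-point structure of $g$. By \reflem{rationalRot}, since $g$ preserves the quite full lamination system $\sL_1$, the rotation number of $g$ is rational, hence $\Per{g}\neq \emptyset$ and some positive iterate of $g$ has a fixed point. I would split into five sub-cases: $g$ is elliptic; $g$ has no fixed point but $\Per{g}\neq \emptyset$; and $|\Fix{g}|=1,\ 2,$ or $>2$.

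For the elliptic case (1), a non-trivial finite-order element of $\Homeop(S^1)$ is conjugate to a rotation and hence acts freely on $S^1$, so $\Fix{g}=\emptyset$. Using \reflem{trichotomy2} together with the fact that the tips of real gaps and pivots of asterisks of $\sC$ form a countable set while $S^1$ is uncountable, I would first find a point $p$ that is a rainbow point of both $\sL_1$ and $\sL_2$. Applying \reflem{elliptic} to $(\sL_1,g)$ and then to $(\sL_2,g)$ at $p$ yields ideal polygons $\sG_i\in \sL_i$ preserved by $g$ with $\sigma\mid |v(\sG_i)|$, where $\sigma$ is the order of $g$. Weak interleaving of $\sG_1$ and $\sG_2$ then follows by tracking the $\langle g\rangle$-action on the two vertex sets: each contains the orbit of $p$, and the $\sigma$-periodic structure combined with strong transversality $\E{\sL_1}\cap \E{\sL_2}=\emptyset$ forces the alternation condition in the definition of a weakly interleaving pair with divisor $d=\sigma$. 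The parabolic case (2) is handled by \reflem{preserveCrown}, which directly produces the desired $g$-invariant crowns with shared pivot at the unique fixed point; two crowns sharing a pivot weakly interleave automatically from their common accumulation structure.

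The hyperbolic case (3) is exactly \reflem{hyperbolic}. For $|\Fix{g}|>2$, \reflem{ginvariantpairofgaps} produces the unique $g$-invariant interleaving pair $(\sG_1,\sG_2)$ of real gaps with $v(\sG_1)\cup v(\sG_2)=\Fix{g}$, which gives case (5). The attracting/repelling dichotomy of the tips is then established by noting that $g$ acts on each component of $S^1\setminus \Fix{g}$ as a fixed-point-free translation, and by using that $g$ permutes the boundary leaves of each $\sG_i$ among themselves; combined with the interleaving alternation of tips between $\sG_1$ and $\sG_2$, this forces all tips of one gap to be attracting fixed points and all tips of the other to be repelling. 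Finally, for the pA-like case (4) with $\Fix{g}=\emptyset$ but $\Per{g}\neq \emptyset$, I would choose a positive integer $k$ so that $\Fix{g^k}$ realizes $\Per{g}$ and apply the previous analysis to $g^k$. Since $g$ has no fixed point, $g^k$ cannot be parabolic, and the $g^k$-invariant gaps cannot be crowns as $g$ would then fix their pivots; thus we obtain real ideal polygons, which are $g$-invariant by the uniqueness clause of \reflem{ginvariantpairofgaps}.

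The main obstacle, in my estimation, lies in the weak-interleaving statement in case (1): \reflem{elliptic} produces only one polygon in each lamination system separately, and verifying that the vertex sets of $\sG_1$ and $\sG_2$ are genuinely interleaved in the weak sense requires a careful combinatorial argument coordinating the common orbit $\langle g\rangle\cdot p$, the strong transversality $\E{\sL_1}\cap \E{\sL_2}=\emptyset$, and a compatible divisor $d$ governing the cyclic action on both polygons simultaneously. A secondary technical point is the verification, in case (4), that $v(\sG_1)\cup v(\sG_2)$ genuinely exhausts $\Per{g}$ rather than merely the fixed set of the chosen iterate $g^k$.
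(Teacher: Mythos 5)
Your overall strategy is exactly the paper's: organize by fixed-point structure, use \reflem{rationalRot} to produce periodic points, invoke \reflem{elliptic}, \reflem{preserveCrown}, \reflem{hyperbolic}, and \reflem{ginvariantpairofgaps} in the respective cases, and reduce the fixed-point-free non-elliptic case to a power $g^k$ together with the uniqueness clause of \reflem{ginvariantpairofgaps}. There is, however, one genuine gap in your case analysis. When $\Fix{g}=\emptyset$ and $g$ is not elliptic, you choose $k$ with $\Fix{g^k}=\Per{g}$ and ``apply the previous analysis to $g^k$,'' ruling out only the parabolic possibility. You must also rule out $|\Fix{g^k}|=2$, i.e.\ $g^k$ hyperbolic: in that situation there are no $g^k$-invariant gaps at all, so your next sentence about ``the $g^k$-invariant gaps'' has nothing to refer to, and $g$ would be a fixed-point-free root of a hyperbolic element, which fits none of the five cases. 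The paper disposes of this by observing that then $k=2$ and $g$ swaps the two fixed points of $g^2$, so conjugating $g^2$ by $g$ exchanges its attracting and repelling fixed points while also leaving $g^2$ unchanged, a contradiction. Without some such argument the classification does not close.

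Secondly, your justification for the existence of a common rainbow point in the elliptic case is incorrect as stated: by \reflem{trichotomy2}, the points that fail to be rainbow points of both $\sL_1$ and $\sL_2$ are the tips of real gaps of either lamination together with the pivots, and the tips of real gaps include the vertices of all real leaves, i.e.\ essentially all of $\E{\sL_1}\cup\E{\sL_2}$, which is dense and in general uncountable. The conclusion is nonetheless true and salvageable, e.g.\ by Baire category: each $\E{\sL_i}$ is an $F_\sigma$ set, and two disjoint dense $F_\sigma$ sets are both meager (a comeager $F_\sigma$ set would have interior), while the pivots form a countable set, so common rainbow points are comeager. I would not weigh this too heavily against you: the paper's own proof of case (1) consists of a single citation of \reflem{elliptic} and never verifies the weak interleaving, so your attempt, modulo the fix above, is more careful than the written proof. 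The remaining cases (parabolic, hyperbolic, $|\Fix{g}|>2$, and the identification of $\Per{g}$ with $\Fix{g^k}$ for minimal $k$) match the paper's argument.
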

\begin{proof}
First, we consider the case where $\Fix{g}\neq \emptyset.$ If $g$ has only one fixed point $p$, then $g$ is parabolic. By \reflem{preserveCrown}, there are crowns $\sG_1$ and $\sG_2$ in $\sL_1$ and $\sL_2,$ respectively, whose pivots are $p$ and that are preserved by $g.$ 

To show that $\sG_1$ and $\sG_2$ weakly interleave, for each $i\in \{1,2\}$, we write $\sG_i=\{\opi{u_n^i}{u_{n+1}^i}:n\in \ZZ\}$.  Observe that there are non-zero integers $d_1$ and $d_2$ such that $d_1$ and $d_2$ have same sign and for each $i\in\{1,2\}$, $g(u_n^i)=u_{n+d_i}^i$ since $g$ acts on $\opi{p}{p}$ as a translation without fixed point. This implies that $\sG_1$ and $\sG_2$ weakly interleave. This gives the case \refitm{parabolic}.

By \reflem{hyperbolic}, the case where $g$ has exactly two fixed points is the case \refitm{hyperbolic}. Then, we consider the case where $|\Fix{g}|\geq 3$. By \reflem{ginvariantpairofgaps}, there is a unique interleaving pair $(\sG_1,\sG_2)$ of real gaps preserved by $g$ such that $v(\sG_1)\cup v(\sG_2)=\Fix{g}.$

Note that for each $i\in\{1,2\}$, each tip of $\sG_i$ is a rainbow point in $\sL_{i+1}$ by \reflem{noPivotEnd} and \reflem{trichotomy}. Fix a tip $t$ of $\sG_1$. There is a unique element $\opi{s_1}{s_2}$ in $\sG_2$ crossing over $t$ and we can take an element $J_t$ in $\sL_2$ crossing over $t$ so that $\closure{J_t}\subset \opi{s_1}{s_2}$. Then, either $\closure{g(J_t)}\subset J_t$ or $\closure{J_t}\subset g(J_t)$ since $\partial J_t\subset S^1\setminus \Fix{g}$. If $\closure{g(J_t)}\subset J_t,$ then $t$ is an attracting fixed point of $g$ and, otherwise, $t$ is a repelling fixed point of $g$. Without loss of generality, we may assume that $t$ is an attracting fixed point. 

As $s_1$ and $s_2$ are rainbow points in $\sL_1,$ we can take two elements $\opi{u_1}{v_1}$ and $\opi{u_2}{v_2}$ in $\sL_1$ crossing over $s_1$ and $s_2,$ respectively such that $v_1\in \opi{s_1}{t}$ and $u_2\in \opi{t}{s_2}.$ Since $t$ is an attracting fixed point, $g(\opi{v_1}{t})=\opi{g(v_1)}{t}\subsetneq \opi{v_1}{t}$ and $g(\opi{t}{u_2})=\opi{t}{g(u_2)}\subsetneq \opi{t}{u_2}$. Hence, we get that $\cldi{u_1}{v_1}\subset g(\opi{u_1}{v_1})$ and $\cldi{u_2}{v_2}\subset g(\opi{u_2}{v_2}).$ Therefore, $s_1$ and $s_2$ are repelling fixed points of $g.$ Inductively, applying similar arguments to tips of $\sG_1$ and $\sG_2$, we can conclude that the tips of $\sG_1$ are attracting fixed points and the tips of $\sG_2$ are repelling fixed points. This implies the case \refitm{pA}.

Now, we consider the case where $g$ has no fixed point. By \reflem{rationalRot}, $\Per{g}$ is not empty. Then, we can take a minimal natural number $k>1$ such that $\Per{g} \subset \Fix{g^k}$. Note that $\Per{g}=\Fix{g^k}$ as $\Fix{g^k} \subset \Per{g}$ by definition. Then, if $\Fix{g^k}=S^1,$ then $g$ is an elliptic automorphism of order $k.$ This is the case \refitm{elliptic} by \reflem{elliptic}. 

Assume that $\Fix{g^k}\neq S^1.$ If $|\Fix{g^k}|=1,$ then $|\Per{g}|=1$ and so $g$ has a fixed point. This is a contraction to the assumption. If $|\Fix{g^k}|=2,$ then $g$ has exactly two periodic points $p_1$ and $p_2$ with $g(p_1)=p_2$ and so $k=2.$ By \reflem{hyperbolic}, $g^2$ is a hyperbolic automorphism with $\Fix{g^2}=\{p_1, p_2\}.$ We say that $p_1$ is the attracting fixed point of $g^2$ and $p_2$ is the repelling fixed point of $g^2.$ Then, the hyperbolic automorphism $g\circ g^2\circ g^{-1}$ also has $g(p_1)=p_2$ as the attracting fixed point. On the other hand, $g\circ g^2\circ g^{-1}=g^2$ and so $p_2$ is also the repelling fixed points of $g\circ g^2\circ g^{-1}$. This is a contradiction. Therefore, $|\Fix{g^k}|>2.$ 

Now, we can apply \reflem{ginvariantpairofgaps} to $g^k.$ Then, there is a unique interleaving pair $(\sG_1, \sG_2)$ of real gaps preserved by $g^k$ with $v(\sG_1)\cup v(\sG_2)=\Fix{g^k}$. Also, $(g(\sG_1),g(\sG_2))$ is a unique interleaving pair preserved by $g\circ g^2 \circ g^{-1}$. By the uniqueness of the interleaving pair and the fact that $g\circ g^2 \circ g^{-1}=g^2$, $g(\sG_i)=\sG_i$ for all $i\in \{1,2\}$. When $\sG_i$ are crowns, $g$ fixed the pivot of $(\sG_1,\sG_2)$. This is a contradiction by assumption. Therefore, $\sG_i$ are ideal polygons preserved by $g$. Also, $\Per{g}=v(\sG_1)\cup v(\sG_2)$ since $\Per{g}=\Fix{g^k}$. This shows the case \refitm{pAlike}. 
\end{proof}

We may simplify \refthm{classification} in the following form. 

\begin{cor}\label{Cor:simpleClass}
Let $\sC=\{\sL_1, \sL_2\}$ be a pseudo-fibered pair. 
Each non-trivial automorphism in $\Aut(\sC)$ is either a hyperbolic automorphism or an automorphism preserving a weakly interleaving pair. 
\end{cor}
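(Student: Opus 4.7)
The plan is to deduce the corollary directly from Theorem~\ref{Thm:classification} by observing that every interleaving pair is already a weakly interleaving pair. Theorem~\ref{Thm:classification} partitions the non-trivial elements of $\Aut(\sC)$ into five classes. In cases~\refitm{elliptic} and \refitm{parabolic}, $g$ is asserted to preserve a weakly interleaving pair of ideal polygons or crowns, and in case~\refitm{hyperbolic} the element is hyperbolic; these three cases are already of the desired form. What remains is to settle cases~\refitm{pAlike} and~\refitm{pA}, where $g$ preserves an \emph{interleaving} (not a priori weakly interleaving) pair $(\sG_1,\sG_2)$ of real gaps.

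For this I would check from the definitions that an interleaving pair of real gaps always falls into one of the two patterns listed in the definition of weakly interleaving. If $(\sG_1,\sG_2)$ is an interleaving pair of ideal polygons, then the condition that each side of $\sG_i$ meets $v(\sG_{i+1})$ in exactly one point forces $|v(\sG_1)|=|v(\sG_2)|=:n$, so that the vertices and sides of the two gaps alternate one-to-one around $S^1$. Choosing $d=n$ in the first clause of the definition reduces the shift requirement to the trivial statement that $u^i_{k+1}$ lies in the interval of $\sG_{i+1}$ immediately after the one containing $u^i_k$. If instead $(\sG_1,\sG_2)$ is an interleaving pair of crowns, both vertex sets are $\ZZ$-indexed and, by the same one-to-one alternation, the choice $d_1=d_2=1$ in the second clause works.

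Finally, to see that these are the only shapes that can occur, I would appeal to the proof of Theorem~\ref{Thm:classification}: in case~\refitm{pAlike} the invariant gaps arise from $\Fix{g^k}=\Per{g}$, which is finite, so $\sG_1$ and $\sG_2$ are ideal polygons, while in case~\refitm{pA} Lemma~\ref{Lem:ginvariantpairofgaps} constructs $\sG_1$ and $\sG_2$ explicitly as either polygons (when $\Fix{g}$ is finite) or crowns (when $\Fix{g}$ is infinite). No other shape arises, so the verification of the previous paragraph completes the proof.

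The main (and really only) obstacle is purely notational: making sure the cyclic/translational indexing conventions in the two definitions line up correctly. Once the one-to-one correspondence between sides and vertices implied by interleaving is recorded, the corollary is essentially a bookkeeping exercise.
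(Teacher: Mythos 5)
Your proposal is correct and takes essentially the same approach as the paper, which simply presents the corollary as a ``simplification'' of Theorem~\ref{Thm:classification} without further argument. The only content beyond citing the theorem is your explicit (and correct) verification that an interleaving pair is automatically weakly interleaving via the trivial parameter choices, which the paper leaves implicit.
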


\begin{cor}\label{Cor:constantPer}
Let $\sC$ be a pseudo-fibered pair. Then, for each $g\in \Aut(\sC),$ $\Per{g}=\Per{g^n}$ for all non-zero $n\in \ZZ.$
\end{cor}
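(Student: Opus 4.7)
The asserted identity $\Per{g} = \Per{g^n}$ is a set-theoretic fact that holds for any self-homeomorphism of any space; the pseudo-fibered pair structure plays no essential role in the argument. The plan is therefore to verify both inclusions directly from the definition of periodic points, using only the group law in $\Aut(\sC)$.

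For $\Per{g} \subseteq \Per{g^n}$, I will take $x \in \Per{g}$, so that $g^k(x) = x$ for some $k \geq 1$, and then compute $(g^n)^k(x) = g^{nk}(x) = (g^k)^n(x) = x$, which places $x$ in $\Per{g^n}$. For the reverse inclusion, I will start from $x \in \Per{g^n}$ together with a positive integer $k$ satisfying $g^{nk}(x) = x$. Because $\Per{h} = \Per{h^{-1}}$ for any homeomorphism $h$ (a point fixed by $h^m$ is also fixed by $h^{-m}$), I may reduce the case $n < 0$ to the case $n > 0$; in the latter case $nk$ is a positive integer for which $g^{nk}(x) = x$, so $x \in \Per{g}$.

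The only input from the surrounding framework is that $g^n$ lies in $\Aut(\sC)$ for every nonzero $n$, which is immediate because $\Aut(\sC)$ is a group containing $g$. There is no real obstacle to overcome. The purpose of recording this identity seems to be for later bookkeeping: combined with \refthm{classification}, it guarantees that when one passes from $g$ to any iterate $g^n$, the invariant (weakly) interleaving pair attached to $g^n$ by the classification is pinned down by exactly the same periodic point set as that of $g$, which will be useful when analyzing elementary subgroups in \refthm{elementary}.
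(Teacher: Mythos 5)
Your proof is correct, and it takes a genuinely different route from the paper's. The paper dispatches this corollary with the single line ``It follows from \refthm{classification},'' implicitly inviting a case-by-case check across the five types (elliptic, parabolic, hyperbolic, pA-like, properly pseudo-Anosov), each of which carries its own description of $\Per{g}$. You observe instead that $\Per{g}=\Per{g^n}$ is a tautology of group actions: $\Per{g}=\bigcup_{k\ge 1}\Fix{g^k}$, and for $n>0$ one has $\Fix{g^k}\subseteq\Fix{g^{nk}}=\Fix{(g^n)^k}$ and $\Fix{(g^n)^k}=\Fix{g^{nk}}\subseteq\Per{g}$, while $n<0$ reduces to $n>0$ via $\Per{h}=\Per{h^{-1}}$. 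This is cleaner and correctly exposes that none of the pseudo-fibered machinery is needed; it also matches what the paper actually uses downstream (\refclm{preservePer} and \refcor{noBSgroup} only invoke the set equality). One small tightening worth making: in your forward inclusion the step $(g^k)^n(x)=x$ is immediate only for $n>0$; for $n<0$ you should note explicitly that $g^k(x)=x$ implies $g^{-k}(x)=x$ before iterating, or apply the $\Per{h}=\Per{h^{-1}}$ reduction symmetrically to both inclusions rather than only the reverse one.
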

\begin{proof}
It follows from \refthm{classification}.
\end{proof}

For clarity, we restate \refthm{classification} for automorphism of veering pairs.

\begin{thm}\label{Thm:classOfLaminarAut}
Let $\sV=\{\sL_1, \sL_2\}$ be a veering pair. If $g$ is a non-trivial laminar automorphism in $\Aut(\sV),$ then $g$ falls into one of the following cases:
\begin{enumerate}
    \item $g$ is elliptic and there is an interleaving pair of ideal polygons of $\sV$ preserved by $g.$
    \item $g$ is parabolic and there is an interleaving pair of crowns of $\sV$ preserved by $g.$ Furthermore, the pivot of the crowns is the fixed point of $g.$
    \item $g$ is hyperbolic and each fixed point of $g$ is a rainbow point in both $\sL_1$ and $\sL_2.$ 
    \item $g$ is a pA-like automorphism without fixed point and there is an interleaving pair $(\sG_1, \sG_2)$ of real ideal polygons preserved by $g$ such that $v(\sG_1)\cup v(\sG_2)=\Per{g}.$
    \item  $g$ is a properly pseudo-Anosov and there is an interleaving pair $(\sG_1, \sG_2)$ of real gaps preserved by $g$ such that $v(\sG_1)\cup v(\sG_2)=\Fix{g}.$ Moreover,  $\sG_i$ is the attracting gap of $g$ and $\sG_{i+1}$ is the repelling gap of $g$ for some $i\in \{1,2\}$ (cyclically indexed). 
\end{enumerate}
\end{thm}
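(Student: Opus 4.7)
The plan is to bootstrap from \refthm{classification}, the analogous classification for pseudo-fibered pairs, and to promote the \emph{weakly} interleaving pairs furnished by cases~(1) and~(2) of that theorem to genuine interleaving pairs by exploiting the defining property of a veering pair: every non-leaf gap has an interleaving gap. Since every veering pair is pseudo-fibered, \refthm{classification} applies and the five-case dichotomy is already in hand. Cases~(3), (4), and~(5) match the present statement verbatim, so the only real work concerns cases~(1) and~(2).

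The key auxiliary statement I would establish first is a \emph{uniqueness lemma}: if $\sG$ is a non-leaf gap of $\sV$, then its interleaving gap is unique. Suppose $\sG'$ and $\sG''$ both interleave with $\sG$ in the opposite lamination system. Each has exactly one vertex in every side of $\sG$, so each has at least three vertices when $\sG$ is a polygon and infinitely many when $\sG$ is a crown. If $\sG' \neq \sG''$, then by \refprop{twoGapUnlinked} one of them, say $\sG'$, lies on some side $J' \in \sG''$, so $(I')^{*} \subset J'$ for some side $I'$ of $\sG'$. Numbering the sides of $\sG$ cyclically and noting that $I'$ has its endpoints in two consecutive sides of $\sG$, the complementary arc $(I')^{*}$ traverses all remaining sides of $\sG$, and thus contains at least $|v(\sG)| - 2 \geq 1$ vertices of $\sG''$. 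This contradicts $(I')^{*} \subset J'$, since the open interval $J' \in \sG''$ is disjoint from $v(\sG'')$.

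Given this lemma, case~(2) is immediate: by \refthm{classification}(2), $g$ preserves a crown $\sG_1 \in \sL_1$ whose pivot is the fixed point of $g$; the veering axiom supplies an interleaving crown $\sG_1^{*} \in \sL_2$, which is $g$-invariant by uniqueness, so $(\sG_1, \sG_1^{*})$ is the required interleaving pair. For case~(1), \refthm{classification}(1) produces a weakly interleaving pair $(\sG_1, \sG_2)$ of $g$-invariant ideal polygons. If either $\sG_i$ has at least three vertices, hence is non-leaf, the uniqueness lemma promotes $(\sG_i, \sG_i^{*})$ to a genuine interleaving pair. Otherwise both $\sG_1, \sG_2$ are bi-gons; then $g$ has order two, and an elementary case analysis (distinguishing whether the order-two $g$ has zero or two fixed points on $S^1$) shows that any two distinct $g$-invariant $2$-point subsets of $S^1$ are linked. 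Since $v(\sG_1) \cap v(\sG_2) = \emptyset$ by strong transversality, $(\sG_1, \sG_2)$ itself interleaves as a pair of bi-gon ideal polygons.

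The main obstacle is the uniqueness lemma for interleaving gaps; once it is in hand, cases~(1) and~(2) upgrade cleanly, and cases~(3)--(5) transfer from \refthm{classification} without modification.
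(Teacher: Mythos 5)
Your proposal is correct and takes the same route as the paper: the paper simply restates \refthm{classification} for veering pairs with no further argument, leaving implicit the upgrade from weakly interleaving pairs to interleaving pairs in the elliptic and parabolic cases. Your uniqueness lemma for the interleaving gap of a non-leaf gap (which yields its $g$-invariance), together with the order-two linking argument for the case of two invariant bi-gons, correctly supplies exactly the justification the paper omits.
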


\subsection{Relationship with Pants-like $\COL_2$ Pairs}\label{Sec:promotiontocol2}

In this section, we explain how to extend a pseudo-fibered pair to a pants-like pair. As a result, every group which preserves a pseudo-fibered pair is a pants-like $\COL_2$ group. Recall from \cite{Baik15} that for a subgroup $G\leq \Homeop(S^1),$ we say that $G$ is \emph{pants-like $\COL_2$}  if there is a pair $\sC=\{\sL_1, \sL_2\}$ of very full lamination systems satisfying the followings:
\begin{itemize}
    \item $G\leq \Aut(\sC),$
    \item $\sL_1$ and $\sL_2$ are \emph{transverse}, that is, $
    \sL_1 \cap \sL_2 =\emptyset,$ and 
    \item the set $\E{\sL_1}\cap \E{\sL_2}$ is precisely the set of the cusp points of $G$ (i.e., the fixed points of parabolic elements of $G$). 
\end{itemize}
We also call such a pair $\sC$ a \emph{pants-like pair} for $G.$   

For future reference, we start with a pair of laminations which satisfies weaker conditions than those of a pseudo-fibered pair. Let $\sC=\{\sL_1,\sL_2\}$ be a pair of quite full lamination systems that are strongly transverse, namely, $\E{\sL_1}\cap \E{\sL_2}=\emptyset.$ In this section, given a subgroup $G$ of $\Aut(\sC),$ we are going to construct a pair of very full lamination systems $\closure{\sL_1}$ and $\closure{\sL_2}$ which contains $\sL_1$ and $\sL_2,$ respectively,  so that $\{\closure{\sL_1},\closure{\sL_2}\}$ is a pants-like pair for $G.$

In this subsection, we will abuse the language a bit and treat $\sL_1$ and $\sL_2$ as circle laminations instead of lamination systems in the following sense. Given a lamination system $\sL,$ we call a subset $\cG$ of $\cC(\sL)$ a \emph{gap} in $\cC(\sL)$ if there is a gap $\sG$ in $\sL$ such that $\cG=\{\epsilon(\ell(I)):I\in \sG\}.$ We denote $\cG$ by $\cG(\sG).$ 

Fix a subgroup $G$ of $\Aut(\sC).$ We write $\Lambda_1=\cC(\sL_1)$ and $\Lambda_2=\cC(\sL_2)$ for the circle laminations associated with $\sL_1$ and $\sL_2.$ Let $\sG$ be a crown in $\sL_1$ and let $p$ be its pivot. We write $\sG=\{\opi{t_n}{t_{n+1}}:n\in \ZZ\}$ and $\cG=\cG(\sG).$ First, we note that if $g$ is a nontrivial element of $G,$ then $\Stab{G}{g(\sG)}=g\Stab{G}{\sG}g^{-1}$ where for a group $G$ acting on a set $X$ and for any subset $Y$ of $X,$ $\Stab{G}{Y}$ denotes the stabilizer of $Y$ under the $G$-action. Hence, if we add leaves in $\Lambda_1$ dividing $\cG$ into ideal polygons so that the result is $\Stab{G}{\sG}$-invariant, then one can just add the orbit of those leaves under the $G$-action to get a new $G$-invariant circle lamination which contains $\Lambda_1$. 

Only thing we need to be cautious here is that we would like to have $p \in \E{\overline{\sL_1}}$ if and only if $\Stab{G}{\sG}$ contains a parabolic element. Here is how we achieve this. 

If all elements in $\Stab{G}{\sG}$ fix all tips of $\cG$, then it actually does not matter what we do inside the gap $\cG$. Hence, in this case, we add leaves $\{\{t_n, t_{-n}\}:n\in \NN\}$ in $\cG$ so that $\cG$ is decomposed into ideal polygons and there exists a rainbow at $p$ consisting of the new leaves in $\cG.$  

If there exists a parabolic element $h$ in $G$ which fixes the pivot of $\cG$ and preserves $\cG$, 
then one can just add leaves $\{\{t_n,p\}:n\in \ZZ \}$ in $\cG$ so that each tip of $\cG$ is connected to the pivot $p$ in $\cG$ by a leaf. 
Then such leaves are permuted by $h$. In any case, we have the extended $\cG$ is still $\Stab{G}{\sG}$-invariant, so by adding their $G$-orbit, one gets a lamination which contains $\Lambda_1$ and has one less orbit class of crowns. 

Note that there are at most countably many crowns in $\sL_1.$ By repeating this process inductively for all orbit classes of crowns, we get a very full lamination $\overline{\Lambda_1}$ which contains $\Lambda_1$. Now, we say that $\closure{\sL_1}$ is the lamination system associated with $\overline{\Lambda_1}.$ Then, $\sL_1\subset \overline{\sL_1}.$  Similarly, we can obtain an extended lamination system $\closure{\sL_2}$ for $\sL_2.$

Then, we observe the following claim.
\begin{claim}\label{Clm:noRainbow}
If $g$ is a parabolic element of $G$ with $\Fix{g}=\{p\}$, then for each $i\in \{1,2\}$, $p$ is either an end point of $\sL_i$ or the pivot of a crown in $\sL_i$ preserved by $g.$     
\end{claim}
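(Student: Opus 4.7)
The plan is to apply the trichotomy of \reflem{trichotomy} to $p$ in each $\sL_i$, and then to rule out the ``rainbow point'' case by playing the parabolic dynamics of $g$ against the unlinkedness axiom of a lamination system. Fix $i\in\{1,2\}$. Since $\sL_i$ is quite full, \reflem{trichotomy} says that $p$ is either (a) an endpoint of $\sL_i$, (b) the pivot of a unique crown of $\sL_i$, or (c) a rainbow point of $\sL_i$. Case (a) is exactly one of the alternatives in the conclusion. In case (b), letting $\sG$ be the crown of $\sL_i$ with pivot $p$, I would invoke \refprop{fixingCrown}: since $g$ fixes $p$, it must preserve $\sG$, yielding the other alternative. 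So everything reduces to eliminating case (c).

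To do so, I would assume toward contradiction that $p$ is a rainbow point of $\sL_i$ and choose a rainbow $\seqi{K}$ at $p$. The complements $K_n^*\in\sL_i$ then form an ascending family of good intervals exhausting $S^1\setminus\{p\}$. The key dynamical input is that a parabolic element with unique fixed point $p$ restricts to a fixed-point-free orientation-preserving homeomorphism of $S^1\setminus\{p\}$, which by a classical fact is topologically conjugate, via some homeomorphism $\phi\colon S^1\setminus\{p\}\to\RR$, to the translation $x\mapsto x+1$. Under $\phi$, each $K_n^*$ becomes a bounded open interval $(c_n,d_n)\subset\RR$ with $c_n\to-\infty$ and $d_n\to+\infty$, while $g(K_n^*)$ corresponds to $(c_n+1,d_n+1)$. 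For $n$ large enough that $d_n-c_n>1$, the four real numbers $c_n<c_n+1<d_n<d_n+1$ pull back to four points on $S^1$ occurring in that cyclic order, so the endpoints of $K_n^*$ and $g(K_n^*)$ alternate. Hence $\ell(K_n^*)$ and $\ell(g(K_n^*))$ will be linked in $\sL_i$; since $g$ preserves $\sL_i$ we have $g(K_n^*)\in\sL_i$, contradicting the unlinkedness of $\sL_i$.

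The main care will be in the conjugation step: one must justify that a fixed-point-free orientation-preserving homeomorphism of $\RR$ is topologically conjugate to a translation, and then track carefully how the cyclic order on $S^1$ ``wraps around'' through $p$ to conclude alternation. An alternative that would avoid the conjugation is to argue directly on $S^1$ using a single $g$-orbit $\{g^n(q)\}$, which accumulates on $p$ from both sides, and then to take $K_n^*$ large enough to contain $q$, $g(q)$, $g^{-1}(q)$; the same alternation of endpoints will then appear. Either way, the computation itself is short once the dynamical picture is in hand.
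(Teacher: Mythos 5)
Your proof is correct and takes essentially the same route as the paper: apply \reflem{trichotomy}, use \refprop{fixingCrown} for the pivot case, and rule out the rainbow case by producing a pair of linked elements of $\sL_i$ from the fixed-point-free action of $g$ on $S^1\setminus\{p\}$. The paper's stated proof of this claim is terse (it merely asserts that $p$ cannot be a rainbow point), but the detailed linkedness argument is exactly the one given in the proof of \reflem{preserveCrown}, and your ``single-orbit'' alternative, taking $I\in\sL_i$ containing $q$ and $g(q)$ and observing that $\ell(I)$ and $\ell(g(I))$ link, is precisely what the paper does there.
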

\begin{proof}
Observe that $p$ can not be a rainbow point in each $\sL_i$. Thus, the claim follows from \reflem{trichotomy}.
\end{proof}

Then, there are two possible case on a cusp point $p$ of $G$ by \refclm{noRainbow} and the fact that $\sL_1$ and $\sL_2$ are strongly transverse. One is that $p$ is a pivot point in both $\sL_1$ and $\sL_2.$ The other case is that there is $i\in\{1,2\}$ such that $p$ is a pivot in $\sL_i$ and an end point of $\sL_{i+1}$ (cyclically indexed). In both cases, $p$ is a common end point of $\E{\closure{\sL_1}}$ and $\E{\closure{\sL_2}}$ from the construction of $\closure{\sL_i}.$

From the construction of $\overline{\sL_i}$, $\E{\overline{\sL_i}} \setminus \E{\sL_i}$ consist of the pivots of crowns in $\sL_i$ which are fixed by some parabolic elements of $G$, i.e., they are cusp points of $G$. A point in $\E{\closure{\sL_1}}\cap \E{\closure{\sL_2}}$ is such a cusp point or a point already in $\E{\sL_1}\cap \E{\sL_2}$ but $\E{\sL_1}\cap \E{\sL_2}=\emptyset$ by assumption. Hence, we get that $\E{\overline{\sL_1}} \cap \E{\overline{\sL_2}}$ is precisely the set of all cusp points of $G$ and the pair $\{ \overline{\sL_1}, \overline{\sL_2}\}$ is a desired one which makes $G$ a pants-like $\COL_2$ group.

\subsection{Classification of Elementary Groups}

Let $\sC=\{\sL_1, \sL_2\}$ be a pseudo-fibered pair and $G$  a subgroup of $\Aut(\sC).$ Let $\sG$ be a gap of $\sC$, and we now study the structure of $\Stab{G}{\sG}$. In the case where $\sG$ is a crown, an element of $\Stab{G}{\sG}$ is called a \emph{guardian of the crown} $\sG$, or more shortly just a GOC element. Let $\closure{\sC}=\{\overline{\sL_1}, \overline{\sL_2} \} $ be a pants-like pair for $G$ constructed in  \refsec{promotiontocol2}. The collection of ideal polygons in $\overline{\sL_i}$ which is just a decomposition of a crown in $\sL_i$ is called a \emph{fractured crown}. More generally, we call the process of obtaining $\overline{\sL_i}$ from $\sL_i$ described in \refsec{promotiontocol2} \emph{fracturing $\sL_i$}. Note that any element of $G$ which preserves a fractured crown in $\overline{\sL_i}$ is a GOC element in the perspective of $\sL_i$. 

Recall that a descending sequence of elements $\{I_n\}_{n\in \NN}$ in a lamination system is called a rainbow at $p$ if $\bigcap_{n\in\NN} I_n=\{p\}$ for some $p\in S^1$. If a descending sequence of elements $\{I_n\}_{n=1}^\infty$ in a lamination system satisfies that $\bigcap_{n\in \NN} \overline{I_n}=\{p\}$ for some $p\in S^1$, then we call it a \emph{quasi-rainbow} at $p$. Note that in our case, all quasi-rainbows in $\sL_i$ are actually rainbows by \refprop{threeLeaves}, but in the fractured lamination systems $\overline{\sL_i}$, there are new quasi-rainbows at pivot points of fractured crowns.

For a lamination system $\sL$ and for a subgroup $G$ of $\Aut(\sL)$, suppose that there exists a quasi-rainbow  $\{I_n\}_{n=1}^{\infty}$ at $x \in S^1$. A sequence $\{(g_n, I_n)\}_{n=1}^{\infty}$  of  elements of $G \times \sL$ is called a \emph{pre-approximation sequence} at $x$ if 
there is a point $y$ in $S^1$ such that $g_n(y)\in I_n$ for all $n\in \NN.$

Since $\{\overline{\sL_1}, \overline{\sL_2}\}$ is a pants-like pair for $G$ in the sense of \cite{Baik15}, the following lemma from \cite{BaikKim20} holds. 

\begin{lem}[\cite{BaikKim21}] \label{Lem:presequence}
Suppose that we have a sequence $\{x_n\}_{n=1}^{\infty}$ of elements of $S^1$ converging to $x\in S^1$ and a sequence $\{g_n\}_{n=1}^{\infty}$ of distinct elements of $G$ such that $\{g_n(x_n)\}_{n=1}^\infty$ converges to $x'\in S^1$. Then we can have one of the following cases:
\begin{enumerate}
\item there is a subsequence $\{g_{n_k}\}_{k=1}^{\infty}$ such that $g_{n_k}(x)=x'$ for all $k\in\NN$; 
\item there is a subsequence $\{g_{n_k}\}_{k=1}^{\infty}$ and a quasi-rainbow $\{I_k\}_{k=1}^\infty$ at $x'$ in $\overline{\sL_i}$ for some $i\in \{1,2\}$ such that the sequence $\{(g_{n_k},I_k)\}_{k=1}^{\infty}$ is a pre-approximation sequence at $x'$;
\item  there is a subsequence $\{g_{n_k}\}_{k=1}^{\infty}$ and  a quasi-rainbow $\{I_k\}_{k=1}^\infty$ at $x$ in $\overline{\sL_i}$ for some $i\in \{1,2\}$  such that the sequence $\{(g_{n_k}^{-1},I_k)\}_{k=1}^{\infty}$ is a pre-approximation  sequence at $x$. 
\end{enumerate} 
\end{lem}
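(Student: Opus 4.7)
The plan is to reduce the statement to a convergence–group–type dichotomy, using the structure of the pants-like fractured pair $\{\overline{\sL_1},\overline{\sL_2}\}$ from \refsec{promotiontocol2}. First I would pass to a subsequence (still denoted $\{g_n\}$) such that $g_n(x)$ converges to some $z\in S^1$, which is possible by compactness of $S^1$. If infinitely many $n$ satisfy $g_n(x)=x'$, then case~(1) holds after one further extraction. Otherwise we must produce the quasi-rainbow demanded by (2) or (3), and the rough principle is that when $x_n\to x$ but $g_n(x)\not\to x'$, the homeomorphisms $g_n$ must be severely distorting a shrinking neighborhood of some point; which point plays the role of the contracting centre dictates whether we land in (2) or (3).

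Having excluded case (1), I would split by comparing $z$ with $x'$. If $z\ne x'$, then points arbitrarily close to $x$ (namely $x_n$) are sent arbitrarily close to $x'$ while $x$ itself is sent toward $z$. To build a quasi-rainbow at $x'$, I would use the fact that after fracturing the endpoint set $\E{\overline{\sL_1}}\cup \E{\overline{\sL_2}}$ is dense in $S^1$ (by \refcor{denseEnd} applied to the extended lamination systems, which remain quite full). Pick a descending sequence $\{I_k\}\subset \overline{\sL_i}$ with $\bigcap_k \closure{I_k}=\{x'\}$, available in some $\overline{\sL_i}$ because $x'$ is either an endpoint, a pivot, or a rainbow point of one of the fractured systems. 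Since $g_n(x_n)\to x'$, for each $k$ we may choose $n_k$ so that $g_{n_k}(x_{n_k})\in I_k$; the nontrivial task is to replace $x_{n_k}$ by a single fixed $y$. This is done by choosing $y$ to be an endpoint of the fractured lamination close to $x$ but distinct from it, and using continuity of $g_{n_k}$ together with $x_{n_k}\to x$ to arrange (after further thinning) that $g_{n_k}(y)$ also lies in $I_k$; the distinctness of the $g_{n_k}$ prevents finite accumulation and forces the shrinking.

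If instead $z=x'$, the analogous argument should be run for $g_n^{-1}$: choose a point $y'$ near $x'$, show that $g_n^{-1}(y')$ accumulates on $x$ (essentially because $g_n$ spreads arbitrarily small neighborhoods of $x$ across a macroscopic arc while still sending $x_n$ to $x'$), and extract a quasi-rainbow $\{J_k\}$ at $x$ in one of the $\overline{\sL_i}$ so that $(g_{n_k}^{-1},J_k)$ is a pre-approximation sequence at $x$; this yields case (3). Throughout, the standing assumption that $\{g_n\}$ consists of distinct elements is what rules out the trivial case and drives the extraction, in the same spirit as standard convergence-group arguments.

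The main obstacle I expect is the simultaneous bookkeeping in the extraction: we must pick a single auxiliary point $y$ (or $y'$) that works for every index in a subsequence, and we must guarantee that the nested intervals $I_k$ are genuinely good, nested, and have closures meeting only at the target limit. This is where the fracturing step is essential, since pivots of crowns of $\sL_i$ might otherwise fail to admit quasi-rainbows in $\sL_i$ itself; promoting to $\overline{\sL_i}$ furnishes the needed chain of intervals around every point of $S^1$. With this structural input, the rest is a careful diagonal argument combining compactness of $S^1$, density of endpoints, and the distinctness of the $g_n$.
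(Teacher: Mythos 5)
You are attempting a from-scratch proof of a lemma that the paper itself does not prove: it is imported from [BaikKim21], and the only thing the surrounding text verifies is the hypothesis that the fractured pair $\{\overline{\sL_1},\overline{\sL_2}\}$ is pants-like. A self-contained proof is welcome, but yours has a genuine gap at its central step. In the sub-case $z\ne x'$ you choose a point $y$ near $x$ and claim that ``continuity of $g_{n_k}$ together with $x_{n_k}\to x$'' lets you arrange $g_{n_k}(y)\in I_k$. That step needs equicontinuity of the family $\{g_n\}$ at $x$, not continuity of each individual map, and equicontinuity fails precisely in the situation you are in: $g_n(x)\to z$ while $g_n(x_n)\to x'$ with $z\ne x'$, so the $g_n$ tear apart arbitrarily small neighborhoods of $x$. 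Worse, the conclusion you aim for in this sub-case, namely case (2), can genuinely be false there: take $g_n$ hyperbolic with common repelling fixed point $x$ (so $z=x$) and common attracting fixed point $w$, with translation lengths growing so that $g_n(x_n)\to x'$ for some $x'\notin\{x,w\}$; then every fixed $y\ne x$ satisfies $g_n(y)\to w\ne x'$, and no pre-approximation sequence at $x'$ exists. What does hold when $z\ne x'$ is case (3): the $g_n$-image of the shrinking arc between $x$ and $x_n$ eventually contains a definite arc between neighborhoods of $z$ and of $x'$, so a fixed $y'$ in that arc has $g_n^{-1}(y')$ trapped near $x$. Your assignment of sub-cases to conclusions is thus reversed, and the argument given for it cannot be repaired inside case (2).

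The missing ingredient is the $G$-invariance of the lamination systems, which your proof never uses at the decisive moment. The workable argument pulls a quasi-rainbow $\{I_k\}$ at $x'$ back by the $g_{n_k}$: the intervals $g_{n_k}^{-1}(I_k)$ again belong to $\overline{\sL_i}$ and contain $x_{n_k}$, and unlinkedness/nestedness of the pocset forces a dichotomy --- either these pullbacks share a definite interval, yielding the single point $y$ required for case (2), or they shrink and nest around $x$, yielding the quasi-rainbow and the point $y'$ for case (3). The part of your proposal that is sound is the preliminary one: fracturing guarantees a quasi-rainbow at every point of $S^1$ in at least one of $\overline{\sL_1},\overline{\sL_2}$. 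But that is the easy half; the dichotomy itself must come from the invariant combinatorial structure, not from pointwise continuity.
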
 

Here is the key lemma of the current section. 

\begin{lem}\label{Lem:discreteAction}
Let $\sC=\{\sL_1, \sL_2\}$ be a pseudo-fibered pair and $G$ a subgroup of $\Aut(\sC)$. Assume that $G$ is not trivial. If $G$ preserves an open interval $I=\opi{u}{v}$ and faithfully and freely acts on $I$, then there is an automorphism $g$ generating $G$ so that $G$ is isomorphic to $\ZZ$ and acts on $I$ as a translation without fixed point. 
\end{lem}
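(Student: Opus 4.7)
The plan is to combine the element classification \refthm{classification} with H\"older's theorem on free actions on $\RR$ by orientation-preserving homeomorphisms to reduce the problem to showing $G$ is cyclic, and then to rule out the dense case via a linkedness argument.

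First, I would observe that every non-trivial $g\in G$ fixes $u$ and $v$ individually: $g(I)=I$ forces $g$ to permute $\{u,v\}$, and if it swapped them, then for any $x\in I$ with $(u,x,v)$ counter-clockwise, the image $(v,g(x),u)$ would also have to be counter-clockwise by orientation preservation, forcing $g(x)\in I^*$ and contradicting $g(I)=I$. Combining this with freeness of the action on $I$ and \refthm{classification}, one rules out the elliptic case (a finite-order orientation-preserving homeomorphism of $\RR$ cannot act freely), the parabolic case (only one fixed point), and the pA-like case (no fixed points), leaving every non-trivial $g$ to be either hyperbolic with $\Fix{g}=\{u,v\}$ or properly pseudo-Anosov with all extra fixed points lying in $I^*$. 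The restricted action of $G$ on $I\cong\RR$ is therefore a faithful, free, orientation-preserving action, so by H\"older's theorem $G$ is abelian and order-isomorphic to a subgroup of $(\RR,+)$ via some homomorphism $g\mapsto t_g$, and the action on $I$ is semi-conjugated to the translation action by a monotone continuous surjection $\phi\colon I\to\RR$. It therefore suffices to show that $G$ is not a dense subgroup of $\RR$.

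Suppose toward a contradiction that $G$ is dense. Pick $p\in I$ that is a rainbow point of $\sL_i$ for some $i\in\{1,2\}$; such a $p$ exists because by \reflem{trichotomy2} together with strong transversality $\E{\sL_1}\cap\E{\sL_2}=\emptyset$, every point of $I$ is either a rainbow point of at least one of $\sL_1,\sL_2$ or the pivot of an asterisk of crowns, and pivots form a countable set. By additionally avoiding the countably many collapsed $\phi$-fibers (any such fiber $J$ would give a $G$-indexed collection of pairwise disjoint open subintervals $g(J)\subset I$, which is necessarily countable), I may assume that $p$ and both endpoints $a,b$ of a sufficiently deep rainbow element $J_k\in\sL_i$ at $p$ lie in singleton $\phi$-fibers; choosing $k$ large also guarantees $\closure{J_k}\subset I$. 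The leaf $\ell:=\ell(J_k)\in\sL_i$ then has both endpoints in $I$ with $\phi(a)<\phi(b)$. Using density of $G$ in $\RR$, pick $g\in G\setminus\{e\}$ with $0<t_g<\phi(b)-\phi(a)$; then $\phi(g(a))=\phi(a)+t_g<\phi(b)$ while $\phi(g(b))=\phi(b)+t_g>\phi(b)$, so $a<g(a)<b<g(b)$ in the order on $I$. This forces the cyclic order $a,g(a),b,g(b)$ on $S^1$, making $\ell$ and $g(\ell)$ linked leaves of $\sL_i$ and contradicting the unlinkedness axiom.

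The main obstacle is the linkedness step, specifically selecting a leaf of $\sL_i$ with both endpoints in $I$ lying in distinct $\phi$-fibers so that a sufficiently small positive translation really yields the cyclic order $a,g(a),b,g(b)$; this is where strong transversality of the pseudo-fibered pair and the richness of rainbow points both come into play. Once density is excluded, $G$ is a non-trivial discrete abelian subgroup of $\RR$, hence infinite cyclic. Any generator $g$ then acts on $I\cong\RR$ as an orientation-preserving fixed-point-free homeomorphism, which is automatically topologically conjugate to a translation, completing the proof.
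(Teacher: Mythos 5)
Your proof is correct in substance but takes a genuinely different route from the paper's. The paper never invokes H\"older's theorem: it fixes $p\in I$, assumes the orbit $G\cdot p$ accumulates in $I$, and uses \reflem{presequence} (imported from the pants-like $\COL_2$ machinery of \refsec{promotiontocol2}) to produce a pre-approximation sequence; nestedness then forces some $g_{n_{k+1}}g_{n_k}^{-1}$ to have a fixed point in $I$, contradicting freeness. Discreteness of the orbit then yields cyclicity by the usual fundamental-domain argument. Your route replaces all of that with the classical H\"older semi-conjugacy and excludes the dense case by linking a rainbow leaf with its image under a small translation; this is more self-contained (no appeal to the fractured laminations or to \reflem{presequence}) and isolates exactly where unlinkedness enters, at the cost of having to handle the collapsed fibers of the semi-conjugacy $\phi$. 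Your first paragraph (fixing $u,v$ and invoking \refthm{classification}) is harmless but unnecessary: H\"older only needs the restricted action on $I\cong\RR$ to be free and orientation preserving, which is the hypothesis.

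One step is stated more strongly than you can justify, though the argument survives. "Avoiding the countably many collapsed $\phi$-fibers" does not mean avoiding a countable set of points: each collapsed fiber is a nondegenerate interval, so their union may be uncountable, and there is no reason the endpoints $a,b$ of a prescribed rainbow element should land in singleton fibers. The repair is that you do not need $a,b$ in singleton fibers at all. The set $S$ of singleton-fiber points is uncountable (it injects onto $\RR$ minus a countable set under $\phi$), while by \reflem{trichotomy2} the points of $I$ that are rainbow points of neither $\sL_1$ nor $\sL_2$ are exactly the pivots of asterisks of crowns, a countable set; hence some $p\in S$ is a rainbow point of some $\sL_i$. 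For any rainbow element $J_k$ at $p$ with $\closure{J_k}\subset I$ and endpoints $a<p<b$, the equality $\phi(a)=\phi(b)$ would put $p$ in a nondegenerate fiber, so $\phi(a)<\phi(b)$ automatically, and your linking argument goes through verbatim.
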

\begin{proof}
Choose a point $p$ in $I.$ The set $G\cdot p$ is the orbit of $p$ in $I$ under the $G$-action. Suppose that there is a sequence $\{g_n\}_{n=1}^\infty$ of distinct elements of $G$ such that the sequence $\{g_n(p)\}_{n=1}^\infty$ converges to a point $q$ in $I.$  By \reflem{presequence}, there is a subsequence $\{g_{n_k}\}_{k=1}^\infty$ 
 of $\{g_n\}_{n=1}^\infty$ and a quasi-rainbow $\{I_k\}_{k=1}^\infty$ in some $\closure{\sL_i}$ such that either $\{(g_{n_k}, I_k)\}_{k=1}^\infty$ is a pre-approximation sequence at $p$ or  $\{(g_{n_k}^{-1}, I_k)\}_{k=1}^\infty$ is a pre-approximation sequence at $q.$ Otherwise, $g_n(p)=g_m(p)=q$ for some $n\neq m.$ This implies that the non-trivial element $g_n^{-1}\circ g_m$ in $G$ has a fixed point $p$ in $I.$ This is a contradiction by assumption. 

 If $\{(g_{n_k}, I_k)\}_{k=1}^\infty$ is a pre-approximation sequence at $p,$ then there is a point $x_p$ in $S^1$ such that $g_{n_k}(x_p) \in I_k$ for all $k\in \NN.$ Then, since $ \bigcap_{k\in\NN}\overline{I}_k=\{p\}$ and $p\in I,$ there is a natural number $N$ such that $\overline{I}_k \subset I$ for all $k>N.$ Fix $k_0>N.$ Then 
 $$x_p \in g_{n_{k_0}}^{-1}(I_{n_{k_0}})\cap g_{n_{k_0+1}}^{-1}(I_{n_{k_0+1}})
 \subseteq g_{n_{k_0}}^{-1}(I_{n_{k_0}})\cap g_{n_{k_0+1}}^{-1}(I_{n_{k_0}}).$$
 Therefore, $$ g_{n_{k_0+1}} g_{n_{k_0}}^{-1}(I_{n_{k_0}})\cap I_{n_{k_0}} \neq \emptyset.$$
 Note that $g_{n_{k_0+1}} g_{n_{k_0}}^{-1}$ is not the identity element since $\{g_n\}_{n=1}^\infty$ is a sequence of distinct elements of $G.$
 Then, by unlinkedness, there are three cases: 
\begin{enumerate}
\item $g_{n_{k_0+1}} g_{n_{k_0}}^{-1}(I_{n_{k_0}})\subseteq I_{n_{k_0}};$
\item $I_{n_{k_0}} \subseteq g_{n_{k_0+1}} g_{n_{k_0}}^{-1}(I_{n_{k_0}});$
\item $g_{n_{k_0+1}} g_{n_{k_0}}^{-1}(I_{n_{k_0}}^*)\subseteq I_{n_{k_0}}.$
\end{enumerate}
The case where $g_{n_{k_0+1}} g_{n_{k_0}}^{-1}(I_{n_{k_0}}^*)\subseteq I_{n_{k_0}}$ does not occurx since every element in $G$ preserves $I$ and fixes $u$ and $v.$ In the remaining cases, there is a fixed point of $g_{n_{k_0+1}} g_{n_{k_0}}^{-1}$ in $\overline{I}_{n_{k_0}}.$ Hence,  $g_{n_{k_0+1}} g_{n_{k_0}}^{-1}$ has a fixed point in $I.$ This is a contradiction since $I\cap \Fix{g_{n_{k_0+1}} g_{n_{k_0}}^{-1}} =\emptyset$ by assumption. 
In the case where $\{(g_{n_k}^{-1}, I_k)\}_{k=1}^\infty$ is a pre-approximation sequence at $q,$ we can do in a similar way.
Therefore, for any $p$ in $I,$ $G\cdot p$ has no limit point in $I.$
Hence, $G\cdot p$ is a countable closed subset of $I$ since the set $G\cdot p$ is a discrete closed subset of $I.$

Fix $p$ in $I.$ Then, there is a connected component of $I \setminus G\cdot p$ which is $(g(p), p)_{S^1}$ for some $g$ in $G.$ Now, we show that $G$ is generated by $g.$ Let $\langle g \rangle$ be the subgroup of $G$ generated by $g.$ Suppose that $h$ is an element in $G\setminus\langle g \rangle.$ Then there is an integer $m$ in $\ZZ$ such that $h(p)\in \cldi{g^{m+1}(p)}{g^{m}(p)}$ since 
$$I=\displaystyle \bigcup_{n\in \ZZ} [g^{n+1}(p),g^{n}(p)]_{S^1}.$$
Therefore, $g^{-m}h(p)\in [g(p), p]_{S^1} \cap G\cdot p =\{g(p), p\}$ and since $G$ acts freely on $I,$ $g^{-m}h=g$ or $g^{-m}h=\id_G.$ Hence, $h=g^{m+1}$ or $h=g^m.$ This is a contradiction since $h\notin \langle g \rangle.$ Thus, $G=\langle g \rangle.$
\end{proof}

\begin{thm}\label{Thm:elementary}
Let $\sC=\{\sL_1, \sL_2\}$ be a pseudo-fibered pair and $G$ a subgroup of $\Aut(\sC).$ If $G$ has an infinite cyclic normal subgroup, $G$ is isomorphic to $\ZZ,$ the infinite dihedral group, $\ZZ\times \ZZ_n$ for some $n\in \NN$ with $n>1,$ or $\ZZ\times \ZZ.$
Furthermore, one of the following cases holds.
\begin{enumerate}
    \item When $G\cong \ZZ,$
    \begin{enumerate}
        \item $G$ is generated by a parabolic automorphism, 
        \item $G$ is generated by a hyperbolic automorphism, or
        \item $G$ is generated by a pA-like automorphism.
    \end{enumerate} 
    \item When $G$ is isomorphic to the infinite dihedral group, $G$ is generated by an hyperbolic automorphism $g$ and an elliptic automorphism $e$ of order two, and so $G=\langle  g, e \ | \ e^2=1,\ ege=g^{-1} \rangle.$  
    \item When $G\cong \ZZ \times \ZZ_n$ for some $n\in\NN$ with $n>1,$ $G$ preserves a unique interleaving pair of  ideal polygons of $\sC$ and  there is a pA-like automorphism $g$ and an elliptic automorphism $e$ of order $n$ such that $G=\langle g \rangle \times \langle e \rangle.$
    \item When $G\cong \ZZ \times \ZZ,$ $G$ preserves a unique asterisk of crowns of $\sC$ and there is a properly pseudo-Anosov $g$ and a parabolic automorphism $h$ such that $G=\langle g \rangle \times \langle h \rangle.$ 
\end{enumerate}
\end{thm}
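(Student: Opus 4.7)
The plan is to start from a generator $g$ of the infinite cyclic normal subgroup $N \trianglelefteq G$ and exploit the classification of single automorphisms (\refthm{classification}). Since $\Aut(\ZZ)=\ZZ/2\ZZ$, the conjugation action of $G$ on $N$ yields a homomorphism $\chi: G \to \ZZ/2\ZZ$ sending $h$ to the $\epsilon_h \in \{\pm 1\}$ with $hgh^{-1}=g^{\epsilon_h}$; set $G_+ = \ker\chi$, the centralizer of $g$, which is of index at most $2$. The four types of $g$ from \refthm{classification} (parabolic, hyperbolic, pA-like, properly pseudo-Anosov; the elliptic case is ruled out since $g$ has infinite order) will each force different structure on $G$, and I will treat them separately.

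For the parabolic case, the unique fixed point $p$ of $g$ is also fixed by $g^{-1}$, so normality forces every $h\in G$ to fix $p$. By \reflem{trichotomy} and \refprop{fixingCrown}, $G$ then preserves the unique interleaving pair of crowns at $p$. Any other non-trivial element of $G$ is either another parabolic element (which must have $p$ as its pivot, hence lies in a commuting subgroup) or a properly pseudo-Anosov element (whose attracting/repelling gaps must include $p$). A careful accounting of the possible axes and their interaction with the pinned crown pair gives either $G\cong \ZZ$ or $G\cong \ZZ\times\ZZ$. For the pA-like case, $g$ preserves a unique interleaving pair $(\sG_1,\sG_2)$ of ideal polygons with $v(\sG_1)\cup v(\sG_2)=\Per{g}$; by \refcor{constantPer}, $\Per{g^{\pm 1}}=\Per{g}$, so normality shows $G$ preserves this pair. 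The quotient $G/\langle g\rangle$ then embeds into the finite rotation group of the pair, giving $G\cong \ZZ\times \ZZ_n$.

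For the hyperbolic case, $\Fix{g}=\{p,q\}$, and $G$ permutes $\{p,q\}$; the subgroup $G_0$ that fixes $p$ and $q$ pointwise has index at most $2$ in $G$. I will argue that $G_0$ acts freely on the open arc $\opi{p}{q}$: any extra fixed point of some $h\in G_0$ would (by \refthm{classification}, since $h$ would have at least three fixed points) make $h$ properly pseudo-Anosov with preserved gap pair, but then $g^{\pm 1} = hgh^{-1}$ would have to preserve those gaps, contradicting $|\Fix{g}|=2$ and the real-gap property. \reflem{discreteAction} then identifies $G_0\cong \ZZ$, and analyzing the involution (if any) that swaps $p$ and $q$ gives either $\ZZ$ or the infinite dihedral group. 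For the properly pseudo-Anosov case, $g$ has attracting gap $\sG_a$ and repelling gap $\sG_r$; these are distinguished from each other by the dynamics, and normality forces $G$ to preserve the unordered pair $\{\sG_a,\sG_r\}$, with $G_+$ preserving each individually. A similar analysis as in the parabolic case, using whether $\sG_a,\sG_r$ are crowns or polygons, yields $G\cong \ZZ\times \ZZ$ or falls into an earlier case.

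The main obstacle I foresee is in the hyperbolic/properly pseudo-Anosov cases: namely, ruling out the existence of elements in $G_+$ that have unexpected extra fixed points on a $G$-invariant arc, and verifying that \reflem{discreteAction} applies (which requires the freeness of the action). The second delicate point will be ensuring that, when constructing the second generator in the $\ZZ\times\ZZ$ or $\ZZ\times \ZZ_n$ cases, the candidate commuting element really does commute with $g$ rather than inverting it — this comes down to checking orientation data on the interleaving pair preserved by $g$. The remaining content of the theorem (identifying the invariant pair and showing the rank-$2$ abelian structure preserves an asterisk of crowns in the $\ZZ\times\ZZ$ case) then follows by combining \refthm{classification} with the uniqueness assertions for invariant interleaving pairs.
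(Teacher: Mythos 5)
Your plan follows the same broad structure as the paper's proof: fix an infinite cyclic normal subgroup $\langle g\rangle$, observe that conjugation gives $hgh^{-1}=g^{\pm1}$, split on the dynamical type of $g$ via \refthm{classification}, and let \reflem{discreteAction} together with the invariant-gap analysis do the rest. The intermediate tools you invoke (\refcor{constantPer}, \reflem{trichotomy}, \refprop{fixingCrown}, \reflem{hyperbolic}) are the same ones the paper uses, and the case-by-case outcomes match.

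One step is stated in a form that would fail and needs to be fixed. In the pA-like polygon case you write that ``$G/\langle g\rangle$ embeds into the finite rotation group of the pair, giving $G\cong \ZZ\times\ZZ_n$.'' The relevant quotient is $G/\ker\psi$, where $\psi:G\to\ZZ_n$ is the map to vertex rotations of the preserved interleaving pair; $\ker\psi$ consists of \emph{all} properly pseudo-Anosov elements of $G$ together with the identity, and can be strictly larger than $\langle g\rangle$. (Indeed, if $g$ itself is pA-like without fixed points, then $g\notin\ker\psi$, so $\langle g\rangle$ is certainly not the kernel.) What one needs — and what the paper actually does — is first to apply \reflem{discreteAction} to show $\ker\psi$ is infinite cyclic generated by some properly pseudo-Anosov $h_0$, then pick $e\in G$ whose image generates $\psi(G)$ and verify $eh_0e^{-1}=h_0$ (not $h_0^{-1}$) by a dynamical argument near an attracting fixed point. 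Even after that, elementary algebra shows the extension $1\to\ker\psi\to G\to\psi(G)\to 1$ can be non-split, in which case $G\cong\ZZ$ generated by a pA-like automorphism rather than $\ZZ\times\ZZ_k$ — a possibility your plan silently drops. The same adjustment (work with $\ker\psi$ or $G_{pA}$, not $\langle g\rangle$) is also needed in the crown and parabolic cases; in the latter, it is the subgroup of properly pseudo-Anosov elements to which \reflem{discreteAction} is applied, while the parabolics are controlled via the one with smallest translation on the tips of the invariant crown, so your phrase ``lies in a commuting subgroup'' also needs the same sort of care.
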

\begin{proof} 
Let $\langle h\rangle$ be a normal subgroup of $G$ for some infinite order element $h$.
\begin{claim}\label{Clm:preservePer}
Every element in $G$ preserves $\Per{h}.$ Therefore, $\Per{h}\subset \Per{g}$ for all $g\in G.$
\end{claim}
\begin{proof}
Choose $g$ in $G.$ Since $H$ is normal in $G,$ $ghg^{-1}=h^n$ for some non-zero $n\in \ZZ.$ Then, by \refcor{constantPer}, $g(\Per{h})=\Per{ghg^{-1}}=\Per{h^n}=\Per{h}$.
\end{proof}

Given $g\in G$, we have $ghg^{-1} = h^n$ and $g^{-1} h g = h^m$ for some $m,n\in \ZZ$. We compute $h =  g^{-1} h^n g = (h^m)^n= h^{mn}$. Since $h$ is not of finite order, we know that $mn=1$. This shows that for any $g\in G$, we have either $g h g^{-1} =h$ or $ghg^{-1} =h^{-1}$. 

We split the cases according to the type of $h$. Note that the possible types of $h$ are parabolic, hyperbolic and pA-like.

Case 1: $h$ is a  parabolic automorphism preserving a weakly interleaving pair of crowns $(\sG_1,\sG_2)$. 

If all non-trivial elements of $G$ are parabolic, then they share the same fixed point as $h$. We apply \reflem{discreteAction} to conclude that $G$ is cyclic. Suppose that there is a non-trivial $g\in G$ that has more than one fixed point. Observe that $g$ must be properly pseudo-Anosov with $\Fix{g} = v(\sG_1)\cup v(\sG_2)$; otherwise $g$ maps the $h$-invariant crowns to another crowns, which is not allowed. Hence, $(\sG_1,\sG_2)$ is an interleaving pair. By \reflem{discreteAction}, we know that the set $G_{pA}$ of properly pseudo-Anosovs together with the identity is a cyclic group generated by a properly pseudo-Anosov $g_0 \in G$.

Let $h_0\in G$ be a parabolic element that has the smallest translation on the set of vertices of $\sG$. We claim that $g_0h_0 g_0 ^{-1} = h_0$. We prove this by mean of contradiction. Hence assume that $g_0 h_0 g_0 ^{-1} = h_0^{-1}$. Let $v$ be a fixed point of $g_0$ that is a tip of $\sG_1$. Then, we have that $h_0 (v) = g_0 h_0 g_0 ^{-1}(v) = h_0 ^{-1}(v)$, which is absurd. Thus, $g_0 h_0 g_0^{-1} = h_0$. 

It is clear that $h_0$ and $g_0 $ generate $G$; for any $g\in G$, since $h_0$ has the smallest translation on $v(\sG_1)$, there is $m\in \ZZ$ such that  $h_0^m g$ fixes $v(\sG_1)\cup v(\sG_2)$ pointwise, i.e., $h_0 ^m g$ is in $G_{pA}$. 
Therefore, $G$ is abelian as $[g_0,h_0]=1$.  Thus, it follows that $G$ is either  $\langle h_0 \rangle \cong \ZZ$ for some parabolic $h_0$ or $G=\langle h_0\rangle \times \langle g_0 \rangle \cong \ZZ\times \ZZ$ for some parabolic $h_0$ and properly pseudo-Anosov $g_0$.

Case 2: $h$ is hyperbolic. 

In this case, any $g\in G$ permutes two fixed points of $h$ and this action gives rise to a homomorphism $\psi: G\to \ZZ_2$. Note that each non-trivial element in $\ker \psi$ can not be pA-like because if it were, two fixed points of $\Fix{h}$ would be tips of a crown or polygon, violating \reflem{hyperbolic} and \reflem{quadrachotomy}. Hence, each non-trivial element of $\ker \psi$ is hyperbolic and $\ker \psi$ acts faithfully and freely on each component of $S^1 \setminus \Fix{h}$.  By \reflem{discreteAction}, $\ker \psi$ is an infinite cyclic group generated by a hyperbolic  $h_0$.  If $\ker \psi = G$, we are done.

If not, we take any $g \in G \setminus \ker \psi$. Since $g^2\in \ker \psi$, $g$ is either elliptic of order two or pA-like. As the kernel has no pA-like automorphism, $g$ is an elliptic element of order two. We claim now that $gh_0 g^{-1} = h_0 ^{-1}$.  Suppose, on the contrary, that $gh_0 g^{-1} = h_0$. Choose a neighborhood $U$ of the attracting fixed point of $h_0$ so that  $g (U) \cap U = \emptyset$. Let $x\in U\setminus \Fix{h_0}$. Choose a large enough $n$ such that $h_0 ^n g^{-1}(x)\in U$ and $h_0 ^n(x) \in U$.  Because $h_0 ^n g^{-1} (x) \in U$ and because $g(U) \cap U = \emptyset$, we have that $gh_0 ^n g^{-1} (x) \notin U$. On the other hand, $gh_0 ^n g^{-1}(x) = h_0 ^n (x) \in U$, a contradiction. Therefore, $gh_0 g^{-1} = h_0 ^{-1}$. We therefore have shown that $G$ is either  $\langle h_0\rangle\cong\ZZ$ for some hyperbolic $h_0$ or the infinite dihedral group $\langle h_0,g \,|\, g^2 = 1,\, gh_0 g^{-1} = h_0^{-1}\rangle$ for some hyperbolic $h_0$ and elliptic $g$ of order two. 

Case 3: $h$ is pA-like with an interleaving invariant pair $(\sG_1,\sG_2)$ of crowns or polygons. 

First assume that $\Per{h}$ is finite, equivalently, $\sG_i$ are polygons. Since any $g\in G$ cyclically permutes $\Per{h}$, we have a homomorphism $\psi: G\to \ZZ_n$ for some $n\in \ZZ$. As $\ker \psi$ faithfully and freely acts on each component of $S^1\setminus \Per{h}$, by \reflem{discreteAction}, we know that $\ker \psi$ is cyclic generated by some properly pseudo-Anosov $h_0$. 

If $G\setminus \ker \psi$ is not empty, we choose $g$ in $G\setminus \ker \psi$ so that $\psi(g)$ generates the cyclic group $G/\ker \psi\le \ZZ_n$. As $g^n\in\ker \psi$, we know that $g$ is either pA-like  or elliptic with the same invariant interleaving pair $(\sG_1,\sG_2)$. Either cases, we claim that $gh_0 g^{-1} = h_0$. By the method of contradiction, we assume that $gh_0 g^{-1} = h_0 ^{-1}$. Pick a neighborhood $U$ near an attracting fixed  point  of $h_0$  so that $U$ avoids small neighborhood of the set of repelling fixed points of $h_0$. Let $x\in U\setminus \Fix{h_0}$. There is large $m\in \ZZ$ such that $h_0 ^m g^{-1}(x) \in g^{-1}(U)$ and $h_0^{-m}(x)\notin U$. We then have that $gh_0 g^{-1} (x) \in U$. But $gh_0 ^m g^{-1} (x) = h_0 ^{-m} (x) \notin U$, a contradiction. This shows that $G$ is an abelian group generated by $h_0$ and $g$. By elementary algebra, we can see that $G$ is either an infinite cyclic group generated by a pA-like automorphism or $\langle f \rangle \times \langle e \rangle \cong \ZZ \times \ZZ_k$ for some pA-like $f$ and some elliptic $e$ of order $k$.

Finally, suppose that $\Per{h}$ is not finite, namely, $\sG_i$ are crowns. Then any nontrivial $g\in G$ is either parabolic or properly pseudo-Anosov. If all non-trivial elements of $G$ are properly pseudo-Anosov, we know that $G= \langle h_0\rangle\cong \ZZ$ for some $h_0\in G$. If not, consider the subgroup $G_{pA}=\{g\in G\,:\, g\text{ is properly pseudo-Anosov or  trivial }\}$ and choose an element $p\in G$ that has the smallest translation on $v(\sG_1)$. $G_{pA}$ is cyclic generated by, say,  $f$. Then by the same argument as in Case 2, we see that $pfp^{-1} = f$. By the same argument as in Case 1, $p$ and $f$ generate $G$. Therefore, $G\cong \langle f \rangle \times\langle p \rangle \cong \ZZ\times \ZZ$. 
\end{proof}

\begin{rmk}
\refthm{elementary} gives the strong Tits alternative questioned in \cite{AlonsoBaikSamperton}. In \cite{AlonsoBaikSamperton}, they showed the Tits alternative for the automorphism group of a pseudo-fibered pair of very full laminations with the countable fixed point condition that every element has at most countable fixed points.
\end{rmk}

\subsection{Classification of Gap Stabilizers}

\begin{thm}\label{Thm:gapStab}
Let $\sC$ be a pseudo-fibered pair and $G$ be a subgroup of $\Aut(\sC).$ Then, the non-trivial stabilizer $\Stab{G}{\sG}$ of a gap $\sG$ of $\sC$ falls into one of the following cases.
\begin{enumerate}
    \item When $\sG$ is an ideal polygon,
    \begin{enumerate}
        \item $\Stab{G}{\sG}$ is generated by an elliptic automorphism $e$ of order $n$ in $\Stab{G}{\sG}$ and so $\Stab{G}{\sG}\cong \ZZ_n.$ 
        \item $\Stab{G}{\sG}$ is generated by a pA-like element in $\Stab{G}{\sG}$ without fixed point and $\Stab{G}{\sG}\cong \ZZ.$
        \item $\Stab{G}{\sG}$ is generated by a properly pseudo-Anosov in $\Aut(\sC)$  and $\Stab{G}{\sG}\cong \ZZ.$
        \item $\Stab{G}{\sG}$ is generated by a pA-like automorphism $g$ and an elliptic automorphism $e$ of order $n$ in $\Aut(\sG)$ and $$\Stab{G}{\sG}=\langle g, e \ | \ [g,e]=1, \ e^n=1 \rangle\cong \ZZ \times \ZZ_n.$$
    \end{enumerate}
    \item When $\sG$ is a crown, 
    \begin{enumerate}
        \item $\Stab{G}{\sG}$ is generated by a parabolic element in $\Aut(\sG)$ and $\Stab{G}{\sG}\cong \ZZ.$
        \item $\Stab{G}{\sG}$ is generated by a properly pseudo-Anosov in $\Aut(\sG)$ and $\Stab{G}{\sG}\cong \ZZ.$
        \item $\Stab{G}{\sG}$ is generated by a properly pseudo-Anosov $g$ and a parabolic element $h$  in $\Aut(\sG)$ and $$\Stab{G}{\sG}=\langle g,h \ | \ [g,h]=1 \rangle \cong \ZZ\times \ZZ.$$
    \end{enumerate}
\end{enumerate}
\end{thm}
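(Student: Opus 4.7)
The strategy is to combine the classification of individual automorphisms (\refthm{classification}) with the classification of elementary groups (\refthm{elementary}), extracting a normal infinite cyclic subgroup of $\Stab{G}{\sG}$ whenever the stabilizer is not purely elliptic.

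First I would use \refthm{classification} to restrict the possible types of elements in $\Stab{G}{\sG}$. A hyperbolic element cannot stabilize any gap since its two fixed points are rainbow in both $\sL_1$ and $\sL_2$, while vertices of $\sG$ are not; and a parabolic element stabilizes only a crown whose pivot is the unique fixed point (by \reflem{preserveCrown}). Moreover, for any pA-like or properly pseudo-Anosov element preserving $\sG$, the interleaving partner $\sG'$ is unique, so every element of $\Stab{G}{\sG}$ also preserves $\sG'$ and hence the combined vertex set $V=v(\sG)\cup v(\sG')$. Consequently, when $\sG$ is a polygon the non-trivial elements of $\Stab{G}{\sG}$ are elliptic, pA-like without fixed points, or properly pseudo-Anosov; when $\sG$ is a crown they are parabolic or properly pseudo-Anosov.

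Next I would extract a natural normal subgroup $K$. When $\sG$ is a polygon, $V$ is finite and the cyclic-order-preserving action gives a homomorphism $\phi\colon\Stab{G}{\sG}\to\ZZ_{|V|}$; when $\sG$ is a crown with pivot $p$, enumerating the tips cyclically as $\{t_n\}_{n\in\ZZ}$ yields a shift homomorphism $\phi\colon\Stab{G}{\sG}\to\ZZ$. In either case a non-trivial element of $K:=\ker\phi$ fixes every vertex of $\sG$ and $\sG'$ pointwise, hence has at least three fixed points on $S^1$, and must therefore be properly pseudo-Anosov with fixed-point set exactly $V$ by \refthm{classification}. Thus $K$ acts freely on each connected component of $S^1\setminus V$, so \reflem{discreteAction} together with faithfulness of the $S^1$-action forces $K$ to be trivial or infinite cyclic. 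The theorem then follows by case analysis: if $K$ is trivial then $\Stab{G}{\sG}$ embeds into the image of $\phi$, yielding either a finite cyclic group of elliptic elements in the polygon case (case (a)) or an infinite cyclic group of parabolic elements in the crown case (case (a)); if $K$ is infinite cyclic I would feed it as the normal cyclic subgroup into \refthm{elementary} to obtain one of $\ZZ$, infinite dihedral, $\ZZ\times\ZZ_n$, $\ZZ\times\ZZ$. The dihedral case is excluded because it requires the cyclic-normal generator to be hyperbolic; the polygon $\ZZ\times\ZZ$ case is excluded because it requires a parabolic element; the crown $\ZZ\times\ZZ_n$ case is excluded because it requires an elliptic element. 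The surviving possibilities give precisely cases (b), (c), (d) for polygons and (b), (c) for crowns.

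The main technical obstacle I anticipate is showing cleanly that $K$ itself (and not merely its restriction) is cyclic: \reflem{discreteAction} gives cyclicity of the action restricted to a single component of $S^1\setminus V$, and one must then argue that an orientation-preserving circle homeomorphism is determined by its restriction to any such component so that $K$ embeds into that cyclic quotient. A secondary subtlety is correctly identifying the dynamical type of the generator in the $\ZZ\times\ZZ_n$ case of case (d), since \refthm{elementary}(3) permits either a fixed-point-free pA-like or a properly pseudo-Anosov automorphism in the $\ZZ$ factor, and one must decide which is allowed based on the geometry of the preserved interleaving pair.
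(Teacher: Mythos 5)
Your proposal is correct and follows essentially the same route as the paper: the paper also reduces to the subgroup of properly pseudo-Anosov elements (your $K$ is exactly this set, being the kernel of the vertex-permutation action), shows it is infinite cyclic and normal via \reflem{discreteAction}, and then invokes \refthm{elementary} with the same exclusions; the degenerate purely elliptic/parabolic case is handled in the paper by a direct orbit-of-a-tip argument rather than your embedding into $\operatorname{im}\phi$, but this is only a cosmetic difference.
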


\begin{proof}
Let $\sG$ be a gap of $\sC.$ First, we consider the case where $\sG$ is an ideal polygon.  Then, by \refthm{classification}, each non-trivial element in $\Stab{G}{\sG}$ is elliptic or pA-like. 

If there is no pA-like automorphism, then every element in $\Stab{G}{\sG}$ is elliptic. Choose a tip $t$ of $\sG.$ Observe that the orbit $\Stab{G}{\sG} \cdot t$ has at least two points. Then, we may take an automorphism $g$ in $\Stab{G}{\sG}$  such that the good interval $\opi{t}{g(t)}$ is a connected component of $S^1 \setminus \Stab{G}{\sG} \cdot t.$ Let $n$ be the order of $g.$ Then, $$\bigcup_{k=1}^n \opi{g^{k-1}(t)}{g^k(t)}=S^1 \setminus \Stab{G}{\sG} \cdot t$$ and so $\langle g \rangle \cdot t= \Stab{G}{\sG} \cdot t.$ Hence, for any non-trivial $h$ in $\Stab{G}{\sG},$ $h(t)=g^{k_0}(t)$ for some $k_0$ in $\ZZ.$ Then, $g^{-k_0}h$ fixes $t$ and $g^{-k_0}h$ is the identity. Therefore, $h=g^{k_0}$ and $g$ generates $\Stab{G}{\sG}.$ This is the first case where $\sG$ is an ideal polygon.

Assume that there is a pA-like automorphism in $\Stab{G}{\sG}.$ This implies that there is a properly pseudo-Anosov in $\Stab{G}{\sG}$ by powering the pA-like automorphism. Also, by \refthm{classification},  there is a interleaving gap $\sH$ of $\sG.$ Without loss of generality, we may assume that $(\sG,\sH)$ is an interleaving pair.  

Let $H$ be the set of all properly pseudo-Anosov automorphisms in  $\Stab{G}{\sG}$ with the identity. Then, $H$ is a normal subgroup of $\Stab{G}{\sG}.$ Note that by \refthm{classification}, 
$\Fix{g}=v(\sG)\cup v(\sH)$ for all $g\in \Stab{G}{\sG}.$ Then, $H$ faithfully and freely on each connected component of $S^1\setminus (v(\sG)\cup v(\sH)) $ and by \reflem{discreteAction}, there is a properly pseudo-Anosov $h$ generating $H.$ Therefore, $H$ is an infinite cyclic normal subgroup of $\Stab{G}{\sG}.$ Thus, the result follows from \refthm{elementary} and the fact that every non-trivial element in $\Stab{G}{\sG}$ is either elliptic or pA-like. 

Now, we assume that $\sG$ is a crown. Then, by \refthm{classification}, each non-trivial element in $\Stab{G}{\sG}$ is a parabolic element or a properly pseudo-Anosov. 

If there is no properly pseudo-Anosov in $\Stab{G}{\sG}$, then every element in $\Stab{G}{\sG}$ is parabolic. Choose a tip $t$ of $\sG.$ Then, we may take an automorphism $g$ in $\Stab{G}{\sG}$  such that $\opi{t}{g(t)}$ is a connected component of $S^1\setminus \Stab{G}{\sG}\cdot t.$ Note that $\langle g \rangle \cdot t = \Stab{G}{\sG}\cdot t.$ For any $h$ in $\Stab{G}{\sG},$ $h(t)=g^n(t)$ for some $n\in \ZZ.$ As $g^{-n}h$ fixes $t$ and there is no properly pseudo-Anosov in $\Stab{G}{\sG},$ $g^{-n}h=1$ and so $h=g^n.$ Therefore, $\Stab{G}{\sG}$ is generated by the parabolic automorphism $g.$ 

Otherwise, there is a properly pseudo-Anosov in $\Stab{G}{\sG}$ and so there is an interleaving crown $\sH$ of $\sG.$ Without loss of generality, we may assume that $(\sG,\sH)$ is an interleaving pair. Let $H$ be the set of all properly pseudo-Anosov elements in $\Stab{G}{\sG}.$ We can see that $H$ is a normal subgroup of $\Stab{G}{\sG}.$ Note that $\Fix{g}=v(\sG)\cup v(\sH)$ for all non-trivial $g$ in $H.$ Hence, the subgroup $H$ faithfully and freely acts on each connected components of $S^1\setminus (v(\sG)\cup v(\sH)).$ By \reflem{discreteAction}, there is a properly Anosov $g$ generating $H.$ Therefore, $H$ is an infinite cyclic normal subgroup of $\Stab{G}{\sG}.$ Thus, the result follows from \refthm{elementary} and the fact that  every non-trivial element in $\Stab{G}{\sG}$ is a parabolic element or a properly pseudo-Anosov. 
\end{proof}

Recall that for each non-zero integer $m$ and $n$, the \emph{Baumslag-Solitar group} $\operatorname{BS}(m,n)$ is given by the group presentation
$$\langle a, b \ | \ ba^mb^{-1}=a^n \rangle.$$

\begin{cor}\label{Cor:noBSgroup}
Let $\sC=\{\sL_1, \sL_2\}$ be a pseudo-fibered pair. If a Baumslag-Solitar group $\operatorname{BS}(m,n)$ is contained in $\Aut(\sC),$ $m=n=1.$
\end{cor}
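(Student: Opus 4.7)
The plan is to show that any embedded Baumslag--Solitar group must have commuting generators, from which $m = n = 1$ follows. Write $\operatorname{BS}(m,n) = \langle a, b \mid ba^m b^{-1} = a^n\rangle$ with $m,n \neq 0$, and regard $a,b$ as elements of $\Aut(\sC)$ generating an isomorphic copy. Since $\operatorname{BS}(m,n)$ is an HNN extension of $\langle a \rangle = \ZZ$, the element $a$ has infinite order, so by \refthm{classification} it is parabolic, hyperbolic, or pA-like. Once $[a,b]=1$ is established, $ba^m b^{-1} = a^m = a^n$ forces $m=n$; then the image $\langle a,b\rangle \cong \operatorname{BS}(m,m)$ must be abelian as an abstract group, which occurs only when $m = \pm 1$, and the symmetry $\operatorname{BS}(m,n) \cong \operatorname{BS}(-m,-n)$ gives $m=n=1$.

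The first step is to observe that $b$ preserves $\Per{a}$: by \refcor{constantPer} we have $\Per{a^m} = \Per{a} = \Per{a^n}$, while $b(\Per{a^m}) = \Per{ba^mb^{-1}} = \Per{a^n}$. In the parabolic case $\Per{a} = \{p\}$, where $p$ is the pivot of an $a$-invariant crown $\sG$ in $\sL_1$ (and another in $\sL_2$) by \refthm{classOfLaminarAut}; since $b$ fixes $p$, \refprop{fixingCrown} yields $b(\sG) = \sG$. In the pA-like case $a$ preserves an interleaving pair $(\sG_1,\sG_2)$ with $v(\sG_1) \cup v(\sG_2) = \Per{a}$, and I will show $b(\sG_1) = \sG_1$: the image $b(\sG_1)$ is a gap of $\sL_1$ whose vertex set lies in $\Per{a}$, and the looseness of $\sL_1$ together with $\E{\sL_1} \cap \E{\sL_2} = \emptyset$ rules out any such gap other than $\sG_1$ itself. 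In either case $a,b \in \Stab{G}{\sG}$, which is abelian by \refthm{gapStab}, so $[a,b] = 1$.

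The hyperbolic case will be the main obstacle, since no invariant gap is available: $\Fix{a} = \{p^+,p^-\}$ consists of points that are rainbow in both laminations by \refthm{classOfLaminarAut}. Still, $b$ preserves $\{p^+,p^-\}$. If $b$ fixes both, then $b$ cannot be properly pseudo-Anosov (its fixed points would have to be gap vertices, hence lie in $\E{\sL_1} \cup \E{\sL_2}$, whereas $p^\pm$ are rainbow), so by \reflem{hyperbolic} $b$ is hyperbolic with $\Fix{b} = \{p^+,p^-\}$; then $\langle a,b\rangle$ acts faithfully and freely on each component of $S^1 \setminus \{p^+,p^-\}$, and \reflem{discreteAction} gives $\langle a,b\rangle \cong \ZZ$, hence $[a,b] = 1$. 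If instead $b$ swaps $p^+$ and $p^-$, then $b^2$ fixes both, and since $p^+ \in \Per{b^2} = \Per{b}$ by \refcor{constantPer} while $p^+$ cannot lie in $\Per{b}$ for any non-elliptic $b$ (again because $p^+$ is rainbow in both laminations, and the periodic sets in \refthm{classification} are tied to gap vertices), a case analysis forces $b$ to be elliptic of order two. But then $\langle a,b\rangle$ is the infinite dihedral group, which has torsion and therefore cannot be isomorphic to the torsion-free HNN extension $\operatorname{BS}(m,n)$. This configuration is excluded, completing the reduction to $[a,b]=1$ and to $m=n=1$.
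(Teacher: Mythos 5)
Your proof is correct and follows essentially the same strategy as the paper: split on the type (parabolic, hyperbolic, pA-like) of the infinite-order generator $a$, show $b$ preserves $\Per{a}$ via $\Per{a^m}=\Per{a}=\Per{a^n}$ and $b(\Per{a^m})=\Per{ba^mb^{-1}}=\Per{a^n}$, and then reduce to a commutativity argument or a torsion contradiction. The main organizational difference is in the non-hyperbolic cases: you put $a$ and $b$ into a common gap stabilizer $\Stab{G}{\sG}$ and read off abelianness directly from \refthm{gapStab}, so $[a,b]=1$ forces $m=n$ at once; the paper instead first extracts $m=n$ from the arithmetic of the induced translation on the tips (in the parabolic case) and then invokes \refthm{elementary} on the resulting normal infinite cyclic subgroup $\langle a^m\rangle$. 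Both routes are valid, and your use of \refthm{gapStab} has the small advantage of handling the parabolic and pA-like cases uniformly. Two minor presentation points: in the hyperbolic subcase where $b$ fixes both rainbow points, the conclusion $\langle a,b\rangle\cong\ZZ$ from \reflem{discreteAction} is already a contradiction (no $\operatorname{BS}(m,n)$ with $m,n\neq 0$ is infinite cyclic) rather than just a step toward $[a,b]=1$, and it would be cleaner to say so; and when $b$ swaps the two fixed points, you do not actually need the claim that $\langle a,b\rangle$ is infinite dihedral --- the presence of the order-two element $b$ alone contradicts that $\operatorname{BS}(m,n)$ is torsion-free. Neither affects the correctness of the argument.
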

\begin{proof}We split the cases. Recall that $\operatorname{BS}(m,n)$ is torsion-free. Hence, $a$ is parabolic, hyperbolic, or pA-like. 

Case 1: $a$ is parabolic. Let $(\sG_1,\sG_2)$ be the $a$-invariant asterisk of crowns. We order the set of tips of $\sG_1$ counter-clockwise so that $a$ maps a tip $t_i$ to $t_{i+d}$ for some $d\in \NN$. Since $\Per{a^n}=\Per{a^m}=\Per{a}$ by \refcor{constantPer}, $b$ permutes $\Per{a}$. Hence, by \refprop{fixingCrown} and \refthm{classification}, $b$ is either a properly pA-like or a parabolic automorphism  with the same invariant asterisk $(\sG_1,\sG_2)$ of crowns. In either cases, choose any tip $t_i$ of $\sG_1$. Then $t_{i+md}=b a^m b^{-1}(t_i) = a^n(t_i) =t_{i+nd}$. This shows that $m=n$ and that $\langle a^m\rangle\cong \ZZ$ is an infinite cyclic normal subgroup of $\operatorname{BS}(m,m)$. Therefore, by \refthm{elementary}, $\operatorname{BS}(m,m)$ is $\ZZ$, $\ZZ\times \ZZ_n$, $\ZZ\times \ZZ$ or an infinite dihedral group. Among them, only $\ZZ\times \ZZ\cong \operatorname{BS}(1,1)$ can be a Baumslag-Solitar group.

Case 2: $a$ is hyperbolic. We know that $b$ permutes two fixed points of $a$. If $b$ fixes $\Fix{a}$ pointwise, we know that $\langle a,b\rangle$ is in fact cyclic, which is not a Baumslag-Solitar group. See the proof of \refthm{elementary}. Hence, $b$ must exchange two fixed points of $a$. However, in this case, $\langle a,b \rangle$ contains an elliptic element of order two, which is a contradiction.

Case 3: $a$ is a pA-like automorphism preserving an asterisk $(\sG_1,\sG_2)$ of $\sC$. Then, by \refprop{fixingCrown} and \refthm{classification}, $b$ is either a parabolic or pA-like automorphism  preserving $(\sG_1,\sG_2)$. Then, by \refthm{gapStab}, $\operatorname{BS}(m,n)\leq \Stab{\operatorname{BS}(m,n)}{\sG_i}$ and so $\operatorname{BS}(m,n)$ is abelian. Hence, we get that $a^m=ba^mb^{-1}=a^n$ and $m=n.$ Therefore, $\langle a^m \rangle$ is an infinite cyclic normal subgroup of $\operatorname{BS}(m,m).$ Therefore, we have that $\langle a,b\rangle \cong \ZZ\times \ZZ\cong \operatorname{BS}(1,1)$ as in Case 1.
\end{proof}
This corollary is a supportive evidence for that $\Aut(\sC)$ is hyperbolic or relatively hyperbolic. See \cite[Q1.1]{Bestvina}.

\subsection{2-Torsions and the Canonical Marking}
We show that given a veering pair $\sV=\{\sL_1, \sL_2\},$ each laminar group $G\leq \Aut(\sV)$ canonically gives rise to a $G$-invariant marking $\Mrk$.

The following lemma shows that no two fixed points of elliptic elements of order two can sit on a same leaf. 

\begin{lem}\label{Lem:etaisinjective}
Let $\sV=\{\sL_1, \sL_2\}$ be a veering pair. Suppose that there are elliptic elements $e_1$ and $e_2$ in $\Aut(\sV).$ If $e_1$ and $e_2$ preserve a leaf $\ell$ of $\sV,$ then both $e_1$ and $e_2$ preserve a common stitch in the thread $\oslash(\ell)$.
\end{lem}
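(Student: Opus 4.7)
The plan is to produce a single leaf $m\in\sL_2$ that crosses $\ell$ and is preserved by both $e_1$ and $e_2$: the stitch $(\ell,m)\in\oslash(\ell)$ will then be the desired common fixed stitch. Assume without loss of generality that $\ell\in\sL_1$, and write $v(\ell)=\{u,v\}$.

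First I would show that each $e_i$ has order two and swaps $u$ and $v$. Indeed, a nontrivial orientation-preserving finite-order homeomorphism of $S^1$ has empty fixed-point set, since any fixed point would give a nontrivial finite-order orientation-preserving homeomorphism of an open arc, which is impossible. Hence $e_i(u)=v$ and $e_i(v)=u$, and because $e_i^2$ then fixes $u$, the same principle forces $e_i^2=\id$. Consequently $e_1e_2$ fixes both $u$ and $v$, so I can apply \refthm{classOfLaminarAut} to $e_1e_2$. Since $u,v\in\E{\sL_1}$, strong transversality yields $u,v\notin\E{\sL_2}$, and by \reflem{trichotomy2} the points $u,v$ are not rainbow in $\sL_1$. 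Combined with \reflem{hyperbolic}, which demands that fixed points of a hyperbolic element be rainbow in both laminations, this leaves only two possibilities for $e_1e_2$: it is either trivial, or properly pseudo-Anosov with a unique interleaving invariant pair $(\sH_1,\sH_2)$ of real gaps with $v(\sH_1)\cup v(\sH_2)=\Fix{e_1e_2}$, in which case $u,v\in\E{\sL_1}\setminus\E{\sL_2}$ forces $u,v$ to be tips of $\sH_1$, so $\sH_1$ is a polygon.

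Next I would unify the two cases and force $\sH_1=\ell$. In the pseudo-Anosov case, the identity $e_i(e_1e_2)e_i^{-1}=(e_1e_2)^{-1}$ (using $e_i^{-1}=e_i$) together with uniqueness of the invariant pair shows that each $e_i$ preserves $(\sH_1,\sH_2)$; in the trivial case I would take $(\sH_1,\sH_2)$ to be any interleaving polygon pair preserved by $e_1=e_2$ provided by \refthm{classOfLaminarAut}. In either case, since $\sH_1$ is a gap of $\sL_1$ the leaf $\ell$ lies on some side $J\in\sH_1$; say $\opi{u}{v}\subseteq J$. Because $e_1$ is orientation-preserving and swaps $u,v$, it maps $\opi{u}{v}$ to $\opi{v}{u}$, so $e_1(J)\in\sH_1$ contains $\opi{v}{u}$. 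As $J$ alone cannot contain both components of $S^1\setminus\{u,v\}$, pairwise disjointness of elements of $\sH_1$ combined with $J\cup e_1(J)\supseteq S^1\setminus\{u,v\}$ forces $J=\opi{u}{v}$, $e_1(J)=\opi{v}{u}$, and no further elements in $\sH_1$; hence $\sH_1=\ell$ is a bigon. Then the interleaving $\sH_2$ must also be a bigon, namely a leaf $m\in\sL_2$ whose two endpoints alternate with $u,v$ and therefore crosses $\ell$. Both $e_1$ and $e_2$ preserve $\sH_2=m$, producing the desired common fixed stitch $(\ell,m)\in\oslash(\ell)$.

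The hard part will be the classification step in the second paragraph: one must carefully exclude the hyperbolic alternative for $e_1e_2$ by exploiting the incompatibility between $u,v\in\E{\sL_1}$ and the rainbow condition of \reflem{hyperbolic}, and then uniformly locate a common $(e_1,e_2)$-invariant polygon pair in both the trivial and properly pseudo-Anosov subcases. Once that is done, collapsing $\sH_1$ to a bigon is a short combinatorial consequence of orientation-preservation together with the gap axioms.
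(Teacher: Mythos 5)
Your proof is correct and rests on the same two pillars as the paper's own argument: the classification of $e_1e_2$ (which, fixing the two non-rainbow vertices of $\ell$, must be trivial or properly pseudo-Anosov) and the conjugation identity $e_i(e_1e_2)e_i^{-1}=(e_1e_2)^{-1}$ combined with uniqueness of the invariant interleaving pair. The only real difference is organizational: the paper argues by contradiction after asserting via \refthm{classOfLaminarAut} that each $e_i$ already preserves a stitch $(\ell,\ell_i)$, whereas you derive the common invariant stitch directly, and your ``collapsing'' step (any $e_1$-invariant gap of $\sL_1$ must equal $\ell$, since $J$ and $e_1(J)$ are disjoint elements covering both sides of $\ell$) supplies exactly the detail that the paper leaves implicit in that citation; your parenthetical ``so $\sH_1$ is a polygon'' is not justified at that point (a crown also has tips) but is harmless, since the collapsing argument works verbatim for crowns and shows $\sH_1=\ell$ anyway.
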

\begin{proof}
Since $e_1$ and $e_2$ preserve a leaf of $\sV$, we know that both $e_1$ and $e_2$ are of order 2. Without loss of generality, we may assume that $\ell$ is a leaf of $\sL_1$. Let $\ell_1$ and $\ell_2$ be leaves of $\sL_2$ that are linked to $\ell$ and preserved by $e_1$ and $e_2$ respectively. See \refthm{classOfLaminarAut}. 

If $\ell_1\ne \ell_2$, the composition $g:=e_1e_2$ is not the identity and has at least two fixed points the vertices of $\ell$. Hence $g$ is either hyperbolic or properly pseudo-Anosov by \refthm{classOfLaminarAut}. Note that $g$ cannot be hyperbolic because no leaf can have hyperbolic fixed points as its vertices. Hence, $g$ is properly pseudo-Anosov like preserving $\ell.$ 

Then, there is a stitch $s=(\ell, m)$ preserved by $g$ such that $\Fix{g}=v(\ell)\cup v(m).$ 

Observe that 
$$g^{-1}=(e_1e_2)^{-1}=e_2^{-1}e_1^{-1}=e_2e_1$$ and 
for each $i\in \{1,2\}$,
$$e_ige_i^{-1}=e_ige_i=e_2e_1=g^{-1}$$ as $e_i$ are of order two.
Then, for each $i \in \{1,2\}$ 
$$e_i\Fix{g}=\Fix{e_ige_i^{-1}}=\Fix{g^{-1}}=\Fix{g}.$$ This implies that $e_i$ preserve $m$ as $\Fix{g}=v(\ell)\cup v(m).$ Therefore, $m=\ell_1=\ell_2$ and it is a contradiction. Thus, both $e_1$ and $e_2$ preserve the same stitch.
\end{proof}

\begin{lem}\label{Lem:veeringpair}
Let $\sV=\{\sL_1, \sL_2\}$ be a veering pair. 
\begin{enumerate}
    \item If an order $2$ elliptic element of $\Aut(\sV)$  preserves a stitch $(\ell_1,\ell_2),$ then each $\ell_i$ is a real leaf of $\sL_i$. Therefore, $(\ell_1,\ell_2)$ is a genuine stitch of $\sV$.
    \item The set of stitches preserved by elliptic elements of $\Aut(\sV)$ is closed and discrete in the stitch space $\fS(\sV)$.
\end{enumerate}    
\end{lem}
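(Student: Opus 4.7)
For part (1), I exploit that any order-two orientation-preserving homeomorphism of $S^1$ has no fixed points (being conjugate to the rotation $z\mapsto -z$). Hence $e$ swaps the endpoints of $\ell_1$, and therefore swaps the two connected components of $S^1\setminus v(\ell_1)$. If $\ell_1$ were not real, then by \refprop{realGap} it would be a boundary leaf of some non-leaf gap $\sG_1$ of $\sL_1$ lying on one side of $\ell_1$; applying $e$, one obtains a second non-leaf gap $e(\sG_1)$ of $\sL_1$ having $\ell_1$ as a boundary leaf but lying on the opposite side. The gaps $\sG_1$ and $e(\sG_1)$ are distinct yet share the two vertices of $\ell_1$, violating the looseness of $\sL_1$. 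Therefore $\ell_1$ is real, and by symmetry so is $\ell_2$, making $(\ell_1,\ell_2)$ genuine.

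For part (2), I first reduce to order-two elliptics. By \refthm{classOfLaminarAut}, any elliptic $g\in\Aut(\sV)$ preserves a unique interleaving pair of ideal polygons $(\sG_1,\sG_2)$; for $g$ to preserve a stitch, that stitch must coincide with $(\sG_1,\sG_2)$, so each $\sG_i$ is a leaf whose two endpoints $g$ permutes. Since $g$ has no fixed points on $S^1$, this permutation is the swap, so $g^2$ fixes both endpoints; but $g^2$ is either trivial or elliptic (hence fixed-point free), forcing $g^2=1$. Uniqueness of the invariant pair then gives a bijection between the set $M$ of stitches preserved by elliptics and the set of order-two elliptics in $\Aut(\sV)$.

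Now I show $M$ has no accumulation point in $\fS(\sV)$, yielding both discreteness and closedness. Suppose for contradiction $(s_n)$ is a sequence of distinct elements of $M$ converging to some $s\in\fS(\sV)$. Writing $s_n=(\ell_1^n,\ell_2^n)$, $s=(\ell_1,\ell_2)$, and letting $e_n$ denote the unique order-two elliptic preserving $s_n$, the $e_n$ are all distinct. By \reflem{etaisinjective}, distinct $e_n$ and $e_m$ cannot share a preserved leaf, so the $\ell_i^n$ are eventually pairwise distinct for each $i$. The plan is to analyse the compositions $f_{n,m}:=e_n\cdot e_m\in\Aut(\sV)\setminus\{1\}$. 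Using the swap relations $e_n(u_i^n)=v_i^n$ together with $u_i^n\to u_i$ and $v_i^n\to v_i$, the composition $f_{n,m}$ displaces each of the four endpoints of $s_n$ by an amount that tends to zero as $n,m\to\infty$. Feeding $f_{n,m}$ through the classification \refthm{classOfLaminarAut} and invoking the dynamical behaviour of each non-identity type -- hyperbolic and pA-like elements expand or contract near their fixed or periodic points, parabolic elements translate uniformly away from a single fixed point, and nontrivial elliptics act as rotations -- such simultaneous approximate fixation of four points arranged in the cyclic pattern of a stitch should force $f_{n,m}=1$ for $n,m$ sufficiently large, whence $e_n=e_m$ and $s_n=s_m$, a contradiction.

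The main obstacle is ruling out the elliptic case for $f_{n,m}$, since a priori $f_{n,m}$ could be a rotation of large order by a small angle. To handle this I intend to invoke the uniqueness of the invariant interleaving pair from \refthm{classOfLaminarAut}: the polygon preserved by $f_{n,m}$ must have its vertex set close to $\{u_1,u_2,v_1,v_2\}$, and combining this with the looseness and strong transversality of $\sV$ should bound the possible orders of $f_{n,m}$ and ultimately force $f_{n,m}=1$.
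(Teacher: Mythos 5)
Your argument for part (1) is correct and close in spirit to the paper's: the key point in both is that an order-two elliptic has no fixed point and therefore swaps the two sides of any invariant leaf. (The paper transports an $I$-side sequence to an $I^*$-side sequence; you instead produce two distinct non-leaf gaps sharing $v(\ell_1)$ and contradict looseness. Either works.)

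Part (2) has a genuine gap, and it sits exactly where the real work of the paper's proof is. Your central claim is that $f_{n,m}=e_ne_m$ ``displaces each of the four endpoints of $s_n$ by an amount that tends to zero.'' Unwinding it, $f_{n,m}$ applied to an endpoint of $s_m$ gives $e_n(v_i^m)$, and to compare this with $e_n(v_i^n)=u_i^n$ you would need the family $\{e_n\}$ to be equicontinuous near $v_i$ — which you have no right to assume for circle homeomorphisms that merely preserve a lamination pair. The paper circumvents this precisely by never estimating displacements: it fixes four pairwise disjoint intervals $I_1,J_1\in\sL_2$ and $I_2,J_2\in\sL_1$ around the four limit endpoints, proves combinatorially that $e_k(I_1)\cap I_1=\emptyset$ for large $k$, and then extracts (via an accumulation argument on $\{e_k(x_\infty^1)\}$) a subsequence along which $e_m(I_1)$ and $e_n(I_1)$ intersect, hence are nested; nesting gives an honest fixed point of $e_me_n$ in $\closure{I_1}$, and likewise in the other three closed intervals. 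Without some substitute for this step you do not get fixed points of $f_{n,m}$ at all, only ``approximate'' ones, which is not enough to feed into \refthm{classOfLaminarAut}.

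Your endgame is also only a plan (``should force,'' ``I intend to invoke''), and the elliptic case you worry about is not actually the hard part once one has genuine fixed points: four fixed points force $g:=e_me_n$ to be properly pseudo-Anosov by the classification. The paper then closes not by showing $g=1$ but by a short algebraic trick: since $e_m,e_n$ are involutions, $e_mge_m^{-1}=g^{-1}$, so $e_m$ (and $e_n$) preserves $\Fix{g}$ and hence the unique $g$-invariant interleaving pair $(\sG_1,\sG_2)$; by uniqueness of the invariant pair of an involution this pair equals both $s_m$ and $s_n$, contradicting $s_m\neq s_n$. I would encourage you to adopt both the interval-nesting argument and this conjugation trick rather than trying to bound orders of elliptics.
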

\begin{proof}

(1) We show that a leaf $\ell=\{I,I^*\}$ of $\sL_1$ preserved by an order 2 elliptic element $g$ is a real leaf. Since $\ell$ is not isolated by \refprop{realGap}, at least one of $I$ and $I^*$ is not isolated. Without loss of generality, we may say that $I$ is not isolated. Then, there is a $I$-side sequence $\{\ell_i\}_{i\in\NN}$ in $\sL_1.$ Since $g$ swaps $I$ and $I^*$, the sequence $\{g(\ell_i)\}_{i\in \NN}$ is an $I^*$-side sequence. This shows that $\ell$ is a real leaf.

(2) Let $\Mrk$ be the set of stitches preserved by elliptic elements of order 2. Suppose that, in $\fS(\sV)$, $\Mrk$ has an accumulation point $(\ell_\infty ^1, \ell_\infty ^2)$. Let $\{(\ell_i ^1, \ell^2 _i)\}_{i\in \NN}$ be a sequence of distinct stitches in $\Mrk$ with  $(\ell_\infty ^1, \ell_\infty ^2)\ne (\ell_i ^1, \ell^2 _i)$ for all $i\in \NN$ that converges to a stitch $(\ell_\infty ^1, \ell_\infty ^2)$.  By \reflem{etaisinjective}, we know that the sets $\{\ell_i^ 1\}_{i\in \NN}$ and $\{\ell_i ^2\}_{i\in \NN}$ are infinite. Passing to a subsequence, we assume that we have sequences of strictly increasing intervals $K_1^j\subset K_2^j\subset \cdots \subset K_\infty^j $  such that  $\ell_i ^j = \ell(K_i^j)$ for $i\in \{1,2,\cdots,\infty\}$, $j\in\{1,2\}$. Note that, as $\ell_i ^j$ is real, $\closure{K_i ^j}\subset K_{i+1} ^j$ for $i\in \NN$ and $j\in\{1,2\}$. Let $K_i ^j=\opi{x_i ^j}{y_i ^j}$ for $i\in \{1,2,\cdots, \infty\}$ and $j\in\{1,2\}$.  Find disjoint good intervals $I_1,J_1 \in \sL_2$ and $I_2,J_2\in \sL_1$  such that the closures of the good intervals are pairwise disjoint and  $x_\infty ^1\in I_1$, $y_\infty ^1\in J_1$, $x_\infty ^2\in I_2$ and $y_\infty ^2\in J_2$. 

Choose a large enough $N$ such that $x_k ^1\in I_1$, $y_k ^1\in J_1$, $x_k ^2\in I_2$, and  $y_k ^2\in  J_2$ for all $k>N$. We claim the following fact:
\begin{claim}\label{disjoint}
For any $k>N$, $e_k(I_1) \cap I_1=\emptyset$. The same is true for $I_2$, $J_1$ and $J_2$.
\end{claim}
\begin{proof}
Suppose, on the contrary, that $e_k(I_1) \cap I_1\ne \emptyset$ for some $k$. Because $e_{k}(I_1)$ is connected and meets $J_1$ and $I_1$, it contains either $J_2$ or $I_2$ as well. Let us first assume that $e_k(I_1)$ contains $I_2$. Then, since $e_{k}(x_k ^2)=y_k ^2\in J_2$, we have $J_2\cap I_1 \ne \emptyset$, a contradiction. The same argument reveals that $e_k(I_1)$ cannot contain $J_2$ either. Hence, $e_k(I_1)\cap I_1 = \emptyset$ for all $k>N$. 
\end{proof}

We now claim that there is a subsequence $\{e_{n_i}\}_{i\in\NN}$ such that $e_{n_i}(I_1) \cap e_{n_j}(I_1) \ne \emptyset$ for all $i,j$. For this, we consider the set  $\{e_k(x_\infty ^1)\}_{k>N}$. We may assume there is a large enough $M$ with $M\geq N$ that  $e_k(x_\infty ^1)$ is in $J_1$ for all $k>M$. If not, we have $e_k(I_1)\supset J_1$ for infinitely many $k$ and the claim immediately follows. We may also assume that $\{e_k(x_\infty ^1)\}_{k>M}$ is an infinite set. If not, the claim  also follows. Therefore, the set $\{e_k(x_\infty ^1)\}_{k>M}$ accumulates in $\overline{J_1}$. Give a linear order on $\overline{J_1}$ so that $\{y_k^1\}_{k>M}$ is a strictly increasing sequence. Note that  $y_k^1=e_k(x_k ^1) > e_k(x_\infty ^1)$ for all $k>M$. 

Let $z$ be an accumulation point of $\{e_k(x_\infty ^1)\}_{k>M}$. We have two cases. Either $e_k(x_\infty^1)$ has a subsequence $e_{n_i}(x_\infty^1)$  that converges strictly monotonically to $z$ from the below or from the above.  If $e_{n_i}(x_\infty^1)>z$ for all $i$, then $e_{n_i}(I_1)$ contains all $y_{n_j} ^1=e_{n_j}(x_{n_j}^1)\in e_{n_j}(I_1)$ with $j<i$. Therefore, $e_{n_i}(I_1) \cap e_{n_j}(I_1) \ne \emptyset$ for all $i,j$. 

On the other hand, suppose that $e_{n_i}(x_\infty^1)<z$ for all $i$. We first rule out one possible case:

\begin{claim}
If $z= y_\infty ^1$, then we can take a  further subsequence $n_i$ so that $e_{n_i}(I_1) \cap e_{n_j}(I_1) \ne \emptyset$ for all $i,j$.
\end{claim}
\begin{proof}
In fact, we will show that  $y_\infty ^1 \in e_{n_i}(I_1)$ for all sufficiently large $i$. By mean of contradiction, suppose that there is another subsequence such that $y_\infty ^1 \notin e_{n_i}(I_1)$ for all $i$. By Claim \ref{disjoint}, we have $e_{n_i}(J_1)\cap J_1 = \emptyset$ for all $n_i$. From this, we conclude that $e_{n_i}(y_\infty ^1)$ lies in the complement of $I_1\cup J_1$. Therefore, the sequence of leaves $(e_{n_i}(\ell_\infty ^1))_{i\in \NN}$ converges to a leaf different from $\ell_\infty ^1$ but with one of its endpoint $y_\infty ^1=z=\lim_i e_{n_i}(x_\infty ^1)$. This violates the fact that $\ell_\infty ^1 = \ell(K_\infty ^1)$ has the $K_\infty^1$-side sequence $(\ell_k ^1)_{k>N}$. Hence,  $e_{n_i}(I_1)$ contains $y_\infty ^1$ for all large enough $n_i$. In particular, by taking another subsequence, $e_{n_i}(I_1)\cap e_{n_j}(I_1)$ contains $y_\infty ^1$ for all $i,j$ concluding the claim. 
\end{proof} 

Therefore, we only need to handle the case when $z\ne y_\infty ^1$. Since $\{y_{n_i}^1\}_{i\in \NN}$ converges to $y_\infty ^1>z$, we know that there is a large $L$ such that $y_{n_i}^1>z$ for all $i>L$. Therefore, by taking a further subsequence, we know that $e_{n_i}(I_1) \cap e_{n_j}(I_1)$ contains $z$, and therefore is nonempty for each $i,j$.

Now by repeating the above argument for $I_2$, $J_1$ and $J_2$, we know that there are large enough $m>n$ such that  $e_{m}(I_1) \cap e_{n}(I_1)$,  $e_{m}(I_2) \cap e_{n}(I_2)$,  $e_{m}(J_1) \cap e_{n}(J_1)$, and $e_{m}(J_2) \cap e_{n}(J_2)$ are not empty. We now  show that $e_me_n$ is a properly pseudo-Anosov by claiming that $e_me_n$ has at least four fixed points, one for the closure of each $I_1$, $I_2$, $J_1$, and $J_2$ (\refthm{classOfLaminarAut}). In fact, we only show that $e_me_n$ has a fixed point in $\closure{I_1}$. Then the rest of cases follow by symmetry. For this, observe, by Claim \ref{disjoint} and the fact that $e_{m}(I_1) \cap e_{n}(I_1)\ne \emptyset$, that we have either $e_{m}(I_1) \subset e_{n}(I_1)$ or $e_{m}(I_1) \supset e_{n}(I_1)$. But in any case, $e_m e_n$ attains a fixed point in 
$\closure{I_1}$. 

To conclude the proof, we claim that $g:=e_m e_n$ cannot be  a properly pseudo-Anosov. Observe that $g^{-1}=(e_n e_m)^{-1}=e_m^{-1}e_n^{-1}=e_m e_n$ and that $e_m g e_m ^{-1}=e_n g e_n^{-1} =g^{-1}.$ Hence, $$e_m \Fix{g}=\Fix{e_m g e_m^{-1}}=\Fix{g^{-1}}=\Fix{g}$$ and, similarly, $e_n \Fix{g}=\Fix{g}$.
Let $(\sG_1, \sG_2)$ be the interleaving pair preserved by $g.$ Note that by \refthm{classOfLaminarAut}, $\Fix{g}=v(\sG_1)\cup v(\sG_2).$ Observe that, for each $i=1,2$,
$$e_m(\sG_i)=e_m g^{-1}(\sG_i)=e_m(e_m g e_m^{-1})(\sG_i)=g(e_m^{-1}(\sG_i))=g(e_m(\sG_i)).$$
Thus, $\sG_i=e_m(\sG_i)$ by the uniqueness of the interleaving pair preserved by $g.$  Likewise, $e_n$ preserves $\sG_1$ and $\sG_2.$
But  by \refthm{classOfLaminarAut} we have, $(\sG_1, \sG_2)=(\ell_m ^1, \ell_m ^2)=(\ell_n ^1, \ell_n ^2)$. This contradicts the assumption that $(\ell_m^1, \ell_m^2)\neq (\ell_n^1, \ell_n^2).$ 
\end{proof}

Now we can prove the main theorem of this section:
\begin{thm}\label{Thm:canonicalmarking}
Given a veering pair $\sV=\{\sL_1,\sL_2\}$, 
\[
\Mrk:=\{s\in \fS(\sV)\,:\,g(s)=s\text{ for some order 2 elliptic element }g\in \Aut(\sV)\}
\]
is a marking. 
\end{thm}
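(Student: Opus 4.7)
The plan is to verify the three defining conditions of a marking (\refdefn{marking}) one at a time, each being a short deduction from the two preparatory lemmas \reflem{veeringpair} and \reflem{etaisinjective} that have just been established.

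First, for the genuineness condition, I would invoke \reflem{veeringpair}(1) directly: by definition every $s \in \Mrk$ is preserved by some order-$2$ elliptic element of $\Aut(\sV)$, and that lemma immediately says both components of $s$ are real leaves, so $s$ is genuine.

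For the closedness and discreteness of $\Mrk$ in $\fS(\sV)$, I would simply appeal to \reflem{veeringpair}(2), whose statement and proof concern precisely the set of stitches fixed by order-$2$ elliptic elements (the proof introduces the notation $\Mrk$ for this set). So the set under consideration there is literally our $\Mrk$, and the conclusion is immediate.

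For the injectivity of $\eta_i \vert \Mrk$, suppose $s_j = (\ell, m_j) \in \Mrk$ for $j = 1, 2$, with each $s_j$ preserved by an order-$2$ elliptic element $e_j \in \Aut(\sV)$. Since both $e_1$ and $e_2$ preserve the common leaf $\ell$, \reflem{etaisinjective} supplies a common stitch in $\oslash(\ell)$ fixed by both. Reading into the proof of that lemma, one sees more: the assumption $m_1 \ne m_2$ is shown to force $g := e_1 e_2$ to be properly pseudo-Anosov with a canonically determined invariant leaf $m \in \sL_2$ satisfying $\Fix(g) = v(\ell) \cup v(m)$, and both $e_i$ must preserve $m$, yielding $m = m_1 = m_2$ and a contradiction. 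Hence $m_1 = m_2$ and $s_1 = s_2$, giving injectivity of $\eta_1\vert\Mrk$. The identical argument with the roles of $\sL_1$ and $\sL_2$ swapped handles $\eta_2\vert \Mrk$.

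There is no substantive obstacle at this stage: all of the dynamical work — in particular ruling out elliptic elements of higher order fixing a stitch, controlling accumulations of fixed stitches via properly pseudo-Anosov dichotomies, and identifying the unique invariant leaf opposite $\ell$ — has already been carried out in \reflem{etaisinjective} and \reflem{veeringpair}. The only minor subtlety is that the injectivity step requires extracting the equality $m_1 = m_2$ from the proof of \reflem{etaisinjective} rather than from its stated conclusion, but this follows by a single additional sentence re-applying the properly pseudo-Anosov classification.
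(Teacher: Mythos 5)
Your argument mirrors the paper's own short proof exactly: genuineness and discreteness/closedness of $\Mrk$ come from \reflem{veeringpair}, and injectivity of $\eta_i|_\Mrk$ from \reflem{etaisinjective}. Your observation that the \emph{stated} conclusion of \reflem{etaisinjective} (``preserve a common stitch'') is formally weaker than what injectivity requires is correct and worth flagging.

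However, the ``single additional sentence'' you propose does not fully close the gap. Re-running the proof of \reflem{etaisinjective} with $\ell_j := m_j$ requires (i) that $g := e_1e_2$ be nontrivial when $m_1 \ne m_2$, and (ii) that ``$e_i$ preserves $m$'' implies ``$m = m_i$''. Both steps rest on the fact that an order-two elliptic $e$ preserving $\ell$ can preserve \emph{at most one} stitch in the thread $\oslash(\ell)$. In particular, if $e_1 = e_2$, then $g$ is the identity and the properly-pseudo-Anosov dichotomy you invoke never enters; only uniqueness saves that case. This same uniqueness is silently used in the paper's own proof of \reflem{etaisinjective} at the step ``Therefore, $m = \ell_1 = \ell_2$,'' so it is a shared wrinkle rather than a defect of your proposal alone. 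The uniqueness is true and elementary: if $e$ preserved distinct $(\ell, m_1)$ and $(\ell, m_2)$, then, writing $v(\ell) = \{a, b\}$ and noting that $e$ swaps $a$ and $b$, $e$ carries each arc $\opi{a}{b}$, $\opi{b}{a}$ homeomorphically onto the other; the endpoints of $m_1$ and $m_2$ in $\opi{a}{b}$ are then mapped to endpoints in $\opi{b}{a}$ with the cyclic order on the two arcs transported consistently, and unlinkedness of $m_1$ and $m_2$ forces a reversal of that order, contradicting the orientation-preservation of $e$. Adding this one small lemma would make both your argument and the paper's fully explicit.
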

\begin{proof}
By \reflem{veeringpair}, we know that all elements of $\Mrk$ are genuine and  $\Mrk$ is a  and closed subspace of $\fS(\sV)$. By \reflem{etaisinjective}, we know that $\eta_i|_{\Mrk}$ is injective for each $i\in \{1,2\}$. 
\end{proof}
\begin{cor}\label{Cor:canonicalmarking}
Let $\sV$ be a veering pair. Given a subgroup $G$ of $\Aut(\sV),$
$$\Mrk(G):=\{s\in \fS(\sV): g(s)=s \text{ for some order $2$ elliptic element $g\in G$}\}$$
is a marking. Furthermore, $\Mrk(G)$ is $G$-invariant.
\end{cor}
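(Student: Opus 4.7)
The plan is to deduce the corollary directly from \refthm{canonicalmarking} together with the simple observation that $\Mrk(G) \subseteq \Mrk(\Aut(\sV))$. Since $G \leq \Aut(\sV)$, every order-$2$ elliptic element of $G$ is also an order-$2$ elliptic element of $\Aut(\sV)$, so the stitches fixed by such elements of $G$ form a subset of those fixed by such elements of $\Aut(\sV)$.

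First I would verify the three defining properties of a marking (see \refdefn{marking}) for $\Mrk(G)$. Properties (1) and (3) are immediate from the inclusion $\Mrk(G) \subseteq \Mrk(\Aut(\sV))$ and the fact, supplied by \refthm{canonicalmarking}, that $\Mrk(\Aut(\sV))$ consists entirely of genuine stitches and that each $\eta_i$ is injective on $\Mrk(\Aut(\sV))$; the corresponding properties for $\Mrk(G)$ are inherited by restriction. For property (2), discreteness of $\Mrk(G)$ as a subspace of $\fS(\sV)$ is inherited from its discrete superset $\Mrk(\Aut(\sV))$. For closedness in $\fS(\sV)$, suppose toward a contradiction that some $s \in \fS(\sV)$ is a limit point of $\Mrk(G)$; then $s$ is also a limit point of $\Mrk(\Aut(\sV))$, contradicting the fact that, being closed and discrete in $\fS(\sV)$, the set $\Mrk(\Aut(\sV))$ has no limit points in $\fS(\sV)$.

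Finally, for the $G$-invariance: let $s \in \Mrk(G)$ and $h \in G$, and let $e \in G$ be an order-$2$ elliptic element with $e(s) = s$. Then the conjugate $heh^{-1}$ lies in $G$, has the same order as $e$, and fixes $h(s)$, which gives $h(s) \in \Mrk(G)$.

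I do not expect a substantive obstacle here; the corollary is essentially a bookkeeping consequence of \refthm{canonicalmarking}, and the only subtle point is the closedness of $\Mrk(G)$ in $\fS(\sV)$, which is immediate from the closedness and discreteness of the larger marking $\Mrk(\Aut(\sV))$.
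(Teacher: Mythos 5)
Your proof is correct and takes essentially the same approach as the paper, which reduces the statement to the one-line observation that any subset of a marking is again a marking (a closed discrete set has no limit points, so neither does any subset). You also correctly supply the conjugation argument for $G$-invariance, which the paper's proof leaves implicit.
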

\begin{proof}
It follows from \refthm{canonicalmarking} since any subset of a marking is also a marking.
\end{proof}

\section{Group Actions on Weavings}\label{Sec:frameactionsec}

In this section we study the laminar group actions preserving veering pairs. We will see that such an action induces the action on the associated loom space and weaving. 

Let $\sV=\{\sL_1,\sL_2\}$ be a veering pair with a marking $\Mrk$.
It is clear that $\Aut(\sV)$ acts on $\fS(\sV)$ as homeomorphisms by the diagonal action. 
An \emph{automorphism} of $(\sV,\Mrk)$ is an automorphism $g$ in $\Aut(\sV)$ preserving $\Mrk$. We denote by $\Aut(\sV,\Mrk)$ the group of automorphisms of $(\sV,\Mrk)$. Note that $\Aut(\sV,\Mrk)< \Aut(\sV)$ and that for any marking $\Mrk'$ with $\Mrk'\subseteq \Mrk$, $\Aut(\sV,\Mrk)< \Aut(\sV,\Mrk')$.

An \emph{automorphism} of $\fW^\circ(\sV,\Mrk)$ is a homeomorphism on $\fW^\circ(\sV,\Mrk)$ that preserves  the orientation of $\fW^\circ(\sV,\Mrk)$ and maps each leaf of $\cF_i^\circ(\sV,\Mrk)$ to a leaf of  $\cF_i^\circ(\sV, \Mrk)$ for each $i\in\{1,2\}$. We denote by $\Aut(\fW^\circ (\sV,\Mrk))$ the group of automorphisms of $\fW^\circ(\sV,\Mrk)$.



\begin{prop}
Let $\sV=\{\sL_1, \sL_2\}$ be a veering pair with a marking $\Mrk$. The group  $\Aut(\sV,\Mrk)$ faithfully acts on $\fW^\circ (\sV, \Mrk)$ as automorphisms of $\fW^\circ (\sV, \Mrk)$. Moreover, This action gives rise to an isomorphism  $\Aut(\sV,\Mrk)\to \Aut(\fW^\circ(\sV,\Mrk))$.
\end{prop}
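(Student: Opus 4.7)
The plan is to construct the homomorphism $\Aut(\sV,\Mrk) \to \Aut(\fW^\circ(\sV,\Mrk))$, verify faithfulness (injectivity), and then prove surjectivity. For the construction, I would first show that the diagonal action of $g \in \Aut(\sV,\Mrk)$ on $\fS(\sV)$ preserves the weaving relation $\sim_\omega$; this follows from the characterization of $\sim_\omega$ in terms of threads and empty intervals between stitches (\refrmk{eqDef}) together with the fact that $g$ preserves each $\sL_i$ individually. Because $g$ also preserves $\Mrk$, the induced map on $\closure{\fW}(\sV)$ permutes cusp, singular, marked, and regular classes within their own types by \refprop{eqClass2}, and hence restricts to a self-map of $\fW^\circ(\sV,\Mrk)$. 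That this self-map is an automorphism follows since $g$ sends warp threads to warp threads and weft threads to weft threads, so it preserves $\cF_1^\circ(\sV,\Mrk)$ and $\cF_2^\circ(\sV,\Mrk)$; and since $g \in \Homeop(S^1)$ carries counter-clockwise sectors to counter-clockwise sectors, it carries the scrap charts of \reflem{weavingRect} to compatible ones, which yields orientation preservation. Faithfulness is then a quick consequence: if $g$ acts trivially, then for every unmarked regular stitch $s$, which lies on an unmarked frame by \reflem{unmakredFrameCover} and whose class $\#(s)$ is a singleton by \refrmk{regularNbhd} and \refprop{eqClass2}, we have $g(s) = s$; hence $g$ fixes the dense set $\E{\sL_1} \cup \E{\sL_2} \subset S^1$ (\refcor{denseEnd}), forcing $g = \id$.

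For surjectivity, given $\phi \in \Aut(\fW^\circ(\sV,\Mrk))$, I would first extend $\phi$ continuously to the cusped weaving $\closure{\fW}(\sV)$. The removed classes come in three intrinsically characterized flavors: a marked class is a $2$-prong puncture of the foliations, a singular class is an $n$-prong puncture with $n \ge 3$, and a cusp class is a non-isolated end described by nested cusp rectangles in the sense of \reflem{propertiesofweaving} and \refrmk{cuspCorner}. Since $\phi$ preserves the oriented transverse foliations and the class of rectangles (see \refrmk{tetFrameToRect}), it preserves each of these local combinatorial patterns, so there is a unique continuous extension $\overline{\phi} : \closure{\fW}(\sV) \to \closure{\fW}(\sV)$; by \reflem{compressH} this is a self-homeomorphism of the closed disk. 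Restricting $\overline{\phi}$ to $\partial \closure{\fW}(\sV) \cong S^1$ via $\pi_{C(\sV)}|_{S^1}$ (\refrmk{boundary}) defines a homeomorphism $g : S^1 \to S^1$, orientation-preserving because $\phi$ is. To see $g \in \Aut(\sV,\Mrk)$, I would use \reflem{leafToLeaf}: each leaf $\ell \in \sL_i$ yields a closed arc in $\closure{\fW}(\sV)$ whose endpoints on $S^1$ are precisely $v(\ell)$, and $\overline{\phi}$ sends such arcs to arcs of the same type, so $g(\sL_i) = \sL_i$; preservation of $\Mrk$ follows because $\overline{\phi}$ permutes the marked classes, identified intrinsically as the $2$-prong punctures. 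Naturality of the construction then gives that $g$ induces $\phi$, completing surjectivity.

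The hard part will be making the continuous extension step of the preceding paragraph fully rigorous, in particular establishing that $\phi$ respects the local combinatorics at every cone class and cusp class simultaneously. This requires a careful dictionary between the foliation data on $\fW^\circ(\sV,\Mrk)$ and the frame data on $\sV$: I would prove that $\phi$ induces a bijection on tetrahedron rectangles that respects the cusp, edge, and face rectangles bordering them (building on \refrmk{tetFrameToRect}, \reflem{fullExtension}, and \reflem{rectangleisintetra}), so that $\overline{\phi}$ can be defined unambiguously at each removed class as the common limit of its values on adjacent tetrahedron rectangles. Once this combinatorial rigidity is in place, continuity and the circle extension are routine, and the proof concludes.
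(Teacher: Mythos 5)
Your forward direction and faithfulness argument are fine and match the paper: the diagonal action preserves $\frown$ (hence $\sim_\omega$), permutes classes within their types, and preserves the threads and the counter-clockwise orientation of frames, so it descends to $\Aut(\fW^\circ(\sV,\Mrk))$; faithfulness via singleton regular classes and density of endpoints is a reasonable supplement (the paper leaves this implicit).

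The surjectivity direction has a genuine gap. You extend $\phi$ to $\closure{\fW}(\sV)$ and then assert that ``by \reflem{compressH} this is a self-homeomorphism of the closed disk,'' from which you restrict to the boundary circle to obtain $g$. But $\closure{\fW}(\sV)$ is \emph{not} the closed disk: under $\hat\omega$ its image is $\interior{\cD(C(\sV))}$ together with only the countably many pivot points of asterisks of crowns (\refrmk{boundary}). The rest of $\partial\cD(C(\sV))$ --- in particular every rainbow point and every non-pivot endpoint, which together form all but a countable subset of $S^1$ --- is simply not in the domain of your extension, so there is no boundary restriction to take. The correct continuation, which is what the paper does, is: (i) for each leaf $\mu$ of $\closure{\cF_i}(\sV)$, use that $\closure{\hat\omega(\mu)}\cap\partial\cD(C(\sV))=v(\sG)$ for the associated real gap $\sG$ (\refprop{arcImage}, \reflem{leafToLeaf}) to \emph{define} a circular-order-preserving bijection $g^\circ$ on $\E{\sL_1}\cup\E{\sL_2}$ and the pivots --- you mention this arc-endpoint fact, but only as a verification of a map you have not actually constructed; (ii) extend to the rainbow points by sending a rainbow $\{I_n\}$ at $x$ to $\bigcap_n\closure{\opi{g^\circ(u_n)}{g^\circ(v_n)}\cap\fE_i}$, which requires proving that the image of a rainbow is again a rainbow (the paper does this by contradiction: otherwise the image leaves would converge to a leaf, contradicting that $\phi$ is a homeomorphism); and (iii) invoke that a circular-order-preserving bijection of $S^1$ is in $\Homeop(S^1)$. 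Step (ii) is where the essential analytic content lies, and your proposal omits it entirely; the ``combinatorial rigidity'' of tetrahedron rectangles you flag as the hard part does not substitute for it.
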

\begin{proof}
Let $g$ be an automorphism of $(\sV,\Mrk)$. As for any pair of stitches $s_1,s_2$ with $s_1\frown s_2$, $g(s_1)\frown g(s_2)$. Hence, $g$ induces an orientation preserving homeomorphism of $\fW(\sV)$. Furthermore, the induced homeomorphism preserves $\Mrk$ in $\fW(\sV)$ as $g$ preserves $\Mrk$ in $\fS(\sV)$. Then, the  restriction of  the induced homeomorphism to $\fW^\circ(\sV,\Mrk)$ is an automorphism of $\fW^\circ(\sV,\Mrk)$.

Conversely, let $f$ be an automorphism of $\fW^\circ(\sV,\Mrk)$. We first claim that $f$ can be uniquely  extended to a homeomorphism on $\closure{\fW}(\sV)$ whose restriction to $\fW(\sV)$ preserves the orientation.
We define a map $\hat{f}$ on $\closure{\fW}(\sV)$ as follows. For $x$ in $\closure{\fW}(\sV)$, $\hat{f}(x)=f(x)$ if $x\in \fW^\circ(\sV,\Mrk)$ and, otherwise,  $\hat{f}(x)$ is $\cusp(f(R))$ for some cusp rectangle $R$ with $x=\cusp(R)$. See \refrmk{cuspCorner}. Like in loom spaces, we say that two cusp rectangles $P$ and $Q$ in $\fW^\circ(\sV,\Mrk)$ are \emph{equivalent} if and only if there is a finite sequence of cusp rectangles $P=R_1,R_2,\cdots,R_n=Q$ such that for each pair $(R_i, R_{i+1})$, some cusp side of one is contained in a cusp side of the other. Observe that for any pair  $\{P, Q\}$ of cusp rectangles in $\fW^\circ(\sV,\Mrk)$, $P$ and $Q$ are equivalent if and only if $\cusp(P)=\cusp(Q)$. Therefore, $\hat{f}$ is a well-defined homeomorphism on $\closure{\fW}(\sV)$ preserving $\closure{\cF_1}(\sV)$ and $\closure{\cF_2}(\sV)$.


Now, let $\hat{g}$ be the homeomorphism on $\im{\hat{\omega}}$ defined as   $\hat{g}(x)=\hat{\omega}\circ \hat{f} \circ \hat{\omega}^{-1}(x)$, and for each $i\in \{1,2\}$, let $\fE_i$ be the union of $\E{\sL_i}$ and the pivot points. We then claim that $\hat{g}$ induces a bijection $g^\circ$ on $\fE_1\cup \fE_2$ that preserves the circular order, that is, for any counter-clockwise triple $(x_1,x_2,x_3)$ of elements of $\fE_1\cup \fE_2$, $(g^\circ(x_1),g^\circ(x_2),g^\circ(x_3))$ is counter-clockwise

 

First,  fix $i\in \{1,2\}$ and choose an element $\mu$ in $\closure{\cF_i}(\sV)$. Then, there is a unique real gap $\sG$ in $\sL_i$ such that  $\#^{-1}(\mu)=\bigcup_{I\in \sG } \oslash(\ell(I))$, and $\closure{\hat{\omega}(\mu)}\cap \partial \cD(C(\sV))=v(\sG)$ and $\closure{\hat{\omega}(\mu)}\cap \cD(C(\sV))=\hat{\omega}(\mu)$. Likewise, as $\hat{f}(\mu)\in \closure{\cF_1}(\sV)$,  $\hat{f}(\mu)$ has a unique real gap $\sH$ such that  $\#^{-1}(\hat{f}(\mu))=\bigcup_{I\in \sH } \oslash(\ell(I))$, and $\closure{\hat{\omega}(\hat{f}(\mu))}\cap \partial \cD(C(\sV))=v(\sH)$ and $\closure{\hat{\omega}(\hat{f}(\mu))}\cap \cD(C(\sV))=\hat{\omega}(\hat{f}(\mu))$ . Therefore, the homeomorphism $\hat{g}|\hat{\omega}(\mu)$ from $\hat{\omega}(\mu)$ to $\hat{g}(\hat{\omega}(\mu))=\hat{\omega}( \hat{f}(\mu))$ can be uniquely extended to the homeomorphism $\hat{g}_\mu$ from $\closure{\hat{\omega}(\mu)}$ to $\closure{\hat{g}(\hat{\omega}(\mu))}$. Thus, the restriction $\hat{g}_\mu|v(\sG)$ is the circular order preserving homeomorphism from $v(\sG)$ to $v(\sH)$.

Now, we define $g^\circ$ on $\fE_1$ as follows. For $x$ in $\fE_1$, there is a unique element $\mu_x$ in $\closure{\cF_1}(\sV)$ such that $\closure{\hat{\omega}(\mu_x)}$ contains $x$, and so we define $g^\circ(x)=\hat{g}_{\mu_x}(x)$. In a similar way, we define $g^\circ$ on $\fE_2$. Since $\hat{f}$ is an orientation preserving homeomorphism, by construction, $g^\circ$ preserves the circular order.

Finally, we extend $g^\circ$ to an automorphism $g$ in $\Aut(\sV)$ as follows. For each point $x$ in $S^1\setminus (\fE_1\cup \fE_2)$, $x$ is a rainbow point in both $\sL_1$ and $\sL_2$ and if   $\{I_n\}_{n\in \NN}$ is a rainbow at $x$ in some $\sL_i$, $\{x\}=\bigcap_{n\in\NN} \closure{I_n\cap \fE_i}$. For $x$ in $S^1$,  $g(x)=g^\circ(x)$ if $x\in \fE_1 \cup \fE_2$ and, otherwise, we define $g(x)$ to be the point in $\bigcap_{n\in\NN} \closure{\opi{g^\circ(u_n)}{g^\circ(v_n)}\cap \fE_i}$ for some rainbow $\{\opi{u_n}{v_n}\}_{n\in \NN}$ at $x$ in some $\sL_i$. The map $g$ is well-defined since $\{\opi{g^\circ(u_n)}{g^\circ(v_n)}\}_{n\in\NN}$ is also a rainbow in $\sL_i$. If not, $\{\ell(\opi{g^\circ(u_n)}{g^\circ(v_n)})\}_{n\in\NN}$ converges to a leaf $\ell$ in $\sL_i$. This implies that the sequence of lines $f(\#(\oslash(\ell(\opi{u_n}{v_n}))))$ converges to the line $\#(\oslash(\ell))$. This contradicts to the fact that $\{\opi{u_n}{v_n}\}_{n\in \NN}$ is a rainbow at $x$ as $f$ is an orientation preserving homeomorphism. By the fact that every bijection on $S^1$ preserving the circular order is in $\Homeop(S^1)$, $g$ is in $\Homeop(S^1)$. Therefore, as $g^\circ(\cC(\sL_i))=\cC(\sL_i)$ for all $i\in \{1,2\}$, $g$ also preserves each $\sL_i$ and so $g\in \Aut(\sV)$. Thus, the result follows
\end{proof}

We now show that no nontrivial element of $G$ preserves a rectangle of the regular weaving. At the end this result related to the induced $G$-action on some 3-manifold is properly discontinuous and free. 

\begin{prop}\label{Prop:tetrastabilizer}
Let $\sV=\{\sL_1, \sL_2\}$ be a veering pair and $G$ be a subgroup of $\Aut(\sV).$ The stabilizer of a  rectangle in $\fW^\circ (\sV,\Mrk(G))$ under the $G$-action is trivial where $\Mrk(G)$ is the marking obtained in \refcor{canonicalmarking}.
\end{prop}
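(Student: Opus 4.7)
The plan is to analyse how such a $g$ acts on a minimal frame representative $\fF=(I_1,I_2,I_3,I_4)$ of $R$, supplied by \reflem{minimalRep}. Since $g(\fF)$ is another minimal representative of $R$, \reflem{minimalRep} leaves only two possibilities: either $g(I_i)=I_i$ for every $i$, or $g$ realises the involutive relabelling $(I_1,I_2,I_3,I_4)\mapsto(I_3,I_4,I_1,I_2)$.

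In the first case $g$ fixes the eight pairwise distinct vertices of $\ell(I_1),\ldots,\ell(I_4)$, so by \refthm{classOfLaminarAut} $g$ is trivial or properly pseudo-Anosov. Assume the latter, with invariant interleaving pair $(\sG_1,\sG_2)$ satisfying $\Fix{g}=v(\sG_1)\cup v(\sG_2)$. Strong transversality places the endpoints of $\ell(I_1),\ell(I_3)\in \sL_1$ inside $v(\sG_1)$, so both are boundary leaves of the same non-leaf gap $\sG_1$ of $\sL_1$. The key combinatorial observation is that no leaf of $\sL_1$ can properly lie between two distinct boundary leaves of a single non-leaf gap: writing such a hypothetical leaf as $\{K,K'\}$, one is forced into the configuration $\overline{I_1}\subset K$ and $\overline{I_3}\subset K'$ (the other ``proper'' alternatives collapse via the disjointness of elements of $\sG_1$), but the same disjointness then prevents $K$ or $K'$ from being contained in any $J\in \sG_1$, contradicting $\sG_1$ being a gap of $\sL_1$. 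Hence $\cS(\fF)=\emptyset$, contradicting \refprop{scrapIsNonempty}, and $g$ must be trivial.

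In the second case $g^2(I_i)=I_i$ for every $i$, so by the previous case $g^2=1$ and $g$ is a non-trivial involution; \refthm{classOfLaminarAut} then forces $g$ to be elliptic, and since orientation-preserving involutions of $S^1$ are conjugate to the rotation by $\pi$, $\Fix{g}=\emptyset$ on $S^1$. Restricted to $R\cong(0,1)^2$, $g$ is an orientation-preserving, foliation-preserving involution, hence conjugate in foliated coordinates to a half-turn, and so has a unique fixed point $p\in R$. The class $\#^{-1}(p)$ has one, two, or four stitches by \refprop{eqClass2}. If $g$ fixes some stitch $s\in \#^{-1}(p)$, then $s\in \Mrk(G)$ by definition, contradicting $p\in \fW^\circ(\sV,\Mrk(G))$. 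Otherwise $g$ permutes $\#^{-1}(p)$ without fixed stitches; the order-two permutations can be enumerated, and tracking the induced action on the common vertex of the parallel (resp.\ tip-pair) leaves of the class, combined with $g^2=1$ and $\Fix{g}=\emptyset$, forces two distinct leaves of the class to share both endpoints. This contradicts \reflem{etaisinjective} when $|\#^{-1}(p)|=2$ and the distinctness of tip-pair leaves when $|\#^{-1}(p)|=4$. Hence $g$ is trivial in this case as well.

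The main obstacle is the combinatorial lemma underlying Case~1: two distinct boundary leaves of a single non-leaf gap admit no third leaf of the same lamination system properly between them. This is what converts the classification-theoretic restriction on $g$ into a concrete contradiction with \refprop{scrapIsNonempty}. Case~2 then reduces to a short vertex-level enumeration once the unique interior fixed point of $g$ on $R$ is located.
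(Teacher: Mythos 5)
Your proposal is correct, and it follows the same skeleton as the paper's proof (minimal frame representative via \reflem{minimalRep}, then the dichotomy between $g$ fixing each $I_i$ and $g$ realising the swap $(I_1,I_2,I_3,I_4)\mapsto(I_3,I_4,I_1,I_2)$), but the way you close each case is genuinely different. In Case~1 the paper invokes minimality of the frame to rule out $\ell(I_1)$ and $\ell(I_3)$ being boundary leaves of a common gap (minimality forces $I_1^*,I_3^*$ to be non-isolated, whereas boundary leaves of non-leaf gaps are not real), and then appeals to \refthm{classification}; you instead observe that if they were boundary leaves of one gap then no leaf of $\sL_1$ could properly lie between them, so $\cS(\fF)=\emptyset$ against \refprop{scrapIsNonempty}. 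Your combinatorial lemma checks out (any leaf $\{K,K^*\}$ with $\closure{I_1}\subset K$, $\closure{I_3}\subset K^*$ must lie on some $J\in\sG_1$, and disjointness of the gap's elements forces $J=I_1$ or $J=I_3$, both absurd), and it has the mild advantage of not using minimality at all in this case. In Case~2 the divergence is larger: the paper works on the circle, showing that the unique interleaving pair preserved by the order-two elliptic $g$ is in fact a genuine stitch properly lying on the frame, hence a marked class inside $R$; you work inside the rectangle, using the product structure to locate the unique fixed point $p$ of $g$ in $R$ and then analysing the $G$-action on the finite fibre $\#^{-1}(p)$ via \refprop{eqClass2}. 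Both routes land on the same contradiction with the canonical marking. Two small repairs: (i) in Case~1 you should note explicitly that $\sG_1$ cannot be a leaf (else $\ell(I_1)=\ell(I_3)$, impossible since $\closure{I_1}\subset I_3^*$); (ii) in the $|\#^{-1}(p)|=2$ subcase the contradiction you reach is not really with \reflem{etaisinjective} --- it is simply that two distinct leaves cannot have the same vertex set (equivalently, that $g$ would fix the shared tip of the two parallel leaves, impossible for a fixed-point-free elliptic element), so the correct citation is the definition of leaves together with \refthm{classOfLaminarAut} rather than \reflem{etaisinjective}. Neither repair affects the validity of the argument.
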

\begin{proof}
Let $R$ be a rectangle on $\fW^\circ(\sV,\Mrk(G)).$ Let $g$ be an element in $G$ such that $g(R)=R.$ By \reflem{minimalRep}, there is a minimal frame representative $(I_1, I_2, I_3, I_4)$ of $R.$ Then, $(g(I_1),g(I_2),g(I_3),g(I_4))$ is a minimal representative of $g(R)$ and so $(g(I_1),g(I_2),g(I_3),g(I_4))$ is also a minimal representative of $R.$ By \reflem{minimalRep}, there are two possible cases:
\begin{itemize}
    \item $(I_1, I_2, I_3, I_4)=(g(I_1),g(I_2),g(I_3),g(I_4));$
    \item $(I_3, I_4, I_1, I_2)=(g(I_1),g(I_2),g(I_3),g(I_4)).$
\end{itemize} 

If $(I_1, I_2, I_3, I_4)=(g(I_1),g(I_2),g(I_3),g(I_4)),$ then $g$ fixes leaves $\ell(I_1)$ and $\ell(I_3).$ By the minimality of $(I_1, I_2, I_3, I_4),$ there is no gap of $\sL_1$ that has  $\ell(I_1)$ and $\ell(I_3)$ as boundary leaves. Therefore, by \refthm{classification}, $g$ is the identity. 

When $(I_3, I_4, I_1, I_2)=(g(I_1),g(I_2),g(I_3),g(I_4)),$ $g^2$ is the identity by the first case. Hence, $g$ is an elliptic element of order $2.$ By \refthm{classification}, there is a unique interleaving pair $(\sG_1, \sG_2)$ preserved by $g.$
Note that $\sG_i$ are ideal polygons. 

Now, we claim that $(\sG_1,\sG_2)$ is a genuine stitch lying on $(I_1, I_2, I_3, I_4).$ 
Observe that there are distinct elements $J_1$ and $J_3$ in $\sG_1$ such that $g(J_1)=J_3$ and $I_i\subset J_i$ for all $i\in \{1,3\}.$
Then, $\ell(I_2)$ and $\ell(I_4)$ are linked with $\sG_1.$

Assume that $\sG_1$ is not a leaf. By \refprop{twoGap}, $\ell(I_2)$ and $\ell(I_4)$ cross over tips $t_2$ and $t_4$ of $\sG_1,$ respectively. If $t_2\neq t_4,$ then $v(\ell(J_1))=v(\ell(J_3))=\{t_2,t_4\}$ and it is a contradiction. Hence, $t_2=t_4$ and $t_2\in v(\ell(J_1))\cap v(\ell(J_3))$ 

Now, we write $J_1=\opi{a}{b}$ and $J_3=\opi{u}{v}.$ Note that $g(J_1)=J_3$ and so $g(a)=u$ and $g(b)=v.$ If $t_2=b,$ then $t_2=u$ and $b=u.$ Therefore, 
$$a=g(g(a))=g(u)=g(b)=v$$
and so $J_1=J_3^*.$ This implies that $\sG_1$ is a leaf and it is a contradiction. Likewise, if $t_2=a,$ then $t_2=v$ and $a=v.$ Also,
$$b=g(g(b))=g(v)=g(a)=u.$$
This implies that $\sG_1$ is a leaf and it is a contradiction. Therefore, $\sG_1$ is a leaf with $\sG_1=\{J_1, J_3\}.$

Moreover, since $I_1^*\neq I_3$ and $g(I_1)=I_3,$ $J_i\neq I_i$ for all $i\in \{1,3\}.$ The leaf $\sG_1$ lies between $\ell(I_1)$ and $\ell(I_3).$ Also, by the minimality of $(I_1, I_2, I_3, I_4)$, $\sG_1$ and $\ell(I_i)$ are ultraparallel for all $i\in \{1,3\}.$ Therefore,  $\sG_1$ properly lies between $\ell(I_1)$ and $\ell(I_3).$

Similarly, we can show that $\sG_2$ is a leaf which properly lies between $\ell(I_2)$ and $\ell(I_4).$ By \reflem{veeringpair}, $(\sG_1, \sG_2)$ is a genuine stitch lying on $(I_1, I_2, I_3, I_4).$ 
Then, $\#((\sG_1,\sG_2))\in R.$ 
However, it is a contradiction since $(\sG_1,\sG_2)\in \Mrk(G).$
Therefore, $g$ can not be an elliptic element of order two. Thus, $g$ is the identity.
\end{proof}

\section{Group Actions on Triangulated 3-Manifolds}\label{Sec:triangulationactionsec}

In this section, we study the action of laminar groups on 3-manifolds with veering triangulation.

\subsection{From Loom Spaces to Veering Triangulations}

We recall definitions of veering triangulations and functors between the category of loom spaces and the category of veering triangulations on $\RR^3$. We refer readers to \cite{SchleimerSegerman19}, \cite{SchleimerSegerman20}, and \cite{SchleimerSegerman21} for details. 

A model veering tetrahedron is a tetrahedron with the following extra data: 
\begin{itemize}
    \item (Co-orientations) two faces are oriented outward and the other two are oriented inward. 
    \item (Dihedral angles) two faces with the same orientation meet at angle $\pi$ and faces with opposite orientations meet at angle 0. 
    \item (Colors on edges) View the tetrahedron from the top. The equatorial edges are colored blue or red such that on each visible top face, we see $\pi$-edge, red-edge, and blue-edge in the counter-clockwise order. Colors of $\pi$-edges are indefinite.
\end{itemize}
Figure~\ref{model} depicts a model veering tetrahedron. 
\begin{figure}[!ht]
\begin{tikzpicture}[scale=2]
\draw[thick, dashed] (-1,0) -- (0.5,0) node[below]{$\pi$}-- (1,0);
\draw[thick] (0,1) -- (0,0.5) node[left]{$\pi$} -- (0,-1);
\draw[thick, red] (0,1) -- (-0.5, 0.5) node[left]{0}--(-1,0);
\draw[thick, blue] (-1,0)-- (-.5,-.5) node[left]{0} -- (0,-1);
\draw[thick,red] (0,-1) -- (.5, -.5) node[right]{0}--(1,0);
\draw[thick, blue] (1,0) --(.5, .5) node[right]{0} --(0,1);
\end{tikzpicture}
\caption{A model veering tetrahedron, top view. }\label{model}
\end{figure}
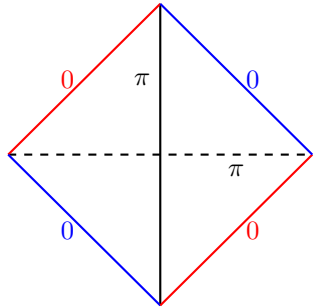

\begin{defn}
Let $M$ be a 3-manifold with boundary. A (transverse) veering triangulation $\cV$ on $M$ is an ideal triangulation with extra data:
\begin{itemize}
    \item (Transversality) each face is given an orientation such that for each ideal tetrahedron, two of its faces are oriented outward and other two are oriented inward.  
    \item (Taut angle structure) each interference of two faces of $\cV$ is given an angle $\pi$ or $0$ in such a way that the dihedral angle sum around each edge is $2\pi$.
    \item (Edge coloring) each edge is given a color either red or blue such that for each ideal triangle $t\in \cV$, there is a cellular map from a model veering tetrahedron onto $t$ that preserves co-orientations, dihedral angles and colors. 
\end{itemize}
\end{defn}

Note that some authors omit the transversality condition in veering triangulation. 

We summarize some notations used in \cite{SchleimerSegerman19}.
\begin{itemize}
    \item The veering functor $\fV$ is the functor from the category of loom spaces to the category of veering triangulations on $\RR^3$.
    \item  The loom functor $\fL$ is the functor from the category of veering triangulations on $\RR^3$ to the category of loom spaces. 
    \item For a tetrahedron rectangle $R$ in a loom space $\cL$, $\mathsf{c}(R)$ denotes the corresponding tetrahedron in $\fV(\cL)$. We define $\mathsf{c}$ for cusp, edge and face rectangles in the same manner. 
    \item For a tetrahedron $t$ in a veering triangulation $\cV$ of $\RR^3$, $\fR(t)$ denotes the corresponding tetrahedron rectangle in the loom space $\fL(\cV)$. We define $\fR$ for ideal vertices, edges and faces in the same manner. 
\end{itemize}

The definition of $\mathsf{c}(c)$ for a cusp $c$ in a loom space deserves a comment. Recall that a cusp is defined as an equivalence class of cusp rectangles. Hence, $\mathsf{c}(c)$ should be defined first for a cusp rectangle $R$ in the equivalence class $c$ by the common ideal vertex of the set of edges $\mathsf{c}(R_1)\cap \mathsf{c}(R_2)$ where $R_1$ and $R_2$ are edge rectangles containing $R$. It is clear that this assignment descends to the equivalence class $c=[R]$. 

Conversely, $\fR(v)$ for an ideal vertex $v$ should be also defined as a equivalence class that contains the cups rectangle $\fR(e_1)\cap \fR(e_2)$ where $e_1$ and $e_2$ are edges sharing the ideal vertex $v$.

In \cite{SchleimerSegerman19}, it is shown that $\fL$ and $\fV$ are equivalences between two categories and the explicit natural transform between $\fL\circ \fV$ and the identity functor is constructed.

Let $\sV=\{\sL_1,\sL_2\}$ be a veering pair and $G$ a subgroup of $\Aut(\sV).$  Then, there is a canonical marking $\Mrk,$ from \refcor{canonicalmarking}, namely, $\Mrk=\Mrk(G).$ Let $D$ be the group of deck transformations of the universal covering $\widetilde{\fW^\circ}(\sV,\Mrk)\to \fW^\circ(\sV,\Mrk)$. By \cite{SchleimerSegerman21}, we can associate the canonical veering triangulation $\widetilde{\fV}(\sV, \Mrk):=\fV(\widetilde{\fW^\circ}(\sV,\Mrk))$ with the \emph{realisation} $|\widetilde{\fV}(\sV,\Mrk)|$ homeomorphic to $\RR^3$. 
This association is functorial in the sense that loom isomorphisms induce taut isomorphisms of the taut triangulation. 
In particular, a deck transformation of the universal covering $\widetilde{\fW^\circ}(\sV,\Mrk)\to \fW^\circ(\sV,\Mrk)$ acts as a taut isomorphism on $\widetilde{\fV}(\sV,\Mrk)$.  Since each element of $D$ preserves the orientations of $\widetilde{\fW^\circ}(\sV,\Mrk),$ the associated taut isomorphism preserves the co-orientation of the horizontal branched surface and the veering coloring. See \reflem{V(V)noncompact}. Hence, $D$ preserves the orientation and  transverse veering structure of $\RR^3$.

In the subsequent sections, if not mentioned otherwise,
$\sV=\{\sL_1, \sL_2\}$ is a veering pair, $G\leq \Aut(\sV)$ is a fixed laminar group, and $\Mrk$ is the marking $\Mrk(G)$ from \refcor{canonicalmarking}. In fact, most of the following results also hold under the assumption that $\Mrk$ is a $G$-invariant marking. Nonetheless, for simplicity, we assume that $\Mrk$ is $\Mrk(G)$. We write $p:\widetilde{\fW^\circ}(\sV,\Mrk)\to \fW^\circ(\sV,\Mrk)$ for the universal covering and $D$ its group of deck transformations. 

\begin{lem}\label{Lem:V(V)noncompact}
Let $D$ be the group of deck transformations for the universal covering $\widetilde{\fW^\circ}(\sV,\Mrk)\to \fW^\circ(\sV,\Mrk)$. 
\begin{enumerate}
    \item The induced action $\fV(D)$ of $D$ on $|\widetilde{\fV}(\sV, \Mrk)|$ is free and properly discontinuous.
    \item The $\fV(D)$-action on $\widetilde{\fV}(\sV, \Mrk)$ preserves the transverse taut structure. 
    \item The quotient space $|\widetilde{\fV}(\sV, \Mrk)|/\fV(D)$ is an orientable 3-manifold with the induced transverse veering triangulation, $$\fV(\sV, \Mrk)=\widetilde{\fV}(\sV, \Mrk)/\fV(D).$$
\end{enumerate}
\end{lem}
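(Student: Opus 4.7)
The plan is to prove the three statements in order, using the functoriality of $\fV$ to transport the good properties of the $D$-action on $\widetilde{\fW^\circ}(\sV,\Mrk)$ over to the 3-dimensional realisation. The conceptual point is that $D$ already acts on the loom space by orientation-preserving loom automorphisms, freely and properly discontinuously, and everything descends through the veering functor.

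First I would verify that every $g\in D$ is a loom automorphism of $\widetilde{\fW^\circ}(\sV,\Mrk)$: the transverse foliations $\widetilde{\cF^\circ_i}(\sV,\Mrk)$ are defined as lifts of $\cF^\circ_i(\sV,\Mrk)$ and are therefore tautologically $D$-invariant, and the orientation on $\fW^\circ(\sV,\Mrk)$ from \refsec{orientation} pulls back to a $D$-invariant orientation on the universal cover. Applying $\fV$ yields a group homomorphism $D\to \Aut(\widetilde{\fV}(\sV,\Mrk))$ whose image $\fV(D)$ consists of taut automorphisms preserving the transverse veering data (co-orientations of faces and edge colorings). This already establishes Statement (2).

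For Statement (1), the freeness step uses the correspondence $\fR\leftrightarrow \mathsf{c}$ between tetrahedra of $\widetilde{\fV}(\sV,\Mrk)$ and tetrahedron rectangles of $\widetilde{\fW^\circ}(\sV,\Mrk)$. If $\fV(g)=\id$ for some $g\in D$, then $\fV(g)$ fixes every tetrahedron, hence $g$ preserves every tetrahedron rectangle; \reflem{correspondence}(2) (applied to any one tetrahedron rectangle, which exists by \reflem{containedintetra}) forces $g=\id$. For proper discontinuity I would exploit local finiteness of the triangulation: each veering tetrahedron has $4$ faces and $6$ edges, and each edge is shared by only finitely many tetrahedra, so a compact $K\subset |\widetilde{\fV}(\sV,\Mrk)|$ is contained in a finite union of closed tetrahedra whose combinatorial neighborhoods are finite. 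Any $g\in \fV(D)$ with $gK\cap K\ne \emptyset$ must map one of these finitely many tetrahedra to one in the neighborhood of another; since a taut automorphism of a veering triangulation is determined by its action on a single tetrahedron, and $\fV(D)$ acts freely, only finitely many such $g$ can occur.

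Statement (3) is then essentially formal: a free, properly discontinuous action on the 3-manifold $|\widetilde{\fV}(\sV,\Mrk)|\cong \RR^3$ produces a 3-manifold quotient inheriting the transverse veering structure, defining $\fV(\sV,\Mrk):=\widetilde{\fV}(\sV,\Mrk)/\fV(D)$. Orientability follows from the $\fV(D)$-invariance of the orientation on the realisation, which descends from the $D$-invariance of the orientation on $\widetilde{\fW^\circ}(\sV,\Mrk)$. I expect the main obstacle to be the freeness step, which requires a careful translation between loom automorphisms and taut automorphisms via $\fR$ and $\mathsf{c}$ so that \reflem{correspondence}(2) may be invoked; proper discontinuity and the descent of the triangulation are then routine consequences.
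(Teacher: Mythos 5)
Your overall route — transporting the $D$-action through the veering functor, using \reflem{correspondence} for the stabilizer statement, local finiteness for proper discontinuity, and letting (2) and (3) descend formally — is the same as the paper's. However, your freeness step proves the wrong statement. Showing that $\fV(g)=\id$ forces $g=\id$ establishes only that the homomorphism $D\to \fV(D)$ is \emph{faithful}; freeness of the action on $|\widetilde{\fV}(\sV,\Mrk)|$ requires that no non-trivial $\fV(g)$ fixes \emph{any point} of the realisation, which is strictly stronger. The correct argument (and the one the paper intends) is pointwise: a point $x$ of the realisation lies in the interior of a unique cell $\mathsf{c}(R)$ for some skeletal (tetrahedron, face, or edge) rectangle $R$; if $\fV(g)(x)=x$ then $\fV(g)$ carries that cell to a cell meeting it, hence preserves it, hence $g(R)=R$ in the loom space — and \reflem{correspondence}(2) gives $g(R)\cap R=\emptyset$ for every rectangle $R$ and every $g\ne 1$, so $g=1$. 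So the ingredient you cite is the right one, but it must be applied to the cell containing an arbitrary putative fixed point, not to the kernel of $\fV$.

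A secondary consequence: your proper discontinuity argument invokes ``$\fV(D)$ acts freely'' before freeness is actually in hand, and the phrase ``a taut automorphism is determined by its action on a single tetrahedron'' is not quite what you need. What the argument really requires is that tetrahedron stabilizers in $\fV(D)$ are trivial, so that for each ordered pair of tetrahedra at most one element of $\fV(D)$ maps one to the other; combined with local finiteness of the triangulation this bounds the number of $g$ with $gK\cap K\ne\emptyset$. That triviality again comes directly from \reflem{correspondence}(2) via the correspondence $\mathsf{c}\leftrightarrow\fR$, so once the freeness step is repaired the rest of your argument goes through, and your treatments of (2) and (3) agree with the paper's.
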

\begin{proof}
(1) By \reflem{correspondence}, we know that no elements in $D\setminus\{1\}$ stabilize a rectangle in $\widetilde{\fW^\circ}(\sV)$. Hence, the stabilizers of the skeletal rectangles are trivial. Therefore, the $\fV(D)$-action is properly discontinuous and free. 

(2) We only need to prove that the $\fV(D)$-action respects the co-orientations of the faces. Recall that each face $f=\mathsf{c}(R)$ of $\widetilde{\fV}(\sV, \Mrk)$ is given the co-orientation according to the counter-clockwise ordering on the three cusps of $R$ inherited from the orientation of $\widetilde{\fW^\circ}(\sV,\Mrk)$ by the right-hand grip rule. See \refsec{rectangle} and \cite[The proof of Lemma~5.12]{SchleimerSegerman21}.  Since the $D$-action on $\widetilde{\fW^\circ}(\sV,\Mrk)$ is orientation preserving, the circular order of the cusps is preserved. See \refsec{orientation}.
 
(3) By (1), $|\widetilde{\fV}(\sV,\Mrk)|/\fV(D)$ is a 3-manifold. By (2) and the fact that $\fV(D)$ preserves the orientation of $|\widetilde{\fV}(\sV,\Mrk)|$, $|\widetilde{\fV}(\sV,\Mrk)|/\fV(D)$ is orientable. Since each element of $D$ preserves the orientation of $\widetilde{\fW^\circ}(\sV,\Mrk),$ each deck transformation maps red and blue edge rectangles to red and blue edge rectangles, respectively. Therefore, by (2) and \refprop{fullcorrespondence}, $\widetilde{\fV}(\sV,\Mrk)/\fV(D)$ is a transverse veering triangulation of $|\widetilde{\fV}(\sV,\Mrk
)|/\fV(D).$ \end{proof}

\begin{lem}\label{Lem:subgroupnatural}
Let $c$ be a cusp of $\widetilde{\fW^\circ}(\sV,\Mrk)$. The isomorphism $\fV: D\to \fV(D)$ induces the isomorphism $\Stab{D}{c}\to \Stab{\fV(D)}{\mathsf{c}(c)}.$
\end{lem}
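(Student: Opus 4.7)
The plan is to deduce the lemma directly from the functoriality of the Schleimer--Segerman correspondence. Since $\fV \colon D \to \fV(D)$ is already known to be an isomorphism, the only content to verify is that, for every $g \in D$,
\[
g(c)=c \iff \fV(g)(\mathsf{c}(c))=\mathsf{c}(c).
\]
Both directions will follow once we establish an equivariance formula relating $\mathsf{c}$ with the $D$-action.

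The first step is to prove the naturality identity
\[
\fV(g)(\mathsf{c}(c)) = \mathsf{c}(g(c))
\]
for every cusp $c$ of $\widetilde{\fW^\circ}(\sV, \Mrk)$ and every $g \in D$. To this end, pick any cusp rectangle $R$ representing $c$, and edge rectangles $R_1, R_2$ of $\widetilde{\fW^\circ}(\sV, \Mrk)$ containing $R$. By the very definition of $\mathsf{c}$ on cusps (recalled in the excerpt), the edges $\mathsf{c}(R_1)$ and $\mathsf{c}(R_2)$ of $\widetilde{\fV}(\sV, \Mrk)$ share the ideal vertex $\mathsf{c}(c)$. Now $g$, being a loom automorphism, carries this configuration to the edge rectangles $g(R_1), g(R_2)$ which contain the cusp rectangle $g(R)$, a representative of $g(c)$. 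By functoriality of $\fV$, the induced taut automorphism $\fV(g)$ sends $\mathsf{c}(R_i)$ to $\mathsf{c}(g(R_i))$, so it sends the common endpoint $\mathsf{c}(c)$ of these two edges to the common endpoint of $\mathsf{c}(g(R_1))$ and $\mathsf{c}(g(R_2))$. The latter is precisely $\mathsf{c}(g(c))$ by the definition of $\mathsf{c}$ on the cusp class $[g(R)] = g(c)$.

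The second step is to conclude. By the equivalence of categories established in \cite{SchleimerSegerman19}, the assignment $c \mapsto \mathsf{c}(c)$ is a bijection between the cusps of $\widetilde{\fW^\circ}(\sV, \Mrk)$ and the ideal vertices of $\widetilde{\fV}(\sV, \Mrk)$, its inverse being $\fR$ applied to ideal vertices. In particular $\mathsf{c}$ is injective on cusps, and combining injectivity with the equivariance identity yields
\[
g(c)=c \iff \mathsf{c}(g(c))=\mathsf{c}(c) \iff \fV(g)(\mathsf{c}(c))=\mathsf{c}(c),
\]
which is exactly the desired correspondence between stabilizers.

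The main technical point, and the only place where care is needed, is verifying that the naturality identity is independent of the choices of representative $R \in c$ and the choices of edge rectangles $R_1, R_2$ containing $R$. This well-definedness is already implicit in the explicit natural transformation between $\fL \circ \fV$ and the identity functor from \cite{SchleimerSegerman19}; no new combinatorial argument is required here, since any two admissible pairs $(R_1, R_2)$ give rise to the same ideal vertex by the construction recalled in the excerpt, and the $D$-action permutes such pairs consistently.
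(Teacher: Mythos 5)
Your proof is correct and follows the same line as the paper's: it reduces the claim to the set-equality $\fV(\Stab{D}{c})=\Stab{\fV(D)}{\mathsf{c}(c)}$ and derives this from functoriality of $\fV$ together with the categorical equivalence of loom spaces and veering triangulations. You simply unpack the paper's appeal to ``the construction of the functors'' into an explicit naturality identity $\fV(g)(\mathsf{c}(c))=\mathsf{c}(g(c))$ and injectivity of $\mathsf{c}$ on cusps, which is a welcome expansion but not a different argument.
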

\begin{proof}
Since $\fV$ is already known to be an isomorphism, it suffices to show that $\fV(\Stab{D}{c})=\Stab{\fV(D)}{\mathsf{c}(c)}$ as sets. But this claim clearly follows from the construction of the functors $\fL$ and $\fV$ and the fact that they are equivalences of categories. 
\end{proof}

\subsection{Actions on Veering Triangulations}
This subsection is devoted to proving our main theorem. 

\begin{lem}\label{Lem:pdaction}
The group $\Aut(\sV,\Mrk)$  acts on $|\fV(\sV,\Mrk)|$ properly discontinuously and freely. Moreover, this action respects the transverse taut structure $\fV(\sV,\Mrk)$ and the veering coloring.\end{lem}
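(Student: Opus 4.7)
The plan is to construct the $G := \Aut(\sV,\Mrk)$ action through the universal cover and the veering functor, then prove freeness (the core step), and finally derive proper discontinuity. Every $g \in G$ acts on $\fW^\circ(\sV,\Mrk)$ as an orientation-preserving automorphism that preserves the two transverse foliations, hence lifts to a loom automorphism $\tilde{g}$ of $\widetilde{\fW^\circ}(\sV,\Mrk)$, well-defined modulo the deck group $D$. By functoriality of $\fV$, each $\tilde{g}$ induces a transverse taut automorphism of $\widetilde{\fV}(\sV,\Mrk)$ that preserves the veering coloring, and the indeterminacy modulo $D$ disappears upon descending to $|\fV(\sV,\Mrk)| = |\widetilde{\fV}(\sV,\Mrk)|/\fV(D)$. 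Preservation of the transverse taut structure and veering coloring on the quotient is then immediate, handling the second assertion of the lemma.

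For freeness, suppose $g \in G$ fixes a point $y \in |\fV(\sV,\Mrk)|$. Lifting $y$ to $\tilde{y} \in |\widetilde{\fV}(\sV,\Mrk)|$ and composing a lift of $g$ with a suitable element of $\fV(D)$, we obtain a lift $\tilde{g}$ with $\tilde{g}(\tilde{y}) = \tilde{y}$. Since only finitely many tetrahedra contain $\tilde{y}$ in their closure and $\tilde{g}$ permutes them, some power $\tilde{g}^N$ stabilizes a tetrahedron; via \refprop{fullcorrespondence} this $\tilde{g}^N$ stabilizes the corresponding tetrahedron rectangle of $\widetilde{\fW^\circ}(\sV,\Mrk)$, and projecting via $p \colon \widetilde{\fW^\circ}(\sV,\Mrk) \to \fW^\circ(\sV,\Mrk)$ and invoking \refprop{tetrastabilizer} yields $g^N = 1$. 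If $g \neq 1$, then $g$ is a nontrivial elliptic element of some order $m$ by \refthm{classOfLaminarAut}. By the Brouwer--Ker\'ekj\'art\'o theorem applied to $g$ as an orientation-preserving finite-order homeomorphism of the open disk $\fW(\sV)$, $g$ has a unique fixed point, namely the class $w(\sG_1,\sG_2)$ corresponding to its invariant interleaving pair. This class is a singular class when $m \geq 3$ and either a singular or a marked class when $m = 2$ (precisely where the definition $\Mrk = \Mrk(G)$ from \refcor{canonicalmarking} is used), so in all cases it is removed in $\fW^\circ(\sV,\Mrk)$. Hence $g$ acts freely on $\fW^\circ(\sV,\Mrk)$, and every lift $\tilde{g}$ acts freely on $\widetilde{\fW^\circ}(\sV,\Mrk) \cong \RR^2$. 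Since $\tilde{g}^m \in D$, we split into cases: if $\tilde{g}^m = 1$, then $\tilde{g}$ is a nontrivial finite-order orientation-preserving self-homeomorphism of $\RR^2$ and must have a fixed point by Brouwer--Ker\'ekj\'art\'o, contradicting freeness; if $\tilde{g}^m \neq 1$, then $\fV(\tilde{g}^m) \in \fV(D)$ is a nontrivial deck transformation of $|\widetilde{\fV}(\sV,\Mrk)|$ that fixes $\tilde{y}$, contradicting the free action of $\fV(D)$ from \reflem{V(V)noncompact}(1). In either case we reach a contradiction, so $g = 1$.

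Finally, proper discontinuity follows easily. A compact $K \subset |\fV(\sV,\Mrk)|$ meets only finitely many tetrahedra $t_1,\ldots,t_n$ by local finiteness of the triangulation at non-ideal points. If $g(K) \cap K \ne \emptyset$, then some $g(t_i)$ shares a point of the underlying $3$-manifold with some $t_j$, which in a veering triangulation forces $g(t_i)$ to either equal $t_j$ or share a face or edge with it; each edge has only finitely many incident tetrahedra by the taut angle structure, so there are finitely many candidates for $g(t_i)$. Combined with injectivity of $g \mapsto g(t_i)$---coming from \refprop{tetrastabilizer} via the correspondence \refprop{fullcorrespondence}---this forces the set $\{g \in G : g(K) \cap K \ne \emptyset\}$ to be finite. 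The main obstacle is the finite-order case in the freeness argument, where the canonical choice $\Mrk = \Mrk(G)$ is essential: only through this choice are all potential elliptic fixed points removed from $\fW^\circ(\sV,\Mrk)$, allowing one to combine freeness on $\widetilde{\fW^\circ}(\sV,\Mrk)$ with freeness of $\fV(D)$ on $|\widetilde{\fV}(\sV,\Mrk)|$ to rule out nontrivial stabilizers in $G$.
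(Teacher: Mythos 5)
Your construction of the action agrees with the paper's (lift $g$ to a loom isomorphism of $\widetilde{\fW^\circ}(\sV,\Mrk)$, apply $\fV$, descend through $\fV(D)$), and your reduction of freeness and proper discontinuity to \refprop{tetrastabilizer} and \refprop{fullcorrespondence} is exactly what the paper's one-line citation intends. Where you diverge is in the freeness argument, which is correct but substantially more roundabout than it needs to be. Since $\tilde y$ lies in the relative interior of a \emph{unique} cell $\tilde\sigma$ of $\widetilde{\fV}(\sV,\Mrk)$ (a tetrahedron, face, or edge; there are no non-ideal vertices), the simplicial map $\fV(\tilde g)$ fixing $\tilde y$ already stabilizes $\tilde\sigma$ \emph{without passing to any power}. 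By \refprop{fullcorrespondence} the loom lift $\tilde g$ then stabilizes $\fR(\tilde\sigma)$, its projection $g$ stabilizes $p(\fR(\tilde\sigma))$, and \refprop{tetrastabilizer} --- which is stated for all rectangles, not only tetrahedron rectangles --- gives $g=1$ at once. This makes the permutation argument, both applications of Brouwer--Ker\'ekj\'art\'o, and the case split on whether $\tilde g^m$ is trivial or a nontrivial deck transformation all unnecessary. Your detour does correctly observe that an elliptic $g\in G$ has its unique fixed point on $\fW(\sV)$ removed in $\fW^\circ(\sV,\Mrk)$ precisely because $\Mrk=\Mrk(G)$, and it is good that you flag this as the crucial use of the canonical marking; but this fact is exactly what the proof of \refprop{tetrastabilizer} has already internalized, so you re-derive it rather than reuse it.

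Two smaller inaccuracies. Attributing finiteness of edge degree to ``the taut angle structure'' is not quite right: the angle condition forces exactly two $\pi$-corners per edge but in principle permits arbitrarily many zero-angled ones; finiteness of the edge degree comes instead from the Schleimer--Segerman realization $|\widetilde{\fV}(\sV,\Mrk)|$ being a genuine locally finite ideal triangulation of $\RR^3$. Also, both you and the paper invoke \refprop{tetrastabilizer} for elements of $\Aut(\sV,\Mrk)$, whereas that proposition is stated and proved for elements of the fixed subgroup $G$ with $\Mrk=\Mrk(G)$; this is harmless for the downstream application (which only needs the $G$-action), but it means the lemma as literally stated is a hair stronger than what the cited propositions directly deliver.
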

\begin{proof}
Let $g$ be an automorphism in $\Aut(\fW^\circ (\sV,\Mrk))=\Aut(\sV,\Mrk)$. Any such homeomorphism can be lifted to a loom isomorphism $\widetilde{g}$ on $\widetilde{\fW^\circ}(\sV,\Mrk)$. Note that $\widetilde{g}$ commutes with the group $D$ of deck transformations of the universal covering $\widetilde{\fW^\circ}(\sV,\Mrk)\to \fW^\circ(\sV,\Mrk)$.   The homeomorphism $\fV(\widetilde{g})$ on $\RR^3$ associated to $\widetilde{g}$  preserves the veering triangulation $\fV(\widetilde{\fW^\circ}(\sV,\Mrk))$. Since $\widetilde{g}$ commutes with $D$, $\fV(\widetilde{g})$ commutes with $\fV(D)$. Therefore, $\fV(\widetilde{g})$ gives rise to a homeomorphism on $|\fV(\widetilde{\fW^\circ}(\sV,\Mrk))|/\fV(D)$ and this homeomorphism is independent of the choice of the lift $\widetilde{g}$. Hence we have the well-defined action of $\Aut(\sV,\Mrk)$ on $|\fV(\sV,\Mrk)|$ and this action preserves the induced taut ideal triangulation $\fV(\sV,\Mrk)=\widetilde{\fV}(\sV,\Mrk)/\fV(D)$.

The fact that the action defined above is properly discontinuous and free follows from \refprop{tetrastabilizer} and \refprop{fullcorrespondence}.  
\end{proof}

Given an ideal vertex $v$ of $\fV(\sV,\Mrk)$, we define its \emph{link} $L(v)$ as follows. Choose a model tetrahedron $t$ that has $v$ as its vertex. Consider $L_t(v)$ the convex hull in the tetrahedron $t$ of three points chosen from each of three edges adjacent to $v$. We assume that the points are chosen close enough to $v$ so that $L_t(v)$ is completely situated in upper and lower cusp neighborhoods of $v$. We do this in the $\Stab{G}{v}$ equivariant manner. Then we define 
\[
L(v)=\bigcup_{t} L_t(v)
\]
where the union is taken over all tetrahedra $t$ that contain $v$ as their ideal vertex. 

We say that $v$ is a \emph{singular} vertex if $L(v)$ is homeomorphic to the infinite cylinder. If $L(v)$ is homeomorphic to the plane, $v$ is called a \emph{cusp} vertex. 

For a cusp vertex $v$ of $\widetilde{\fV}(\sV,\Mrk)$, we define its vertex link $L(v)$ similarly. We assume that $L(v)$ is chosen in the $\Stab{\fV(D)}{v}$-equivariant manner. 
\begin{lem}\label{Lem:stabilizerInD}
Let $v$ be an ideal vertex of $\fV(\sV,\Mrk)$. Let $q: \widetilde{\fV}(\sV,\Mrk) \to \fV(\sV,\Mrk)$ be the universal covering. Then, 
\begin{enumerate}
    \item $v$ is either a cusp or a singular vertex.
    \item $v$ is a cusp if and only if the stabilizer of each component of $q^{-1}(L(v))$ in $\fV(D)$ is trivial. 
    \item $v$ is a  singular point if and only if the stabilizer of each component of  $q^{-1}(L(v))$ in $\fV(D)$ is infinite cyclic.  
\end{enumerate}
\end{lem}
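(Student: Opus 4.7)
The plan is to descend from the $3$-dimensional veering triangulation to the $2$-dimensional loom space via \reflem{subgroupnatural}. Fix a lift $\widetilde{v}$ of $v$ in $\widetilde{\fV}(\sV,\Mrk)$ and let $c$ be the cusp of $\widetilde{\fW^\circ}(\sV,\Mrk)$ corresponding to $\widetilde{v}$ under the Schleimer--Segerman correspondence. By the construction of the veering functor, every ideal vertex of $\widetilde{\fV}(\sV,\Mrk)$ has a plane vertex link, so $L(\widetilde{v})\cong \RR^2$. The components of $q^{-1}(L(v))$ are precisely the links $L(g\cdot \widetilde{v})$ as $g$ ranges over coset representatives of $\Stab{\fV(D)}{\widetilde{v}}$ in $\fV(D)$, each homeomorphic to $\RR^2$, and the stabilizer of such a component in $\fV(D)$ is conjugate to $\Stab{\fV(D)}{\widetilde{v}}$, which by \reflem{subgroupnatural} is isomorphic to $\Stab{D}{c}$.

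The heart of the matter is to show $\Stab{D}{c}$ is either trivial or infinite cyclic, which immediately gives (1). Under the universal covering $p:\widetilde{\fW^\circ}(\sV,\Mrk)\to \fW^\circ(\sV,\Mrk)$, the cusp $c$ projects to a cusp $p(c)$ of $\fW^\circ(\sV,\Mrk)$, which arises from one of the removed points of $\closure{\fW}(\sV)$, namely a cusp class, a singular class, or a marked class. If $p(c)$ arises from a cusp class, then since such classes lie on the boundary of the closed disk $\cD(C(\sV))$ (see \refrmk{boundary}) and $\fW(\sV)\cong \RR^2$ is already simply connected (\reflem{weavingIsDisk}), a small loop around $p(c)$ in $\fW^\circ(\sV,\Mrk)$ contracts inside $\fW(\sV)$ and so represents the trivial element of $D=\pi_1(\fW^\circ(\sV,\Mrk))$; hence $\Stab{D}{c}$ is trivial. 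If instead $p(c)$ arises from a singular or marked class, then it is an interior puncture of $\fW(\sV)$, a small loop around $p(c)$ yields a free generator of $\pi_1(\fW^\circ(\sV,\Mrk))$, and by classical covering space theory $\Stab{D}{c}$ is the infinite cyclic subgroup generated by a conjugate of this loop.

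For (2) and (3), I realize the link as the orbit space $L(v)=L(\widetilde{v})/\Stab{\fV(D)}{\widetilde{v}}$. The action is free and properly discontinuous by \reflem{V(V)noncompact}(1), and orientation-preserving since $\fV(D)$ preserves the transverse taut structure and hence the ambient orientation of $|\widetilde{\fV}(\sV,\Mrk)|=\RR^3$ by \reflem{V(V)noncompact}(2). If $\Stab{\fV(D)}{\widetilde{v}}$ is trivial, then $L(v)\cong \RR^2$, so $v$ is a cusp vertex; if it is isomorphic to $\ZZ$, the quotient of $\RR^2$ by a free, properly discontinuous, orientation-preserving $\ZZ$-action is forced to be the infinite open cylinder, so $v$ is a singular vertex. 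This completes (2) and (3).

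The main obstacle, I expect, will be the careful identification between a loom cusp of $\widetilde{\fW^\circ}(\sV,\Mrk)$ and the topological notion of an end of the underlying surface, together with the matching between the stabilizer of a loom cusp in $D$ and the cyclic subgroup of $\pi_1(\fW^\circ(\sV,\Mrk))$ cut out by a small loop around the corresponding puncture. One must cleanly distinguish loom cusps arising from cusp classes, which are boundary points of $\closure{\fW}(\sV)$ and contribute nothing to $\pi_1$, from those arising from singular or marked classes, which are interior punctures of $\fW(\sV)$ each contributing a free generator, since this dichotomy is precisely what produces the split behavior in (2) and (3).
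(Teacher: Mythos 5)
Your proposal is correct in outline but takes a genuinely different route from the paper. The paper works entirely on the $3$-dimensional side: it observes that each component of $q^{-1}(L(v))$ is a vertex link in $\widetilde{\fV}(\sV,\Mrk)$, hence a plane; it then uses the fact that the $\fV(D)$-action preserves the ladderpole curves and the transverse taut structure to constrain the quotient $L(v)$ to be a plane, an infinite cylinder, or a torus; and it rules out the torus because $D\cong \pi_1(\fW^\circ(\sV,\Mrk))$ is free and so contains no $\ZZ\times\ZZ$. You instead compute $\Stab{D}{c}$ first, via covering space theory on the punctured disk $\fW^\circ(\sV,\Mrk)$, and only then read off the topology of $L(v)$ from the (already known) isomorphism type of the stabilizer; your observation that a free, properly discontinuous, orientation-preserving $\ZZ$-action on $\RR^2$ forces an open annulus quotient neatly replaces the paper's appeal to the ladder decomposition. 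What your approach buys is a more explicit identification of the stabilizers as the peripheral cyclic subgroups around the cone classes, which the paper only extracts afterwards (in the discussion preceding \reflem{ladderdescend} and in \reflem{pi1generators}); what the paper's approach buys is that it never has to translate between loom cusps and topological ends of the surface.

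One soft spot, which you partly flag yourself: for a cusp class $w$ the phrase ``a small loop around $p(c)$'' is not literally meaningful, since $w$ is a boundary point of $\cD(C(\sV))$ rather than an interior puncture of $\fW(\sV)$, so there is no puncture to encircle. The statement you actually need is that the image of $\pi_1$ of the union of the cusp rectangles in the class $p(c)$ inside $\pi_1(\fW^\circ(\sV,\Mrk))$ is trivial; this holds because at a pivot the sectors are indexed by $\ZZ$ rather than cyclically, so the chain of cusp rectangles is a bi-infinite line whose nerve is simply connected, whereas at a cone class of order $n$ the chain closes up after $2n$ steps and encircles the puncture, producing the infinite cyclic stabilizer. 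Making that dichotomy precise is exactly the ``careful identification'' you anticipate, and with it your argument goes through.
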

\begin{proof}
In the universal cover $\widetilde{\fV}(\sV,\Mrk)=\fV(\widetilde{\fW^\circ}(\sV,\Mrk))$,
each connected component of $q^{-1}(L(v))$ gives rise to a vertex link $L(c)$ of some ideal vertex in $\widetilde{\fV}(\sV,\Mrk)$. It follows that  $q^{-1}(L(v))$ consists of copies of planes. Since the ladderpole curves and  the transverse taut structure are preserved under the action of $\fV(D)$, $L(v)$ is either an infinite cylinder, a plane or a torus. The torus case can be ruled out because $D$ has no free abelian subgroups of rank 2. \end{proof}

By \cite{FuterGueritaud13}, we know that the ladderpole slopes decompose $L(c)$ into disjoint strips so-called \emph{ladders}. This ladder decomposition induces the \emph{equivalence relation} on the set of tetrahedra that have $c$ as their ideal vertex as follows: $t_1$ and $t_2$ are equivalent if and only if $L_{t_1}(c)$ and $L_{t_2}(c)$ are contained in the same ladder. The set of ladders is linearly ordered from right to left seen from the cusp. 

On the other hand, each cusp $c$ (see \refsec{loom} for the definition of cusps) of a loom space induces a partition on the set of tetrahedron rectangles containing the cusp $c$. To illustrate this, let us first define a \emph{west division} rectangle as a rectangle $R$ with an orientation preserving homeomorphism $f_R:(0,1)^2\to R$ such that there is a homeomorphism extension $\overline{f_R}:[0,1]^2 \setminus \{ 0 \times a\} \to \overline{R}$ for some $0< a < 1$. 

We define a \emph{east}, \emph{north} and \emph{south division} rectangles likewise. We say that tetrahedron rectangles $R_1$ and $R_2$  are equivalent if $R_1\cap R_2$ contains a division rectangle. A \emph{division} at $c$ is an equivalence class of this equivalence relation. The set of divisions is linearly ordered by the adjacency relation. 

\begin{rmk}
In \cite{SchleimerSegerman21}, a sector at a cusp $x$ in a loom space is defined as a connected component of the complement of all leaves passing through $x$. In fact, a division corresponds to two consecutive sectors in the sense of \cite[Definition 6.9]{SchleimerSegerman21}. Hence, the following proposition holds for sectors. 
\end{rmk}

\begin{prop}\label{Prop:divisioncorresp}
Let $c$ be a cusp vertex of a veering triangulation $\widetilde{\fV}(\sV,\Mrk)$. There is an order preserving one-to-one correspondence between the set of ladders in the ladder decomposition of $L(c)$ and the set of divisions at the corresponding cusp $\fR(c)$ in the loom space.
\end{prop}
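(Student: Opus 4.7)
\smallskip

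\noindent\textbf{Proof plan.} My plan is to reduce the proposition to the established equivalence of categories between loom spaces and veering triangulations, and then to match the combinatorial structures on both sides cusp by cusp. The first step is to unpack the two sides. On the triangulation side, the tetrahedra of $\widetilde{\fV}(\sV,\Mrk)$ incident to $c$ are indexed by $\fR$: there is a bijection $t\leftrightarrow \fR(t)$ between tetrahedra containing $c$ as an ideal vertex and tetrahedron rectangles containing the cusp $\fR(c)$, using the natural transformation between $\fL\circ\fV$ and the identity. Each such tetrahedron contributes the triangle $L_t(c)$ to the cusp link $L(c)$, and two tetrahedra $t_1,t_2$ lie in a common ladder precisely when $L_{t_1}(c),L_{t_2}(c)$ lie in a common strip of the Futer--Guéritaud decomposition.

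The heart of the argument is to identify the bounding data of ladders with the bounding data of divisions. On the loom side, the boundary of a division is governed by those leaves of $\widetilde{\cF_1^\circ}(\sV,\Mrk)\cup\widetilde{\cF_2^\circ}(\sV,\Mrk)$ that limit onto $\fR(c)$, which I will call \emph{through-cusp leaves}. These through-cusp leaves arise as the sides of the edge rectangles having $\fR(c)$ as a corner; by \refprop{fullcorrespondence} (and its lift to the universal cover), such edge rectangles correspond bijectively to edges of $\widetilde{\fV}(\sV,\Mrk)$ emanating from $c$. The plan is to show that the through-cusp leaves of a single foliation, say $\widetilde{\cF_1^\circ}(\sV,\Mrk)$, correspond exactly to the ladderpole edges at $c$ in the sense of Futer--Guéritaud: this is a local veering-coloring computation, since ladderpole edges are characterized by the color pattern of incident faces, which in turn translates into the foliation direction of the sides of the incident face rectangles at $\fR(c)$. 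I would carry this out by tracing, under $\mathsf{c}$ and $\fR$, how the co-oriented coloured face rectangles at $\fR(c)$ assemble into the fan of triangles around a vertex of $L(c)$.

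Once this identification of ladderpole edges with through-cusp leaves of one foliation is in place, I match ladders with divisions directly. A ladder is the region of $L(c)$ bounded by two consecutive ladderpole edges; under the correspondence this translates, via the functor $\fR$, into the family of tetrahedron rectangles whose union at $\fR(c)$ sits between two consecutive through-cusp leaves of $\widetilde{\cF_1^\circ}(\sV,\Mrk)$. Consecutive tetrahedra inside a ladder meet along a rung (a face of the triangulation); by \refprop{fullcorrespondence} the corresponding face rectangle contains $\fR(c)$ not at a corner but on a side, and this is precisely a division rectangle in the sense of the proposition. Hence $\fR(t_1)\cap\fR(t_2)$ contains a division rectangle iff $t_1,t_2$ share a rung iff they lie in the same ladder. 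Finally, the linear orders on both sides descend from the cyclic order of the through-cusp leaves around $\fR(c)$ (equivalently, the cyclic order of ladderpole edges around $c$ in $L(c)$), and the bijection preserves this order tautologically.

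The step I expect to be the main obstacle is the precise matching of ladderpole edges with through-cusp leaves of a single foliation: the Futer--Guéritaud ladderpole characterization is in terms of the veering edge-coloring, while the loom-space characterization is in terms of foliation type, and one must check that under $\fV$ the red/blue coloring of edges at $c$ is synchronized with the foliation containing the corresponding through-cusp leaf. This synchronization ultimately comes from the convention of $\fV$ that determines how colors on edges in the model tetrahedron are assigned from the north/south/east/west orientation of a tetrahedron rectangle, so the bookkeeping reduces to carefully unwinding the construction in \cite{SchleimerSegerman19,SchleimerSegerman21} at the level of a single cusp.
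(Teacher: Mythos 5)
Your proposal gets the easy half of the correspondence right and that half does match the paper: if $t_1$ and $t_2$ share a rung, then $\fR(t_1)\cap\fR(t_2)$ is a face rectangle containing the cusp in the interior of a side, hence contains a division rectangle, so ``same ladder $\Rightarrow$ same division.'' The genuine gap is the converse, which you compress into the unjustified chain ``$\fR(t_1)\cap\fR(t_2)$ contains a division rectangle iff $t_1,t_2$ share a rung iff they lie in the same ladder.'' Neither ``iff'' holds as stated: two tetrahedron rectangles whose intersection contains a division rectangle need not correspond to tetrahedra sharing a face, and membership in a common ladder (or division) is only the transitive closure of the respective elementary relations. So the substantive claim --- that two tetrahedron rectangles sharing a division rectangle have their tetrahedra in the same ladder --- is exactly what must be proved, and your outline never produces the required chain of face-sharing tetrahedra. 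The paper's proof is devoted almost entirely to this step: it splits a division rectangle into two cusp rectangles, forms the two staircases $U$ and $V$ they generate, projects the exterior cusps to the common axis, and runs an induction on the (finite, by the Astroid lemma of Schleimer--Segerman) number of projected cusps between $\pi_m(R_1)$ and $\pi_m(R_2)$, at each stage exhibiting an intermediate tetrahedron rectangle $Q$ with $\mathsf{c}(Q)$ sharing a rung with its neighbors. Nothing in your proposal substitutes for this.

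The structural claim you lean on instead --- that divisions (equivalently ladders) are the regions between consecutive through-cusp leaves of a \emph{single} foliation --- is also doubtful. A division rectangle contains the cusp in the interior of exactly one side, so it straddles exactly one prong at the cusp, and one checks that any tetrahedron rectangle containing it straddles that same prong; hence each division is attached to a single prong. But the prongs at a cusp alternate between $\widetilde{\cF_1^\circ}(\sV,\Mrk)$ and $\widetilde{\cF_2^\circ}(\sV,\Mrk)$, and a tetrahedron rectangle straddles $\cF_1$-prongs at its north and south cusps and $\cF_2$-prongs at its east and west cusps, so at a fixed cusp the straddled prongs come from both foliations. Delimiting ladders by the prongs of only one foliation would therefore merge adjacent divisions pairwise (consistent with the left/right alternation of ladders in Futer--Gu\'eritaud) and your correspondence would be two-to-one rather than a bijection. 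The color-synchronization issue you flag as the main obstacle is not where the difficulty lies; the difficulty is the combinatorial connectivity argument above.
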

\begin{proof}
For any tetrahedron rectangle $R$ that contains $\fR(c)$ as a cusp, let $\mathfrak{l}(R)$ be the ladder that contains $L_{\mathsf{c}(R)}(c)$.  We show that if tetrahedron rectangles $R_1$ and $R_2$ are in the same division $S$ then $\mathfrak{l}(R_1)=\mathfrak{l}(R_2)$. We prove this only for a west division. Remaining cases are similar. 

Choose a west division rectangle $R_0$ for the division  $S$.  We can split $R_0$ into two cusp rectangles, one with south-west ideal corner and the other with north-west ideal corner. These cusp rectangles generate staircases $U$ and $V$ respectively with the common lower axis ray $m$. Denote by $\Delta$ the union of all exterior cusps of these two staircases. Then there is a (injective) projection $\pi_m$ from $\Delta$ into $m$. For a tetrahedron rectangle $R$ in $S$,  the image of the east cusp of $R$ under the map $\pi_m$ is also denoted by  $\pi_m(R)\in m$. Note that a tetrahedron rectangle $R$ is in the division $S$ if and only if we have  $R \subset U\cup V$, $R\cap U\ne \emptyset$, and $R\cap V \ne\emptyset$.

Let $R_1$ and $R_2$ be tetrahedron rectangles in the same division $S$. Without loss of generality, we assume $R_1<R_2$, i.e., $R_1$ west-east spans $R_2$. We use the induction on the cardinality $n$ of image of $\pi_m(\Delta)$ between $\pi_m(R_1)$ and $\pi_m(R_2)$, which is finite by the Astroid lemma \cite[Lemma 4.10]{SchleimerSegerman21}. As the base case, assume that $n=1$. Let $x\in \pi_m(\Delta)$ be between $\pi_m(R_1)$ and $\pi_m(R_2)$. The preimage of $x$, say $z$, is either north or south cusp of $R_2$. In either cases, we can find a tetrahedron rectangle $Q$ in the same division that has $z$ as its east cusp such that $L_{\mathsf{c}(Q)}(c)$ shares rungs with $L_{\mathsf{c}(R_1)}(c)$ and with $L_{\mathsf{c}(R_2)}(c)$ since $\mathsf{c}(Q)$ shares a face with each $\mathsf{c}(R_i)$ which is corresponded to a rung. See Figure~\ref{config1}. Therefore, $R_1$ and $R_2$ are in the same ladder. Suppose now that the assertion holds for $n<k$ and assume $n=k$. We choose the closest point $x$ to $\pi_m(R_2)$ among the points in $\pi_m(\Delta)$ between $\pi_m(R_1)$ and $\pi_m(R_2)$. Let $z$ be the preimage of $x$. Then, $z$ must be either north or south cusp of $R_2$. We then find a tetrahedron rectangle $Q$ as in the $n=1$ case. We now use the induction hypothesis to conclude that $\mathsf{c}(Q)$ and $\mathsf{c}(R_2)$ are in the same ladder. 
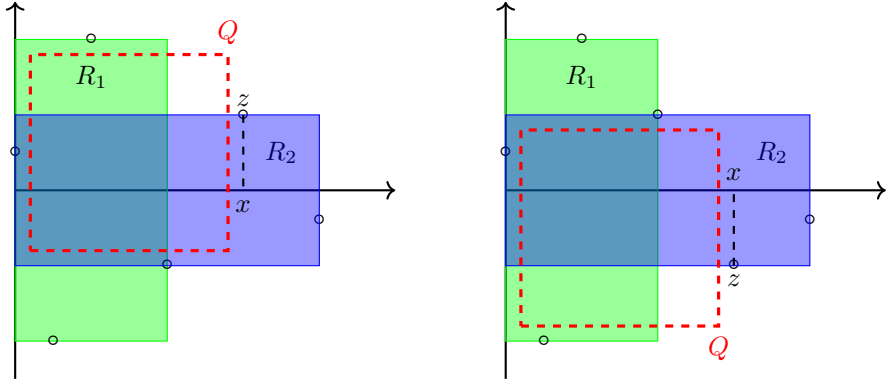
\begin{figure}[hbt]
    \centering
    \begin{tikzpicture}
    \draw[->, thick] (0,-2.5) -- (0,2.5);
    \draw[->, thick] (0,0) -- (5,0);
    \filldraw[fill=green, draw=green, fill opacity = 0.4] (0,-2) rectangle (2,2);
    \draw (1,1.5) node{$R_1$} (3.5,0.5) node{$R_2$};
    \draw (0,0.5)  node{$\circ$} (1,2) node{$\circ$}  (3,1) node{$\circ$} node[above]{$z$} (4,-0.4) node{$\circ$} (2,-1) node{$\circ$} (0.5,-2)node{$\circ$};
    \filldraw[fill=blue, draw=blue, fill opacity = 0.4] (0,-1) rectangle (4,1);
    \draw[dashed, thick] (3,1) -- (3,0) node[below]{$x$}; 
    \draw[very thick, red, dashed] (.2,-0.8) rectangle (2.8, 1.8) node[above]{$Q$};
    \end{tikzpicture}  \hspace{1cm} 
    \begin{tikzpicture}
    \draw[->, thick] (0,-2.5) -- (0,2.5);
    \draw[->, thick] (0,0) -- (5,0);
    \filldraw[fill=green, draw=green, fill opacity = 0.4] (0,-2) rectangle (2,2);
    \draw (1,1.5) node{$R_1$} (3.5,0.5) node{$R_2$};
    \draw (0,0.5)  node{$\circ$} (1,2) node{$\circ$}  (3,-1) node{$\circ$} node[below]{$z$} (4,-0.4) node{$\circ$} (2,1) node{$\circ$} (0.5,-2)node{$\circ$};
    \filldraw[fill=blue, draw=blue, fill opacity = 0.4] (0,-1) rectangle (4,1);
    \draw[dashed, thick] (3,-1) -- (3,0) node[above]{$x$}; 
    \draw[very thick, red, dashed] (.2,0.8) rectangle (2.8, -1.8) node[below]{$Q$};
    \end{tikzpicture}
    \caption{Two possible configurations for the proof of the proposition}
    \label{config1}
\end{figure}

For the inverse, let $t$ be a tetrahedron with a vertex $c$. Define $\mathfrak{s}(t)$ to be the division that contains $\fR(t)$. Again we prove that if $t_1$ and $t_2$ are in the same ladder, then $\mathfrak{s}(t_1)=\mathfrak{s}(t_2)$. Without loss of generality, we may assume that $t_1<t_2$. Then there is an increasing sequence of tetrahedra $t_1=u_1<\cdots<u_n = t_2$ such that $L_{u_i}(c)$ and $L_{u_{i+1}}(c)$ share a rung. We show that $\fR(u_i)$ and $\fR(u_{i+1})$ are in the same division. For this let $Q$ be the face rectangle  $\fR(u_i)\cap \fR(u_{i+1})$. Because $u_{i}$ and $u_{i+1}$ share a cusp $c$, $Q$ also contains the corresponding cusp $\fR(c)$ in the loom space. Since  $u_{i}$ and $u_{i+1}$ share a rung, the cusp $\fR(c)$ cannot be a corner of $Q$. Say $\fR(c)$ is a west cusp of $Q$. Then, it follows that $\fR(u_i)\cap \fR(u_{i+1})$ contains a west division rectangle, completing the well-definedness of $\mathfrak{s}$. 

From the construction, it is clear that $\mathfrak{s}$ and $\mathfrak{l}$ are inverse to each other and preserve orders. \end{proof}

Let $s$ be a cone class in $\closure{\fW}(\sV)\setminus \fW^\circ(\sV,\Mrk)$ of order $n\ge 2$. Let $c$ be a cusp in $\widetilde{\fW^\circ}(\sV,\Mrk)$ such that $\cusp(R)=s$ for all $R\in \mathfrak{p}(c)$ (see \refprop{fullcorrespondence} for the definition of $\mathfrak{p}$ and also see \refrmk{cuspCorner} for  the notation $\cusp(R)$). By \reflem{stabilizerInD}, the group $\Stab{D}{c}$ is an infinite cyclic group with a generator $\gamma$. The action of $\gamma$ on the linearly ordered set of divisions $\{\cdots, S_{-1},S_0,S_1,\cdots\}$ can be understood as follow: Assume that $\gamma$ preserves a division $S_k$ for some $k\in \ZZ.$ Then, there is a tetrahedron rectangle $R$ in $S_k$ such that $\gamma(R)$ is also in $S_k.$ By \reflem{correspondence}, this is a contradiction as $R$ and $\gamma(R)$ share  a division rectangle. Therefore, there is no division preserved by $\gamma.$ Thus, $\gamma$ acts on the set $\{\cdots, S_{-1},S_0,S_1,\cdots\}$ of divisions at $c$ as $S_i \mapsto S_{i+2n}$. This shows the following lemma. 

\begin{lem}\label{Lem:ladderdescend}
Let $c=q^{-1}(s)$ be a lift of a singular vertex $s$ of $\fV(\sV,\Mrk)$ where $q:\widetilde{\fV}(\sV,\Mrk) \to \fV(\sV,\Mrk)$ is the universal covering. Let $g\in \Stab{\fV(D)}{c}$ be a nontrivial element. For each ladder $L_i$, we have $g(L_i)\cap L_i=\emptyset$. In particular, the ladderpole lines in $L(c)$ descend to parallel lines in $L(s)$.
\end{lem}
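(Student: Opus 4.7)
The plan is to combine the description of the deck-group action on divisions established in the paragraph immediately preceding the lemma with the order-preserving bijection between divisions and ladders provided by \refprop{divisioncorresp}.

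First, I would translate the setup of the lemma into the loom-space side. Let $c' = \fR(c)$ be the cusp in $\widetilde{\fW^\circ}(\sV,\Mrk)$ corresponding to $c$; then $c'$ projects to a cone class $s' \in \closure{\fW}(\sV)\setminus \fW^\circ(\sV,\Mrk)$ of finite order $n \ge 2$ (this is exactly the scenario set up just before the lemma). By \reflem{subgroupnatural} the functorial isomorphism $\fV \colon D \to \fV(D)$ restricts to an isomorphism $\Stab{D}{c'} \to \Stab{\fV(D)}{c}$, and by \reflem{stabilizerInD} each of these groups is infinite cyclic. Let $\gamma$ be a generator. The paragraph preceding the lemma establishes that $\gamma$ acts on the linearly ordered set of divisions $\{S_i\}_{i\in\ZZ}$ at $c'$ by $S_i \mapsto S_{i+2n}$.

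Next, I would observe that the correspondence $\mathfrak{l}\leftrightarrow\mathfrak{s}$ of \refprop{divisioncorresp} is equivariant with respect to the actions of $D$ on the loom-space side and $\fV(D)$ on the veering-triangulation side. This is essentially definitional: $\mathfrak{l}(R)$ is defined as the ladder containing $L_{\mathsf{c}(R)}(c)$, and $\mathsf{c}$, being part of the functor $\fV$, commutes with deck transformations. Consequently, the induced action of $\gamma$ on the linearly ordered set of ladders $\{L_i\}_{i\in\ZZ}$ in $L(c)$ is $L_i \mapsto L_{i+2n}$. For any nontrivial $g \in \Stab{\fV(D)}{c}$, we may write $g = \gamma^k$ with $k \ne 0$, so $g$ shifts ladders by $2kn \ne 0$; since distinct ladders are disjoint subsurfaces of $L(c)$, this immediately yields $g(L_i) \cap L_i = \emptyset$. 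For the ``in particular'' clause, note that the ladderpole lines bounding consecutive ladders are permuted freely by $\Stab{\fV(D)}{c} \cong \ZZ$, so under the covering $L(c) \to L(s) = L(c)/\Stab{\fV(D)}{c}$ (a cylinder) they descend to $2n$ pairwise disjoint, mutually parallel, properly embedded lines.

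The only real verification required is the equivariance of $\mathfrak{l}$ and $\mathfrak{s}$ under the deck actions. I expect this to be the main obstacle in the sense that it is the only place where one must track the functors $\fV$ and $\fL$ carefully, but it should amount to unwinding definitions rather than introducing new technical content, since both maps are built from data (tetrahedra containing a cusp, and tetrahedron rectangles with their cusps and division rectangles) that is preserved on the nose by the functorial identification $\fV \colon D \to \fV(D)$.
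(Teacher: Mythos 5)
Your proposal follows essentially the same route as the paper: the paper's proof of this lemma \emph{is} the paragraph that immediately precedes the statement (that $\Stab{D}{c}$ is infinite cyclic by \reflem{stabilizerInD}, that its generator fixes no division by \reflem{correspondence}, and hence shifts the linearly ordered set of divisions), together with an implicit appeal to the order-preserving bijection of \refprop{divisioncorresp} to transfer the shift to ladders. Your write-up reproduces exactly this argument; your only genuine addition is spelling out the deck-equivariance of $\mathfrak{l}$ and $\mathfrak{s}$, which the paper leaves tacit when it closes with ``This shows the following lemma.''
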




The following lemma is used to extend an action on an infinite cylinder to a solid cylinder. Let $\RR/\ZZ\times \RR/\ZZ$ and $\RR/\ZZ\times \RR$ be the standard torus and cylinder respectively, obtained as the quotients of $\RR\times \RR$ by the actions $(x,y)\mapsto (x+1,y)$, $(x,y)\mapsto (x,y+1)$.

\begin{lem}\label{Lem:extensionIntoCyl} Let $L$ be an infinite cylinder. Let $G=\ZZ+ \ZZ_n$. Suppose that $G$ acts freely and  properly discontinuously on $L$. Assume that the $G$-action fixes the ends of $L$.  Then there is a topological conjugacy  $\RR/\ZZ\times \RR \to L$ such that the $G$-action on $\RR/\ZZ \times \RR$ is standard: $(x,y)\mapsto (x+\frac{1}{n},y)$, $(x,y) \mapsto (x,y+1)$. Similar results hold when $G=\ZZ$ or $\ZZ_n$.
\end{lem}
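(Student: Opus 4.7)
The plan is to treat the full case $G = \ZZ \oplus \ZZ_n$; the cases $G = \ZZ$ and $G = \ZZ_n$ are structural subcases of the same argument. The proof combines three ingredients: the Brouwer-Kerékjártó theorem applied to the periodic factor, a compactness argument for the quotient, and the topological rigidity of the torus.

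First, I would apply a cylinder version of the Brouwer-Kerékjártó theorem to a generator $e$ of the $\ZZ_n$-factor. Since $e$ is orientation-preserving, of order $n$, acts freely, and fixes the two ends of $L$, there is a homeomorphism $\varphi_1 : \RR/\ZZ \times \RR \to L$ with $\varphi_1^{-1} e \varphi_1 = r_n$, where $r_n(x, y) = (x + 1/n, y)$. Identify $L$ with $\RR/\ZZ \times \RR$ and $e$ with $r_n$ via this conjugation.

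Second, I would pass to the quotient cylinder $L' = L/\langle e \rangle$. The commuting generator $g$ of the $\ZZ$-factor descends to a homeomorphism $\bar g$ on $L'$; freeness of the full $G$-action forces $\bar g$ to generate a free, properly discontinuous $\ZZ$-action on $L'$ that still fixes the ends. The key technical step is to show $L'/\langle \bar g \rangle$ is compact: any orientation-preserving homeomorphism of the cylinder fixing both ends is isotopic (rel ends) to an affine rotation-translation, and for such an action to be properly discontinuous and free the translation component in the $\RR$-direction must be nonzero, producing a compact annular fundamental domain. The quotient is then a closed orientable surface of Euler characteristic $0$, hence a torus, with fundamental group $\ZZ^2$ as forced by the extension $1 \to \pi_1(L') = \ZZ \to \pi_1(T^2) \to \ZZ \to 1$.

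Third, invoking the topological rigidity of tori I would choose coordinates $\RR^2/\ZZ^2$ on $L'/\langle \bar g \rangle$ so that $\pi_1(L') \subset \ZZ^2$ corresponds to the subgroup $\ZZ \times 0$. This makes the cover $L' \to L'/\langle \bar g \rangle$ the standard cover and sends $\bar g$ to the translation $(x, y) \mapsto (x, y+1)$. Lifting through the $n$-fold cover $L \to L'$ then identifies $L$ with $\RR/\ZZ \times \RR$, sends $e$ to $r_n$, and sends $g$ to some lift $(x, y) \mapsto (x + k/n, y + 1)$ with $k \in \{0, 1, \dots, n-1\}$. A final conjugation by the shear $h(x, y) = (x - (k/n)y, y)$, a well-defined homeomorphism of $\RR/\ZZ \times \RR$ because the first coordinate is taken modulo $1$, commutes with $r_n$ and eliminates the residual $k/n$, yielding the standard form.

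The main obstacle is the compactness claim in the third paragraph: the fixed-ends hypothesis must be used decisively to rule out pathological free cyclic actions on the cylinder whose quotient is non-compact. The cleanest formalization is through the mapping class group of $S^1 \times \RR$ relative to its ends, where each homeomorphism has a well-defined end-translation number, and properly discontinuous freeness forces this number to be nonzero at each end with the same sign.
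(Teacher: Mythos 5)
Your proof is correct, and it takes a genuinely different route from the paper's. The paper goes straight to the quotient $L/G$, identifies it as a torus, chooses an isomorphism $\ZZ\oplus\ZZ\to\pi_1(L/G)$ adapted to the subgroup $\pi_1(L)$, realizes it by a homeomorphism of a model torus (Dehn--Nielsen--Baer for $T^2$), and lifts once. You instead first straighten the torsion generator $e$ via a cylinder Brouwer--Ker\'ekj\'art\'o argument (compactify by adding the two ends to obtain a finite-order homeomorphism of $S^2$, conjugate to a rotation, restrict), pass to the intermediate cylinder $L'=L/\ZZ_n$, treat the descended $\ZZ$-action there, lift back through the $n$-fold cyclic cover, and finally remove the residual $k/n$-shift by an explicit shear. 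Your route is longer but keeps the covering-space bookkeeping explicit, and it has a concrete benefit: for $n>1$ the subgroup $\pi_1(L)\subset\pi_1(L/G)\cong\ZZ^2$ lies with index $n$ inside a primitive line, so one must choose the model torus cover with matching index; by dispatching the torsion factor first you never need to confront this, whereas the paper's compressed ``$\phi$ maps the first factor to $\pi_1(L)$'' elides it. Two minor streamlinings you could make: the compactness of $L'/\langle\bar g\rangle$ already follows because the central extension $1\to\ZZ\to\pi_1\to\ZZ\to 1$ (central since $\bar g$ preserves orientation and the ends) forces $\pi_1\cong\ZZ^2$, and a non-compact surface has free fundamental group, so the end-translation-number discussion is unnecessary; and once the quotient is a closed orientable surface with $\pi_1\cong\ZZ^2$ it is a torus by the classification of surfaces, without appealing to Euler characteristic.
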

\begin{proof}
We prove the lemma for the case when $G=\ZZ+\ZZ_n$. Other cases can be shown along the same line. 

We know that $L/ G$ must be a torus, otherwise, the $\ZZ$ factor of $G$ would not act properly discontinuously and freely on the infinite cylinder $L$. We regard $\pi_1(L)\cong \ZZ$ as a subgroup of  $\pi_1(L/G)$. Let $\phi : \ZZ +\ZZ \to \pi_1(L/G)$ be an isomorphism such that $\phi$ maps the first factor to $\pi_1(L)$. Then there is a homeomorphism $\psi: \RR/\ZZ \times \RR/\ZZ\to L/G$  such that $\psi_* = \phi$. The lift  $\widetilde{\psi} : \RR/\ZZ \times \RR \to L$ of $\psi$ provides the desired topological conjugacy.   
\end{proof}

For a singular vertex $s$ of $\fV(\sV,\Mrk)$, any essential simple closed curve in $L(s)$ is called a \emph{meridian}. 
\begin{lem}\label{Lem:pi1generators}
Let $\sV$ be a veering pair. Let $\mathfrak{S}$ be the set of singular vertices of $\fV(\sV,\Mrk)$.  Fix a base point $x_0$ in $|\fV(\sV,\Mrk)|$. Choose a meridian of $L(s)$ for each $s\in\mathfrak{S}$ in $|\fV(\sV,\Mrk)|$ and by choosing a path from $x_0$ to $L(s)$, regard this meridian as a loop $\gamma_s$ based at $x_0$. Then $\pi_1(|\fV(\sV,\Mrk)|,x_0)$ is normally generated by $\{[\gamma_s]\}_{s\in \mathfrak{S}}$, where $[\gamma_s]$ denotes an element in $\pi_1(|\fV(\sV,\Mrk)|,x_0)$ represented by $\gamma_s$.
\end{lem}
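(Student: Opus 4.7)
The plan is to identify $\pi_1(|\fV(\sV,\Mrk)|, x_0)$ with $D = \pi_1(\fW^\circ(\sV,\Mrk))$ and then match the free generators of $D$ with the meridians $[\gamma_s]$ up to conjugation. Since $|\widetilde{\fV}(\sV,\Mrk)|$ is homeomorphic to $\RR^3$ and $\fV(D)$ acts freely and properly discontinuously on it by \reflem{V(V)noncompact}, the universal covering theorem gives $\pi_1(|\fV(\sV,\Mrk)|, x_0)\cong \fV(D)$, which via $\fV$ is isomorphic to $D$. By \reflem{weavingIsDisk} and \reflem{planar}, $\fW^\circ(\sV, \Mrk)$ is an open disk with a discrete countable subset (the cone classes) removed, so $D$ is a free group with one generator $\gamma_p$ --- a small loop around $p$ --- for each cone class $p$.

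Next I would establish a bijective correspondence between cone classes and $\mathfrak{S}$. Given a cone class $p$, choose a cusp $\tilde{p}$ of the loom space $\widetilde{\fW^\circ}(\sV,\Mrk)$ lying over $p$; by standard covering space theory, $\Stab{D}{\tilde{p}}$ is the infinite cyclic subgroup generated by a conjugate of $\gamma_p$. By \reflem{subgroupnatural}, $\fV(\Stab{D}{\tilde{p}})=\Stab{\fV(D)}{\mathsf{c}(\tilde{p})}$ is an infinite cyclic subgroup of $\fV(D)$, so by \reflem{stabilizerInD} the ideal vertex $s(p) \in \fV(\sV,\Mrk)$ under which $\mathsf{c}(\tilde{p})$ lies is singular. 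Conversely, cusps of $\widetilde{\fW^\circ}(\sV,\Mrk)$ whose $D$-stabilizers are trivial project to cusp classes in $\closure{\fW}(\sV)\setminus\fW(\sV)$, which lie on the boundary of $\fW(\sV)$ and contribute no free generator to $D$; by \reflem{stabilizerInD} these correspond to cusp vertices of $\fV(\sV,\Mrk) \setminus \mathfrak{S}$. Hence cone classes are in bijection with $\mathfrak{S}$.

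Finally, I would identify $\fV(\gamma_p) \in \fV(D)$ with a conjugate of $[\gamma_{s(p)}]\in \pi_1(|\fV(\sV,\Mrk)|, x_0)$. Both elements generate the infinite cyclic stabilizer $\Stab{\fV(D)}{\mathsf{c}(\tilde{p})}$: the first by the construction in the previous paragraph, and the second because the meridian $\gamma_{s(p)}$ lifts to a path in $|\widetilde{\fV}(\sV,\Mrk)|$ between $\mathsf{c}(\tilde{p})$ and its translate by the stabilizer generator. Indeed, by \reflem{ladderdescend}, the generator of $\Stab{\fV(D)}{\mathsf{c}(\tilde{p})}$ translates the ladderpole lines of $L(\mathsf{c}(\tilde{p}))$ by one period, and this translation projects to the meridian loop of the cylinder $L(s(p))$. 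After accounting for the basing path from $x_0$ to $L(s(p))$, $[\gamma_{s(p)}]$ and $\fV(\gamma_p)$ agree up to conjugation and inversion. The normal closure of $\{[\gamma_s]\}_{s\in\mathfrak{S}}$ therefore contains every free generator $\fV(\gamma_p)$ of $\fV(D)$ and so equals $\pi_1(|\fV(\sV,\Mrk)|, x_0)$. The most delicate step will be this last identification --- confirming that $[\gamma_s]$, defined concretely via the cylindrical link $L(s)$, is really the same element up to conjugation as the stabilizer generator produced abstractly by covering space theory; this is where \reflem{ladderdescend} is essential.
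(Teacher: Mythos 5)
Your proposal is correct and follows essentially the same route as the paper's proof: identify $\pi_1(|\fV(\sV,\Mrk)|,x_0)\cong \fV(D)\cong D$, observe that $D$ is generated by the stabilizers of the lifts of the cone classes (equivalently, by loops around the punctures of the punctured disk $\fW^\circ(\sV,\Mrk)$), transport these via \reflem{subgroupnatural} to the stabilizers $\Stab{\fV(D)}{\mathsf{c}(c)}$, and identify each such generator with a conjugate of a meridian $[\gamma_s]^{\pm1}$. Your version merely spells out in more detail the bijection between cone classes and $\mathfrak{S}$ and the final meridian identification, both of which the paper leaves implicit.
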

\begin{proof}
We have isomorphisms $\pi_1(|\fV(\sV,\Mrk)|)\cong \fV(D)\cong \pi_1(\fW^\circ (\sV,\Mrk))\cong D$. Note that $D$ is  generated by $\bigcup \Stab{D}{c}$, where the union is taken over all preimages of all singular and marked classes in $\fW^\circ(\sV, \Mrk)$. By \reflem{subgroupnatural}, $\Stab{D}{c}\cong \Stab{\fV(D)}{\mathsf{c}(c)}$. Thus, $\pi_1(|\fV(\sV,\Mrk)|,x_0) \cong \fV(D)$ is generated by $\bigcup \Stab{\fV(D)}{\mathsf{c}(c)}$.  For each generator $g$ of  $\Stab{\fV(D)}{\mathsf{c}(c)}$ there is $h\in \fV(D)$  such that $g=h [\gamma_s]^{\pm 1}h^{-1}$. Therefore, $\pi_1(|\fV(\sV,\Mrk)|)$ is normally generated by meridians.
\end{proof}

\begin{thm}\label{Thm:3orbifoldgroup}
Let $\sV=\{\sL_1, \sL_2\}$ be a veering pair and $G$ be a subgroup of $\Aut(\sV).$
Then $G$ is the fundamental group of an irreducible 3-orbifold. Moreover, this orbifold is obtained by orbifold Dehn fillings of a 3-manifold with a taut veering  triangulation.
\end{thm}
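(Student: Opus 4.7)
The plan is to follow the sketch: realize $G$ as acting properly discontinuously on a 3-manifold carrying a veering triangulation, fill in its cylindrical ends to obtain a simply-connected irreducible 3-manifold $X$, and take the orbifold quotient $X/G$.

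First, by \refcor{canonicalmarking} the canonical marking $\Mrk := \Mrk(G)$ is $G$-invariant, so $G \leq \Aut(\sV, \Mrk)$. Applying \refthm{loomspaceconstruction} produces the loom space $\widetilde{\fW^\circ}(\sV,\Mrk)$, and the Schleimer-Segerman veering functor $\fV$ turns it into a transverse veering triangulation on $\RR^3$ on which the deck group $D$ of the universal cover $\widetilde{\fW^\circ}(\sV,\Mrk) \to \fW^\circ(\sV,\Mrk)$ acts freely, properly discontinuously, and as taut isomorphisms by \reflem{V(V)noncompact}. The quotient $M := |\fV(\sV,\Mrk)| = \RR^3/\fV(D)$ is then an orientable 3-manifold carrying an induced transverse veering triangulation, and \reflem{pdaction} gives a properly discontinuous free action of $G$ on $M$ preserving this structure.

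Next I fill the cylindrical ends. By \reflem{stabilizerInD} the ideal vertices of $\fV(\sV,\Mrk)$ split into cusp vertices (planar links) and singular vertices (cylindrical links). For each singular vertex $s$, \refthm{gapStab} together with \reflem{extensionIntoCyl} identifies a neighborhood of the corresponding cylindrical end with $(\RR/\ZZ) \times [0,\infty)$ in a $\Stab{G}{s}$-equivariant way, with the action in standard form. Gluing a solid cylinder $D^2 \times [0,\infty)$ along each such end with the radial extension of the rotation action, and carrying out this filling $G$-equivariantly over the $G$-orbits of singular vertices, produces a 3-manifold $X$ on which $G$ continues to act properly discontinuously. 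By \reflem{pi1generators} the group $\pi_1(M)$ is normally generated by meridians of singular-vertex links; since every such meridian bounds a disk $D^2 \times \{\mathrm{pt}\}$ in the attached solid cylinder, $\pi_1(X) = 1$, so $X$ is simply-connected.

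Finally, to establish irreducibility of $X/G$ it suffices to prove $X$ is irreducible. By Hurewicz, $\pi_2(X) \cong H_2(X;\ZZ)$, so I need to show $H_2(X) = 0$. A Mayer-Vietoris argument decomposing $X$ along the boundary tori of the filled solid cylinders reduces the computation to a claim on $M$: every 2-cycle in $M$ is either supported on the link of a singular vertex (and hence bounded by one of the filling disks after gluing) or can be pushed off to infinity through the planar cusp ends of $M$ (and hence is null-homologous in $X$). Once $\pi_2(X) = 0$ is established, the sphere theorem forces every embedded 2-sphere in $X$ to bound a 3-ball, so $X$ is irreducible, giving $X/G$ as an irreducible 3-orbifold with $\pi_1(X/G) = G$. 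The Dehn filling statement is then immediate since $X/G$ is obtained from $M/G$ by filling each singular cusp with a solid orbifold cylinder. The hardest step I expect is the equivariant $H_2$ computation on the non-compact triangulation $M$, which requires a careful direct-limit argument through finite subcomplexes and a precise description of the 2-cycles supported near the ends of $M$.
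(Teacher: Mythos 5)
Your overall architecture matches the paper's proof exactly: pass to the canonical marking, build the loom space and its veering triangulation, let $G$ act properly discontinuously and freely on $M=|\fV(\sV,\Mrk)|$, fill the cylindrical ends equivariantly with solid cylinders to get a simply-connected $X$ (killing $\pi_1$ via \reflem{pi1generators}), and reduce irreducibility of $X/G$ to irreducibility of $X$, hence to $H_2(X)=0$ plus the sphere theorem. Up to that point the proposal is correct.

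The genuine gap is the step you yourself flag as hardest: you reduce to the claim that ``every 2-cycle in $M$ is either supported on the link of a singular vertex or can be pushed off to infinity through the planar cusp ends,'' and then defer this to an unspecified ``careful direct-limit argument through finite subcomplexes.'' As stated this claim is not justified, and no such geometric analysis of 2-cycles is needed. The paper closes the argument purely algebraically. Writing $X^\circ$ for $M$ with vertex-link neighborhoods removed, the Mayer--Vietoris sequence for $X=X^\circ\cup\bigcup_s C_s$ gives
\[
0\to H_2(X^\circ)\to H_2(X)\to H_1\Bigl(\bigcup_s\partial C_s\Bigr)\to H_1(X^\circ),
\]
and the last map is an isomorphism because both sides are (freely) generated by the meridians of the singular-vertex links (\reflem{pi1generators}); hence $H_2(X)\cong H_2(X^\circ)$. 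Now the key point you are missing: $\pi_1(X^\circ)\cong \fV(D)\cong D\cong\pi_1(\fW^\circ(\sV,\Mrk))$ is a \emph{free} group, since the regular weaving is a planar surface (an open disk minus a closed discrete set), and $X^\circ$ is aspherical. Therefore $H_2(X^\circ)\cong H_2(\pi_1(X^\circ);\ZZ)=0$ by group homology of free groups, so $H_2(X)=0$ and Hurewicz plus the sphere theorem finish the argument. You should replace your deferred direct-limit step with this observation; without it the proof is incomplete at its crux. (Two smaller points: the reduction from irreducibility of the orbifold $X/G$ to irreducibility of its universal cover $X$ needs a citation, e.g.\ Proposition 3.23 of \cite{Porti2003}; and to conclude $H_2(X)\cong H_2(X^\circ)$ from Mayer--Vietoris you need injectivity, not just surjectivity, of $H_1(\bigcup_s\partial C_s)\to H_1(X^\circ)$, which is where freeness of $D$ on the meridian generators enters again.)
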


\begin{proof}
Let $X^\circ$ be the space obtained from $|\fV(\sV,\Mrk)|$ by removing small neighborhoods of ideal vertices. Boundary components of $X^\circ$ are cusp links. $X^\circ$ is a 3-manifold and $G$ acts on $|\fV(\sV,\Mrk)|$ properly discontinuously and freely (\reflem{V(V)noncompact} and \reflem{pdaction}). 

Let $\mathfrak{S}$ be the set of singular vertices of $\fV(\sV,\Mrk)$. By \reflem{extensionIntoCyl}, we can find a topological conjugacy $\phi_{s}:\RR/\ZZ\times \RR \to L(s)$ for each $s\in \mathfrak{S}$. Let $X$ be the space obtained by gluing the standard solid cylinders $C_s=\{z\in\CC\,:\,\|z\|\le 1\}\times \RR$ to each $\{L(s)\}_{s\in \mathfrak{S}}$ via the gluing maps $\{\phi_s\}_{s\in \mathfrak{S}}$. By \reflem{pi1generators}, $X$ is simply-connected. 

Now we extend the $G$-action on $X^\circ$ to $X$. First note that the standard action on $\RR/\ZZ\times \RR$ can be coned off to give the action on the standard solid cylinder. More precisely, introduce the cylindrical coordinates $(r,\theta, t)$ on the standard solid cylinder $\{z\in\CC\,:\,\|z\|\le 1\}\times \RR$. Given a homeomorphism $g$ on the boundary $\{(1,\theta,t)\}$, we define $\widehat{g}$ by $\widehat{g} (r,\theta,t) = (r, g(\theta,t))$. Then, given an element $g\in G$, we define $\widetilde{g}:X\to X$ as follow: For $x\in X^\circ$, we let $\widetilde{g}(x) = g(x)$. In the solid cylinder $C_s$ attached to $s\in \mathfrak{S}$, $\widetilde{g}:C_s \to C_{g\cdot s}$ is given by $\widehat{\phi_{g\cdot s}^{-1} \circ g \circ\phi_{s}}$. This gives a well-defined homeomorphism on $X$:  For if $x\in \partial C_s$ and $\phi_s(x)\in X^\circ$ are equivalent in $X$, we have $\phi_{g\cdot s} (\widehat{g}(x)) = \phi_{g\cdot s}\circ \phi_{g\cdot s}^{-1} \circ g \circ \phi_s (x)=g\circ \phi_s(x)$. Moreover, we have $\widetilde{gh} = \widetilde{g} \circ \widetilde {h}$ for all $g,h\in G$. The $G$-action on $X$ is then given by $g\cdot x = \widetilde{g} ( x)$. 

For the standard $\ZZ$, $\ZZ_n$, and $\ZZ_n+\ZZ$ actions on the standard cylinder, their coned-off actions on the solid cylinder are also properly discontinuous and their quotients are (possibly singular) solid cylinders or  solid tori. Since the $\Stab{G}{s}$-action on $\partial C_s$ is standard, the extended $G$-action on $X$ is still properly discontinuous (but may not be free). Therefore, $X/G$ is a 3-orbifold with $\pi_1 ^{\operatorname{orb}}(X) = G$. From the construction, we know that $X/G$ is a Dehn filling of $X^\circ/G$ by (possibly singular) solid cylinders or solid tori $C_s/\Stab{G}{s}$.

Finally, we show that $X/G$ is irreducible. Due to Proposition 3.23 followed by the remark in \cite{Porti2003}, a 3-orbifold is irreducible if and only if its universal cover is irreducible. Therefore, it is enough to show that $X$ is irreducible.

Consider the Mayer-Vietoris sequence: 
\[
H_2(\bigcup_{s\in\mathfrak{S}} \partial C_s) \to H_2(\bigcup_{s\in\mathfrak{S}} C_s)\oplus H_2(X^\circ) \to H_2(X) \to H_1(\bigcup_{s\in\mathfrak{S}} \partial C_s)\to H_1(\bigcup_{s\in\mathfrak{S}} C_s) \oplus H_1(X^\circ).
\]
We know that $H_2(\bigcup_{s\in\mathfrak{S}} \partial C_s) =  H_2(\bigcup_{s\in\mathfrak{S}} C_s)=H_1(\bigcup_{s\in\mathfrak{S}} C_s)  =0$, reducing the above sequence to
\[
0 \to  H_2(X^\circ) \to H_2(X) \to H_1(\bigcup_{s\in\mathfrak{S}} \partial C_s)\to H_1(X^\circ).
\]
Note that $H_1(\bigcup_{s\in\mathfrak{S}} \partial C_s)$ is generated by meridians of each $\partial C_s$ and by \reflem{pi1generators}, so is $H_1(X^\circ)$. Therefore, $H_1(\bigcup_{s\in\mathfrak{S}} \partial C_s)\to H_1(X^\circ)$ is an isomorphism. As a result, we get $H_2(X;\ZZ) \cong H_2(X^\circ;\ZZ)$.

Observe that $X^\circ$ is a $K(\pi_1(X^\circ),1)$ space. Hence, we know that $H_*(X^\circ;\ZZ)$ is isomorphic to the group homology $H_*(\pi_1(X^\circ);\ZZ)$. Since $\pi_1(X^\circ)$ is a free group (with at most countably many generators), we have that $H_2(\pi_1(X^\circ);\ZZ)=0$. Thus, $H_2(X^\circ;\ZZ)=0$ as well. Combined with the previous computation, we have that $H_2(X;\ZZ) = H_2(X^\circ;\ZZ)=0$. 

Since $X$ is simply-connected, it follows that $\pi_2(X)\cong H_2(X;\ZZ)=0$. By the sphere theorem, $X$ is irreducible. 
\end{proof}

\begin{rmk}
Here is another proof for the irreducibility of $X/G$ when the laminations are very full and $G$ does not have 2-torsions. Being cofinite means that $X/G$ is compact. By construction and \reflem{ladderdescend}, each torus boundary of $X^\circ /G$ has at least two parallel ladderpole curves and $X/G$ is the Dehn filling of $X^\circ/G$ along slopes $s$ that intersect every ladderpole curve transversely. Hence, $|\langle s,l\rangle |\ge 2$ where $l$ is the collection of ladderpole curves. By \cite{AgolTsang22}, $X/G$ admits a transitive pseudo-Anosov flow without perfect fits. Since $G$ does not contain $\ZZ+\ZZ$, $X/G$ is atoroidal. Therefore, the universal cover $X$ of $X/G$ is homeomorphic to $\RR^3$ and $X/G$ is irreducible \cite{CalegariDunfield03, Gabai89}.
\end{rmk}

\section{Veering Pairs and Kleinian Groups}\label{Sec:kleiniansec}
In this section, we improve our main \refthm{3orbifoldgroup} to yield more geometric conclusion. To do this, we need ``cocompactness'' of the action, although we believe that this assumption is not essential. 
\subsection{Cofinite Actions}

Let $\sV=\{\sL_1, \sL_2\}$ be a veering pair with a marking $\Mrk$ and $G$ a subgroup of $\Aut(\sV, \Mrk).$
Now, we say that the $G$ action  on the veering triangulation $\fV(\sV,\Mrk)$ is \emph{cofinite} if there are only finitely many orbit classes of ideal tetrahedra. In this section, we give a necessary and sufficient condition to make the $G$-action be cofinite.

Let $\sG$ be a gap of $\sV$ that is either a marked leaf or a non-leaf gap. The gap $\sG$ has \emph{the maximal rank stabilizer} if it is one of the following cases.
\begin{itemize}
    \item If $\sG$ is an ideal polygon, then $\Stab{G}{\sG}$ contains an infinite cyclic group. 
    \item If $\sG$ is a crown, then $\Stab{S}{\sG}$ contains a rank two free abelian group. 
\end{itemize}
We say that $G$ has the \emph{maximal stabilizer property} if every gaps that is either a marked leaf or a non-leaf gap of $\sV$ has the maximal rank stabilizer.

Assume that $G$ has the maximal stabilizer property. Let $\sG$ be a gap of $\sL_1$ that is either a marked leaf or a non-leaf gap. When $\sG$ is a marked leaf, then, by \refthm{gapStab}, there is a pA-like element $g$ in $\Stab{G}{\sG}.$ Furthermore, there is a leaf $\sH$ of $\sL_2$ such that $(\sG,\sH)$ is the stitch preserved by $g.$ Observe that $\sH$ is also marked. Hence, $(\sG, \sH)\in \Mrk.$ Also, observe that $\Stab{G}{\sG}$ contains a properly pseudo-Anosov as at least one of $g$ or $g^2$ is properly pseudo-Anosov. Similarly, when $\sG$ is an ideal polygon, $\Stab{G}{\sG}$ contains a properly pseudo-Anosov by \refthm{gapStab}. When $\sG$ is a crown, by \refthm{gapStab}, $\Stab{G}{\sG}$ contains a properly pseudo-Anosov and also contains a parabolic automorphism.

Let $\fF=(I_1,I_2, I_3, I_4)$ be a tetrahedron frame under $\Mrk.$ Then, by \reflem{extension}, $\cI(c_4(\fF),c_1(\fF))$ contains either a unique marked stitch or exactly two singular stitches. Let $s$ be such a stitch. There is a unique tip $t$ in $v(\eta_2(s))$ over which $I_1^*$  cross. Observe that $t$ does not depend on the choice of $s$ by \refrmk{tipBtwStitches}. Therefore, we say that the tetrahedron frame $\fF=(I_1,I_2,I_3,I_4)$ under $\Mrk$ is \emph{penetrated} by the tip $t.$ Note that $t\in \E{\sL_2}$ and $I_3$ crosses over $t.$ 

Conversely, if $p$ in $\E{\sL_2}$ and the tip gap $\tipg{p}$ is a marked leaf or a non-leaf gap, then we can take a tetrahedron frame under $\Mrk$ penetrated by $p$ in the similar way of the proof of  \reflem{propertiesofweaving}-(1). Now, we denote the set of tetrahedron frames under $\Mrk$ by $\fT(\sV,\Mrk).$ Also, for any $t$ in $\E{\sL_2}$ such that $\tipg{t}$ is either a marked leaf or a non-leaf gap, we denote the set of tetrahedron frames under $\Mrk$ penetrated by a tip $t$ by $\fT(\sV,\Mrk,t).$ Then, we have that  the set $$\{\fT(\sV,\Mrk,t):\text{$t\in \E{\sL_2}$ and $\tipg{t}$ is a marked leaf or a non-leaf gap}\}$$
is a partition for $\fT(\sV,\Mrk).$

Then, observe that if two distinct tetrahedron frames $\fF_1=(I_1,I_2, I_3, I_4)$ and $\fF_2=(J_1, J_2, J_3, J_4)$ are penetrated by a  same tip $t,$ then $I_1=J_1.$ To see this, we first assume that $\tipg{t}$ is a marked leaf. Then, both $(\ell(I_1),\tipg{t})$ and $(\ell(J_1),\tipg{t})$ are marked leaves and so $I_1=J_1$ since $I_1^*$ and $J_1^*$ cross over $t$ and $\eta_1|\Mrk$ is injective by the definition of markings. When $\tipg{t}$ is a non-leaf gap, both $I_1^*$ and $J_1^*$ are the element of the interleaving gap of  $\tipg{t}$ crossing over $t$ and so $I_1=J_1.$ Hence, for a tetrahedron frame $\fF=(I_1, I_2, I_3, I_4)$ under $\Mrk$ penetrated by a tip $t,$ we denote $I_1$ by $I_t$ and $I_3$ by $\pi_t(\fF).$

Let $t$ be an end point of $\sL_2$ whose tip gap is either a marked leaf or a non-leaf gap. 
We may think of $\pi_t$ as an injective map from $\fT(\sV,\Mrk,t)$ to the stem $\stem{t}{I_t^*}$ in $\sL_1.$ Hence, we can define a linear order  $\leq_t$ on $\fT(\sV,\Mrk,t)$ as follows. For any $\fF_1$ and $\fF_2$ in $\fT(\sV,\Mrk,t),$ $\fF_1\leq_t \fF_2$ if and only if $\pi_t(\fF_1)\subseteq \pi_t(\fF_2).$ Hence, $\fT(\sV,\Mrk,t)$ is  linearly ordered and $\pi_t$ is order preserving.

\begin{rmk}\label{Rmk:tipFrame}
Let $t$ be an end point of $\sL_2$ whose tip gap is either a marked leaf or non-leaf gap.
For any automorphism $g$ in $G,$ $$g(\fT(\sV,\Mrk,t))=\fT(\sV,\Mrk,g(t))$$ and $g$ is order preserving under $\leq_t$ and $\leq_{g(t)}.$
\end{rmk}

\begin{lem}\label{Lem:cofinite}
Let $\sV=\{\sL_1,\sL_2\}$ be a veering pair and $G$ a subgroup of $\Aut(\sV).$ Set $\Mrk$ as the marking $\Mrk(G).$ Assume that $G$ has the maximal stabilizer property. If the sets of non-leaf gaps of $\sL_i$ consist of finitely many orbit classes under the $G$-action and $\Mrk$ also consists of finitely many orbit classes under the $G$-action, then the $G$-action on $\fV(\sV,\Mrk)$ is cofinite.  
\end{lem}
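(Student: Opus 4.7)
The plan is to reduce the count of $G$-orbits of tetrahedra in $\fV(\sV,\Mrk)$ to a count of $G$-orbits of tetrahedron frames under $\Mrk$, then stratify those by penetrating tip and bound each stratum. First I would chain together the correspondences already established: tetrahedra of $\fV(\sV,\Mrk)$ correspond to $\fV(D)$-orbits of tetrahedra of $\widetilde{\fV}(\sV,\Mrk)$ (\reflem{V(V)noncompact}), which under $\fL$ correspond to $D$-orbits of tetrahedron rectangles of $\widetilde{\fW^\circ}(\sV,\Mrk)$, which by \refprop{fullcorrespondence} correspond to tetrahedron rectangles of $\fW^\circ(\sV,\Mrk)$, and finally by \refrmk{tetFrameToRect} to tetrahedron frames under $\Mrk$ modulo the reordering $(I_1,I_2,I_3,I_4)\leftrightarrow(I_3,I_4,I_1,I_2)$. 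Because $G$ acts on $\fV(\sV,\Mrk)$ through its action on $\fW^\circ(\sV,\Mrk)$ (\reflem{pdaction}), it suffices to show that $\fT(\sV,\Mrk)/G$ is finite.

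Next I would use the partition $\fT(\sV,\Mrk)=\bigsqcup_{t}\fT(\sV,\Mrk,t)$ where $t$ ranges over endpoints of $\sL_2$ whose tip gap is either a marked leaf or a non-leaf gap. By \refrmk{tipFrame}, $G$ permutes these pieces through its action on admissible tips. To see that there are only finitely many $G$-orbits of admissible tips, I would argue: the hypothesis gives finitely many $G$-orbits of non-leaf gaps of $\sL_2$ and finitely many $G$-orbits of marked stitches (hence of marked leaves of $\sL_2$, by injectivity of $\eta_2|_{\Mrk}$). For each orbit representative $\sG$, the stabilizer $\Stab{G}{\sG}$ acts on the tip set of $\sG$; when $\sG$ is a marked leaf or ideal polygon, the tip set is finite, and when $\sG$ is a crown, the maximal stabilizer property together with \refthm{gapStab} provides a parabolic element permuting the infinite tip set with finitely many orbits. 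This gives finitely many $G$-orbits of admissible tips.

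The remaining, and most substantive, step is to show that for each fixed admissible tip $t$, the group $\Stab{G}{t}$ acts on $\fT(\sV,\Mrk,t)$ with only finitely many orbits. By the maximal stabilizer property and \refthm{gapStab}, $\Stab{G}{\tipg{t}}$ contains a properly pseudo-Anosov element $g$ (for the marked-leaf case one replaces a pA-like generator by its square). Since $g$ fixes all tips of $\tipg{t}$ and is orientation preserving, $g(I_t)=I_t$, so $g\in\Stab{G}{t}$ and $g$ acts on $\fT(\sV,\Mrk,t)$ preserving the linear order $\leq_t$. By \refprop{tetrastabilizer}, $g$ has no fixed points in $\fT(\sV,\Mrk,t)$, so the action is by an order-preserving translation. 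I expect the hard part to be showing that this translation is \emph{cocompact}, i.e. that between any $\fF$ and $g(\fF)$ only finitely many elements of $\fT(\sV,\Mrk,t)$ sit.

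To handle cocompactness I plan to translate the question into the triangulated vertex link $L(v)$ in $\widetilde{\fV}(\sV,\Mrk)$, where $v$ is an ideal vertex lifting $t$. Under the $\fL$--$\fV$ correspondence, the elements of $\fT(\sV,\Mrk,t)$ correspond bijectively to tetrahedra of $\widetilde{\fV}(\sV,\Mrk)$ with ideal vertex $v$, i.e. to triangles of $L(v)$. By \reflem{ladderdescend} and \refprop{divisioncorresp}, the divisions at the corresponding cusp of $\widetilde{\fW^\circ}(\sV,\Mrk)$ index the ladders of $L(v)$, and the deck-group stabilizer acts as a translation identifying ladders separated by a $2n$-step shift. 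The pA element $g$ acts as a second, transverse translation on the ladder structure, so that $\langle g,\Stab{D}{v}\rangle$ descends to an action on $L(v)$ (pushed to the quotient $L(v)/\Stab{D}{v}$, an annulus or cylinder) with compact fundamental domain containing only finitely many triangles. Unwinding the correspondence bounds the number of $\Stab{G}{t}$-orbits in $\fT(\sV,\Mrk,t)$ by this finite count. Combining with the finitely many orbits of admissible tips, we conclude that $\fT(\sV,\Mrk)/G$ is finite, completing the plan.
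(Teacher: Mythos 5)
Your proposal takes essentially the same route as the paper: reduce to tetrahedron frames under $\Mrk$ via \refrmk{tetFrameToRect}, partition $\fT(\sV,\Mrk)$ by penetrating tips, obtain finitely many $G$-orbits of admissible tips from the hypotheses and the maximal stabilizer property, and finish each stratum $\fT(\sV,\Mrk,t)$ using a properly pseudo-Anosov element of $\Stab{G}{\tipg{t}}$. The only divergence is at the step you rightly single out as substantive: the paper simply asserts that $g$ preserving $\fT(\sV,\Mrk,t)$ forces finitely many $\langle g\rangle$-orbits, whereas you propose to justify the needed cocompactness (and, implicitly, cofinality of the orbit $\{g^n(\fF)\}$ with respect to $\leq_t$) through the vertex link and the ladder decomposition; that is detail the paper omits rather than a flaw in your plan. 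If you do carry it out, be careful with directions: by \reflem{ladderdescend} the deck stabilizer $\Stab{D}{v}$ shifts the ladder index, so the lift of $g$ must translate along the ladderpole direction for the combined $\ZZ\times\ZZ$ action on $L(v)$ to be cocompact --- your phrase ``transverse translation on the ladder structure'' should be read that way and not as a second shift of ladder indices.
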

\begin{proof}
Let $t$ be an end point of $\sL_2$ whose tip gap is either  a marked leaf or a non-leaf gap. Since $G$ has the maximal stabilizer property, there is a properly pseudo-Anosov automorphism $g$ in $\Stab{G}{\tipg{t}}.$ Note that $I_t$ is the element of the interleaving gap of $\tipg{t}$ as explained. Then, $g$ preserves $\fT(\sV,\Mrk,t).$ This implies that  $\fT(\sV,\Mrk,t)$ consists of finitely many orbit classes under the $\langle g \rangle$-action. 

Now, we consider the set  $\fE$ of all end points of $\sL_2$ whose tip gap are marked leaves or non-leaf gaps. Note that the stabilizer of each crown in $\sL_2$ has a parabolic automorphism. Hence, by assumption, we can see that $\fE$ has finitely many orbit classes under the $G$-action. Since $\{\fT(\sV,\Mrk,t):t\in \fE\}$ is a partition of $\fT(\sV,\Mrk),$ by \refrmk{tipFrame}, $\fT(\sV,\Mrk)$ consists of finitely many orbit classes under the $G$-action. Thus, by \refrmk{tetFrameToRect}, we can conclude that the $G$-action on $\fV(\sV,\Mrk)$ is cofinite. 
\end{proof}

The converse implication is also true. See the following proposition.
\begin{prop}\label{Prop:converseCofinite}
Let $\sV=\{\sL_1,\sL_2\}$ be a veering pair and $G$ a subgroup of $\Aut(\sV).$ Set $\Mrk$ as the marking $\Mrk(G).$ If the $G$-action on $\fV(\sV,\Mrk)$ is cofinite, then $G$ satisfies the following.
\begin{enumerate}
    \item $G$ has the maximal stabilizer property,
    \item the sets of non-leaf gaps of $\sL_i$ consist of finitely many orbit classes under the $G$-action, and 
    \item $\Mrk$ consists of finitely many orbit classes under the $G$-action.
\end{enumerate}
\end{prop}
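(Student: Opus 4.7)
The plan is to reverse the argument of \reflem{cofinite}. By \refrmk{tetFrameToRect}, cofiniteness of the $G$-action on $\fV(\sV,\Mrk)$ is equivalent to the set $\fT(\sV,\Mrk)$ of tetrahedron frames under $\Mrk$ carrying only finitely many $G$-orbits. The key tool is the $G$-equivariant partition $\fT(\sV,\Mrk)=\bigsqcup_t \fT(\sV,\Mrk,t)$ indexed by the penetrating tips (\refrmk{tipFrame}) together with the mirror partition obtained by picking the marked or singular stitch from the opposite side $\cI(c_1(\fF),c_2(\fF))$, which assigns to each tetrahedron frame a penetrating tip in $\E{\sL_1}$ instead. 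This immediately yields finitely many $G$-orbits of such tips in each $\E{\sL_i}$. Next, the tip-gap assignment $t\mapsto \tipg{t}$ is $G$-equivariant and, by \refprop{threeLeaves} and \reflem{quadrachotomy}, different marked leaves and non-leaf gaps have disjoint vertex sets; so the surjective induced map from tip orbits to gap orbits produces finitely many $G$-orbits of marked leaves and non-leaf gaps in each $\sL_i$, giving~(2). Combined with the $G$-equivariant injection $\eta_1|_\Mrk\colon \Mrk\hookrightarrow\{\text{marked leaves of }\sL_1\}$ from \refdefn{marking}, this also establishes~(3).

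For~(1), I plan to extract the maximal-rank stabilizer condition by a refined orbit count. Fix a marked leaf or non-leaf gap $\sG$ of $\sV$ and set
\[
F(\sG):=\bigsqcup_{t\in v(\sG)\setminus\{\text{pivot}\}}\fT(\sV,\Mrk,t),
\]
which is $\Stab{G}{\sG}$-invariant. Orbit-stabilizer together with cofiniteness forces finitely many $\Stab{G}{\sG}$-orbits on $F(\sG)$. Since each tip $t$ gives a non-empty $\fT(\sV,\Mrk,t)$ (by the converse construction recalled before \reflem{cofinite}), the $\Stab{G}{\sG}$-action on $v(\sG)\setminus\{\text{pivot}\}$ has finitely many orbits. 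For crowns (infinite tip set) this already forces an element of $\Stab{G}{\sG}$ with infinite orbit on tips, and \refthm{classOfLaminarAut} identifies this element as parabolic. To obtain the properly pseudo-Anosov factor I would show that $\fT(\sV,\Mrk,t)$ is infinite for each relevant tip $t$: then the finiteness of $\Stab{G}{t}$-orbits on $\fT(\sV,\Mrk,t)$ (inherited from cofiniteness) forces $\Stab{G}{t}$ to be infinite, and because parabolics in $\Stab{G}{\sG}$ fix only the pivot, $\Stab{G}{t}$ must sit inside the pseudo-Anosov subgroup of $\Stab{G}{\sG}$. Applying \refthm{gapStab} then promotes $\Stab{G}{\sG}$ to $\ZZ\times\ZZ$ for crowns; for polygons and marked leaves the same infinitude of $\Stab{G}{t}$ rules out the purely torsion options and yields the required infinite cyclic subgroup.

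The main obstacle is establishing $|\fT(\sV,\Mrk,t)|=\infty$. Here $t\in\E{\sL_2}$, so by \reflem{trichotomy2} $t$ is a rainbow point of $\sL_1$, and the stem $\stem{t}{I_t^*}$ is therefore infinite. My strategy is to show that for every $K\in\stem{t}{I_t^*}$ sufficiently deep in the rainbow one can complete $(I_t,I_2,K,I_4)$ to a tetrahedron frame under $\Mrk$: the first side is automatically full because the marked or singular stitch of $\tipg{t}$ lies on $\cI(c_4(\fF),c_1(\fF))$ once $I_2,I_4$ are chosen appropriately, and the remaining three sides can be made full using \reflem{fullExtension}, which only modifies the side being extended and so leaves $I_1=I_t$ undisturbed. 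The delicate point is checking that the sector and linkedness conditions survive each extension step; alternatively, one can argue loom-theoretically via \refprop{divisioncorresp}, reading off the infinitude of $\fT(\sV,\Mrk,t)$ from the ladder structure of the planar vertex link at a lift of the corresponding cone or cusp class.
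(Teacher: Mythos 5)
Your proposal takes the same overall route as the paper's proof: reduce cofiniteness to finitely many $G$-orbits on $\fT(\sV,\Mrk)$ via \refrmk{tetFrameToRect}, push forward the penetrating-tip partition (and its $\E{\sL_1}$-counterpart, which is more precisely the \emph{adjacent} side $\cI(c_1,c_2)$, not the opposite one) to obtain (2) and (3), and then use orbit-counting to produce the stabilizer elements for (1). Where the paper applies the pigeonhole principle directly — once to $\fT(\sV,\Mrk,t)$ for a fixed tip $t$ to find a nontrivial $g$ with $g(t)=t$ and $g(I_t)=I_t$ (hence properly pseudo-Anosov), and, for crowns, once more to $\bigcup_{s\in v(\sG_2)}\fT(\sV,\Mrk,s)$ to find a tip-permuting (hence parabolic) element — you repackage this via the orbit-stabilizer bijection into the statements that $\Stab{G}{\sG}$ has finitely many orbits on $F(\sG)$ and that $\Stab{G}{t}$ has finitely many orbits on $\fT(\sV,\Mrk,t)$. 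This reformulation is correct (the induced-set bijection you invoke is legitimate) and, if anything, organizes the logic more transparently than the paper's two separate pigeonhole applications. Both arguments hinge on the same unspoken ingredient: $\fT(\sV,\Mrk,t)$ must be \emph{infinite}, not merely countable, for the pseudo-Anosov step — the paper's proof records only countability and lets infiniteness go without comment, whereas you have explicitly surfaced it as the crux and offered two approaches (frozen-side full extension, or ladder counting via \refprop{divisioncorresp}). Your sketch of the first approach does have a real subtlety you correctly flag — \reflem{fullExtension} modifies the side being extended, so one must verify that extending sides 2, 3, 4 while holding $I_1=I_t$ can be iterated to reach arbitrarily deep third sides in the rainbow at $t$; the ladder/division route is probably the cleaner way to close this. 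Up to that shared lacuna, the proof is sound and essentially the paper's.
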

\begin{proof}
Since the $G$-action on $\sV(\sV,\Mrk)$ is cofinite, equivalently, $\fT(\sV,\Mrk)$ consists of the finitely many orbit classes under the $G$-action. Hence, the (2) and (3) follow immediately. It is enough to show (1). 

First, let $(\sG_1,\sG_2)$ be an interleaving pair 
in $\sV$ such that $(\sG_1,\sG_2)$ is either a marked stitch or an asterisk. Choose a tip $t$ of $\sG_2.$ Then, observe that $\fT(\sV,\Mrk,t)$ is countable since $\Mrk$ is at most countable and in each $\sL_i,$ there are countably many non-leaf gaps by totally disconnectedness. Hence, as $\fT(\sV,\Mrk)$ consists of finitely many orbit classes under the $G$-action, by the Pigeonhole principle, there are distinct frames  $\fF_1$ and $\fF_2$ in $\fT(\sV,\Mrk,t)$ such that $g(\fF_1)=\fF_2$ for some non-trivial automorphism $g$ in $G.$ Then, as $$g(\fT(\sV,\Mrk,t))=\fT(\sV,\Mrk,g(t))=\fT(\sV,\Mrk,t),$$ $g(t)=t$ and $g(I_t)=I_t.$ 
Since $I_t\in \sG_1,$ by \refthm{classOfLaminarAut}, $g(\sG_1)=\sG_1.$ Hence, $g$ is a properly pseudo-Anosov in $\Stab{G}{\sG_i}.$ Therefore, each $\Stab{G}{\sG_i}$ contains an infinite cyclic subgroup.

Now, assume that $(\sG_1,\sG_2)$ is an asterisk of crowns of $\sV$. Then, for each tip $s$ of $\sG_2,$ $\fT(\sV,\Mrk, s)$ is countable and the set 
$$\bigcup_{\text{a tip }s\in v(\sG_2)} \fT(\sV,\Mrk, s)$$
is also countable.  Since $\fT(\sV,\Mrk)$ consists of finitely many orbit classes under the $G$-action, there are distinct tips $t_1$ and $t_2$ in $v(\sG_2)$ penetrating frames $\fF_1$ and $\fF_2,$  respectively, such that $h(\fF_1)=\fF_2$ for some non-trivial automorphism $h\in G.$ Then, $h(I_{t_1})=I_{t_2}$ and, by \refthm{classOfLaminarAut}, $h(\sG_2)=\sG_2$. Then, $\sG_i$ are preserved by $\langle h\rangle$ and as $h$ does not fix the tips, $h$ is parabolic. Therefore, as  $\langle g, h\rangle \leq \Stab{G}{\sG_i},$ by \refthm{gapStab}, each $\Stab{G}{\sG_i}$ contains a rank two free abelian group. Thus, $G$ has the maximal stabilizer property. 
\end{proof}

\subsection{Veering Pairs and Kleinian Groups}
We have some corollaries related to  Conjecture 8.8 of \cite{Baik15}. Note that every veering pair is also a pants-like $\COL_2$ pair as shown in \refsec{promotiontocol2}. 

\begin{cor}\label{Cor:geometric}
Let $\sV=\{\sL_1,\sL_2\}$ be a veering pair and $G$ a subgroup of $\Aut(\sV).$ Set $\Mrk$ as the marking $\Mrk(G).$ If the $G$-action on $\fV(\sV,\Mrk)$ is cofinite, then $G$ is the fundamental group of a hyperbolic 3-orbifold.  
\end{cor}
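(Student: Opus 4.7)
The plan is to build directly on \refthm{3orbifoldgroup}, which already provides the 3-orbifold $X/G$ with $G = \pi_1^{\operatorname{orb}}(X/G)$ and establishes that $X/G$ is irreducible. The strategy is to supplement this with two topological improvements coming from cofiniteness: (i) topological tameness of $X/G$, and (ii) homotopical atoroidality, and then invoke geometrization together with the classification of elementary subgroups (\refthm{elementary}) to rule out every geometry other than $\HH^3$.

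First, I would record the topological finiteness. Cofiniteness says there are only finitely many $G$-orbit classes of ideal tetrahedra of $\fV(\sV,\Mrk)$, which by \refprop{converseCofinite} also forces finitely many orbit classes of non-leaf gaps and of marked classes, and gives the maximal stabilizer property. Translating to $X/G$ via the construction in \refthm{3orbifoldgroup}, this yields: each singular vertex of $\fV(\sV,\Mrk)$ is filled in by a solid cylinder or solid torus whose local orbifold group is the $\ZZ$ or $\ZZ\times \ZZ_n$ stabilizer of the associated marked or polygonal interleaving pair (\refthm{gapStab}); only finitely many orbits of such fillings occur; and each cusp vertex (coming from an asterisk of crowns) has stabilizer $\ZZ\times \ZZ$ (\refthm{elementary}) and contributes a torus cusp to $X/G$, with only finitely many cusps in total. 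So $X/G$ is a topologically finite, irreducible 3-orbifold with finitely many torus cusps.

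Next, the core step: homotopical atoroidality of $X/G$ rel cusps. By \refthm{elementary}, every subgroup of $G$ containing an infinite cyclic normal subgroup is one of $\ZZ$, $D_\infty$, $\ZZ\times \ZZ_n$, or $\ZZ\times \ZZ$, and the $\ZZ\times \ZZ$ case arises \emph{uniquely} from the stabilizer of an asterisk of crowns. Combined with \refthm{gapStab}, every rank-two abelian subgroup of $G$ is therefore conjugate into the $\pi_1$ of a cusp of $X/G$. A similar appeal shows every embedded Euclidean 2-suborbifold (turnover, Klein bottle, or torus) has $\pi_1^{\operatorname{orb}}$ containing a $\ZZ\oplus\ZZ$, so must be peripheral. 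Hence $X/G$ is atoroidal away from its cusps. Irreducibility is already in \refthm{3orbifoldgroup}, and sphere/disk-type spherical 2-suborbifolds are excluded the same way (finite $\pi_1^{\operatorname{orb}}$ would force a finite subgroup acting on $X\cong \RR^3$ with a fixed point, giving an elliptic element preserving a unique interleaving polygon pair of $\sV$, which fits into the cone structure rather than producing a bad 2-suborbifold).

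Finally, I would apply geometrization. If $X/G$ is a manifold (no marked classes and no interleaving polygon pairs, so all gaps are crown asterisks), then $X/G$ is a topologically tame, orientable, irreducible, atoroidal 3-manifold with torus cusps, and Perelman--Thurston hyperbolization applies. If $X/G$ has non-empty singular locus, the orbifold geometrization theorem (Boileau--Leeb--Porti, Cooper--Hodgson--Kerckhoff) applies and gives a geometric structure. In either case, one rules out the non-hyperbolic geometries: spherical is excluded because $G$ is infinite; Euclidean, $\operatorname{Nil}$, $\operatorname{Sol}$, $\widetilde{\operatorname{SL}_2}$ and Seifert-fibered types would force either a non-peripheral $\ZZ\oplus \ZZ$ in $G$ or an infinite cyclic normal subgroup not of the elementary types of \refthm{elementary} with the corresponding global structure; both are impossible since the only $\ZZ\oplus\ZZ$s and the only normal $\ZZ$s in $G$ are the peripheral cusp groups from crown asterisks. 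Hence $X/G$ admits a hyperbolic structure, so $G$ is a hyperbolic 3-orbifold group.

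The main obstacle I anticipate is the verification, in the orbifold case, that the \emph{general} orbifold geometrization theorem applies to $X/G$ with its particular cusp/singular structure — in particular, checking that $X/G$ is very good and topologically tame in the sense required, and carefully matching our local cusp and cone fillings against the hypotheses of Boileau--Leeb--Porti. Once this bookkeeping is in place, Theorems~\ref{Thm:elementary} and~\ref{Thm:gapStab} do the algebraic work of forcing the geometry to be $\HH^3$.
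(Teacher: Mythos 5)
Your proposal is correct and follows essentially the same route as the paper: irreducibility from \refthm{3orbifoldgroup}, homotopical atoroidality from \refthm{elementary} together with \refprop{converseCofinite} (no non-peripheral $\ZZ\oplus\ZZ$ and no infinite cyclic normal subgroup beyond the elementary types), then Perelman--Thurston hyperbolization in the manifold case and orbifold geometrization in the singular case, with the infinite fundamental group and atoroidality excluding all non-hyperbolic geometries. Your extra bookkeeping about cusp structure and the hypotheses of orbifold geometrization is a more careful version of what the paper leaves implicit, not a different argument.
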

\begin{proof}
We show this by appealing to the geometrization theorem.

We have already showed some properties of $X/G$. In \refthm{3orbifoldgroup}, we proved that $X/G$ is irreducible. Moreover, thanks to  \refthm{elementary} and \refprop{converseCofinite}, we know that $G$ does not contain an infinite cyclic normal subgroup nor a non-peripheral rank 2 free abelian subgroup. Hence, $X/G$ is homotopically atoroidal.

If $\sV$ does not contain any non-leaf ideal polygon gaps and $\Mrk$ is empty, $G$ is torsion-free  by \refthm{classOfLaminarAut}, $X/G$ becomes a compact irreducible atoroidal 3-manifold with infinite $\pi_1(X/G)=G$. By the Perelman-Thurston hyperbolization theorem, $X/G$ is hyperbolic. 

If $\sV$ contains some (and therefore infinitely many) non-leaf ideal polygon gaps or marked leaves, $X$ becomes irreducible homotopically atoroidal 3-orbifold with non-empty singular locus. By the orbifold geometrization theorem \cite[Theorem 9.1]{Porti2003}, $X$ is geometric. Since $X$ is homotopically atoroidal and $G=\pi_1(X)$ is infinite, the only possible geometry that $X/G$ can support is hyperbolic. \end{proof}

\begin{ex}
There are many examples where \refcor{geometric} applies. The most common situation is when $G$ is the fundamental group of the mapping torus of a hyperbolic surface by a pseudo-Anosov mapping class. In this case, $G$ preserves a veering pair induced from the invariant laminations of the pseudo-Anosov mapping class. 
\end{ex}

\begin{cor}
Let $\sV$ be a veering pair and $G$ a subgroup of $\Aut(\sV)$. If the $G$-action on $\fV(\sV,\Mrk(G))$ is cofinite, then $G$ is relatively hyperbolic with respect to $\{\Stab{G}{\sG}\,|\,\sG\text{ is a crown}\}$.
\end{cor}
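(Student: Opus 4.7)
The plan is to combine Corollary \ref{Cor:geometric} with the classical theorem of Farb and Bowditch asserting that the fundamental group of a finite-volume cusped hyperbolic 3-orbifold is hyperbolic relative to its cusp subgroups.

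By Corollary \ref{Cor:geometric}, the space $X/G$ constructed in the proof of Theorem \ref{Thm:3orbifoldgroup} is a hyperbolic 3-orbifold. First I would verify that $X/G$ has finite volume, with cusp ends in bijective $G$-equivariant correspondence with the cusp ideal vertices of $\fV(\sV, \Mrk(G))$. The cofiniteness hypothesis, via Proposition \ref{Prop:converseCofinite}, implies that $X^\circ/G$ is a compact 3-orbifold with boundary, whose boundary components are the quotients of the links of the ideal vertices of $\fV(\sV,\Mrk(G))$. Filling the singular-vertex boundaries with (possibly singular) solid cylinders as in the proof of Theorem \ref{Thm:3orbifoldgroup} yields $X/G$, whose remaining boundary components, arising from the cusp ideal vertices, realize the finitely many cusp ends of the 3-orbifold.

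Next I would identify these cusps with $G$-orbits of crowns of $\sV$, matching the cusp subgroups with crown stabilizers. Tracing through Part~\ref{loomspacepart}, cusp ideal vertices of $\widetilde{\fV}(\sV, \Mrk(G))$ correspond to cusps of the loom space $\widetilde{\fW^\circ}(\sV, \Mrk(G))$ lying above cusp classes of $\closure{\fW}(\sV)$ (as opposed to cone classes, whose lifts have infinite cyclic $D$-stabilizer by Lemma \ref{Lem:stabilizerInD} and hence give singular vertices), and by Proposition \ref{Prop:eqClass2} the cusp classes of $\closure{\fW}(\sV)$ are canonically bijective with asterisks of crowns of $\sV$. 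Since each crown determines its interleaving partner uniquely and they share a common pivot fixed by any common stabilizer (Theorem \ref{Thm:classOfLaminarAut} and Proposition \ref{Prop:fixingCrown}), $G$-orbits of cusps of $X/G$ are thus in natural bijection with $G$-orbits of crowns of $\sV$, and the stabilizer of a cusp of $X/G$ coincides with $\Stab{G}{\sG}$ for the corresponding crown $\sG$. The cofiniteness assumption, together with Theorem \ref{Thm:gapStab}, guarantees that each such stabilizer is isomorphic to $\ZZ \times \ZZ$, as required for a cusp subgroup of a finite-volume hyperbolic 3-orbifold.

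Applying the classical relative hyperbolicity theorem for geometrically finite hyperbolic 3-orbifolds then yields that $G$ is hyperbolic relative to $\{\Stab{G}{\sG} : \sG \text{ is a crown of } \sV\}$. The main technical obstacle is the careful $G$-equivariant identification in the second step: one must trace through the chain of correspondences supplied by the weaving relation, the loom functor $\fV$, and the naturality results in Lemma \ref{Lem:subgroupnatural} and Lemma \ref{Lem:pdaction} in order to certify that the $G$-action on the cusps of $X/G$ genuinely corresponds to the $G$-action on crowns of $\sV$, so that the parabolic subgroups of $\pi_1^{\operatorname{orb}}(X/G)=G$ furnished by the geometric structure are precisely the crown stabilizers.
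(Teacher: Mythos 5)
Your proposal is correct and follows exactly the route the paper intends: the corollary is stated without proof as an immediate consequence of \refcor{geometric}, and your argument — finite-volume hyperbolicity of $X/G$, identification of its cusps with $G$-orbits of asterisks of crowns via \refprop{eqClass2} and \reflem{stabilizerInD}, cusp subgroups being $\ZZ\times\ZZ$ by \refprop{converseCofinite} and \refthm{gapStab}, then Bowditch/Farb relative hyperbolicity for geometrically finite Kleinian groups — is the natural elaboration. The only cosmetic point is that each cusp corresponds to an asterisk, i.e.\ to \emph{two} crowns (one in each $\sL_i$) with identical stabilizers, so the correspondence with crown orbits is two-to-one rather than a bijection; this does not affect the peripheral structure.
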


\begin{cor} \label{Cor:tautFoli}
Let $\sV=\{\sL_1,\sL_2\}$ be a veering pair without non-leaf polygons and $G$ a torsion free subgroup of $\Aut(\sV)$. If the $G$-action on $\fV(\sV)$ is cofinite, then $|\fV(\sV)|/G$ is a tautly foliated hyperbolic $3$-manifold whose fundamental group is $G$. 
\end{cor}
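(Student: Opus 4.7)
The plan is to apply \refcor{geometric} and then refine its conclusion using torsion-freeness of $G$ and the absence of non-leaf polygons in $\sV$: the former excludes orbifold singularities from the construction in \refthm{3orbifoldgroup}, and the latter (together with a standard fact about veering triangulations) delivers a taut foliation.

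First, I would observe that the canonical marking $\Mrk(G)$ of \refcor{canonicalmarking} is empty: being torsion-free, $G$ has no order-two elliptic elements by \refthm{classOfLaminarAut}, so no stitch is fixed by such an element. Combined with the hypothesis that $\sV$ has no non-leaf polygon gaps, no cone classes exist, and hence $\fW^\circ(\sV, \Mrk(G)) = \fW(\sV) \cong \RR^2$ is already simply-connected by \reflem{weavingIsDisk}. The deck group $D$ of its universal covering is therefore trivial, so $\fV(D)$ acts trivially; by \reflem{stabilizerInD} every ideal vertex of $\fV(\sV, \Mrk(G))$ is a cusp vertex, and no solid-cylinder fillings are required in the construction of $X$ in \refthm{3orbifoldgroup}. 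Since $G$ acts freely and properly discontinuously on $X$ by \reflem{pdaction}, the quotient $M := X/G$ is a genuine $3$-manifold rather than an orbifold. Cofiniteness together with \refprop{converseCofinite} and \refthm{gapStab} forces each cusp vertex to have a $\ZZ\times\ZZ$ stabilizer in $G$, so $M$ has finitely many torus cusps. Finally, \refcor{geometric} gives a complete hyperbolic structure on $M$ with $\pi_1(M) = G$.

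Second, I would produce the taut foliation on $M$. The $G$-action preserves the transverse taut structure and veering coloring of $\fV(\sV, \Mrk(G))$ on $|\fV(\sV, \Mrk(G))| \cong \RR^3$ by \reflem{V(V)noncompact} and \reflem{pdaction}, so $M$ inherits a transverse veering triangulation in the sense of Agol. Invoking the correspondence established in \cite{AgolTsang22} between transverse veering triangulations and transitive pseudo-Anosov flows without perfect fits (the absence of non-leaf polygon gaps in $\sV$ is precisely what rules out singular orbits and the associated perfect fits), $M$ carries a transitive pseudo-Anosov flow. Its weak stable and weak unstable foliations are taut foliations on $M$, which completes the proof.

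The main obstacle is the last step, namely passing from the combinatorial veering triangulation to an honest taut foliation on the smooth hyperbolic $3$-manifold $M$. This relies essentially on the Agol--Tsang correspondence \cite{AgolTsang22} and the classical fact that a transitive pseudo-Anosov flow carries taut weak stable/unstable foliations. An alternative, more self-contained route would work directly with the horizontal branched surface of $\fV(\sV, \Mrk(G))$: it is transversely co-oriented and carries an essential lamination via the two transverse foliations $\cF_1(\sV)$ and $\cF_2(\sV)$, whose complementary regions one would need to analyze carefully near the torus cusps in order to extend to a full taut foliation.
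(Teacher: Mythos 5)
Your first paragraph is correct and is essentially the paper's own route: with $G$ torsion-free the canonical marking $\Mrk(G)$ is empty, with no non-leaf polygons there are no singular classes, so $\fW^\circ(\sV,\Mrk(G))=\fW(\sV)$ is already an open disk, the deck group $D$ is trivial, no solid cylinders are glued, and \refthm{3orbifoldgroup} together with \refcor{geometric} exhibits $|\fV(\sV)|/G$ as a hyperbolic $3$-manifold with fundamental group $G$ carrying the taut veering triangulation $\fV(\sV)/G$.

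The gap is in your production of the taut foliation. The paper's entire argument for this step is the route you relegate to an un-executed ``alternative'': it simply cites the remark in \cite{Lackenby00} that the horizontal branched surface of a taut ideal triangulation carries a taut foliation, applied to $\fV(\sV)/G$; no further analysis near the cusps is required beyond that citation. Your primary route through \cite{AgolTsang22} does not work as stated. The Agol--Tsang correspondence produces transitive pseudo-Anosov flows on closed Dehn fillings of the veering-triangulated cusped manifold along slopes meeting the ladderpole curves at least twice --- this is exactly how the paper uses it in the remark following \refthm{3orbifoldgroup}, where $X/G$ is a filled manifold. Here no filling is performed: the corollary concerns the cusped manifold $|\fV(\sV)|/G$ itself, so the flow you invoke lives on the wrong space. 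Your parenthetical is also off: the absence of non-leaf polygons removes singular vertices of the triangulation, but the cusps coming from crowns are still there, and if one were to fill them the number of prongs of the resulting core orbit is governed by the intersection of the filling slope with the ladderpole class, so neither singular orbits nor perfect fits are excluded by the hypothesis on $\sV$ alone. Even granting a flow, upgrading its (in general singular) weak stable/unstable foliations to a taut foliation on a noncompact cusped manifold would need further justification.
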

\begin{proof}
By \refthm{3orbifoldgroup} and  \refcor{geometric}, $|\fV(\sV)|/G$ is a hyperbolic $3$-manifold with a taut veering triangulation $\fV(\sV)/G$ whose fundamental group is $G$. Moreover, as remarked in \cite{Lackenby00}, the horizontal branched surface of $\fV(\sV)/G$  carries at least one taut foliation. Thus, the result follows. 
\end{proof}

\section{Next step}
We suspect that $\Aut(\sV)=\Aut(\sV,\Mrk(\Aut(\sV)))$ is still geometric even without the cofinite condition. More weakly, we also expect that every subgroup $G$ of $\Aut(\sV)$ is relatively hyperbolic with respect to $\{\Stab{G}{\sG}:\sG \text{ is a crown}\}.$ One of supportive evidences is  \refcor{noBSgroup} in the sense of the Gersten conjecture \cite[Q1.1]{Bestvina}.  Unfortunately, our argument does rely on the cocompactness of the action and does not seem to work well in the noncompact setting. One of directions to tackle this issue is to see the action of $\Aut(\sV)$ on the ``boundary'' of the space $X$ in the proof of \refcor{geometric},  although no natural compactification of this space is known yet. With the characterization of the Bowditch boundary \cite{Yaman}, this leads us to the following question. 

\begin{ques}
Let $\sV$ be a veering pair. Does $\Aut(\sV)$ admit a convergence action on $S^2$?
\end{ques}

More precisely, given a veering pair $\sV=\{\sL_1,\sL_2\},$ the partition $Q=Q_1\cup J(Q_2)$ is a cellular decomposition of $\hat{\CC}$ where $Q_1,$ $Q_2,
$ and $J$ are defined in \refsec{decompositions}. Hence, by \refthm{Moore}, $\cD(Q)$ is homeomorphic to the sphere. Furthermore, $\Aut(\sV)$ faithfully acts on $\cD(Q).$ By \refthm{classification}, each element of $\Aut(\sV)$ acts on the sphere like an element of $\PSL{\CC}.$ Thereofore, the precise statement is of the following form.
\begin{ques}
Does any subgroup $G$ of $\Aut(\sV)$ act on $\cD(Q)$ as a convergence action? Furthermore, if the $G$-action on $\fV(\sV,\Mrk(G))$ is cofinite, does $\cD(Q)$ consist only of conical limit points and bounded parabolic points?
\end{ques}
A related result can be found in \cite{AlonsoBaikSamperton} where they proved that a subgroup of $\Homeop(S^1)$ preserving a pseudo-fibered pair of very full lamination systems such that all elements are hyperbolic admits a convergence action on $S^2$. 

As of now, we do not know any ``non-trivial'' example of veering pair. Namely, all known veering pairs come from some extra structures defined in $3$-manifolds, e.g. pseudo-Anosov flow, essential lamination, or veering triangulation.  
Because our setting is quite general and seemingly independent of 3-dimensional topology, we expect that there are interesting constructions and examples of veering pairs. Especially, we hope that the following question holds.

\begin{ques}
Does every hyperbolic group with sphere boundary act on the circle preserving a veering pair?
\end{ques}

 If so, our theorem gives an answer of  Cannon's conjecture.

\end{document}